\definecolor{pku}{RGB}{139,0,18}
\newtheorem{thm}{Theorem}[section]
\newtheorem{cor}[thm]{Corollary} 
\newtheorem{prop}[thm]{Proposition} 
\newtheorem{lem}{Lemma}[section]
\theoremstyle{definition}
\newtheorem{definition}{Definition}[section]
\theoremstyle{remark}
\newtheorem{rmk}{Remark}[section]
\let\pa=\partial
\let\f=\frac
\let\pa=\partial
\let\na=\nabla
\let\al=\alpha
\let\b=\beta
\let\e=\varepsilon
\let\d=\delta
\let\D=\Delta
\let\g=\gamma
\let\G=\Gamma
\let\lam=\lambda
\let\Lam=\Lambda
\let\va=\varphi
\let\r=\rho
\let\s=\sigma
\let\th=\theta
\newcommand{\beq}{\begin{equation}}
\newcommand{\eeq}{\end{equation}}
\newcommand{\beqo}{\begin{equation*}}
\newcommand{\eeqo}{\end{equation*}}
\newcommand{\Id}{\mathrm{Id}}
\newcommand{\pv}{\mathrm{p.v.}}
\newcommand{\di}{\mathrm{div}\,}
\newcommand{\CE}{\mathcal{E}}
\newcommand{\CD}{\mathcal{D}}
\newcommand{\CI}{\mathcal{I}}
\newcommand{\CU}{\mathcal{U}}
\newcommand{\CF}{\mathcal{F}}
\newcommand{\CT}{\mathcal{T}}
\newcommand{\CJ}{\mathcal{J}}
\newcommand{\CX}{\mathcal{X}}
\newcommand{\R}{\mathbb{R}}
\newcommand{\T}{\mathbb{T}}
\newcommand{\BR}{\mathbb{R}}
\newcommand{\BZ}{\mathbb{Z}}
\newcommand{\BT}{\mathbb{T}}
\newcommand{\BP}{\mathbb{P}}
\newcommand{\BN}{\mathbb{N}}
\numberwithin{equation}{section}
\begin{document}
\title[The Immersed Boundary Problem in 2-D]{The Immersed Boundary Problem in 2-D:\\the Navier-Stokes Case}

\subjclass[2020]{
35B25, 
35C15, 
35Q30, 
35Q35, 
35R11, 
35R37, 
35Q74. 
}
\keywords{Immersed boundary problem, Navier-Stokes equation, Peskin problem, Mild solution, Regularity, Blow-up criterion, Zero-Reynolds-number limit.}

\author{Jiajun Tong}
\address{Beijing International Center for Mathematical Research, Peking University, Beijing 100871, China}
\email{tongj@bicmr.pku.edu.cn}
\author{Dongyi Wei}
\address{School of Mathematical Sciences, Peking University, Beijing 100871, China}
\email{jnwdyi@pku.edu.cn}

\date{\today}
\maketitle

\begin{abstract}
We study the immersed boundary problem in 2-D.
It models a 1-D elastic closed string immersed and moving in a fluid that fills the entire plane, where the fluid motion is governed by the 2-D incompressible Navier-Stokes equation with a positive Reynolds number subject to a singular forcing exerted by the string.
We introduce the notion of mild solutions to this system, and prove its existence, uniqueness, and optimal regularity estimates when the initial string configuration is $C^1$ and satisfies the well-stretched condition and when the initial flow field $u_0$ lies in $L^p(\mathbb{R}^2)$ with $p\in (2,\infty)$.
A blow-up criterion is also established.
When the Reynolds number is sent to zero, we show convergence in short time of the solution to that of the Stokes case of 2-D immersed boundary problem, with the optimal error estimates derived.
We prove the energy law of the system when $u_0$ additionally belongs to $L^2(\mathbb{R}^2)$.
Lastly, we show that the solution is global when the initial data is sufficiently close to an equilibrium state.
\end{abstract}

\tableofcontents

\section{Introduction}
In this paper, we consider the immersed boundary problem in 2-D with the incompressible Navier-Stokes equation.
It models a 1-D elastic closed string immersed and moving in a fluid that fills the entire $\BR^2$.
The string applies elastic force to the ambient fluid and thus affects the flow field, and meanwhile, the flow moves and deforms the string. This gives an autonomous fluid-structure interaction problem, featuring singular forcing on the flow field along a time-varying 1-D string.
This problem arises from the general mathematical formulation of the immersed boundary problem introduced by Peskin \cite{peskin1972flow,peskin2002immersed}, and it is also closely related to the well-known numerical immersed boundary method he proposed, which has been widely applied as a power computational tool in physics, biology, medical sciences, and engineering over the past half a century \cite{griffith2020reviewIB,mittal2005reviewIB,peskin2002immersed,verzicco2023reviewIB}.
The primary goal of this paper is to present a comprehensive rigorous study of the problem, so as to put its mathematical formulation on the solid ground.

\subsection{Problem formulation}
We model the 1-D elastic string by a planar Jordan curve.
We parameterize it by an $\BR^2$-valued function $X = X(s,t)$, where $s\in \BT : = \BR/(2\pi \BZ)$ denotes the Lagrangian (material) coordinate, and where $t\geq 0$ denotes the time.
Here $X(s,t)$ gives the position of the material point along the string with the label $s$ at time $t$, and we always assume that the parameterization goes counter-clockwise along the curve $X(\BT,t)$.
Note that $s$ is not the arc-length parameter.
We shall use $u=u(x,t)$ and $p=p(x,t)$ to denote the velocity field and the pressure in the fluid, respectively.
Then the problem reads that \cite{peskin2002immersed}
\begin{align}
&\r(\pa_t u + u\cdot \na u) + \na p = \mu\D u +f_X ,\label{eqn: NS equation dimensional}\\
&\di u = 0,\quad |u|,|p|\to 0 \mbox{ as }|x|\to 0,\\
&f_X(x,t) = \int_\BT F_X(s,t)\d (x- X(s,t))\,ds,\label{eqn: def of force dimensional}\\
&\pa_t X(s,t) = u(X(s,t),t),\label{eqn: kinematic equation dimensional}\\
&X(s,0) = X_0(s),\quad u(x,0) = u_0(x).\label{eqn: initial data dimensional}
\end{align}
Here $\r$ and $\mu$ are the density and dynamic viscosity of the fluid, respectively, which are assumed to be positive constants;
$f_X(x,t)$ is the elastic force applied to the fluid, which is singularly supported along the curve $X(\BT,t)$;
and $\d$ denotes the Dirac $\d$-function in $\BR^2$.
In \eqref{eqn: def of force dimensional}, $F_X = F_X(s,t)$ denotes the force density in the Lagrangian coordinate.
For simplicity, in this paper, we only study the case of the linear Hookean elasticity, i.e.,
\beq
F_X(s,t) = k X_{ss}(s,t),
\label{eqn: Hooke's law dimensional}
\eeq
where $k>0$ denotes the Hooke's constant, and where $X_{ss}$ denotes the second derivative of $X$ with respect to the $s$-variable.
In other words, each infinitesimal segment of the string is modeled as a Hooke's spring with zero resting length.

\begin{rmk}
To study general elasticity laws, one has \cite{peskin2002immersed}
\[
F_X(s,t) = \pa_s\left(\CT(|X_s(s,t)|)\f{X_s(s,t)}{|X_s(s,t)|}\right),
\]
where $\CT = \CT(q)$ denotes the tension density of the string when an infinitesimal string segment is stretched by an amount of $q\in [0,+\infty)$.
$\CT(q)$ is determined by the elastic energy density $\CE(q)$ of the string material, $\CT(q) = \CE'(q)$.
When an active elastic string is considered, e.g., in some biological models, one may assume spatially inhomogeneous and time-varying elasticity laws such as $\CT = \CT(q,s,t)$.
\end{rmk}

\subsection{Non-dimensionalization}
We first perform non-dimensionalization of the model to simplify the parameters.
Let $L_*,T_*,M_*>0$ denote characteristic length, time, and mass scales respectively, which are to be chosen later.
We introduce dimensionless quantities as follows:
\begin{align*}
& \left(\tilde{x},\,\tilde{t},\,\tilde{u},\,\tilde{p},\,
\tilde{X},\,\tilde{f}_{\tilde{X}},\,\tilde{F}_{\tilde{X}},\,
\tilde{\r},\,\tilde{\mu},\tilde{k}\right) \\
& := \left(\f{x}{L_*},\,\f{t}{T_*},\,\f{u}{L_*/T_*},\, \f{p}{M_*/T_*^2},\, \f{X}{L_*}, \, \f{f_X}{M_*/(L_*T_*^2)}, \, \f{F_X}{M_* L_*/T_*^2}, \, \f{\r}{M_*/L_*^2},\, \f{\mu}{M_*/T_*},\, \f{k}{M_*/T_*^2}\right).
\end{align*}
By a change of variable, one can derive from \eqref{eqn: NS equation dimensional}-\eqref{eqn: Hooke's law dimensional} a dimensionless system in the $(\tilde{x},s,\tilde{t})$-variable:
\begin{align*}
&\tilde{\r}(\pa_{\tilde{t}} \tilde{u} + \tilde{u}\cdot \na_{\tilde{x}} \tilde{u}) + \na_{\tilde{x}} \tilde{p} = \tilde{\mu}\D_{\tilde{x}} \tilde{u} +\tilde{f}_{\tilde{X}},\\
&\mathrm{div}_{\tilde{x}}\, \tilde{u} = 0,\quad |\tilde{u}|,|\tilde{p}|\to 0 \mbox{ as }|\tilde{x}|\to 0,\\
&\tilde{f}_{\tilde{X}}(\tilde{x},\tilde{t}) = \int_\BT \tilde{k} \tilde{X}_{ss}(s,\tilde{t}) \d \big(\tilde{x}- \tilde{X}(s,\tilde{t})\big)\,ds,\\
&\pa_{\tilde{t}} \tilde{X}(s,\tilde{t}) = \tilde{u}\big(\tilde{X}(s,\tilde{t}),\tilde{t}\big),\\
&\tilde{X}(s,0) = \tilde{X}_0(s),\quad \tilde{u}(\tilde{x},0) = \tilde{u}_0 (\tilde{x}),
\end{align*}
where
\[
\tilde{X}_0(s) :=L_*^{-1} X_0(s),\quad \tilde{u}_0(\tilde{x}) := T_*L_*^{-1} u_0 (L_* \tilde{x}).
\]

Now we define $L_*>0$ by
\beqo
\pi L_*^2 = \f12 \int_\BT X_0(s)\times X_0'(s)\,ds,
\eeqo
where the right-hand side represents the area of the planar domain enclosed by the Jordan curve $X_0(\BT)$.
This definition is motivated by the fact that, if the system \eqref{eqn: NS equation dimensional}-\eqref{eqn: Hooke's law dimensional} admits a
well-behaved solution, the area of the domain enclosed by $X(\BT,t)$ should be time-invariant because of the incompressibility of the flow.
We further choose $T_* := \mu k^{-1}$ and $M_* := \mu^2 k^{-1}$ so that $\tilde{\mu} = \tilde{k} = 1$.
Then
\[
\tilde{\rho} = \f{\rho}{M_*/L_*^2} = \f{\rho k}{2\pi \mu^2}\int_\BT X_0(s)\times X_0'(s)\,ds =:Re.
\]
The dimensionless constant $Re$ is exactly the Reynolds number.

In the rest of the paper, we will focus on the dimensionless system.
With the above choice of characteristic scales and with all the tilde symbols omitted for brevity, it can be summarized as
\begin{align}
& Re(\pa_t u + u\cdot \na u) + \na p = \D u +f_X ,\label{eqn: NS equation}\\
&\di u = 0,\quad |u|,|p|\to 0 \mbox{ as }|x|\to 0,\\
&f_X(x,t) = \int_\BT X_{ss}(s,t)\d (x- X(s,t))\,ds,\label{eqn: def of force}\\
&\pa_t X(s,t) = u(X(s,t),t),\label{eqn: kinematic equation}
\end{align}
and the initial condition is given by
\beq
X(s,0) = X_0(s),\quad u(x,0) = u_0(x).\label{eqn: initial data}
\eeq
Our goal is to prove the existence, uniqueness, and various other properties of its solutions.
The main results of the paper will be presented in Section \ref{sec: main result}.
Readers are also referred to Section \ref{sec: organization of the paper} for the organization of the paper.

\subsection{Related studies}
\label{sec: Stokes case revisited}
As is mentioned before, the mathematical formulation of the immersed boundary problem and the numerical immersed boundary method was proposed in early 1970s \cite{peskin1972flow}, and since then, the latter has been extensively investigated and widely applied (see e.g.\;the reviews \cite{griffith2020reviewIB,mittal2005reviewIB,peskin2002immersed,verzicco2023reviewIB} and the references therein).
However, rigorous analytic studies of the immersed boundary problem only started within the past decade.
So far, the existing literature focuses on the system \eqref{eqn: NS equation}-\eqref{eqn: initial data} with $Re = 0$, which is called the \emph{Stokes case} of the 2-D immersed boundary problem, also known as the 2-D \emph{Peskin problem}:
\begin{align}
& -\D u + \na p = f_X ,\label{eqn: Stokes equation}\\
&\di u = 0,\quad |u|,|p|\to 0 \mbox{ as }|x|\to 0,\\
&f_X(x,t) = \int_\BT F_X(s,t)\d (x- X(s,t))\,ds,\label{eqn: def of force Stokes case}\\
&\pa_t X(s,t) = u(X(s,t),t),\label{eqn: kinematic equation Stokes case}
\\
&X(s,0) = X_0(s).\label{eqn: initial data Stokes case}
\end{align}
What makes it mathematically more tractable than the general case is that, it can be reformulated into a contour dynamic equation for $X=X(s,t)$ in the Lagrangian coordinate by virtue of the fundamental solution to the 2-D stationary Stokes equation (i.e., the 2-D Stokeslet)
\beq
G_{ij}(x) := \f{1}{4 \pi}\left(-\ln |x|\d_{ij} + \f{x_i x_j}{|x|^2}\right),\quad i,j = 1,2.
\label{eqn: Stokeslet in 2-D}
\eeq
For instance, in the case $F_X = X_{ss}$, we can obtain that (see e.g.\;\cite{lin2019solvability,mori2019well} for details)
\begin{align*}
\pa_t X  = &\;
\int_\BT G(X(s,t)-X(s',t)) X_{ss}(s',t)\,ds'.
\\
= &\; \f{1}{4\pi}\mathrm{p.v.}\int_\BT \left[-\f{|X'(s')|^2}{|X(s')-X(s)|^2} + \f{2((X(s')-X(s))\cdot X'(s'))^2}{|X(s')-X(s)|^4} \right]\big(X(s')-X(s)\big)\,ds'.
\end{align*}
Here in the second line, for brevity, we wrote $X_s$ as $X'$ and the $t$-dependence is omitted.
By extracting the principal singular part in the integral, we can rewrite it as a fractional heat equation
\beq
\pa_t X(s,t) = -\f14\Lam X(s,t)+ g_X(s,t),
\label{eqn: contour dynamic equation Stokes case}
\eeq
where $\Lam := (-\D)^{1/2}_\BT$, and where $g_X$ is defined in \eqref{eqn: def of g_X} below.
One can show that $g_X$ is in fact a regular forcing term, as long as $X$ admits suitable regularity and satisfies \emph{the well-stretched condition} --- a string configuration $Y(s)$ is said to satisfy the well-stretched condition (or simply, $Y(s)$ is well-stretched) with constant $\lam>0$, if
\beq
\big|Y(s_1)-Y(s_2)\big|\geq \lam |s_1-s_2|_\BT,\quad \forall \,s_1,s_2\in \BT,
\label{eqn: well-stretched condition}
\eeq
where $|\cdot|$ on the left-hand side denotes the norm in $\BR^2$, while $|\cdot|_\BT$ on the right-hand side denotes the distance on $\BT$.
Based on this idea, well-posedness of \eqref{eqn: Stokes equation}-\eqref{eqn: initial data Stokes case} with the linear Hookean elasticity $F_{X} = X_{ss}$ was first proved in the independent works \cite{lin2019solvability} and \cite{mori2019well} for well-stretched initial data $X_0$ that lies in the subcritical spaces $H^{5/2}(\BT)$ and $h^{1,\g}(\BT)$ (the little H\"{o}lder space with $\g\in (0,1)$), respectively.
Besides, among other results, \cite{mori2019well} also showed an instant smoothing property of the solution and a blow-up criterion.
Later, Rodenberg \cite{rodenberg20182d} extended the local well-posedness result in \cite{mori2019well} to the case of general nonlinear elasticity.

After that, several well-posedness results have been obtained in scaling-critical regularity classes.
Garc\'{i}a-Ju\'{a}rez, Mori, and Strain \cite{garcia2020peskin} considered the case where the fluids in the interior and the exterior of the string can have different viscosities, and they proved global well-posedness for medium-size initial data in the critical Wiener space $\CF^{1,1}(\BT)$.
In \cite{gancedo2020global}, Gancedo, Granero-Belinch\'{o}n, and Scrobogna introduced a toy model that captures the normal motion of the string, and its global well-posedness was proved for small initial data in the critical Lipschitz class $W^{1,\infty}(\BT)$.
Chen and Nguyen \cite{chen2021peskin} proved the well-posedness in the critical space $\dot{B}_{\infty,\infty}^{1}(\BT)$, which contains non-Lipschitz functions.
For the 2-D Peskin problem with general nonlinear elasticity, Cameron and Strain \cite{cameron2021critical} proved the local well-posedness in $B^{3/2}_{2,1}(\BT)$, while in a very recent work \cite{GARCIAJUAREZ2025110047}, Garc\'{i}a-Ju\'{a}rez and Haziot obtained the global well-posedness and asymptotic stability for small initial data in some critical space which allows corners.

Some recent studies revealed special nonlinear properties of the 2-D Peskin problem.
In \cite{tong2024tangential}, the first author of this paper introduced the tangential Peskin problem with a Eulerian formulation, which models an infinitely long and straight 1-D elastic string deforming tangentially in a Stokes flow in $\BR^2$.
Global solutions were constructed for initial datum in the energy class, which had been regarded as super-critical in the classic treatment using the Lagrangian formulation.
In our recent paper \cite{tong2024geometric} on the original 2-D Peskin problem, we observed extremum principles for certain geometric quantities of the string, and then proved a global well-posedness result that only imposes a medium-size geometric condition on the initial string shape but no assumption on the size of its stretching.
Connection between the tangential problem and the dynamics of a circular string in the full model was also illustrated in this work.

In addition to those works, the first author of this paper studied the regularized Peskin problem inspired by numerical interests \cite{tong2021regularized}.
Li \cite{li2021stability} studied the case where the elastic string has both stretching and bending energy.
In \cite{GarciaJuarez2025Peskin3D}, Garc\'{i}a-Ju\'{a}rez, Kuo, Mori, and Strain considered the 3-D Peskin problem, where a 2-D closed elastic membrane moves in a 3-D fluid, and they proved its local well-posedness and smoothing of the solution.
In a very recent work \cite{Chen2024wellposedness_quasilinear}, Chen, Hu, and Nguyen obtained local well-posedness in critical spaces of the 2-D and 3-D Peskin problems with nonlinear elasticity as applications of their general regularity theory for quasilinear evolution equations.

Let us mention another two lines of recent research related to the Peskin problem.
It is natural to consider a 1-D elastic string moving in a 3-D Stokes flow.
However, in this case, the string velocity cannot be well-defined since the 3-D Stokeslet is too singular to be integrated along a 1-D curve.
Instead, one can consider the slender body problem where the 1-D string here is replaced by a filament which is a 3-D object with small constant cross-sectional radius; see the recent analytic works in \cite{mori2021accuracy,mori2020theoretical, mori2020theoretical_free_end,ohm2024free,ohm2025angle,ohm2025slender}.
Besides, the tension determination problem was also studied recently as an inextensible version of the 2-D Peskin problem, where an inextensible closed string applies bending forces to the surrounding Stokes flow \cite{garcia2025immersed,kuo2023tension}.

The Peskin problem is known to be mathematically similar to several other evolution free boundary problems.
For instance, the Muskat problem \cite{muskat1934two} arising from the oil industry has attracted enormous attention in the analysis community over the past two decades; see e.g.\;\cite{Alazard2020EndpointST,Ambrose2004WellposednessOT,cameron2018global,Cameron2026111257,chen2022muskat, constantin2016muskat,constantin2012global, cordoba2011,cordoba2013porous,Crdoba2018GlobalWF,deng2017two,
gancedo2019muskat} and the references therein.
Another one is the interface dynamics in a two-phase Stokes flow driven by the surface tension \cite{bohme2024well,bohme2025well,choi2024stability,matioc2021two,matioc2022two,pruss2016moving}.
When the two fluids have the same density and viscosity, this model can be viewed as \eqref{eqn: Stokes equation}-\eqref{eqn: initial data Stokes case} with $F_X(s,t) = \pa_s(X_s(s,t)/|X_s(s,t)|)$ (if one ignores the diversity in the interface and domain geometry).
There is also a counterpart coming with the Navier-Stokes equation; see e.g.\;\cite{denisova1994problem,denisova1995classical, kohne2013qualitative,pruss2010rayleigh,pruss2010two,pruss2011analytic, pruss2016moving,tanaka1993global01011993}.
In contrast to the Peskin problem, in these problems, the dynamics is solely governed by the shape of the interface while its parameterization is not intrinsic to the problem.

To the best of our knowledge, there has been no rigorous analysis result on the general 2-D immersed boundary problem \eqref{eqn: NS equation}-\eqref{eqn: initial data} with the Navier-Stokes equation.

\subsection{Organization of the paper}
\label{sec: organization of the paper}
The rest of this paper is structured as follows.
We define necessary notations in Section \ref{sec: definitions and notations}, and introduce an important decomposition of the flow field in Section \ref{sec: def of basic quantities}.
After that, we will explain the key ideas of our analysis.
In Section \ref{sec: main result}, we introduce the notion of the mild solution to \eqref{eqn: NS equation}-\eqref{eqn: initial data}, and present the main results of the paper.
A priori estimates for each part of the decomposition of the flow field will be established in Section \ref{sec: a priori estimates}.
Using these estimates, we construct a local mild solution by a fixed-point argument in Section \ref{sec: local well-posedness}.
Section \ref{sec: properties of mild solutions} is devoted to studying the regularity and uniqueness of the mild solution.
In Section \ref{sec: regularity of mild solutions}, we derive sharp regularity estimates for the mild solutions in terms of some basic bounds of them.
Based on that, we address in Section \ref{sec: uniqueness and continuation} the uniqueness and continuation of the mild solution, and also prove a blow-up criterion.
In Section \ref{sec: higher regularity}, we further show that the solution enjoys even higher regularity.
In Section \ref{sec: zero Reynolds number limit}, we justify the short-time convergence of the Navier-Stokes case to the Stokes case as $Re\to 0^+$, where the error estimates are derived with the optimal dependence on the Reynolds number.
In Section \ref{sec: local well-posedness for u_0 in L^2 and L^p}, we prove the energy law of the system in the case that the initial flow field additionally satisfies $u_0\in L^2(\BR^2)$.
Using that, in Section \ref{sec: global well-posedness}, we show that, for initial data $(u_0,X_0)$ that is sufficiently close to an equilibrium in a suitable sense, we can actually obtain a global solution.
We collect some parabolic estimates and simple calculus results that will be used throughout the paper in Appendix \ref{sec: parabolic estimates}. 
Lastly, in Appendix \ref{sec: improved estimates for g_X}, we present the proofs of several improved estimates for the nonlinear term $g_X$ in the Stokes case (see \eqref{eqn: contour dynamic equation Stokes case}).

\subsection*{Acknowledgement}
The authors would like to thank De Huang, Jinzi Mac Huang, and Xiaoyutao Luo for helpful discussions.
Both authors are supported by the National Key Research and Development Program of China under the grant No.~2021YFA1001500.


\section{Preliminaries and Main Results}
\label{sec: preliminary}

\subsection{Definitions and notations}
\label{sec: definitions and notations}

Throughout the paper, if not otherwise stated:
\begin{itemize}
\item We will always use $s$ and $s'$ to denote the Lagrangian coordinate that ranges in $\BT$ or its subsets;
\item $x$ and $y$ will always denote the Eulerian coordinate that ranges in $\BR^2$ or its subsets;
\item $t$, $\tau$, $\eta$, etc., are used for the time variables;
\item For $X = X(s)$ defined on $\BT$ or $X = X(s,t)$ defined on $\BT\times [0,T]$, for brevity, we will use $X'$ and $X''$ to denote $X_{s}$ and $X_{ss}$, i.e., the first and second (partial) derivatives of $X$ with respect to the $s$-variable, respectively;
\item $\Lam := (-\D)^{1/2}_\BT$.
\end{itemize}

Let us introduce some standard norms.
\begin{itemize}
\item
Let $u = u(x)$ be defined on $\BR^2$.
For $k\in \BN$, let
\[
\|u\|_{C_x^k(\BR^2)}
:= \sum_{j =0}^k\|\na^j u\|_{L_x^\infty(\BR^2)},
\quad
\|u\|_{\dot{C}_x^{k}(\BR^2)}: = \|\na^ku \|_{L_x^\infty(\BR^2)}.
\]
We also denote
\[
\|u\|_{C_x(\BR^2)} := \|u\|_{C_x^{0}(\BR^2)} = \|u\|_{\dot{C}_x^{0}(\BR^2)}
= \|u\|_{L_x^\infty(\BR^2)}.
\]
With $\al\in (0,1)$, let
\[
\|u\|_{\dot{C}_x^\al(\BR^2)}: = \sup_{x_1,x_2\in \BR^2\atop x_1\neq x_2}\f{|u(x_1)-u(x_2)|}{|x_1-x_2|^\al},
\]
and for $k\in \BN$, define the H\"{o}lder semi-norms and H\"{o}lder norms as
\begin{align*}
\|u\|_{\dot{C}_x^{k+\al}(\BR^2)}
= &\; \|u\|_{\dot{C}_x^{k,\al}(\BR^2)}: =\|\na^k u\|_{\dot{C}_x^{\al}(\BR^2)},\\
\|u\|_{C_x^{k+\al}(\BR^2)}
= &\;\|u\|_{C_x^{k,\al}(\BR^2)}
:= \sum_{j =0}^k\|\na^j u\|_{L_x^\infty(\BR^2)} + \|\na^k u\|_{\dot{C}_x^\al(\BR^2)}.
\end{align*}

\item
Similarly, for $X = X(s)$ defined on $\BT$, and $k\in \BN$, let
\[
\|X\|_{C_s^{k}(\BT)}
:= \sum_{j = 0}^k \|X^{(j)}\|_{L_s^\infty(\BT)},
\quad
\|X\|_{\dot{C}_s^{k}(\BT)}: = \|X^{(k)} \|_{L_s^\infty(\BT)}.
\]
Here $X^{(j)}$ denotes the $j$-th derivative of $X$ with respect to $s$.
We also denote
\[
\|X\|_{C_s(\BT)} := \|X\|_{C_s^0(\BT)} = \|X\|_{\dot{C}_s^0(\BT)}
= \|X\|_{L_s^\infty(\BT)}.
\]
With $\al\in (0,1)$, let
\[
\|X\|_{\dot{C}_s^\al(\BT)}: = \sup_{s_1,s_2\in \T\atop s_1\neq s_2}\f{|X(s_1)-X(s_2)|}{|s_1-s_2|^\al_\BT},
\]
and for $k\in \BN$, define
\begin{align*}
\|X\|_{\dot{C}_s^{k+\al}(\BT)}
= &\; \|X\|_{\dot{C}_s^{k,\al}(\BT)}: = \|X^{(k)} \|_{\dot{C}_s^{\al}(\BT)},\\
\|X\|_{C_s^{k+\al}(\BT)} = \|X\|_{C_s^{k,\al}(\BT)}
:= &\; \sum_{j = 0}^k \|X^{(j)}\|_{L_s^\infty(\BT)} + \|X\|_{\dot{C}_s^{k,\al}(\BT)}.
\end{align*}
Here $|s_1-s_2|_\BT$ denotes the distance between $s_1$ and $s_2$ on $\BT$, which ranges in $[0,\pi]$.
In what follows, we shall simply write it as $|\cdot|$ when it incurs no confusion.

\item
For $u = u(x,t)$ defined on $\BR^2\times [0,T]$ and $p\in [1,\infty]$, we introduce the space-time norms: for $t\in (0,T]$, let
\[
\|u\|_{L_t^\infty L_x^p(\BR^2)}: = \sup_{\eta\in [0,t]} \|u(\cdot,\eta)\|_{L_x^p(\BR^2)}.
\]
Note that we took the supremum in time instead of the essential supremum in the usual definition of the $L^\infty$-norm.
For $\al\in (0,1)$, $\|u\|_{L_t^\infty C_x^\al(\BR^2)}$, $\|u\|_{L_t^\infty \dot{C}_x^\al(\BR^2)}$, $\|u\|_{L_t^\infty C_x^{k,\al}(\BR^2)}$, and $\|u\|_{L_t^\infty \dot{C}_x^{k,\al}(\BR^2)}$, etc., can be defined analogously.

\item
For $X = X(s,t)$ being a function on $\T\times [0,T]$, we can define $\|X\|_{L_t^\infty L^p_s(\T)}$ and $\|X\|_{L_t^\infty C^\al_s(\T)}$, etc., in a similar way for $t\in (0,T]$.
We omit the details.

\item For $X = X(s)$ and $Y = Y(s)$ defined on $\BT$, we denote
\[
\|(X',Y')\|_{L^\infty_s(\BT)}: = \|X'\|_{L^\infty_s(\BT)} + \|Y'\|_{L^\infty_s(\BT)}.
\]
Similar quantities, such as $\|(X',Y')\|_{C^\al_s(\BT)}$ and $\|(u,v)\|_{L^\infty_x(\BR^2)}$, etc., can be defined analogously.

\item We introduce the Morrey norm on $\BR^2$.
With $\mu\in (0,1)$, let
\beq
\|f\|_{M^{1,\mu}(\R^2)}:=\sup_{r>0,\, x\in \BR^2} r^{\mu-2}\|f\|_{L^1(B(x,r))},
\label{eqn: def of Morrey norm}
\eeq
where $B(x,r)$ denotes a disk of radius $r$ centered at $x\in\BR^2$.
By the H\"older's inequality, we have $\|f\|_{M^{1,\mu}(\R^2)}\leq C\|f\|_{L^{2/\mu}(\R^2)} $.

\item
When the context is clear, we may omit $s$ and/or $x$ in the arguments and in the subscripts of the function spaces.
For instance, $\|X(\eta)\|_{\dot{C}^\al}$ denotes $\|X(\cdot,\eta)\|_{C^\al_s(\BT)}$, while $\|u(\tau)\|_{L^p}$ is short for $\|u(\cdot,\tau)\|_{L^p_x(\BR^2)}$, etc.
\end{itemize}

Next, we introduce notations that are specific to the analysis of the problem.
\begin{itemize}
\item Throughout the paper, we denote $\nu: = Re^{-1}$.

\item
Given $X = X(s,t)$ and $x\in \BR^2$, denote
\beq
d(x,t):=\inf_{s\in \BT}{|x-X(s,t)|}.
\label{eqn: def of d(x,t)}
\eeq
\item
Given $X = X(s)$, we follow \cite{mori2019well} to denote
\[
|X|_* : = \sup\{\lam\geq 0:\, |X(s_1)-X(s_2)|\geq \lam |s_1-s_2|_\BT \mbox{ for all }s_1,s_2\in \BT\}.
\]
i.e., it is the largest constant $\lam$ such that
the well-stretched property \eqref{eqn: well-stretched condition} holds for $X$.
It is clear that, if $X\in C^1(\BT)$, $|X|_* \leq \inf_{s\in \BT}|X'(s)|\leq \|X'\|_{L^\infty}$.

\item
If $X=X(s)$ gives a $C^1$-Jordan curve in $\BR^2$ parameterized in the counterclockwise direction, we define its \emph{effective radius} $R>0$ by
\beq
\pi R^2 = \f12\int_{\BT} X(s)\times X'(s)\, ds.
\label{eqn: def of effective radius}
\eeq
Indeed, the right-hand side represents the area of the domain enclosed by the curve $X(\BT)$, so it is positive and thus $R$ is well-defined.

\item For $X= X(s)\in L^2(\BT)$, we define the Fourier expansion of $X(s)$ by
\beq
X(s)=\sum_{n\in\BZ}
\begin{pmatrix}
\cos(ns) & -\sin(ns) \\
\sin(ns) & \cos(ns)
\end{pmatrix}
a_n,
\label{eqn: Fourier series}
\eeq
where for $n\in \BZ$, the Fourier coefficients $a_n\in \BR^2$ are defined by
\beq
a_n := \f{1}{2\pi}\int_\BT \begin{pmatrix}
\cos(ns) & \sin(ns) \\
-\sin(ns) & \cos(ns)
\end{pmatrix}X(s)\,ds =: \CF(X)_n.
\label{eqn: Fourier coefficients}
\eeq
We remark that, using the correspondence between complex numbers and vectors in $\BR^2$, we may rewrite \eqref{eqn: Fourier series} in the form of the more standard Fourier expansion of a complex-valued function:
$X(s) = \sum_{n\in \BZ} e^{ins} a_n$ --- in this formula, both $X(s)$ and $a_n$ should be interpreted as complex numbers.
However, to reduce confusion, we choose not to use this complex notation in this paper.

\item
Assuming the expansion \eqref{eqn: Fourier series}, we define a projection operator $\Pi$ as in \cite[Equation (1.23)]{mori2019well}:
\beq
X^*(s) := a_0 + \begin{pmatrix}
\cos s & -\sin s \\
\sin s & \cos s
\end{pmatrix}
a_1,\quad
\Pi X(s):=X(s)-X^*(s).
\label{eqn: def of projection operator Pi}
\eeq
It will be clear in Section \ref{sec: global well-posedness} that, if $X=X(s)$ represents a string configuration, $X^*(s)$ is the closest equilibrium configuration to $X(s)$ in the $L^2$-sense (not necessarily enclosing the same area), so the $\Pi X(s)$ can be viewed as the deviation of $X(s)$ from an equilibrium.
\end{itemize}

\subsection{Basic quantities and decomposition of the flow field}
\label{sec: def of basic quantities}
Let $G(x)$ be the 2-D Stokeslet defined in \eqref{eqn: Stokeslet in 2-D}.
Given $X = X(s,t)$, we define $u_{X}=u_{X}(x,t) $ to be the Stokes flow in $\BR^2$ induced by the forcing $f_X$ defined in \eqref{eqn: def of force}, i.e.,
\begin{align*}
&-\Delta u_X+\nabla p_X=f_X,\quad \di u_X=0,\\
&
|u_X|,|p_X|\to 0 \mbox{ as }|x|\to +\infty.
\end{align*}
In other words, $u_{X}=\BP(-\Delta)^{-1}f_{X}$, where $\BP$ denotes the Leray projection in $\BR^2$.
It has been shown that \cite[Equations (2.1), (2.4) and (2.6)]{lin2019solvability}
\beq
\begin{split}
u_{X}(x,t) = &\; \int_\BT G(x-X(s',t))X''(s',t)\,ds'\\
= &\; \int_\BT - \pa_{s'}\big[G(x-X(s',t))\big]\big(X'(s',t)-X'(s_{x,t},t)\big)\,ds'.
\end{split}
\label{eqn: expression of u_11}
\eeq
Here $s_{x,t}\in \BT$ is taken such that
\beq
|x- X(s_{x,t},t)| = \inf_{s\in \BT} | x-X(s,t)|.
\label{eqn: def of s_x}
\eeq
If such $s_{x,t}$ is not unique, take an arbitrary one; we shall omit the time variable and simply write $s_{x,t}$ as $s_x$ when it incurs no confusion.
It can also be justified that, for any given $c\in \BR^2$ \cite[Equation (2.2)]{lin2019solvability},
\beq
u_{X}(x,t) = \pv\int_\BT - \pa_{s'}\big[G(x-X(s',t))\big] \big(X'(s',t) -c\big)\,ds'.
\label{eqn: expression of u_11 alternative general c}
\eeq
Taking $c = 0$ yields
\cite[Equations (2.2) and (2.8)]{lin2019solvability}
\beqo
u_{X}(x,t) = \pv\int_\BT - \pa_{s'}\big[G(x-X(s',t))\big] X'(s',t) \,ds'.
\eeqo
Denote
\[
U_{X}(s,t): = u_{X}(X(s,t),t).
\]
In the study of the Stokes case (e.g.\;\cite{lin2019solvability,mori2019well}), it is split into two parts (also see \eqref{eqn: contour dynamic equation Stokes case})
\beq
U_{X}(s,t) = -\f14\Lam X(s,t)+g_{X}(s,t),
\label{eqn: Stokes velocity along the string}
\eeq
where $\Lam = (-\D)^{1/2}_\BT$ and
\beq
g_X(s,t) := \pv\int_\BT - \pa_{s'}\big[G(X(s,t)-X(s',t))\big] X'(s',t) \,ds'
+ \f14\Lam X(s,t).
\label{eqn: def of g_X}
\eeq
It is known that $g_X$ can be regarded as a nonlinear remainder term that is more regular than $-\f14\Lam X$
\cite{lin2019solvability,mori2019well}; also see its estimates in Lemma \ref{lem: improved estimates for g_X} and Lemma \ref{lem: improved estimates for g_X-g_Y} below.

Given $\nu = Re^{-1}>0$ and $X = X(s,t)$, we \emph{informally} let $u_X^{\nu} = u_X^{\nu}(x,t)$ solve the linearized Navier-Stokes equation in $\BR^2\times[0,T]$ with zero initial data and forcing $f_X$ defined in \eqref{eqn: def of force}, i.e.,
\beq
Re \partial_tu_X^{\nu}-\Delta u_X^{\nu}+\nabla p_X^{\nu}=f_X,\quad
\di u_X^{\nu}=0,\quad
u_X^{\nu}|_{t=0}=0.
\label{eqn: def of u_X^nu}
\eeq
Combining this with the definition of $u_X$, we find that
\[
\partial_tu_X^{\nu}-\nu \Delta u_X^{\nu}=-\nu\Delta u_X.
\]
Hence, we \emph{rigorously} define $u_X^\nu$ as follows:
\beq
\begin{split}
u_X^{\nu}(t) = &\; -\nu\int_0^t\Delta e^{\nu (t-\tau)\Delta}u_{X(\tau)}\, d\tau
\\
= &\; u_{X(t)}-e^{\nu t\Delta}u_{X(t)}
-\nu\int_0^t\Delta e^{\nu (t-\tau)\Delta}(u_{X(\tau)}-u_{X(t)})\, d\tau\\
=: &\; u_{X(t)}-e^{\nu t\Delta}u_{X(t)}
+h_X^\nu(t),
\end{split}
\label{eqn: u_X^nu in terms of u_X}
\eeq
where we used the identity $\nu\int_0^t\Delta e^{\nu (t-\tau)\Delta}w\, d\tau = e^{\nu t\Delta}w-w$ with $w=u_{X(t)}$, and where we denoted
\beq
h_X^{\nu}(t):=-\nu\int_0^t\Delta e^{\nu (t-\tau)\Delta}(u_{X(\tau)}-u_{X(t)})\, d\tau.
\label{eqn: def of h_X}
\eeq
For convenience, we also define
\[
H_X^\nu(t): = h_X^{\nu}(t)\circ X(t).
\]

Finally, given $u = u(x,t)$, we define
\beq
B_{\nu}[u](t):= -\int_0^te^{\nu (t-\tau)\Delta}\mathbb{P}(u\cdot\nabla u)(\tau)\, d\tau.
\label{eqn: u_2 tilde}
\eeq

Let us point out that $u_X$, $u_X^\nu$, $h_X^\nu$, and $B_{\nu}[u]$ defined above are natural quantities to study.
In fact, if $(u,X)$ is a sufficiently regular solution to \eqref{eqn: NS equation}-\eqref{eqn: initial data}, it is not difficult to verify the following important decomposition of $u$ (see \eqref{eqn: NS equation}, \eqref{eqn: def of u_X^nu}, \eqref{eqn: u_X^nu in terms of u_X}, and \eqref{eqn: u_2 tilde})
\beq
\begin{split}
u(t)
=&\; u_X^\nu(t) + e^{\nu t\Delta}u_0 +B_{\nu}[u](t)\\
=&\; u_{X(t)}+h_X^{\nu}(t)+e^{\nu t\Delta}(u_0-u_{X(t)}) +B_{\nu}[u](t).
\end{split}
\label{eqn: decomposition of u}
\eeq
This decomposition reveals the structure of the flow field $u$, which inspires the analysis in the rest part of the paper.
Indeed, the first line of \eqref{eqn: decomposition of u} shows that $u$ consists of three parts:
\begin{enumerate}[label = (\roman*)]
\item
$u_X^\nu(t)$ is fully determined by the dynamics of $X$ up to time $t$, so its estimates only depend on $X$ but not on $u$;
\item
$e^{\nu t\D}u_0$ is an explicit part coming from the initial flow field $u_0$, which is easy to control;
\item
and $B_\nu[u]$ arises from the nonlinear term $u\cdot \na u$ in the Navier-Stokes equation, which, due to the smoothing of the heat semigroup, should enjoy better regularity than $u$ itself in the subcritical regime.
\end{enumerate}
In the second line of \eqref{eqn: decomposition of u}, the flow field $u_X^\nu$ obtained from solving the linearized Navier-Stokes equation is further decomposed into a purely Stokesian flow field $u_X$ and the other two error terms (see \eqref{eqn: u_X^nu in terms of u_X}).
Combining \eqref{eqn: decomposition of u} with \eqref{eqn: kinematic equation} and \eqref{eqn: Stokes velocity along the string}, we find that $X$ solves
\begin{align*}
\pa_t X = &\; u\circ X\\
= &\; -\f14\Lam X + g_X
+ \big[h_X^{\nu} + e^{\nu t\Delta}(u_0-u_{X(t)})+B_{\nu}[u]\big]\circ X,
\end{align*}
with the initial condition $X(s,0)=X_0(s)$.
As in the Stokes case (see \eqref{eqn: contour dynamic equation Stokes case}), this can also be treated as a fractional heat equation, which admits smoothing mechanism as well.
By the Duhamel's formula,
\beq
\begin{split}
X(s,t)= &\; e^{-\f{t}4\Lam}X_0(s)\\
&\; +\int_0^t e^{-\f{t-\tau}{4}\Lam}\Big[ g_X(\tau)
+ \big[h_X^{\nu}(\tau) + e^{\nu \tau\Delta}(u_0-u_{X(\tau)})+B_{\nu}[u](\tau)\big]\circ X(\tau)\Big] \,d\tau.
\end{split}
\label{eqn: integral representation of X}
\eeq
In order to bound $X$, it suffices to show that $g_X$ and $[h_X^{\nu} + e^{\nu t\Delta}(u_0-u_{X(t)})+B_{\nu}[u]]\circ X$ have nice regularity estimates in terms of $u$ and $X$.
This will be achieved in Section \ref{sec: a priori estimates}.
In fact,
\begin{itemize}
\item $g_X$ has been well-studied in the Stokes case, and yet we shall provide improved estimates for it;
\item while $[h_X^{\nu} + e^{\nu t\Delta}(u_0-u_{X(t)})+B_{\nu}[u]]\circ X$ turns out to be more regular than $\Lam X$ and $g_X$ under suitable conditions.
\end{itemize}
This eventually enables us to show the existence, uniqueness, and other nice properties of the solutions.

\subsection{Statement of the main results}
\label{sec: main result}

Let us first introduce the notion of the mild solutions, which is directly motivated by the above representations \eqref{eqn: decomposition of u} and \eqref{eqn: integral representation of X} of $u$ and $X$.

\begin{definition}
\label{def: mild solution}
Assume that $u_0= u_0(x)\in L^p(\BR^2)$ for some $p\in (2,\infty)$ and $X_0 = X_0(s)\in C^1(\BT)$, which satisfy $\di u_0 = 0$ and $|X_0|_* >0$.
For some $T\in (0,\infty)$,
\[
(u,X) \in L^\infty([0,T];L^p(\BR^2))\times C([0,T];C^1(\BT))
\]
is called a \emph{(mild) solution} to \eqref{eqn: NS equation}-\eqref{eqn: initial data} on $[0,T]$, if the following holds:
\begin{enumerate}[label = (\roman*)]
\item $\di u(x,t) = 0$ for any $t\in [0,T]$ in the sense of distribution;

\item
\[
\inf_{\eta\in [0,T]}|X(\eta)|_* >0,
\quad \mbox{and}\quad
\sup_{\eta\in (0,T]}\eta^{\f1p} \|X'(\eta)\|_{\dot{C}^{1/p}_s} < +\infty,
\]

\item and $(u,X)$ satisfies that, for any $t\in [0,T]$,
\begin{align}
u(t) = &\; u_X^\nu(t) + e^{\nu t\Delta}u_0 +B_{\nu}[u](t),
\label{eqn: fixed pt equation for u}
\\
X(t) = &\; e^{-\f{t}4 \Lam}X_0+\CI \big[W[u,X; \nu,u_0]\big](t),
\label{eqn: fixed pt equation for X}
\end{align}
where
\beq
W[u,X; \nu,u_0](t): = g_X(t)
+ \big[h_X^{\nu}+e^{\nu t\D}(u_0-u_{X(t)})+B_{\nu}[u]\big] \circ X(t),
\label{eqn: def of w}
\eeq
and
\beq
\CI[W](t):=\int_0^te^{-\f14(t-\tau)\Lam}W(\tau)\,d\tau.
\label{eqn: def of the operator I}
\eeq
%
\end{enumerate}
\end{definition}

\begin{definition}
\label{def: maximal solution}
Let $u_0$ and $X_0$ be given as in Definition \ref{def: mild solution}.
For some $T_*\in (0,+\infty]$, $(u,X) \in L_{loc}^\infty([0,T_*);L^p(\BR^2))\times C_{loc}([0,T_*);C^1(\BT))$ is said to be a \emph{maximal (mild) solution} to \eqref{eqn: NS equation}-\eqref{eqn: initial data}, if the following holds:
\begin{enumerate}[label=(\roman*)]
\item for any $T<T_*$, $(u,X)$ restricted on $[0,T]$ gives a mild solution to \eqref{eqn: NS equation}-\eqref{eqn: initial data} on $[0,T]$ in the sense of Definition \ref{def: mild solution};
\item and $T_*$ is maximal, which means either $T_* = +\infty$, or if $T_*<+\infty$, for any $T\geq T_*$, there is no mild solution to \eqref{eqn: NS equation}-\eqref{eqn: initial data} on $[0,T]$ whose restriction on $[0,T_*)$ coincides with $(u,X)$.
\end{enumerate}
We call $T_*$ the \emph{maximal lifespan} of $(u,X)$.
If $T_* = +\infty$, we say that $(u,X)$ is a global (mild) solution.
If $T_*<+\infty$, we say a \emph{finite-time singularity} occurs, or the solution \emph{blows up} at the finite time $T_*$; in this case, $T_*$ is referred as the \emph{blow-up time} of $(u,X)$.
\end{definition}

Our first theorem states that, for any initial data $(u_0,X_0)$ satisfying the assumptions in Definition \ref{def: mild solution}, there exists a unique maximal solution to \eqref{eqn: NS equation}-\eqref{eqn: initial data}, and in short time, the solution can be quantitatively characterized in terms of the initial data.

\begin{thm}[Maximal mild solution and its short-time characterizations, Proposition \ref{prop: fixed-point solution}, Proposition \ref{prop: uniqueness of mild solution}, and Corollary \ref{cor: maximal solution}]
\label{thm: maximal solution}
Suppose that $u_0= u_0(x)\in L^p(\BR^2)$ for some $p\in (2,\infty)$, $X_0 = X_0(s)\in C^1(\BT)$, and they satisfy that $\di u_0 = 0$ and $|X_0|_* > 0$.
Denote
\beq
M_0:= \|X_0'\|_{L^\infty_s(\BT)},\quad \lam_0: = |X_0|_*,\quad
 Q_0 := \|u_0\|_{L_x^p(\BR^2)} + \lam_0^{-1+\f2p} M_0^2 .
\label{eqn: def of initial constants thm}
\eeq
Then there exists a unique $T_*\in (0,+\infty]$ and a unique maximal mild solution $(u,X)$ to \eqref{eqn: NS equation}-\eqref{eqn: initial data} with its maximal lifespan being $T_*$.
It holds that
\beq
T_* \geq C(p, \nu, \lam_0,  Q_0 , M_0, X_0),
\label{eqn: lower bound for lifespan}
\eeq
where $C(p,\nu,\lam_0, Q_0 ,M_0,X_0)$ denotes a positive constant that depends on $p$, $\nu$, $\lam_0$, $ Q_0 $, $M_0$, and $X_0$.
We will use similar notations in the sequel without further explanation.

In short time, $(u,X)$ enjoys quantitative characterizations in terms of the initial data.
More precisely, there exists some $T>0$ that depends on $p$, $\nu$, $\lam_0$, $ Q_0 $, $M_0$, and $X_0$, such that:
\begin{enumerate}
\item $u \in C_{loc}((0,T]\times \BR^2)$, and
\begin{align*}
&\; \|u\|_{L^\infty_T L^p_x(\BR^2)}
+ \sup_{\eta\in (0,T]} (\nu\eta)^{\f1p}\|u(\cdot,\eta)\|_{L^\infty_x(\BR^2)}
\leq  C\left(\lam_0^{-1+\f2p} M_0^2 +  Q_0 \right),
\end{align*}
where $C$ is a constant only depending on $p$.

\item
\[
\|X'\|_{L^\infty_T L^\infty_s(\BT)}\leq 2M_0,
\quad
\sup_{\eta\in (0,T]} \eta^{\f1p} \|X'(\cdot, \eta)\|_{\dot{C}_s^{1/p}(\BT)}
\leq 4M_0,
\]
and
\[
\inf_{t\in [0,T]}|X(t)|_*\geq \f{\lam_0}2.
\]

\item $X(s,t)\in C^1_{loc}((0,T)\times \BT)$, and it holds pointwise on $(0,T)\times \BT$ that
\beq
\pa_t X = -\f14\Lam X + g_X
+ \big[h_X^{\nu} + e^{\nu t\Delta}(u_0-u_{X(t)})+B_{\nu}[u]\big]\circ X = u\big( X(s,t),t\big).
\label{eqn: equation for X_t}
\eeq
Besides,
\[
\sup_{\eta\in (0,T)} \eta^{\f1p}\|\pa_t X(\cdot, \eta)\|_{L_s^\infty(\BT)}
\leq C(p,\nu,\lam_0, Q_0 ,M_0).
\]
\end{enumerate}

\begin{rmk}
It is well-known that, for the 2-D Navier-Stokes equation, $L^2(\BR^2)$ is the critical regularity for $u_0$, while in the Stokes case of the 2-D immersed boundary problem, the critical regularity for $X_0$ is $\dot{C}^1(\BT)$ and other scaling-equivalent regularity classes, such as $\dot{W}^{1,\infty}(\BT)$, $\dot{B}^1_{\infty,\infty}(\BT)$, and $\dot{H}^{3/2}(\BT)$, etc.\;\cite{chen2021peskin}.
Hence, our results in Theorem \ref{thm: maximal solution} cover critical initial data for $X$ and almost-critical initial data for $u$.

Since the $C^1_s(\BT)$-regularity is critical for $X$, it is conceivable that $T$, the length of the time interval on which we can quantitatively characterize the solution, depends on $X_0$ itself in an implicit manner, but not merely on $M_0$ and $\lam_0$.
See Remark \ref{rmk: choosing a specific gamma} and Remark \ref{rmk: characterization of lifespan T} for more details.
Note that $T$ here can serve as a lower bound for $T_*$, which implies \eqref{eqn: lower bound for lifespan}.
\end{rmk}
\end{thm}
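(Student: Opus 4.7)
My strategy is to construct the local solution by running a Banach fixed-point argument directly on the coupled integral system \eqref{eqn: fixed pt equation for u}--\eqref{eqn: fixed pt equation for X} and then to extend the local solution to its maximal lifespan. The natural space on which to run the contraction has norm
\[
\|(u,X)\|_T := \|u\|_{L^\infty_T L^p_x} + \sup_{0<\eta\leq T}(\nu\eta)^{1/p}\|u(\eta)\|_{L^\infty_x} + \|X'\|_{L^\infty_T L^\infty_s} + \sup_{0<\eta\leq T}\eta^{1/p}\|X'(\eta)\|_{\dot C^{1/p}_s},
\]
and the closed ball in which the fixed point is sought will additionally enforce $\inf_{\eta\in[0,T]}|X(\eta)|_*\geq \lambda_0/2$, which is the minimal condition making the singular integrals $u_X,\,u_X^\nu,\,g_X,\,h_X^\nu$ meaningful via the a priori bounds of Section~\ref{sec: a priori estimates}. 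This choice of weights exactly matches the bounds asserted in parts (1)--(2) of the theorem.

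Next I would show, piece by piece, that the map
\[
\Phi(u,X)(t) := \Bigl( u_X^\nu(t) + e^{\nu t\Delta}u_0 + B_\nu[u](t),\ e^{-\frac{t}{4}\Lam}X_0 + \mathcal{I}\bigl[W[u,X;\nu,u_0]\bigr](t)\Bigr)
\]
sends a suitable ball into itself and is a strict contraction for small $T$. Each summand is bounded separately: for $u_X^\nu$ I combine Stokeslet estimates (controlled through $|X|_*$ and $\|X'\|_{L^\infty_s}$) with the identity \eqref{eqn: u_X^nu in terms of u_X} and heat-semigroup bounds on $h_X^\nu$; for $e^{\nu t\Delta}u_0$ the $L^p$--$L^\infty$ smoothing directly delivers the $(\nu t)^{1/p}$-weighted $L^\infty$ bound; for $B_\nu[u]$ the Leray projection together with $\|u\otimes u\|_{L^{p/2}}\leq \|u\|_{L^p}\|u\|_{L^\infty}$ and heat-kernel smoothing absorbs a positive power of $T$; and for $\mathcal{I}[W]$ the fractional-heat smoothing of $e^{-t\Lam/4}$ converts $L^\infty_s$-type bounds on $g_X$ and on the composed term $[h_X^\nu + e^{\nu t\Delta}(u_0-u_{X(t)}) + B_\nu[u]]\circ X$ into the weighted $\dot C^{1/p}$ bound on $X'$, again with a positive power of $T$ to spare.

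To preserve both the well-stretched lower bound and the pointwise bound $\|X'(\eta)\|_{L^\infty_s}\leq 2M_0$ I would write $X(t)-X_0 = (e^{-t\Lam/4}-\mathrm{Id})X_0 + \mathcal{I}[W]$ and force $\|X'(\eta)-X_0'\|_{L^\infty_s}$ to be uniformly small on $[0,T]$. Since $\dot C^1_s$ is critical for the contour equation, the smoothing of $e^{-t\Lam/4}$ applied to the critical datum cannot be quantified by $\|X_0'\|_{L^\infty_s}$ alone: the relevant rate depends on the modulus of continuity of $X_0'$, and this is precisely the channel through which $X_0$ enters the lifespan estimate \eqref{eqn: lower bound for lifespan} as an implicit argument. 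Uniqueness follows from the same contraction estimate applied to the difference of two solutions in the same weighted norm. A standard continuation procedure then produces a unique maximal solution with lifespan $T_*\in(0,+\infty]$; the pointwise equation \eqref{eqn: equation for X_t} and the continuity $u\in C_{loc}((0,T]\times\BR^2)$ are obtained by differentiating \eqref{eqn: integral representation of X} in $t$ and using the improved regularity of each summand of \eqref{eqn: decomposition of u} away from the initial time.

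The hard part will be the step on $\mathcal{I}[W]$. The forcing $W[u,X;\nu,u_0]$ involves the composition of a function built from singular integrals (including $u_{X(t)}$ itself) with the moving curve $X(t)$, and I need to estimate this composition in $\dot C^{1/p}$ with a sharp $\eta^{-1/p}$ blow-up as $\eta\to 0^+$, so that the time integral defining $\mathcal{I}[W]$ converges and yields smallness in $T$ upon integration in $\tau$. Closing this requires carefully combining the improved estimates on $g_X$ and on $g_X-g_Y$ (announced in Section~\ref{sec: def of basic quantities}) with the well-stretched condition $|X|_*\geq \lambda_0/2$, and it is precisely this analysis that Section~\ref{sec: a priori estimates} will be devoted to.
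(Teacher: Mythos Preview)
Your overall strategy---a Banach fixed-point argument on the coupled integral system \eqref{eqn: fixed pt equation for u}--\eqref{eqn: fixed pt equation for X}---matches the paper's, and you correctly identify that the critical $C^1$ regularity of $X_0$ forces the lifespan to depend on a modulus of continuity of $X_0'$ rather than on $M_0,\lambda_0$ alone. But there is a structural gap in your proposed norm that prevents the contraction from closing: it tracks only $X'$, while the map $\Phi$ is not Lipschitz in any such seminorm. The velocity $u_X^\nu$ depends on the \emph{positions} $X(\cdot,\tau)$ for $\tau\in[0,t]$, not merely on $X'$; a pure translation $X_2=X_1+c$ leaves $X'$ unchanged but shifts $u_X^\nu$ in space, so $\|u_{X_1}^\nu-u_{X_2}^\nu\|_{L^p_x}$ is nonzero while your distance vanishes. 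Concretely, every difference estimate for $u_{X_1}^\nu-u_{X_2}^\nu$ and $h_{X_1}^\nu-h_{X_2}^\nu$ (Lemmas~\ref{lem: estimate for u_X-u_Y}, \ref{lem: estimate for u_X^nu-u_Y^nu in Morrey space}, \ref{lem: estimate for H_X^nu-H_Y^nu}) requires $\|X_1(\tau)-X_2(\tau)\|_{L^\infty_s}$, which---given $X_1(0)=X_2(0)=X_0$---is only accessible through $\int_0^\tau\|\partial_t(X_1-X_2)(\eta)\|_{L^\infty_s}\,d\eta$. This is exactly why the paper carries weighted $\partial_t X$ norms in both the space $\mathcal X$ (see \eqref{eqn: def of set CX}) and the metric $d_{\mathcal X}$ (see \eqref{eqn: def of d_X}), and closes them on the output via $\partial_t Y=-\tfrac14\Lambda Y+W[u,X]$.

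Two further places where your sketch would not close as written: (i) the fractional-heat operator $\mathcal I$ gains at most one derivative, so $L^\infty_s$ bounds on $W$ do \emph{not} yield $\dot C^{1/p}_s$ bounds on $X'=(\mathcal I[W])'$; you need $W\in\dot C^{\beta}_s$ with $\beta\geq 1/p$, which is why the paper estimates $g_X$, $H_X^\nu$, and $B_\nu[u]\circ X$ in H\"older norms rather than $L^\infty$ (cf.\ \eqref{eqn: C gamma estimate for W}, and works with an auxiliary exponent $\gamma>\tfrac12$ rather than $1/p$); (ii) the smallness of the Lipschitz constant in the critical $g_X$ term does not come from a power of $T$ but from a factor $(\rho_{X_0,\gamma}(T)+T^\sigma)^{1/2}$, which the paper builds into the ball through a \emph{variable} radius $2(\rho(t)+t^\sigma)$ in \eqref{eqn: def of set CX}; your mention of the modulus of continuity is the right intuition, but it must be encoded in the ball itself, not merely in the final lifespan estimate. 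The paper also contracts $u$ in a weaker Morrey-type metric $d_{\mathcal U}$ (see \eqref{eqn: def of d_U}) rather than in the space norm.
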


The next theorem provides additional regularity estimates for the solution $(u,X)$ in terms of some basic bounds of it.
Thanks to the definition of the mild solutions, these estimates apply to any compact time interval within the maximal lifespan of $(u,X)$.
They imply that, as $t$ becomes positive, the solution will instantly gain higher regularity than what was stated in Theorem \ref{thm: maximal solution}.

\begin{thm}[Regularity of the solution, Lemma \ref{lem: L inf regularity of u mild solution}, Lemma \ref{lem: time continuity of u in L^p}, Lemma \ref{lem: time derivative and equation}, Lemma \ref{lem: time continuity of u in L inf}, Lemma \ref{lem: intermediate norms vanish as t goes to zero}, Proposition \ref{prop: higher regularity}, and Lemma \ref{lem: volume conservation}]
\label{thm: regularity}
Let $p$ and $(u_0,X_0)$ be given as in Theorem \ref{thm: maximal solution}.
Suppose for some $T\in (0,\infty)$, $(u,X)$ is a mild solution to \eqref{eqn: NS equation}-\eqref{eqn: initial data} on $[0,T]$ in the sense of Definition \ref{def: mild solution}.
Denote
\[
M:= \sup_{\eta\in [0,T]} \|X'(\eta)\|_{L^\infty_s}
+ \sup_{\eta\in (0,T]}\eta^{\f1p} \|X'(\eta)\|_{\dot{C}^{1/p}_s} < +\infty,
\]
\[
\lam:= \inf_{\eta\in [0,T]}|X(\eta)|_* >0,
\]
and
\[
Q:=\sup_{\eta\in [0,T]} \|u(\eta)\|_{L^p_x} < +\infty.
\]
In the above formulas, we omitted the $x$- and $s$-variables.
Then
\begin{enumerate}
\item $u \in C([0,T];L^p(\BR^2))$.
\item In $(0,T)\times \BT$, $\pa_t X(s,t)$ is well-defined pointwise, and \eqref{eqn: equation for X_t} holds pointwise.

\item
\[
\sup_{\eta\in (0,T]} \eta^{\f1p}\|u(\eta)\|_{L_x^\infty(\BR^2)}
+
\sup_{\eta\in (0,T)} \eta^{\f1{p}} \|\pa_t X(\eta)\|_{L_s^\infty(\BT)}
\leq C(T,p,\nu,\lam,M,Q).
\]

\item
For any $\b\in [0,1]$,
\[
\sup_{\eta\in (0,T]} \eta^\b \|X'(\eta)\|_{C^\b_s(\BT)}
\leq C(T, p,\nu,\lam,M,Q),
\]
and for any $\b\in [\f2p,1]$,
\beq
\sup_{\eta\in (0,T)} \eta^\b \|\pa_t X(\eta)\|_{C^\b_s(\BT)}
\leq C(T, p,\nu,\lam,M,Q).
\label{eqn: regularity estimate for X_t}
\eeq
Note that the constants $C$ do not depend on $\b$.

\item
For any $\b\in (0,1)$,
\beqo
\sup_{\eta\in (0,T]} \eta^{1+\b} \|X''(\eta)\|_{C^{\b}_s(\BT)}
+\sup_{\eta\in (0,T)} \eta^{1+\b} \|\pa_t X'(\eta)\|_{C^{\b}_s(\BT)}
\leq C(T,\b,p,\nu,\lam,M,Q).
\eeqo

\item For any $\b\in (0,2)$, $\lim_{t\to 0^+} t^{\b}\|X'(t)\|_{\dot{C}_s^\b(\BT)} = 0$.

\item
For any $\b\in [\f2p,1)$,
\beq
\sup_{\eta\in (0,T]}\eta^{\b}\|u(\eta)\|_{\dot{C}^\b_x(\BR^2)}
\leq C(T, \b, p,\nu,\lam,M,Q),
\label{eqn: regularity estimate for u}
\eeq
and in addition, for any $0 < t_1< t_2\leq T$,
\[
t_1^\b \|u(t_1)-u(t_2)\|_{L^\infty_x(\BR^2)}
\leq C(T, \b, p,\nu,\lam,M,Q) |t_1-t_2|^{\b/2}.
\]
\item For any $t\in (0,T]$,
\beqo
\|\na u(t)\|_{L^\infty_x(\BR^2)}
\leq C(T, p,\nu,\lam,M,Q)  t^{-1}. 
\eeqo

\item
For any $t\in [0,T]$,
\[
\f12\int_\BT X(s,t)\times X'(s,t)\,ds \equiv \f12\int_\BT X_0(s)\times X_0'(s)\,ds,
\]
i.e., the area of the region enclosed by the curve $X(\BT,t)$ is constant in time.
\end{enumerate}

\begin{rmk}
\label{rmk: optimality of higher-order estimates main thm}
These results are optimal in two ways.
\begin{itemize}
\item
The above quantitative bounds for $(u,X)$ 
have the optimal dependence on $t$.
In fact, the time-singularity of the claimed estimates for $X$ coincides with that of the estimates for the linear term $e^{-\f{t}{4}\Lam}X_0$, which in general cannot be improved in view of \eqref{eqn: fixed pt equation for X}.
That in turn leads to the claimed time-singularity of $u$ through \eqref{eqn: fixed pt equation for u}.

We also remark that the condition $\b\geq \f2p$ for \eqref{eqn: regularity estimate for X_t} to hold is optimal as well.
Indeed, for $\b\in (0,1]$,
\begin{align*}
\|e^{\nu t\D}u_0\|_{\dot{C}^\b_x(\BR^2)}\leq &\; C(\nu t)^{-\f1p-\f{\b}{2}}\|u_0\|_{L^p_x(\BR^2)},
\\
\|\pa_t e^{-\f{t}4\Lam}X_0\|_{\dot{C}^\b_s(\BT)}
= &\;\f14\|\Lam e^{-\f{t}4\Lam}X_0\|_{\dot{C}^\b_s(\BT)}
\leq C t^{-\b}\|X_0'\|_{L^\infty_s(\BT)}.
\end{align*}
In view of \eqref{eqn: fixed pt equation for u}, \eqref{eqn: fixed pt equation for X}, and \eqref{eqn: equation for X_t}, $\|\pa_t X(t)\|_{\dot{C}^\b_s(\BT)}\leq Ct^{-\b}$ holds only when $\b\geq \f1p+\f{\b}{2}$, i.e., $\b\geq \f{2}{p}$.
This also explains the range of $\b$ in \eqref{eqn: regularity estimate for u}.

\item

In general, 
the flow field $u$ can at most be Lipschitz in space, since the tangential force along the string gives rise to jump of $\na u$ across the curve $X(\BT)$ \cite{lai2001jumpcondition,mori2019well,Peskin1993improved_volume_conservation}.
This also affects the spatial regularity of $X$ through the term $B_\nu[u]$ in \eqref{eqn: fixed pt equation for X} and \eqref{eqn: def of w}.
As a result, $X$ being in $C^{2,1}_s(\BT)$ seems to be the highest possible regularity one can achieve within the current framework of analysis;
see more detailed explanation in Remark \ref{rmk: optimality of higher-order estimates}.
\end{itemize}
\end{rmk}
\end{thm}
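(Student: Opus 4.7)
The overall strategy is to exploit the decomposition \eqref{eqn: decomposition of u} of $u$ together with the Duhamel representation \eqref{eqn: integral representation of X} of $X$ that are guaranteed by Definition \ref{def: mild solution}, combined with the a priori estimates for $g_X$, $u_X$, $h_X^\nu$, and $B_\nu[u]$ developed in Section \ref{sec: a priori estimates}. These estimates, paired with the smoothing properties of the semigroups $e^{-t\Lam/4}$ on $\BT$ and $e^{\nu t \D}$ on $\BR^2$, support an iterative bootstrap that upgrades the basic bounds $M$, $\lam$, $Q$ to all the sharper claims. Items (1)--(8) are obtained this way; item (9) then follows from incompressibility and integration by parts once the required pointwise regularity is in hand.

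For items (4), (5), (6) concerning $X$, I would start from \eqref{eqn: fixed pt equation for X}: the linear part $e^{-t\Lam/4}X_0$ obeys $\|\pa_s e^{-t\Lam/4}X_0\|_{\dot{C}^\b_s}\lesssim t^{-\b}M$ for $\b\in[0,1]$ by standard kernel estimates for the half-Laplacian on $\BT$. For the nonlinear piece $\CI[W]$, plugging in \eqref{eqn: def of w} and \eqref{eqn: def of g_X} decomposes $W$ into $g_X$, the trace $u_X\circ X$, and remainders involving $h_X^\nu$, $e^{\nu t\D}(u_0-u_X)$, and $B_\nu[u]$; the first two are controlled in $C^\b_s$ by the singular-integral lemmas of Section \ref{sec: a priori estimates} in terms of $M$ and $\lam$, while the other three gain extra smoothness from $e^{\nu t\D}$ in $\BR^2$. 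The operator $\CI$ then yields an additional $t^{1-\b}$ smoothing factor, which closes the bootstrap against the $t^{\f1p}$-weighted norms. Iterating once more (by differentiating \eqref{eqn: fixed pt equation for X} in $s$) produces (5). Item (6) is a density argument: for $X_0\in C^2$ the quantity $t^\b\|X'(t)\|_{\dot{C}^\b_s}$ vanishes trivially as $t\to 0^+$, and a $C^1$-approximation by mollification transfers this to the general case.

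For items (3), (7), (8) and the $L^p$-continuity (1), I would revisit the decomposition \eqref{eqn: decomposition of u}. The term $e^{\nu t\D}u_0$ is strongly $L^p$-continuous with $\|e^{\nu t\D}u_0\|_{L^\infty}\lesssim (\nu t)^{-\f1p}Q$ and the expected H\"older/Lipschitz bounds. The identity \eqref{eqn: u_X^nu in terms of u_X} transfers the newly proved regularity of $X$ onto $u_X$, while the error terms $h_X^\nu$ and $e^{\nu t\D}u_X$ inherit the additional smoothing of $e^{\nu t\D}$. The nonlinear contribution $B_\nu[u]$ is handled by combining the just-obtained $L^\infty$-bound on $u$ with $\|\na e^{\nu(t-\tau)\D}\|_{L^1}\lesssim (\nu(t-\tau))^{-\f12}$; the precise $t^{\f1p}$-weight on $\|u(\eta)\|_{L^\infty}$ is chosen so that this convolution converges. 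Item (8), $\|\na u(t)\|_{L^\infty}\lesssim t^{-1}$, emerges as the limiting endpoint of (7) as $\b\to 1^-$, and its non-improvability reflects the jump of $\na u$ across the string pointed out in Remark \ref{rmk: optimality of higher-order estimates main thm}.

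Item (2) is obtained by differentiating \eqref{eqn: fixed pt equation for X} in $t$; the regularity just proved justifies differentiation under the integral and the action of $\Lam$, giving $\pa_t X = -\f14\Lam X + W$. Using \eqref{eqn: Stokes velocity along the string} to rewrite $-\f14\Lam X + g_X = U_X = u_X\circ X$ and collecting the remaining terms of $W$ against \eqref{eqn: decomposition of u} evaluated at $X(s,t)$ yields $\pa_t X = u(X(s,t),t)$. For (9), differentiating $\f12\int_\BT X\times X'\,ds$ and integrating by parts in $s$ reduces it to $\int_\BT u(X(s,t),t)\times X'(s,t)\,ds$, which is the oriented flux of $u$ across the curve $X(\BT,t)$ and therefore vanishes by $\di u=0$. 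The main obstacle throughout is that the assumed regularity of $X$ sits exactly at the critical $C^1$-threshold, so the bootstrap must close with no slack; the improved singular-integral estimates for $g_X$ and its $s$-derivatives proved in Appendix \ref{sec: improved estimates for g_X} are essential for absorbing this loss.
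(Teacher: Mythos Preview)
Your overall bootstrap strategy---using the Duhamel representation \eqref{eqn: fixed pt equation for X} for $X$ and the decomposition \eqref{eqn: decomposition of u} for $u$, together with the a priori estimates of Section~\ref{sec: a priori estimates}---is indeed the paper's approach, and items (1)--(4), (7), (9), and (2) are handled essentially as you describe.

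There is, however, a genuine gap in your treatment of item (8). You write that the Lipschitz bound $\|\nabla u(t)\|_{L^\infty}\lesssim t^{-1}$ ``emerges as the limiting endpoint of (7) as $\beta\to 1^-$.'' This does not work: the constants in \eqref{eqn: regularity estimate for u} depend on $\beta$ and there is no reason they remain bounded as $\beta\uparrow 1$. More fundamentally, a uniform family of $\dot C^\beta$ bounds does \emph{not} imply a Lipschitz bound. The paper obtains (8) by a separate argument (Lemma~\ref{lem: Lipschitz continuity of flow field} and Proposition~\ref{prop: higher regularity}): once (5) gives $X(t)\in C^{2,\alpha}$, one proves directly that the Stokeslet flow $u_X$ is Lipschitz via a delicate singular-integral decomposition of $\nabla u_X$ into pieces $V_1,\dots,V_5$, where the most dangerous term $V_3$ is controlled only through an explicit oddness cancellation that becomes visible after passing to normal coordinates along the curve. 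This is a genuinely new ingredient, not a limit of the $\dot C^\beta$ theory. The remaining pieces $h_X^\nu$, $e^{\nu t\Delta}(u_0-u_{X(t)})$, and $B_\nu[u]$ are then handled with the time-shifting trick (resetting the initial time to $t_0/2$), which also underlies the paper's proof of (5)---rather than differentiating \eqref{eqn: fixed pt equation for X} in $s$ as you suggest.

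Your approach to (6) via $C^2$-density and mollification also differs from the paper's. That route would require a continuous-dependence estimate for the nonlinear solution map $X_0\mapsto X(t)$ in a sufficiently strong topology, which is not immediate. The paper instead argues directly (Lemma~\ref{lem: intermediate norms vanish as t goes to zero}) by interpolation: it splits $t^\beta\|X'(t)\|_{\dot C^\beta}$ into the linear part $t^\beta\|e^{-t\Lambda/4}X_0'\|_{\dot C^\beta}$ (which vanishes by \eqref{eqn: time weighted higher order norm vanish at 0} since $X_0\in C^1$) and a remainder that is interpolated between the already-established $\dot C^\gamma$ bound (for some $\gamma>\beta$) and the $C^1$-continuity of $X'(t)$ at $t=0$.
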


The next theorem is a blow-up criterion.

\begin{thm}[Blow-up criterion, Corollary \ref{cor: maximal solution}]
\label{thm: blow-up criterion}
Let $p$, $(u_0,X_0)$, $(u,X)$, and $T_*$ be given as in Theorem \ref{thm: maximal solution}.
If $T_*<+\infty$, at least one of the following scenarios would occur:
\begin{enumerate}[label=(\alph*)]
\item $\limsup_{t\to T_*^-} \|u(t)\|_{L^p_x(\BR^2)} = +\infty$;

\item $\liminf_{t\to T_*^-} |X(t)|_* = 0$;

\item For any increasing sequence $t_k\to T_*^-$, $\{X'(t_k)\}_k$ is not convergent in $C(\BT)$.
As a result, $X'([0,T_*))$, which is the image of $[0,T_*)$ under the mapping $t\mapsto X'(\cdot,t)$, is not pre-compact in $C(\BT)$.
\end{enumerate}
\end{thm}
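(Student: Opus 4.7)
The plan is to prove the contrapositive: assuming $T_* < +\infty$ and that none of scenarios (a), (b), (c) occur, we will construct an extension of $(u, X)$ past $T_*$, contradicting the maximality asserted in Theorem \ref{thm: maximal solution}. The failure of (a) and (b) yields a uniform bound $Q := \sup_{t \in [0, T_*)} \|u(t)\|_{L_x^p} < \infty$ and a uniform lower bound $\lambda := \inf_{t \in [0, T_*)} |X(t)|_* > 0$, while the failure of (c) produces an increasing sequence $t_k \to T_*^-$ for which $X'(t_k)$ converges in $C(\BT)$ to some limit $Z$, in particular $\bar M := \sup_k \|X'(t_k)\|_{L_s^\infty} < \infty$.

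The first key task is to upgrade this sequential bound along $t_k$ to a uniform bound $M := \sup_{t \in [0, T_*)} \|X'(t)\|_{L_s^\infty} + \sup_{\eta \in (0, T_*)} \eta^{1/p} \|X'(\eta)\|_{\dot C_s^{1/p}} < \infty$. I would do this by bootstrapping on the integral equation \eqref{eqn: fixed pt equation for X}: on each subinterval $[0, T]$ with $T < T_*$, Theorem \ref{thm: regularity} already gives the relevant estimates in terms of $Q, \lambda$, and the sup of $\|X'\|$ on $[0, T]$; the sequential bound from (c) failing, combined with the short-time quantitative characterization of Theorem \ref{thm: maximal solution} restarted from times $t_k$ close to $T_*$, prevents $\sup_{[t_k, T_*)} \|X'\|$ from escaping to infinity and closes the bootstrap. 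With $M$ uniform, Theorem \ref{thm: regularity} yields $\|\partial_t X(\tau)\|_{L_s^\infty} \leq C \tau^{-1/p}$ uniformly on $[0, T_*)$, which is integrable up to $T_*$, so $\{X(t_k)\}$ is Cauchy in $C(\BT)$. Combined with $X'(t_k) \to Z$ in $C(\BT)$, this gives $X(t_k) \to X_*$ in $C^1(\BT)$, where $X_* \in C^1(\BT)$ satisfies $|X_*|_* \geq \lambda$ by passing to the limit in the well-stretched inequality.

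The extension is then obtained by restarting: applying Theorem \ref{thm: maximal solution} to the initial data $(u(t_k), X(t_k))$ yields a mild solution on some $[t_k, t_k + T^{(k)}]$ with $T^{(k)} \geq C(p, \nu, \lambda^{(k)}, Q^{(k)}, M^{(k)}, X(t_k))$, where the first five arguments are uniformly bounded in $k$ and $X(t_k) \to X_*$ in $C^1(\BT)$. The implicit dependence of the lifespan on $X_0$ noted after Theorem \ref{thm: maximal solution}, stemming from the fixed-point argument on the integral equation \eqref{eqn: fixed pt equation for X}, is controlled by the $C^1_s(\BT)$-modulus of continuity of the initial string, which is equicontinuous along $X(t_k) \to X_*$; hence $T^{(k)} \geq \tau_0 > 0$ for all large $k$. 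For $k$ large enough that $t_k + \tau_0 > T_*$, the uniqueness statement (Proposition \ref{prop: uniqueness of mild solution}) forces this restarted solution to coincide with $(u, X)$ on $[t_k, T_*)$, so gluing produces a mild solution on $[0, t_k + \tau_0]$ that extends $(u, X)$ past $T_*$, contradicting the maximality of $T_*$.

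The main obstacle is the first step---upgrading the sequential bound on $\|X'(t_k)\|_{L_s^\infty}$ from the failure of (c) to a uniform bound on all of $[0, T_*)$, together with uniform control of the $\eta^{1/p} \|X'(\eta)\|_{\dot C_s^{1/p}}$ seminorm up to $T_*$. This step seems unavoidably bootstrap-like: without a uniform $M$ one cannot integrate $\partial_t X$ to conclude convergence of $X(t_k)$ in $C(\BT)$, and without this $C^1$-convergence the implicit dependence of the lifespan on the initial string configuration in Theorem \ref{thm: maximal solution} cannot be handled, so Steps 1 and 3 must be closed together using the a priori estimates for $g_X$, $h_X^\nu$, $B_\nu[u]$, and $e^{\nu t\Delta}(u_0 - u_{X(t)})$ developed in Section \ref{sec: a priori estimates}.
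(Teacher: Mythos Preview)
Your overall strategy---contrapositive, restart from a late sequence $t_k$, obtain a uniform lifespan lower bound, glue, contradict maximality---matches the paper. But the detour you flag as the ``main obstacle,'' namely upgrading the sequential bound $\bar M = \sup_k \|X'(t_k)\|_{L^\infty_s}$ to a uniform bound on all of $[0,T_*)$ and then proving $X(t_k)\to X_*$ in $C^1(\BT)$, is unnecessary, and the circularity you describe between Steps~1 and~3 is artificial.

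The point you are missing is that the implicit dependence of the lifespan on the initial string, via Remark~\ref{rmk: characterization of lifespan T}, enters only through the functions
\[
\rho_k(t)=\sup_{\tau\le t}\tau^{\gamma}\big\|e^{-\tfrac{\tau}{4}\Lambda}X'(t_k)\big\|_{\dot C^\gamma_s},
\qquad
\phi_k(t)=\sup_{\tau\le t}\big\|X(t_k)-e^{-\tfrac{\tau}{4}\Lambda}X(t_k)\big\|_{\dot C^1_s},
\]
and both of these depend only on $X'(t_k)$, not on $X(t_k)$ itself (the semigroup preserves the mean, and $\|\cdot\|_{\dot C^1_s}=\|(\cdot)'\|_{L^\infty_s}$). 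Hence the convergence $X'(t_k)\to Z$ in $C(\BT)$, which is \emph{exactly} what the failure of (c) hands you, already yields uniform convergence $\rho_k\rightrightarrows\rho_\circ$ and $\phi_k\rightrightarrows\phi_\circ$, where $\rho_\circ,\phi_\circ$ are built from $Z$. Since $\rho_\circ,\phi_\circ\to 0$ as $t\to 0^+$, the smallness conditions of Remark~\ref{rmk: characterization of lifespan T} are met by a single $T_\circ>0$ for all large $k$, giving $T^{(k)}\ge T_\circ$ directly. No bootstrap, no uniform-$M$ bound on $[0,T_*)$, and no convergence of $X(t_k)$ in $C(\BT)$ is needed. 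This is precisely what the paper does in the proof of Corollary~\ref{cor: maximal solution}; the gluing is then handled by invoking the argument of Proposition~\ref{prop: continuation}.
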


As is mentioned in Section \ref{sec: Stokes case revisited}, the Stokes case of the 2-D immersed boundary problem \eqref{eqn: Stokes equation}-\eqref{eqn: initial data Stokes case} has been studied extensively in the literature, which corresponds to the case of zero Reynolds number in \eqref{eqn: NS equation}-\eqref{eqn: initial data}.
It is natural to ask, as the Reynolds number $Re\to 0^+$, whether the solution to \eqref{eqn: NS equation}-\eqref{eqn: initial data} would converge in a suitable sense to the one to the Stokes case.
In the next theorem, we provide an affirmative answer to this question over a short time interval, and also bound the difference between the solutions in these two cases when $Re\ll 1$.
The theorem is stated in terms of the parameter $\nu= Re^{-1}$.

\begin{thm}[The zero-Reynolds-number limit, Proposition \ref{prop: well-posedness Stokes case}, Proposition \ref{prop: zero Reynolds number limit} and Remark \ref{rmk: extend the range of gamma in the zero Reynolds number limit}]
\label{thm: zero Reynolds number limit thm}

Let $p$ and $(u_0,X_0)$ be given as in Theorem \ref{thm: maximal solution}.
Define $M_0$, $\lam_0$, and $ Q_0 $ as in \eqref{eqn: def of initial constants thm}.

Take an arbitrary $\g\in (0,1)$.
There exists
\[
T_\dag = T_\dag(\g,\lam_0, M_0,X_0)\in (0,1]\quad \mbox{and}\quad
\nu_* = \nu_* (\g,p,\lam_0,M_0, Q_0 )\geq 1,
\]
such that the following holds for any $\nu \geq \nu_*$:
\begin{enumerate}
\item There exists a unique mild solution $X_\dag = X_\dag(s,t)$ on $[0,T_\dag]$ to the Stokes case of the 2-D immersed boundary problem \eqref{eqn: Stokes equation}-\eqref{eqn: initial data Stokes case} (see the definition in Remark \ref{rmk: def of mild solution Stokes case}), whose properties are established in Proposition \ref{prop: well-posedness Stokes case}, and there exists a unique mild solution $(u_\nu,X_\nu)$ on $[0,T_\dag]$ to \eqref{eqn: NS equation}-\eqref{eqn: initial data} with $Re = \nu^{-1}$, whose properties have been provided in Theorem \ref{thm: maximal solution} and Theorem \ref{thm: regularity}.

\item $\{(u_\nu,X_\nu)\}_{\nu \geq \nu_*}$ satisfies uniform-in-$\nu$ bounds on $[0,T_\dag]$:
\[
\sup_{t\in [0,T_\dag]}\|X_\nu'(t)\|_{L^\infty_s(\BT)}\leq 3M_0,\quad
\sup_{t\in (0,T_\dag]} t^{\g}\|X_\nu'(t)\|_{\dot{C}^{\g}_s(\BT)} \leq C(\g,\lam_0,M_0),
\]
\[
\inf_{t\in [0,T_\dag]} |X_\nu(t)|_* \geq \f{\lam_0}{2},
\]
and
\[
\|u_\nu\|_{L^\infty_{T_\dag} L^p_x(\BR^2)}\leq C(p,\lam_0,M_0, Q_0 ).
\]

\item
Moreover, for any $t\in (0,T_\dag]$,
\[
\|X_\nu(t)-X_\dag(t)\|_{L^\infty_s(\BT)}
\leq C t\cdot (\nu t)^{-\f1p},
\]
\[
\|(X_\nu-X_\dag)(t)\|_{\dot{C}_s^1(\BT)}
+ t^\g \|(X_\nu-X_\dag)'(t)\|_{\dot{C}^\g_s(\BT)}
\leq Ct \cdot (\nu t)^{-\f{1}{2}-\f1p},
\]
and
\[
\|u_\nu(t)-u_{X_\dag(t)}\|_{L^\infty_x(\BR^2)}
\leq C  \ln \nu \cdot t
\|(X_\nu'(t),X_\dag'(t))\|_{\dot{C}^1_s(\BT)}\cdot (\nu t)^{-\f1p} + C(\nu t)^{-\f1p},
\]
where the constants $C$ depend on $\g$, $p$, $\lam_0$, $M_0$, and $ Q_0 $.
\end{enumerate}
\begin{rmk}
This implies that, given $(u_0,X_0)$, for all sufficiently large $\nu$, the maximal lifespan of $(u_\nu,X_\nu)$ has a uniform-in-$\nu$ positive lower bound.
\end{rmk}

\begin{rmk}
\label{rmk: optimality of the convergence}
Let us highlight that the first two bounds for $X_\nu-X_\dag$ are optimal in terms of its $\nu$- and $t$-dependence.
Indeed, when compared with the dynamics of $X_\dag$, the evolution of $X_\nu$ is also affected by the initial flow field through the term $e^{\nu t\D}u_0$ (see \eqref{eqn: fixed pt equation for X} and \eqref{eqn: def of w}).
It satisfies that
\[
\|e^{\nu t \D}u_0\|_{L^\infty_x(\BR^2)} \leq C (\nu t)^{-\f1p}\|u_0\|_{L^p_x(\BR^2)},
\quad
\|\na e^{\nu t \D}u_0\|_{L^\infty_x(\BR^2)} \leq C (\nu t)^{-\f12-\f1p}\|u_0\|_{L^p_x(\BR^2)}.
\]
Integrating this effect in time, we find that it leads to an error between $X_\nu$ and $X_\dag$ in the exact form of the above bounds.
In a similar spirit, in the last bound for $u_\nu-u_{X_\dag(t)}$, the $t$-dependence is also optimal, while the $\nu$-dependence is optimal up to a logarithmic factor.
\end{rmk}
\end{thm}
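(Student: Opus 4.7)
The plan is to treat the Stokes-case solution $X_\dag$ as a ``base'' trajectory on $[0,T_\dag]$ (provided by Proposition \ref{prop: well-posedness Stokes case}) and view $X_\nu$ as a perturbation driven by the three Navier--Stokes-specific forcing terms appearing in \eqref{eqn: integral representation of X} and \eqref{eqn: def of w}, namely $h_{X_\nu}^\nu \circ X_\nu$, $e^{\nu t \Delta}(u_0 - u_{X_\nu(t)}) \circ X_\nu$, and $B_\nu[u_\nu]\circ X_\nu$. Each of these vanishes as $\nu \to \infty$ at a quantifiable rate via heat-semigroup decay in $\nu$, which is what drives the whole theorem.

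First, I would construct the uniform-in-$\nu$ existence on $[0,T_\dag]$. By Theorem \ref{thm: maximal solution} each $(u_\nu, X_\nu)$ exists on some maximal interval, but its length depends on $\nu$ a priori. To get a $\nu$-independent lifespan I would re-run the fixed-point scheme of Proposition \ref{prop: fixed-point solution}, this time comparing the iterates against $X_\dag$ rather than against $e^{-t\Lambda/4}X_0$. The idea is that the Stokes contour equation $\pa_t X_\dag = -\tfrac14 \Lam X_\dag + g_{X_\dag}$ already supplies the contraction on $[0,T_\dag]$; adding the three extra forcing terms, each of which is $O((\nu t)^{-1/p})$ or better in $L^\infty_x$ by the parabolic estimates in Appendix \ref{sec: parabolic estimates}, does not destroy the contraction once $\nu \geq \nu_*$ is large enough, and the fixed point inherits the bounds of $X_\dag$ up to controlled corrections. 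This yields $\|X_\nu'\|_{L^\infty} \leq 3M_0$, $|X_\nu|_* \geq \lam_0/2$, the $\dot C^\gamma_s$-bound, and the $L^p$-bound on $u_\nu$ uniformly in $\nu \geq \nu_*$.

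Second, for the convergence estimates I would subtract the Duhamel formulas \eqref{eqn: fixed pt equation for X} for $X_\nu$ and $X_\dag$, using the same initial data. The difference satisfies
\begin{equation*}
X_\nu(t)-X_\dag(t) = \int_0^t e^{-\f{t-\tau}{4}\Lam}\Big[\big(g_{X_\nu}(\tau)-g_{X_\dag}(\tau)\big)+\CE_\nu(\tau)\Big]\,d\tau,
\end{equation*}
where $\CE_\nu$ collects the Navier--Stokes-specific pieces composed with $X_\nu$. For the Lipschitz dependence $g_{X_\nu}-g_{X_\dag}$ I would invoke Lemma \ref{lem: improved estimates for g_X-g_Y}, which provides the needed Lipschitz estimate in the $C^1_s$ scale given a positive $|X|_*$. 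The three terms in $\CE_\nu$ are estimated directly: the $e^{\nu\tau\Delta}(u_0 - u_{X_\nu(\tau)})$ piece yields $(\nu\tau)^{-1/p}\|u_0-u_{X_\nu(\tau)}\|_{L^p_x}$ in $L^\infty_x$ (one spatial derivative costs an extra $(\nu\tau)^{-1/2}$), while $h_{X_\nu}^\nu$ and $B_\nu[u_\nu]$ are handled through their integral representations \eqref{eqn: def of h_X}, \eqref{eqn: u_2 tilde} combined with the uniform $L^p$-bound and the maximal smoothing of the heat semigroup. Inserting these into the Duhamel formula and using the smoothing of $e^{-t\Lambda/4}$ in a Grönwall loop against $\|X_\nu-X_\dag\|_{C^1_s}$ and $t^\gamma\|(X_\nu-X_\dag)'\|_{\dot C^\gamma_s}$, the time integration turns the $(\nu\tau)^{-1/p}$ decay into the claimed $t\cdot(\nu t)^{-1/p}$, and the $(\nu\tau)^{-1/2-1/p}$ decay into $t\cdot(\nu t)^{-1/2-1/p}$.

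For the velocity difference I would decompose
\begin{equation*}
u_\nu(t)-u_{X_\dag(t)} = \big(u_{X_\nu(t)}-u_{X_\dag(t)}\big) + h_{X_\nu}^\nu(t) + e^{\nu t\Delta}\big(u_0-u_{X_\nu(t)}\big)+B_\nu[u_\nu](t),
\end{equation*}
and bound the last three terms exactly as above. For the first term, the continuity of $X\mapsto u_X$ in $L^\infty_x$ is borderline logarithmic: the kernel $\na G$ has a critical singularity, so the best one can do is $\|u_{X_\nu}-u_{X_\dag}\|_{L^\infty_x} \lesssim \|X_\nu-X_\dag\|_{\dot C^1_s}(1+\log^+\|(X_\nu',X_\dag')\|_{\dot C^1_s})$, and this is precisely the origin of the $\ln\nu$ factor in the final bound. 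The main obstacle I anticipate is closing the uniform-in-$\nu$ fixed point cleanly: the term $B_\nu[u_\nu]$ is quadratic in $u_\nu$, and $u_\nu$ itself contains the piece $e^{\nu t\Delta}u_0$ whose $L^\infty_x$-norm blows up as $\nu \to \infty$ (at rate $(\nu t)^{-1/p}$), so one must carefully split and re-combine these contributions — against the integrated smoothing of $e^{\nu(t-\tau)\Delta}\BP$ — to ensure that the time integral in $B_\nu$ actually gains a positive power of $\nu^{-1}$ relative to the driving norms, thereby preserving the contraction.
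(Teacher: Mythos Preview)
Your proposal captures the right ingredients and the overall strategy is sound, but your mechanism for obtaining the uniform-in-$\nu$ lifespan differs from the paper's. You propose to re-run the fixed-point scheme of Proposition~\ref{prop: fixed-point solution} with the iterates centered at $X_\dag$ instead of $e^{-t\Lambda/4}X_0$. The paper instead uses a \emph{continuity (bootstrap) argument}: it takes the solution $(u_\nu,X_\nu)$ already furnished by Theorem~\ref{thm: maximal solution}, defines $T_\nu\le T_\dag$ as the maximal time on which a priori bounds of the form \eqref{eqn: a priori bound for X nu}--\eqref{eqn: a priori bound for L^p norm of u nu} hold (with a small auxiliary parameter $\epsilon$), and then proves in Lemma~\ref{lem: improved closeness to the Stokes solution} that for $\nu$ large these bounds self-improve to strict inequalities, forcing $T_\nu=T_\dag$. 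This avoids re-doing the contraction mapping in a new function space and reduces the work to proving the difference estimate $\|(X_\nu-X_\dag)'\|_{L^\infty}+t^\g\|(X_\nu-X_\dag)'\|_{\dot C^\g}\le C\,t\,(\nu t)^{-1/2-1/p}$ under the a priori bounds; that estimate simultaneously closes the bootstrap and gives the convergence rate. Your fixed-point approach would work but requires setting up and checking a new contraction, whereas the bootstrap re-uses the existing solution.

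For the convergence estimates your Duhamel-plus-Gr\"onwall outline matches the paper closely; note that the absorption of the $g_{X_\nu}-g_{X_\dag}$ term is what forces the smallness parameter $\epsilon$ in \eqref{eqn: a priori bound for X dag}--\eqref{eqn: a priori bound for X nu} (see \eqref{eqn: choice of epsilon}), not a Gr\"onwall inequality in time. For the velocity bound, the paper does not use a direct Lipschitz-with-log estimate on $X\mapsto u_X$ as you sketch; instead it writes $u_{X_\nu(t)}-u_{X_\dag(t)}=-\int_0^\infty \Delta e^{\tau\Delta}(u_{X_\nu(t)}-u_{X_\dag(t)})\,d\tau$, splits at scales $\delta<t<\infty$, applies Lemma~\ref{lem: estimate for Laplace of heat operator applied to u_X-u_Y} on each piece, and takes $\delta=t^2(\nu t)^{-1}$ so that $\ln(t/\delta)=\ln\nu$. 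Your concern about $B_\nu[u_\nu]$ is legitimate but resolved more simply than you expect: the paper shows in Lemma~\ref{lem: uniform in nu bounds} that $\sup_\eta (\nu\eta)^{1/p}\|u_\nu(\eta)\|_{L^\infty}$ is bounded by a constant plus $C\nu^{1/p}$, and this is compensated by the explicit $\nu^{-1}$ prefactor in all the $B_\nu$ estimates of Lemma~\ref{lem: estimate for B_nu u}, so no re-splitting is needed.
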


If we additionally assume that $u_0\in L^2(\BR^2)$, we can prove the energy law for the system \eqref{eqn: NS equation}-\eqref{eqn: initial data}.
\begin{thm}[Energy law, Proposition \ref{prop: energy law}]
\label{thm: energy law thm}

Let $p$, $(u_0,X_0)$, $(u,X)$, and $T_*$ be given as in Theorem \ref{thm: maximal solution}.
We additionally assume that $u_0\in L^2(\BR^2)$.
Then for any $t\in [0,T_*)$,
\[
\|X'(t)\|_{L_s^2(\BT)}^2 + \nu^{-1}\|u(t)\|_{L_x^2(\BR^2)}^2
+ 2\int_0^t\|\na u(\tau)\|_{L^2_x(\BR^2)}^2\,d\tau
= \|X'_0\|_{L_s^2(\BT)}^2 + \nu^{-1}\|u_0\|_{L_x^2(\BR^2)}^2.
\]
\begin{rmk}
In view of this, we can define the total energy of the system as
\beqo
E(t) := \f12\|X'(\cdot,t)\|_{L_s^2(\BT)}^2 + \f{1}{2\nu}\|u(\cdot,t)\|_{L_x^2(\BR^2)}^2.
\eeqo
The first term is the Hookean elastic energy of the string, while the second term is the kinetic energy in the flow.
Theorem \ref{thm: energy law thm} implies that $E(t)$ is non-increasing in $t$, and the energy loss is purely due to the viscous dissipation in the flow.
\end{rmk}
\end{thm}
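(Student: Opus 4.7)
The plan is to first derive the identity at the formal level and then justify every step using the regularity theory already established in Theorems \ref{thm: maximal solution} and \ref{thm: regularity}. Formally, pair \eqref{eqn: NS equation} with $u$ in $L^2_x$; the pressure term vanishes by $\di u = 0$, the convective term vanishes by the standard integration by parts for divergence-free vector fields, and we obtain
\[
\f{Re}{2}\f{d}{dt}\|u(t)\|_{L^2_x}^2 + \|\na u(t)\|_{L^2_x}^2 = \langle f_X(t), u(t)\rangle.
\]
Since $u$ is continuous on $\BR^2\times (0,T_*)$ by Theorem \ref{thm: regularity}, evaluating $u$ along the string and using \eqref{eqn: def of force} together with the pointwise kinematic identity $\pa_t X(s,t) = u(X(s,t),t)$ gives
\[
\langle f_X, u\rangle = \int_\BT X''(s,t)\cdot \pa_t X(s,t)\,ds = -\f12 \f{d}{dt}\|X'(t)\|_{L_s^2}^2,
\]
the last step being integration by parts in $s$, which is valid since $X\in C_{loc}((0,T_*);C^{2,\b}_s)$ by Theorem \ref{thm: regularity}. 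Multiplying by $2\nu$ and integrating in $t$ gives the claimed identity.

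To justify this, I would first upgrade the regularity of $u$ to $u\in L^\infty_{loc}([0,T_*);L^2_x)\cap L^2_{loc}([0,T_*);\dot H^1_x)$ under the extra assumption $u_0\in L^2$. This is done piece by piece via the decomposition \eqref{eqn: decomposition of u}. The heat term $e^{\nu t\D}u_0$ belongs to this space by standard parabolic estimates. For $u_X^\nu$, the representation \eqref{eqn: u_X^nu in terms of u_X} combined with the $C^1_s$ regularity and the well-stretched property of $X$ yields the needed $L^2_x$ and $L^2_t\dot H^1_x$ control, the subtraction $u_{X(t)}-e^{\nu t\D}u_{X(t)}$ killing the slow decay of the 2-D Stokeslet. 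The Duhamel term $B_\nu[u]$ is bounded via $\|u\cdot \na u\|_{L^2_x}\le \|u\|_{L^\infty_x}\|\na u\|_{L^2_x}$, the $L^\infty_x$-bound from Theorem \ref{thm: regularity}(3) being integrable in time since $p>2$, followed by a Gr\"onwall argument to close the estimate.

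With these bounds in hand, the formal computation is made rigorous by mollifying in space. I would convolve \eqref{eqn: NS equation} with a standard mollifier $\phi_\varepsilon$, pair the resulting equation with $u^\varepsilon:=u*\phi_\varepsilon$ (in which every term is smooth in $x$), and pass to the limit $\varepsilon\to 0^+$ using the strong convergences $u^\varepsilon\to u$ in $C_{loc}((0,T_*);L^2_x)\cap L^2_{loc}((0,T_*);\dot H^1_x)$ and the uniform Lipschitz control on $u$ near the curve (from Theorem \ref{thm: regularity}(8)), which takes care of the evaluation $u^\varepsilon(X(s,\cdot),\cdot)\to u(X(s,\cdot),\cdot)$ in the boundary forcing. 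The limit $t\to 0^+$ is then handled by $u\in C([0,T_*);L^2_x)$ (a by-product of the decomposition argument above) and by the $H^1_s$-continuity of $X$ at $t=0$ following from $X\in C([0,T_*);C^1_s)$ on the compact $\BT$.

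The main obstacle is to justify the commutator terms produced by mollification of the nonlinearity $u\cdot \na u$ in the critical 2-D energy space: one must show $[u,\phi_\varepsilon]\cdot\na u\to 0$ in $L^1_{loc}((0,T_*);L^2_x)$. The standard Lions-type commutator lemma together with the $L^\infty_x$ bound $\|u(t)\|_{L^\infty_x}\lesssim t^{-1/p}$ from Theorem \ref{thm: regularity} suffices, but care is needed near $t=0$ where this bound is singular; integrability in $t$ follows from $p>2$. A related subtlety is that the pairing $\langle f_X,u\rangle$ is not defined for distributional $u$; it becomes well-defined only through the continuity of $u$ for $t>0$, so that the endpoint $t=0$ must be reached by a separate limiting argument exploiting the strong $L^2_x$-continuity of $u$ at $t=0$ rather than a pointwise trace.
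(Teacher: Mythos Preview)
Your proposal is correct and follows essentially the same route as the paper: establish $u\in C([0,T];L^2_x)$ and $\na u\in L^2_{loc}((0,T];L^2_x)$ from the decomposition \eqref{eqn: decomposition of u}, mollify in space, pair with the mollified velocity, convert the forcing into $-\tfrac12\,d/dt\,\|X'\|_{L^2_s}^2$ via the kinematic relation and integration by parts in $s$, pass $\varepsilon\to 0^+$ using the commutator bound, and finally send $t_1\to 0^+$ by strong continuity.

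One technical nuance worth flagging: the paper does not ``convolve \eqref{eqn: NS equation}'' directly, since $(u,X)$ is only a mild solution and the singular forcing $f_X$ lives on a moving curve. Instead, it starts from the Duhamel representation \eqref{eqn: fixed pt equation for u} applied to $\varphi_\varepsilon*u$, identifies $\varphi_\varepsilon*u_{X(t)}$ as the Stokes velocity generated by the smooth force $f_{X,\varepsilon}(x,t)=\int_\BT\varphi_\varepsilon(x-X(s,t))X''(s,t)\,ds$, and only then recovers the classical equation $(\pa_t-\nu\D)(\varphi_\varepsilon*u)=\BP[\nu f_{X,\varepsilon}-\varphi_\varepsilon*(u\cdot\na u)]$, on which the $L^2$-pairing is unambiguous. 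Your mollification step should be phrased this way to avoid issues with interpreting the trace $\langle f_X,u\rangle$ before regularity is in place; with that adjustment, your argument and the paper's coincide.
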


Next we prove that, when the initial data is small in a suitable sense, the maximal solution $(u,X)$ to \eqref{eqn: NS equation}-\eqref{eqn: initial data} is global.
The smallness of the initial data is quantified using its deviation from an equilibrium state.
We will see in Section \ref{sec: global well-posedness} that the only equilibria of the system are in the form
\[
u(x)\equiv 0,\quad
X(s) = b_0 + \begin{pmatrix}
\cos s & -\sin s \\
\sin s & \cos s
\end{pmatrix} b_1
\]
for some $b_0,b_1\in \BR^2$ with $|b_1|>0$.
Note that such $X(s)$ represents a circular and evenly-stretched string.

\begin{thm}[Global solution for small initial data, Proposition \ref{prop: global solution}]
\label{thm: global solution for small data general case}
Let $p$, $(u_0,X_0)$, and $(u,X)$ be given as in Theorem \ref{thm: maximal solution}.
We additionally assume that $u_0\in L^2(\BR^2)$, and that $X_0 = X_0(s)$ is parameterized in the counterclockwise direction.
Let $R>0$ be defined by
\beqo
\pi R^2 = \f12\int_{\BT} X_0(s)\times X_0'(s)\, ds.
\eeqo
Then there exists $\e_*>0$, which depends on $p$, $\nu$, and $R$, such that as long as
\[
e_0:=
\|\Pi X_0'\|_{L^\infty_s(\BT)}+ \|u_0\|_{L^2_x(\BR^2)} + \|u_0\|_{L^q_x(\BR^2)} \leq \e_*,
\]
$(u,X)$ is a global solution.
Here $\Pi X_0$ was defined as in \eqref{eqn: def of projection operator Pi}.

Moreover, it holds for all $t>0$ that
\[
\|u(t)\|_{L^p_x(\BR^2)} \leq CR^{1+\f2p},
\]
where $C$ depends on $p$,
\[
\|X'(t)\|_{L^\infty_s(\BT)}\leq CR,\quad
|X(t)|_* \geq cR,
\]
where $C>c>0$ are universal constants, and
\[
\|X'(t)\|_{\dot{C}^{1/p}_s(\BT)}
\leq C(p,\nu,R)\big(1+t^{-1/p}\big).
\]
Lastly,
\[
\|\Pi X'(t)\|_{L^2_s(\BT)} + \nu^{-1/2}\|u(t)\|_{L^2_x(\BR^2)}
\leq C\big(1+\nu^{-1/2}\big)e_0,
\]
where $C$ is universal.

\begin{rmk}
Based on the above bounds, one can derive higher-order estimates for $(u,X)$ by Theorem \ref{thm: regularity}.
However, we should point out that these bounds are not optimal as they fail to fully characterize the property that $(u,X)$ stays $O(e_0)$-close to equilibrium states, which we claim to be true.
To justify such refined estimates for $(u,X)$, we need more sophisticated analysis and lengthy arguments.
We choose to present that in a forthcoming work.
\end{rmk}
\end{thm}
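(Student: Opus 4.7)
The plan is to show that the maximal solution $(u,X)$ produced by Theorem \ref{thm: maximal solution} is global, by a continuity-bootstrap argument that precludes the three blow-up scenarios of Theorem \ref{thm: blow-up criterion}. The key structural input is that the projection $\Pi$ removes exactly the two-dimensional kernel of the principal linear operator $-\f14\Lam$ driving the $X$-equation, so on $\mathrm{Range}(\Pi)$ the semigroup $e^{-t\Lam/4}$ enjoys the spectral-gap decay $e^{-t/2}$. Together with the energy law of Theorem \ref{thm: energy law thm}, this should keep the deviation $\Pi X$ small for all time whenever it starts small.

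First I would derive a global-in-time $L^2$-dissipation bound. Using the Fourier expansion \eqref{eqn: Fourier series} one has $\|X'\|_{L^2_s}^2 = 2\pi\sum_n n^2|a_n|^2$ and $\f{1}{2\pi}\int_\BT X\times X'\,ds = \sum_n n|a_n|^2$. Since the enclosed area $\pi R^2$ is conserved by Theorem \ref{thm: regularity}(9), subtracting gives
\beqo
\|X'(t)\|_{L^2_s}^2 - 2\pi R^2 \;=\; 2\pi\!\!\sum_{n\notin\{0,1\}}\!\! n(n-1)|a_n(t)|^2 \;\geq\; \f12\|(\Pi X)'(t)\|_{L^2_s}^2,
\eeqo
using $n(n-1)\geq n^2/2$ for $n\geq 2$ and $n(n-1)\geq n^2$ for $n\leq -1$. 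Substituting into Theorem \ref{thm: energy law thm} and noting $\|\Pi X_0'\|_{L^2_s}\lesssim \|\Pi X_0'\|_{L^\infty_s}$ yields, for every $t\in[0,T_*)$,
\beqo
\|(\Pi X)'(t)\|_{L^2_s}^2 + \nu^{-1}\|u(t)\|_{L^2_x}^2 + 2\int_0^t\|\na u(\tau)\|_{L^2_x}^2\,d\tau \;\leq\; C(1+\nu^{-1})e_0^2.
\eeqo

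Next I would bootstrap on $[0,T]\subset[0,T_*)$, hypothesizing $\|\Pi X'(t)\|_{L^\infty_s}\leq 2\e_1$, $|X(t)|_*\geq cR$, $\|X'(t)\|_{L^\infty_s}\leq CR$, and $\|u(t)\|_{L^p_x}\leq 2C_1R^{1+2/p}$, and strictly improving each constant. The bounds $|X|_*\geq cR$ and $\|X'\|_{L^\infty}\leq CR$ follow from $X^*$ being a circle of radius $\simeq R$ perturbed in $L^\infty$ by the small quantity $\Pi X$. Control of $\|u\|_{L^p}$ uses the decomposition \eqref{eqn: decomposition of u}: $u_{X(t)}$ and $h_X^\nu$ are bounded via Section \ref{sec: a priori estimates} in terms of $\|X'\|_{L^\infty_s}$ and $|X|_*$; $\|e^{\nu t\D}u_0\|_{L^p_x}\leq \|u_0\|_{L^p_x}$; and $B_\nu[u]$ is handled by the 2-D Navier--Stokes machinery together with the dissipation integral from Step 1. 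The decisive step is the improvement of $\|\Pi X'\|_{L^\infty_s}$. Since $\Pi$ commutes with $\Lam$,
\beqo
\pa_t(\Pi X) = -\f14\Lam(\Pi X) + \Pi W[u,X;\nu,u_0]
\eeqo
with $W$ from \eqref{eqn: def of w}, so Duhamel and the $e^{-t/2}$ decay of $e^{-t\Lam/4}$ on $\mathrm{Range}(\Pi)$ yield a bound of the form $Ce^{-t/2}\|\Pi X_0'\|_{L^\infty_s} + \int_0^t e^{-(t-\tau)/2}\|\Pi W(\tau)\|_{\dot{C}^{\b}_s}\,d\tau$ for some $\b\in(0,1)$. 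Using the improved estimates for $g_X$ (Lemma \ref{lem: improved estimates for g_X}) and the bounds of Section \ref{sec: a priori estimates} for the other pieces of $W$, each summand in $W$ can be controlled by a quantity carrying a factor of $e_0$ multiplied by the bootstrap constants. Choosing $\e_*=\e_*(p,\nu,R)$ sufficiently small makes every bootstrap inequality strict, and a continuity argument propagates the bounds up to $T_*$; the regularity estimates of Theorem \ref{thm: regularity} then preclude scenario (c) of Theorem \ref{thm: blow-up criterion}, while (a) and (b) are ruled out by the propagated bounds, forcing $T_*=+\infty$.

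The main obstacle will be the $L^\infty_s$-improvement for $\Pi X'$. The global bound of Step 1 lives in $L^2_s(\BT)$, which in one dimension does not embed into $L^\infty_s(\BT)$, so the upgrade must come entirely from the spectral-gap decay of $e^{-t\Lam/4}$ on non-equilibrium modes, combined with sharp nonlinear estimates on every piece of $W[u,X;\nu,u_0]$---in particular on $B_\nu[u]\circ X$, where the Navier--Stokes convection nonlinearity feeds back into the $X$-equation. Keeping all constants truly independent of $t$ while respecting the claimed scaling $\|u(t)\|_{L^p_x}\lesssim R^{1+2/p}$ and the correct $\nu$-dependence is the principal technical challenge.
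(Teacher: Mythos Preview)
Your energy-law step is exactly what the paper does, but your Step 2 diverges substantially from the paper's argument and runs into the very obstacle you flag at the end.

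The paper does \emph{not} attempt a direct spectral-gap Duhamel argument for $\|\Pi X'\|_{L^\infty_s}$. Instead it exploits a much softer mechanism: when $\|\Pi X_0'\|_{L^\infty_s}\leq \d_0 R$ for small $\d_0$, the functions $\rho_{X_0,\g}(t)$ and $\phi_{X_0}(t)$ governing the local existence time in Theorem \ref{thm: maximal solution} can be bounded explicitly by $C(t^\g R+\d_0 R)$ (since $X_0^*$ is a single Fourier mode). This makes the local existence time $T$ depend only on $p,\nu,R$. On $[0,T]$ the local theory gives $\eta^{1/p}\|X'(\eta)\|_{\dot C^{1/p}_s}\leq CR$ and $(\nu\eta)^{1/p}\|u(\eta)\|_{L^\infty_x}\leq CR^{1+2/p}$. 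At $t=T/2$ one then interpolates
\[
\|\Pi X'(T/2)\|_{L^\infty_s}\leq C\|\Pi X'(T/2)\|_{L^2_s}^{2/(2+p)}\|X'(T/2)\|_{\dot C^{1/p}_s}^{p/(2+p)},
\qquad
\|u(T/2)\|_{L^p_x}\leq \|u(T/2)\|_{L^2_x}^{2/p}\|u(T/2)\|_{L^\infty_x}^{1-2/p},
\]
and the global $L^2$-smallness from the energy law makes both quantities $\leq \d_0 R$ and $\leq R^{1+2/p}$ again, provided $\e_0$ is small enough. One restarts from $T/2$ and iterates.

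Your approach would require controlling $\|\Pi W[u,X]\|_{\dot C^\b_s}$ with an explicit factor of $e_0$, uniformly in $t$. For the $g_X$ piece this means bounding $\|\Pi g_X\|_{\dot C^\b_s}$, which via Lemma \ref{lem: improved estimates for g_X-g_Y} reduces to controlling $\|(\Pi X)'\|_{\dot C^{\b/2}_s}$---a quantity not in your bootstrap hypotheses and carrying a $t^{-\b/2}$ singularity from the local theory that does not obviously integrate against $e^{-(t-\tau)/2}$ to produce smallness. Similar issues arise for $h_X^\nu\circ X$ and $B_\nu[u]\circ X$. The paper explicitly states in the remark following the theorem that such a refined spectral-gap analysis requires ``more sophisticated analysis and lengthy arguments'' deferred to a forthcoming work; what you are proposing is essentially that deferred program, not the proof given here.
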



\section{Estimates for the Basic Quantities}
\label{sec: a priori estimates}

This section is devoted to bounding the basic quantities in the decomposition of $u$ which were introduced in Section \ref{sec: def of basic quantities}.
For convenience of the future use, we will summarize all the estimates in Section \ref{sec: summary of estimates}, and then prove them in the subsequent subsections.

\subsection{Summary of the estimates}
\label{sec: summary of estimates}
Let us start by introducing some notations.
Throughout this section, we shall always assume $\g$ to be a real number in $(0,1)$.
Given $\lam,T>0$, denote
\beqo
O^\lam :=
\Big\{X = X(s)\in C^{1,\g}_s(\BT):\, |X|_*\geq \lam \Big\},
\eeqo
and
\beq
\begin{split}
O_T^{\lam}:=&\; \Big\{X = X(s,t)\in C([0,T];C^1(\BT)):\, |X(\cdot,t)|_*\geq \lam
\mbox{ for all }t\in[0,T],\\
&\;\qquad \qquad \quad
\sup_{\eta\in (0,T]}\eta^\g \|X'(\cdot, \eta)\|_{\dot{C}^{\g}_s(\BT)}<+\infty \Big\}.
\end{split}
\label{eqn: def of O_T^M lam}
\eeq
We note that the weight $\eta^\g$ in time is motivated by the smoothing estimates of the fractional heat semi-group in \eqref{eqn: fixed pt equation for X} (also see Lemma \ref{lem: parabolic estimates for fractional Laplace}).

The first three lemmas aim at bounding $u_X$ and $u_X^\nu$; see their definitions in Section \ref{sec: def of basic quantities}.

\begin{lem}
\label{lem: estimate for u_X}
Suppose $X\in O^\lam$.
\begin{enumerate}
\item
For any $t>0$ and $k\in \BN$, it holds that
\begin{align*}
\|\na^k\D e^{t\D}u_X\|_{L^{\infty}_x(\R^2)}
\leq C\lam^{-1-\g}t^{-(2+k-\g)/2}
\|X'\|_{\dot{C}_s^\g} \|X'\|_{L_s^{\infty}},
\end{align*}
\[
\|\na^k \Delta e^{t\Delta}u_{X}\|_{L^1_x(\R^2)}
\leq C\lam^{-\g} t^{-(1+k-\g)/2}\|X'\|_{\dot{C}_s^\g} \|X'\|_{L_s^\infty},
\]
where the constant $C$ depends on $k$ and $\g$, and
\[
\|\na^k\D e^{t\D}u_X\|_{L^{\infty}_x(\R^2)}
\leq C\min\Big(
\lam^{-1}t^{-(2+k)/2}\|X'\|_{L_s^\infty}^2,\; t^{-(3+k)/2}\|X'\|_{L_s^2}^2\Big),
\]
\[
\|\na^k \Delta e^{t\Delta}u_{X}\|_{L^1_x(\R^2)}
\leq C t^{-(1+k)/2} \|X'\|_{L_s^2}^2,
\]
where $C$ depends only on $k$.

\item For any $p\in (2,\infty)$,
\[
\|u_{X}\|_{L_x^p(\R^2)} \leq C\lam^{-1+\f{2}{p}}\|X'\|_{L_s^2}^{\f{4}{p}}
\|X'\|_{L_s^{\infty}}^{2-\f{4}{p}},
\]
where $C$ depends only on $p$.

\item
Moreover, 
\[
\lam\|u_X\|_{L_x^{\infty}(\R^2)}
+\lam^{1+\g}\|u_X\|_{\dot{C}_x^{\g}(\R^2)}
\leq C\|X'\|_{\dot{C}_s^{\gamma}}  \|X'\|_{L_s^{\infty}},
\]
where $C$ depends only on $\g$.
\end{enumerate}
\end{lem}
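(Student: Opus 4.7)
The plan is to prove the three parts in the order (3), (2), (1), since the pointwise bounds in (3) underpin the $L^p$ estimate in (2), and the heat-smoothed bounds in (1) then follow from the force representation combined with Gaussian kernel estimates. For Part (3), we start from the integration-by-parts representation
$$u_X(x) = -\int_\BT \partial_{s'}\bigl[G(x-X(s'))\bigr]\bigl(X'(s')-X'(s_x)\bigr)\,ds'.$$
The well-stretched condition $|X(s_1)-X(s_2)|\ge\lambda|s_1-s_2|_\BT$ together with the extremality $|x-X(s_x)|\le|x-X(s')|$ gives $|x-X(s')|\ge\tfrac{\lambda}{2}|s_x-s'|_\BT$, so the integrand is dominated by $C\lambda^{-1}\|X'\|_{L^\infty_s}\|X'\|_{\dot C^\gamma_s}|s_x-s'|_\BT^{\gamma-1}$, which is integrable on $\BT$ and yields the $L^\infty$ bound. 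For $\|u_X\|_{\dot C^\gamma_x}$ we fix $x_1,x_2\in\BR^2$, set $r=|x_1-x_2|$, and split the integral for $u_X(x_1)-u_X(x_2)$ according to whether $|s'-s_{x_1}|_\BT$ is smaller or larger than $r/\lambda$, estimating the near part term-by-term and the far part by a mean-value bound on $\partial_{s'}G$ between $x_1$ and $x_2$; this yields the Hölder bound with weight $\lambda^{-1-\gamma}$.

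For Part (2), we split $\BR^2$ into a ball $B(c,R_0)$ near $X(\BT)$ and its complement, and optimize in $R_0$. Inside the ball we use $\|u_X\|_{L^\infty_x}\le C\lambda^{-1}\|X'\|_{L^\infty_s}^2$, obtained by a direct variant of Part (3) in which $|X'(s')-X'(s_x)|$ is bounded by $2\|X'\|_{L^\infty_s}$ and the resulting integral is truncated at a scale of order $\lambda/\|X'\|_{L^\infty_s}$. Outside the ball, the identity $\int_\BT X''(s')\,ds'=0$ cancels the leading logarithmic term in the Taylor expansion of $G(x-X(s'))$; the next-order coefficient is controlled through $\int_\BT X_i(s')X_j''(s')\,ds'=-\int_\BT X_i'(s')X_j'(s')\,ds'$ by $\|X'\|_{L^2_s}^2$, yielding $|u_X(x)|\le C\|X'\|_{L^2_s}^2/|x-c|$ for $|x-c|\gg\mathrm{diam}\,X(\BT)$. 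Choosing $R_0\sim\lambda\|X'\|_{L^2_s}^2/\|X'\|_{L^\infty_s}^2$ balances the two contributions and produces the bound; a short case analysis handles the regime in which this choice falls below $\mathrm{diam}\,X(\BT)$.

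For Part (1), using $\Delta u_X=-\mathbb P f_X$ from the Stokes system and the Oseen tensor $H_t:=e^{t\Delta}\mathbb P\delta$ (which satisfies $|\nabla^j H_t(x)|\le Ct^{-(j+2)/2}e^{-c|x|^2/t}$ and $\|\nabla^j H_t\|_{L^1_x}\le Ct^{-j/2}$), one integration by parts in $s'$ gives the pointwise bound
$$|\nabla^k\Delta e^{t\Delta}u_X(x)| \le \int_\BT|\nabla^{k+1}H_t(x-X(s'))|\,|X'(s')|^2\,ds',$$
from which Young's inequality in $x$ together with $\int|X'|^2\,ds'=\|X'\|_{L^2_s}^2$ immediately produces the two estimates involving $\|X'\|_{L^2_s}^2$. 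For the bounds with $\lambda^{-1}\|X'\|_{L^\infty_s}^2$ we replace Young by the curve-Gaussian estimate $\int_\BT e^{-c|x-X(s')|^2/t}\,ds'\le C\sqrt t/\lambda$, which again follows from $|x-X(s')|\ge\tfrac{\lambda}{2}|s_x-s'|_\BT$. For the Hölder-type bounds involving $\|X'\|_{\dot C^\gamma_s}\|X'\|_{L^\infty_s}$ we first peel off a vanishing piece by writing $X'(s')\otimes X'(s')=X'(s')\otimes(X'(s')-X'(s_x))+X'(s')\otimes X'(s_x)$, where the second summand, after pulling the $s'$-independent factor $X'(s_x)$ out of the integral, becomes a total $s'$-derivative and hence vanishes by periodicity. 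On the first piece, combining $|X'(s')-X'(s_x)|\le C\|X'\|_{\dot C^\gamma_s}\lambda^{-\gamma}|x-X(s')|^\gamma$ (from the well-stretched condition) with the Gaussian decay of $\nabla^{k+1}H_t$ yields the advertised $\lambda^{-\gamma}t^{\gamma/2}$-improvement.

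The main obstacle we anticipate is Part (1): the vanishing-piece decomposition must be calibrated so that each contribution pays the correct power of $\lambda$ and $t$, and in the $L^1_x$ bounds the dependence of $s_x$ on $x$ has to be decoupled from the $x$-integration via the pointwise inequality $|s_x-s'|_\BT\le 2\lambda^{-1}|x-X(s')|$ before Fubini is applied. Tracking these weights uniformly across the eight estimates is the core bookkeeping task.
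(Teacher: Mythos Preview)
Your approach to Parts (1) and (3) is sound. For Part (1), your Oseen tensor $H_t$ is exactly the paper's kernel $K$, and your vanishing-piece decomposition is the paper's first representation in \eqref{eqn: representation of Delta heat kernel applied to u_X}. For Part (3), your direct Stokeslet argument with a near/far splitting is a legitimate alternative to the paper's route, which instead writes $u_X=-\int_0^\infty\Delta e^{\tau\Delta}u_X\,d\tau$ and feeds in the bounds from Part (1) together with an interpolation lemma (Lemma~\ref{lem: interpolation}). Either works.

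There is, however, a genuine gap in your Part (2). The claimed interior bound $\|u_X\|_{L^\infty_x}\le C\lambda^{-1}\|X'\|_{L^\infty_s}^2$ is not established by your argument: if you replace $|X'(s')-X'(s_x)|$ by $2\|X'\|_{L^\infty_s}$ in the Part (3) integrand, the resulting integrand behaves like $\lambda^{-1}\|X'\|_{L^\infty_s}^2\,|s'-s_x|_\BT^{-1}$, which diverges logarithmically, and no ``truncation at scale $\lambda/\|X'\|_{L^\infty_s}$'' removes this---the truncated tail still contributes $\ln(\pi\|X'\|_{L^\infty_s}/\lambda)$. The honest pointwise bound without H\"older input is $|u_X(x)|\le C\lambda^{-1}\|X'\|_{L^\infty_s}^2\bigl(1+\ln_+(\lambda/d(x))\bigr)$, which is not uniform. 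While the logarithm is $L^p$-integrable over a tube around $X(\BT)$, pushing it through your near/far optimization does not produce the clean factor $\lambda^{-1+2/p}\|X'\|_{L^2_s}^{4/p}\|X'\|_{L^\infty_s}^{2-4/p}$ with $C=C(p)$.

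The paper sidesteps this by reversing your order: it proves Part (1) first, then obtains Parts (2) and (3) from $u_X=-\int_0^\infty\Delta e^{\tau\Delta}u_X\,d\tau$. For Part (2) one interpolates $\|\Delta e^{\tau\Delta}u_X\|_{L^p}\le\|\Delta e^{\tau\Delta}u_X\|_{L^1}^{1/p}\|\Delta e^{\tau\Delta}u_X\|_{L^\infty}^{1-1/p}$ using only the non-H\"older bounds from Part (1)---the $L^1$ bound $C\tau^{-1/2}\|X'\|_{L^2_s}^2$ and the $L^\infty$ bound $C\min(\lambda^{-1}\tau^{-1}\|X'\|_{L^\infty_s}^2,\,\tau^{-3/2}\|X'\|_{L^2_s}^2)$---and the $\tau$-integral converges precisely for $p>2$. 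The heat smoothing at scale $\sqrt\tau$ is what absorbs the logarithm; splitting the $\tau$-integral at $\tau\sim\lambda^2\|X'\|_{L^2_s}^2/\|X'\|_{L^\infty_s}^2$ plays the role your spatial cutoff $R_0$ was meant to play, but without any missing log. I would adopt this route for Part (2).
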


\begin{lem}
\label{lem: estimate for u_X^nu}
Suppose $X\in O_T^{\lam}$.
Then for any $t\in (0,T]$,
\begin{align*}
&\;
\lam \|u_X^{\nu}(t)\|_{L^{\infty}_x(\R^2)}
+\lam^{1+\g}\|u_X^{\nu}(t)\|_{\dot{C}^{\g}_x(\R^2)}
+\lam \|h_X^{\nu}(t)\|_{L^{\infty}_x(\R^2)}
+\lam^{1+\g} \|h_X^{\nu}(t)\|_{\dot{C}_x^{\g}(\R^2)}\\
\leq &\;
C t^{-\g} \|X'\|_{L^\infty_t L_s^{\infty}}
\sup_{\eta\in (0,t]} \eta^\g \|X'(\eta)\|_{\dot{C}_s^{\g}},
\end{align*}
where the constant $C$ depends only on $\g$.

Moreover, for any $p\in [1,\infty)$, 
\beqo
\|u_X^{\nu}(t)\|_{L^p_x(\R^2)}
\leq C(\nu t)^{\f1{2p}} \lam^{-1+\f1p}
\|X'\|_{L^\infty_t L_s^2}^{\f2p}
\|X'\|_{L^\infty_t L_s^\infty}^{2-\f2p}.
\eeqo
If, additionally, $p\in (2,\infty)$,
\[
\|u_X^{\nu}(t)\|_{L^p_x(\R^2)} + \|h_X^{\nu}(t)\|_{L^p_x(\R^2)}
\leq C \lam^{-1+\f2p}
\|X'\|_{L^\infty_t L_s^2}^{\f4p}
\|X'\|_{L^\infty_t L_s^\infty}^{2-\f4p}.
\]
Here the constants $C$'s depend only on $p$.
\end{lem}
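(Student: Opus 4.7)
My plan is to exploit the decomposition $u_X^{\nu}(t) = u_{X(t)} - e^{\nu t \Delta}u_{X(t)} + h_X^{\nu}(t)$ from \eqref{eqn: u_X^nu in terms of u_X}. The stationary piece $u_{X(t)}$ is controlled directly by Lemma \ref{lem: estimate for u_X}(3), and its heat-smoothed counterpart $e^{\nu t\Delta}u_{X(t)}$ is handled by the same bound together with the $L^{\infty}_x$- and $\dot{C}^{\gamma}_x$-contractivity of the heat semigroup; using the weighted bound $\|X'(t)\|_{\dot{C}^{\gamma}_s} \leq t^{-\gamma}\sup_{\eta}\eta^{\gamma}\|X'(\eta)\|_{\dot{C}^{\gamma}_s}$ afforded by $X\in O_T^{\lambda}$, these two pieces already contribute the claimed right-hand side.

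The central task is then the estimate on
\beqo
h_X^{\nu}(t) = -\nu\int_0^t \Delta e^{\nu(t-\tau)\Delta}\bigl(u_{X(\tau)}-u_{X(t)}\bigr)\,d\tau.
\eeqo
The key point is that the prefactor $\nu$ cancels the natural $(\nu s)^{-1}$ scaling, giving $\nu\|\Delta e^{\nu s\Delta}\|_{L^{\infty}\to L^{\infty}}\leq Cs^{-1}$ (and similarly on $\dot{C}^{\gamma}_x$), so no $\nu$-dependence will appear. I split $\int_0^t = \int_0^{t/2} + \int_{t/2}^t$. On $[0,t/2]$ the kernel is bounded by $2t^{-1}$, and Lemma \ref{lem: estimate for u_X}(3) yields $\|u_{X(\tau)}-u_{X(t)}\|_{L^{\infty}}\leq C\lambda^{-1}\|X'\|_{L^{\infty}_t L^{\infty}_s} M(\tau^{-\gamma}+t^{-\gamma})$ with $M:=\sup_{\eta}\eta^{\gamma}\|X'(\eta)\|_{\dot{C}^{\gamma}_s}$; direct $\tau$-integration produces the target. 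On $[t/2,t]$ the kernel is singular, but the integrand vanishes at $\tau=t$; here I plan to appeal to a Duhamel-type parabolic estimate from Appendix \ref{sec: parabolic estimates}, designed for arguments $f(\tau)$ with $f(t)=0$, which controls $\|\nu\int_0^t \Delta e^{\nu(t-\tau)\Delta}f(\tau)\,d\tau\|_{L^{\infty}}$ by the $L^{\infty}$-size of $f$ without any $\nu$-factor. The $\dot{C}^{\gamma}$ estimate on $h_X^{\nu}$ is proved analogously with the Hölder versions of the same ingredients.

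The two $L^p$ estimates are based on interpolation. For the first, Lemma \ref{lem: estimate for u_X}(1) supplies
\beqo
\nu\|\Delta e^{\nu s\Delta}u_{X(\tau)}\|_{L^1_x}\leq C\nu^{1/2}s^{-1/2}\|X'(\tau)\|_{L^2_s}^2, \quad \nu\|\Delta e^{\nu s\Delta}u_{X(\tau)}\|_{L^{\infty}_x}\leq C\lambda^{-1}s^{-1}\|X'(\tau)\|_{L^{\infty}_s}^2,
\eeqo
and log-convex interpolation $\|\cdot\|_{L^p}\leq \|\cdot\|_{L^1}^{1/p}\|\cdot\|_{L^{\infty}}^{1-1/p}$ followed by $s$-integration produces the claimed $(\nu t)^{1/(2p)}$ prefactor. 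For the sharper bound valid on $p\in(2,\infty)$, I will use the decomposition together with Lemma \ref{lem: estimate for u_X}(2) to control $u_{X(t)}$ and $e^{\nu t\Delta}u_{X(t)}$, and then treat $h_X^{\nu}$ by a parallel interpolation exploiting the cancellation of the integrand at $\tau=t$.

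I expect the principal obstacle to lie in the $[t/2,t]$ integration for $h_X^{\nu}$: the naive $L^{\infty}\to L^{\infty}$ heat bound produces a non-integrable $(t-\tau)^{-1}$ kernel, while a natural Hölder-based replacement such as $\nu\|\Delta e^{\nu s\Delta}\|_{\dot{C}^{\gamma}\to L^{\infty}}\leq C\nu^{\gamma/2}s^{-1+\gamma/2}$ accidentally reintroduces a $\nu^{\gamma/2}$ factor absent from the target. Cleanly absorbing this singularity requires a parabolic estimate that genuinely uses the cancellation $u_{X(\tau)}-u_{X(t)}\big|_{\tau=t}=0$, and this is where most of the technical weight of the proof concentrates.
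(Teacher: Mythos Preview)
Your plan has a real gap exactly where you flag the obstacle. The ``Duhamel-type parabolic estimate from Appendix~\ref{sec: parabolic estimates}, designed for arguments $f(\tau)$ with $f(t)=0$'' that you propose to invoke does not exist there, and no such $\nu$-free bound is available under the hypotheses of this lemma. The reason is structural: the difference $u_{X(\tau)}-u_{X(t)}$ carries no smallness in $|t-\tau|$ here, because no time-regularity of $X$ (e.g.\ control of $\|\pa_t X\|$) is assumed---that only enters later in Lemma~\ref{lem: estimate for h_X^nu}, where the sharper $h_X^\nu$ estimates genuinely require it. So the cancellation at $\tau=t$ alone cannot rescue the $(t-\tau)^{-1}$ singularity, and your $[t/2,t]$ integral is not controllable along this route.

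The paper avoids the issue by reversing your order: it bounds $u_X^\nu$ \emph{first}, directly from the unsubtracted formula $u_X^\nu(t)=-\nu\int_0^t\Delta e^{\nu(t-\tau)\Delta}u_{X(\tau)}\,d\tau$, and only afterward deduces $h_X^\nu$ from $h_X^\nu=u_X^\nu-(\Id-e^{\nu t\Delta})u_{X(t)}$ together with Lemma~\ref{lem: estimate for u_X}(3). The key ingredient you are missing is the pointwise bound from Lemma~\ref{lem: estimate for u_X}(1),
\[
\|\Delta e^{s\Delta}u_X\|_{L^\infty}\le C\min\!\Big(\lam^{-1-\g}s^{-(2-\g)/2}\|X'\|_{\dot C^\g}\|X'\|_{L^\infty},\ s^{-3/2}\|X'\|_{L^2}^2\Big).
\]
The first branch has exponent $-(2-\g)/2>-1$ and is therefore integrable at $s\to 0^+$; the second is integrable at $s\to\infty$. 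After the substitution $s=\nu(t-\tau)$, balancing the two branches at $s\sim\lam^2$ makes all $\nu$-dependence drop out and yields $\lam^{-1}t^{-\g}$ times the claimed product of norms. The $\dot C^\g$ bound is then obtained by the interpolation Lemma~\ref{lem: interpolation} (with $\va(\tau)=\tau^{-\g}$), not by a separate integration. Your first $L^p$ argument is correct and matches the paper; for the sharper $p>2$ bound, the paper again uses the $L^1$--$L^\infty$ interpolated version of the same $\min$ structure on $\Delta e^{\nu(t-\tau)\Delta}u_{X(\tau)}$ rather than going through the decomposition, and $h_X^\nu$ in $L^p$ follows a posteriori from the relation to $u_X^\nu$ and Lemma~\ref{lem: estimate for u_X}(2).
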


\begin{lem}
\label{lem: time Holder continuity of u_X^nu}
Suppose $X\in O^\lam_T$.
For any $0< t_1< t_2\leq T$,
\[
\|u_X^\nu(t_1)-u_X^\nu(t_2)\|_{L^\infty_x(\BR^2)}
\leq C \big(\nu |t_1-t_2|\big)^{\g/2} t_1^{-\g} \cdot \lam^{-1-\g} \|X'\|_{L^\infty_{t_2} L_s^{\infty}} \sup_{\eta\in (0,t_2]} \eta^\g \|X'(\eta)\|_{\dot{C}_s^{\g}},
\]
where the constant $C$ depends only on $\g$.
\end{lem}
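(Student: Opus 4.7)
\medskip
\noindent\textbf{Proof proposal.} The plan is to split the difference $u_X^\nu(t_2)-u_X^\nu(t_1)$ into a ``semigroup shift'' piece and a ``short-time Duhamel'' piece using the identity
\[
u_X^\nu(t_2) = e^{\nu(t_2-t_1)\Delta}u_X^\nu(t_1) - \nu\int_{t_1}^{t_2}\Delta e^{\nu(t_2-\tau)\Delta}u_{X(\tau)}\,d\tau,
\]
which is immediate from the definition of $u_X^\nu$ in \eqref{eqn: u_X^nu in terms of u_X} after pulling $e^{\nu(t_2-t_1)\Delta}$ out of $\int_0^{t_1}$. Thus
\[
u_X^\nu(t_2)-u_X^\nu(t_1) = \bigl[e^{\nu(t_2-t_1)\Delta}-I\bigr]u_X^\nu(t_1) \;-\;\nu\int_{t_1}^{t_2}\Delta e^{\nu(t_2-\tau)\Delta}u_{X(\tau)}\,d\tau =: \mathrm{I} + \mathrm{II}.
\]

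For $\mathrm{I}$ I would invoke the standard heat-semigroup Hölder estimate (a parabolic lemma in the appendix) $\|[e^{t\Delta}-I]f\|_{L^\infty_x}\leq Ct^{\gamma/2}\|f\|_{\dot{C}^\gamma_x}$ with $t=\nu(t_2-t_1)$, and then feed in the spatial Hölder bound on $u_X^\nu(t_1)$ coming from Lemma \ref{lem: estimate for u_X^nu}, which already produces the prefactor $C\lambda^{-1-\gamma}t_1^{-\gamma}\|X'\|_{L^\infty_{t_1}L^\infty_s}\sup_{\eta\in(0,t_1]}\eta^\gamma\|X'(\eta)\|_{\dot{C}^\gamma_s}$. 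This yields precisely the advertised bound for $\mathrm{I}$.

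For $\mathrm{II}$ I would apply Lemma \ref{lem: estimate for u_X} with $k=0$ (replacing the dummy semigroup time by $\nu(t_2-\tau)$) to get
\[
\|\Delta e^{\nu(t_2-\tau)\Delta}u_{X(\tau)}\|_{L^\infty_x} \leq C\lambda^{-1-\gamma}\bigl(\nu(t_2-\tau)\bigr)^{-1+\gamma/2}\|X'(\tau)\|_{\dot{C}^\gamma_s}\|X'(\tau)\|_{L^\infty_s}.
\]
Writing $\|X'(\tau)\|_{\dot{C}^\gamma_s}\leq \tau^{-\gamma}\sup_{\eta\in(0,t_2]}\eta^\gamma\|X'(\eta)\|_{\dot{C}^\gamma_s}$ and crudely bounding $\tau^{-\gamma}\leq t_1^{-\gamma}$ on $[t_1,t_2]$, the remaining time integral is
\[
\nu\int_{t_1}^{t_2}\bigl(\nu(t_2-\tau)\bigr)^{-1+\gamma/2}\,d\tau = \frac{2}{\gamma}\bigl(\nu(t_2-t_1)\bigr)^{\gamma/2},
\]
which reproduces the same prefactor $(\nu|t_2-t_1|)^{\gamma/2}t_1^{-\gamma}\lambda^{-1-\gamma}\|X'\|_{L^\infty_{t_2}L^\infty_s}\sup_{\eta\in(0,t_2]}\eta^\gamma\|X'(\eta)\|_{\dot{C}^\gamma_s}$. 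Adding $\mathrm{I}$ and $\mathrm{II}$ gives the lemma.

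There is no serious obstacle; the only point of care is the choice to integrate the singular kernel $(\nu(t_2-\tau))^{-1+\gamma/2}$ only over $[t_1,t_2]$, which both makes it integrable (since $\gamma>0$) and produces the desired $(\nu|t_1-t_2|)^{\gamma/2}$ scaling once the $\nu$ prefactor is distributed across the integral. The splitting above is essentially forced by the structure of $u_X^\nu$: the contribution from $\tau\in[0,t_1]$ must be absorbed into $e^{\nu(t_2-t_1)\Delta}u_X^\nu(t_1)$ to avoid the non-integrable kernel $(t_2-\tau)^{-1+\gamma/2}$ near $\tau=t_1$ from the ``naive'' difference of two Duhamel integrals.
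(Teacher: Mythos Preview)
Your proof is correct and follows essentially the same approach as the paper: split $u_X^\nu(t_2)-u_X^\nu(t_1)$ into the semigroup-shift piece $(e^{\nu(t_2-t_1)\Delta}-\Id)u_X^\nu(t_1)$ and the short-time Duhamel integral over $[t_1,t_2]$, then bound the first via the $\dot{C}^\gamma_x$ estimate on $u_X^\nu(t_1)$ from Lemma~\ref{lem: estimate for u_X^nu} and the second by integrating the pointwise bound on $\Delta e^{\nu(t_2-\tau)\Delta}u_{X(\tau)}$. The only cosmetic difference is that the paper routes the Duhamel integrand through the parabolic estimate $\|\Delta e^{t\Delta}f\|_{L^\infty_x}\leq Ct^{-1+\gamma/2}\|f\|_{\dot{C}^\gamma_x}$ followed by the $\dot{C}^\gamma_x$ bound on $u_{X(\tau)}$ from Lemma~\ref{lem: estimate for u_X}, whereas you invoke the first estimate of Lemma~\ref{lem: estimate for u_X} directly; the outcome is identical.
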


The next two lemmas bound $u_X-u_Y$ and $u_X^\nu-u_Y^\nu$.

\begin{lem}
\label{lem: estimate for u_X-u_Y}
Suppose $X,Y\in O^\lam$, and they satisfy $\|X'-Y'\|_{L_s^\infty}\leq \lam/2$.
Then for any $t>0$, and any $\b\in (0,1]$,
\begin{align*}
&\;\big\|e^{t\D}(u_X-u_Y)\big\|_{L^\infty_x(\BR^2)}\\
\leq &\;C t^{-\b/2} \lam^{-(1-\b)}\|(X',Y')\|_{L_s^{\infty}}\|X'-Y'\|_{L_s^{\infty}} \\
&\; + Ct^{-(1-\g)/2} \lam^{-1-\g}
\big(\|(X',Y')\|_{L_s^\infty} \|(X',Y')\|_{\dot{C}_s^{\g}} \|X-Y\|_{L_s^\infty} + \|(X',Y')\|_{L_s^\infty}^2 \|X-Y\|_{\dot{C}_s^{\g}}\big),
\end{align*}
where the constant $C$ depends on $\g$ and $\b$.

Moreover, for any $t>0$,
\begin{align*}
&\;\big\|e^{t\D}(u_X-u_Y)\big\|_{\dot{C}^\g_x(\BR^2)}\\
\leq &\; C t^{-\g/2} \lam^{-1} \|(X',Y')\|_{L_s^{\infty}} \|X'-Y'\|_{L_s^{\infty}}\\
&\; + C t^{-1/2} \lam^{-1-\g}
\big(\|(X',Y')\|_{L_s^\infty} \|(X',Y')\|_{\dot{C}_s^{\g}} \|X-Y\|_{L_s^\infty} + \|(X',Y')\|_{L_s^\infty}^2 \|X-Y\|_{\dot{C}_s^{\g}}\big),
\end{align*}
where $C$ depends on only $\g$.
\end{lem}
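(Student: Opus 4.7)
The approach is to split $u_X - u_Y$ into a piece capturing the variation of $X'$ (which produces the first summand of the claimed bound) and a piece capturing the variation of the underlying curve $X$ itself (which produces the second summand). Starting from the principal-value representation \eqref{eqn: expression of u_11 alternative general c}, I would write $(u_X-u_Y)(x) = I_1(x) + I_2(x)$, where
\begin{align*}
I_1(x) &:= \pv\int_\BT -\pa_{s'}\bigl[G(x-X(s'))\bigr]\,(X'-Y')(s')\,ds',\\
I_2(x) &:= \pv\int_\BT -\pa_{s'}\bigl[G(x-X(s'))-G(x-Y(s'))\bigr]\,Y'(s')\,ds'.
\end{align*}
Each piece will then be smoothed by $e^{t\D}$ and bounded separately.

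For $I_1$, I would view it as an analogue of $u_X$ with the density $X'$ replaced by $X'-Y'$ and revisit the proof of Lemma \ref{lem: estimate for u_X}. Interpolating between a trivial heat-kernel bound (giving full smoothing $t^{-1/2}$ and no well-stretched cost, corresponding to $\b=1$) and a Stokeslet-type bound (which uses the well-stretched condition on $X$ to absorb the kernel singularity at the cost of $\lam^{-1}$, corresponding to $\b\to 0$) produces the factor $t^{-\b/2}\lam^{-(1-\b)}$ for arbitrary $\b\in(0,1]$. The $\dot{C}_x^\g$ bound follows analogously, by applying $\na e^{t\D}$ in place of $e^{t\D}$ and gaining an extra $t^{-1/2}$, which matches the claimed $t^{-\g/2}$ homogeneity after the same interpolation.

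For $I_2$, the key step is the fundamental theorem of calculus along $Z_\theta:=(1-\theta)X+\theta Y$:
\begin{align*}
G(x-X(s'))-G(x-Y(s')) = -\int_0^1 \na G(x-Z_\theta(s'))\cdot (X-Y)(s')\,d\theta.
\end{align*}
The hypothesis $\|X'-Y'\|_{L_s^\infty}\le \lam/2$ ensures $|Z_\theta|_*\ge \lam/2$ uniformly in $\theta\in[0,1]$, so each intermediate curve inherits the well-stretched condition with constant $\lam/2$. Differentiating in $s'$ yields a $\na G\cdot(X-Y)'$ term and a $\na^2 G\cdot Z_\theta'\cdot(X-Y)$ term; after applying $e^{t\D}$ and running the same smoothing template as in Lemma \ref{lem: estimate for u_X}, the two sub-pieces of the second summand arise naturally. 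The $\|X-Y\|_{\dot{C}_s^\g}$ contribution comes from transferring the $s'$-derivative onto $(X-Y)'$ and pairing it with a $C^\g$ bound on the remaining Stokeslet factors, while the $\|X-Y\|_{L_s^\infty}$ contribution comes from the $\na^2 G$ term. The prefactor $\lam^{-1-\g}$ reflects one $\lam^{-1}$ from the Stokeslet-type integral plus an additional $\lam^{-\g}$ picked up when converting Eulerian Hölder seminorms into Lagrangian ones via $|X(s_1)-X(s_2)|\ge\lam|s_1-s_2|_\BT$.

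The main obstacle is the $\na^2 G$ contribution in $I_2$: its $|x-Z_\theta|^{-2}$ singularity is only marginally integrable along a $1$-D curve, so the argument must exploit principal-value cancellation. I would handle this either by an additional integration by parts in $s'$ to convert $\na^2 G\cdot Z_\theta'$ into $\pa_{s'}[\na G]$ and thereby restore a Stokeslet-type kernel, or by subtracting $(X-Y)$ at the basepoint $s_x$ so that the $\dot{C}^\g$ part of $X-Y$ furnishes an extra $|s'-s_x|^\g$ factor that renders the integrand convergent. Once these bookkeeping manipulations are carried out, the heat-semigroup smoothing machinery already developed for Lemmas \ref{lem: estimate for u_X}--\ref{lem: time Holder continuity of u_X^nu} transfers with only minor modifications.
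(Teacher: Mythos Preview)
Your decomposition $u_X-u_Y=I_1+I_2$ is valid and will yield the claimed bounds, but the paper organizes the argument differently and thereby avoids the obstacle you flag. Rather than splitting at the Stokeslet level and then smoothing, the paper first passes to the heat-smoothed object: it writes $e^{t\D}(u_X-u_Y)=-\int_t^\infty \D e^{\tau\D}(u_X-u_Y)\,d\tau$ and expresses the integrand via a first-variation formula, $\D e^{\tau\D}(u_X-u_Y)=\int_0^1\CD[\D e^{\tau\D}u]_{\theta X+(1-\theta)Y}[X-Y]\,d\theta$, with kernel $K(\cdot,\tau)$ in place of $G$ (Lemmas~\ref{lem: first variation of Laplace of heat kernel applied to u_X} and~\ref{lem: estimate for Laplace of heat operator applied to u_X-u_Y}). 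Since $K(\cdot,\tau)$ is smooth, the $\na^2$-term (the paper's $I_3$) carries no singularity and the basepoint subtraction \eqref{eqn: formula for I_3 new} is just a device to gain the extra $|s'-s_x|^\g$ for the Hölder factor, not to rescue convergence. Your fix (b) is exactly this move, but you should apply it \emph{after} smoothing; applied to $\na^2 G$ directly it leaves an $|s'-s_x|^{-2+\g}$ integrand, which still diverges at points on the curve. Your fix (a) is more problematic: the integration by parts throws a derivative onto $(X-Y)\,Y'$ and produces a $Y''$ factor that you do not control in $C^{1,\g}$.

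Two further differences: (i) your $I_1$ uses $X$ as base curve while your $I_2$ uses $Z_\theta$, whereas the paper's first-variation formula keeps $Z_\theta$ throughout; this is cosmetic since both $|X|_*$ and $|Z_\theta|_*$ are $\ge\lam/2$. (ii) For the $\dot C^\g$ bound the paper does not apply $\na e^{t\D}$ and interpolate in the way you sketch; it instead invokes the weighted interpolation of Lemma~\ref{lem: interpolation} applied to the $\tau$-integral representation, pairing $\sup_\tau \tau^{(2-\g)/2}\|\D e^{\tau\D}(\cdot)\|_{L^\infty}$ with $\sup_\tau \tau^{(3-\g)/2}\|\na\D e^{\tau\D}(\cdot)\|_{L^\infty}$. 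Your description would also work, but the paper's route reuses the same $\na^k\D e^{\tau\D}$ bounds already computed and avoids a separate argument.
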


\begin{lem}
\label{lem: estimate for u_X^nu-u_Y^nu in Morrey space}
Suppose $X,Y\in  O_T^{\lam}$ satisfy that $X(s,0) = Y(s,0)$ and $\|X'-Y'\|_{L^\infty_T L_s^\infty}\leq \lam/2$.
Then, with $\mu_0\in [0,1)$, for any $t\in (0,T]$,
\begin{align*}
&\;\|u_X^{\nu}(t)-u_Y^\nu(t)\|_{M^{1,1-\g}(\R^2)}\\
\leq
&\; C \lam^{-1-\g} \|(X',Y')\|_{L^\infty_t L_s^{\infty}} \\
&\; \cdot \Big[ t^{1-\g-\mu_0} \sup_{\eta\in(0,t]}\eta^\g
\|(X'(\eta),Y'(\eta))\|_{\dot{C}_s^\g} \sup_{\eta\in (0,t)} \eta^{\mu_0}\|\pa_t (X-Y)(\eta)\|_{L_s^\infty}\\
&\;\quad
+ \|(X',Y')\|_{L^\infty_t L_s^{\infty}} \|X'-Y'\|_{L^\infty_t L^\infty_s}\Big],
\end{align*}
where $C$ is a constant depending on $\g$ and $\mu_0$.
Here $\|\cdot\|_{M^{1,1-\g}(\BR^2)}$ denotes the Morrey norm; see the definition in Section \ref{sec: definitions and notations}.
\end{lem}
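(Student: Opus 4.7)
The plan is to integrate by parts in the time variable of the defining convolution for $u_X^\nu$, exploit that the initial conditions coincide, and then estimate each of the two resulting pieces in the Morrey norm. From $u_X^\nu(t)-u_Y^\nu(t)=-\nu\int_0^t\Delta e^{\nu(t-\tau)\Delta}(u_{X(\tau)}-u_{Y(\tau)})\,d\tau$ and the identity $\pa_\tau e^{\nu(t-\tau)\Delta}=-\nu\Delta e^{\nu(t-\tau)\Delta}$, together with the hypothesis $X(0)=Y(0)$ (which kills the boundary value at $\tau=0$), one obtains
\begin{equation*}
u_X^\nu(t)-u_Y^\nu(t)=\bigl(u_{X(t)}-u_{Y(t)}\bigr)-\int_0^t e^{\nu(t-\tau)\Delta}\pa_\tau\bigl(u_{X(\tau)}-u_{Y(\tau)}\bigr)\,d\tau.
\end{equation*}
The two summands on the right produce, respectively, the $\|(X',Y')\|\|X'-Y'\|$ contribution and the $t^{1-\g-\mu_0}$ contribution in the target bound.

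For the boundary piece, I would expand $u_{X(t)}-u_{Y(t)}$ using the base-point version of the Stokeslet representation \eqref{eqn: expression of u_11 alternative general c} with $c$ chosen to be the $X$- or $Y$-tangent at the point closest to $x$; this puts the integrand into the form $(\text{kernel})\cdot(X'-Y')+(\text{kernel difference})\cdot(Y'-c)$. The well-stretched conditions $|X|_*,|Y|_*\ge\lam$ together with $\|X-Y\|_{L^\infty}\le\pi\|X'-Y'\|_{L^\infty}$ give pointwise kernel bounds of size $\lam^{-1}|s'-s_x|^{-1}$, and integrating over a ball $B(x_0,r)$ as in the Morrey definition \eqref{eqn: def of Morrey norm} yields $\|u_{X(t)}-u_{Y(t)}\|_{M^{1,1-\g}}\le C\lam^{-1-\g}\|(X',Y')\|_{L^\infty_t L^\infty_s}^2\|X'-Y'\|_{L^\infty_t L^\infty_s}$. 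For the integral piece, I would compute $\pa_\tau(u_X-u_Y)$ from the $s$-integration-by-parts form $u_X=\int\na G(x-X)\,X'(X'-c)\,ds$ so that only $\pa_\tau X$ and $\pa_\tau X'$ appear after time differentiation; a further integration by parts in $s$ converts $\pa_\tau X'$ to $\pa_\tau X$, introducing $\na^2 G$ and $X''$ factors, the latter controlled via $X'\in \dot C^\g$. Grouping the resulting terms so that each carries exactly one ``small'' factor --- either $\pa_\tau(X-Y)$ or $(X-Y)$ --- and applying the well-stretched bound gives a pointwise bound on $\pa_\tau(u_X-u_Y)$ that, integrated on balls, yields
\begin{equation*}
\|\pa_\tau w(\tau)\|_{M^{1,1-\g}}\le C\lam^{-1-\g}\|(X',Y')\|_{L^\infty_s}\,\tau^{-\g-\mu_0}\sup_{\eta\le\tau}\eta^\g\|(X',Y')(\eta)\|_{\dot C^\g_s}\sup_{\eta<\tau}\eta^{\mu_0}\|\pa_t(X-Y)(\eta)\|_{L^\infty_s}.
\end{equation*}
Since $e^{\nu s\Delta}$ is bounded on $M^{1,1-\g}$ uniformly in $s\ge 0$ (a standard fact via dyadic decomposition of the Gaussian kernel), integrating in $\tau$ via $\int_0^t\tau^{-\g-\mu_0}\,d\tau\le C t^{1-\g-\mu_0}$ produces the first summand in the target estimate.

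The main obstacle will be the careful expansion and resummation of $\pa_\tau(u_X-u_Y)$. One must choose the $s$-integration-by-parts form of $u_X$ so that after $\pa_\tau$ no $\pa_\tau X''$ (or $\pa_\tau Y''$) survives, then rearrange the many resulting products so each has precisely one factor quantifying the smallness of $X-Y$ --- either a spatial difference or the time derivative $\pa_\tau(X-Y)$. A second technical point is recovering the sharp $\lam^{-1-\g}$ dependence: this is why we work in the Morrey space $M^{1,1-\g}$ rather than an $L^p$ space, as Morrey is finely tuned to the transverse length scale $\lam|s'-s_x|$ near the curve and avoids the logarithmic losses that a global $L^\infty$ or $L^p$ estimate would incur.
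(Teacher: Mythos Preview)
Your time-integration-by-parts approach is genuinely different from the paper's. The paper works directly with $u_X^\nu(t)-u_Y^\nu(t)=-\nu\int_0^t\Delta e^{\nu(t-\tau)\Delta}(u_{X(\tau)}-u_{Y(\tau)})\,d\tau$, bounds $\int_{B(x,r)}|\Delta e^{\nu(t-\tau)\Delta}(u_{X(\tau)}-u_{Y(\tau)})|\,dy$ via the first-variation formula \eqref{eqn: Laplace of heat kernel applied to u_X-u_Y} together with the ball-integral Lemma~\ref{lem: a typical integral in x variable}, and then integrates in $\tau$. No time derivative of $u_X$ or $u_Y$ is ever taken; instead one uses $\|X(\tau)-Y(\tau)\|_{L^\infty_s}\le\int_0^\tau\|\pa_t(X-Y)\|\le Ct^{1-\mu_0}\sup_\eta\eta^{\mu_0}\|\pa_t(X-Y)(\eta)\|_{L^\infty_s}$.

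There is a genuine gap in your plan. When you expand $\pa_\tau(u_{X(\tau)}-u_{Y(\tau)})$ and regroup so that each term carries one small factor $\pa_\tau(X-Y)$ or $(X-Y)$, the terms of the second type necessarily also carry a factor $\pa_\tau X$ or $\pa_\tau Y$: writing $u_X-u_Y$ as the $\theta$-integral of the first variation of $u$ at $Y+\theta(X-Y)$ in the direction $X-Y$ and applying $\pa_\tau$ by the chain rule produces a second-variation term paired with $\pa_\tau(Y+\theta(X-Y))$, and the piece involving $\pa_\tau Y$ cannot be eliminated. But the lemma's hypotheses (membership in $O_T^{\lam}$) give no control whatsoever on $\pa_\tau X$ or $\pa_\tau Y$ individually --- only on $\pa_t(X-Y)$ --- and the target bound contains no such factor. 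So the estimate you claim for $\|\pa_\tau w(\tau)\|_{M^{1,1-\g}}$ is not reachable from the stated hypotheses. A secondary issue: even granting that estimate, $\int_0^t\tau^{-\g-\mu_0}\,d\tau$ diverges whenever $\g+\mu_0\ge 1$, whereas the lemma allows all $\mu_0\in[0,1)$; the paper avoids this by pulling the factor $t^{1-\mu_0}$ coming from $\|X(\tau)-Y(\tau)\|_{L^\infty_s}$ outside the time integral, leaving only the integrable singularity $\tau^{-\g}$ inside.
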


Although we have obtained some basic bounds for $h_X^\nu$ (see its definition in \eqref{eqn: def of h_X}) in Lemma \ref{lem: estimate for u_X^nu}, we will still need the following more refined estimate.

\begin{lem}
\label{lem: estimate for h_X^nu}
Suppose $X\in O_{T}^{\lam}$, and it satisfies $\|X'(t_1)-X'(t_2)\|_{L^\infty}\leq \lam/2$ for any $t_1,t_2\in [0,T]$.
\begin{enumerate}
\item
For $k = 0,1$, if $\g\in (\f{k}{2},1)$, $\g'\in (0,\g]$, and $\mu_0,\mu,\mu'\in [0,1)$, we have that
\beq
\begin{split}
&\; \|\na^k h_X^\nu(t)\|_{L_x^\infty(\BR^2)}
\\
\leq &\; C\left[ (\nu t)^{-\f{k}{2}} \min\left(\lam^{-1},\,(\nu t)^{-\f{1}{2}}\right)
+ \mathds{1}_{\{\nu t\geq 4\lam^2\}} (\nu t)^{-\g} \lam^{-1-k+2\g}  \right] t^{-\g(\mu-\g)}\\
&\;\cdot \|X'\|_{L^\infty_t L_s^\infty}  \left(\sup_{\eta\in (0,t]}\eta^\g \|X'(\eta)\|_{C_s^\g}\right)^{1-\g} \left(\sup_{\eta\in (0,t]}\eta^{\mu}\|\pa_t X(\eta)\|_{\dot{C}_s^{\g}}\right)^{\g} \\
&\; + C (\nu t)^{-\f{1+k-\g'}{2}} \min\left(\lam^{-\g'},\, (\nu t)^{-\f{\g'}{2}}\right)
\left[1+\mathds{1}_{\{\nu t\geq 4\lam^2\}}\int_{2\lam^2/(\nu t)}^1 \zeta^{-\f{1+k}{2}} \,d\zeta \right]\\
&\;\quad \cdot \lam^{-1} \left(t^{1-\g'-\mu_0} \|X'\|_{L^\infty_t L_s^\infty}
\sup_{\eta\in (0,t]}\eta^{\g'} \|X'(\eta)\|_{C_s^{\g'}} \sup_{\eta\in (0,t]}\eta^{\mu_0}\|\pa_t X(\eta)\|_{L_s^\infty} \right.\\
&\;\qquad \qquad \left.+ t^{1-\mu'}\|X'\|_{L^\infty_t L_s^\infty}^2 \sup_{\eta\in (0,t]}\eta^{\mu'}\|\pa_t X(\eta)\|_{\dot{C}_s^{\g'}} \right),
\end{split}
\label{eqn: L inf and W 1 inf estimate for h}
\eeq
where $C$ depends on $k$, $\g$, $\g'$, $\mu_0$, $\mu$, and $\mu'$.

\item Again assume $\g'\in (0,\g]$, and $\mu_0,\mu,\mu'\in [0,1)$.
For any $\al\in (0,1)\cup (1,1+\g']$ satisfying that $\al\leq 2\g$,
\beq
\begin{split}
\|h_X^{\nu}(t)\|_{\dot{C}_x^{\al}(\R^2)}
\leq &\; C \left[(\nu t)^{-\f{\al}{2}}\min\left(\lam^{-1},\,(\nu t)^{-\f{1}{2}}\right)
+ \mathds{1}_{\{\nu t\geq 4\lam^2\}} (\nu t)^{-\g}\lam^{-1-\al+2\g}\right]
t^{-\g(\mu-\g)}
\\
&\;\cdot \|X'\|_{L^\infty_t L_s^\infty} \left(\sup_{\eta\in (0,t]}\eta^\g \|X'(\eta)\|_{C_s^\g }\right)^{1-\g} \left(\sup_{\eta\in (0,t]}\eta^{\mu}\|\pa_t X(\eta)\|_{\dot{C}_s^{\g}}\right)^{\g} \\
&\; + C\left[(\nu t)^{-\f{1+\al-\g'}{2}} \min\left(\lam^{-\g'},\, (\nu t)^{-\f{\g'}{2}} \right)
\lam^{-1}
+ \mathds{1}_{\{\nu t\geq 4\lam^2\}} (\nu t)^{-1}  \lam^{-\al} \right]
\\
&\;\quad \cdot  \left(t^{1-\g'-\mu_0} \|X'\|_{L^\infty_t L_s^\infty}
\sup_{\eta\in (0,t]}\eta^{\g'} \|X'(\eta)\|_{C_s^{\g'}} \sup_{\eta\in (0,t]}\eta^{\mu_0}\|\pa_t X(\eta)\|_{L_s^\infty} \right.\\
&\;\qquad \left.+ t^{1-\mu'}\|X'\|_{L^\infty_t L_s^\infty}^2 \sup_{\eta\in (0,t]}\eta^{\mu'}\|\pa_t X(\eta)\|_{\dot{C}_s^{\g'}} \right),
\end{split}
\label{eqn: Holder estimate for h}
\eeq
where $C$ depends on $\al$, $\g$, $\g'$, $\mu_0$, $\mu$, and $\mu'$.
Here if $\al>1$, $\|h_X^{\nu}(t)\|_{\dot{C}_x^{\al}(\R^2)}$ is interpreted as $\|\na h_X^{\nu}(t)\|_{\dot{C}_x^{\al-1}(\R^2)}$.

\item Besides, for $H_X^\nu(t) = h_X^{\nu}(t)\circ X(t)$, if $\al\in (0,1]$,
\[
\|H_X^{\nu}(t)\|_{\dot{C}_s^{\al}(\T)}
\leq \|h_X^{\nu}(t)\|_{\dot{C}_x^{\al}(\R^2)}
\|X'(t)\|_{L_s^\infty(\BT)}^{\al},
\]
while if $\al \in (1,2)$,
\begin{align*}
\|H_X^{\nu}(t)\|_{\dot{C}^{1,\al-1}_s(\BT)}
\leq &\; \|\na h_X^{\nu}(t)\|_{\dot{C}^{\al-1}_x(\BR^2)} \|X'(t)\|_{L_s^\infty(\BT)}^{\al}
+ \|\na h_X^{\nu}(t)\|_{L^\infty_x(\BR^2)} \|X'(t)\|_{\dot{C}^{\al-1}_s(\BT)}.
\end{align*}
\end{enumerate}
\end{lem}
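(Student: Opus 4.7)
The strategy is to split the defining integral
\[
h_X^\nu(t) = -\nu\int_0^t \Delta e^{\nu(t-\tau)\Delta}\big(u_{X(\tau)}-u_{X(t)}\big)\,d\tau
\]
according to the threshold $\nu(t-\tau)\simeq \lam^2$, and then to bound the inner difference $u_{X(\tau)}-u_{X(t)}$ using Lemma \ref{lem: estimate for u_X-u_Y}. On the \emph{short-time} regime $\nu(t-\tau)\leq \lam^2$, the Stokes bound of Lemma \ref{lem: estimate for u_X} that gains a factor $\lam^{-1}$ (paired with $\|X'\|_{L^\infty}^2$) dominates; on the \emph{long-time} regime $\nu(t-\tau)\geq \lam^2$ (which is nontrivial only when $\nu t\geq 4\lam^2$), the $L^1$-based bound of Lemma \ref{lem: estimate for u_X} (paired with $\|X'\|_{L^2}^2$ and featuring extra heat-smoothing decay) is sharper. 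Optimizing across the two regimes produces the factor $\min(\lam^{-1},(\nu t)^{-1/2})$ and the indicator $\mathds{1}_{\{\nu t\geq 4\lam^2\}}$ in the statement, and the change of variables $\zeta=(\nu(t-\tau))/(\nu t)$ on the long-time piece converts the integration to the explicit form $\int_{2\lam^2/(\nu t)}^1\zeta^{-(1+k)/2}\,d\zeta$ appearing there.

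To bound $u_{X(\tau)}-u_{X(t)}$, I would apply Lemma \ref{lem: estimate for u_X-u_Y} with $X\leftarrow X(\tau),\; Y\leftarrow X(t)$, and control the quantities $\|X(\tau)-X(t)\|_{L^\infty_s}$ and $\|X(\tau)-X(t)\|_{\dot{C}^{\g'}_s}$ by time-integrating $\pa_\eta X(\eta)$ in the corresponding norm, as in
\[
\|X(\tau)-X(t)\|_{L^\infty_s}\leq C(t-\tau)\tau^{-\mu_0}\sup_{\eta\in(0,t]}\eta^{\mu_0}\|\pa_t X(\eta)\|_{L^\infty_s}.
\]
These substitutions produce the ``second block'' of the stated bound, in which the weighted suprema $\sup\eta^{\mu_0}\|\pa_t X\|_{L^\infty_s}$ and $\sup\eta^{\mu'}\|\pa_t X\|_{\dot{C}^{\g'}_s}$ appear linearly. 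The ``first block'', in which the weighted suprema of $\|X'\|_{C^\g_s}$ and $\|\pa_t X\|_{\dot{C}^\g_s}$ carry the exponents $1-\g$ and $\g$, instead arises by interpolation: one combines a coarse bound on $u_{X(\tau)}-u_{X(t)}$ proportional to $\sup\eta^\g\|X'(\eta)\|_{C^\g_s}$ (obtained by treating the difference as two Stokes fields bounded separately via Lemma \ref{lem: estimate for u_X}) with a refined time-Lipschitz bound proportional to $\sup\eta^\mu\|\pa_t X(\eta)\|_{\dot{C}^\g_s}$, using the elementary $\min(A,B)\leq A^{1-\g}B^{\g}$. The residual time weight $t^{-\g(\mu-\g)}$ is exactly the balance of the two powers after interpolation.

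The Hölder estimate of part (2) proceeds by the same splitting, with the semigroup smoothing upgraded to $\|\nabla^k\Delta e^{\nu s\Delta}f\|_{\dot{C}^\al_x}\leq C(\nu s)^{-(k+2+\al-\b)/2}\|f\|_{\dot{C}^\b_x}$ for appropriate $\b\leq\al$, and with the input $u_{X(\tau)}-u_{X(t)}$ measured in a suitable fractional space. The restriction $\al\leq 2\g$ reflects that the Hölder input from Lemma \ref{lem: estimate for u_X-u_Y} is capped at exponent $\g$, so at least half of the output regularity $\al$ must come from the heat smoothing. The composition estimate in part (3) is routine: for $\al\in(0,1]$ it follows at once from
\[
|H_X^\nu(s_1)-H_X^\nu(s_2)|\leq \|h_X^\nu\|_{\dot{C}^\al_x}\|X'\|_{L^\infty_s}^\al|s_1-s_2|^\al;
\]
for $\al\in(1,2)$ one differentiates $(H_X^\nu)_s=(\na h_X^\nu)\circ X\cdot X'$, applies the Leibniz inequality in $\dot{C}^{\al-1}_s$, and uses the already-proved $\al-1\in(0,1)$ case for the first factor.

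The principal obstacle is the detailed bookkeeping: one must track how the many parameters ($\g$, $\g'$, $\mu_0$, $\mu$, $\mu'$, $k$, $\al$, together with the regime split and the interpolation weights) interact to produce the precise powers of $\nu t$, $\lam$, and each weighted supremum. In particular, choosing the interpolation so that the $\dot{C}^\g$-bound on $u_{X(\tau)}-u_{X(t)}$ splits exactly into the exponents $1-\g$ and $\g$ (rather than some other partition), and keeping the long-time integral $\int_{2\lam^2/(\nu t)}^1\zeta^{-(1+k)/2}\,d\zeta$ unevaluated so that the bound remains uniform across both $k\in\{0,1\}$ in part (1) and across the full admissible range of $\al$ in part (2), requires careful case analysis.
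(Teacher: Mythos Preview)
Your decomposition and the treatment of the second block (time-integrating $\|X(\tau)-X(t)\|_{L^\infty_s}$ and $\|X(\tau)-X(t)\|_{\dot{C}^{\g'}_s}$) are on target, but the mechanism you describe for the first block---interpolating via $\min(A,B)\le A^{1-\g}B^\g$ between a coarse bound obtained from two Stokes fields separately and a refined time-Lipschitz bound---is not how the exponents $(1-\g,\g)$ arise, and your route would not recover the factor $\min(\lam^{-1},(\nu t)^{-1/2})$. If $A$ comes from Lemma~\ref{lem: estimate for u_X} applied to $u_{X(\tau)}$ and $u_{X(t)}$ individually, it carries $\lam^{-1-\g}$, and interpolating with any $B$ of comparable $\lam$-order still yields $\lam^{-1-\g}$ rather than $\lam^{-1}$. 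The paper instead applies Lemma~\ref{lem: estimate for Laplace of heat operator applied to u_X-u_Y} (not Lemma~\ref{lem: estimate for u_X-u_Y}) directly to $\na^k\Delta e^{\nu(t-\tau)\Delta}(u_{X(\tau)}-u_{X(t)})$; its \emph{first} term already carries $\min(\lam^{-1},(\nu(t-\tau))^{-1/2})$ multiplied by $\|X'(\tau)-X'(t)\|_{L^\infty_s}$. The $(1-\g,\g)$ split is then produced by the Gagliardo--Nirenberg inequality
\[
\|X'(\tau)-X'(t)\|_{L^\infty_s}\le C\,\|X(\tau)-X(t)\|_{\dot{C}^\g_s}^{\,\g}\,\|(X'(\tau),X'(t))\|_{C^\g_s}^{\,1-\g},
\]
after which $\|X(\tau)-X(t)\|_{\dot{C}^\g_s}$ is time-integrated to yield $(t-\tau)t^{-\mu}\sup_\eta \eta^\mu\|\pa_t X(\eta)\|_{\dot{C}^\g_s}$. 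This G--N step is the missing ingredient.

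For part~(2), the paper also proceeds differently from your proposal. Rather than upgrading the semigroup to a H\"older-to-H\"older estimate on the integrand (which would require a bound on $\|u_{X(\tau)}-u_{X(t)}\|_{\dot{C}^\b_x}$ that vanishes as $\tau\to t$, not supplied by the available lemmas, and would in any case face integrability trouble at $\tau=t$ once $\al>\g$), the paper reuses the pointwise $L^\infty$ bounds on $\na^{[\al]}\Delta e^{\nu(t-\tau)\Delta}(u_{X(\tau)}-u_{X(t)})$ and $\na^{[\al]+1}\Delta e^{\nu(t-\tau)\Delta}(u_{X(\tau)}-u_{X(t)})$ already obtained for part~(1), and feeds them into the interpolation Lemma~\ref{lem: interpolation} with weight $\va(\tau)=\tau^{-\g}$. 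This converts weighted $L^\infty$ suprema of the integrand and its gradient directly into a $\dot{C}^{\al-[\al]}_x$ bound on the time integral $h_X^\nu(t)$; the constraint $\al\le 2\g$ enters because the pointwise bound carries a factor $(t-\tau)^\g$ from the G--N step, and the weighted suprema in Lemma~\ref{lem: interpolation} are finite precisely when $\g\ge \al/2$.
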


\begin{lem}
\label{lem: estimate for H_X^nu-H_Y^nu}
Assume $T\leq 1$.
Suppose $X,Y\in O^{\lam}_T$ satisfy that $X(s,0) = Y(s,0)$, and that for any $t_1,t_2\in [0,T]$,
\[
\|X'(t_1)-X'(t_2)\|_{L_s^\infty(\BT)}\leq \lam/2,\quad \|Y'(t_1)-Y'(t_2)\|_{L_s^\infty(\BT)}\leq \lam/2.
\]
Denote
\[
M: = \|X'\|_{L^\infty_T L_s^\infty} + \|Y'\|_{L^\infty_T L_s^\infty} +  \sup_{\eta\in (0,T]} \eta^\g \|X'(\eta)\|_{\dot{C}_s^{\g}(\BT)} + \sup_{\eta\in (0,T]} \eta^\g \|Y'(\eta)\|_{\dot{C}_s^{\g}(\BT)},
\]
and, with some $\mu_0\in[0, \f12]$ and some $\b\in [\g,1)$, 
\[
N_0 := \sup_{\eta\in (0,T)}\eta^{\mu_0}\|\pa_t X(\eta)\|_{L_s^\infty},\quad
N_\b := \sup_{\eta\in (0,T)} \eta^\b \|\pa_t X(\eta)\|_{C_s^\b}.
\]
Define
\beq
\begin{split}
D(t) = &\; D[\g,\b,X,Y](t)\\
:= &\; \|X'-Y'\|_{L^\infty_t L_s^{\infty}} + \sup_{\eta\in (0,t]} \eta^\g \|(X'-Y')(\eta)\|_{C_s^{\g}}
+ \sup_{\eta\in (0,t)} \eta^{\b} \|\pa_t(X-Y)(\eta)\|_{\dot{C}_s^\b},
\end{split}
\label{eqn: def of D distance bewteen X and Y}
\eeq
and
\beq
\begin{split}
A(t) = &\; A[\g, \b,\mu_0, \nu, \lam, X, Y](t)\\
:= &\; \nu^{-1} \left(D(t)+\sup_{\eta\in (0,t]}\eta^{\mu_0} \|\pa_t (X-Y)(\eta)\|_{L_s^\infty}\right)\\
&\;\cdot \Big[\lam^{-3}
M^{3-\g} N_\b^\g \big(1 + \nu^{-1} t^{1-2\mu_0} N_0^2\big) + \lam^{-1} M\\
&\;\quad  + t^{(1-\g)/2-\mu_0} \cdot
\lam^{-2-\g} \nu^{-(1-\g)/2} M^2 N_0
\big( 1 + \nu^{-1} t^{1-2\mu_0} N_0^2\big)^{\f{1+\g}{2}}
\\
&\; \quad + t^{(1-\g)/2-\mu_0} \cdot \lam^{-1-\g}\nu^{-(1-\g)/2} M^2 \Big].
\end{split}
\label{eqn: def of A_J}
\eeq
Then for any $t\in[0,T]$,
\[
\|H_X^\nu(t)-H_Y^\nu(t)\|_{L^\infty_s(\BT)}
\leq
C\nu  A(t),
\]
and, if $\g\in (0,\f12)$,
\[
\|H_X^{\nu}(t)-H_Y^\nu(t)\|_{\dot{C}_s^{2\g}(\T)}
\leq C (\nu t)^{-\g} \nu M^{2\g} A(t).
\]
Here the constants $C$'s depend on $\b$ and $\g$.

\end{lem}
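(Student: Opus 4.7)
The plan is to decompose
\[
H_X^{\nu}(t) - H_Y^{\nu}(t) = \bigl(h_X^{\nu}(t) - h_Y^{\nu}(t)\bigr)\circ X(t) + \bigl[h_Y^{\nu}(t)\circ X(t) - h_Y^{\nu}(t)\circ Y(t)\bigr],
\]
and to estimate the two pieces separately. The second piece is a pure composition error: it is controlled by the $L^\infty_x$- and $\dot{C}_x^\al$-bounds on $h_Y^\nu$ from Lemma~\ref{lem: estimate for h_X^nu} (with $Y$ in place of $X$), combined with the displacement estimate
\[
\|X(t)-Y(t)\|_{L^\infty_s(\BT)}\leq \int_0^t \|\pa_\sigma(X-Y)(\sigma)\|_{L^\infty_s}\,d\sigma\leq C\, t^{1-\mu_0}\sup_{\eta\in(0,t)}\eta^{\mu_0}\|\pa_t(X-Y)(\eta)\|_{L^\infty_s},
\]
valid because $X(\cdot,0)=Y(\cdot,0)$. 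For the $\dot{C}_s^{2\g}$ version one invokes the standard composition rule, which costs an additional factor $\|X'\|_{L^\infty_s}^{2\g}\leq M^{2\g}$.

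For the main piece $(h_X^{\nu}-h_Y^{\nu})\circ X(t)$, set $v(\tau):=u_{X(\tau)}-u_{Y(\tau)}$, so that
\[
h_X^{\nu}(t) - h_Y^{\nu}(t) = -\nu\int_0^t \Delta e^{\nu(t-\tau)\Delta}\bigl(v(\tau)-v(t)\bigr)\,d\tau,
\]
and split $\int_0^t=\int_0^{t/2}+\int_{t/2}^t$. On $[0,t/2]$ the operator $\nu\Delta e^{\nu(t-\tau)\Delta}$ is a bounded map from the Morrey space $M^{1,1-\g}(\BR^2)$ into $L^\infty_x(\BR^2)$ (by standard heat-kernel estimates), and Lemma~\ref{lem: estimate for u_X^nu-u_Y^nu in Morrey space} bounds $v(\tau)$ directly in that Morrey norm; this ``off-diagonal'' piece produces the non-singular terms $\lam^{-1}M$ and $t^{(1-\g)/2-\mu_0}\lam^{-1-\g}\nu^{-(1-\g)/2}M^2$ appearing inside the brackets of $A(t)$.

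On $[t/2,t]$ the kernel is singular as $\tau\to t$, and one must write $v(\tau)-v(t)=-\int_\tau^t\pa_\sigma v(\sigma)\,d\sigma$. Because $u_X$ is bilinear in $X$ through \eqref{eqn: expression of u_11}, the time derivative $\pa_\sigma v=\pa_\sigma(u_{X(\sigma)}-u_{Y(\sigma)})$ expands into a sum of triple products carrying one $(X-Y)$- or $\pa_\sigma(X-Y)$-factor, one $X$- or $Y$-factor, and one $\pa_\sigma$-factor. Each such product is estimated via Lemma~\ref{lem: estimate for u_X} and Lemma~\ref{lem: estimate for u_X-u_Y}, and interpolation between the weighted $L^\infty$-norm $N_0$ and the weighted $\dot{C}^\b$-norm $N_\b$ of $\pa_t X$ furnishes the remaining geometric combinations $\lam^{-3}M^{3-\g}N_\b^\g\bigl(1+\nu^{-1}t^{1-2\mu_0}N_0^2\bigr)$ and $t^{(1-\g)/2-\mu_0}\lam^{-2-\g}\nu^{-(1-\g)/2}M^2N_0\bigl(1+\nu^{-1}t^{1-2\mu_0}N_0^2\bigr)^{(1+\g)/2}$. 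The outer factor $\nu$ in the bound $\|H_X^\nu-H_Y^\nu\|_{L^\infty_s}\leq C\nu A(t)$ comes from the $-\nu$ in front of the original integral. The $\dot{C}^{2\g}_s$ bound is obtained by repeating the two-piece decomposition with $\na^{2\g}$ applied to the heat kernel, at the cost of an extra factor $(\nu t)^{-\g}$, and then invoking the composition rule to land back on $\BT$ with the additional $M^{2\g}$.

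The main obstacle is the near-diagonal piece on $[t/2,t]$: the singular kernel $\nu\Delta e^{\nu(t-\tau)\Delta}$ must be balanced against the time-weighted regularity of $\pa_\sigma v$ (through the $\eta^{\mu_0}$- and $\eta^\b$-weights on $N_0$ and $N_\b$), while respecting the bilinear structure of $u_X$ so that the linearization in $X-Y$ does not waste regularity. Keeping track of this bookkeeping so that the exact powers of $\lam$, $M$, $N_0$, $N_\b$, $\nu$, and the positive power $t^{(1-\g)/2-\mu_0}$ in $A(t)$ come out correctly — in particular producing the interpolation exponent $(1+\nu^{-1}t^{1-2\mu_0}N_0^2)^{(1+\g)/2}$ — is the delicate technical step of the proof.
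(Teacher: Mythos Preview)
Your decomposition differs from the paper's and has two concrete gaps. First, Lemma~\ref{lem: estimate for u_X^nu-u_Y^nu in Morrey space} does \emph{not} bound $v(\tau)=u_{X(\tau)}-u_{Y(\tau)}$; it bounds the time-integrated quantity $u_X^\nu-u_Y^\nu$, and there is no Morrey estimate for the instantaneous Stokes difference $u_X-u_Y$ anywhere in the paper. Second, ``applying $\nabla^{2\gamma}$ to the heat kernel'' is not a valid step for non-integer $2\gamma$; the $\dot C_s^{2\gamma}$ bound must come from an interpolation between an $L^\infty$ and a $\dot C^1$ estimate on the \emph{integrand} (this is exactly Lemma~\ref{lem: interpolation}), not from a fractional gradient of the kernel.

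The paper avoids both issues by never splitting off a composition error and never differentiating $v$ in time. It writes
\[
H_X^\nu(t)-H_Y^\nu(t)=-\nu\int_0^t\CJ[\tau,t,\nu;X,Y]\,d\tau,\qquad
\CJ:=J(\nu(t-\tau);X(t),X(\tau)-X(t))-J(\nu(t-\tau);Y(t),Y(\tau)-Y(t)),
\]
with $J(t;X,Z):=[\Delta e^{t\Delta}(u_{X+Z}-u_X)]\circ X$. The composition point and the increment are handled together, and the core work is Lemma~\ref{lem: bound for J YZ - J YW}, which bounds $J(t;X,Z)-J(t;Y,W)$ simultaneously in $L^\infty_s$ and in $\dot C^1_s$ via the explicit kernel decomposition $I_1+I_2+I_3$ of $\mathcal{D}[\Delta e^{t\Delta}u]$ (cf.\ the proof of Lemma~\ref{lem: first variation of Laplace of heat kernel applied to u_X}). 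One then feeds in the time-increment bounds $\|X(\tau)-X(t)\|_{L^\infty}\le C(t-\tau)t^{-\mu_0}N_0$ and the Gagliardo--Nirenberg-type estimate $\|X(\tau)-X(t)\|_{\dot C^1}\le C|t-\tau|^\gamma\tau^{-\gamma}M^{1-\gamma}N_\beta^\gamma$; this is the origin of the $N_\beta^\gamma$ factor you attribute to ``interpolation'', but it enters through $\|Z'\|_{L^\infty}$ in Lemma~\ref{lem: bound for J YZ - J YW}, not through a time derivative of $v$. The resulting pointwise bound on $\CJ$ is of the form $C(t-\tau)^{-1+\gamma}\tau^{-\gamma}A(t)$, which is integrable over $(0,t)$ without any $t/2$-split, and the $\dot C_s^{2\gamma}$ estimate follows directly from Lemma~\ref{lem: interpolation} applied to $\CJ$.

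Your route could in principle be repaired, but the near-diagonal piece would still require a second-variation estimate for $\Delta e^{t\Delta}\partial_\sigma(u_{X(\sigma)}-u_{Y(\sigma)})$ that is at least as intricate as Lemma~\ref{lem: bound for J YZ - J YW}, so you would not save any work.
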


The next two lemmas collect all the estimates we need for $B_\nu[u]$.
\begin{lem}
\label{lem: estimate for B_nu u}
Given $u$, let $B_\nu[u]$ be defined in \eqref{eqn: u_2 tilde}.
The following estimates hold for any $t\in (0,T]$.
\begin{enumerate}
\item
For $q\in (1,\infty]$ and $r\in [1,q]$ satisfying that $0< \f1r <\f12 + \f1q$,
\beq
\|B_{\nu}[u](t)\|_{L^q_x}
\leq C\nu^{-1} (\nu t)^{\f1q-\f12}
\sup_{\eta\in(0,t]}(\nu \eta)^{1-\f1r}\|(u\otimes u)(\eta)\|_{L_x^r}.
\label{eqn: L q estimate for B general}
\eeq
Here we interpret $\f{1}{\infty}$ as $0$, and the constant $C$ depends on $q$ and $r$.
In particular, for $p\in (2,\infty)$,
\begin{align}
\|B_{\nu}[u](t)\|_{L^2_x}
\leq &\; 
C\nu^{-1} \|u\|_{L^\infty_t L_x^2} \sup_{\eta\in(0,t]}(\nu \eta)^{\f12-\f1p}\|u(\eta)\|_{L_x^p},
\label{eqn: L 2 estimate for B}
\\
\|B_{\nu}[u](t)\|_{L^p_x}
\leq &\;
C\nu^{-1} (\nu t)^{\f1p-\f12}\left(\sup_{\eta\in(0,t]}(\nu \eta)^{\f12-\f1p}\|u(\eta)\|_{L_x^p}\right)^2,
\label{eqn: L p estimate for B}
\\
\|B_{\nu}[u](t)\|_{L^\infty_x}
\leq &\; C\nu^{-1} (\nu t)^{-\f12}\sup_{\eta\in(0,t]}(\nu \eta)^{\f12-\f1p}\|u(\eta)\|_{L_x^p}
\sup_{\eta\in(0,t]}(\nu \eta)^{\f12}\|u(\eta)\|_{L_x^\infty},
\label{eqn: L inf estimate for B}
\end{align}
where the constants $C$'s depend on $p$.

\item
For $\g\in (0,1)$,
\begin{align}
\|B_{\nu}[u](t)\|_{\dot{C}_x^{\g}(\R^2)}
\leq &\; C \nu^{-1} (\nu t)^{-(1+\g)/2} \sup_{\eta\in (0,t]} (\nu \eta)^{(1+\g)/2}\|(u\otimes u)(\eta)\|_{L_x^{2/(1-\g)}},
\label{eqn: Holder estimate for B u prelim}
\end{align}
where the constant $C$ depends on $\g$.
Moreover, for $p \in [2,\infty)$,
\beq
\begin{split}
&\;\|B_{\nu}[u](t)\|_{\dot{C}^{\g}_x(\R^2)}\\
\leq &\; C \nu^{-1}(\nu t)^{-(1+\g)/2}
\left(\sup_{\eta\in (0,t]} (\nu \eta)^{\f12-\f1p} \|u(\eta)\|_{L^p_x}+\sup_{\eta\in (0,t]}(\nu \eta)^{\f12}\|u(\eta)\|_{L^\infty_x}\right)^2,
\end{split}
\label{eqn: 6th estimate for B new form general gamma}
\eeq
where the constant $C$ depends on $p$ and $\g$.

\item For $\g\in (0,1)$ and $p\in [2,\infty)$,
\beq
\begin{split}
&\;(\nu t)^{(2+\g)/2} \|B_{\nu}[u](t)\|_{\dot{C}^{1,\g}_x(\BR^2)}
+ \nu t \|B_{\nu}[u](t)\|_{\dot{C}^1_x(\BR^2)}\\
\leq
&\; C \nu^{-1}
\sup_{\eta\in (0,t]} (\nu\eta)^{1/2} \|u(\eta)\|_{L^\infty_x}
\left[\sup_{\eta\in(0,t]}(\nu \eta)^{\f12-\f1p}\|u(\eta)\|_{L_x^p}
+ \sup_{\eta\in (0,t]} (\nu\eta)^{(1+\g)/2} \|u(\eta)\|_{\dot{C}^\g_x}\right],
\end{split}
\label{eqn: 2nd estimate for B}
\eeq
where the constant $C$ depends on $p$ and $\g$.

\item For $\g\in (0,1)$,
\beq
\begin{split}
&\; (\nu t)^{\g/2} \|(B_{\nu}[u]-B_{\nu}[v])(t)\|_{M^{1,1-\g}(\BR^2)}
+ (\nu t)^{1/2} \|(B_{\nu}[u] - B_{\nu}[v])(t)\|_{L^\infty_x(\BR^2)}\\
\leq &\; C\nu^{-1}
\sup_{\eta\in (0,t]}(\nu \eta)^{\g/2}\|(u-v)(\eta)\|_{M^{1,1-\g}}
\sup_{\eta\in (0,t]} (\nu\eta)^{1/2} \|(u(\eta),v(\eta))\|_{L_x^\infty},
\end{split}
\label{eqn: 4th estimate for B L inf}
\eeq
and
\beq
\begin{split}
&\; (\nu t)^{(1+\g)/2} \|(B_{\nu}[u] - B_{\nu}[v])(t)\|_{\dot{C}^\g_x(\BR^2)}\\
\leq &\; C\nu^{-1}
\sup_{\eta\in (0,t]}(\nu \eta)^{\g/2}\|(u-v)(\eta)\|_{M^{1,1-\g}}
\sup_{\eta\in (0,t]} (\nu\eta)^{1/2} \|(u(\eta),v(\eta))\|_{L_x^\infty}.
\end{split}
\label{eqn: 4th estimate for B gamma}
\eeq
Here the constants $C$'s depend only on $\g$.

\end{enumerate}
\end{lem}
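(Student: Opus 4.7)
The unifying observation is that the incompressibility $\di u = 0$ gives $u\cdot\na u = \di(u\otimes u)$, so after moving one derivative onto the Oseen kernel we can write
\[
B_\nu[u](t) = -\int_0^t e^{\nu(t-\tau)\Delta}\BP\na\cdot (u\otimes u)(\tau)\,d\tau.
\]
All four parts then reduce to inserting the standard smoothing estimates for $e^{\sigma\Delta}\BP\na\cdot$ between various function spaces (collected in the parabolic appendix), applying H\"older inequalities on the product $u\otimes u$, and computing a Beta-type time integral. I will handle the four parts in order.

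For part (1), I would apply $\|e^{\sigma\Delta}\BP\na\cdot f\|_{L^q}\le C\sigma^{-1/2-(1/r-1/q)}\|f\|_{L^r}$ and insert the time weight $(\nu\tau)^{1-1/r}\|(u\otimes u)(\tau)\|_{L^r}$. After the substitution $\sigma = \tau/t$ the resulting Beta integral $\int_0^1(1-\sigma)^{-1/2-1/r+1/q}\sigma^{-(1-1/r)}\,d\sigma$ converges precisely under $0<1/r<1/2+1/q$, yielding \eqref{eqn: L q estimate for B general}. The three specializations follow by choosing $r$ so that H\"older's inequality for $u\otimes u$ matches the stated right-hand side: $1/r = 1/2+1/p$ for \eqref{eqn: L 2 estimate for B}, $r = p/2$ for \eqref{eqn: L p estimate for B}, and $r = p$ for \eqref{eqn: L inf estimate for B}.

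For parts (2) and (3), the analogous estimate $\|e^{\sigma\Delta}\BP\na\cdot f\|_{\dot C^\gamma}\le C\sigma^{-(1+\gamma)/2-1/q}\|f\|_{L^q}$ at $q=2/(1-\gamma)$ produces a weight $(\nu(t-\tau))^{-1}$, which combined with the time weight $(\nu\tau)^{-(1+\gamma)/2}$ gives singularities at \emph{both} endpoints of $[0,t]$. The resolution is to split the integral at $\tau = t/2$: on $[0,t/2]$ use the above bound so the $(t-\tau)^{-1}$ factor is controlled by $(t/2)^{-1}$, while on $[t/2,t]$ use a less singular endpoint estimate so that the $(t-\tau)$ singularity becomes integrable. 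For \eqref{eqn: 6th estimate for B new form general gamma} one factors $\|u\otimes u\|_{L^{2/(1-\gamma)}} \le \|u\|_{L^p}\|u\|_{L^{2p/((1-\gamma)p-2)}}$ and interpolates the second factor between $L^p$ and $L^\infty$. For the $\dot C^{1,\gamma}$ bound \eqref{eqn: 2nd estimate for B}, one derivative falls on the kernel (giving $\sigma^{-1/2}$) and the remaining Hölder regularity falls on $u\otimes u$, now estimated in $L^p$ or $\dot C^\gamma$ with the product rule $\|u\otimes u\|_{\dot C^\gamma}\le C\|u\|_{L^\infty}\|u\|_{\dot C^\gamma}$.

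For part (4), bilinearity gives $u\otimes u-v\otimes v = (u-v)\otimes u + v\otimes (u-v)$, so
\[
(B_\nu[u]-B_\nu[v])(t) = -\int_0^t e^{\nu(t-\tau)\Delta}\BP\na\cdot\bigl[(u-v)\otimes u + v\otimes(u-v)\bigr](\tau)\,d\tau.
\]
The key extra ingredient is the Morrey-to-$L^\infty$ smoothing bound $\|e^{\sigma\Delta}\BP\na\cdot f\|_{L^\infty}\le C\sigma^{-(1+\mu)/2}\|f\|_{M^{1,\mu}}$ (proved by splitting the heat kernel into dyadic annuli and using the Morrey definition \eqref{eqn: def of Morrey norm}), together with the trivial product inequality $\|f\otimes g\|_{M^{1,1-\gamma}}\le \|f\|_{M^{1,1-\gamma}}\|g\|_{L^\infty}$. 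With $\mu = 1-\gamma$, inserting the weights $(\nu\eta)^{\gamma/2}$ and $(\nu\eta)^{1/2}$ produces a Beta integral with exponents $-1+\gamma/2$ and $-1/2-\gamma/2$, both $>-1$ since $\gamma\in(0,1)$, which yields \eqref{eqn: 4th estimate for B L inf}; the Hölder version \eqref{eqn: 4th estimate for B gamma} uses instead $\|e^{\sigma\Delta}\BP\na\cdot f\|_{\dot C^\gamma}\le C\sigma^{-(1+\mu)/2-\gamma/2}\|f\|_{M^{1,\mu}}$ plus the same splitting at $\tau = t/2$ as in part (2).

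The main obstacle is the Hölder bounds of parts (2) and (3), where the double singularity in the time convolution forces the $\tau=t/2$ split and careful interpolation of $u\otimes u$ across $L^p$, $L^\infty$, and $\dot C^\gamma$. Part (4) is conceptually similar but easier once one has the Morrey smoothing estimate in hand. The exponent-chasing throughout is notationally heavy but mechanical once each regime is set up correctly.
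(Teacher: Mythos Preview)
Your strategy for part~(1) and for the $M^{1,1-\g}$ and $L^\infty$ bounds in part~(4) matches the paper's proof.  The gap is in the H\"older estimates \eqref{eqn: Holder estimate for B u prelim} and \eqref{eqn: 4th estimate for B gamma}, where your ``split at $t/2$ and use a less singular endpoint estimate'' is underspecified and, as stated, cannot work.  The bound $\|e^{\sigma\Delta}\BP\na\cdot f\|_{\dot C^\g}\le C\sigma^{-\alpha}\|f\|_{L^{2/(1-\g)}}$ (or with $M^{1,1-\g}$ in place of $L^{2/(1-\g)}$) is scaling-critical at $\alpha=1$: no smaller $\alpha$ is available without changing the input space.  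Hence on $[t/2,t]$ you would be forced to measure $u\otimes u$ in $L^\infty$ or $\dot C^\g$, which alters the right-hand side away from what \eqref{eqn: Holder estimate for B u prelim} and \eqref{eqn: 4th estimate for B gamma} claim.  Your approach does recover \eqref{eqn: 6th estimate for B new form general gamma}, whose right-hand side already contains $\|u\|_{L^\infty}$, but not the sharper intermediate bound.

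The paper circumvents this via an interpolation device, Lemma~\ref{lem: interpolation}: instead of integrating $\|e^{\nu(t-\tau)\Delta}\BP\na\cdot(u\otimes u)\|_{\dot C^\g}$ in $\tau$, one bounds $\|\int_0^t v\,d\tau\|_{\dot C^\g}$ by a weighted product of $\sup_\tau(t-\tau)^{(2-\g)/2}\varphi(\tau)^{-1}\|v(\tau)\|_{L^\infty}$ and $\sup_\tau(t-\tau)^{(3-\g)/2}\varphi(\tau)^{-1}\|\na v(\tau)\|_{L^\infty}$.  With $\varphi(\tau)=\tau^{-(1+\g)/2}$ and the Morrey smoothing $\|e^{\sigma\Delta}\BP\na\cdot f\|_{L^\infty}\le C\sigma^{-(2-\g)/2}\|f\|_{M^{1,1-\g}}$ (plus its gradient analogue with exponent $-(3-\g)/2$), the powers of $(t-\tau)$ cancel exactly and no time integration of a borderline singularity occurs; this gives $\|B_\nu[u](t)\|_{\dot C^\g}\le C\nu^{-1}t^{-(1+\g)/2}\sup_\eta\eta^{(1+\g)/2}\|(u\otimes u)(\eta)\|_{M^{1,1-\g}}$, from which both \eqref{eqn: Holder estimate for B u prelim} and \eqref{eqn: 4th estimate for B gamma} follow directly.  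For part~(3) the paper does split at $t/2$, but via the semigroup identity $B_\nu[u](t)=-\int_{t/2}^t(\cdots)\,d\tau + e^{\nu t\Delta/2}B_\nu[u](t/2)$: the far piece is handled by heat-kernel smoothing plus the already-proved $L^\infty$ bound \eqref{eqn: L inf estimate for B}, while on the near piece one again invokes Lemma~\ref{lem: interpolation} (now with $\varphi\equiv 1$) together with $\|u\otimes u\|_{\dot C^\g}\le C\|u\|_{L^\infty}\|u\|_{\dot C^\g}$.
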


\begin{lem}
\label{lem: time Holder continuity of B_u}
Let $\g\in (0,1)$ and $p\in [2,\infty)$.
Then for any $0<t_1<t_2\leq T$,
\begin{align*}
&\; \|B_{\nu}[u](t_2)-B_{\nu}[u](t_1)\|_{L^\infty_x(\BR^2)}\\
\leq &\;
C \nu^{-1} \big(\nu(t_2-t_1)\big)^{\f\g{2}}(\nu t_1)^{-\f{1+\g}{2}}
\left(\sup_{\eta\in (0,t_1]} (\nu \eta)^{\f12-\f1p} \|u(\eta)\|_{L^p_x}+\sup_{\eta\in (0,t_1]}(\nu \eta)^{\f12}\|u(\eta)\|_{L^\infty_x}\right)^2\\
&\;
+ C \nu^{-1} \big(\nu(t_2-t_1)\big)^{\f{\g}2}(\nu t_2)^{-\f{1+\g}{2}}
\left(\sup_{\eta\in (0,t_2]} (\nu \eta)^{\f12-\f1p} \|u(\eta)\|_{L^p_x}+\sup_{\eta\in (0,t_2]}(\nu \eta)^{\f12}\|u(\eta)\|_{L^\infty_x}\right)^2,
\end{align*}
where the constant $C$ depends on $p$ and $\g$.
\end{lem}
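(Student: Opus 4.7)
The plan is to split the difference as $B_{\nu}[u](t_2)-B_{\nu}[u](t_1)=J_1+J_2$, where
\beqo
J_1:=-\int_{t_1}^{t_2}e^{\nu(t_2-\tau)\D}\BP(u\cdot\na u)(\tau)\,d\tau
\eeqo
and
\beqo
J_2:=-\int_0^{t_1}\bigl[e^{\nu(t_2-\tau)\D}-e^{\nu(t_1-\tau)\D}\bigr]\BP(u\cdot\na u)(\tau)\,d\tau.
\eeqo
By the semigroup composition identity $e^{\nu(t_2-\tau)\D}=e^{\nu(t_2-t_1)\D}e^{\nu(t_1-\tau)\D}$ applied for $\tau\in[0,t_1]$, $J_2$ collapses to $(e^{\nu(t_2-t_1)\D}-\mathrm{Id})B_\nu[u](t_1)$. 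Likewise, adding and subtracting $e^{\nu(t_2-t_1)\D}B_\nu[u](t_2)$ rewrites $J_1=(\mathrm{Id}-e^{\nu(t_2-t_1)\D})B_\nu[u](t_2)+e^{\nu(t_2-t_1)\D}(B_\nu[u](t_2)-B_\nu[u](t_1))$.

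To bound $J_2$ and the first summand of the rewritten $J_1$, I would invoke the standard parabolic H\"older-in-time estimate $\|(e^{s\D}-\mathrm{Id})f\|_{L^\infty_x(\BR^2)}\le Cs^{\g/2}\|f\|_{\dot C^\g_x(\BR^2)}$ with $s=\nu(t_2-t_1)$, which extracts the desired factor $(\nu(t_2-t_1))^{\g/2}$. The requisite spatial H\"older bounds on $B_\nu[u](t_1)$ and $B_\nu[u](t_2)$ are supplied by \eqref{eqn: 6th estimate for B new form general gamma} of Lemma~\ref{lem: estimate for B_nu u}: evaluating at $t_1$ produces the prefactor $(\nu t_1)^{-(1+\g)/2}$ and the $\sup$ over $(0,t_1]$ in the claimed first term, while evaluating at $t_2$ produces $(\nu t_2)^{-(1+\g)/2}$ and the $\sup$ over $(0,t_2]$ in the claimed second term. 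The leftover piece $e^{\nu(t_2-t_1)\D}(B_\nu[u](t_2)-B_\nu[u](t_1))=e^{\nu(t_2-t_1)\D}(J_1+J_2)$ is then dealt with by the time-shift $\tilde u(\sigma):=u(t_1+\sigma)$ for $\sigma\in(0,t_2-t_1]$, so that $J_1=B_\nu[\tilde u](t_2-t_1)$ becomes a genuine $B_\nu$-integral on a short interval; since $p>2$ makes $\f12-\f1p>0$, the weighted $L^p_x$ and $L^\infty_x$ sups of $\tilde u$ over $(0,t_2-t_1]$ are each dominated by the corresponding sups of $u$ over $(0,t_2]$, breaking the apparent self-reference.

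The main obstacle will be producing the sharp decay $(\nu t_2)^{-(1+\g)/2}$ in the $J_1$ estimate rather than the cruder $(\nu(t_2-t_1))^{-(1+\g)/2}$ that a naive direct application of \eqref{eqn: 6th estimate for B new form general gamma} to the shifted flow on $(0,t_2-t_1]$ would give: the latter, multiplied by the H\"older-in-time factor $(\nu(t_2-t_1))^{\g/2}$, would degrade to $(\nu(t_2-t_1))^{-1/2}$, which is strictly weaker than desired. The key is that the decomposition $J_1=(\mathrm{Id}-e^{\nu(t_2-t_1)\D})B_\nu[u](t_2)+(\text{remainder})$ routes the decay through $\|B_\nu[u](t_2)\|_{\dot C^\g_x}$, whose prefactor is $(\nu t_2)^{-(1+\g)/2}$, while the short-interval factor $(\nu(t_2-t_1))^{\g/2}$ is extracted solely from the heat-semigroup H\"older estimate.
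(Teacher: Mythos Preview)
Your decomposition $B_\nu[u](t_2)-B_\nu[u](t_1)=J_1+J_2$ with $J_2=(e^{\nu(t_2-t_1)\Delta}-\mathrm{Id})B_\nu[u](t_1)$ is exactly what the paper uses, and your treatment of $J_2$ via the parabolic H\"older-in-time estimate combined with \eqref{eqn: 6th estimate for B new form general gamma} is correct. The gap is in your handling of $J_1$.

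Your rewriting $J_1=(\mathrm{Id}-e^{\nu(t_2-t_1)\Delta})B_\nu[u](t_2)+e^{\nu(t_2-t_1)\Delta}(B_\nu[u](t_2)-B_\nu[u](t_1))$ is algebraically correct, but the ``leftover piece'' is $e^{\nu(t_2-t_1)\Delta}$ applied to the very quantity you are bounding. Since the heat semigroup is an $L^\infty$-contraction with constant $1$ (not strictly less than $1$), this term cannot be absorbed into the left-hand side. Your proposed resolution---bounding the $J_1$ inside the leftover directly via the time-shifted flow $B_\nu[\tilde u](t_2-t_1)$---only reproduces the crude prefactor $(\nu(t_2-t_1))^{-1/2}$ that you yourself flagged as too weak. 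So the circularity is not broken: you end with a bound containing a term of the wrong order.

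The paper avoids this by never decomposing $J_1$ further. Instead it case-splits on whether $t_1\le t_2/2$. When $t_1\le t_2/2$ one uses the trivial bound $\|B_\nu[u](t_1)\|_{L^\infty}+\|B_\nu[u](t_2)\|_{L^\infty}$ and observes that $t_2-t_1\gtrsim t_1,t_2$ converts the $(\nu t_j)^{-1/2}$ from \eqref{eqn: L inf estimate for B} into the claimed form. When $t_1>t_2/2$, one bounds $J_1$ at the integrand level via Lemma~\ref{lem: parabolic estimates}:
\[
\|J_1\|_{L^\infty}\le C\int_{t_1}^{t_2}(\nu(t_2-\tau))^{-(2-\g)/2}\|(u\otimes u)(\tau)\|_{M^{1,1-\g}}\,d\tau.
\]
The weight $(\nu\tau)^{-(1+\g)/2}$ on $\|u\otimes u(\tau)\|_{L^{2/(1-\g)}}$ then pulls out as $(\nu t_2)^{-(1+\g)/2}$ because $\tau\ge t_1>t_2/2$, and the remaining $\tau$-integral gives exactly $(\nu(t_2-t_1))^{\g/2}$. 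This is where the sharp $t_2$-decay comes from---not from any identity involving $B_\nu[u](t_2)$ itself.
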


Proofs of these lemmas will be presented in the rest of this section.

Finally, we recall the nonlinear term $g_X$ defined in \eqref{eqn: def of g_X}.
Its H\"{o}lder estimates have been studied in several existing works; see e.g.\;\cite[Propositions 3.8, 3.9, 3.12]{mori2019well} and \cite[Lemma 7.15]{Chen2024wellposedness_quasilinear}.
Nevertheless, in order to derive optimal regularity estimates for the solutions, we will need the following improved estimates, which are of independent interest.

\begin{lem}
\label{lem: improved estimates for g_X}
Suppose $X\in O^\lam$.
Let $g_X$ be defined in \eqref{eqn: def of g_X}.
Then
\[
\|g_X\|_{L_s^\infty(\BT)}
\leq C\lam^{-2}\|X'\|_{L_s^\infty}^2 \|X'\|_{\dot{C}_s^\g}.
\]
If $\g\in (0,\f12)$,
\[
\|g_X\|_{\dot{C}_s^{2\g}(\BT)} \leq C \lam^{-2} \|X'\|_{L_s^\infty}\|X'\|_{\dot{C}_s^\g}^2,
\]
and if $\g\in (\f12,1)$,
\[
\|g_X\|_{\dot{C}_s^{1,2\g-1}(\BT)}\leq C\lam^{-3} \|X'\|_{L_s^\infty}^2 \|X'\|_{\dot{C}_s^\g}^2.
\]
Here $C$'s are constants that only depend on $\g$.
\end{lem}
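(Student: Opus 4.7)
The strategy is to isolate the ``straight-line'' principal part of the kernel, whose principal value integral yields exactly $-\tfrac{1}{4}\Lam X(s)$, and then bound the remainder kernel directly. Writing $\sigma := s-s'$, $a:=X'(s)$, and decomposing
\[
X(s)-X(s')=a\sigma + R(s,s'),\qquad X'(s') = a + \d'(s,s'),
\]
the well-stretched hypothesis gives $|X(s)-X(s')|\geq \lam|\sigma|$ and, passing to the limit $s'\to s$, $|a|\geq \lam$, while
\[
|R|\leq \|X'\|_{\dot{C}_s^\g}|\sigma|^{1+\g},\qquad |\d'|\leq \|X'\|_{\dot{C}_s^\g}|\sigma|^\g.
\]
Because the Stokeslet gradient $\pa G$ is homogeneous of degree $-1$, the scalar/Fourier identity on the line (essentially the principal part calculation contained in \cite{mori2019well, lin2019solvability}) gives $\pv\int_\BT \pa G(a\sigma)[a,a]\,ds' = -\tfrac14\Lam X(s)$. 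Subtracting yields $g_X(s)=\pv\int_\BT \CK(s,s')\,ds'$ with
\[
\CK = \underbrace{\pa G(X(s)-X(s'))\bigl[X'(s')\otimes X'(s')-a\otimes a\bigr]}_{\CK_1} + \underbrace{\bigl[\pa G(X(s)-X(s'))-\pa G(a\sigma)\bigr][a,a]}_{\CK_2}.
\]

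For the $L^\infty$ bound, I would estimate $\CK_1$ and $\CK_2$ pointwise. The tensor difference in $\CK_1$ is $O(\|X'\|_{L_s^\infty}\,\|X'\|_{\dot{C}_s^\g}|\sigma|^\g)$ and $|\pa G|\leq C\lam^{-1}|\sigma|^{-1}$, so $|\CK_1|\lesssim \lam^{-1}\|X'\|_{L_s^\infty}\|X'\|_{\dot{C}_s^\g}|\sigma|^{\g-1}$, which is integrable. The kernel difference in $\CK_2$ satisfies $|\pa G(X(s)-X(s'))-\pa G(a\sigma)|\leq C|R|\cdot\lam^{-2}|\sigma|^{-2}\leq C\lam^{-2}\|X'\|_{\dot{C}_s^\g}|\sigma|^{\g-1}$; multiplying by $|a|^2\leq \|X'\|_{L_s^\infty}^2$ gives $|\CK_2|\lesssim \lam^{-2}\|X'\|_{L_s^\infty}^2\|X'\|_{\dot{C}_s^\g}|\sigma|^{\g-1}$. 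Integration over $\BT$ and the elementary bound $\|X'\|_{L_s^\infty}/\lam\geq 1$ absorbs the $\CK_1$ contribution into that of $\CK_2$, yielding the first estimate.

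For the $\dot{C}_s^{2\g}$ bound with $\g<\tfrac12$, I would estimate $g_X(s_1)-g_X(s_2)$ with $h:=|s_1-s_2|_\BT$ by splitting the $s'$-integration into a near zone $\{|s'-s_1|\leq 4h\}$ and a far zone. In the near zone, the pointwise $L^\infty$-type bounds on $\CK$ give $O(h^\g)$ in each term $g_X(s_j)$, whose combination carries the right $h^{2\g}$ after trading one $\|X'\|_{L_s^\infty}$ for $\|X'\|_{\dot{C}_s^\g}\cdot h^\g$. In the far zone, rewrite the difference of kernels using the reparameterized variable $\tau = s'-s_j$ and apply the second-difference bound
\[
\bigl|[X'(s_1+\tau)-X'(s_1)] - [X'(s_2+\tau)-X'(s_2)]\bigr|\leq C\|X'\|_{\dot{C}_s^\g}\min(|\tau|, h)^\g
\]
together with the mean value theorem applied to $\pa G$ in its spatial slot. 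This converts one $\|X'\|_{L_s^\infty}$ into a second $\|X'\|_{\dot{C}_s^\g}$ and contributes the extra $h^\g$ needed to reach $h^{2\g}$. For the $\dot{C}_s^{1,2\g-1}$ bound with $\g>\tfrac12$, I would differentiate $g_X$ in $s$, commuting $\pa_s$ with the principal value and absorbing the resulting extra $|\sigma|^{-1}$ using the well-stretched bound (this is the source of the additional $\lam^{-1}$ factor), and then rerun the near/far splitting on $\pa_s g_X$ targeting $\dot{C}_s^{2\g-1}$ regularity.

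The hard part is the second-difference bookkeeping in the third paragraph. To achieve the quadratic dependence on $\|X'\|_{\dot{C}_s^\g}$ rather than the naive $\|X'\|_{L_s^\infty}\|X'\|_{\dot{C}_s^\g}$, one must track the kernel expansion at $s_1$ and $s_2$ so that each difference in $s$ pairs with a genuine H\"older increment of $X'$ rather than being absorbed into its $L^\infty$ norm; this requires several algebraic rearrangements of the differences $\CK(s_1,\cdot)-\CK(s_2,\cdot)$ depending on whether the difference acts on the base point, on $X'(s')$, or on the argument of $\pa G$. For the $\g>\tfrac12$ case, the differentiation step must also be justified carefully because of the conditional convergence of the defining integral, and this is where the additional factor of $\lam^{-1}$ enters via a further application of the lower bound $|X(s)-X(s')|\geq \lam|\sigma|$.
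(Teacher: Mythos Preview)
Your decomposition has a genuine error at the very first step. The identity
\[
\pv\int_\BT \pa G(a\sigma)[a,a]\,ds' = -\tfrac14\Lam X(s)
\]
is false: by homogeneity and the algebraic identity $x_k\pa_k G_{ij}(x)=-\tfrac{1}{4\pi}\d_{ij}$, the integrand equals $-\tfrac{a_i}{4\pi\sigma}$, whose principal value over a symmetric interval is zero. Consequently your remainder integral $\int(\CK_1+\CK_2)$ equals $g_X+\tfrac14\Lam X$, not $g_X$. For the $L^\infty$ bound this is harmless, since $\|\Lam X\|_{L^\infty}\lesssim \|X'\|_{\dot C^\g}$ can be absorbed using $\|X'\|_{L^\infty}\geq\lam$. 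But for the H\"older estimate it is fatal: $\Lam X$ lies only in $C^\g$, not in $C^{2\g}$, so your $\int(\CK_1+\CK_2)$ cannot belong to $C^{2\g}$, and no amount of near/far splitting will produce the claimed $h^{2\g}$ bound. The ``trading one $\|X'\|_{L^\infty}$ for $\|X'\|_{\dot C^\g}h^\g$'' step is exactly where this manifests---it cannot be done within your decomposition.

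The paper's fix is to linearize at $X'(s')$ (the integration variable) rather than at $a=X'(s)$: writing $L(s,s')=(X(s')-X(s))/(s'-s)$ and subtracting $\pa_k G_{ij}(X'(s'))X'_k(s')X'_j(s')=-\tfrac{1}{4\pi(s-s')}X'_i(s')$ correctly produces the Hilbert transform of $X'$, cancelling the $\tfrac14\Lam X$ in the definition of $g_X$. For the H\"older estimate the paper goes one step further: since $\int_\BT\pa_{s'}[G(X(s)-X(s'))]c\,ds'=0$ for any constant $c$, one may insert $X'_j(s')-X'_j(s)$ in place of $X'_j(s')$ in one slot, which makes the integrand carry two factors of $\|X'\|_{\dot C^\g}|\sigma|^\g$ (one from $L(s,s')-X'(s')$, one from $X'(s')-X'(s)$). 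This is what yields $|\sigma|^{2\g-1}$ in the near zone and the quadratic dependence on $\|X'\|_{\dot C^\g}$. The $\g>\tfrac12$ case follows the same pattern after one $s$-derivative, using the analogous quotient $\tilde L(s,s')=(X(s')-X(s))/(2\sin\tfrac{s'-s}{2})$.
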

We highlight that the H\"{o}lder estimates above have exactly quadratic dependence on $\|X'\|_{\dot{C}_s^\g}$, which is crucial for deriving the regularity estimates for $(u,X)$ in Section \ref{sec: properties of mild solutions} with the optimal time-singularity at $t=0$.

In a similar spirit, we also have that
\begin{lem}
\label{lem: improved estimates for g_X-g_Y}
Suppose $X,Y\in O^\lam$.
Let $g_X$ and $g_Y$ be defined in \eqref{eqn: def of g_X} in terms of $X$ and $Y$, respectively.
Then
\begin{align*}
\|g_X-g_Y\|_{L_s^\infty(\BT)}
\leq &\; C\lam^{-2} \big(\|X'\|_{L_s^\infty}+\|Y'\|_{L_s^\infty}\big)^2\\
&\;\cdot \Big[\|X'-Y'\|_{\dot{C}_s^\g}
+ \lam^{-1} \big(\|X'\|_{\dot{C}_s^\g}+\|Y'\|_{\dot{C}_s^\g}\big) \|X'-Y'\|_{L_s^\infty} \Big] .
\end{align*}
If $\g\in (0,\f12)$,
\begin{align*}
\|g_X-g_Y\|_{\dot{C}_s^{2\g}(\BT)}
\leq &\; C \lam^{-2}
\big(\|X'\|_{L_s^\infty}+\|Y'\|_{L_s^\infty}\big) \big(\|X'\|_{\dot{C}_s^\g}+\|Y'\|_{\dot{C}_s^\g}\big)\\
&\;\cdot \Big[\|X'-Y'\|_{\dot{C}_s^\g} + \lam^{-1} \big(\|X'\|_{\dot{C}_s^\g}+\|Y'\|_{\dot{C}_s^\g}\big) \|X'-Y'\|_{L_s^\infty}\Big].
\end{align*}
If $\g\in (\f12,1)$,
\begin{align*}
\|(g_X-g_Y)'\|_{\dot{C}_s^{2\g-1}(\BT)}
\leq &\; C\lam^{-3} \big(\|X'\|_{L_s^\infty}+\|Y'\|_{L_s^\infty}\big)^2
\big(\|X'\|_{\dot{C}_s^\g}+\|Y'\|_{\dot{C}_s^\g}\big)\\
&\;\cdot \Big[\|X'-Y'\|_{\dot{C}_s^\g}
+ \lam^{-1} \big(\|X'\| _{\dot{C}_s^\g}+\|Y'\|_{\dot{C}_s^\g}\big) \|X'-Y'\|_{L_s^\infty} \Big].
\end{align*}
Here $C$'s are constants that only depend on $\g$.
\end{lem}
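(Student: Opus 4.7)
The plan is to adapt the proof of Lemma~\ref{lem: improved estimates for g_X} by writing $g_X-g_Y$ as a multilinear difference that is linear either in the tangent difference $X'-Y'$ or in the position difference $X-Y$, with the remaining factors expressed via $\lam$, $\|X'\|_{L^\infty_s}+\|Y'\|_{L^\infty_s}$, and $\|X'\|_{\dot C^\g_s}+\|Y'\|_{\dot C^\g_s}$. A naive interpolation $Z_\theta := (1-\theta)Y + \theta X$ is unsuitable because $Z_\theta$ need not be well-stretched even when $X,Y\in O^\lam$; instead I shall use an algebraic splitting that only invokes the individual well-stretched bounds on $X$ and $Y$. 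Concretely, from the definition \eqref{eqn: def of g_X} one writes
\begin{align*}
g_X(s)-g_Y(s) &= R_2(s) + R_1(s),\\
R_2(s) &:= \mathrm{p.v.}\!\int_\BT -\pa_{s'}\bigl[G(X(s)-X(s'))\bigr](X'-Y')(s')\,ds' + \tfrac14\Lam(X-Y)(s),\\
R_1(s) &:= \mathrm{p.v.}\!\int_\BT -\pa_{s'}\bigl[G(X(s)-X(s'))-G(Y(s)-Y(s'))\bigr]Y'(s')\,ds'.
\end{align*}
The term $R_2$ has exactly the structure of $g_X$ itself, with the tangent factor $X'$ replaced by $X'-Y'$; the same extraction of the principal symbol near the diagonal shows that $\tfrac14\Lam(X-Y)$ absorbs the leading singularity, and the residual is controlled by the proof of Lemma~\ref{lem: improved estimates for g_X} applied with $(X'-Y')$ in the inner tangent slot. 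This yields precisely the first summand in each bracketed right-hand side.

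For $R_1$, I will decompose the kernel difference algebraically via identities such as
\[
\f{1}{|X(s)-X(s')|^2}-\f{1}{|Y(s)-Y(s')|^2}
= \f{(|Y(s)-Y(s')|-|X(s)-X(s')|)(|Y(s)-Y(s')|+|X(s)-X(s')|)}{|X(s)-X(s')|^2\,|Y(s)-Y(s')|^2},
\]
together with analogous expansions for the numerator factors $X_i(s)-X_i(s')$ and for the logarithm in the Stokeslet \eqref{eqn: Stokeslet in 2-D}. Combined with the pointwise estimate $\bigl||X(s)-X(s')|-|Y(s)-Y(s')|\bigr|\le \|X'-Y'\|_{L^\infty_s}|s-s'|$ and with the individual well-stretched bounds $|X(s)-X(s')|,|Y(s)-Y(s')|\ge\lam|s-s'|$, this yields pointwise kernel estimates carrying an extra factor $\lam^{-1}\|X'-Y'\|_{L^\infty_s}$ relative to those for $-\pa_{s'}G(X(s)-X(s'))$ alone, matching the extra $\lam^{-1}$ and $\|X'-Y'\|_{L^\infty_s}$ in the second bracketed summand. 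The remaining $Y'(s')$ factor and the integration over $\BT$ are handled exactly as in Lemma~\ref{lem: improved estimates for g_X}, contributing the prefactor $(\|X'\|_{L^\infty_s}+\|Y'\|_{L^\infty_s})^2$.

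The H\"older estimates $\dot C_s^{2\g}$ (for $\g<\tfrac12$) and $\dot C_s^{1,2\g-1}$ (for $\g>\tfrac12$) follow by taking either a second finite difference in $s$ or one $s$-derivative, and then repeating the ``at most two H\"older slots'' bookkeeping from Lemma~\ref{lem: improved estimates for g_X}: at most two tangent factors may simultaneously carry $\dot C^\g$ norms, so with one slot occupied by the difference factor $\|X'-Y'\|_{\dot C^\g_s}$ (coming from $R_2$) or $\|X'-Y'\|_{L^\infty_s}$ (from the kernel expansion of $R_1$), the remaining slot is filled by $\|X'\|_{\dot C^\g_s}+\|Y'\|_{\dot C^\g_s}$, producing the claimed quadratic (rather than cubic) dependence on the H\"older seminorms. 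The main obstacle will be the $\dot C_s^{1,2\g-1}$ estimate for $\g>\tfrac12$: differentiating $R_1$ in $s$ distributes the derivative across both kernel factors $G(X(s)-X(s'))$ and $G(Y(s)-Y(s'))$ and, after integration by parts, onto $Y'(s')$, generating numerous cross terms; verifying that each is controlled with precisely the prefactor $\lam^{-3}(\|X'\|_{L^\infty_s}+\|Y'\|_{L^\infty_s})^2(\|X'\|_{\dot C^\g_s}+\|Y'\|_{\dot C^\g_s})$ -- no worse and no better -- requires a careful case analysis duplicating the template of Lemma~\ref{lem: improved estimates for g_X} to account for the $X$-versus-$Y$ splitting.
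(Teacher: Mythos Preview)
Your algebraic split $g_X-g_Y=R_1+R_2$ is fine, but the claimed source of the $\|X'-Y'\|_{\dot C^\g_s}$ term (the first summand in each bracket) is wrong, and with the estimates you describe neither $R_1$ nor $R_2$ produces it. In the $L^\infty$ bound for $g_X$, the single H\"older factor $\|X'\|_{\dot C^\g_s}$ enters only through $|L_X(s,s')-X'(s')|\le C\|X'\|_{\dot C^\g_s}|s-s'|^\g$, i.e.\ through the curve inside the kernel, not through either tangent factor. Replacing the outer tangent $X'_j(s')$ by $(X-Y)'_j(s')$ in $R_2$ therefore yields
\[
\|R_2\|_{L^\infty_s}\lesssim \lam^{-2}\|X'\|_{L^\infty_s}\|X'\|_{\dot C^\g_s}\,\|X'-Y'\|_{L^\infty_s},
\]
a contribution to the \emph{second} summand, not the first. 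Likewise, your pointwise kernel estimate for $R_1$, based on $\bigl||X(s)-X(s')|-|Y(s)-Y(s')|\bigr|\le\|X'-Y'\|_{L^\infty_s}|s-s'|$, again only produces a factor $\|X'-Y'\|_{L^\infty_s}$. With these ingredients alone the lemma as stated is not proved.

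The missing step is a second-order (double-difference) kernel estimate,
\[
\bigl|\na G(L_X)-\na G(X')-\na G(L_Y)+\na G(Y')\bigr|
\lesssim |s-s'|^\g\,\lam^{-2}\Bigl[\|X'-Y'\|_{\dot C^\g_s}+\lam^{-1}\bigl(\|X'\|_{\dot C^\g_s}+\|Y'\|_{\dot C^\g_s}\bigr)\|X'-Y'\|_{L^\infty_s}\Bigr],
\]
which is exactly what supplies the $\|X'-Y'\|_{\dot C^\g_s}$ term. The paper obtains it precisely through the interpolation $Z_\theta=(1-\theta)Y+\theta X$ that you dismiss: the well-stretchedness obstruction is resolved by a case split. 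When $\|X'-Y'\|_{L^\infty_s}\le\lam/2$ one has $|Z_\theta|_*\ge\lam/2$ and the interpolation goes through, giving both bracket terms; when $\|X'-Y'\|_{L^\infty_s}>\lam/2$ the double difference is bounded trivially by the sum of two single differences and the result is absorbed into the second summand via $1\le 2\lam^{-1}\|X'-Y'\|_{L^\infty_s}$. The same double-difference mechanism (and its analogue for $\tilde L$) drives the $\dot C^{2\g}$ and $\dot C^{1,2\g-1}$ estimates; without it your bookkeeping in the last paragraph cannot place $\|X'-Y'\|_{\dot C^\g_s}$ in one of the ``at most two H\"older slots''.
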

The above two lemmas can be justified as the existing estimates in \cite{Chen2024wellposedness_quasilinear,mori2019well}.
One only has to carefully track the dependence on various norms of $X$, $Y$, and $X-Y$.
Nevertheless, for completeness, we still provide self-consistent new proofs in Appendix \ref{sec: improved estimates for g_X}, which are of independent interest.

\subsection{Estimates for $u_X$ and $u_X^\nu$}
By the definition of $u_X$ (see \eqref{eqn: expression of u_11 alternative general c} and also \eqref{eqn: u_X^nu in terms of u_X}),
\beq
-\Delta e^{t\Delta}u_{X}(x) = \int_{\T}K(x-X(s),t)X''(s)\,ds
= -\int_{\T}\pa_{s}[K(x-X(s))](X'(s)-c)\,ds,
\label{eqn: Laplace of heat kernel applied to u_X}
\eeq
where $c\in \BR^2$ is arbitrary.
Here $K$ denotes the fundamental solution of the linearized Navier-Stokes equation in $\BR^2$:
\beqo
K(x,t):=\f{e^{-|x|^2/(4t)}}{4\pi t}\left(\Id-\f{x\otimes x}{|x|^2}\right)-\f{1-e^{-|x|^2/(4t)}}{2\pi |x|^2}\left(\Id-\f{2x\otimes x}{|x|^2}\right).
\eeqo
It satisfies the following estimates.

\begin{lem}
\label{lem: estimate for K}
$K(x,t)\in C^{\infty}(\R^2\times(0,+\infty))$, and for any $k\in \BN$,
\beqo
|\na_x^k K(x,t)|\leq C(t+|x|^2)^{-(2+k)/2}.
\eeqo
Moreover,
\[
|\na_x^k K(x,t)-\na_y^k K(y,t)|
\leq C|x-y|\left((t+|x|^2)^{-(3+k)/2}+(t+|y|^2)^{-(3+k)/2}\right).
\]
Here the constants $C$'s only depend on $k$.
\begin{proof}
For $z\in(0,+\infty)$, let
\beqo
\va(z):=e^{-z}-\f{1-e^{-z}}{2z},\quad
\psi(z):=\f{1}{z}\left(e^{-z}-\f{1-e^{-z}}{z}\right)
= -\f{1-e^{-z}(1+z)}{z^2} = \f{d}{dz}\left(\f{1-e^{-z}}{z}\right).
\eeqo
We additionally define $\va(0)=1/2$ and $\psi(0)=-1/2$.
It is not difficult to show by induction that, for all $k\in \BN$ and $z>0$,
\[
\left(\f{d}{dz}\right)^k\left(\f{1-e^{-z}}{z}\right)
= (-1)^k\cdot \f{k!}{z^{k+1}}
\left[1-e^{-z}\sum_{j = 0}^k\f{z^j}{j!}\right].
\]
As a result,
\begin{itemize}
\item $\va(z), \psi(z)\in C^{\infty}([0,+\infty))$;
\item and for all $k\in \BN$ and $z\geq 0$,
\[
|\pa_z^k \va(z)|\leq C_k(1+z)^{-1-k}, \quad
|\pa_z^k \psi(z)|\leq C_k(1+z)^{-2-k},
\]
where $C_k$'s are universal constants depending on $k$.
\end{itemize}

With these notations,
\beqo
K(x,t)=\f{\va(|x|^2/(4t))}{4\pi t}\Id-\f{\psi(|x|^2/(4t))}{\pi (4t)^2}x\otimes x.
\eeqo
Then the desired claims follow from induction and direct calculation.
\end{proof}
\end{lem}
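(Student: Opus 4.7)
The plan is to exploit the self-similar radial structure of $K$ in order to reduce everything to one-variable calculus. Specifically, I would introduce the profile functions
\[
\varphi(z) := e^{-z} - \frac{1-e^{-z}}{2z}, \qquad \psi(z) := \frac{1}{z}\left(e^{-z} - \frac{1-e^{-z}}{z}\right),
\]
extended by their limits at $z=0$, and write
\[
K(x,t) = \frac{\varphi(|x|^2/(4t))}{4\pi t}\,\Id - \frac{\psi(|x|^2/(4t))}{16\pi t^2}\,x\otimes x.
\]

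First I would check that $\varphi$ and $\psi$ lie in $C^\infty([0,\infty))$. The only issue is the apparent singularity at $z=0$, which is removed once one notes that $(1-e^{-z})/z$ has the convergent power series $\sum_{j\geq 0}(-z)^j/(j+1)!$. By induction one obtains the closed form
\[
\left(\tfrac{d}{dz}\right)^{\!k}\!\!\left(\tfrac{1-e^{-z}}{z}\right) = (-1)^k\,\tfrac{k!}{z^{k+1}}\!\left[1-e^{-z}\sum_{j=0}^k\tfrac{z^j}{j!}\right],
\]
from which the decay bounds
\[
|\varphi^{(k)}(z)| \leq C_k(1+z)^{-1-k}, \qquad |\psi^{(k)}(z)| \leq C_k(1+z)^{-2-k}
\]
follow at once, since $\psi$ is itself a first derivative of $(1-e^{-z})/z$ and $\varphi$ admits an analogous identity.

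Next I would differentiate $K$ using the chain rule. Each derivative in $x$ either differentiates the composition $z = |x|^2/(4t)$ (producing a factor of $x/(2t)$ together with $\varphi^{(j)}$ or $\psi^{(j)}$) or hits the tensor $x\otimes x$. Carrying this out, $\nabla_x^k K(x,t)$ is a finite sum of terms of the form $t^{-(2+k+j)}P(x)\,\varphi^{(j)}(|x|^2/(4t))$ or the analogous expression with $\psi^{(j)}$, where $P$ is a polynomial of degree at most $k+2j$ (respectively $k+2j+2$) and $j\leq k$ (respectively $k+1$). Substituting the decay bounds from the previous step, each term is controlled by $C\, t^{-(2+k)/2}(1+|x|^2/t)^{-(2+k)/2} = C(t+|x|^2)^{-(2+k)/2}$, yielding the first estimate; smoothness on $\R^2\times(0,\infty)$ is automatic from this representation.

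For the Lipschitz-type estimate I would split into two regimes. In the regime $|x-y|\geq\tfrac12\min\!\big(\sqrt{t+|x|^2},\,\sqrt{t+|y|^2}\big)$, the desired bound follows directly from the triangle inequality combined with the pointwise estimate $|\nabla_x^k K| \leq C(t+|\cdot|^2)^{-(2+k)/2}$, absorbing one factor of $\sqrt{t+|x|^2}$ (or $\sqrt{t+|y|^2}$) into $|x-y|$. In the complementary regime, $|\xi|$ and hence $t+|\xi|^2$ remain comparable to both $t+|x|^2$ and $t+|y|^2$ uniformly along the segment $\xi = x + \theta(y-x)$, $\theta\in[0,1]$, so the mean value theorem applied with the bound on $\nabla^{k+1}K$ from the previous paragraph yields the claim. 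I expect the genuinely delicate point to be removing the apparent singularity at $z=0$ in the profile functions; the case analysis for the Lipschitz bound and the polynomial book-keeping in the chain rule are tedious but routine.
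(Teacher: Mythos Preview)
Your proposal is correct and follows essentially the same approach as the paper: you introduce the same profile functions $\varphi,\psi$, establish the same closed-form derivative identity for $(1-e^{-z})/z$ and the resulting decay bounds, and then reduce the pointwise and Lipschitz estimates to chain-rule bookkeeping (the paper compresses this last step into ``induction and direct calculation''). Your two-regime argument for the Lipschitz bound is the standard way to unpack that step and is exactly what the paper intends.
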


The following two lemmas will be useful in the calculation below.
\begin{lem}
\label{lem: lower bound for x-X(s)}
Suppose $|X|_*\geq\lam$. Let $d(x)$ and $s_x$ be defined as in \eqref{eqn: def of d(x,t)} and \eqref{eqn: def of s_x}, respectively.
Then for any $x\in \BR^2$ and $s\in \BT$,
\beq
|x-X(s)|\geq \max\left\{d(x),\,\f{\lam}{2}|s_x-s|_\BT\right\}.
\label{eqn: lower bound for distance of x to a string point}
\eeq
\begin{proof}
It follows from \cite[Equation (A.1)]{lin2019solvability} and the definition of $d(x)$.
\end{proof}
\end{lem}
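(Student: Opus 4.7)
The plan is to establish the two lower bounds separately. The bound $|x-X(s)| \geq d(x)$ is immediate from the definition of $d(x) = \inf_{s'\in\BT} |x - X(s')|$, so the only real content is the second inequality $|x-X(s)| \geq \frac{\lambda}{2}|s_x - s|_\BT$.

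For this, I would combine the triangle inequality with the well-stretched condition. First, by the triangle inequality applied to the three points $x$, $X(s_x)$, and $X(s)$,
\[
|X(s_x) - X(s)| \leq |X(s_x) - x| + |x - X(s)| = d(x) + |x - X(s)|,
\]
where the last equality uses the defining property $|x - X(s_x)| = d(x)$ from \eqref{eqn: def of s_x}. Rearranging and invoking the well-stretched hypothesis $|X|_* \geq \lambda$ (which, by definition, gives $|X(s_x) - X(s)| \geq \lambda |s_x - s|_\BT$), one obtains
\[
|x - X(s)| \geq \lambda |s_x - s|_\BT - d(x).
\]

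Now I would add this to the trivial bound $|x - X(s)| \geq d(x)$ to cancel the unfavorable $-d(x)$ term, yielding $2|x - X(s)| \geq \lambda |s_x - s|_\BT$, which gives the desired second inequality. Taking the maximum of the two bounds then proves \eqref{eqn: lower bound for distance of x to a string point}. There is no real obstacle here; the argument is a one-line triangle inequality combined with the definitions, and the factor $1/2$ arises naturally from the averaging step used to eliminate $d(x)$.
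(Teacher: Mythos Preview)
Your proof is correct and complete. The paper itself does not give an argument but simply cites \cite[Equation (A.1)]{lin2019solvability}; your triangle-inequality-plus-averaging argument is precisely the elementary computation one would find behind that citation, so there is no meaningful difference in approach.
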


\begin{lem}
\label{lem: a typical integral}
Suppose $|X|_*\geq\lam$.
For any $x\in\R^2$, $t>0$, and $m>1/2$,
\[
\int_{\T} \f{ds}{(t+|x-X(s)|^2)^m}\leq C\min(\lam^{-1}t^{1/2-m},\, t^{-m}),
\]
where the constant $C$ depends only on $m$.
\begin{proof}
It follows from Lemma \ref{lem: lower bound for x-X(s)} and direct calculation.
\end{proof}
\end{lem}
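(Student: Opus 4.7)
The plan is to derive two separate upper bounds and then take their minimum. The first bound, $C t^{-m}$, comes from the crudest possible estimate: since $t + |x - X(s)|^2 \geq t$ pointwise, the integrand is bounded by $t^{-m}$, and integrating the constant over $\mathbb{T}$ (which has length $2\pi$) immediately yields $2\pi t^{-m}$.

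For the sharper bound $C \lambda^{-1} t^{1/2 - m}$, I would invoke Lemma \ref{lem: lower bound for x-X(s)}, which gives $|x - X(s)| \geq \tfrac{\lambda}{2} |s_x - s|_\BT$. Substituting this into the denominator yields
\[
\int_\BT \f{ds}{(t + |x - X(s)|^2)^m} \leq \int_\BT \f{ds}{\bigl(t + \tfrac{\lambda^2}{4} |s_x - s|_\BT^2\bigr)^m}.
\]
Next I would perform the change of variable $u = \tfrac{\lambda}{2\sqrt{t}}(s - s_x)$, under which $ds = \tfrac{2\sqrt{t}}{\lambda}\,du$ and $t + \tfrac{\lambda^2}{4}|s_x - s|_\BT^2 = t(1 + u^2)$. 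The bound becomes
\[
\f{2}{\lambda}\, t^{1/2 - m} \int_I \f{du}{(1 + u^2)^m},
\]
where $I$ is a bounded interval corresponding to $s \in \BT$. Extending to all of $\BR$ (legitimate by non-negativity) and using the hypothesis $m > 1/2$ to ensure convergence of the dilogarithmic tail, the final integral is a finite constant $C(m)$, giving the desired $C(m) \lambda^{-1} t^{1/2-m}$.

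Taking the minimum of these two bounds delivers the conclusion. There is essentially no obstacle here: the two bounds capture complementary regimes (the $\lambda^{-1} t^{1/2-m}$ estimate is sharper when $t \lesssim \lambda^2$, while the $t^{-m}$ estimate wins when $t \gg \lambda^2$), and the hypothesis $m > 1/2$ enters precisely at the one point where it is needed, namely the convergence of $\int_\BR (1+u^2)^{-m}\,du$. The well-stretched condition encoded in Lemma \ref{lem: lower bound for x-X(s)} is what reduces the curve integral to a one-variable calculation, effectively replacing $X(\BT)$ by a straight line segment of length proportional to $\lambda$ for the purpose of this estimate.
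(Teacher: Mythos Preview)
Your proposal is correct and is precisely the ``direct calculation'' the paper alludes to: the trivial bound $t+|x-X(s)|^2\geq t$ gives the $t^{-m}$ estimate, and the lower bound from Lemma~\ref{lem: lower bound for x-X(s)} together with the scaling substitution reduces the other estimate to the finite integral $\int_{\BR}(1+u^2)^{-m}\,du$. (Minor wording note: ``dilogarithmic tail'' is not the right term---the integrand simply has polynomial decay of order $2m>1$.)
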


Now we are ready to bound $u_X$ and $u_X^\nu$.
\begin{proof}[Proof of Lemma \ref{lem: estimate for u_X}]
By \eqref{eqn: Laplace of heat kernel applied to u_X}, for $k \in \BN$,
\beq
\begin{split}
-\na^k \Delta e^{t\Delta}u_{X}(x)_i
= &\; \int_{\T}X_l'(s)\pa_{l}\na^k K_{ij}(x-X(s),t)(X'(s)-X'(s_x))_j\,ds\\
= &\; \int_{\T}X_l'(s)\pa_{l}\na^k K_{ij}(x-X(s),t)X_j'(s)\,ds.
\end{split}
\label{eqn: representation of Delta heat kernel applied to u_X}
\eeq
Here and in what follows, we adopt the Einstein summation convention.

From the first representation, we can derive with Lemma \ref{lem: estimate for K} and Lemma \ref{lem: lower bound for x-X(s)} (also see Lemma \ref{lem: a typical integral}) that, for $k\in \BN$,
\beq
\begin{split}
|\na^k \Delta e^{t\Delta}u_{X}(x)|
\leq &\; C\int_{\T}\frac{|X'(s)||X'(s)-X'(s_x)|}{(t+|x-X(s)|^2)^{(3+k)/2}}\,ds
\leq C \int_{\T}\f{\|X'\|_{L_s^{\infty}}\|X'\|_{\dot{C}_s^\g} |s-s_x|^\g} {(t+\lambda^2|s-s_x|^2)^{(3+k)/2}}\,ds\\
\leq &\; C \lambda^{-1-\g}t^{-(2+k-\g)/2} \|X'\|_{L_s^{\infty}}\|X'\|_{\dot{C}_s^\g},
\end{split}
\label{eqn: estimate for Laplace of heat kernel applied to u_X}
\eeq
where the constant $C$ depends on $k$ and $\g$.
On the other hand, \eqref{eqn: estimate for Laplace of heat kernel applied to u_X} also gives
\[
|\na^k \Delta e^{t\Delta}u_{X}(x)|
\leq C\int_{\T}\frac{\lam^{-\g}\|X'\|_{L_s^{\infty}}\|X'\|_{\dot{C}_s^\g}} {(t+ |x-X(s)|^2)^{(3+k-\g)/2}}\,ds.
\]
Hence,
\[
\|\na^k \Delta e^{t\Delta}u_{X}\|_{L_x^1(\BR^2)}
\leq C \int_{\BR^2}\f{ \lam^{-\g} \|X'\|_{L_s^{\infty}}\|X'\|_{\dot{C}_s^\g}}{(t+ |x|^2)^{(3+k-\g)/2}}\,dx
\leq C\lam^{-\g} t^{-(1+k-\g)/2}\|X'\|_{L_s^{\infty}}\|X'\|_{\dot{C}_s^\g},
\]
where $C$ also depends on $k$ and $\g$.

From the second representation in \eqref{eqn: representation of Delta heat kernel applied to u_X}, we similarly argue that
\begin{align*}
|\na^k \Delta e^{t\Delta}u_{X}(x)|
\leq &\; C\int_{\T}\frac{|X'(s)|^2}{(t+|x-X(s)|^2)^{(3+k)/2}}\,ds
\leq C\int_{\T}\frac{|X'(s)|^2}{(t+\lam^2|s-s_x|^2)^{(3+k)/2}}\,ds
\\
\leq &\; C\min\big(\lambda^{-1}t^{-(2+k)/2}\|X'\|_{L_s^\infty}^2,\, t^{-(3+k)/2}\|X'\|_{L_s^2}^2\big),
\end{align*}
where the constant $C$ only depends on $k$.
On the other hand, integrating this on $\BR^2$ yields that
\[
\|\na^k \Delta e^{t\Delta}u_{X}\|_{L_x^1(\R^2)}
\leq C\int_{\T}|X'(s)|^2\left(\int_{\BR^2}\f1{(t+|x|^2)^{(3+k)/2}}\,dx\right)ds
\leq C t^{-(1+k)/2} \|X'\|_{L_s^2}^2,
\]
where the constant $C$ only depends on $k$ as well.

To prove the estimates for $u_X$, we observe that
\beq
u_{X} =-\int_0^{\infty}\Delta e^{\tau\Delta}u_{X}\, d\tau.
\label{eqn: representation of u_X using time integral}
\eeq
Hence, with $X\in O^{\lam}$,
\begin{align*}
\|u_{X}\|_{L_x^{\infty}(\R^2)}
\leq &\; \int_0^{\infty}\|\Delta e^{\tau\Delta}u_{X}\|_{L_x^{\infty}(\R^2)}\,d\tau\\
\leq &\; C\int_0^{\infty}\min(\lambda^{-1-\gamma}\tau^{-(2-\g)/2},\,\tau^{-3/2}) \|X'\|_{\dot{C}_s^{\gamma}} \|X'\|_{L_s^{\infty}} \, d\tau\\
\leq &\; C\lambda^{-1}\|X'\|_{\dot{C}_s^{\gamma}}\|X'\|_{L_s^{\infty}},
\end{align*}
where the constant $C$ depends on $\g$.
For any $p\in (2,\infty)$,
\begin{align*}
\|u_{X}\|_{L_x^p(\R^2)}
\leq &\; \int_0^{\infty}\|\Delta e^{\tau\Delta}u_{X}\|_{L_x^p(\R^2)}\,d\tau\\
\leq &\; \int_0^{\infty}\|\Delta e^{\tau\Delta}u_{X}\|_{L_x^1(\R^2)}^{\f1p}
\|\Delta e^{\tau\Delta}u_{X}\|_{L_x^\infty(\R^2)}^{1-\f1p}\,d\tau\\
\leq &\; C\int_0^{\infty}
\left[\tau^{-1/2} \|X'\|_{L_s^2}^2\right]^{\f1p}
\left[\min\big(\lambda^{-1}\tau^{-1}\|X'\|_{L_s^\infty}^2,\, \tau^{-3/2}\|X'\|_{L_s^2}^2\big)\right]^{1-\f1p} \, d\tau\\
\leq &\; C\lambda^{-1+\f{2}{p}}\|X'\|_{L_s^2}^{\f{4}{p}}
\|X'\|_{L_s^{\infty}}^{2-\f{4}{p}},
\end{align*}
where the constant $C$ depends on $p$.
Finally, applying Lemma \ref{lem: interpolation} below with $\va \equiv 1$, we also have that
\begin{align*}
\|u_{X}\|_{\dot{C}_x^{\g}(\R^2)}
\leq &\; C\left|\sup_{\tau>0}\tau^{(2-\g)/2}\|\D e^{\tau\D}u_{X}\|_{L_x^{\infty}(\R^2)}\right|^{1-\g}
\left|\sup_{\tau>0}\tau^{(3-\g)/2}\|\na\D e^{\tau\D}u_{X}\|_{L_x^{\infty}(\R^2)}\right|^{\g}\\
\leq &\; C\lam^{-1-\g} \|X'\|_{\dot{C}_s^{\gamma}} \|X'\|_{L_s^{\infty}},
\end{align*}
where the constant $C$ depends on $\g$.

This completes the proof.
\end{proof}

In the proof above, we used the following lemma on H\"{o}lder estimate of an integral term.

\begin{lem}
\label{lem: interpolation}
Assume that $\va:(0,+\infty)\to (0,+\infty)$ is decreasing on $(0,+\infty)$ and it is integrable on any bounded interval.
Then for $\al\in (0,1)$ and any $t>0$,
\begin{align*}
\left\|\int_0^t v(x,\tau)\, d\tau\right\|_{\dot{C}_x^{\alpha}(\R^2)}
\leq &\; \f{C}{t}\int_0^t \va(\tau)\,d\tau \left|\sup_{\eta\in(0,t)}(t-\eta)^{\f{2-\al}{2}} \va(\eta)^{-1} \|v(\cdot,\eta)\|_{L_x^{\infty}(\R^2)}\right|^{1-\al}\\
&\;\cdot \left|\sup_{\eta\in(0,t)}(t-\eta)^{\f{3-\al}{2}}\va(\eta)^{-1}\|\na v(\cdot,\eta)\|_{L_x^{\infty}(\R^2)}\right|^{\al},\\
\left\|\int_0^t V(s,\tau)\, d\tau\right\|_{\dot{C}_s^{\alpha}(\T)}
\leq &\;  \f{C}{t}\int_0^t \va(\tau)\,d\tau \left|\sup_{\eta\in(0,t)}(t-\eta)^{\f{2-\al}{2}} \va(\eta)^{-1}\|V(\cdot,\eta)\|_{L_s^{\infty}(\T)}\right|^{1-\al}\\
&\;\cdot \left|\sup_{\eta\in(0,t)}(t-\eta)^{\f{3-\al}{2}}
\va(\eta)^{-1}\|\pa_s V(\cdot,\eta)\|_{L_s^{\infty}(\T)}\right|^\al,\\
\left\|\int_0^t V(s,\tau)\, d\tau\right\|_{\dot{C}_s^{\alpha}(\T)}
\leq &\; \f{C}{t}\int_0^t \va(\tau)\,d\tau
\left|\sup_{\eta\in(0,t)}(t-\eta)^{1-\al} \va(\eta)^{-1}\|V(\cdot,\eta)\|_{L_s^\infty(\T)}\right|^{1-\al}\\
&\; \cdot \left|\sup_{\eta\in(0,t)}(t-\eta)^{2-\al}
\va(\eta)^{-1}\|\pa_s V(\cdot,\eta)\|_{L_s^{\infty}(\T)}\right|^\al,
\end{align*}
where the constants $C$'s only depend on $\al$.

Similarly,
\begin{align*}
\left\|\int_0^\infty v(x,\tau)\, d\tau\right\|_{\dot{C}_x^{\alpha}(\R^2)}
\leq &\; C\left|\sup_{\eta\in(0,\infty)}\eta^{\f{2-\al}{2}} \|v(\cdot,\eta)\|_{L_x^{\infty}(\R^2)}\right|^{1-\al} \left|\sup_{\eta\in(0,\infty)} \eta^{\f{3-\al}{2}} \|\na v(\cdot,\eta)\|_{L_x^{\infty}(\R^2)}\right|^{\al},
\end{align*}
where $C$ only depends on $\al$.
We also have other two estimates that are parallel to those above.

\begin{proof}
Denote
\begin{align*}
A:= &\; \sup_{\eta\in (0,t)}(t-\eta)^{\f{2-\al}{2}} \va(\eta)^{-1} \|v(\cdot,\eta)\|_{L_x^{\infty}(\R^2)},\\
B:= &\; \sup_{\eta\in (0,t)}(t-\eta)^{\f{3-\al}{2}} \va(\eta)^{-1} \|\na v(\cdot,\eta)\|_{L_x^{\infty}(\R^2)}.
\end{align*}
For any $x,y\in\R^2$ and $\tau\in(0,t)$,
\begin{align*}
|v(x,\tau)-v(y,\tau)|
\leq &\;\min (2\|v(\tau)\|_{L_x^\infty(\R^2)},\,|x-y|\|\nabla v(\tau)\|_{L_x^\infty(\R^2)})\\
\leq &\;
\min\left(2(t-\tau)^{\f{\alpha-2}{2}} \va(\tau) A,\, |x-y|(t-\tau)^{\f{\alpha-3}{2}} \va(\tau) B\right).
\end{align*}
Hence, with $\va$ being decreasing,
\begin{align*}
&\;\left|\int_0^t v(x,\tau)\, d\tau
-\int_0^t v(y,\tau)\, d\tau\right|
\leq \int_0^t|v(x,\tau)-v(y,\tau)|\, d\tau\\
\leq &\;
\int_0^t \min\left(2(t-\tau)^{\f{\alpha-2}{2}} A, \,|x-y|(t-\tau)^{\f{\alpha-3}{2}} B\right)\va(\tau)\, d\tau
\\
\leq &\;
C\int_0^{t/2}\min\left(t^{\f{\alpha-2}{2}}A, \,|x-y|t^{\f{\alpha-3}{2}} B\right) \va(\tau)\, d\tau\\
&\; +
C\int_{t/2}^{t}\min\left((t-\tau)^{\f{\alpha-2}{2}}A, \,|x-y|(t-\tau)^{\f{\alpha-3}{2}} B\right) \va\left(\f{t} {2}\right) d\tau
\\
\leq &\; C\left[ \va\left(\f{t}{2}\right) + \f{1}{t}\int_0^{t/2} \va(\tau)\,d\tau\right] A^{1-\al}\big(|x-y|B\big)^{\al}.
\end{align*}
This implies the desired inequality, since given the monotonicity of $\va$,
\[
\va\left(\f{t}{2}\right) + \f{1}{t}\int_0^{t/2} \va(\tau)\,d\tau
\leq \f{3}{t}\int_0^{t/2} \va(\tau)\,d\tau
\leq \f{3}{t}\int_0^{t} \va(\tau)\,d\tau.
\]

The other claims can be justified similarly.
\end{proof}
\end{lem}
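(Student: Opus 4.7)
The plan is an interpolation argument, carried out in detail for the first inequality and then adapted to the other variants. The starting point is the elementary two-sided bound
\[
|v(x,\tau)-v(y,\tau)|\leq \min\bigl(2\|v(\tau)\|_{L^\infty_x},\, |x-y|\,\|\nabla v(\tau)\|_{L^\infty_x}\bigr),
\]
which, using the definitions of $A$ and $B$ introduced in the statement (essentially to factor out the $(t-\tau)$-weights and the weight $\varphi$), becomes
\[
|v(x,\tau)-v(y,\tau)|\leq \varphi(\tau)\min\bigl(2A(t-\tau)^{(\alpha-2)/2},\, B|x-y|(t-\tau)^{(\alpha-3)/2}\bigr).
\]
I would then integrate this pointwise bound over $\tau\in(0,t)$ and split the integral at $\tau=t/2$, treating the two pieces differently.

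On $[0,t/2]$ the weight $(t-\tau)$ is comparable to $t$, so I would pull the minimum outside the integral, leaving a bound of the form $C\min\bigl(At^{(\alpha-2)/2},\,B|x-y|t^{(\alpha-3)/2}\bigr)\int_0^{t/2}\varphi(\tau)\,d\tau$. The elementary bound $\min(a,b)\leq a^{1-\alpha}b^\alpha$ converts this into $CA^{1-\alpha}(B|x-y|)^\alpha t^{\kappa}$, and a direct computation yields
\[
\kappa=\frac{(\alpha-2)(1-\alpha)+(\alpha-3)\alpha}{2}=-1,
\]
which produces precisely the $t^{-1}$ factor appearing in the claim. On $[t/2,t]$ the decreasing assumption on $\varphi$ gives $\varphi(\tau)\leq\varphi(t/2)$, and after the substitution $s=t-\tau$ the remaining task is to bound
\[
\int_0^{t/2}\min\bigl(2As^{(\alpha-2)/2},\, B|x-y|s^{(\alpha-3)/2}\bigr)\,ds
\]
by $CA^{1-\alpha}(B|x-y|)^\alpha$, uniformly in the problem parameters.

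I would carry out this last bound by a direct case split at the balance point $s_*=(B|x-y|/(2A))^2$ of the two arguments of the minimum: integrating explicitly in each of the two cases $s_*\leq t/2$ and $s_*>t/2$ yields the same homogeneous bound $CA^{1-\alpha}(B|x-y|)^\alpha$, with no residual $t$-dependence in either regime. The factor $\varphi(t/2)$ is then absorbed via the monotonicity-based inequality $\varphi(t/2)\leq \frac{2}{t}\int_0^{t/2}\varphi(\tau)\,d\tau$, which collapses both pieces into the stated form $\frac{C}{t}\int_0^t\varphi(\tau)\,d\tau\cdot A^{1-\alpha}(B|x-y|)^\alpha$.

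The main obstacle, and it is a mild one, is the case analysis in $s_*$; once one checks that both subcases produce the same homogeneous bound, the rest is bookkeeping, and the two uses of monotonicity (namely $\varphi(\tau)\leq\varphi(t/2)$ on $[t/2,t]$, and $\varphi(t/2)\leq\frac{2}{t}\int_0^{t/2}\varphi$ at the end) are routine. The remaining three finite-time inequalities in the lemma follow by the same scheme with only cosmetic changes: in the two $V(s,\tau)$ estimates on $\BT$, one replaces $|x-y|$ by $|s_1-s_2|_\BT$ and correspondingly adjusts the $(t-\eta)$-exponents in the definitions of the supremum norms, but in each variant the two exponents of $(t-\tau)$ inside the minimum still sum to $-1$, so the proof transfers verbatim. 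The three infinite-time estimates are strictly easier, since the monotonicity hypothesis on $\varphi$ is no longer needed: one replaces each $L^\infty$ norm by its $\eta$-weighted supremum, integrates over $(0,\infty)$, and extracts the homogeneous bound directly from $\min(a,b)\leq a^{1-\alpha}b^\alpha$.
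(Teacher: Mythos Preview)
Your proposal is correct and follows essentially the same approach as the paper: the same two-sided pointwise bound, the same split of the $\tau$-integral at $t/2$, the same two uses of monotonicity of $\varphi$, and the same reduction to the homogeneous bound $A^{1-\alpha}(B|x-y|)^{\alpha}$. Your write-up is in fact slightly more explicit than the paper's (you spell out the exponent computation $\kappa=-1$ and the case split at the balance point $s_*$, whereas the paper simply asserts the final bound on the $[t/2,t]$-piece), but there is no substantive difference in method.
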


\begin{proof}[Proof of Lemma \ref{lem: estimate for u_X^nu}]

By \eqref{eqn: u_X^nu in terms of u_X} and Lemma \ref{lem: estimate for u_X},
\begin{align*}
&\;\|u_X^{\nu}(t)\|_{L_x^{\infty}(\R^2)}\\
\leq &\; \nu\int_0^{t}\|\Delta e^{\nu (t-\tau)\Delta}u_{X(\tau)}\|_{L^{\infty}_x(\R^2)}\, d\tau\\
\leq &\; C\nu\int_0^{t} \min(\lam^{-1-\g}(\nu (t-\tau))^{-(2-\g)/2},\, (\nu(t-\tau))^{-3/2})
\tau^{-\g} \cdot \|X'(\tau)\|_{L_s^{\infty}} \cdot \tau^\g\|X'(\tau)\|_{\dot{C}_s^{\g}}
\, d\tau\\
\leq &\; C \|X'\|_{L^\infty_t L_s^{\infty}}
\sup_{\eta\in (0,t]} \eta^\g \|X'(\eta)\|_{\dot{C}_s^{\g}}
\\
&\; \cdot \left[\nu \int_0^{t/2} \min(\lam^{-1-\g}(\nu t)^{-(2-\g)/2},\, (\nu t)^{-3/2})
\tau^{-\g}\, d\tau\right.\\
&\;\quad \left. + \nu \int_{t/2}^{t} \min(\lam^{-1-\g}(\nu (t-\tau))^{-(2-\g)/2},\, (\nu(t-\tau))^{-3/2})
t^{-\g}\, d\tau\right]\\
\leq &\; C \|X'\|_{L^\infty_t L_s^{\infty}}
\sup_{\eta\in (0,t]} \eta^\g \|X'(\eta)\|_{\dot{C}_s^{\g}}\\
&\;\cdot \left[ \lam^{-1}t^{-\g} \min(\lam^{-\g}(\nu t)^{\g/2},\, \lam (\nu t)^{-1/2})
+\lam^{-1} t^{-\g} \min(\lam^{-\g} (\nu t)^{\g/2},\, 1) \right]\\
\leq &\; C\lam^{-1}t^{-\g} \min(\lam^{-2} \nu t,\, 1)^{\g/2} \|X'\|_{L^\infty_t L_s^{\infty}}
\sup_{\eta\in (0,t]} \eta^\g \|X'(\eta)\|_{\dot{C}_s^{\g}},
\end{align*}
where the constant $C$ depends only on $\g$.

By Lemma \ref{lem: estimate for u_X} and Lemma \ref{lem: interpolation} with $\va (t)= t^{-\g}$,
\begin{align*}
\|u_X^{\nu}(t)\|_{\dot{C}_x^\g(\R^2)}
\leq &\; C\nu t^{-\g} \left|\sup_{\tau\in (0,t)} (t-\tau)^{(2-\gamma)/2}\cdot \tau^\g \|\Delta e^{\nu (t-\tau)\Delta} u_{X(\tau)}\|_{L_x^\infty(\R^2)}\right|^{1-\gamma}\\
&\;\cdot \left|\sup_{\tau\in (0,t)} (t-\tau)^{(3-\gamma)/2}\cdot \tau^\g \|\nabla\Delta e^{\nu (t-\tau)\Delta}u_{X(\tau)}\|_{L_x^\infty(\R^2)}\right|^{\gamma}\\
\leq &\; C\lam^{-1-\g} t^{-\g} \|X'\|_{L^\infty_t L_s^{\infty}}
\sup_{\eta\in (0,t]} \eta^\g \|X'(\eta)\|_{\dot{C}_s^{\g}}.
\end{align*}

By \eqref{eqn: u_X^nu in terms of u_X} and Lemma \ref{lem: estimate for u_X},
\begin{align*}
\|h_X^{\nu}(t)\|_{L_x^{\infty}(\R^2)}
\leq &\; \|u_X^{\nu}(t)\|_{L_x^{\infty}(\R^2)} +\|(\mathrm{Id}-e^{\nu t\D})u_{X(t)}\|_{L_x^\infty(\R^2)}
\\
\leq &\; \|u_X^\nu(t)\|_{L_x^\infty(\R^2)}
+ C \|u_{X(t)}\|_{L_x^\infty(\R^2)} \\
\leq &\; C\lam^{-1}t^{-\g} \|X'\|_{L^\infty_t L_s^{\infty}}
\sup_{\eta\in (0,t]} \eta^\g \|X'(\eta)\|_{\dot{C}_s^{\g}},
\end{align*}
and similarly,
\[
\|h_X^{\nu}(t)\|_{\dot{C}_x^{\g}(\R^2)}
\leq C\lam^{-1-\g} t^{-\g} \|X'\|_{L^\infty_t L_s^{\infty}}
\sup_{\eta\in (0,t]} \eta^\g \|X'(\eta)\|_{\dot{C}_s^{\g}}.
\]
Here the constants $C$'s only depend on $\g$.

To show the $L^p$-estimate for $u_X^\nu$ with $p\in [1,\infty)$, we apply Lemma \ref{lem: estimate for u_X} to find that, for $\tau\in [0,t]$,
\begin{align*}
&\;\|\Delta e^{\nu (t-\tau)\Delta}u_{X(\tau)}\|_{L^p_x(\R^2)}\, \\
\leq &\; \|\Delta e^{\nu (t-\tau)\Delta}u_{X(\tau)}\|_{L^1_x(\R^2)}^{\f1p}
\|\Delta e^{\nu (t-\tau)\Delta}u_{X(\tau)}\|_{L^\infty_x(\R^2)}^{1-\f1p}\\
\leq &\; C\Big((\nu (t-\tau))^{-\f12} \|X'(\tau)\|_{L_s^2}^2\Big)^{\f1p}\\
&\;\cdot \left[\min\Big(
\lam^{-1}(\nu (t-\tau))^{-1}\|X'(\tau)\|_{L_s^\infty}^2,\; (\nu (t-\tau))^{-\f32}\|X'(\tau)\|_{L_s^2}^2\Big)
\right]^{1-\f1p}\\
\leq &\; C\min\left(\lambda^{-1+\f1p} (\nu (t-\tau))^{-1+\f1{2p}} \|X'(\tau)\|_{L_s^2}^{\f2p}
\|X'(\tau)\|_{L_s^\infty}^{2-\f2p},\;
(\nu (t-\tau))^{-\f32+\f1p}\|X'(\tau)\|_{L_s^2}^2
\right).
\end{align*}
For $p\in [1,\infty)$, we integrate the first bound above in $\tau$ to obtain that
\[
\|u_X^{\nu}(t)\|_{L^p_x(\R^2)}
\leq \nu\int_0^{t}\|\Delta e^{\nu (t-\tau)\Delta}u_{X(\tau)}\|_{L^p_x(\R^2)}\, d\tau
\\
\leq C(\nu t)^{\f1{2p}} \lam^{-1+\f1p}
\|X'\|_{L^\infty_t L_s^2}^{\f2p}
\|X'\|_{L^\infty_t L_s^\infty}^{2-\f2p},
\]
where the constant $C$ depends only on $p$.
If additionally $p\in (2,\infty)$, we use the both bounds above to similarly derive that
\[
\|u_X^{\nu}(t)\|_{L^p_x(\R^2)}
\leq C \lam^{-1+\f2p}
\|X'\|_{L^\infty_t L_s^2}^{\f4p}
\|X'\|_{L^\infty_t L_s^\infty}^{2-\f4p},
\]
where $C$ depends only on $p$.
The $L^p$-estimate for $h_X^\nu$ then follows from \eqref{eqn: u_X^nu in terms of u_X} and Lemma \ref{lem: estimate for u_X}.
\end{proof}

\begin{proof}[Proof of Lemma \ref{lem: time Holder continuity of u_X^nu}]
By \eqref{eqn: u_X^nu in terms of u_X} and Lemma \ref{lem: parabolic estimates}, for arbitrary $0\leq t_1<t_2\leq T$,
\begin{align*}
&\; \|u_X^\nu(t_1)-u_X^\nu(t_2)\|_{L_x^\infty(\BR^2)}\\
\leq &\; \nu\int_{t_1}^{t_2} \|\D e^{\nu (t_2-\tau)\D}u_{X(\tau)}\|_{L_x^\infty} \, d\tau
+\left\| (e^{\nu (t_2-t_1)\D}-\Id) u_{X}^\nu(t_1)\right\|_{L_x^\infty}\\
\leq &\; C\nu\int_{t_1}^{t_2} (\nu(t_2-\tau))^{-1+\g/2}\|u_{X(\tau)}\|_{\dot{C}_x^\g} \, d\tau
+C(\nu(t_2-t_1))^{\g/2}\|u_{X}^\nu(t_1)\|_{\dot{C}_x^\g}\\
\leq &\; 
C\lam^{-1-\g}(\nu(t_2-t_1))^{\g/2} t_1^{-\g} \|X'\|_{L^\infty_{t_2} L_s^{\infty}}
\sup_{\eta\in (0,t_2]} \eta^\g \|X'(\eta)\|_{\dot{C}_s^{\g}} .
\end{align*}
In the last line, we applied Lemma \ref{lem: estimate for u_X} and Lemma \ref{lem: estimate for u_X^nu}.
\end{proof}

\subsection{Estimates for $u_X-u_Y$ and $u^\nu_X-u^\nu_Y$}
\label{sec: estimates for u_X-u_Y}
In order to bound $u_X-u_Y$ and $u^\nu_X-u^\nu_Y$, we 
fix $t>0$, and start with the calculation
\beq
\Delta e^{t\Delta}(u_{X}-u_{Y})
= \int_0^1\f{d}{d\th}\Delta e^{t\Delta}(u_{\th X+(1-\th)Y})\, d\th
= \int_0^1\mathcal{D}[\Delta e^{t\Delta}u]_{\th X+(1-\th)Y}[X-Y]\, d\th.
\label{eqn: Laplace of heat kernel applied to u_X-u_Y}
\eeq
Here by \eqref{eqn: Laplace of heat kernel applied to u_X}, with the time-variable omitted and the Einstein summation convention adopted,
\beq
\mathcal{D}[\Delta e^{t\Delta}u]_{X}[Z](x)_i
= \int_{\T} \pa_{s}[K_{ij}(x-X(s))]Z_j'(s)
-\pa_{s}[Z_l(s)\pa_lK_{ij}(x-X(s))]X_j'(s) \,ds.
\label{eqn: def of first variation of Delta etD u}
\eeq
It satisfies the following estimate.

\begin{lem}
\label{lem: first variation of Laplace of heat kernel applied to u_X}
Suppose $X\in O^\lam$ and $Z\in C^1(\BT)$.
Then for $k\in \BN$,
\begin{align*}
&\; |\na^k\mathcal{D}[\Delta e^{t\Delta}u]_{X}[Z]|\\
\leq &\; C\min(\lam^{-1}t^{-(2+k)/2}, \, t^{-(3+k)/2}) \|X'\|_{L_s^{\infty}}\|Z'\|_{L_s^{\infty}}\\
&\;+C\min\Big(\lam^{-1-\g}t^{-(3+k-\g)/2} \big(\|X'\|_{L_s^\infty}\|X'\|_{\dot{C}_s^\g}\|Z\|_{L_s^\infty}
+\|X'\|_{L_s^\infty}^2\|Z\|_{\dot{C}_s^\g}\big),\\
&\;\qquad\qquad\quad \lam^{-1}t^{-(3+k)/2} \|X'\|_{L_s^\infty}^2 \|Z\|_{L_s^\infty}\Big),
\end{align*}
where the constant $C$ depends on $k$ and $\g$.

\begin{proof}
Let us write $\CD[\Delta e^{t\Delta}u]_{X}[Z]
=I_1[t;X,Z]+I_2[t;X,Z]+I_3[t;X,Z]$, where
\begin{align}
&I_1[t;X,Z](x)_i := \int_{\T}\pa_{s}[K_{ij}(x-X(s),t)]Z_j'(s)\,ds,\label{eqn: formula for I_1}\\
&I_2[t;X,Z](x)_i:=-\int_{\T}Z_l'(s)\pa_lK_{ij}(x-X(s),t) X_j'(s)\,ds,\label{eqn: formula for I_2}\\
&I_3[t;X,Z](x)_i:=\int_{\T}Z_l(s)X_m'(s)\pa_m\pa_lK_{ij}(x-X(s),t) X_j'(s)\,ds.\label{eqn: formula for I_3}
\end{align}
In what follows, we shall omit the $t$-dependence.

For $k \in \BN$,
\beqo
\na^k I_1[X,Z](x)_i=-\int_{\T}X_l'(s)\partial_l \na^k K_{ij}(x-X(s))Z_j'(s)\,ds,
\eeqo
so Lemma \ref{lem: estimate for K} and Lemma \ref{lem: a typical integral} imply that
\begin{align*}
|\na^k I_1[X,Z](x)|\leq &\; C\int_{\T}\f{|X'(s)||Z'(s)|}{(t+|x-X(s)|^2)^{(3+k)/2}}\,ds\\
\leq &\; C\min(\lam^{-1}t^{-(2+k)/2}, \, t^{-(3+k)/2}) \|X'\|_{L_s^\infty}\|Z'\|_{L_s^\infty}.
\end{align*}

Similarly,
\[
|\na^k I_2[X,Z](x)|\leq C\min(\lam^{-1}t^{-(2+k)/2}, \, t^{-(3+k)/2}) \|X'\|_{L_s^\infty}\|Z'\|_{L_s^\infty}.
\]

As for $I_3$, with $s_x$ introduced in \eqref{eqn: def of s_x},
\beq
\begin{split}
\na^k I_3[X,Z](x)_i
= &\; \int_{\T}X_m'(s)\pa_m\pa_l \na^k K_{ij}(x-X(s)) X_j'(s)Z_l(s)\,ds\\
= &\; \int_{\T}X_m'(s)\pa_m\pa_l \na^k K_{ij}(x-X(s)) \big(X_j'(s)Z_l(s)-X_j'(s_x)Z_l(s_x)\big)\,ds.
\end{split}
\label{eqn: formula for I_3 new}
\eeq
So we can analogously derive two estimates
\[
|\na^k I_3[X,Z](x)|\leq C\int_{\T}\frac{|X'(s)|^2|Z(s)|}{(t+|x-X(s)|^2)^{(4+k)/2}}\,ds
\leq 
C\lambda^{-1}t^{-(3+k)/2}\|X'\|_{L_s^\infty}^2\|Z\|_{L_s^\infty},
\]
and (cf.\;\eqref{eqn: estimate for Laplace of heat kernel applied to u_X})
\beq
\begin{split}
|\na^k I_3[X,Z](x)|
\leq &\; C\int_{\T}\f{\|X'\|_{L_s^\infty}\|X' \otimes Z\|_{\dot{C}_s^\g}|s-s_x|^{\g}}{(t+\lam^2|s-s_x|^2)^{(4+k)/2}}\,ds\\
\leq &\; C\lam^{-1-\g}t^{-(3+k-\g)/2}
\big(\|X'\|_{L_s^\infty}\|X'\|_{\dot{C}_s^\g}\|Z\|_{L_s^\infty}
+\|X'\|_{L_s^\infty}^2\|Z\|_{\dot{C}_s^\g}\big).
\end{split}
\label{eqn: the 2nd bound for grad I_3}
\eeq

Summarizing the above estimates, we obtain the desired bound.
\end{proof}
\begin{rmk}
\label{rmk: gamma equals 1}
It is clear from \eqref{eqn: the 2nd bound for grad I_3} that this estimate also holds with $\g = 1$.
\end{rmk}
\end{lem}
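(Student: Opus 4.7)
The quantity $\CD[\D e^{t\D}u]_X[Z]$ is (formally) the Gateaux derivative of the map $X\mapsto \D e^{t\D}u_X$ at $X$ in direction $Z$, so it is linear in $Z$ and involves a single $\na K$-type kernel after one integration by parts in $s$. My plan is to split $\CD[\D e^{t\D}u]_X[Z]$ into three integral pieces using the product rule, bound each pointwise using the $K$-estimates from Lemma~\ref{lem: estimate for K} and the well-stretched integral inequalities from Lemma~\ref{lem: lower bound for x-X(s)} and Lemma~\ref{lem: a typical integral}, and then combine.

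\textbf{Step 1 (decomposition).} From the definition \eqref{eqn: def of first variation of Delta etD u}, I will expand $\pa_{s}[K_{ij}(x-X(s))]=-X_l'(s)\pa_l K_{ij}(x-X(s))$ in the first integrand and $\pa_s[Z_l(s)\pa_l K_{ij}(x-X(s))]=Z_l'(s)\pa_l K_{ij}(x-X(s))-Z_l(s)X_m'(s)\pa_m\pa_l K_{ij}(x-X(s))$ in the second, yielding
\[
\CD[\D e^{t\D}u]_X[Z]=I_1+I_2+I_3
\]
with $I_1,I_2$ as in \eqref{eqn: formula for I_1}--\eqref{eqn: formula for I_2} (both carrying $X'\cdot Z'$ and a single $\na K$) and $I_3$ as in \eqref{eqn: formula for I_3} (carrying $X'\otimes X'\cdot Z$ and two $\na K$'s).

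\textbf{Step 2 (easy pieces).} For $\na^k I_1$ and $\na^k I_2$, each integrand is pointwise dominated by $|X'(s)||Z'(s)|(t+|x-X(s)|^2)^{-(3+k)/2}$ by Lemma~\ref{lem: estimate for K}. Pulling out $\|X'\|_{L_s^\infty}\|Z'\|_{L_s^\infty}$ and applying Lemma~\ref{lem: a typical integral} with $m=(3+k)/2$ gives the clean bound
\[
C\min\!\big(\lam^{-1}t^{-(2+k)/2},\,t^{-(3+k)/2}\big)\|X'\|_{L_s^\infty}\|Z'\|_{L_s^\infty},
\]
which matches the first summand in the claim.

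\textbf{Step 3 (the key piece $I_3$).} The naive estimate using $|X'(s)|^2|Z(s)|(t+|x-X(s)|^2)^{-(4+k)/2}$ combined with Lemma~\ref{lem: a typical integral} already yields the bound $C\lam^{-1}t^{-(3+k)/2}\|X'\|_{L_s^\infty}^2\|Z\|_{L_s^\infty}$, which is the second entry inside the $\min$. To produce the H\"older-improved bound, I will exploit the crucial cancellation
\[
\int_\BT X_m'(s)\pa_m\pa_l\na^k K_{ij}(x-X(s),t)\,ds=-\int_\BT \pa_s\!\big[\pa_l\na^k K_{ij}(x-X(s),t)\big]ds=0,
\]
valid because $\BT$ has no boundary. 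This allows me to insert the constant $X_j'(s_x)Z_l(s_x)$ inside the brackets (with $s_x$ as in \eqref{eqn: def of s_x}) and rewrite $\na^k I_3$ as in \eqref{eqn: formula for I_3 new}. I then bound the bracket $|X_j'(s)Z_l(s)-X_j'(s_x)Z_l(s_x)|$ by
\[
\|X'\|_{L_s^\infty}\|Z\|_{\dot{C}_s^\g}|s-s_x|^\g+\|X'\|_{\dot{C}_s^\g}\|Z\|_{L_s^\infty}|s-s_x|^\g,
\]
invoke the pointwise lower bound $|x-X(s)|\ge \tfrac{\lam}{2}|s-s_x|$ from Lemma~\ref{lem: lower bound for x-X(s)} to control one factor of $|s-s_x|^\g$ by the kernel, and finally apply Lemma~\ref{lem: a typical integral} to the remaining $\int (t+\lam^2|s-s_x|^2)^{-(4+k-\g)/2}ds$ (whose exponent exceeds $1/2$). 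This yields the improved estimate $C\lam^{-1-\g}t^{-(3+k-\g)/2}(\|X'\|_{L_s^\infty}\|X'\|_{\dot{C}_s^\g}\|Z\|_{L_s^\infty}+\|X'\|_{L_s^\infty}^2\|Z\|_{\dot{C}_s^\g})$. Taking the minimum with the trivial bound gives the second summand in the claim, and summing with Step 2 closes the proof.

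\textbf{Anticipated difficulty.} The only nontrivial maneuver is the cancellation trick in Step 3: without it, one cannot separate one factor of $|s-s_x|^\g$ and turn the extra $\na$ on $K$ into a gain of $\lam^{-\g}t^{\g/2}$. Steps 1 and 2 are routine bookkeeping; Step 3 is where the well-stretched hypothesis $X\in O^\lam$ and the $C^\g$-regularity of $X'$ (and $Z$) enter in a coupled way, and one must be careful that the subtracted value at $s_x$ is legitimately allowed by the vanishing of the boundary-free $s$-integral.
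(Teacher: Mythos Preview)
Your proposal is correct and follows essentially the same approach as the paper: the same three-term decomposition $I_1+I_2+I_3$, the same direct bounds on $I_1,I_2$ via Lemma~\ref{lem: estimate for K} and Lemma~\ref{lem: a typical integral}, and the same cancellation trick for $I_3$ (subtracting the constant $X_j'(s_x)Z_l(s_x)$ using that the $s$-integral of the total derivative vanishes) to extract the $\dot C^\g$-improved bound alongside the naive one. The anticipated difficulty you flag is exactly the crux of the paper's argument.
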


\begin{lem}
\label{lem: estimate for Laplace of heat operator applied to u_X-u_Y}

Suppose $X,Y\in O^\lam$, satisfying that $\|X'-Y'\|_{L^{\infty}_s(\BT)}\leq \lam/2$.
For any $t>0$ and $k\in \BN$, it holds that
\begin{align*}
&\;\|\na^k\D e^{t\D}(u_{X}-u_{Y})\|_{L_x^{\infty}(\R^2)}\\
\leq &\; C\min(\lam^{-1}t^{-(2+k)/2}, \, t^{-(3+k)/2}) \|(X',Y')\|_{L_s^\infty}\|X'-Y'\|_{L_s^\infty} \\
&\; + C\min\Big(\lam^{-1-\g}t^{-(3+k-\g)/2}\big(\|(X',Y')\|_{L_s^\infty} \|(X',Y')\|_{\dot{C}_s^\g} \|X-Y\|_{L_s^\infty} + \|(X',Y')\|_{L_s^\infty}^2 \|X-Y\|_{\dot{C}_s^\g}\big),\\
&\;\qquad \qquad \quad \lam^{-1}t^{-(3+k)/2} \|(X',Y')\|_{L_s^\infty}^2 \|X-Y\|_{L_s^\infty}\Big),
\end{align*}
where the constant $C$ depends on $k$ and $\g$.
This estimate also holds for $\g = 1$.

\begin{proof}
By the assumptions, $|\th X+ (1-\th)Y|_* \geq \lam/2$ for any $\th\in [0,1]$.
Then the estimate follows from \eqref{eqn: Laplace of heat kernel applied to u_X-u_Y}, Lemma
\ref{lem: first variation of Laplace of heat kernel applied to u_X}, and Remark \ref{rmk: gamma equals 1}.
\end{proof}
\end{lem}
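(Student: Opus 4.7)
The plan is to exploit the integral representation \eqref{eqn: Laplace of heat kernel applied to u_X-u_Y}, which writes $\Delta e^{t\Delta}(u_X - u_Y)$ as an integral over $\th \in [0,1]$ of the first variation $\CD[\Delta e^{t\Delta}u]_{Z_\th}[X-Y]$ along the linear interpolation $Z_\th := \th X + (1-\th)Y$, and then apply the pointwise bound from Lemma \ref{lem: first variation of Laplace of heat kernel applied to u_X} uniformly in $\th$.

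The first step is to check that the interpolating curves $Z_\th$ remain well-stretched with constant $\lam/2$, so that Lemma \ref{lem: first variation of Laplace of heat kernel applied to u_X} is applicable with $X$ replaced by $Z_\th$. Writing
\[
Z_\th(s_1) - Z_\th(s_2) = \big(X(s_1) - X(s_2)\big) - (1-\th)\int_{s_2}^{s_1}(X'-Y')(s)\,ds,
\]
we obtain
\[
|Z_\th(s_1) - Z_\th(s_2)| \geq \lam|s_1-s_2|_\BT - (1-\th)\|X'-Y'\|_{L_s^\infty}|s_1-s_2|_\BT \geq \f{\lam}{2}|s_1-s_2|_\BT
\]
thanks to the hypothesis $\|X'-Y'\|_{L^\infty_s} \leq \lam/2$. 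Simultaneously, convexity gives $\|Z_\th'\|_{L^\infty_s} \leq \|(X',Y')\|_{L^\infty_s}$ and $\|Z_\th'\|_{\dot{C}^\g_s} \leq \|(X',Y')\|_{\dot{C}^\g_s}$, uniformly in $\th \in [0,1]$.

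Next, I substitute $Z = X - Y$ into the bound of Lemma \ref{lem: first variation of Laplace of heat kernel applied to u_X} with $X$ replaced by $Z_\th$ and $\lam$ replaced by $\lam/2$. By the previous paragraph, each factor $\|X'\|_{L^\infty_s}$ in that bound can be majorized by $\|(X',Y')\|_{L^\infty_s}$ and each $\|X'\|_{\dot{C}^\g_s}$ by $\|(X',Y')\|_{\dot{C}^\g_s}$, while $\|Z'\|_{L^\infty_s}$, $\|Z\|_{L^\infty_s}$, and $\|Z\|_{\dot{C}^\g_s}$ become $\|X'-Y'\|_{L^\infty_s}$, $\|X-Y\|_{L^\infty_s}$, and $\|X-Y\|_{\dot{C}^\g_s}$, respectively. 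The resulting upper bound is $\th$-independent, so integrating against $d\th$ over $[0,1]$ via \eqref{eqn: Laplace of heat kernel applied to u_X-u_Y} produces exactly the claimed estimate (up to harmless constants absorbed into $C$). The extension to $\g = 1$ follows identically, since Remark \ref{rmk: gamma equals 1} guarantees that the H\"older piece of Lemma \ref{lem: first variation of Laplace of heat kernel applied to u_X} also holds at the endpoint $\g = 1$.

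The argument is essentially mechanical once the right variational representation is in hand; the only genuinely substantive point is the preservation of the well-stretched condition along the interpolation, and this is precisely the role of the smallness assumption $\|X'-Y'\|_{L^\infty_s}\leq \lam/2$. No delicate oscillatory cancellation or refined kernel analysis is needed here, since all the hard work has been done in establishing Lemma \ref{lem: first variation of Laplace of heat kernel applied to u_X}.
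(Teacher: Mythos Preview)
Your proof is correct and follows exactly the same approach as the paper: verify that the interpolant $Z_\th = \th X + (1-\th)Y$ satisfies $|Z_\th|_* \geq \lam/2$ using the smallness hypothesis, then apply Lemma~\ref{lem: first variation of Laplace of heat kernel applied to u_X} (and Remark~\ref{rmk: gamma equals 1} for $\g=1$) uniformly in $\th$ and integrate via \eqref{eqn: Laplace of heat kernel applied to u_X-u_Y}. The paper's proof is a two-line version of what you have written out in detail.
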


This enables us to prove Lemma \ref{lem: estimate for u_X-u_Y}.
\begin{proof}[Proof of Lemma \ref{lem: estimate for u_X-u_Y}]
Since
\beq
e^{t\D}(u_X-u_Y) = -\int_t^{\infty}\Delta e^{\tau\Delta}(u_{X}-u_Y)\,d\tau,
\label{eqn: representation of heat kernel applied to u_X-u_Y}
\eeq
we apply Lemma \ref{lem: estimate for Laplace of heat operator applied to u_X-u_Y} to find that, for arbitrary $\b\in (0,1]$,
\begin{align*}
&\; \big\|e^{t\D}(u_X-u_Y)\big\|_{L^\infty_x(\BR^2)}\\
\leq &\; \int_t^\infty \big\|\Delta e^{\tau\Delta}(u_X-u_Y)\big\|_{L_x^\infty(\BR^2)}\,d\tau\\
\leq &\; C \int_t^\infty \lam^{-(1-\b)}\tau^{-1-\b/2}\|(X',Y')\|_{L_s^{\infty}} \|X'-Y'\|_{L_s^{\infty}} \\
&\; \qquad + \lam^{-1-\g}\tau^{-(3-\g)/2} \\
&\;\qquad \quad \cdot \big(\|(X',Y')\|_{L_s^\infty} \|(X',Y')\|_{\dot{C}_s^\g} \|X-Y\|_{L_s^\infty} + \|(X',Y')\|_{L_s^\infty}^2 \|X-Y\|_{\dot{C}_s^\g}\big)
\,d\tau\\
\leq &\; C \lam^{-(1-\b)}t^{-\b/2}\|(X',Y')\|_{L_s^\infty}\|X'-Y'\|_{L_s^\infty} \\
&\; + C\lam^{-1-\g}t^{-(1-\g)/2}
\big(\|(X',Y')\|_{L_s^\infty} \|(X',Y')\|_{\dot{C}_s^\g} \|X-Y\|_{L_s^\infty} + \|(X',Y')\|_{L_s^\infty}^2 \|X-Y\|_{\dot{C}_s^\g}\big),
\end{align*}
where the constant $C$ depends on $\g$ and $\b$.

On the other hand, by Lemma \ref{lem: estimate for Laplace of heat operator applied to u_X-u_Y}, for $k = 0,1$,
\begin{align*}
&\; \|\na^k\D e^{\tau\D}(u_X-u_Y)\|_{L_x^\infty(\R^2)}\\
\leq &\;
C \lam^{-1}\tau^{-(2+k)/2} \|(X',Y')\|_{L_s^\infty} \|X'-Y'\|_{L_s^\infty}\\
&\;
+ C \lam^{-1-\g}\tau^{-(3+k-\g)/2}
\big(\|(X',Y')\|_{L_s^\infty} \|(X',Y')\|_{\dot{C}_s^\g} \|X-Y\|_{L_s^\infty} + \|(X',Y')\|_{L_s^\infty}^2 \|X-Y\|_{\dot{C}_s^\g}\big).
\end{align*}
By \eqref{eqn: representation of heat kernel applied to u_X-u_Y} and the last estimate in Lemma \ref{lem: interpolation}, for $t>0$,
\begin{align*}
&\; \big\|e^{t\D}(u_X-u_Y)\big\|_{\dot{C}^\g_x(\BR^2)}\\
\leq &\; \left|\sup_{\tau>t}\tau^{\f{2-\g}{2}} \|\D e^{\tau\D}(u_X-u_Y)\|_{L_x^{\infty}(\R^2)}\right|^{1-\g}
\left|\sup_{\tau>t}\tau^{\f{3-\g}{2}}\|\na \D  e^{\tau\D}(u_X-u_Y)\|_{L_x^\infty(\R^2)}\right|^\g\\
\leq &\; C \lam^{-1} t^{-\g/2} \|(X',Y')\|_{L_s^\infty} \|X'-Y'\|_{L_s^\infty}\\
&\; + C \lam^{-1-\g} t^{-1/2}
\big(\|(X',Y')\|_{L_s^\infty} \|(X',Y')\|_{\dot{C}_s^\g} \|X-Y\|_{L_s^\infty} + \|(X',Y')\|_{L_s^\infty}^2 \|X-Y\|_{\dot{C}_s^{\g}}\big),
\end{align*}
where $C$ depends only on $\g$.
\end{proof}

Next we prove Lemma \ref{lem: estimate for u_X^nu-u_Y^nu in Morrey space}.
The following estimate will be useful.

\begin{lem}\label{lem: a typical integral in x variable}
For any $x\in \BR^2$, $r>0$, and $m>1$,
\[
\int_{B(x,r)}(t+|y-X(s)|^2)^{-m}\, dy
\leq Cr^2\left(1+\f{r^2}{t}\right)^{m-1}\big(t+r^2+|x-X(s)|^2\big)^{-m},
\]
where the constant $C$ only depends on $m$.

\begin{proof}
By a change of variable,
\[
\int_{B(x,r)}(t+|y-X(s)|^2)^{-m}\, dy
= t^{-m+1} \int_{B\big(\f{x-X(s)}{\sqrt{t}},\f{r}{\sqrt{t}}\big)} (1+|y|^2)^{-m}\, dy.
\]
If $|x-X(s)|\geq 2r$, then
\begin{align*}
\int_{B(x,r)}(t+|y-X(s)|^2)^{-m}\, dy
\leq &\; Ct^{-m+1} \cdot \f{r^2}{t} \left(1+\left|\f{x-X(s)}{\sqrt{t}}\right|^2\right)^{-m}\\
\leq&\; C r^2 \big(t+r^2+|x-X(s)|^2\big)^{-m},
\end{align*}
where the constant $C$ depends on $m$ only.
Otherwise, if $|x-X(s)|\leq 2r$, then
\begin{align*}
\int_{B(x,r)}(t+|y-X(s)|^2)^{-m}\, dy
\leq &\; t^{-m+1} \int_{B\big(0,\f{3r}{\sqrt{t}}\big)} (1+|y|^2)^{-m}\, dy\\
\leq &\; Ct^{-m+1} \min\left(\f{r^2}{t},\,1\right)
\leq Ct^{-m+1} \left(\f{t}{r^2}+1\right)^{-1}\\
= &\; Cr^2 \left(\f{t+r^2}{t}\right)^{m-1}(t+r^2)^{-m}\\
\leq &\; Cr^2 \left(1+\f{r^2}{t}\right)^{m-1}\big(t+r^2+|x-X(s)|^2\big)^{-m}.
\end{align*}
Here $C$ depends on $m$.
\end{proof}
\end{lem}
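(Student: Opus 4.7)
The plan is to reduce to $t=1$ by a scaling argument and then split according to whether the point $X(s)$ is far from or near to the ball $B(x,r)$. Setting $\tilde{y}=y/\sqrt{t}$, $\tilde{x}=x/\sqrt{t}$, $\tilde{r}=r/\sqrt{t}$, and $\tilde{a}=X(s)/\sqrt{t}$, the change of variables converts the integral into $t^{-(m-1)}\int_{B(\tilde{x},\tilde{r})}(1+|\tilde{y}-\tilde{a}|^2)^{-m}\,d\tilde{y}$, and the target inequality becomes
\[
\int_{B(\tilde{x},\tilde{r})}(1+|\tilde{y}-\tilde{a}|^2)^{-m}\,d\tilde{y}
\leq C\tilde{r}^2(1+\tilde{r}^2)^{m-1}\big(1+\tilde{r}^2+|\tilde{x}-\tilde{a}|^2\big)^{-m}.
\]
Hence it suffices to prove the statement with $t=1$; below I write $a=X(s)$.

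In the \emph{far regime} $|x-a|\geq 2r$, every $y\in B(x,r)$ satisfies $|y-a|\geq |x-a|/2$, and $r^2\leq|x-a|^2/4$ gives $1+r^2+|x-a|^2\leq\tfrac{5}{4}(1+|x-a|^2)$. Bounding the integrand uniformly by its maximum over $B(x,r)$ then yields
\[
\int_{B(x,r)}(1+|y-a|^2)^{-m}\,dy
\leq \pi r^2\bigl(1+|x-a|^2/4\bigr)^{-m}
\leq Cr^2\bigl(1+r^2+|x-a|^2\bigr)^{-m},
\]
which is stronger than the claim since $(1+r^2)^{m-1}\geq 1$.

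In the \emph{near regime} $|x-a|<2r$, the inclusion $B(x,r)\subset B(a,3r)$ allows us to enlarge the domain and switch to polar coordinates centered at $a$, giving
\[
\int_{B(x,r)}(1+|y-a|^2)^{-m}\,dy
\leq \pi\int_0^{9r^2}(1+u)^{-m}\,du
\leq C\min\bigl(1,\,r^2\bigr),
\]
where the two alternatives come from integrating where $u\lesssim 1$ versus using $m>1$ to keep the tail finite. Since $|x-a|<2r$ implies $1+r^2+|x-a|^2\asymp 1+r^2$, a short calculation (separating $r^2\leq 1$ and $r^2>1$) shows that $C\min(1,r^2)\leq C'r^2(1+r^2)^{m-1}(1+r^2)^{-m}$, which matches the target. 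The only point requiring care — and the closest thing to an obstacle here — is aligning the two regimes through the interpolation factor $(1+r^2/t)^{m-1}$, which precisely interpolates between the small-$r$ behavior $\sim r^2 t^{-m}$ and the large-$r$ behavior $\sim t^{-(m-1)}$ of the unrestricted integral.
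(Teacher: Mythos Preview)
Your proof is correct and follows essentially the same approach as the paper: both perform the scaling change of variables, split into the far case $|x-X(s)|\geq 2r$ (where the integrand is bounded by its maximum over the ball) and the near case $|x-X(s)|<2r$ (where the ball is enlarged to one centered at $X(s)$ and the radial integral is evaluated), and then do the same algebraic manipulation with $\min(1,r^2)\asymp r^2/(1+r^2)$ to recover the target form. The only cosmetic difference is that you reduce to $t=1$ at the outset, whereas the paper carries $t$ through the computation.
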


\begin{proof}[Proof of Lemma \ref{lem: estimate for u_X^nu-u_Y^nu in Morrey space}]

Recall that $\mathcal{D}[\Delta e^{t\Delta}u]_{X}[Z]$ was defined in \eqref{eqn: def of first variation of Delta etD u}.
We first show that, if $X\in O^\lam$ and $Z\in C^1(\BT)$, for arbitrary $x\in \BR^2$ and $r>0$,
\beq
\begin{split}
&\;\int_{B(x,r)}|\CD[\Delta e^{t\D}u]_{X}[Z](y)|\, dy\\
\leq &\; Cr^2\lam^{-\g} t^{-(2-\g)/2}
(t+r^2)^{-1/2} \cdot \lam^{-1} \|X'\|_{L_s^\infty}
(\|X'\|_{C_s^\g}\|Z\|_{L_s^\infty} +\|X'\|_{L_s^\infty}\|Z'\|_{L_s^\infty}),
\end{split}
\label{eqn: local integral of first variation of Laplace of heat kernel applied to u_X}
\eeq
where the constant $C$ depends on $\g$.

We still split $\CD[\Delta e^{t\D}u]_{X}[Z]$ as in the proof of Lemma \ref{lem: first variation of Laplace of heat kernel applied to u_X}.
By \eqref{eqn: formula for I_1}, Lemma \ref{lem: estimate for K}, Lemma \ref{lem: a typical integral in x variable}, and also Lemma \ref{lem: a typical integral},
\begin{align*}
\int_{B(x,r)}|I_1[X,Z](y)|\, dy
\leq &\; Cr^2(1+r^2/t)^{1/2}\int_{\T}\f{|X'(s)||Z'(s)|}{(t+r^2+|x-X(s)|^2)^{3/2}}\,ds\\
\leq &\; Cr^2(1+r^2/t)^{1/2}\cdot \min(\lam^{-1}(t+r^2)^{-1},(t+r^2)^{-3/2}) \|X'\|_{L_s^{\infty}}\|Z'\|_{L_s^\infty}\\
\leq &\; Cr^2(1+r^2/t)^{1/2}\cdot \lam^{-\g}(t+r^2)^{-(3-\g)/2}\|X'\|_{L_s^{\infty}}\|Z'\|_{L_s^{\infty}},
\end{align*}
where $C$ is a universal constant.
Similarly,
\[
\int_{B(x,r)}|I_2[X,Z](y)|\,dy
\leq
Cr^2(1+r^2/t)^{1/2}\cdot \lam^{-\g}(t+r^2)^{-(3-\g)/2}\|X'\|_{L_s^{\infty}}\|Z'\|_{L_s^{\infty}}.
\]
For $I_3$, by \eqref{eqn: formula for I_3 new} and Lemma \ref{lem: lower bound for x-X(s)},
\begin{align*}
|I_3[X,Z](y)|
\leq &\;  C\int_{\T}\f{\|X'\|_{L_s^\infty}\|X' \otimes Z\|_{\dot{C}_s^\g}|s-s_y|^\g}{(t+|y-X(s)|^2)^2}\,ds\\
\leq &\;
C\lam^{-\g}
\big(\|X'\|_{L_s^\infty}\|X'\|_{C_s^\g}\|Z\|_{L_s^\infty} + \|X'\|_{L_s^\infty}^2 \|Z\|_{\dot{C}_s^\g} \big) \int_{\T}\f{|y-X(s)|^{\g}}{(t+|y-X(s)|^2)^{2}}\,ds.
\end{align*}
Further integrating in $y$ and applying Lemma \ref{lem: a typical integral} and Lemma \ref{lem: a typical integral in x variable} gives
\begin{align*}
&\;\int_{B(x,r)}|I_3[X,Z](y)|\, dy  \\
\leq &\; Cr^2(1+r^2/t)^{(2-\g)/2}
\int_{\T}\f{1}{(t+r^2+|x-X(s)|^2)^{2-\g/2}}\,ds\\
&\;\cdot \lam^{-\g}\big(\|X'\|_{L_s^\infty}\|X'\|_{C_s^\g}\|Z\|_{L_s^\infty} + \|X'\|_{L_s^\infty}^2 \|Z\|_{\dot{C}_s^\g} \big)\\
\leq &\; Cr^2\lam^{-1-\g} (1+r^2/t)^{(2-\g)/2}\cdot (t+r^2)^{-(3-\g)/2}\big(\|X'\|_{L_s^\infty}\|X'\|_{C_s^\g}\|Z\|_{L_s^\infty} + \|X'\|_{L_s^\infty}^2 \|Z\|_{\dot{C}_s^\g} \big),
\end{align*}
where the constant $C$ depends on $\g$ only.
Summarizing the above estimates and using the fact that $\lam \leq \|X'\|_{L^\infty}$, we obtain \eqref{eqn: local integral of first variation of Laplace of heat kernel applied to u_X}.
This and \eqref{eqn: Laplace of heat kernel applied to u_X-u_Y} further imply that, if $X,Y\in O^\lam$ satisfy $\|X'-Y'\|_{L^{\infty}}\leq \lam/2$, for any $x\in \BR^2$ and $r>0$,
\begin{align*}
\|\Delta e^{t\Delta}(u_{X}-u_Y)\|_{L^1(B(x,r))}
\leq &\; Cr^2\lam^{-\g}t^{\g/2-1} (t+r^2)^{-1/2} \cdot \lam^{-1} \|(X',Y')\|_{L_s^{\infty}}\\
&\;\cdot  \big(\|(X',Y')\|_{C_s^\g} \|X-Y\|_{L_s^\infty}
+\|(X',Y')\|_{L_s^\infty} \|X'-Y'\|_{L_s^{\infty}}\big),
\end{align*}
where $C$ depends only on $\g$.

By \eqref{eqn: u_X^nu in terms of u_X} and the assumption that $X(s,0) = Y(s,0)$,
\begin{align*}
&\; \|u_X^{\nu}(t)-u_Y^\nu(t)\|_{L^1(B(x,r))}\\
\leq &\; \nu\int_{0}^t \|\Delta e^{\nu (t-\tau)\Delta}(u_{X(\tau)}-u_{Y(\tau)})\|_{L^1(B(x,r))}\,d\tau\\
\leq &\; C\nu\int_{0}^t r^2\lam^{-\g} (\nu (t-\tau))^{\g/2-1}(\nu (t-\tau)+r^2)^{-1/2}\cdot \lam^{-1}\|(X',Y')\|_{L^\infty_t L_s^{\infty}}\\
&\;\qquad \cdot \big(\|(X'(\tau),Y'(\tau))\|_{\dot{C}_s^\g}\|X(\tau)-Y(\tau)\|_{L_s^\infty}
+ \|(X',Y')\|_{L^\infty_t L_s^{\infty}} \|X'-Y'\|_{L^\infty_t L_s^\infty}\big)\,d\tau\\
\leq &\; C\nu r^2\lam^{-1-\g} \|(X',Y')\|_{L^\infty_t L_s^{\infty}} \int_{0}^t (\nu (t-\tau))^{\g/2-1}(\nu (t-\tau)+r^2)^{-1/2} \tau^{-\g} \,d\tau\\
&\;\quad \cdot \sup_{\eta\in(0,t]}\eta^\g
\|(X'(\eta),Y'(\eta))\|_{\dot{C}_s^\g}
\cdot  \sup_{\eta\in (0,t)} \eta^{\mu_0}\|\pa_t (X-Y)(\eta)\|_{L_s^\infty}
\int_0^t \tau^{-\mu_0}\,d\tau
\\
&\; + C\nu r^2\lam^{-1-\g} \|(X',Y')\|_{L^\infty_t L_s^{\infty}}^2 \|X'-Y'\|_{L^\infty_t L^\infty_s} \int_{0}^t (\nu (t-\tau))^{\g/2-1}(\nu (t-\tau)+r^2)^{-1/2} \,d\tau.
\end{align*}
Since
\begin{align*}
&\; \int_0^t (\nu (t-\tau))^{\g/2-1}(\nu (t-\tau)+r^2)^{-1/2}\,d\tau\\
= &\; \int_0^t (\nu \tau)^{\g/2-1}(\nu \tau+r^2)^{-1/2}\,d\tau\\
= &\; \nu^{-1} r^{\g-1}\int_0^{\nu t/r^2} s^{\g/2-1}(s+1)^{-1/2}\,d s
\leq C\nu^{-1} r^{\g-1},
\end{align*}
and
\begin{align*}
&\;\int_{0}^t (\nu (t-\tau))^{\g/2-1}(\nu (t-\tau)+r^2)^{-1/2} \tau^{-\g} \,d\tau\\
\leq &\;C \int_0^{t/2} (\nu t)^{\g/2-1}(\nu t+r^2)^{-1/2}\tau^{-\g} \,d\tau\\
&\;+ C\int_{t/2}^t (\nu (t-\tau))^{\g/2-1}(\nu (t-\tau)+r^2)^{-1/2} t^{-\g} \,d\tau
\\
\leq &\;C (\nu t)^{\g/2-1}(\nu t+r^2)^{-1/2} t^{1-\g}\\
&\;+ Ct^{-\g} \int_0^t (\nu (t-\tau))^{\g/2-1}(\nu (t-\tau)+r^2)^{-1/2}\,d\tau
\\
\leq &\; Ct^{-\g}  \nu^{-1} r^{\g-1},
\end{align*}
we arrive at the estimate
\begin{align*}
&\; \|u_X^{\nu}(t)-u_Y^\nu(t)\|_{L^1(B(x,r))}\\
\leq &\; C r^{1+\g} \lam^{-1-\g} \|(X',Y')\|_{L^\infty_t L_s^{\infty}} \\
&\; \cdot \Big[ t^{1-\g-\mu_0} \sup_{\eta\in(0,t]}\eta^\g
\|(X'(\eta),Y'(\eta))\|_{\dot{C}_s^\g} \cdot \sup_{\eta\in (0,t)} \eta^{\mu_0}\|\pa_t (X-Y)(\eta)\|_{L_s^\infty}\\
&\;\quad
+ \|(X',Y')\|_{L^\infty_t L_s^{\infty}} \|X'-Y'\|_{L^\infty_t L^\infty_s}\Big],
\end{align*}
where the constant $C$ depends on $\g$ and $\mu_0$.
This together with the definition of the Morrey norm in \eqref{eqn: def of Morrey norm} implies the desired claim.
\end{proof}

\subsection{Estimates for $h_X^\nu$ and $H_X^\nu$}

\begin{proof}[Proof of Lemma \ref{lem: estimate for h_X^nu}]

Recall the Gagliardo-Nirenberg interpolation inequality:
\beq
\|X'-Y'\|_{L_s^\infty(\BT)}
\leq C\|X-Y\|_{\dot{C}_s^\g(\BT)}^{\g}\|X'-Y'\|_{C_s^\g(\BT)}^{1-\g}
\leq C\|X-Y\|_{\dot{C}_s^\g(\BT)}^{\g}\|(X',Y')\|_{C_s^\g(\BT)}^{1-\g},
\label{eqn: G-N inequality}
\eeq
where $C$ depends on $\g$.

By Lemma \ref{lem: estimate for Laplace of heat operator applied to u_X-u_Y}, the interpolation inequality \eqref{eqn: G-N inequality}, with $\g'\in (0,\g]$ and $\mu_0,\mu,\mu'\in [0,1)$, and $k \in \BN$,
\begin{align*}
&\;\big\|\na^k \D e^{\nu(t-\tau)\D}(u_{X(\tau)}-u_{X(t)})\big\|_{L_x^{\infty}(\R^2)}
\\
\leq &\; C\min\left(\lam^{-1}(\nu(t-\tau))^{-\f{2+k}2}, \, (\nu(t-\tau))^{-\f{3+k}{2}}\right) \\
&\;\cdot \|X'\|_{L^\infty_t L_s^\infty} \|(X'(\tau),X'(t))\|_{C_s^\g }^{1-\g} \|X(\tau)-X(t)\|_{\dot{C}_s^\g }^{\g} \\
&\; + C\min\left(\lam^{-1-\g'}(\nu(t-\tau))^{-\f{3+k-\g'}{2}},\, \lam^{-1}(\nu(t-\tau))^{-\f{3+k}{2}}\right) \\
&\;\quad \cdot \big(\|X'\|_{L^\infty_t L_s^\infty} \|(X'(\tau),X'(t))\|_{\dot{C}_s^{\g'}} \|X(\tau)-X(t)\|_{L_s^\infty} + \|X'\|_{L^\infty_t L_s^\infty}^2 \|X(\tau)-X(t)\|_{\dot{C}_s^{\g'}}\big)
\\
\leq &\; C\min\left(\lam^{-1}(\nu(t-\tau))^{-\f{2+k}2}, \, (\nu(t-\tau))^{-\f{3+k}{2}}\right) \\
&\;\cdot \|X'\|_{L^\infty_t L_s^\infty} \|(X'(\tau),X'(t))\|_{C_s^\g }^{1-\g} \left(\int_\tau^t \eta^{-\mu} \cdot \eta^\mu\|\pa_t X(\eta)\|_{\dot{C}_s^\g} \,d\eta\right)^{\g} \\
&\; + C\min\left(\lam^{-1-\g'}(\nu(t-\tau))^{-\f{3+k-\g'}{2}},\, \lam^{-1}(\nu(t-\tau))^{-\f{3+k}{2}}\right) \\
&\;\quad \cdot \left(\|X'\|_{L^\infty_t L_s^\infty} \|(X'(\tau),X'(t))\|_{\dot{C}_s^{\g'}}
\int_\tau^t \eta^{-\mu_0} \cdot \eta^{\mu_0}\|\pa_t X(\eta)\|_{L_s^\infty} \,d\eta \right.\\
&\;\qquad \quad\left.+ \|X'\|_{L^\infty_t L_s^\infty}^2 \int_\tau^t \eta^{-\mu'} \cdot \eta^{\mu'}\|\pa_t X(\eta)\|_{\dot{C}_s^{\g'}} \,d\eta \right).
\end{align*}
Observe that, for $\b\in [0,1)$,
\beq
\begin{split}
\int_{\tau}^t \zeta^{-\b} \,d\zeta
\leq &\;
\begin{cases}
(t-\tau)\tau^{-\b}, & \mbox{if } \tau \in [\f{t}{2},t), \\
Ct^{1-\b}, & \mbox{if } \tau \in(0, \f{t}{2}],
\end{cases}\\
\leq &\; C(t-\tau) t^{-\b},
\end{split}
\label{eqn: improved inequality regarding time difference}
\eeq
where $C$ depends on $\b$;
yet, we note that if $\b\in [0,\f12]$, the constant $C$ can be made independent of $\b$.
Hence,
\beq
\begin{split}
&\;\big\|\na^k \D e^{\nu(t-\tau)\D}(u_{X(\tau)}-u_{X(t)})\big\|_{L_x^{\infty}(\R^2)}
\\
\leq &\; C\min\left(\lam^{-1}, \, (\nu(t-\tau))^{-\f{1}{2}}\right) (\nu(t-\tau))^{-\f{2+k}2} \|X'\|_{L^\infty_t L_s^\infty} \\
&\;\cdot \left(\tau^{-\g}\sup_{\eta\in (0,t]}\eta^\g \|X'(\eta)\|_{C_s^\g}\right)^{1-\g} \left(|t-\tau|t^{-\mu}\sup_{\eta\in (0,t]}\eta^{\mu}\|\pa_t X(\eta)\|_{\dot{C}_s^{\g}}\right)^{\g} \\
&\; + C\min\left(\lam^{-1-\g'}(\nu(t-\tau))^{-\f{3+k-\g'}{2}},\, \lam^{-1}(\nu(t-\tau))^{-\f{3+k}{2}}\right)\\
&\;\quad \cdot \left(\|X'\|_{L^\infty_t L_s^\infty}
\cdot \tau^{-\g'}\sup_{\eta\in (0,t]}\eta^{\g'} \|X'(\eta)\|_{C_s^{\g'}}\cdot |t-\tau|t^{-\mu_0}\sup_{\eta\in (0,t]}\eta^{\mu_0}\|\pa_t X(\eta)\|_{L_s^\infty} \right.\\
&\;\qquad \quad \left.+ \|X'\|_{L^\infty_t L_s^\infty}^2  \cdot |t-\tau|t^{-\mu'}\sup_{\eta\in (0,t]}\eta^{\mu'}\|\pa_t X(\eta)\|_{\dot{C}_s^{\g'}} \right),
\end{split}
\label{eqn: bound for grad D e tD u_X(tau)-u_X(t)}
\eeq
where the constant $C$ depends on $k$, $\g$, $\g'$, $\mu_0$, $\mu$, and $\mu'$.

On one hand, by \eqref{eqn: def of h_X}, we directly integrate \eqref{eqn: bound for grad D e tD u_X(tau)-u_X(t)} to find that, for $k = 0,1$,
\begin{align*}
&\; \|\na^k h_X^\nu(t)\|_{L_x^\infty(\BR^2)}\\
\leq &\;\nu\int_0^t\|\na^k \D e^{\nu (t-\tau)\D}(u_{X(\tau)}-u_{X(t)})\|_{L_x^{\infty}(\R^2)}\,d\tau
\\
\leq &\; C\nu^{-\f{k}{2}} t^{-\mu\g} \int_0^t \min\left(\lam^{-1}, \, (\nu(t-\tau))^{-\f{1}{2}}\right) (t-\tau)^{-\f{2+k}2+\g} \tau^{-\g(1-\g)}\,d\tau \\
&\;\cdot \|X'\|_{L^\infty_t L_s^\infty}  \left(\sup_{\eta\in (0,t]}\eta^\g \|X'(\eta)\|_{C_s^\g}\right)^{1-\g} \left(\sup_{\eta\in (0,t]}\eta^{\mu}\|\pa_t X(\eta)\|_{\dot{C}_s^{\g}}\right)^{\g} \\
&\; + C \int_0^t \min\left(\lam^{-\g'},\, (\nu(t-\tau))^{-\f{\g'}{2}}\right) (\nu(t-\tau))^{-\f{1+k-\g'}{2}} \\
&\;\qquad \quad \cdot \lam^{-1} \left(\tau^{-\g'}t^{-\mu_0} \|X'\|_{L^\infty_t L_s^\infty}
\sup_{\eta\in (0,t]}\eta^{\g'} \|X'(\eta)\|_{C_s^{\g'}} \sup_{\eta\in (0,t]}\eta^{\mu_0}\|\pa_t X(\eta)\|_{L_s^\infty} \right.\\
&\;\qquad \qquad \qquad \left.+ t^{-\mu'}\|X'\|_{L^\infty_t L_s^\infty}^2 \sup_{\eta\in (0,t]}\eta^{\mu'}\|\pa_t X(\eta)\|_{\dot{C}_s^{\g'}} \right) d\tau,
\end{align*}
where $C$ depends on $k$, $\g$, $\g'$, $\mu_0$, $\mu$, and $\mu'$.

It remains to calculate the two integrals above.
Recall that $\g\in (\f{k}2,1)$.
If $\nu t \leq 4\lam^2$, we find that
\begin{align*}
&\; \nu^{-\f{k}{2}} t^{-\mu\g} \int_0^t \min\left(\lam^{-1}, \, (\nu(t-\tau))^{-\f{1}{2}}\right) (t-\tau)^{-\f{2+k}2+\g} \tau^{-\g(1-\g)}\,d\tau \\
\leq &\; C\nu^{-\f{k}{2}} t^{-\mu\g}
\int_0^t \lam^{-1} (t-\tau)^{-\f{2+k}2+\g} \tau^{-\g(1-\g)}\,d\tau \\
\leq &\; C\nu^{-\f{k}{2}}t^{-\mu\g} \lam^{-1} t^{-\f{k}2+\g^2}
= C(\nu t)^{-\f{k}{2}} \lam^{-1}  t^{-\g(\mu -\g)},
\end{align*}
where $C$ depends on $k$ and $\g$.
Otherwise, if $\nu t>4\lam^2$,
\begin{align*}
&\; \nu^{-\f{k}{2}} t^{-\mu\g} \int_0^t \min\left(\lam^{-1}, \, (\nu(t-\tau))^{-\f{1}{2}}\right) (t-\tau)^{-\f{2+k}2+\g} \tau^{-\g(1-\g)}\,d\tau
\\
\leq &\; C\nu^{-\f{k}{2}} t^{-\mu\g} \left[\nu^{-\f12}\int_0^{t/2} t^{-\f{3+k}2+\g} \tau^{-\g(1-\g)}\,d\tau
+ \nu^{-\f12}\int_{t/2}^{t-\lam^2/\nu} (t-\tau)^{-\f{3+k}2+\g} t^{-\g(1-\g)}\,d\tau \right]\\
&\; + C\nu^{-\f{k}{2}} t^{-\mu\g} \int_{t-\lam^2/\nu}^t \lam^{-1} (t-\tau)^{-\f{2+k}2+\g} t^{-\g(1-\g)}\,d\tau
\\
\leq &\; C\nu^{-\f{1+k}{2}} t^{-\mu\g} \left[ t^{-\f{1+k}2+\g^2}
+ t^{-\g(1-\g)} \left(t^{-\f{1+k}2+\g} + \left(\f{\lam^2}{\nu}\right)^{-\f{1+k}2+\g} \right)\right]\\
&\; + C\nu^{-\f{k}{2}} \lam^{-1}  t^{-\mu\g-\g(1-\g)} \left(\f{\lam^2}{\nu}\right)^{-\f{k}2+\g}
\\
\leq &\; C\left[(\nu t)^{-\f{1+k}{2}}
+ (\nu t)^{-\g} \lam^{-1-k+2\g} \right] t^{-\g(\mu-\g)}.
\end{align*}
Here we used the facts that $\g\in (\f{k}2,1)$ and
\[
\int_{t/2}^{t-\lam^2/\nu} (t-\tau)^{-\f{3+k}2+\g} \,d\tau
\leq C\left[t^{-\f{1+k}2+\g} + \left(\f{\lam^2}{\nu}\right)^{-\f{1+k}2+\g} \right].
\]
In summary, for $k = 0,1$ and $\g\in (\f{k}{2},1)$,
\begin{align*}
&\; \nu^{-\f{k}{2}} t^{-\mu\g} \int_0^t \min\left(\lam^{-1}, \, (\nu(t-\tau))^{-\f{1}{2}}\right) (t-\tau)^{-\f{2+k}2+\g} \tau^{-\g(1-\g)}\,d\tau \\
\leq &\; C\left[ (\nu t)^{-\f{k}{2}} \min\left(\lam^{-1},\,(\nu t)^{-\f{1}{2}}\right)
+ \mathds{1}_{\{\nu t\geq 4\lam^2\}} (\nu t)^{-\g} \lam^{-1-k+2\g}  \right] t^{-\g(\mu-\g)}.
\end{align*}
where the constant $C$ depends on $k$ and $\g$.
One can similarly calculate that, with arbitrary $\b\in [0,1)$,
\begin{align*}
&\; \int_0^t \min\left(\lam^{-\g'},\, (\nu(t-\tau))^{-\f{\g'}{2}}\right) (\nu(t-\tau))^{-\f{1+k-\g'}{2}} \tau^{-\b}\,d\tau
\\
\leq &\; C (\nu t)^{-\f{1+k-\g'}{2}} \min\left(\lam^{-\g'},\, (\nu t)^{-\f{\g'}{2}}\right)
\cdot t^{1-\b} \left[1+\mathds{1}_{\{\nu t\geq 4\lam^2\}}\int_{2\lam^2/(\nu t)}^1 \zeta^{-\f{1+k}{2}} \,d\zeta \right],
\end{align*}
where the constant $C$ depends on $k$ and $\g'$.
Substituting these results into the above estimate, we obtain the desired bound.

On the other hand, by \eqref{eqn: bound for grad D e tD u_X(tau)-u_X(t)}, for $\al\in (0,1)\cup (1,1+\g']$ satisfying that $\al\leq 2\g$, and any $k\in \BN$,
\begin{align*}
&\;(t-\tau)^{\f{2+k-\al}{2}}
\big\|\na^k \D e^{\nu(t-\tau)\D}(u_{X(\tau)}-u_{X(t)})\big\|_{L_x^{\infty}(\R^2)}
\\
\leq &\; C\min\left(\lam^{-1},(\nu (t-\tau))^{-\f12}\right) \nu^{-\f{2+k}2}(t-\tau)^{\g-\f{\al}{2}} \tau^{-\g(1-\g)}t^{-\mu\g} \\
&\;\cdot  \|X'\|_{L^\infty_t L_s^\infty} \left(\sup_{\eta\in (0,t]}\eta^\g \|X'(\eta)\|_{C_s^\g}\right)^{1-\g} \left(\sup_{\eta\in (0,t]}\eta^{\mu}\|\pa_t X(\eta)\|_{\dot{C}_s^{\g}}\right)^{\g} \\
&\; + C \min\left(\lam^{-1-\g'}(\nu(t-\tau))^{\f{1+\g'-\al}{2}},\, \lam^{-1}(\nu(t-\tau))^{\f{1-\al}{2}}\right)
\nu^{-\f{4+k-\al}{2}}
\\
&\;\quad \cdot \left(\tau^{-\g'}t^{-\mu_0} \|X'\|_{L^\infty_t L_s^\infty}
\sup_{\eta\in (0,t]}\eta^{\g'} \|X'(\eta)\|_{C_s^{\g'}} \sup_{\eta\in (0,t]}\eta^{\mu_0}\|\pa_t X(\eta)\|_{L_s^\infty} \right.\\
&\;\qquad \quad \left.+ t^{-\mu'} \|X'\|_{L^\infty_t L_s^\infty}^2 \sup_{\eta\in (0,t]}\eta^{\mu'}\|\pa_t X(\eta)\|_{\dot{C}_s^{\g'}} \right).
\end{align*}
Let $[\al]$ denote the largest integer that is no greater than $\al$.
We shall apply Lemma \ref{lem: interpolation} with $\va(\tau):= \tau^{-\g}$ to \eqref{eqn: def of h_X}.
Since $\g \geq \f{\al}{2}$, $\g\geq \g'$, and $\al-[\al]\in (0,1)$, we find that
\begin{align*}
&\;\|\na^{[\al]} h_X^{\nu}(t)\|_{\dot{C}_x^{\al-[\al]}(\R^2)}\\
\leq
&\; C\nu t^{-\g} \left|\sup_{\tau\in (0,t)}(t-\tau)^{\f{2+[\al]-\al}{2}}\tau^{\g}\|\na^{[\al]}\D e^{\nu (t-\tau)\Delta}(u_{X(\tau)}-u_{X(t)})\|_{L_x^{\infty}(\R^2)}\right|^{1-(\al-[\al])}\\
&\; \cdot \left|\sup_{\tau \in (0,t)}(t-\tau)^{\f{3+[\al]-\al}2}\tau^{\g} \|\na^{1+[\al]} \D e^{\nu (t-\tau)\Delta}(u_{X(\tau)}-u_{X(t)})\|_{L_x^{\infty}(\R^2)}\right|^{\al-[\al]}
\\
\leq
&\; C\nu^{-\g}t^{-\g(1-\g+\mu)}\sup_{\tau\in (0,t)}\left[ \min\left(\lam^{-1},(\nu (t-\tau))^{-\f12}\right) (\nu(t-\tau))^{\g-\f{\al}{2}}\right]
\\
&\;\cdot \|X'\|_{L^\infty_t L_s^\infty} \left(\sup_{\eta\in (0,t]}\eta^\g \|X'(\eta)\|_{C_s^\g}\right)^{1-\g} \left(\sup_{\eta\in (0,t]}\eta^{\mu}\|\pa_t X(\eta)\|_{\dot{C}_s^{\g}}\right)^{\g} \\
&\; + C\nu^{-1}t^{-\g} \sup_{\tau\in (0,t)}\left[
\tau^{\g}\min\left(\lam^{-1-\g'}(\nu(t-\tau))^{\f{1+\g'-\al}{2}},\, \lam^{-1}(\nu(t-\tau))^{\f{1-\al}{2}}\right) \tau^{-\g'}t^{-\mu_0}\right]
\\
&\;\quad \cdot \|X'\|_{L^\infty_t L_s^\infty}
\sup_{\eta\in (0,t]}\eta^{\g'} \|X'(\eta)\|_{C_s^{\g'}} \sup_{\eta\in (0,t]}\eta^{\mu_0}\|\pa_t X(\eta)\|_{L_s^\infty}\\
&\; + C\nu^{-1}t^{-\g} \sup_{\tau\in (0,t)}\left[
\tau^{\g}\min\left(\lam^{-1-\g'}(\nu(t-\tau))^{\f{1+\g'-\al}{2}},\, \lam^{-1}(\nu(t-\tau))^{\f{1-\al}{2}}\right) t^{-\mu'}\right]
\\
&\;\quad \cdot \|X'\|_{L^\infty_t L_s^\infty}^2 \sup_{\eta\in (0,t]}\eta^{\mu'}\|\pa_t X(\eta)\|_{\dot{C}_s^{\g'}}
\\
\leq
&\; C \left[(\nu t)^{-\f{\al}{2}}\min\left(\lam^{-1},\,(\nu t)^{-\f{1}{2}}\right)
+ \mathds{1}_{\{\nu t\geq 4\lam^2\}} (\nu t)^{-\g}\lam^{-1-\al+2\g}\right]
t^{-\g(\mu-\g)}
\\
&\;\cdot \|X'\|_{L^\infty_t L_s^\infty} \left(\sup_{\eta\in (0,t]}\eta^\g \|X'(\eta)\|_{C_s^\g}\right)^{1-\g} \left(\sup_{\eta\in (0,t]}\eta^{\mu}\|\pa_t X(\eta)\|_{\dot{C}_s^{\g}}\right)^{\g} \\
&\; + C\left[ \min\left(\lam^{-\g'},\, (\nu t)^{-\f{\g'}{2}} \right)
(\nu t)^{-\f{1+\al-\g'}{2}}
\lam^{-1}
+ \mathds{1}_{\{\nu t\geq 4\lam^2\}} (\nu t)^{-1}  \lam^{-\al} \right]
\\
&\;\quad \cdot \left(t^{1-\g'-\mu_0}\|X'\|_{L^\infty_t L_s^\infty}
\sup_{\eta\in (0,t]}\eta^{\g'} \|X'(\eta)\|_{C_s^{\g'}} \sup_{\eta\in (0,t]}\eta^{\mu_0}\|\pa_t X(\eta)\|_{L_s^\infty} \right.\\
&\;\qquad \quad \left.+ t^{1-\mu'}\|X'\|_{L^\infty_t L_s^\infty}^2 \sup_{\eta\in (0,t]}\eta^{\mu'}\|\pa_t X(\eta)\|_{\dot{C}_s^{\g'}} \right).
\end{align*}
Here we used the fact that, since $\g\geq \f{\al}{2}$,
\begin{align*}
\sup_{\eta\in (0,t)}\left[\min\left(\lam^{-1}, \, (\nu\eta)^{-\f{1}{2}}\right) (\nu \eta)^{\g-\f{\al}{2}}\right]
\leq &\;
\begin{cases}
\lam^{-1} (\nu t)^{\g-\f{\al}{2}}, & \mbox{if } \nu t \leq \lam^2, \\
(\nu t)^{\g-\f{1+\al}{2}}, & \mbox{if } \nu t \geq \lam^2\mbox{ and }\g\geq \f{1+\al}{2},\\
\lam^{-1+2\g-\al}, & \mbox{if } \nu t \geq \lam^2\mbox{ and }\g\leq \f{1+\al}{2},
\end{cases}\\
\leq &\;
C(\nu t)^{\g-\f{\al}{2}}\min\left(\lam^{-1},\,(\nu t)^{-\f{1}{2}}\right)
+ C\mathds{1}_{\{\nu t\geq 4\lam^2\}} \lam^{-1-\al+2\g},
\end{align*}
and since $\al\in (0,1)\cup (1,1+\g']$,
\begin{align*}
&\; \sup_{\tau\in (0,t)}\left[\min\left(\lam^{-1-\g'}(\nu(t-\tau))^{\f{1+\g'-\al}{2}},\, \lam^{-1}(\nu(t-\tau))^{\f{1-\al}{2}}\right)\right]\\
= &\; \lam^{-\al}\sup_{\tau\in (0,t)}\left[\min\left(\left(\f{\nu\tau}{\lam^2}\right)^{\f{1+\g'-\al}{2}},\, \left(\f{\nu\tau}{\lam^2}\right)^{\f{1-\al}{2}}\right)\right]\\
= &\;
\begin{cases}
\lam^{-\al} \left(\f{\nu t}{\lam^2}\right)^{\f{1+\g'-\al}{2}},
& \mbox{if } \nu t\leq \lam^2, \\
\lam^{-\al} \left(\f{\nu t}{\lam^2}\right)^{\f{1-\al}{2}},
& \mbox{if } \nu t\geq \lam^2\mbox{ and }\al\in (0,1), \\
\lam^{-\al},
& \mbox{if } \nu t\geq \lam^2\mbox{ and }\al\in (1,1+\g'],
\end{cases}
\\
\leq
&\; C \min\left(\lam^{-\g'},\, (\nu t)^{-\f{\g'}{2}}\right)
(\nu t)^{\f{1+\g'-\al}{2}} \lam^{-1}
+ C \mathds{1}_{\{\nu t\geq 4\lam^2\}} \lam^{-\al}.
\end{align*}
This proves \eqref{eqn: Holder estimate for h}.

The estimates for $H_X^\nu$ follows from Lemma \ref{lem: Holder estimate for composition of functions}.
We only note that, if $\al\in (1,2)$,
\begin{align*}
&\; \|H_X^{\nu}(t)\|_{\dot{C}^{1,\al-1}_s(\BT)}\\
= &\; \|\na h_X^{\nu}(X(\cdot,t),t)X'(\cdot,t)\|_{\dot{C}_s^{\al-1}(\BT)}\\
\leq &\; \|\na h_X^{\nu}(t)\|_{\dot{C}^{\al-1}_x(\BR^2)} \|X'(\cdot,t)\|_{L_s^\infty }^{\al-1} \cdot \|X'(\cdot,t)\|_{L_s^\infty}
+ \|\na h_X^{\nu}(t)\|_{L^\infty_x(\BR^2)} \|X'(\cdot,t)\|_{\dot{C}^{\al-1}_s}.
\end{align*}

\end{proof}

Next we turn to bound $H_X^{\nu}-H_Y^\nu$.
By definition,
\beq
\begin{split}
&\;H_X^{\nu}(t)-H_Y^\nu(t)\\
= &\; h_X^{\nu}(t)\circ X(t)-h_Y^\nu(t)\circ Y(t)\\
= &\; -\nu\int_0^t \big[\Delta e^{\nu (t-\tau)\Delta}(u_{X(\tau)}-u_{X(t)})\big]\circ X(t)
-\big[\Delta e^{\nu (t-\tau)\Delta}(u_{Y(\tau)}-u_{Y(t)})\big]\circ Y(t)\,d\tau.
\end{split}
\label{eqn: H_X - H_Y}
\eeq
We need to estimate the term of the form
\[
\D e^{t\D}(u_{{X+Z}}-u_{X})\circ X-\D e^{t\D}(u_{{Y+W}}-u_{Y})\circ Y.
\]
For that purpose, let us denote
\beq
J(t;X,Z)(s):=\big[\D e^{t\D}(u_{{X}+Z}-u_{X})\big]\circ X=\int_0^1 \CD[\D e^{t\D}u]_{X+\theta Z}[Z]\circ X\,d\theta.
\label{eqn: def of J}
\eeq
Then
\begin{align*}
\big[\Delta e^{t\Delta}(u_{{X+Z}}-u_{X})\big]\circ X
-\big[\Delta e^{t\Delta}(u_{{Y+W}}-u_{Y})\big]\circ Y
= &\; J(t;X,Z) - J(t;Y,W).
\end{align*}

\begin{lem}
\label{lem: bound for J YZ - J YW}
Suppose $X,Y\in O^\lam$, and $Z,W\in C^{1,\g}(\BT)$ satisfy that 
$\|Z'\|_{L_s^\infty(\BT)},\|W'\|_{L_s^\infty(\BT)}\leq \lam/2$.
Then
\if0{\color{red}(The red part may be deleted.)
\begin{align*}
&\; \|J(t;X,Z)-J(t;Y,W)\|_{L^{\infty}(\T)}
+ t^{1/2}M^{-1}\|J(t;X,Z)-J(t;Y,W)\|_{\dot{C}^{1}(\T)}\\
&\; + t^{1/2}\|\tilde{J}(t;X,Z)-\tilde{J}(t;Y,W)\|_{L^{\infty}(\T)}
+ tM^{-1}\|\tilde{J}(t;X,Z)-\tilde{J}(t;Y,W)\|_{\dot{C}^{1}(\T)}\\
\leq
&\;
C \min(\lam^{-2}t^{-1},\,\lam^{-1}t^{-3/2})
M\|X'-Y'\|_{L^{\infty}}\|Z'\|_{L^{\infty}}\\
&\;+ C\lam^{-2}(t^{-3/2}+t^{-2}\|Z\|_{L^{\infty}}) M^2\|X'-Y'\|_{L^{\infty}}\|Z\|_{L^{\infty}}\\
&\; + C\min(\lam^{-1}t^{-1},\, t^{-3/2}) M\|Z'-W'\|_{L^{\infty}}
+ C\min(\lam^{-1-\g}t^{-(3-\g)/2},\,\lam^{-1}t^{-3/2})
M^2\|Z-W\|_{C^{\g}}.
\end{align*}
An improved bound will also be useful for the future analysis:}\fi
\begin{align*}
&\; \|J(t;X,Z)-J(t;Y,W)\|_{L_s^\infty(\T)}
+ t^{1/2}(\|X'\|_{L_s^\infty}+\|Y'\|_{L_s^\infty})^{-1}\|J(t;X,Z)-J(t;Y,W)\|_{\dot{C}_s^1(\T)}\\
\leq &\; C
\big(\min(\lam^{-3}t^{-1},\,\lam^{-2}t^{-3/2}) + \lam^{-3}t^{-2}\|Z\|_{L_s^\infty}^2 + \lam^{-3}t^{-3/2}\|Z\|_{L_s^\infty} \big)\\
&\;\quad \cdot
(\|X'\|_{L_s^\infty}+\|Y'\|_{L_s^\infty})^2 \|X'-Y'\|_{L_s^\infty} \|Z'\|_{L_s^\infty}\\
&\; + C \lam^{-1-\g}
\big( t^{-(3-\g)/2} +  t^{-3/2}\|Z\|_{L_s^\infty}^\g\big)
\|X'\|_{L_s^\infty} \|Z\|_{L_s^\infty} \|X'-Y'\|_{\dot{C}_s^\g}\\
&\; + C\lam^{-2-\g}
\big( t^{-(3-\g)/2} + t^{-2}\|Z\|_{L_s^\infty}^{1+\g}\big)
\|Y'\|_{L_s^\infty} \big(\|Y'\|_{\dot{C}_s^\g} + \|Z'\|_{\dot{C}_s^\g}\big) \|X'-Y'\|_{L_s^\infty} \|Z\|_{L_s^\infty}
\\
&\; + C\min(\lam^{-1}t^{-1},\, t^{-3/2}) \|Y'\|_{L_s^\infty} \|Z'-W'\|_{L_s^\infty}\\
&\; + C\min(\lam^{-1-\g}t^{-(3-\g)/2},\,\lam^{-1}t^{-3/2})
\|Y'\|_{L_s^\infty} \big(\|Y'\|_{\dot{C}_s^\g}+\|Z'\|_{\dot{C}_s^\g}+\|W'\|_{\dot{C}_s^\g}\big) \|Z-W\|_{L_s^\infty}\\
&\; + C\min(\lam^{-1-\g}t^{-(3-\g)/2},\,\lam^{-1}t^{-3/2}) \|Y'\|_{L_s^\infty}^2 \|Z-W\|_{C_s^\g},
\end{align*}
where the constant $C$ only depends on $\g$.

\begin{proof}
We start from writing
\beq
J(t;X,Z) - J(t;Y,W)
= \big(J(t;X,Z) - J(t;Y,Z)\big) + \big(J(t;Y,Z)-J(t;Y,W)\big).
\label{eqn: splitting J(X,Z)-J(Y,W)}
\eeq
The second term on the right-hand side can be written as
\[
J(t;Y,Z)-J(t;Y,W)
= \Delta e^{t\Delta}(u_{Y+Z}-u_{Y+W})\circ Y,
\]
so it can be readily bounded by applying Lemma \ref{lem: first variation of Laplace of heat kernel applied to u_X} (also see \eqref{eqn: Laplace of heat kernel applied to u_X-u_Y}).
Indeed,
\begin{align*}
&\; \|J(t;Y,Z)-J(t;Y,W) \|_{L_s^\infty(\BT)}
+t^{1/2} \|Y'\|_{L_s^\infty}^{-1} \|J(t;Y,Z)-J(t;Y,W) \|_{\dot{C}_s^1(\BT)}\\
\leq &\; \int_0^1 \|\CD[\Delta e^{t\Delta}u]_{Y+\th Z+(1-\th)W}[Z-W]\|_{L_x^\infty(\BR^2)} \, d\th\\
&\;+ t^{1/2} \|Y'\|_{L_s^\infty}^{-1}\cdot \|Y'\|_{L_s^\infty} \int_0^1 \|\na \CD[\Delta e^{t\Delta}u]_{Y+\th Z+(1-\th)W}[Z-W]\|_{L_x^\infty(\BR^2)} \, d\th\\
\leq &\; C\min(\lam^{-1}t^{-1},\, t^{-3/2})\big(\|Y'+Z'\|_{L_s^\infty}+\|Y'+W'\|_{L_s^\infty}\big) \|Z'-W'\|_{L_s^{\infty}}\\
&\; + C\min(\lam^{-1-\g}t^{-(3-\g)/2},\,\lam^{-1}t^{-3/2})
\big(\|Y'+Z'\|_{L_s^\infty}+\|Y'+W'\|_{L_s^\infty}\big) \\
&\;\quad \cdot \big(\|Y'+Z'\|_{\dot{C}_s^\g}+\|Y'+W'\|_{\dot{C}_s^\g}\big) \|Z-W\|_{L_s^\infty}\\
&\; + C\min(\lam^{-1-\g}t^{-(3-\g)/2},\,\lam^{-1}t^{-3/2})
\big(\|Y'+Z'\|_{L_s^\infty}+\|Y'+W'\|_{L_s^\infty}\big)^2\|Z-W\|_{C_s^\g},
\end{align*}
where the constant $C$ depends on $\g$.
Here we used the fact that, by assumption, $|Y+\th Z + (1-\th)W|_*\geq \lam/2$.
Since $\|Z'\|_{L_s^\infty},\|W'\|_{L_s^\infty}\leq \lam/2\leq \|Y'\|_{L_s^\infty}$, this further simplifies to
\beq
\begin{split}
&\; \|J(t;Y,Z)-J(t;Y,W) \|_{L_s^\infty(\BT)}
+t^{1/2} \|Y'\|_{L_s^\infty}^{-1} \|J(t;Y,Z)-J(t;Y,W) \|_{\dot{C}_s^1(\BT)}\\
\leq &\; C\min(\lam^{-1}t^{-1},\, t^{-3/2}) \|Y'\|_{L_s^\infty} \|Z'-W'\|_{L_s^\infty}\\
&\; + C\min(\lam^{-1-\g}t^{-(3-\g)/2},\,\lam^{-1}t^{-3/2})
\|Y'\|_{L_s^\infty} \big(\|Y'\|_{\dot{C}_s^\g}+\|Z'\|_{\dot{C}_s^\g}+\|W'\|_{\dot{C}_s^\g}\big) \|Z-W\|_{L_s^\infty}\\
&\; + C\min(\lam^{-1-\g}t^{-(3-\g)/2},\,\lam^{-1}t^{-3/2}) \|Y'\|_{L_s^\infty}^2 \|Z-W\|_{C_s^\g}.
\end{split}
\label{eqn: bound for J YZ - J YW}
\eeq

To bound the first term $(J(t;X,Z) - J(t;Y,Z))$ on the right-hand side of \eqref{eqn: splitting J(X,Z)-J(Y,W)}, we define $I_m[t;X,Z]$ $(m = 1,2,3)$ as in the proof of Lemma \ref{lem: first variation of Laplace of heat kernel applied to u_X} (see \eqref{eqn: formula for I_1}-\eqref{eqn: formula for I_3}), and further denote
\[
J_m(t,\theta;X,Z)(s): =I_m[t;X+\theta Z,Z]\circ X(s).
\]
Then
\begin{align*}
J(t;X,Z)-J(t;Y,Z)
= &\; \int_0^1
\CD[\Delta e^{t\Delta}u]_{{X}+\theta Z}[Z]\circ X - \CD[\Delta e^{t\Delta}u]_{Y+\theta Z}[Z]\circ Y\,d \th\\
= &\; \sum_{m = 1}^3 \int_0^1
J_m(t,\theta;X,Z) - J_m(t,\theta;Y,Z)\,d \th.
\end{align*}

By definition, with the $t$-dependence omitted,
\[
J_1(t,\theta;X,Z)(s)_i
=-\int_{\T}(X_l'(s')+\th Z_l'(s'))\pa_l K_{ij}(X(s)-X(s')-\th Z(s'))Z_j'(s')\,ds',
\]
so
\begin{align*}
&\;(J_1(t,\theta;X,Z)-J_1(t,\theta;Y,Z))(s)_i\\
=&\; -\int_{\T}(X_l'(s')- Y_l'(s'))\pa_l K_{ij}(X(s)-X(s')-\th Z(s')) Z_j'(s')\,ds'\\
&\;-\int_{\T}(Y_l'(s')+\th Z_l'(s'))
\pa_l K_{ij}(x-\th Z(s'))\big|_{x=Y(s)-Y(s')}^{X(s)-X(s')} Z_j'(s')\,ds'\\
=: &\; (J_{1,1}+J_{1,2})(t,\theta;X,Y,Z)(s)_i.
\end{align*}
By Lemma \ref{lem: estimate for K} and Lemma \ref{lem: a typical integral},
\begin{align*}
|J_{1,1}(t,\th;X,Y,Z)(s)|
\leq &\; C
\int_{\T}\f{\|X'-Y'\|_{L_s^{\infty}}\|Z'\|_{L_s^{\infty}}}{(t+|X(s)-X(s')-\th Z(s')|^2)^{3/2}}\,ds'\\
\leq &\; C\min(\lam^{-1}t^{-1},\,t^{-3/2})\|X'-Y'\|_{L_s^{\infty}}\|Z'\|_{L_s^{\infty}},
\end{align*}
where $C$ is a universal constant.
Here we used the fact that $|X+\th Z|_*\geq \lam/2$.

By Lemma \ref{lem: estimate for K},
\begin{align*}
&\;\Big|\pa_l K_{ij}(x-\th Z(s'))\big|_{x=Y(s)-Y(s')}^{X(s)-X(s')}\Big| \\
\leq &\; C|X(s)-X(s')-Y(s)+Y(s')|\\
&\;
\cdot \Big[\big(t+|X(s)-X(s')-\th Z(s')|^2\big)^{-2}+\big(t+|Y(s)-Y(s')-\th Z(s')|^2\big)^{-2}\Big],
\end{align*}
where we have
\[
|X(s)-X(s')-Y(s)+Y(s')| \leq\|X'-Y'\|_{L_s^{\infty}}|s-s'|.
\]
Hence,
\begin{align*}
&\;|J_{1,2}(t,\th;X,Y,Z)(s)|\\
\leq &\; C \|Y'+\th Z'\|_{L_s^\infty} \|Z'\|_{L_s^{\infty}}\\
&\;\cdot
\int_{\T}\f{\|X'-Y'\|_{L_s^{\infty}}|s-s'| }{(t+|X(s)-X(s')-\th Z(s')|^2)^{2}}
+\f{\|X'-Y'\|_{L_s^{\infty}}|s-s'|}{(t+|Y(s)-Y(s')-\th Z(s')|^2)^{2}}\,ds'.
\end{align*}
For future use, we calculate the following general integral with $\b\in [0,3)$ by Lemma \ref{lem: a typical integral}:
\beq
\begin{split}
&\; \int_{\T}\f{|s-s'|^\b}{(t+|X(s)-X(s')-\th Z(s')|^2)^{2}}\,ds'\\
\leq
&\; C\int_{\T}\f{\lam^{-\b}|X(s)+\th Z(s)-X(s')-\th Z(s')|^\b}{(t+|X(s)-X(s')-\th Z(s')|^2)^{2}}\,ds'\\
\leq
&\; C\lam^{-\b} \int_{\T}\f{1}{(t+|X(s)-X(s')-\th Z(s')|^2)^{2-\b/2}}
+ \f{(\th |Z(s)|)^\b}{(t+|X(s)-X(s')-\th Z(s')|^2)^{2}}\,ds'\\
\leq
&\; C \Big[\min(\lam^{-1-\b}t^{-(3-\b)/2},\,\lam^{-\b}t^{-(4-\b)/2}) + \lam^{-1-\b}t^{-3/2}\|Z\|_{L_s^\infty}^\b\Big],
\end{split}
\label{eqn: integral along a deformed curve}
\eeq
where the constant $C$ may depend on $\b\in [0,3)$.
Hence, with $\b = 1$,
\begin{align*}
&\;|J_{1,2}(t,\th;X,Y,Z)(s)|\\
\leq &\; C\big(\min(\lam^{-2}t^{-1},\,\lam^{-1}t^{-3/2}) + \lam^{-2}t^{-3/2}\|Z\|_{L_s^\infty}\big)
\|Y'\|_{L_s^\infty}\|X'-Y'\|_{L_s^{\infty}}\|Z'\|_{L_s^{\infty}}.
\end{align*}
Here we used the inequality $\|Z'\|_{L_s^\infty}\leq \|Y'\|_{L_s^\infty}$ as before to find that $\|Y'+\th Z'\|_{L_s^\infty} \leq C\|Y'\|_{L_s^\infty}$.
Therefore,
\beq
\begin{split}
&\; \|J_1(t,\theta;X,Z)-J_1(t,\theta;Y,Z)\|_{L_s^\infty(\BT)}\\
\leq &\;
C\big(\min(\lam^{-2}t^{-1},\,\lam^{-1}t^{-3/2}) + \lam^{-2}t^{-3/2}\|Z\|_{L_s^\infty}\big)
\|Y'\|_{L_s^\infty}\|X'-Y'\|_{L_s^{\infty}}\|Z'\|_{L_s^{\infty}},
\end{split}
\label{eqn: L inf estimate for difference of J_1}
\eeq
where $C$ is a universal constant.
Furthermore, from
\begin{align*}
\pa_s J_1(t,\theta;X,Z)(s)_i
=-\int_{\T}(X_l'(s')+\th Z_l'(s'))X_m'(s)\pa_l\pa_m K_{ij}(X(s)-X(s')-\th Z(s'))Z_j'(s')\,ds',
\end{align*}
one can similarly derive that
\beq
\begin{split}
&\; \|J_1(t,\th;X,Z)-J_1(t,\th;Y,Z)\|_{\dot{C}_s^1(\BT)}\\
\leq &\;
C\big(\min(\lam^{-2}t^{-3/2},\,\lam^{-1}t^{-2}) + \lam^{-2}t^{-2}\|Z\|_{L_s^\infty}\big)
(\|X'\|_{L_s^\infty}+\|Y'\|_{L_s^\infty})^2\|X'-Y'\|_{L_s^{\infty}}\|Z'\|_{L_s^{\infty}},
\end{split}
\label{eqn: C 1 estimate for difference of J_1}
\eeq
where $C$ is also universal.
We omit the details.

Since
\[
J_2(t,\th;X,Z)(s)_i
=
-\int_{\T}Z_l'(s')\pa_l K_{ij}(X(s)-X(s')-\th Z(s'))(X_j'(s')+\th Z_j'(s'))\,ds'
\]
has a similar form as $J_1$, the above estimates \eqref{eqn: L inf estimate for difference of J_1} and
\eqref{eqn: C 1 estimate for difference of J_1} also apply to $J_2$.

Finally (cf.\;\eqref{eqn: formula for I_3 new}),
\beqo
\begin{split}
&\; J_3(t,\theta;X,Z)(s)_i\\
= &\; \int_{\T}
Z_l(s')(X_m'(s')+\th Z_m'(s'))\pa_m\pa_lK_{ij}(X(s)-X(s')-\th Z(s')) (X_j'(s')+\theta Z_j'(s'))\,ds'\\
= &\; \int_{\T}
(X_m'(s')+\th Z_m'(s'))\pa_m\pa_lK_{ij}(X(s)-X(s')-\th Z(s')) \\
&\;\qquad \cdot \big[Z_l(s') (X_j'(s')+\theta Z_j'(s')) - Z_l(s) (X_j'(s)+\theta Z_j'(s))\big]\,ds'.
\end{split}
\eeqo
Using the last formula, we derive that
\begin{align*}
&\; (J_3(t,\theta;X,Z)-J_3(t,\theta;Y,Z))(s)_i\\
= &\; \int_{\T}
(X_m'(s')+\th Z_m'(s'))\pa_m\pa_lK_{ij}(X(s)-X(s')-\th Z(s')) \\
&\;\qquad \cdot \big[Z_l(s') (X_j'(s')-Y_j'(s')) - Z_l(s) (X_j'(s)-Y_j'(s))\big]\,ds'\\
&\; + \int_{\T}
(X_m'(s')-Y_m'(s'))\pa_m\pa_lK_{ij}(X(s)-X(s')-\th Z(s')) \\
&\;\qquad \quad \cdot \big[Z_l(s') (Y_j'(s')+\theta Z_j'(s')) - Z_l(s) (Y_j'(s)+\theta Z_j'(s))\big]\,ds'\\
&\; + \int_{\T}
(Y_m'(s')+\th Z_m'(s'))\pa_m\pa_lK_{ij}(x-\th Z(s'))\big|_{x = Y(s)-Y(s')}^{X(s)-X(s')} \\
&\;\qquad \quad \cdot \big[Z_l(s') (Y_j'(s')+\theta Z_j'(s')) - Z_l(s) (Y_j'(s)+\theta Z_j'(s))\big]\,ds'.
\end{align*}
Proceeding as before, we find that
\begin{align*}
&\; \|J_3(t,\theta;X,Z)-J_3(t,\theta;Y,Z)\|_{L_s^\infty(\BT)}\\
&\; + t^{1/2}(\|X'\|_{L_s^\infty}+\|Y'\|_{L_s^\infty})^{-1} \|J_3(t,\theta;X,Z)-J_3(t,\theta;Y,Z)\|_{\dot{C}_s^1(\BT)}\\
\leq &\; C\int_{\T}
\|X'\|_{L_s^\infty} \cdot \f{1}{(t+|X(s)-X(s')-\th Z(s')|^2)^2} \\
&\; \qquad \cdot \big[|s-s'|\|Z'\|_{L_s^\infty}\|X'-Y'\|_{L_s^\infty} + \|Z\|_{L_s^\infty}|s-s'|^\g \|X'-Y'\|_{\dot{C}_s^\g}\big]\,ds'\\
&\; + C\int_{\T}
\|X'-Y'\|_{L_s^\infty} \cdot \f{1}{(t+|X(s)-X(s')-\th Z(s')|^2)^2} \\
&\;\qquad \quad \cdot \big[|s-s'|\|Z'\|_{L_s^\infty}\|Y'\|_{L_s^\infty} + \|Z\|_{L_s^\infty}|s-s'|^\g \|Y'+\th Z'\|_{\dot{C}_s^\g}\big] \,ds'\\
&\; + C \int_{\T}
\|Y'\|_{L_s^\infty} \left(\f{|X(s)-X(s')-Y(s)+Y(s')|}{(t+|X(s)-X(s')-\th Z(s')|^2)^{5/2}}+\f{|X(s)-X(s')-Y(s)+Y(s')|}{(t+|Y(s)-Y(s')-\th Z(s')|^2)^{5/2}}\right)\\
&\;\qquad \quad \cdot \big[|s-s'|\|Z'\|_{L_s^\infty}\|Y'\|_{L_s^\infty} + \|Z\|_{L_s^\infty}|s-s'|^\g \|Y'+\th Z'\|_{\dot{C}_s^\g}\big] \,ds'\\
\leq &\; C(\|X'\|_{L_s^\infty}+\|Y'\|_{L_s^\infty}) \|X'-Y'\|_{L_s^\infty}\|Z'\|_{L_s^\infty}
\int_{\T} \f{|s-s'|}{(t+|X(s)-X(s')-\th Z(s')|^2)^2} \,ds'\\
&\; + C\|X'\|_{L_s^\infty} \|Z\|_{L_s^\infty} \|X'-Y'\|_{\dot{C}_s^\g}
\int_{\T} \f{|s-s'|^\g }{(t+|X(s)-X(s')-\th Z(s')|^2)^2} \,ds'\\
&\; + C\|X'-Y'\|_{L^\infty}\|Z\|_{L_s^\infty} \|Y'+\th Z'\|_{\dot{C}_s^\g}
\int_{\T} \f{|s-s'|^\g}{(t+|X(s)-X(s')-\th Z(s')|^2)^2} \,ds'\\
&\; + C\|Y'\|_{L_s^\infty}^2 \|X'-Y'\|_{L_s^\infty} \|Z'\|_{L_s^\infty}\\
&\;\quad \cdot \int_{\T}
\f{|s-s'|^2}{t^{1/2}(t+|X(s)-X(s')-\th Z(s')|^2)^2}
+\f{|s-s'|^2}{t^{1/2}(t+|Y(s)-Y(s')-\th Z(s')|^2)^2} \,ds'\\
&\; + C\|Y'\|_{L_s^\infty}\|Y'+\th Z'\|_{\dot{C}_s^\g} \|X'-Y'\|_{L_s^\infty} \|Z\|_{L_s^\infty}\\
&\;\quad \cdot \int_{\T}
\f{|s-s'|^{1+\g}}{t^{1/2}(t+|X(s)-X(s')-\th Z(s')|^2)^2}
+ \f{|s-s'|^{1+\g}}{t^{1/2}(t+|Y(s)-Y(s')-\th Z(s')|^2)^2} \,ds'.
\end{align*}
Here $C$ is a universal constant.
By virtue of \eqref{eqn: integral along a deformed curve},
\begin{align*}
&\; \|J_3(t,\theta;X,Z)-J_3(t,\theta;Y,Z)\|_{L_s^\infty(\BT)} \\
&\; + t^{1/2}(\|X'\|_{L_s^\infty}+\|Y'\|_{L_s^\infty})^{-1}\|J_3(t,\theta;X,Z)-J_3(t,\theta;Y,Z)\|_{\dot{C}_s^1(\BT)}\\
\leq &\; C
\big(\min(\lam^{-2}t^{-1},\,\lam^{-1}t^{-3/2}) + \lam^{-2}t^{-3/2}\|Z\|_{L_s^\infty}\big)
(\|X'\|_{L_s^\infty}+\|Y'\|_{L_s^\infty})\|X'-Y'\|_{L_s^\infty}\|Z'\|_{L_s^\infty}\\
&\; + C
\big( \lam^{-1-\g}t^{-(3-\g)/2} + \lam^{-1-\g}t^{-3/2}\|Z\|_{L_s^\infty}^\g\big)\\
&\;\quad \cdot
\big[\|X'\|_{L_s^\infty} \|Z\|_{L_s^\infty} \|X'-Y'\|_{\dot{C}_s^\g}
+ \|X'-Y'\|_{L_s^\infty}\|Z\|_{L_s^\infty} \|Y'+\th Z'\|_{\dot{C}_s^\g}\big]
\\
&\; + C
\big(\min(\lam^{-3}t^{-1},\,\lam^{-2}t^{-3/2}) + \lam^{-3}t^{-2}\|Z\|_{L_s^\infty}^2\big)
\|Y'\|_{L_s^\infty}^2 \|X'-Y'\|_{L_s^\infty} \|Z'\|_{L_s^\infty}\\
&\; + C
\big(\lam^{-2-\g}t^{-(3-\g)/2} + \lam^{-2-\g}t^{-2}\|Z\|_{L_s^\infty}^{1+\g}\big)
\|Y'\|_{L_s^\infty}\|Y'+\th Z'\|_{\dot{C}_s^\g} \|X'-Y'\|_{L_s^\infty} \|Z\|_{L_s^\infty}\\
\leq &\; C
\big(\min(\lam^{-3}t^{-1},\,\lam^{-2}t^{-3/2}) + \lam^{-3}t^{-2}\|Z\|_{L_s^\infty}^2 + \lam^{-3}t^{-3/2}\|Z\|_{L_s^\infty} \big)\\
&\;\quad \cdot
(\|X'\|_{L_s^\infty}+\|Y'\|_{L_s^\infty})^2 \|X'-Y'\|_{L_s^\infty} \|Z'\|_{L_s^\infty}\\
&\; + C \lam^{-1-\g}
\big( t^{-(3-\g)/2} +  t^{-3/2}\|Z\|_{L_s^\infty}^\g\big)
\|X'\|_{L_s^\infty} \|Z\|_{L_s^\infty} \|X'-Y'\|_{\dot{C}_s^\g}\\
&\; + C\lam^{-2-\g}
\big( t^{-(3-\g)/2} + t^{-2}\|Z\|_{L_s^\infty}^{1+\g}\big)
\|Y'\|_{L_s^\infty}\|Y'+\th Z'\|_{\dot{C}_s^\g} \|X'-Y'\|_{L_s^\infty} \|Z\|_{L_s^\infty},
\end{align*}
In the last inequality, we used the fact that $t^{-3/2}\|Z\|_{L_s^\infty}^\g\leq C(t^{-(3-\g)/2}
 + t^{-2}\|Z\|_{L_s^\infty}^{1+\g})$.
Here the constant $C$ may depend on $\g$.

Combining this with \eqref{eqn: L inf estimate for difference of J_1} and \eqref{eqn: C 1 estimate for difference of J_1}, we conclude that
\begin{align*}
&\;\|J(t;X,Z)-J(t;Y,Z)\|_{L_s^\infty(\T)}\\
&\; + t^{1/2}(\|X'\|_{L_s^\infty}+\|Y'\|_{L_s^\infty})^{-1}\|J(t;X,Z)-J(t;Y,Z)\|_{\dot{C}_s^1(\T)}\\
\leq &\; C
\big(\min(\lam^{-3}t^{-1},\,\lam^{-2}t^{-3/2}) + \lam^{-3}t^{-2}\|Z\|_{L_s^\infty}^2 + \lam^{-3}t^{-3/2}\|Z\|_{L_s^\infty} \big)\\
&\;\quad \cdot
(\|X'\|_{L_s^\infty}+\|Y'\|_{L_s^\infty})^2 \|X'-Y'\|_{L_s^\infty} \|Z'\|_{L_s^\infty}\\
&\; + C \lam^{-1-\g}
\big( t^{-(3-\g)/2} +  t^{-3/2}\|Z\|_{L_s^\infty}^\g\big)
\|X'\|_{L_s^\infty} \|Z\|_{L_s^\infty} \|X'-Y'\|_{\dot{C}_s^\g}\\
&\; + C\lam^{-2-\g}
\big( t^{-(3-\g)/2} + t^{-2}\|Z\|_{L_s^\infty}^{1+\g}\big)
\|Y'\|_{L_s^\infty} \big(\|Y'\|_{\dot{C}_s^\g} + \|Z'\|_{\dot{C}_s^\g}\big) \|X'-Y'\|_{L_s^\infty} \|Z\|_{L_s^\infty},
\end{align*}
where $C$ depends on $\g$.
This together with \eqref{eqn: splitting J(X,Z)-J(Y,W)} and \eqref{eqn: bound for J YZ - J YW} implies the desired estimate.
\end{proof}
\end{lem}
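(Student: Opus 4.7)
The plan is to decompose the difference by telescoping,
$$J(t;X,Z) - J(t;Y,W) = [J(t;X,Z) - J(t;Y,Z)] + [J(t;Y,Z) - J(t;Y,W)],$$
so that in each bracket only one of the two arguments changes. The second bracket is immediate: $J(t;Y,Z) - J(t;Y,W) = \D e^{t\D}(u_{Y+Z}-u_{Y+W}) \circ Y$, and under the hypothesis $\|Z'\|_{L_s^\infty},\|W'\|_{L_s^\infty}\leq \lam/2$ both $Y+Z$ and $Y+W$ lie in $O^{\lam/2}$ with $\|(Y+Z)'-(Y+W)'\|_{L_s^\infty}\leq \lam$. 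So Lemma \ref{lem: estimate for Laplace of heat operator applied to u_X-u_Y} applies directly (with $Z-W$ in place of $X-Y$). Composition with $Y$ is free for the $L_s^\infty$-estimate; for the $\dot{C}_s^1$-estimate the chain rule contributes a factor $\|Y'\|_{L_s^\infty}$, matching the prefactor $t^{1/2}(\|X'\|_{L_s^\infty}+\|Y'\|_{L_s^\infty})^{-1}\|Y'\|_{L_s^\infty}$ in the target bound. Then one separates $\|(Y+Z)'\|_{\dot{C}_s^\g}\le \|Y'\|_{\dot{C}_s^\g}+\|Z'\|_{\dot{C}_s^\g}$, etc.

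The first bracket is the substantial term. I would write $\mathcal{D}[\D e^{t\D}u]_{X+\th Z}[Z] = I_1 + I_2 + I_3$ using the splitting \eqref{eqn: formula for I_1}--\eqref{eqn: formula for I_3} from Lemma \ref{lem: first variation of Laplace of heat kernel applied to u_X}, set $J_m(t,\th;X,Z) := I_m[t;X+\th Z,Z]\circ X$, and bound each $J_m(t,\th;X,Z)-J_m(t,\th;Y,Z)$ individually, then integrate in $\th\in[0,1]$. For $J_1$ and $J_2$ the integrand has the form (schematically) $A_l'(s')\,\pa_l K(X(s)-X(s')-\th Z(s'))\,B_j'(s')$. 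I split the difference into two pieces: one where the prefactor $A'(s')$ changes from $X'$ to $Y'$ (contributing $\|X'-Y'\|_{L_s^\infty}$ times a kernel bound from Lemma \ref{lem: estimate for K}), and one where the kernel argument changes from $X(s)-X(s')$ to $Y(s)-Y(s')$. For the latter, the mean-value estimate on $\na K$ together with $|X(s)-X(s')-Y(s)+Y(s')|\le \|X'-Y'\|_{L_s^\infty}|s-s'|$ gives an extra factor of $|s-s'|$ in the numerator, and the resulting integral is handled by Lemma \ref{lem: a typical integral}, the well-stretched condition for $X+\th Z$, and the general computation
$$\int_\T \frac{|s-s'|^\beta}{(t+|X(s)-X(s')-\th Z(s')|^2)^m}\,ds' \leq C\bigl[\min(\lam^{-1-\beta}t^{-m+(1+\beta)/2},\lam^{-\beta}t^{-m+\beta/2}) + \lam^{-1-\beta}t^{-m+1/2}\|Z\|_{L_s^\infty}^\beta\bigr],$$
which generalizes \eqref{eqn: integral along a deformed curve}. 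The $\|Z\|_{L_s^\infty}$-terms arise because the natural arc-length along $X+\th Z$ differs from that along $X$ by $\th Z$.

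The main obstacle is $J_3$, whose integrand involves $\pa_m\pa_l K$ with only $|x|^{-4}$ decay. Here one must first perform the usual antisymmetrization that made $I_3$ integrable in Lemma \ref{lem: first variation of Laplace of heat kernel applied to u_X}: rewrite $J_3$ as in \eqref{eqn: formula for I_3 new} so that the product $Z_l(s')(X_j'(s')+\th Z_j'(s'))$ is replaced by its difference with the value at $s'=s$. After this, the difference $J_3(X,Z)-J_3(Y,Z)$ splits into three telescoping terms corresponding to: changing the prefactor $X'+\th Z'$ to $Y'+\th Z'$; changing the antisymmetrized factor $Z X' - Z X'$ evaluated along $X$ to the one along $Y$; and changing the kernel argument as before. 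The first two produce $\|X'-Y'\|_{L_s^\infty}$ or $\|X'-Y'\|_{\dot C_s^\g}$, and the third contributes $\|X'-Y'\|_{L_s^\infty}|s-s'|$ against the higher-decay kernel $\pa^3 K$. Each resulting integral fits the template above (with $\beta\in\{1,\g,1+\g,2\}$), which introduces the $\|Z\|_{L_s^\infty}^2$ term one sees in the final estimate.

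For the $\dot{C}_s^1$-estimate, one differentiates in the evaluation variable $s$; the derivative $\pa_s$ either lands on the kernel (producing $X'(s)$ via the chain rule, gaining one extra derivative on $K$ and one extra power of $t^{-1/2}$) or on $X(s)$ when present in the antisymmetrization. Both cases are structurally identical to the $L_s^\infty$-bound, so exactly the same scheme works, with the prefactor $(\|X'\|_{L_s^\infty}+\|Y'\|_{L_s^\infty})$ coming from the chain rule and the prefactor $t^{1/2}$ from the extra time singularity. Finally, combining the $J_1,J_2,J_3$ bounds, using $\|Z'\|_{L_s^\infty}\leq \lam/2\leq \|Y'\|_{L_s^\infty}$ to simplify mixed prefactors and to absorb $\|Y'+\th Z'\|_{L_s^\infty}$ into $\|Y'\|_{L_s^\infty}$, and adding the contribution from $J(t;Y,Z)-J(t;Y,W)$ yields the stated inequality. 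The bookkeeping of which term should be paired with $\|X'-Y'\|_{L_s^\infty}$ versus $\|X'-Y'\|_{\dot{C}_s^\g}$ (and whether the $\|Z\|_{L_s^\infty}^\beta$ appears with $\beta\in\{1,\g,1+\g,2\}$) is the main source of tedium and must be tracked carefully to match the sharp form of the target estimate.
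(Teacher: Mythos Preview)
Your proposal is correct and follows essentially the same approach as the paper: the same telescoping split, the same application of Lemma~\ref{lem: first variation of Laplace of heat kernel applied to u_X}/\ref{lem: estimate for Laplace of heat operator applied to u_X-u_Y} for the second bracket, the same $I_1,I_2,I_3$ decomposition with $J_m(t,\theta;X,Z):=I_m[t;X+\theta Z,Z]\circ X$ for the first bracket, the same antisymmetrization and three-term telescoping for $J_3$, and the same integral template (your generalization is exactly the paper's \eqref{eqn: integral along a deformed curve}). The bookkeeping you anticipate is indeed the main work, and your plan for the $\dot{C}_s^1$-estimate via differentiation in $s$ matches the paper as well.
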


Now we can bound $H_X^{\nu}-H_Y^\nu$.
\begin{proof}[Proof of Lemma \ref{lem: estimate for H_X^nu-H_Y^nu}]
Let $J$ be defined as in \eqref{eqn: def of J}.
Denote
\beq
\begin{split}
&\; \CJ[\tau,t,\nu;X,Y](s)\\
:= &\; \big[J(\nu (t-\tau);X(t),X(\tau)-X(t))
-J(\nu (t-\tau);Y(t),Y(\tau)-Y(t))\big](s).
\end{split}
\label{eqn: def of script J}
\eeq
Then by \eqref{eqn: H_X - H_Y} and \eqref{eqn: def of J}, 
\beq
H_X^{\nu}(t)-H_Y^\nu(t)
= -\nu\int_0^t \CJ[\tau,t,\nu;X,Y]\,d\tau. 
\label{eqn: difference of H is integral of CJ}
\eeq

Under the assumptions on $X$ and $Y$, we apply Lemma \ref{lem: bound for J YZ - J YW} to \eqref{eqn: def of script J} to find that, for $0<\tau<t\leq T$,
\beq
\begin{split}
&\; \|\CJ[\tau,t,\nu;X,Y]\|_{L_s^\infty(\BT)} \\
&\; + (\nu(t-\tau))^{1/2}(\|X'(t)\|_{L_s^\infty}+\|Y'(t)\|_{L_s^\infty})^{-1}\|\CJ[\tau,t,\nu;X,Y]\|_{\dot{C}_s^1(\BT)}\\
\leq &\; C
\big[ \lam^{-3}(\nu (t-\tau))^{-1}
+ \lam^{-3}(\nu (t-\tau))^{-2}\|X(\tau)-X(t)\|_{L_s^\infty}^2\big]\\
&\;\quad \cdot
(\|X'(t)\|_{L_s^\infty}+\|Y'(t)\|_{L_s^\infty})^2 \|X'(t)-Y'(t)\|_{L_s^\infty} \|X'(\tau)-X'(t)\|_{L_s^\infty}\\
&\; + C \lam^{-1-\g}
\big[ (\nu (t-\tau))^{-(3-\g)/2} +  (\nu (t-\tau))^{-3/2}\|X(\tau)-X(t)\|_{L_s^\infty}^\g\big]\\
&\;\quad \cdot \|X'(t)\|_{L_s^\infty}\|X(\tau)-X(t)\|_{L_s^\infty} \|X'(t)-Y'(t)\|_{\dot{C}_s^\g}\\
&\; + C\lam^{-2-\g}
\big[ (\nu (t-\tau))^{-(3-\g)/2} + (\nu (t-\tau))^{-2}\|X(\tau)-X(t)\|_{L_s^\infty}^{1+\g}\big]\\
&\;\quad \cdot
\|Y'(t)\|_{L_s^\infty} \big(\|Y'(t)\|_{\dot{C}_s^\g} + \|X'(\tau)-X'(t)\|_{\dot{C}_s^\g}\big) \|X'(t)-Y'(t)\|_{L_s^\infty} \|X(\tau)-X(t)\|_{L_s^\infty}
\\
&\; + C \lam^{-1}(\nu (t-\tau))^{-1} \|Y'(t)\|_{L_s^\infty} \|X'(\tau)-X'(t)-Y'(\tau)+Y'(t)\|_{L_s^{\infty}}\\
&\; + C \lam^{-1-\g}(\nu (t-\tau))^{-(3-\g)/2} \|Y'(t)\|_{L_s^\infty} \|X(\tau)-X(t)-Y(\tau)+Y(t)\|_{L_s^\infty}\\
&\;\quad \cdot
\big(\|Y'(t)\|_{\dot{C}_s^\g}+\|X'(\tau)-X'(t)\|_{\dot{C}_s^\g}+\|Y'(\tau)-Y'(t)\|_{\dot{C}_s^\g}\big) \\
&\; + C \lam^{-1-\g}(\nu (t-\tau))^{-(3-\g)/2} \|Y'(t)\|_{L_s^\infty}^2  \|X(\tau)-X(t)-Y(\tau)+Y(t)\|_{C_s^\g},
\end{split}
\label{eqn: estimate for cal J}
\eeq
where the constant $C$ depends on $\g$.
By \eqref{eqn: improved inequality regarding time difference}, with $\mu_0\in [0,\f12]$,
\beq
\begin{split}
\|X(\tau)-X(t)\|_{L_s^\infty}
\leq &\; \int_{\tau}^t \|\pa_t X(\eta)\|_{L_s^\infty}\,d\eta \\
\leq &\;\int_{\tau}^t \zeta^{-\mu_0} \,d\zeta \cdot
\sup_{\eta\in (0,t)}\eta^{\mu_0}\|\pa_t X(\eta)\|_{L_s^\infty}
\leq C (t-\tau) t^{-\mu_0} N_0,
\end{split}
\label{eqn: L inf estimate for X tau - X t}
\eeq
and similarly,
\[
\|X(\tau)-X(t)-Y(\tau)+Y(t)\|_{L_s^\infty}
\leq C (t-\tau)t^{-\mu_0} \sup_{\eta\in (0,t)}\eta^{\mu_0} \|\pa_t (X-Y)(\eta)\|_{L_s^\infty}.
\]
Here the constants $C$'s are universal.
Moreover, with $\b\in [\g,1)$,
\begin{align*}
\|X(\tau)-X(t)-Y(\tau)+Y(t)\|_{C_s^{\b}}
\leq &\; \int_\tau^t \zeta^{-\b}\,d\zeta \cdot \sup_{\eta\in (0,t)}\eta^{\b}\|\pa_t(X-Y)(\eta)\|_{C_s^\b}\\
\leq &\; C(t-\tau)t^{-\b} \cdot \sup_{\eta\in (0,t)}\eta^{\b}\|\pa_t(X-Y)(\eta)\|_{C_s^\b},
\end{align*}
where $C$ depends on $\b$.
This together with the previous estimate and the interpolation inequality implies that
\begin{align*}
&\;\|X(\tau)-X(t)-Y(\tau)+Y(t)\|_{C_s^\g}\\
\leq &\; C(t-\tau)t^{-\g-\mu_0(1-\f{\g}{\b})} \left(\sup_{\eta\in (0,t)}\eta^{\mu_0} \|\pa_t (X-Y)(\eta)\|_{L_s^\infty}+ \sup_{\eta\in (0,t)}\eta^{\b}\|\pa_t(X-Y)(\eta)\|_{C_s^\b}\right),
\end{align*}
where the constant $C$ depends on $\b$ and $\g$.
Also by the interpolation inequality, with $\al\in (0,\g]$ satisfying $(1-\g)(1+\al) + \g\b = 1$, we have that
\beq
\begin{split}
\|X(\tau)-X(t)\|_{\dot{C}_s^1}
\leq &\; C\|X(\tau)-X(t)\|_{\dot{C}_s^{1,\al}}^{1-\g}
\|X(\tau)-X(t)\|_{\dot{C}_s^{\b}}^\g\\
\leq &\; C\left(\tau^{-\al} \sup_{\eta\in [\tau,t]}\eta^{\al}\|X'(\eta)\|_{\dot{C}_s^\al}\right)^{1-\g}
\left(\int_\tau^t \zeta^{-\b}\cdot \zeta^\b\|\pa_t X(\zeta)\|_{\dot{C}_s^{\b}}\,d\zeta\right)^\g\\
\leq &\; C\left(\tau^{-\al} M\right)^{1-\g}
\left(|t-\tau| \tau^{-\b} N_\b \right)^\g
\\
= &\; C|t-\tau|^\g \tau^{-\g} M^{1-\g} N_\b^\g,
\end{split}
\label{eqn: C 1 estimate for X tau - X t}
\eeq
and similarly,
\begin{align*}
&\; \|X(\tau)-X(t)-Y(\tau)+Y(t)\|_{\dot{C}_s^1}\\
\leq &\; C \left(\tau^{-\al}\sup_{\eta\in (0,t]}\eta^{\al}\|(X'-Y')(\eta)\|_{\dot{C}_s^\al}\right)^{1-\g}
\left(|t-\tau| \tau^{-\b} \sup_{\eta\in (0,t)} \eta^\b \|\pa_t (X-Y)(\eta)\|_{\dot{C}_s^{\b}}\right)^\g
\\
\leq &\; C|t-\tau|^{\g} \tau^{-\g}\\
&\; \cdot \left(\|X'-Y'\|_{L^\infty_t L_s^{\infty}} + \sup_{\eta\in (0,t]} \eta^\g \|(X'-Y')(\eta)\|_{C_s^{\g}}
+ \sup_{\eta\in (0,t)} \eta^{\b} \|\pa_t(X-Y)(\eta)\|_{\dot{C}_s^\b}\right)\\
= &\; C|t-\tau|^{\g} \tau^{-\g} D(t),
\end{align*}
where the constants $C$'s depend on $\b$ and $\g$, and where $D(t)$ was defined in \eqref{eqn: def of D distance bewteen X and Y}.
Finally, under the assumption that $X(s,0) = Y(s,0)$, we also find that
\begin{align*}
&\; \|X'(t)-Y'(t)\|_{L_s^\infty}\\
\leq &\; C \|(X'-Y')(t)\|_{C_s^\al}^{1-\g} \|(X-Y)(t)\|_{\dot{C}_s^\b}^{\g} \\
\leq &\; C
\left(t^{-\al}\sup_{\eta\in (0,t]}\eta^\al \|(X'-Y')(\eta)\|_{C_s^\al}\right)^{1-\g}
\left(\int_0^t \zeta^{-\b}\,d\zeta \cdot
\sup_{\eta\in (0,t)}\eta^\b\|\pa_t(X-Y)(\eta)\|_{\dot{C}_s^\b}\right)^{\g}\\
\leq &\; C\left(\|X'-Y'\|_{L^\infty_t L_s^\infty} + \sup_{\eta\in (0,t]} \eta^\g \|(X'-Y')(\eta)\|_{C_s^{\g}}
+ \sup_{\eta\in (0,t)} \eta^{\b} \|\pa_t(X-Y)(\eta)\|_{\dot{C}_s^\b}\right)\\
= &\; C D(t),
\end{align*}
where $C$ depends on $\b$ and $\g$.

Putting all these estimates into \eqref{eqn: estimate for cal J} gives that
\begin{align*}
&\; \|\CJ[\tau,t,\nu;X,Y]\|_{L_s^\infty} \\ 
&\; + (\nu(t-\tau))^{1/2}(\|X'(t)\|_{L_s^\infty}+\|Y'(t)\|_{L_s^\infty})^{-1} \|\CJ[\tau,t,\nu;X,Y]\|_{\dot{C}_s^1}
\\
\leq &\; C\lam^{-3}(\nu (t-\tau))^{-1}
\Big[ 1
+ (\nu (t-\tau))^{-1}\big((t-\tau)t^{-\mu_0} N_0\big)^2\Big]\\
&\;\quad \cdot
M^2 \cdot D(t) \cdot |t-\tau|^\g \tau^{-\g} M^{1-\g} N_\b^\g\\
&\; + C \lam^{-1-\g}(\nu (t-\tau))^{-(3-\g)/2}
\Big[ 1 +  (\nu (t-\tau))^{-\g/2}\big((t-\tau)t^{-\mu_0} N_0\big)^\g\Big]\\
&\;\quad \cdot M\cdot (t-\tau)t^{-\mu_0} N_0 \cdot  t^{-\g} D(t)\\
&\; + C\lam^{-2-\g} (\nu (t-\tau))^{-(3-\g)/2}
\Big[ 1 + (\nu (t-\tau))^{-(1+\g)/2} \big((t-\tau)t^{-\mu_0} N_0\big)^{1+\g}\Big]\\
&\;\quad \cdot
M\cdot \tau^{-\g} M\cdot D(t) \cdot (t-\tau)t^{-\mu_0} N_0
\\
&\; + C \lam^{-1}(\nu (t-\tau))^{-1} M\cdot |t-\tau|^{\g} \tau^{-\g} D(t)\\
&\; + C \lam^{-1-\g}(\nu (t-\tau))^{-(3-\g)/2} M
\cdot (t-\tau)t^{-\mu_0} \sup_{\eta\in (0,t]}\eta^{\mu_0} \|\pa_t (X-Y)(\eta)\|_{L_s^\infty}
\cdot
\tau^{-\g} M \\
&\; + C \lam^{-1-\g}(\nu (t-\tau))^{-(3-\g)/2} M^2 \\
&\;\quad  \cdot (t-\tau)t^{-\g-\mu_0(1-\f{\g}{\b})} \left(\sup_{\eta\in (0,t)}\eta^{\mu_0} \|\pa_t (X-Y)(\eta)\|_{L_s^\infty}+ D(t)\right)
\\
\leq &\; C (t-\tau)^{-1+\g}\tau^{-\g} D(t)\cdot \lam^{-3}\nu^{-1}
M^{3-\g}  N_\b^\g \Big[ 1
+ \nu^{-1} t^{1-2\mu_0} N_0^2\Big]\\
&\; + C(t-\tau)^{-(1-\g)/2} \tau^{-\g} t^{-\mu_0} D(t) \cdot
\lam^{-2-\g} \nu^{-(3-\g)/2} M^2 N_0
\Big[ 1 + \nu^{-1} t^{1-2\mu_0} N_0^2\Big]^{\f{1+\g}{2}}
\\
&\; + C(t-\tau)^{-1+\g}\tau^{-\g} D(t)\cdot \lam^{-1} \nu^{-1}  M\\
&\; + C(t-\tau)^{-(1-\g)/2}
\tau^{-\g} t^{-\mu_0} \sup_{\eta\in (0,t)}\eta^{\mu_0} \|\pa_t (X-Y)(\eta)\|_{L_s^\infty}\cdot \lam^{-1-\g}\nu^{-(3-\g)/2} M^2\\
&\; + C (t-\tau)^{-(1-\g)/2} t^{-\g-\mu_0(1-\f{\g}{\b})} \left(D(t)+\sup_{\eta\in (0,t)}\eta^{\mu_0} \|\pa_t (X-Y)(\eta)\|_{L_s^\infty}\right)\cdot  \lam^{-1-\g}\nu^{-(3-\g)/2}M^2,
\end{align*}
where $C$ depends on $\b$ and $\g$.
By assumption $t\leq 1$, so
\beq
\begin{split}
&\; \|\CJ[\tau,t,\nu;X,Y]\|_{L_s^\infty}\\
&\; + (\nu(t-\tau))^{1/2}(\|X'\|_{L^\infty_t L_s^\infty}+\|Y'\|_{L^\infty_t L_s^\infty})^{-1} \|\CJ[\tau,t,\nu;X,Y]\|_{\dot{C}_s^1}\\
\leq &\; C(t-\tau)^{-1+\g}\tau^{-\g} A(t),
\end{split}
\label{eqn: L inf bound for J}
\eeq
where $A(t)$ was defined in \eqref{eqn: def of A_J}, and where $C$ depends on $\b$ and $\g$.

Now by \eqref{eqn: difference of H is integral of CJ},
\[
\|H_X^{\nu}(t)-H_Y^\nu(t)\|_{L^\infty_s}
\leq \nu\int_0^t \|\CJ[\tau,t,\nu;X,Y]\|_{L_s^\infty} \,d\tau
\leq C\nu A(t).
\]
Applying Lemma \ref{lem: interpolation} and \eqref{eqn: L inf bound for J}, we also find for $\g\in (0,\f12)$ that
\begin{align*}
&\;\|H_X^{\nu}(t)-H_Y^\nu(t)\|_{\dot{C}^{2\g}_s(\T)}\\
\leq
&\; C\nu t^{-\g} \left|\sup_{\tau \in (0,t)}(t-\tau)^{1-\g} \tau^\g
\|\CJ[\tau,t,\nu;X,Y]\|_{L_s^\infty(\T)}\right|^{1-2\g}\\
&\;\cdot
\left|\sup_{\tau \in (0,t)}(t-\tau)^{3/2-\g} \tau^\g
\|\CJ[\tau,t,\nu;X,Y]\|_{\dot{C}_s^1(\T)}\right|^{2\g}\\
\leq
&\; Ct^{-\g} \nu^{1-\g} (\|X'\|_{L^\infty_t L^\infty_s}+\|Y'\|_{L^\infty_t L^\infty_s})^{2\g}\\
&\; \cdot \sup_{\tau \in (0,t)}(t-\tau)^{1-\g}\tau^\g
\Big[\|\CJ[\tau,t,\nu;X,Y]\|_{L_s^\infty(\T)} \\
&\;\qquad \qquad \qquad \qquad +(\nu (t-\tau))^{1/2} (\|X'\|_{L^\infty_t L^\infty_s}+\|Y'\|_{L^\infty_t L^\infty_s})^{-1}\|\CJ[\tau,t,\nu;X,Y]\|_{\dot{C}_s^1(\T)}\Big]\\
\leq &\; C t^{-\g} \nu^{1-\g} (\|X'\|_{L^\infty_t L^\infty_s}+\|Y'\|_{L^\infty_t L^\infty_s})^{2\g} A(t).
\end{align*}
Here the constants $C$'s depend on $\b$ and $\g$.
\end{proof}

\subsection{Estimates for $B_{\nu}[u]$}
Recall that $B_\nu[u]$ was defined in \eqref{eqn: u_2 tilde}.
\begin{proof}[Proof of Lemma \ref{lem: estimate for B_nu u}]
We rewrite $B_{\nu}[u]$ as
\beqo
B_{\nu}[u](t)=-\int_0^t e^{\nu (t-\tau)\Delta}\BP\di(u\otimes u)(\tau)\, d\tau.
\eeqo
Since $\BP = \Id + \na(-\D)^{-1}\di$, the standard parabolic estimates give that,
for
$q\in (1,\infty]$ and $r\in [1,q]$,
\beqo
\|e^{\nu (t-\tau)\Delta}\BP\di(u\otimes u)(\tau)\|_{L^q_x}
\leq C(\nu(t-\tau))^{-\f12-\f1r+\f1q} \|(u\otimes u)(\tau)\|_{L^{r}_x}.
\eeqo
Here we interpret $\f{1}{\infty}$ as $0$, and the constant $C$ depends on $q$ and $r$.
Hence, if additionally $0< \f1r <\f12 + \f1q$,
\begin{align*}
\|B_{\nu}[u](t)\|_{L^q_x}
\leq &\; C \int_0^t (\nu(t-\tau))^{-\f12-\f1r+\f1q} (\nu\tau)^{\f1r-1}\, d\tau \cdot \sup_{\eta\in(0,t]}(\nu \eta)^{1-\f1r}\|(u\otimes u)(\eta)\|_{L_x^r}\\
\leq &\; C\nu^{-1} (\nu t)^{\f1q-\f12}
\sup_{\eta\in(0,t]}(\nu \eta)^{1-\f1r}\|(u\otimes u)(\eta)\|_{L_x^r},
\end{align*}
which gives \eqref{eqn: L q estimate for B general}.

By Lemma \ref{lem: interpolation} with $\va(\tau) = \tau^{-(1+\g)/2}$,
\begin{align*}
\| B_{\nu}[u](t)\|_{\dot{C}_x^{\g}}
\leq &\; Ct^{-(1+\g)/2}\left|\sup_{\tau\in [0,t]}(t-\tau)^{\f{2-\g}{2}} \tau^{(1+\g)/2}\| e^{\nu (t-\tau)\Delta}\BP\di(u\otimes u)(\tau)\|_{L_x^\infty}\right|^{1-\g}\\
&\;\cdot \left|\sup_{\tau\in [0,t]}(t-\tau)^{\f{3-\g}{2}}
\tau^{(1+\g)/2} \|\na e^{\nu (t-\tau)\Delta}\BP\di(u\otimes u)(\tau)\|_{L_x^\infty}\right|^{\g}.
\end{align*}
Then by Lemma \ref{lem: parabolic estimates},
\beq
\begin{split}
&\; \|B_{\nu}[u](t)\|_{\dot{C}_x^{\g}}\\
\leq &\; Ct^{-(1+\g)/2}\left|\sup_{\tau\in (0,t]}(t-\tau)^{\f{2-\g}{2}} \cdot
(\nu (t-\tau))^{-(2-\g)/2} \tau^{(1+\g)/2}
\|(u\otimes u)(\tau)\|_{M^{1,1-\g}}\right|^{1-\g}\\
&\;\cdot \left|\sup_{\tau\in (0,t]}(t-\tau)^{\f{3-\g}{2}}
\cdot (\nu (t-\tau))^{-(3-\g)/2} \tau^{(1+\g)/2} \|(u\otimes u)(\tau)\|_{M^{1,1-\g}}\right|^{\g}\\
\leq &\; C \nu^{-1} t^{-(1+\g)/2} \sup_{\tau\in (0,t]} \tau^{(1+\g)/2} \|(u\otimes u) (\tau)\|_{M^{1,1-\g}}\\
\leq &\; C \nu^{-1} (\nu t)^{-(1+\g)/2} \sup_{\tau\in (0,t]} (\nu \tau)^{(1+\g)/2} \|(u\otimes u) (\tau)\|_{ L_x^{2/(1-\g)}},
\end{split}
\label{eqn: Holder bound for B u in term of Morrey norm}
\eeq
where $C$ depends on $\g$.
This proves \eqref{eqn: Holder estimate for B u prelim}.
Based on this, for $p\in [2,\infty)$, if $\g \geq 1-\f4p$, i.e.\;$\f{4}{1-\g}\geq p$, we may use the interpolation inequality to further obtain that
\beq
\begin{split}
&\; \|B_{\nu}[u](t)\|_{\dot{C}_x^{\g}(\R^2)}\\
\leq &\; C \nu^{-1}(\nu t)^{-(1+\g)/2}
\left(\sup_{\eta\in (0,t]} (\nu \eta)^{\f12-\f1p} \|u(\eta)\|_{L^p_x}\right)^{\f{(1-\g)p}{2}}
\left(\sup_{\eta\in (0,t]}(\nu \eta)^{\f12}\|u(\eta)\|_{L^\infty_x}\right)^{2-\f{(1-\g)p}{2}}\\
\leq &\;
C \nu^{-1}(\nu t)^{-(1+\g)/2}
\left(\sup_{\eta\in (0,t]} (\nu \eta)^{\f12-\f1p} \|u(\eta)\|_{L^p_x} + \sup_{\eta\in (0,t]}(\nu \eta)^{1/2}\|u(\eta)\|_{L^\infty_x}\right)^2.
\end{split}
\label{eqn: Holder estimate for B u after interpolation}
\eeq
The general case $\g\in (0,1)$ follows by interpolating this with \eqref{eqn: L inf estimate for B}, which gives \eqref{eqn: 6th estimate for B new form general gamma}.

To show the higher-order estimates in \eqref{eqn: 2nd estimate for B}, we observe from \eqref{eqn: u_2 tilde} that
\begin{align*}
B_{\nu}[u](t) = &\; -\int_{t/2}^t e^{\nu (t-\tau)\Delta}\mathbb{P}\di(u\otimes u)(\tau)\, d\tau
+ e^{\f{\nu t}{2}\Delta}\left[-\int_0^{t/2}e^{\nu (t/2-\tau)\Delta}\mathbb{P}(u\cdot\na u)(\tau)\, d\tau\right]\\
= &\; -\int_{t/2}^t e^{\nu (t-\tau)\Delta}\mathbb{P}\di (u\otimes u)(\tau)\, d\tau
+ e^{\f{\nu t}{2}\Delta}\big(B_{\nu}[u](t/2)\big).
\end{align*}
Applying Lemma \ref{lem: interpolation} with $\va(t) = 1$ to the first term, we find that
\begin{align*}
\|B_{\nu}[u](t)\|_{\dot{C}^{1,\g}_x}
\leq &\; C \left|\sup_{\tau\in(t/2,t) }(t-\tau)^{\f{2-\g}{2}} \|\na e^{\nu (t-\tau)\Delta}\BP\di(u\otimes u)(\tau)\|_{L_x^\infty}\right|^{1-\g}\\
&\;\cdot
\left|\sup_{\tau\in (t/2,t)}(t-\tau)^{\f{3-\g}{2}} \|\na^2 e^{\nu (t-\tau)\Delta}\BP\di(u\otimes u)(\tau)\|_{L_x^\infty}\right|^{\g}\\
&\; + \big\| e^{\f{\nu t}{2}\Delta}\big(B_{\nu}[u](t/2)\big)\big\|_{\dot{C}^{1,\g}_x}.
\end{align*}
By Lemma \ref{lem: parabolic estimates} and \eqref{eqn: L inf estimate for B}, for $p\in (2,\infty)$,
\begin{align*}
\|B_{\nu}[u](t)\|_{\dot{C}^{1,\g}_x}
\leq &\; C \left|\sup_{\tau\in(t/2,t) }(t-\tau)^{\f{2-\g}{2}} (\nu(t-\tau))^{-\f{2-\g}{2}}
\|(u\otimes u)(\tau)\|_{\dot{C}^\g_x}\right|^{1-\g}\\
&\;\cdot
\left|\sup_{\tau\in (t/2,t)}(t-\tau)^{\f{3-\g}{2}} (\nu(t-\tau))^{-\f{3-\g}{2}} \|(u\otimes u)(\tau)\|_{\dot{C}^\g_x}\right|^{\g}\\
&\;
+ C(\nu t)^{-(1+\g)/2} \big\|B_{\nu}[u](t/2)\big\|_{L^\infty_x}
\\
\leq &\; C \nu^{-1} \sup_{\tau\in(t/2,t)} \|(u\otimes u)(\tau)\|_{\dot{C}^\g_x} \\
&\; + C\nu^{-1} (\nu t)^{-(2+\g)/2}\sup_{\eta\in(0,t]}(\nu \eta)^{\f12-\f1p}\|u(\eta)\|_{L_x^p}\cdot
\sup_{\eta\in(0,t]}(\nu \eta)^{1/2}\|u(\eta)\|_{L_x^\infty}
\\
\leq
&\; C\nu^{-1} (\nu t)^{-(2+\g)/2}
\sup_{\eta\in (0,t]} (\nu\eta)^{1/2} \|u(\eta)\|_{L^\infty_x}\\
&\; \cdot \left[\sup_{\eta\in(0,t]}(\nu \eta)^{\f12-\f1p}\|u(\eta)\|_{L_x^p}
+ \sup_{\eta\in (0,t]} (\nu\eta)^{(1+\g)/2} \|u(\eta)\|_{\dot{C}^\g_x}\right].
\end{align*}
This bounds the first term on the left-hand side of \eqref{eqn: 2nd estimate for B}.
Similarly,
\begin{align*}
\|B_{\nu}[u](t)\|_{\dot{C}^{1}_x}
\leq &\; \int_{t/2}^t \big\|\na e^{\nu (t-\tau)\Delta}\mathbb{P}\di (u\otimes u)(\tau)\big\|_{L^\infty_x}\, d\tau
+ \big\| e^{\f{\nu t}{2}\Delta}\big(B_{\nu}[u](t/2)\big)\big\|_{\dot{C}^{1}_x}
\\
\leq &\; C \int_{t/2}^t (\nu(t-\tau))^{-\f{2-\g}{2}}
\|(u\otimes u)(\tau)\|_{\dot{C}^\g_x}\,d\tau
+ C(\nu t)^{-1/2} \big\|B_{\nu}[u](t/2)\big\|_{L^\infty_x}
\\
\leq &\; C \nu^{-1} (\nu t)^{\g/2} \sup_{\tau\in(t/2,t)} \|(u\otimes u)(\tau)\|_{\dot{C}^\g_x} \\
&\; + C\nu^{-1} (\nu t)^{-1}\sup_{\eta\in(0,t]}(\nu \eta)^{\f12-\f1p}\|u(\eta)\|_{L_x^p}\cdot
\sup_{\eta\in(0,t]}(\nu \eta)^{1/2}\|u(\eta)\|_{L_x^\infty}
\\
\leq
&\; C\nu^{-1} (\nu t)^{-1}
\sup_{\eta\in (0,t]} (\nu\eta)^{1/2} \|u(\eta)\|_{L^\infty_x}\\
&\; \cdot \left[\sup_{\eta\in(0,t]}(\nu \eta)^{\f12-\f1p}\|u(\eta)\|_{L_x^p}
+ \sup_{\eta\in (0,t]} (\nu\eta)^{(1+\g)/2} \|u(\eta)\|_{\dot{C}^\g_x}\right].
\end{align*}
This bounds the second term on the left-hand side of \eqref{eqn: 2nd estimate for B}.
This completes the proof of \eqref{eqn: 2nd estimate for B} for $p\in (2,\infty)$.
The case of $p = 2$ then follows from the interpolation inequality and the Young's inequality.

By Lemma \ref{lem: parabolic estimates},
\begin{align*}
\|B_\nu[u](t)\|_{M^{1,1-\g}(\BR^2)}
\le &\; \int_0^t \left\|e^{\nu (t-\tau)\Delta}\BP\di(u\otimes u)(\tau)\right\|_{M^{1,1-\g}(\BR^2)} d\tau\\
\leq &\; C\int_0^t (\nu (t-\tau))^{-1/2} \big\|(u\otimes u)(\tau)\big\|_{M^{1,1-\g}}\, d\tau\\
\leq &\; C\nu^{-1} (\nu t)^{-\g/2} \sup_{\eta\in(0,t]}(\nu\eta)^{(1+\g)/2}\|(u\otimes u)(\eta)\|_{M^{1,1-\g}},
\end{align*}
and in a similar spirit,
\beqo
\begin{split}
\|B_{\nu}[u](t)\|_{L^\infty_x}
\leq &\; \int_0^t \big\|e^{\nu (t-\tau)\Delta}\BP\di(u\otimes u)(\tau)\big\|_{L^\infty_x}\, d\tau\\
\leq &\; C \int_0^t (\nu(t-\tau))^{-(2-\g)/2} \|(u\otimes u)(\tau)\|_{M^{1,1-\g}} \, d\tau\\
\leq &\; C\nu^{-1} (\nu t)^{-1/2}
\sup_{\tau\in (0,t]} (\nu \tau)^{(1+\g)/2} \|(u\otimes u) (\tau)\|_{M^{1,1-\g}}.
\end{split}
\eeqo
In view of these inequalities as well as
\[
\|u_1\otimes u_2 \|_{M^{1,1-\g}(\BR^2)}
\leq C \|u_1\|_{M^{1,1-\g}(\BR^2)} \|u_2\|_{L^\infty_x(\BR^2)},
\]
we find that
\begin{align*}
&\; (\nu t)^{\g/2} \|(B_{\nu}[u]-B_{\nu}[v])(t)\|_{M^{1,1-\g}(\BR^2)}
+ (\nu t)^{1/2} \|(B_{\nu}[u] - B_{\nu}[v])(t)\|_{L^\infty_x(\BR^2)}\\
\leq &\; C\nu^{-1} \sup_{\eta\in(0,t]}(\nu\eta)^{(1+\g)/2}
\big\|(u\otimes u - v\otimes v)(\eta)\big\|_{M^{1,1-\g}}
\\
\leq &\; C\nu^{-1}
\sup_{\eta\in (0,t]}(\nu \eta)^{\g/2}\|(u-v)(\eta)\|_{M^{1,1-\g}}
\sup_{\eta\in (0,t]} (\nu\eta)^{1/2} \|(u(\eta),v(\eta))\|_{L_x^\infty},
\end{align*}
where $C$ depends on $\g$.
This proves \eqref{eqn: 4th estimate for B L inf}.
One can justify \eqref{eqn: 4th estimate for B gamma} in a similar manner using the second last line of \eqref{eqn: Holder bound for B u in term of Morrey norm}.
\end{proof}

Finally, we prove Lemma \ref{lem: time Holder continuity of B_u} on the time-continuity of $B_\nu[u]$.

\begin{proof}[Proof of Lemma \ref{lem: time Holder continuity of B_u}]

If $t_1 \leq t_2/2$, by \eqref{eqn: L inf estimate for B} in Lemma \ref{lem: estimate for B_nu u},
\begin{align*}
&\; \|B_{\nu}[u](t_2)-B_{\nu}[u](t_1)\|_{L^\infty_x}\\
\leq &\; \|B_{\nu}[u](t_2)\|_{L^\infty_x} + \|B_{\nu}[u](t_1)\|_{L^\infty_x}\\
\leq &\; C\nu^{-1} (\nu t_1)^{-\f12}\sup_{\eta\in(0,t_1]}(\nu \eta)^{\f12-\f1p}\|u(\eta)\|_{L_x^p}\cdot
\sup_{\eta\in(0,t_1]}(\nu \eta)^{\f12}\|u(\eta)\|_{L_x^\infty}\\
&\; + C\nu^{-1} (\nu t_2)^{-\f12}\sup_{\eta\in(0,t_2]}(\nu \eta)^{\f12-\f1p}\|u(\eta)\|_{L_x^p}\cdot
\sup_{\eta\in(0,t_2]}(\nu \eta)^{\f12}\|u(\eta)\|_{L_x^\infty}
\\
\leq &\; C\nu^{-1} (\nu (t_2-t_1))^{\f\g{2}} (\nu t_1)^{-\f{1+\g}2}
\left(\sup_{\eta\in(0,t_1]}(\nu \eta)^{\f12-\f1p}\|u(\eta)\|_{L_x^p}
+
\sup_{\eta\in(0,t_1]}(\nu \eta)^{\f12}\|u(\eta)\|_{L_x^\infty}\right)^2\\
&\; + C\nu^{-1} (\nu (t_2-t_1))^{\f{\g}2} (\nu t_2)^{-\f{1+\g}{2}}
\left(\sup_{\eta\in(0,t_2]}(\nu \eta)^{\f12-\f1p}\|u(\eta)\|_{L_x^p}+
\sup_{\eta\in(0,t_2]}(\nu \eta)^{\f12}\|u(\eta)\|_{L_x^\infty}\right)^2.
\end{align*}

If $t_1\in (t_2/2, t_2)$, by definition,
\[
B_{\nu}[u](t_2)-B_{\nu}[u](t_1)
= -\int_{t_1}^{t_2}e^{\nu (t_2-\tau)\Delta}\BP\di(u\otimes u)(\tau)\, d\tau
+ \big(e^{\nu(t_2-t_1)\D}-\Id\big)B_{\nu}[u](t_1).
\]
By Lemma \ref{lem: parabolic estimates} (also see \eqref{eqn: Holder bound for B u in term of Morrey norm}) and \eqref{eqn: 6th estimate for B new form general gamma} in Lemma \ref{lem: estimate for B_nu u},
and also using the assumption $t_1\geq t_2/2$,
\begin{align*}
&\; \|B_{\nu}[u](t_2)-B_{\nu}[u](t_1)\|_{L^\infty_x }\\
\leq &\; \int_{t_1}^{t_2}\|e^{\nu (t_2-\tau)\Delta}\BP\di(u\otimes u)(\tau)\|_{L^\infty_x}\, d\tau
+ \big\|\big(e^{\nu(t_2-t_1)\D}-\Id\big) B_{\nu}[u](t_1)\big\|_{L^\infty_x}\\
\leq &\; C\int_{t_1}^{t_2} (\nu(t_2-\tau))^{-(2-\g)/2}\|(u\otimes u)(\tau)\|_{M^{1,1-\g}}\, d\tau
+ C (\nu(t_2-t_1))^{\g/2} \|B_{\nu}[u](t_1)\|_{\dot{C}^\g_x}\\
\leq &\; C\int_{t_1}^{t_2} (\nu(t_2-\tau))^{-(2-\g)/2} (\nu \tau)^{-(1+\g)/2} \, d\tau \cdot \sup_{\eta\in (0,t_2]} (\nu \eta)^{(1+\g)/2} \|(u\otimes u)(\eta)\|_{L^{2/(1-\g)}_x}\\
&\; + C \nu^{-1} (\nu(t_2-t_1))^{\g/2}(\nu t_1)^{-(1+\g)/2} \\ &\;\quad \cdot \left(\sup_{\eta\in (0,t_1]} (\nu \eta)^{\f12-\f1p} \|u(\eta)\|_{L^p_x}
+\sup_{\eta\in (0,t_1]}(\nu \eta)^{\f12}\|u(\eta)\|_{L^\infty_x}\right)^2.
\end{align*}
When $\g\geq 1-\f4p$, which means $\f{4}{1-\g}\geq p$, we use interpolation inequality as in \eqref{eqn: Holder estimate for B u after interpolation} to find that
\begin{align*}
&\; \|B_{\nu}[u](t_2)-B_{\nu}[u](t_1)\|_{L^\infty_x }\\
\leq &\; C \nu^{-1} (\nu(t_2-t_1))^{\f{\g}2}(\nu t_2)^{-\f{1+\g}{2}}
\left(\sup_{\eta\in (0,t_2]} (\nu \eta)^{\f12-\f1p} \|u(\eta)\|_{L^p_x}+\sup_{\eta\in (0,t_2]}(\nu \eta)^{\f12}\|u(\eta)\|_{L^\infty_x}\right)^2.
\end{align*}
Observe that lowering the value of $\g$ makes the bound larger, so this holds true for any $\g\in (0,1)$.

Combining these two cases, we prove the desired estimate.
\end{proof}

\section{Construction of the Local Solution}
\label{sec: local well-posedness}

In this section, we shall apply a fixed-point argument to construct a local mild solution to \eqref{eqn: NS equation}-\eqref{eqn: initial data}.
The main result of this section is Proposition \ref{prop: fixed-point solution}.

\subsection{Setup of the fixed-point argument}
\label{sec: fixed-pt mapping}
Throughout this section, we consider the initial data $X_0 = X_0(s)\in C^1(\BT)$, and $u_0 = u_0(x)\in L^p(\BR^2)$ for some $p\in (2,\infty)$, which satisfies $\di u_0 = 0$ and $|X_0|_*>0$.
Denote
\beq
M_0:= \|X_0'\|_{C(\BT)},\quad \lam_0: = |X_0|_*,\quad
Q_0 := \|u_0\|_{L^p(\BR^2)} + \lam_0^{-1+\f2p} M_0^2.
\label{eqn: def of initial constants}
\eeq
Clearly, $0<\lam_0\leq \inf_{s\in \BT} |X_0'(s)|\leq M_0$.
Here we put a second term in the definition of $ Q_0 $ just to avoid possible issues in the special case $u_0=0$.

Fix
\beq
\g\in \left[ \f2p,1\right) \cap \left(1-\f2p,1\right).
\label{eqn: range of gamma}
\eeq
In particular $\g>\f12$.
Let
\beq
\r(t) = \r_{X_0,\g}(t): = \sup_{\tau\in (0,t]}\tau^{\g}\big\|e^{-\f{\tau}{4} \Lam}X_0'\big\|_{\dot{C}^\g_s(\BT)}.
\label{eqn: def of rho}
\eeq
This is well-defined since $\tau^{\g}\|e^{-\f{\tau}{4} \Lam}X_0'\|_{C^\g_s(\BT)}\leq C\|X_0'\|_{L^\infty_s(\BT)}$, with $C$ depending on $\g$ only.
$\r(t)$ is continuous and increasing on $(0,+\infty)$.
Moreover, $\lim_{t\to 0^+}\r(t) = 0$.
Indeed,
\beq
\begin{split}
\tau^{\g}\big\|e^{-\f{\tau}{4} \Lam}X_0'\big\|_{\dot{C}^\g(\BT)}
\leq &\;
\tau^{\g}\big\|e^{-\f{\sqrt{\tau}}{4} \Lam} X_0'\big\|_{\dot{C}^\g(\BT)}
+ \tau^{\g}\Big\|\big(e^{-\f{\sqrt{\tau}}{4} \Lam}- e^{-\f{\tau}{4} \Lam}\big) X_0'\Big\|_{\dot{C}^\g(\BT)}\\
\leq &\;
C\tau^{\g/2}\| X_0'\|_{C(\BT)}
+ C\Big\|\big(e^{-\f{\sqrt{\tau}-\tau}{4} \Lam}- \mathrm{Id}\big) X_0'\Big\|_{C(\BT)},
\end{split}
\label{eqn: time weighted higher order norm vanish at 0}
\eeq
which implies $\lim_{\tau\to 0^+}\tau^{\g}\|e^{-\f{\tau}{4} \Lam}X_0'\|_{\dot{C}^\g(\BT)} = 0$.

Define
\beq
\phi(t) = \phi_{X_0}(t):= \sup_{\tau\in [0,t]}\big\|X_0-e^{-\f{\tau}{4}\Lam}X_0\big\|_{\dot{C}_s^1(\BT)}.
\label{eqn: def of phi}
\eeq
It is also clear that $\phi(t)$ is continuous and increasing on $[0,+\infty)$, and $\lim_{t\to 0^+}\phi(t) = 0$.

With $T\in (0,1)$ and $Q>0$ to be determined, we define
\beq
\begin{split}
\CU := &\; \Big\{u(x,t) \in L^\infty([0,T]; L^p(\BR^2))\cap C_{loc}((0,T]\times \BR^2):
\\
&\;\qquad \quad
\di u = 0\mbox{ for all }t\in[0,T],\;
\|u\|_{L^\infty_T L^p_x(\BR^2)} \leq Q,\\
&\;\qquad \quad
\sup_{\eta\in (0,T]} (\nu\eta)^{\f1p}\|u(\cdot,\eta)\|_{L^\infty_x(\BR^2)}
\leq Q,\; \sup_{\eta\in (0,T]} (\nu\eta)^{\g}\|u(\cdot,\eta)\|_{\dot{C}^{\g}_x(\BR^2)}
\leq Q,\\
&\;\qquad \quad
\sup_{x\in \BR^2} \sup_{0<t_1<t_2\leq T} (\nu t_1)^\g \cdot \f{|u(x,t_1)-u(x,t_2)|}{(\nu(t_1-t_2))^{\g/2}}
\leq Q\bigg\}.
\end{split}
\label{eqn: def of set CU}
\eeq
With $N>0$ to be determined, and $\s: = \f12(\f12-\f1p)$, 
we let
\beq
\begin{split}
\CX := &\; \Big\{X = X(s,t)\in C([0,T];C^1(\BT))\cap C^1_{loc}((0,T)\times \BT):\; X(s,0) = X_0(s),\\
&\;\qquad \quad
t^\g \|X(\cdot, t)\|_{\dot{C}_s^{1,\g}(\BT)}
+\|X(\cdot,t)-e^{-\f{t}4\Lam}X_0\|_{C_s^1(\BT)}
\leq 2\big(\r(t)+ t^{\s}\big)
\mbox{ for all }t\in(0,T],\\
&\;\qquad \quad
\sup_{\eta\in (0,T)} \eta^{\f1p}\|\pa_t X(\cdot, \eta)\|_{L_s^\infty(\BT)}+
\sup_{\eta\in (0,T)} \eta^\g \|\pa_t X(\cdot, \eta)\|_{C_s^{\g}(\BT)}\leq N
\Big\}.
\end{split}
\label{eqn: def of set CX}
\eeq

\begin{rmk}
\label{rmk: choice of sigma}
The form of the bounds $2(\r(t)+t^{\s})$ in the definition of $\CX$ will be clear below; see \eqref{eqn: estimate for Y in C1gamma} and \eqref{eqn: bounds for Y in C1 and C1gamma}.
Actually, one may take $\s$ to be an arbitrary number in $(0,\f12-\f1p)$.
This fact will be used in the proof of Proposition \ref{prop: continuation} below.
\end{rmk}

\begin{rmk}\label{rmk: time continuity in C^1 topology}
In fact, in the definition of $\CX$, the time-continuity of $X\in \CX$ in the $C^1(\BT)$-topology on $[0,T]$ is guaranteed by the quantitative estimates in \eqref{eqn: def of set CX}.
Indeed, the time-continuity of $X\in \CX$ at $t = 0$ is a consequence of the bound for $\|X(t)-e^{-\f{t}4\Lam}X_0\|_{C_s^1(\BT)}$ and the fact $\lim_{t\to 0^+}\r(t) = 0$, while the time-continuity at $t\in (0,T]$ follows from the interpolation inequalities that are similar to \eqref{eqn: L inf estimate for X tau - X t} and \eqref{eqn: C 1 estimate for X tau - X t}.

As a corollary, $t\mapsto X(t)$ is continuous on $(0,T]$ in the $C^{1,\b}(\BT)$-topology for any $\b\in (0,\g)$.
Indeed, this follows from the (local) $\dot{C}^{1,\g}$-bound on $X(t)$ and the time-continuity of $X$ in the $C^1(\BT)$-topology.
\end{rmk}

The main result of this section is as follows.
\begin{prop}
\label{prop: fixed-point solution}
Suppose $u_0 = u_0(x)\in L^p(\BR^2)$ for some $p\in (2,\infty)$, $X_0 = X_0(s)\in C^1(\BT)$, and they satisfy that $\di u_0 = 0$ and $|X_0|_* >0$.

Let $M_0$, $\lam_0$, and $ Q_0 $ be defined as in \eqref{eqn: def of initial constants}.
Fix $\g\in [\f{2}{p},1)\cap (1-\f2p,1)$, and define $\r=\r(t)$ as in \eqref{eqn: def of rho}.
Take $Q,\,N>0$ as in \eqref{eqn: choice of Q} and \eqref{eqn: choice of N} below respectively.
Then for some $T>0$, which depends on $\g,\,p,\,\nu,\,\lam_0,\, Q_0 ,\, M_0$ and $X_0$, there exists a unique $(u,X)\in \CU\times \CX$ satisfying \eqref{eqn: fixed pt equation for u} and \eqref{eqn: fixed pt equation for X} for all $t\in [0,T]$, where $\CU$ and $\CX$ are defined as in \eqref{eqn: def of set CU} and \eqref{eqn: def of set CX}, respectively.
In addition,
\begin{enumerate}
\item it holds pointwise on $(0,T)\times \BT$ that
\beq
\pa_t X = -\f14\Lam X + g_X
+ \big[h_X^{\nu} + e^{\nu t\Delta}(u_0-u_{X(t)})+B_{\nu}[u]\big]\circ X = u\big( X(s,t),t\big);
\label{eqn: equation for X_t as a fixed point}
\eeq
\item
\[
\|X'\|_{L^\infty_T L^\infty_s(\BT)}\leq 2M_0,
\quad
\inf_{t\in [0,T]}|X(t)|_*\geq \f{\lam_0}2,
\]
and for any $\g'\in [0,\g]$,
\[
\sup_{\eta\in (0,T]} \eta^{\g'} \|X'(\cdot, \eta)\|_{\dot{C}_s^{\g'}(\BT)}
\leq 4M_0.
\]
\end{enumerate}

In particular, $(u,X)$ is a mild solution in the sense of Definition \ref{def: mild solution} to \eqref{eqn: NS equation}-\eqref{eqn: kinematic equation} on $[0,T]$ with the initial condition \eqref{eqn: initial data}.

\begin{rmk}
\label{rmk: choosing a specific gamma}
The short-time characterizations of the mild solution in Theorem \ref{thm: maximal solution} follow by taking e.g.,\;$\g:= \max(\f12+\f1p,1-\f1p)$ and $\g' := \f1p$.
\end{rmk}
\begin{rmk}
\label{rmk: characterization of lifespan T}
As is mentioned before, the $C^1_s(\BT)$-regularity is critical for $X$, so it is natural that $T$ depends implicitly on $X_0$, but not just on its $C^1$-seminorm $M_0$ and its well-stretched constant $\lam_0$.
We point out that such implicit constraints on $T$ arise in (and only in) \eqref{eqn: smallness condition on T}, \eqref{eqn: spatial Holder estimate for v}, \eqref{eqn: bound for time continuity for v}, \eqref{eqn: C 1 gamma estimate for deviation from the semigroup solution}-\eqref{eqn: bounds for Y in C1 and C1gamma}, and \eqref{eqn: final bound for distance bewteen output solutions} below.
They essentially require $(\r_{X_0,\g}(T)+T^{\s})$ and $\phi_{X_0}(T)$ to be smaller than certain constants that depend on $\g,\,p,\,\nu,\,\lam_0,\, Q_0 $, and $M_0$.
Observe that the mappings $X_0\mapsto \r_{X_0,\g}$ and $X_0\mapsto \phi_{X_0}$ are continuous from $C^1_s(\BT)$ to $C([0,+\infty))$, so $T$ can be made to depend continuously on $X_0$ in the $C^1_s(\BT)$-topology.
This fact will be used in Section \ref{sec: global well-posedness}.
\end{rmk}
\begin{rmk}
At this moment, we only claim the uniqueness of such $(u,X)$ in $\CU\times \CX$, but not the uniqueness of the mild solution.
The latter will be addressed in Proposition \ref{prop: uniqueness of mild solution}.
\end{rmk}
\end{prop}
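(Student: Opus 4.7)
The approach is to set up the Banach fixed-point map $\CF:\CU\times\CX\to\CU\times\CX$ sending $(u,X)\mapsto(\tilde u,\tilde X)$, where $\tilde u$ and $\tilde X$ are the right-hand sides of \eqref{eqn: fixed pt equation for u} and \eqref{eqn: fixed pt equation for X} respectively. The parameters $Q$, $N$ and $T$ must be chosen consistently so that $\CF$ preserves $\CU\times\CX$ and acts as a contraction on it (possibly in a weaker metric than the ambient norms). All of the nontrivial analytic work has already been packaged into the a priori estimates of Section \ref{sec: a priori estimates}, so the proposition becomes a careful bookkeeping exercise in how those estimates combine.

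For the self-mapping step, I would bound each constituent of $\tilde u$ using Lemma \ref{lem: estimate for u_X^nu} and Lemma \ref{lem: time Holder continuity of u_X^nu} for $u_X^\nu$ in the four norms defining $\CU$, standard parabolic smoothing for $e^{\nu t\D}u_0$, and Lemma \ref{lem: estimate for B_nu u}--Lemma \ref{lem: time Holder continuity of B_u} for $B_\nu[u]$. The key observation is that the $u_X^\nu$-contribution is controlled by $\lam_0^{-1+2/p}M_0^2$ and the initial-data contribution by $\|u_0\|_{L^p}$, together motivating the definition of $ Q_0 $ and suggesting $Q\sim C Q_0 $, while the $B_\nu[u]$-contribution is $O((\nu T)^{\e}Q^2)$ for some positive $\e$ and can be absorbed by taking $T$ small. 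For $\tilde X$, I would write $\tilde X=e^{-\f{t}{4}\Lam}X_0+\CI[W]$; the linear part contributes exactly $\r(t)$ to the $\dot{C}^{1,\g}_s$-weighted seminorm, while $\CI[W]$ is estimated by pairing the smoothing property of $e^{-\tau\Lam/4}$ from Appendix \ref{sec: parabolic estimates} against the four pieces of $W$, namely Lemma \ref{lem: improved estimates for g_X} for $g_X$, Lemma \ref{lem: estimate for h_X^nu} for $h_X^\nu\circ X$, parabolic bounds combined with Lemma \ref{lem: estimate for u_X} for $e^{\nu t\D}(u_0-u_{X(t)})\circ X$, and Lemma \ref{lem: estimate for B_nu u} for $B_\nu[u]\circ X$. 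This yields an additional $O(t^\s)$ term, matching the form $2(\r(t)+t^\s)$ built into $\CX$, while the quantitative $C^1$-closeness to $e^{-\f{t}{4}\Lam}X_0$ combined with smallness of $\phi(T)$ delivers $\|\tilde X'\|_{L^\infty}\le 2M_0$ and $|\tilde X(t)|_*\ge \lam_0/2$; the $\dot{C}^{\g'}$-bound then follows by interpolation between the $L^\infty$ and weighted $\dot{C}^\g$ bounds.

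For the contraction, I would work in a weaker distance on $\CU\times\CX$ that drops the top $\dot{C}^\g_x$ and $\dot{C}^{1,\g}_s$ pieces, using the difference estimates Lemma \ref{lem: estimate for u_X-u_Y}, Lemma \ref{lem: estimate for u_X^nu-u_Y^nu in Morrey space}, Lemma \ref{lem: estimate for H_X^nu-H_Y^nu}, Lemma \ref{lem: improved estimates for g_X-g_Y}, together with the difference bounds \eqref{eqn: 4th estimate for B L inf}--\eqref{eqn: 4th estimate for B gamma} for $B_\nu[u]-B_\nu[v]$, and the smoothing of $e^{-\tau\Lam/4}$. Each nonlinear-difference term either carries a $T$-power or is pre-multiplied by the ambient bounds $Q$, $N$, $\lam_0^{-1}M_0$, so shrinking $T$ pushes the Lipschitz constant below $\f12$ and the Banach fixed-point theorem produces the unique $(u,X)\in\CU\times\CX$ satisfying \eqref{eqn: fixed pt equation for u}--\eqref{eqn: fixed pt equation for X}.

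The main obstacle is the scale-critical $C^1_s$-regularity of $X$. Neither $\r(T)$ nor $\phi(T)$ admits an upper bound depending only on $M_0$; both are continuous moduli of $X_0$ that vanish as $T\to 0^+$ but at an $X_0$-dependent rate. The final smallness condition on $T$ therefore reads $\r(T)+T^\s\le c$ and $\phi(T)\le c$ with $c=c(\g,p,\nu,\lam_0, Q_0 ,M_0)$ a constant emerging from the above estimates, which explains why $T$ depends on the full function $X_0$ rather than only on $M_0$ and $\lam_0$. Once the fixed point is in hand, I would differentiate the Duhamel formula for $X$ to get $\pa_t X=-\f14\Lam X+W(t)$ pointwise, using the regularity gains for $t>0$ encoded in $\CU\times\CX$ to justify the interchange of $\pa_t$ with $\CI$, and then match with $u\circ X$ by combining the splitting \eqref{eqn: Stokes velocity along the string} with the $u$-fixed-point equation \eqref{eqn: fixed pt equation for u} evaluated at $x=X(s,t)$; this gives the two equalities in \eqref{eqn: equation for X_t as a fixed point}.
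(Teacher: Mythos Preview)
Your overall architecture matches the paper: define the map $\CT:(u,X)\mapsto(v,Y)$ via \eqref{eqn: def of v}--\eqref{eqn: def of Y}, show it preserves $\CU\times\CX$ using the lemmas of Section \ref{sec: a priori estimates}, and then contract. The self-mapping discussion and the explanation of the $X_0$-dependent smallness through $\r(T)$ and $\phi(T)$ are correct and align with Sections \ref{sec: bounds for the mapping T}--\ref{sec: contraction mapping}.

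The one substantive discrepancy is your choice of contraction metric on $\CX$. You propose to ``drop the top $\dot{C}^{1,\g}_s$ pieces'', but the paper does \emph{not} weaken the $\CX$-metric: $d_\CX$ in \eqref{eqn: def of d_X} retains the weighted $\dot{C}^\g_s$ seminorm of $X'-Y'$ and also includes weighted norms of $\pa_t(X-Y)$. This is forced by the very lemmas you cite: Lemma \ref{lem: estimate for H_X^nu-H_Y^nu} bounds $H_X^\nu-H_Y^\nu$ in terms of the quantity $D(t)$ in \eqref{eqn: def of D distance bewteen X and Y}, which already contains both $\sup_\eta \eta^\g\|(X'-Y')(\eta)\|_{\dot{C}^\g_s}$ and $\sup_\eta \eta^\b\|\pa_t(X-Y)(\eta)\|_{\dot{C}^\b_s}$, so a weaker distance cannot close against that lemma. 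Only the $\CU$-metric is genuinely weakened, to the Morrey norm $M^{1,1-\g}$ in \eqref{eqn: def of d_U}, which is what makes Lemma \ref{lem: estimate for u_X^nu-u_Y^nu in Morrey space} and \eqref{eqn: 4th estimate for B L inf}--\eqref{eqn: 4th estimate for B gamma} applicable. Relatedly, your claim that each nonlinear-difference term ``carries a $T$-power'' glosses over the scale-critical piece: the $\dot{C}^\g$-estimate for $g_{X_1}-g_{X_2}$ (see the derivation leading to \eqref{eqn: C gamma estimate for W_1 - W_2}) does \emph{not} gain a $T$-power; its smallness comes instead from the \emph{quadratic} dependence on $\|X'\|_{\dot{C}^{\g/2}_s}$ in Lemma \ref{lem: improved estimates for g_X-g_Y}, which produces the factor $(\r(T)+T^\s)^{1/2}$. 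That is the mechanism behind \eqref{eqn: final bound for distance bewteen output solutions} and the reason the improved estimates of Lemmas \ref{lem: improved estimates for g_X}--\ref{lem: improved estimates for g_X-g_Y} are essential here.
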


We will prove Proposition \ref{prop: fixed-point solution} by a fixed-point argument.
We start by introducing the metrics on $\CU$ and $\CX$.

For $u,v\in \CU$, define
\beq
d_\CU(u,v) := \sup_{t\in (0,T]} (\nu t)^{\f\g2+\f1p-\f12} \|(u-v)(\cdot, t)\|_{M^{1,1-\g}(\BR^2)},
\label{eqn: def of d_U}
\eeq
where $M^{1,1-\g}(\BR^2)$ denotes the Morrey norm on $\BR^2$ defined in \eqref{eqn: def of Morrey norm}.
Since $\|f\|_{M^{1,1-\g}(\BR^2)}\leq \|f\|_{L_x^{\f{2}{1-\g}}(\BR^2)}$ where $\f{2}{1-\g}\in (p,\infty)$, one can show that $d_\CU(u,v)$ is a well-defined metric on $\CU$ by interpolation.

For $X,Y\in\CX$, we let
\beq
\begin{split}
d_\CX(X,Y) := &\; \|X'-Y'\|_{L_T^\infty L_s^\infty(\BT)}
+ \sup_{\eta\in (0,T]} \eta^\g \|(X'-Y')(\cdot, \eta)\|_{\dot{C}^\g_s(\BT)}\\
&\; + \sup_{\eta\in (0,T)} \eta^{\f1p} \|\pa_t (X-Y)(\cdot, \eta)\|_{L^\infty_s(\BT)}
+ \sup_{\eta\in (0,T)} \eta^\g \|\pa_t (X-Y)(\cdot, \eta)\|_{C^\g_s(\BT)}.
\end{split}
\label{eqn: def of d_X}
\eeq
Given the definition of $\CX$, this is a well-defined metric on $\CX$.

\begin{lem}
$\CU\times \CX$ equipped with the metric
\beq
d_{\CU\times \CX}((u,X),(v,Y)): = d_\CU(u,v) + d_\CX(X,Y)
\label{eqn: metric on U times X}
\eeq
is a complete metric space.
\begin{proof}
It suffices to show that $(\CU,d_\CU)$ and $(\CX,d_\CX)$ are both complete.
This stems from the compactness provided by the regularity assumptions on the elements in $\CU$ and $\CX$.

Take a Cauchy sequence $\{u_n\}_{n\in \BZ_+}\subset \CU$ under the metric $d_\CU$.
By definition \eqref{eqn: def of set CU}, $\{u_n\}_{n\in \BZ_+}$ is uniformly bounded in $L^\infty([0,T];L^p(\BR^2))$, and it is also uniformly locally H\"{o}lder continuous in $\BR^2\times (0,T]$.
By Arzel\`{a}-Ascoli theorem and Banach-Alaoglu theorem,
there exists a subsequence $\{u_{n_k}\}_{k\in \BZ_+}$ and $u_*:\BR^2\times (0,T]\to \BR^2$ such that, as $k\to +\infty$, $u_{n_k}$ converges to $u_*$ in the weak-$*$ topology of $L^\infty([0,T]; L^p(\BR^2))$, and also converges to $u_*$ locally uniformly in $\BR^2\times (0,T]$.
It is then straightforward to verify that $u_*\in \CU$.

Next we show that $u_n\to u_*$ as $n\to +\infty$ under the metric $d_\CU$.
For any $x\in \BR^2$, $t\in (0,T]$ and $r>0$, by the dominated convergence theorem,
\begin{align*}
r^{-1-\g}\int_{B(x,r)}|u_{n_k}-u_*|(y,t)\,dy
= &\;\lim_{l\to +\infty} r^{-1-\g}\int_{B(x,r)}|u_{n_k}-u_{n_l}|(y,t)\,dy\\
\leq &\; \limsup_{l\to +\infty}\|(u_{n_k}-u_{n_l})(\cdot,t)\|_{M^{1,1-\g}(\BR^2)}.
\end{align*}
This implies
\[
d_\CU (u_{n_k},u_*)
\leq
\limsup_{l\to +\infty}d_\CU (u_{n_k},u_{n_l}),
\]
and thus $\lim_{k\to +\infty}d_\CU(u_{n_k},u_*) = 0$.
Since $\{u_n\}_{n\in \BZ_+}$ is a Cauchy sequence under the metric $d_\CU$, the whole sequence must converge to $u_*\in \CU$.
This shows that $(\CU,d_\CU)$ is a complete metric space.

Now take a Cauchy sequence $\{X_n\}_{n\in \BZ_+}\subset \CX$ under the metric $d_\CX$.
Let $X_*$ be the $C_{loc}^1((0,T)\times \BT)$-limit of $\{X_n\}$; indeed, the definition of $d_\CX$ guarantees that this limit is well-defined.
We additionally define $X_*(s,0) = X_0(s)$, and let $X_*(s,T)$ be the $C^1(\BT)$-limit of $X_*(\cdot,t)$ as $t\to T^-$.
Note that the well-definedness of $X_*(s,T)$ and the time-continuity of $X_*$ in the $C^1(\BT)$-topology is implied by the quantitative estimates for $X_*$ (see Remark \ref{rmk: time continuity in C^1 topology}).
It is then not difficult to verify that $X_*\in \CX$, and $d_\CX(X_n,X_*) \to 0$ as $n\to +\infty$.
Therefore, $(\CX,d_\CX)$ is complete.
\end{proof}
\end{lem}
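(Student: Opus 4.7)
The plan is to reduce completeness of the product to completeness of each factor, which is immediate from the definition of $d_{\CU\times\CX}$ in \eqref{eqn: metric on U times X}: a Cauchy sequence $\{(u_n,X_n)\}$ splits into Cauchy sequences $\{u_n\}\subset \CU$ and $\{X_n\}\subset \CX$ with respect to $d_\CU$ and $d_\CX$, and a candidate limit in the product converges iff each component converges. So the task reduces to proving that $(\CU,d_\CU)$ and $(\CX,d_\CX)$ are each complete.

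For $(\CU,d_\CU)$, I would first extract a candidate limit by soft compactness. The definition \eqref{eqn: def of set CU} gives a uniform bound $\|u_n\|_{L^\infty_T L^p_x}\leq Q$ together with uniform local $C^\g_x$-bounds and uniform H\"older continuity in time with weights $(\nu t)^{-\g}$. By Banach--Alaoglu, a subsequence converges weak-$*$ in $L^\infty([0,T];L^p(\BR^2))$ to some $u_*$, and by Arzel\`a--Ascoli applied on every compact subset of $\BR^2\times(0,T]$, a further subsequence (still denoted $u_{n_k}$) converges to $u_*$ locally uniformly on $\BR^2\times(0,T]$. From these two convergences, each of the quantitative bounds in \eqref{eqn: def of set CU} is preserved for $u_*$ by lower semicontinuity (the $L^p$-bound by weak-$*$ l.s.c., the $L^\infty$, $\dot C^\g_x$, and time H\"older bounds by pointwise passage to the limit), and $\di u_*=0$ passes in the sense of distributions; hence $u_*\in\CU$. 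Finally, to upgrade this to $d_\CU$-convergence, I would use the Cauchy property directly: for any $x\in\BR^2$, $r>0$, and $t\in(0,T]$, Fatou's lemma (applied to the locally uniformly convergent subsequence) gives
\[
r^{\g-2}\int_{B(x,r)}|u_{n_k}-u_*|(y,t)\,dy
\leq \liminf_{l\to\infty} r^{\g-2}\int_{B(x,r)}|u_{n_k}-u_{n_l}|(y,t)\,dy,
\]
which, after taking suprema over $x,r$ and $t$ with the weight $(\nu t)^{\g/2+1/p-1/2}$, yields $d_\CU(u_{n_k},u_*)\leq \limsup_l d_\CU(u_{n_k},u_{n_l})\to 0$. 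Since the original sequence is Cauchy, the whole sequence converges to $u_*$.

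The argument for $(\CX,d_\CX)$ is parallel but a touch more delicate because two different norms of the time derivative appear. The bounds in \eqref{eqn: def of set CX} give uniform $C([0,T];C^1(\BT))$-control plus uniform local $\dot C^{1,\g}_s$-control on $(0,T]\times\BT$ and uniform control of $\pa_t X_n$ in weighted $L^\infty_s$ and $C^\g_s$ norms. By Arzel\`a--Ascoli on $[0,T]\times \BT$ (using that the uniform $\pa_t$ bound provides equicontinuity in time, and the uniform $\dot C^{1,\g}$ bound provides equicontinuity in $s$ on any subinterval $[\e,T]$), a subsequence converges in $C^1_{loc}((0,T]\times\BT)$ and in $C([0,T];C(\BT))$ to some limit $X_*$, with $X_*(s,0)=X_0(s)$. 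Lower semicontinuity preserves every quantitative bound in \eqref{eqn: def of set CX} (with the same constants $2(\r(t)+t^\s)$ and $N$), so $X_*\in\CX$; time continuity in the $C^1(\BT)$-topology at $t=0$ follows as in Remark \ref{rmk: time continuity in C^1 topology} from the bound in terms of $\r(t)+t^\s$ together with $\lim_{t\to 0^+}\r(t)=0$. Finally, $d_\CX(X_{n_k},X_*)\to 0$ is obtained by the same Fatou/lower-semicontinuity argument applied to each of the four constituent seminorms in \eqref{eqn: def of d_X}, using $\liminf_l d_\CX(X_{n_k},X_{n_l})$ as the upper bound and the Cauchy property.

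The main obstacle I anticipate is bookkeeping rather than analysis: making sure that \emph{every} quantitative bound packaged into the definitions \eqref{eqn: def of set CU} and \eqref{eqn: def of set CX} survives passage to the limit with the correct weight and constant, and that the time-H\"older control in $\CU$ and the $\pa_t$-controls in $\CX$ are verified both for the limit (for inclusion in the space) and for the difference (for convergence in the metric). The cleanest way to handle this is to write each quantitative bound as a supremum of pointwise expressions to which Fatou or pointwise lower semicontinuity applies directly; lower semicontinuity of Morrey and H\"older seminorms under locally uniform convergence is then what makes the whole argument go through.
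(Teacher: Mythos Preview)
Your proposal is correct and follows essentially the same approach as the paper: reduce to completeness of each factor, use compactness (Banach--Alaoglu plus Arzel\`a--Ascoli) to extract a candidate limit for $\CU$, verify membership by lower semicontinuity, and upgrade to $d_\CU$-convergence via a Fatou-type argument against the Cauchy tail. The one minor simplification you miss for $\CX$ is that the metric $d_\CX$ already controls $\|X_n'-X_m'\|_{L^\infty_T L^\infty_s}$ and the weighted $\pa_t$-norms directly, so a Cauchy sequence in $d_\CX$ converges outright in $C^1_{loc}((0,T)\times\BT)$ without any compactness extraction; the paper exploits this, defining $X_*$ directly as the $C^1_{loc}$-limit and only invoking Remark~\ref{rmk: time continuity in C^1 topology} to handle the endpoints $t=0,T$. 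Your compactness route for $\CX$ still works, it is just slightly roundabout. (Also, your Morrey weight should read $r^{-1-\g}$, not $r^{\g-2}$; cf.\ the definition \eqref{eqn: def of Morrey norm} with $\mu=1-\g$.)
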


Given $(u,X)\in \CU\times \CX$, using the notations in Section \ref{sec: def of basic quantities}, we let (cf.\;\eqref{eqn: fixed pt equation for u})
\beq
v(t) := u_X^\nu(t) + e^{\nu t\Delta}u_0 +B_{\nu}[u](t).
\label{eqn: def of v}
\eeq
and (cf.\;\eqref{eqn: fixed pt equation for X})
\beq
Y(t):=e^{-\f{t}4 \Lam}X_0+\CI \big[W[u,X; \nu,u_0]\big](t),
\label{eqn: def of Y}
\eeq
where $W$ and $\CI[W]$ were defined in \eqref{eqn: def of w} and \eqref{eqn: def of the operator I}, respectively.
Define the solution mapping
\[
\CT:\,(u,X)\mapsto (v,Y).
\]
In what follows, we will show that $\CT$ is a contraction mapping from $\CU \times \CX$ to itself, provided that $T$,
$Q$, and $N$ are suitably chosen.
This gives a fixed-point of $\CT$, and thus a local mild solution to \eqref{eqn: NS equation}-\eqref{eqn: initial data} in the sense of Definition \ref{def: mild solution}.

\subsection{Bounds for the mapping $\CT$}
\label{sec: bounds for the mapping T}
In this subsection, we will show that by choosing $T$, $Q$, and $N$ suitably, $\CT$ maps $\CU\times \CX$ into itself.

Take an arbitrary $(u,X)\in \CU\times \CX$.
For $t\in [0,T]$,
\[
\|X(\cdot, t)-X_0\|_{\dot{C}_s^1(\BT)}
\leq \|X(\cdot, t)-e^{-\f{t}{4}\Lam}X_0\|_{\dot{C}_s^1(\BT)}
+ \|X_0-e^{-\f{t}{4}\Lam}X_0\|_{\dot{C}_s^1(\BT)}.
\]
Recall that $\r = \r(t)$ and $\phi = \phi(t)$ were defined in \eqref{eqn: def of rho} and \eqref{eqn: def of phi}, respectively.
We choose $T\in(0,1)$ sufficiently small, which depends on $X_0$, $\g$, and $\lam_0$, such that
\beq
2\big(\r(T)+ T^{\s}\big) \leq \f{\lam_0}{16},\quad \mbox{and}\quad \phi(T)\leq \f{\lam_0}{16}.
\label{eqn: smallness condition on T}
\eeq
As a result, by the preceding estimate and \eqref{eqn: def of set CX}, for all $t\in [0,T]$,
\beq
\|X(\cdot, t)-X_0\|_{\dot{C}^1(\BT)}\leq \f{\lam_0}{8},
\label{eqn: X is close to initial data}
\eeq
which implies
\beq
|X(\cdot,t)|_*\geq |X_0|_*- \f{\lam_0}{8} \geq \f{\lam_0}{2}=:\lam.
\label{eqn: lower bound for |X|_* for elements in CX}
\eeq
In view of \eqref{eqn: def of O_T^M lam} and the definition of $\CX$ in \eqref{eqn: def of set CX}, $\CX\subset O_T^{\lam}$ with $\lam = \f{\lam_0}2$.
We also have the simple bounds
\beq
\|X'\|_{L^\infty_T L^\infty_s(\BT)}\leq M_0+\f{\lam_0}{8}\leq 2M_0,
\quad
\sup_{\eta\in (0,T]} \eta^\g \|X'(\cdot, \eta)\|_{\dot{C}_s^{\g}(\BT)}
\leq \f{\lam_0}{16} \leq \f{M_0}{16},
\label{eqn: simple bound for X in CX}
\eeq
and by interpolation, for all $\g'\in [0,\g]$,
\beq
\sup_{\eta\in (0,T]} \eta^{\g'} \|X'(\cdot, \eta)\|_{\dot{C}_s^{\g'}(\BT)}
\leq \left(\sup_{\eta\in (0,T]} \eta^{\g} \|X'(\cdot, \eta)\|_{\dot{C}_s^{\g}}\right)^{\f{\g'}{\g}}
\big(2\|X'\|_{L^\infty_T L_s^\infty}\big)^{1-\f{\g'}{\g}}
\leq 4M_0.
\label{eqn: simple bound 2 for X in CX}
\eeq
Moreover, by \eqref{eqn: def of set CX}, for any $t_1,t_2\in [0,T]$,
\[
\|X'(\cdot, t_1)-X'(\cdot, t_2)\|_{L_s^\infty(\BT)}\leq \f{\lam_0}{4} = \f{\lam}{2}.
\]

Let $(v,Y):=\CT (u,X)$.
We start from estimating $v$.
By \eqref{eqn: def of v}, \eqref{eqn: simple bound for X in CX}, Lemma \ref{lem: estimate for u_X^nu}, and \eqref{eqn: L p estimate for B} in Lemma \ref{lem: estimate for B_nu u}, for any $t\in [0,T]$,
\beq
\begin{split}
\|v(\cdot, t)\|_{L^p_x(\BR^2)}
\leq &\; \|u_X^\nu(t)\|_{L_x^p(\BR^2)}
+ \|e^{\nu t\D}u_0\|_{L_x^p(\BR^2)} + \|B_\nu[u](t)\|_{L^p_x(\BR^2)}\\
\leq &\; C\lam^{-1+\f2p} \|X'\|_{L^\infty_t L_s^2}^{\f4p} \|X'\|_{L^\infty_t L_s^\infty}^{2-\f4p}
+ \|u_0\|_{L_x^p}\\
&\; + C\nu^{-1} (\nu t)^{\f1p-\f12}
\left(\sup_{\eta\in(0,t]}(\nu \eta)^{\f12-\f1p}\|u(\eta)\|_{L_x^p}\right)^2
\\
\leq &\; C_0 \lam_0^{-1+\f2p} M_0^2 +  Q_0  + C \nu^{-1} (\nu t)^{\f12-\f1p}  Q^2,
\end{split}
\label{eqn: L p estimate for v}
\eeq
where $C_0$ and $C$ are universal constants depending only on $p$.
Here we used the fact that $\lam = \lam_0/2$.

Similarly, by \eqref{eqn: def of v}, Lemma \ref{lem: estimate for u_X^nu}, and \eqref{eqn: L inf estimate for B} in Lemma \ref{lem: estimate for B_nu u}, with $\g'=\f1{2p} \in (0,\g)$, for any $t\in (0,T]$,
\beq
\begin{split}
&\; (\nu t)^{\f1p}\|v(\cdot, t)\|_{L_x^{\infty}(\BR^2)}\\
\leq &\; (\nu t)^{\f1p} \Big(\|u_X^\nu(t)\|_{L_x^\infty(\BR^2)}
+ \|e^{\nu t\D}u_0\|_{L_x^\infty(\BR^2)} + \|B_\nu[u](t)\|_{L_x^\infty(\BR^2)}\Big)\\
\leq &\; C(\nu t)^{\f1p}\cdot  \lam^{-1} t^{-\g'} \|X'\|_{L^\infty_t L_s^{\infty}}
\sup_{\eta\in (0,t]} \eta^{\g'} \|X'(\eta)\|_{\dot{C}_s^{\g'}} + C(\nu t)^{\f1p} (\nu t)^{-\f1p} \|u_0\|_{L_x^p}
\\
&\; + C (\nu t)^{\f1p}\cdot \nu^{-1} (\nu t)^{-\f12}\sup_{\eta\in(0,t]}(\nu \eta)^{\f12-\f1p}\|u(\eta)\|_{L_x^p}\cdot
\sup_{\eta\in(0,t]}(\nu \eta)^{\f12}\|u(\eta)\|_{L_x^\infty}
\\
\leq &\; C(\nu t)^{\f1p}\cdot  \lam^{-1} t^{-\f{1}{2p}} M_0^2
+ C_1  Q_0
+ C \nu^{-1} (\nu t)^{\f12-\f1p}Q^2,
\end{split}
\label{eqn: spatial L inf estimate for v}
\eeq
where $C$ and $C_1$ are universal constants that only depend on $p$.
In the last line, we used \eqref{eqn: simple bound for X in CX}, \eqref{eqn: simple bound 2 for X in CX}, and the fact $\g' = \f{1}{2p}$.
Moreover,
\beq
\begin{split}
&\; (\nu t)^{\g}\|v(\cdot, t)\|_{\dot{C}_x^\g(\BR^2)}\\
\leq &\; (\nu t)^\g \Big(\|u_X^\nu(t)\|_{\dot{C}_x^\g(\BR^2)}
+ \|e^{\nu t\D}u_0\|_{\dot{C}_x^\g(\BR^2)} + \|B_\nu[u](t)\|_{\dot{C}_x^\g(\BR^2)}\Big)\\
\leq &\; C(\nu t)^\g \lam^{-1-\g} t^{-\g} \|X'\|_{L^\infty_t L_s^{\infty}}
\sup_{\eta\in (0,t]} \eta^\g \|X'(\eta)\|_{\dot{C}_s^\g} + C(\nu t)^\g (\nu t)^{-\f1p-\f\g2} \|u_0\|_{L_x^p}
\\
&\; + C (\nu t)^\g \cdot \nu^{-1}(\nu t)^{-(1+\g)/2}
\left(\sup_{\eta\in (0,t]} (\nu \eta)^{\f12-\f1p} \|u(\eta)\|_{L^p_x}+\sup_{\eta\in (0,t]}(\nu \eta)^{1/2}\|u(\eta)\|_{L^\infty_x}\right)^2
\\
\leq &\; C\nu^\g \lam^{-1-\g} M_0 \big(\r(T)+ T^{\s}\big)
+ C_2 (\nu t)^{\f\g2-\f1p}  Q_0
+ C \nu^{-1}(\nu t)^{\f{1+\g}{2}-\f2p} Q^2,
\end{split}
\label{eqn: spatial Holder estimate for v}
\eeq
where $C$ and $C_2$ are universal constants that depend on $p$ and $\g$.

Regarding the time-continuity of $v$, we derive by Lemma \ref{lem: time Holder continuity of u_X^nu}, Lemma \ref{lem: time Holder continuity of B_u}, and Lemma \ref{lem: parabolic estimates} that, for any $x\in \BR^2$ and $0 < t_1< t_2\leq T$,
\beq
\begin{split}
&\;|v(x, t_1)-v(x, t_2)|\\
\leq &\; \|u_X^\nu(t_1)-u_X^\nu(t_2)\|_{L^\infty_x} + \|(e^{\nu (t_2-t_1)\D}-\mathrm{Id})e^{\nu t_1\D}u_0 \|_{L^\infty_x} + \|B_\nu[u](t_1) - B_\nu[u](t_2)\|_{L^\infty_x}\\
\leq &\;
C\big(\nu |t_1-t_2|\big)^{\g/2} t_1^{-\g} \cdot \lam^{-1-\g} \|X'\|_{L^\infty_T L_s^{\infty}} \sup_{\eta\in (0,T]} \eta^\g \|X'(\eta)\|_{\dot{C}_s^{\g}} \\
&\; + C(\nu|t_1-t_2|)^{\g/2} (\nu t_1)^{-\f1{p}-\f{\g}{2}} \|u_0\|_{L^p_x} \\
&\; +
C \nu^{-1} \big(\nu(t_2-t_1)\big)^{\g/2}(\nu t_1)^{-\f{1+\g}{2}}
\left(\sup_{\eta\in (0,t_1]} (\nu \eta)^{\f12-\f1p} \|u(\eta)\|_{L^p_x}+\sup_{\eta\in (0,t_1]}(\nu \eta)^{\f12}\|u(\eta)\|_{L^\infty_x}\right)^2\\
&\;
+ C \nu^{-1} \big(\nu(t_2-t_1)\big)^{\g/2}(\nu t_2)^{-\f{1+\g}{2}}
\left(\sup_{\eta\in (0,t_2]} (\nu \eta)^{\f12-\f1p} \|u(\eta)\|_{L^p_x}+\sup_{\eta\in (0,t_2]}(\nu \eta)^{\f12}\|u(\eta)\|_{L^\infty_x}\right)^2\\
\leq &\;
C\big(\nu |t_1-t_2|\big)^{\g/2} (\nu t_1)^{-\g}
\left[ \nu^\g \lam^{-1-\g} M_0 \big(\r(T)+T^{\s}\big)
+ (\nu t_1)^{\f{\g}{2}-\f1p} Q_0
+ \nu^{-1} (\nu t_2)^{\f{1+\g}{2}-\f2p} Q^2
\right],
\end{split}
\label{eqn: bound for time continuity for v prelim}
\eeq
where the constant $C$ depends on $p$ and $\g$.
Since $p\in (2,\infty)$ and $\g \geq \f2p$, for any $0< t_1 < t_2\leq T$,
\beq
\begin{split}
&\;(\nu t_1)^\g \cdot \f{|v(x, t_1)-v(x, t_2)|}{(\nu|t_1-t_2|)^{\g/2}}\\
\leq &\;
C \nu^\g \lam^{-1-\g} M_0 \big(\r(T)+T^{\s}\big)
+ C_3(\nu T)^{\f{\g}{2}-\f1p} Q_0
+ C\nu^{-1} (\nu T)^{\f{1+\g}{2}-\f2p} Q^2,
\end{split}
\label{eqn: bound for time continuity for v}
\eeq
where $C$ and $C_3$ are universal constants that depend on $p$ and $\g$.

In view of these bounds, we shall choose from now on
\beq
Q: =
\begin{cases}
C_0 \lam_0^{-1+\f2p} M_0^2 + (C_1 + 1)  Q_0 , & \mbox{if } \g>\f2p, \\
C_0 \lam_0^{-1+\f2p} M_0^2 + (C_1 + C_2+ C_3 + 1)  Q_0 , & \mbox{if } \g = \f2p.
\end{cases}
\label{eqn: choice of Q}
\eeq
where $C_0$, $C_1$, $C_2$ and $C_3$ were defined in \eqref{eqn: L p estimate for v}-
\eqref{eqn: bound for time continuity for v}, respectively.
We note that if $\g = \f{2}{p}$, $C_2$ and $C_3$ only depend on $p$, so $Q$ is always determined by $p$, $\lam_0$, $M_0$, and $ Q_0 $ only.
Since $p\in (2,\infty)$ and $\g \in[\f2p,1)$, by combining this choice of $Q$ with the estimates above, we may further assume $T \ll 1$ to achieve that
\[
\sup_{t\in [0,T]}\|v(t)\|_{L^p_x(\BR^2)}
\leq Q,\quad
\sup_{t\in (0,T]} (\nu t)^{\f1p}\|v(t)\|_{L^\infty_x(\BR^2)}
\leq Q,\quad
\sup_{t\in (0,T]} (\nu t)^\g \|v(t)\|_{\dot{C}^\g_x(\BR^2)}
\leq Q,
\]
and
\beq
\sup_{x\in \BR^2} \sup_{0<t_1<t_2\leq T} (\nu t_1)^\g \cdot \f{|v(x,t_1)-v(x,t_2)|}{(\nu(t_1-t_2))^{\g/2}}
\leq Q.
\label{eqn: time continuity of v}
\eeq
In summary, we can guarantee $v\in \CU$ by choosing $Q$ as in \eqref{eqn: choice of Q} and assuming $T\ll 1$, where the required smallness of $T$ depends on $\nu$, $p$, $\g$, $\lam_0$, $M_0$, $ Q_0 $, and $X_0$.

Next we turn to estimating $Y$ given in \eqref{eqn: def of Y}.
It suffices to bound $W[u,X; \nu,u_0]$ defined in \eqref{eqn: def of w}; for brevity, we shall write it as $W[u,X]$ or even $W$ in the sequel.

For that purpose, we first estimate $h_X^\nu$ by incorporating the bounds for $X$ with Lemma \ref{lem: estimate for h_X^nu}.
\begin{lem}
\label{lem: L inf and W 1 inf estimate for h_X^nu with optimal power of nu}
Suppose $T\leq 1$.
Let $X\in \CX$.
Then for any $t\in [0,T]$,
\[
\|h_X^\nu(t)\|_{L_x^\infty(\BR^2)}
\leq C (\nu t)^{-\f1p}
\lam^{-1+\f2p}
\left(M_0^{2-\g} N^{\g} +\lam^{-1}M_0^2N \cdot t^{\f1p} \right),
\]
and
\beq
\begin{split}
\|\na h_X^\nu(t)\|_{L_x^\infty(\BR^2)}
\leq &\; C(\nu t)^{-\f{1}{2}-\f1p} \lam^{-1+\f2p}
\left[ 1
+ \mathds{1}_{\{\nu t\geq 4\lam^2\}} \big(\lam^{-2}\nu t\big)^{\f{1}{2}+\f1p-\g} \right]\\
&\;\cdot\left( M_0^{2-\g} N^{\g} +\lam^{-1} M_0^2N \cdot t^{\f1p}\right),
\end{split}
\label{eqn: W 1 inf estimate for h}
\eeq
where the constant $C$ depends on $p$ and $\g$.

As a result,
\begin{align*}
\|h_X^\nu(t)\|_{\dot{C}_x^\g(\BR^2)}
\leq &\; C(\nu t)^{-\f\g 2-\f1p} \lam^{-1+\f2p}  \left[1
+ \mathds{1}_{\{\nu t\geq 4\lam^2\}}
 \big(\lam^{-2} \nu t\big)^{\g(\f1 2+\f1 p-\g)}\right] \\
&\; \cdot
 \left( M_0^{2-\g} N^{\g}
+ \lam^{-1} M_0^2N \cdot t^{\f1p}\right),
\end{align*}
where the constant $C$ depends on $p$ and $\g$.

\begin{proof}
We apply \eqref{eqn: L inf and W 1 inf estimate for h} in Lemma \ref{lem: estimate for h_X^nu} with the parameters there chosen as follows:
\[
(\g',\mu_0,\mu,\mu'):=\left(1-\f2p,\, \f1p,\, \g,\, 1-\f1p\right).
\]
Then, for $k = 0,1$,
\begin{align*}
&\; \|\na^k h_X^\nu(t)\|_{L_x^\infty(\BR^2)}
\\
\leq &\; C\left[ (\nu t)^{-\f{k}{2}} \min\left(\lam^{-1},\,(\nu t)^{-\f{1}{2}}\right)
+ \mathds{1}_{\{\nu t\geq 4\lam^2\}} (\nu t)^{-\g} \lam^{-1-k+2\g}  \right] M_0^{2-\g} N^{\g} \\
&\; + C (\nu t)^{-\f{k}{2}-\f1p} \min\left(\lam^{-1+\f2p},\, (\nu t)^{-\f12+\f1p}\right) t^{\f1p} \cdot \left[1+\mathds{1}_{\{\nu t\geq 4\lam^2\}}\int_{2\lam^2/(\nu t)}^1 \zeta^{-\f{1+k}{2}} \,d\zeta \right]\\
&\;\quad \cdot \lam^{-1} \left(M_0^2N + M_0^2 \sup_{\eta\in (0,t]}\eta^{\mu'}\|\pa_t X(\eta)\|_{\dot{C}_s^{\g'}} \right),
\end{align*}
where $C$ depends on $k$, $p$, and $\g$.
By interpolation and the fact $t\leq 1$,
\begin{align*}
&\; \sup_{\eta\in (0,t]}\eta^{\mu'}\|\pa_t X(\eta)\|_{\dot{C}_s^{\g'}} = \sup_{\eta\in (0,t]}\eta^{\g'+\f1p}\|\pa_t X(\eta)\|_{\dot{C}_s^{\g'}}\\
\leq &\; \left( 2\sup_{\eta\in (0,t]}\eta^{\f1p}\|\pa_t X(\eta)\|_{L_s^\infty}\right)^{1-\f{\g'}{\g}}
\left(t^{\f1p}\sup_{\eta\in (0,t]}\eta^{\g}\|\pa_t X(\eta)\|_{\dot{C}_s^{\g}}\right)^{\f{\g'}{\g}}
\leq 2N.
\end{align*}
Hence,
\begin{align*}
\|\na^k h_X^\nu(t)\|_{L_x^\infty(\BR^2)}
\leq &\; C\left[ (\nu t)^{-\f{k}{2}-\f1p} \lam^{-1+\f2p}
+ \mathds{1}_{\{\nu t\geq 4\lam^2\}} (\nu t)^{-\g} \lam^{-1-k+2\g}  \right] M_0^{2-\g} N^{\g} \\
&\; + C (\nu t)^{-\f{k}{2}-\f1p} \min\left(\lam^{-1+\f2p},\, (\nu t)^{-\f12+\f1p}\right) t^{\f1p} \cdot \lam^{-1} M_0^2N\\
&\;\quad  \cdot \left[1+\mathds{1}_{\{\nu t\geq 4\lam^2\}}\int_{2\lam^2/(\nu t)}^1 \zeta^{-\f{1+k}{2}} \,d\zeta \right].
\end{align*}
If $k = 0$, with in mind that $\g >\f1p$, we have
\[
\|h_X^\nu(t)\|_{L_x^\infty(\BR^2)}
\leq C(\nu t)^{-\f1p}  \lam^{-1+\f2p} M_0^{2-\g} N^{\g}
+ C (\nu t)^{-\f1p} \lam^{-1+\f2p}\cdot t^{\f1p} \cdot \lam^{-1} M_0^2N.
\]
If $k = 1$,
\begin{align*}
\|\na h_X^\nu(t)\|_{L_x^\infty(\BR^2)}
\leq &\; C\left[ (\nu t)^{-\f{1}{2}-\f1p} \lam^{-1+\f2p}
+ \mathds{1}_{\{\nu t\geq 4\lam^2\}} (\nu t)^{-\g} \lam^{-2+2\g}  \right] M_0^{2-\g} N^{\g} \\
&\; + C (\nu t)^{-\f12-\f1p} \lam^{-1+\f2p} \min\big(1,\, \lam^2(\nu t)^{-1}\big)^{\f12-\f1p} t^{\f1p} \cdot \lam^{-1} M_0^2N
\ln \left(2+\f{\nu t}{\lam^2}\right).
\end{align*}
Since
\[
\min\big(1,\, \lam^2(\nu t)^{-1}\big)^{\f12-\f1p} \ln \left(2+\f{\nu t}{\lam^2}\right)
\leq C\left(1+\f{\nu t}{\lam^2}\right)^{-\f12+\f1p} \ln \left(2+\f{\nu t}{\lam^2}\right)\leq C
\]
for some constant $C$ that only depends on $p$, we obtain that
\begin{align*}
\|\na h_X^\nu(t)\|_{L_x^\infty(\BR^2)}
\leq &\; C\left[ (\nu t)^{-\f{1}{2}-\f1p} \lam^{-1+\f2p}
+ \mathds{1}_{\{\nu t\geq 4\lam^2\}} (\nu t)^{-\g} \lam^{-2+2\g}  \right] M_0^{2-\g} N^{\g} \\
&\; + C (\nu t)^{-\f{1}{2}-\f1p} \lam^{-1+\f2p} \cdot \lam^{-1} M_0^2N \cdot t^{\f1p}
\\
\leq
&\; C(\nu t)^{-\f{1}{2}-\f1p} \lam^{-1+\f2p}
\left[ 1
+ \mathds{1}_{\{\nu t\geq 4\lam^2\}} \big(\lam^{-2}\nu t\big)^{\f{1}{2}+\f1p-\g} \right]\\
&\;\cdot\left( M_0^{2-\g} N^{\g} +\lam^{-1} M_0^2N \cdot t^{\f1p}\right),
\end{align*}
where $C$ depends on $p$ and $\g$.

Finally, the H\"{o}lder estimate follows from the interpolation inequality
\begin{align*}
\|h_X^\nu(t)\|_{\dot{C}_x^\g(\BR^2)}
\leq &\;
\big(2\|h_X^\nu(t)\|_{L_x^\infty(\BR^2)}\big)^{1-\g}\|\na h_X^\nu(t)\|_{L_x^\infty(\BR^2)}^{\g}.
\end{align*}
\end{proof}
\end{lem}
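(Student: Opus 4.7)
The plan is to specialize the general estimates in Lemma~\ref{lem: estimate for h_X^nu} to elements $X\in\CX$ by choosing the free parameters $(\g',\mu_0,\mu,\mu')$ so that the time-weights match those built into the definition of $\CX$ and the resulting power of $\nu t$ is the sharp one, $(\nu t)^{-1/p}$. Concretely, I would take
\[
(\g',\mu_0,\mu,\mu'):=\left(1-\tfrac{2}{p},\,\tfrac{1}{p},\,\g,\,1-\tfrac{1}{p}\right).
\]
With these choices the factor $t^{-\g(\mu-\g)}$ in \eqref{eqn: L inf and W 1 inf estimate for h} collapses to $1$, while the two ``integrated'' time powers become $t^{1-\g'-\mu_0}=t^{1/p}$ and $t^{1-\mu'}=t^{1/p}$, producing the right-hand factor $\lam^{-1}M_0^2N\cdot t^{1/p}$ that appears in the statement. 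The $\CX$-bound $\sup\eta^\g\|X'(\eta)\|_{\dot{C}^\g_s}\le C M_0$ (consequence of \eqref{eqn: simple bound for X in CX}) and $\|X'\|_{L^\infty_TL^\infty_s}\le 2M_0$ then generate the remaining $M_0^{2-\g}N^\g$ factor once the bound on $\pa_tX$ at weight $\mu$ is used.

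The bound on $\eta^{\mu'}\|\pa_tX(\eta)\|_{\dot{C}^{\g'}_s}$ that the hypothesis of Lemma~\ref{lem: estimate for h_X^nu} requires is not directly part of the definition of $\CX$; I would obtain it by interpolating between the two controls $\eta^{1/p}\|\pa_tX\|_{L^\infty_s}\le N$ and $\eta^\g\|\pa_tX\|_{C^\g_s}\le N$. A direct computation of exponents gives
\[
\eta^{1-1/p}\|\pa_tX(\eta)\|_{\dot{C}^{1-2/p}_s}\le C\,\eta^{(1/p)(1-2/p)/\g}\,N\le CN,
\]
where the last inequality uses precisely $T\le 1$; this is the only place in the proof where that hypothesis enters. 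After this substitution, every term on the right of \eqref{eqn: L inf and W 1 inf estimate for h} is of the schematic form $M_0^{2-\g}N^\g$ or $\lam^{-1}M_0^2N\cdot t^{1/p}$ multiplied by combinations of $\lam$ and $\nu t$ to be simplified.

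The remaining $\lam$/$\nu t$ simplification I would carry out via the elementary inequality
\[
\min\bigl(\lam^{-1},\,(\nu t)^{-1/2}\bigr)\le \lam^{-1+2/p}(\nu t)^{-1/p},
\]
obtained by splitting at $\nu t=\lam^2$. For $k=0$, this directly gives the stated $L^\infty$ bound, and (using $\g>1/p$) the indicator piece $(\nu t)^{-\g}\lam^{-1+2\g}$ is dominated by the main term and can be absorbed. For $k=1$ the indicator piece $(\nu t)^{-\g}\lam^{-2+2\g}$ may exceed the base bound, and keeping it produces the factor $(\lam^{-2}\nu t)^{1/2+1/p-\g}$ appearing in \eqref{eqn: W 1 inf estimate for h}. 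Finally, the $\dot{C}^\g_x$ estimate is obtained by interpolation
\[
\|h_X^\nu(t)\|_{\dot{C}^\g_x}\le C\|h_X^\nu(t)\|_{L^\infty_x}^{1-\g}\|\na h_X^\nu(t)\|_{L^\infty_x}^{\g},
\]
which automatically raises the indicator factor to the power $\g$, matching the statement.

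The main technical obstacle will be the $k=1$ integral, because the second term of \eqref{eqn: L inf and W 1 inf estimate for h} involves $\int_{2\lam^2/(\nu t)}^1\zeta^{-1}d\zeta=\ln(\nu t/(2\lam^2))$, which is logarithmically divergent as $\nu t/\lam^2\to\infty$. To absorb this logarithm into a $p$-dependent constant I would use the one-variable bound
\[
\min\bigl(1,\,\lam^2(\nu t)^{-1}\bigr)^{1/2-1/p}\ln\!\Bigl(2+\tfrac{\nu t}{\lam^2}\Bigr)\le C(p),
\]
valid because $1/2-1/p>0$. Once this observation is in hand, all that remains is careful bookkeeping of the four (indicator)/(non-indicator)$\times\,k=0/k=1$ cases and combining the two right-hand terms into the compact form stated in the lemma.
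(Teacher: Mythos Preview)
Your proposal is correct and follows essentially the same approach as the paper: the same parameter choice $(\g',\mu_0,\mu,\mu')=(1-\tfrac{2}{p},\tfrac{1}{p},\g,1-\tfrac{1}{p})$ in Lemma~\ref{lem: estimate for h_X^nu}, the same interpolation (using $T\le 1$) to control $\eta^{\mu'}\|\pa_tX\|_{\dot{C}^{\g'}_s}$, the same $\min(\lam^{-1},(\nu t)^{-1/2})\le\lam^{-1+2/p}(\nu t)^{-1/p}$ simplification with the $\g>1/p$ absorption for $k=0$, the same logarithm-killing bound for $k=1$, and the same final interpolation for the $\dot{C}^\g_x$ estimate.
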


Recall that $W[u,X]$ was defined in \eqref{eqn: def of w}.
By applying Lemma \ref{lem: estimate for u_X}, \eqref{eqn: L inf estimate for B} in Lemma \ref{lem: estimate for B_nu u}, Lemma \ref{lem: improved estimates for g_X}, Lemma \ref{lem: L inf and W 1 inf estimate for h_X^nu with optimal power of nu}, and Lemma \ref{lem: parabolic estimates}, we derive with $\g':= \f1p\in (0,\g)$ that, for $t\in (0,T]$,
\beq
\begin{split}
&\; \|W[u,X](t)\|_{L^\infty_s(\BT)}\\
\leq
&\;\|g_X(t)\|_{L^\infty_s(\BT)}
+ \big\|h_X^{\nu}(t)+ e^{\nu t\D}(u_0-u_{X(t)})+ B_{\nu}[u](t) \big\|_{L^\infty_x(\BR^2)}\\
\leq &\; C\lam^{-2} \|X'(t)\|_{L_s^\infty}^2 \|X'(t)\|_{\dot{C}_s^{\g'}}
+ C(\nu t)^{-\f1p} \lam^{-1+\f2p} \left( M_0^{2-\g} N^{\g}
+ \lam^{-1} M_0^2N \cdot t^{\f1p}\right)\\
&\; + C (\nu t)^{-\f1p} \| u_0-u_{X(t)} \|_{L^p_x} \\
&\; + C\nu^{-1} (\nu t)^{-\f12}\sup_{\eta\in(0,t]}(\nu \eta)^{\f12-\f1p}\|u(\eta)\|_{L_x^p}\cdot
\sup_{\eta\in(0,t]}(\nu \eta)^{\f12}\|u(\eta)\|_{L_x^\infty}
\\
\leq &\; Ct^{-\f1p}\lam^{-2} M_0^3
+ C(\nu t)^{-\f1p} \lam^{-1+\f2p} \left( M_0^{2-\g} N^{\g}
+ \lam^{-1} M_0^2N \cdot t^{\f1p}\right)\\
&\; + C (\nu t)^{-\f1p}\left(  Q_0  + \lam^{-1+\f2p}M_0^2\right)
+ C\nu^{-1}(\nu t)^{\f12-\f2p}Q^2,
\end{split}
\label{eqn: L inf estimate for w}
\eeq
where the constant $C$ depends on $p$ and $\g$.
Combining this with \eqref{eqn: def of Y} and Lemma \ref{lem: parabolic estimates for fractional Laplace}, we find that
\beq
\begin{split}
\|Y(\cdot,t)- e^{-\f{t}4 \Lam}X_0\|_{L^\infty_s(\BT)}
\leq &\; \int_0^t \|W[u,X](\tau)\|_{L^\infty_s}\, d\tau\\
\leq &\;
Ct^{1-\f1p}\left[\lam^{-2} M_0^3
+ \nu^{-\f1p} \lam^{-1+\f2p} \left( M_0^{2-\g} N^{\g}
+ \lam^{-1} M_0^2N \cdot t^{\f1p}\right)\right.\\
&\;\left. \qquad \quad + \nu^{-\f1p}\left(  Q_0  + \lam^{-1+\f2p}M_0^2\right)
+  \nu^{-\f12-\f2p} t^{\f12-\f1p}Q^2\right],
\end{split}
\label{eqn: L inf difference from initial data}
\eeq
where $C$ depends on $p$ and $\g$.

Take $\b\in (0,1)$ to be chosen later.
Recall that $\g>\f12$, so $0<\f{\b}2< \g$.
By Lemma \ref{lem: improved estimates for g_X}, the interpolation inequality, and the definition of $\CX$,
\beq
\begin{split}
\|g_X(t)\|_{\dot{C}^\b_s(\BT)}
\leq &\; C \lam^{-2} \|X'(t)\|_{L_s^\infty}\|X'(t)\|_{\dot{C}^{\b/2}_s}^2
\\
\leq &\; C \lam^{-2} \|X'(t)\|_{L_s^\infty}^{3-\f{\b}{\g}}\|X'(t)\|_{\dot{C}^{\g}_s}^{\f{\b}{\g}}
\leq Ct^{-\b} \lam^{-2} M_0^{3-\f{\b}{\g}} \left(\sup_{\eta\in (0,t]} \eta^\g \|X'(\eta)\|_{\dot{C}^{\g}_s}\right)^{\f{\b}{\g}}
\\
\leq &\; Ct^{-\b} \lam^{-2} M_0^{3-\f{\b}{\g}} \big(\r(t)+t^{\s}\big)^{\f{\b}{\g}},
\end{split}
\label{eqn: C beta estimate for g_X}
\eeq
where the constant $C$ depends on $\b$ and $\g$.

By Lemma \ref{lem: parabolic estimates} and Lemma \ref{lem: estimate for u_X},
\beq
\big\|e^{\nu t\D}(u_0-u_{X(t)})\big\|_{\dot{C}_x^\g(\BR^2)}
\leq
C (\nu t)^{-\f{\g}2-\f1p} \|u_0-u_{X(t)}\|_{L^p_x}
\leq C (\nu t)^{-\f{\g}2-\f1p} \left(  Q_0  + \lam^{-1+\f2p} M_0^2\right),
\label{eqn: C gamma estimate for semigroup and initial term}
\eeq
where $C$ depends on $p$ and $\g$, and similarly,
\[
\big\|\na e^{\nu t\D}(u_0-u_{X(t)})\big\|_{L^\infty_x(\BR^2)}
\leq C (\nu t)^{-\f12-\f1p} \left(  Q_0  + \lam^{-1+\f2p} M_0^2\right),
\]
where $C$ only depends on $p$.

Finally, by \eqref{eqn: 6th estimate for B new form general gamma} in Lemma \ref{lem: estimate for B_nu u},
\beq
\begin{split}
\|B_{\nu}[u](t)\|_{\dot{C}^{\g}_x(\BR^2)}
\leq
&\; C\nu^{-1}(\nu t)^{-\f{1+\g}2} \left(\sup_{\eta\in (0,t]} (\nu \eta)^{\f12-\f1p} \|u(\eta)\|_{L^p_x}
+\sup_{\eta\in (0,t]}(\nu \eta)^{\f12}\|u(\eta)\|_{L^\infty_x}\right)^2
\\
\leq &\; C \nu^{-1} (\nu t)^{\f{1-\g}2-\f2p} Q^2,
\end{split}
\label{eqn: C gamma estimate for B}
\eeq
where $C$ depends on $p$ and $\g$.
Now by \eqref{eqn: def of Y} and Lemma \ref{lem: parabolic estimates for Duhamel term of fractional Laplace}, with $\b:=\f{1+\g}{2}\in (\g,1)$, it holds for any $t\in [0,T]$ that
\beq
\begin{split}
&\; \|Y(\cdot,t)- e^{-\f{t}4 \Lam}X_0\|_{\dot{C}_s^1(\BT)}
+ t^\g \|Y(\cdot, t) - e^{-\f{t}4 \Lam}X_0\|_{\dot{C}^{1,\g}_s(\BT)}\\
\leq &\; C \sup_{\eta\in (0,t]} \eta^\b \|g_X(\eta)\|_{\dot{C}^\b_s(\BT)}
+  Ct^{\f12-\f1p} \sup_{\eta\in (0,t]} \eta^{\f12+\f1p} \big\|\big[h_X^{\nu} + e^{\nu \eta\Delta}(u_0-u_{X(\eta)}) \big]\circ X(\eta)\big\|_{\dot{C}^1_s(\BT)}\\
&\;
+  C\sup_{\eta\in (0,t]} \eta^\g \big\| B_\nu[u] \circ X(\eta)\big\|_{\dot{C}^{\g}_s(\BT)}.
\end{split}
\label{eqn: estimate for Y-semigroup solution prelim}
\eeq
We remark that we treated different terms on the right-hand side differently not only to help close the bound in \eqref{eqn: estimate for Y in C1gamma} below, but also to achieve the optimal $\nu$-dependence of the estimate in the last two terms, which will be useful in Section \ref{sec: zero Reynolds number limit}.
Combining this with \eqref{eqn: L inf difference from initial data} and all the estimates and also applying Lemma \ref{lem: L inf and W 1 inf estimate for h_X^nu with optimal power of nu} and Lemma \ref{lem: Holder estimate for composition of functions}, we will find that
\beq
\begin{split}
&\; \|Y(\cdot,t)- e^{-\f{t}4 \Lam}X_0\|_{C^1_s(\BT)}
+ t^{\g}\|Y(\cdot,t)- e^{-\f{t}4 \Lam}X_0\|_{\dot{C}^{1,\g}_s(\BT)}
\\
\leq &\; \|Y(\cdot,t)- e^{-\f{t}4 \Lam}X_0\|_{L^\infty_s(\BT)}
+ C \sup_{\eta\in (0,t]} \eta^\b \|g_X(\eta)\|_{\dot{C}^\b_s(\BT)}
\\
&\;
+ Ct^{\f12-\f1p} \sup_{\eta\in (0,t]} \eta^{\f12+\f1p}
\big\|\na h_X^\nu(\eta) + \na e^{\nu \eta\Delta}(u_0-u_{X(\eta)})\big\|_{L^\infty_x(\BT)} \|X'\|_{L^\infty_t L^\infty_s}\\
&\;
+  C\sup_{\eta\in (0,t]} \eta^\g \| B_\nu[u](\eta)\|_{\dot{C}^{\g}_x(\BR^2)} \|X'\|_{L^\infty_t L^\infty_s}^\g
\\
\leq &\; Ct^{1-\f1p}\left[\lam^{-2} M_0^3
+ \nu^{-\f1p} \lam^{-1+\f2p} \left( M_0^{2-\g} N^{\g}
+ \lam^{-1} M_0^2N \cdot t^{\f1p}\right)\right.\\
&\;\left. \qquad \quad + \nu^{-\f1p}\left(  Q_0  + \lam^{-1+\f2p}M_0^2\right)
+  \nu^{-\f12-\f2p} t^{\f12-\f1p}Q^2\right]
\\
&\; + C\lam^{-2} M_0^{3-\f{\b}{\g}} \big(\r(t)+t^{\s}\big)^{\f{\b}{\g}}
+ Ct^{\f12-\f1p} \nu^{-\f12-\f1p} \left(  Q_0  + \lam^{-1+\f2p} M_0^2\right) M_0 \\
&\; + C t^{\f12-\f1p} \nu^{-\f{1}{2}-\f1p} \lam^{-1+\f2p}
\left[ 1
+ \mathds{1}_{\{\nu t\geq 4\lam^2\}} \big(\lam^{-2}\nu t\big)^{\f{1}{2}+\f1p-\g} \right]\\
&\;\quad \cdot\left( M_0^{2-\g} N^{\g} +\lam^{-1} M_0^2N \cdot t^{\f1p}\right)M_0
\\
&\; + C t^{\f{1+\g}2-\f2p} \nu^{-\f{1+\g}2-\f2p} Q^2 M_0^\g.
\end{split}
\label{eqn: C 1 gamma estimate for Y minus semigroup solution}
\eeq
where $C$ is a constant depending on $p$ and $\g$.
Since $\g > 1-\f2p$, $\g\geq \f2p$, and $t\leq 1$, in short we have
\beq
\begin{split}
&\; \|Y(\cdot,t)- e^{-\f{t}4 \Lam}X_0\|_{C^1_s(\BT)}
+ t^{\g}\|Y(\cdot,t)- e^{-\f{t}4 \Lam}X_0\|_{\dot{C}^{1,\g}_s(\BT)}
\\
\leq &\; C(\g,p,\nu,\lam_0, Q_0 ,M_0,N)\left[ \left(\r(t)+t^{\s}\right)^{\b/\g}
+ t^{\f12-\f1p}\right].
\end{split}
\label{eqn: C 1 gamma estimate for deviation from the semigroup solution}
\eeq
Recall that the notation $C = C(\g,p,\nu,\lam_0, Q_0 ,M_0,N)$ represents a universal constant that only depends on $\g$, $p$, $\nu$, $\lam_0$, $ Q_0 $, $M_0$, and $N$.
Here we used the facts that $\lam = \lam_0/2$ and that $Q$ depends only on $p$, $\lam_0$, $M_0$, and $ Q_0 $.
Thus it follows that, for any $t\in (0,T]$,
\beq
\begin{split}
t^{\g}\|Y(\cdot,t)\|_{\dot{C}^{1,\g}_s(\BT)}
\leq &\; t^{\g}\|Y(\cdot,t)- e^{-\f{t}4 \Lam}X_0\|_{\dot{C}^{1,\g}_s(\BT)}
+ t^{\g}\| e^{-\f{t}4 \Lam}X_0\|_{\dot{C}^{1,\g}_s(\BT)}
\\
\leq &\; C(\g,p,\nu,\lam_0, Q_0 ,M_0,N)\left[ \left(\r(t)+t^{\s}\right)^{\b/\g}
+ t^{\f12-\f1p}\right] + \r(t).
\end{split}
\label{eqn: estimate for Y in C1gamma}
\eeq
Since $\b > \g $ and $\s \in (0,\f12-\f1p)$,
we may take $T$ sufficiently small, which depends on $\g$, $p$, $\nu$, $\lam_0$, $ Q_0 $, $M_0$, $N$, and $X_0$, so that for all $t\in (0,T]$,
\beq
\|Y(\cdot,t)- e^{-\f{t}4 \Lam}X_0\|_{C^1_s(\BT)}
+ t^{\g}\|Y(\cdot,t)\|_{\dot{C}^{1,\g}_s(\BT)}
\leq 2\big(\r(t)+t^{\s}\big).
\label{eqn: bounds for Y in C1 and C1gamma}
\eeq
For future use, we point out that, with this choice of $T$ as well as \eqref{eqn: smallness condition on T}, $Y$ also satisfies \eqref{eqn: simple bound for X in CX} and \eqref{eqn: simple bound 2 for X in CX}.

To bound $\pa_t Y$, we first justify
\beq
\pa_t Y = -\f14\Lam Y + W[u,X].
\label{eqn: equation for Y}
\eeq
Observe from \eqref{eqn: Stokes velocity along the string}, \eqref{eqn: u_X^nu in terms of u_X}, \eqref{eqn: def of w}, and \eqref{eqn: def of v} that
\beq
W[u,X](t) = v\big(X(t),t\big)+\f14 \Lam X(t).
\label{eqn: alternative expression of W}
\eeq
By the regularity of $v$ shown in \eqref{eqn: spatial Holder estimate for v}, the time-continuity of $v$ shown in \eqref{eqn: time continuity of v}, and the time-continuity of $X\in \CX$ noted in Remark \ref{rmk: time continuity in C^1 topology}, we can show that $t\mapsto W[u,X](t)$ is continuous in the $C(\BT)$-topology on $(0,T)$.
From \eqref{eqn: C beta estimate for g_X}-\eqref{eqn: C gamma estimate for B} and Lemma \ref{lem: L inf and W 1 inf estimate for h_X^nu with optimal power of nu}, we can derive a local $C^\b$-bound for $W[u,X]$ with $\b\in [\f2p,\g]$ as follows (cf.\;\eqref{eqn: C 1 gamma estimate for Y minus semigroup solution}):
\beq
\begin{split}
&\;\|W[u,X](t)\|_{\dot{C}_s^\b(\BT)}\\
\leq &\; Ct^{-\b} \lam^{-2} M_0^3
\\
&\; + C(\nu t)^{-\f\b 2-\f1p} \lam^{-1+\f2p}
\left[1+ \mathds{1}_{\{\nu t\geq 4\lam^2\}}
\big(\lam^{-2} \nu t\big)^{\b(\f1 2+\f1 p-\g)}\right] \\
&\;\quad  \cdot\left( M_0^{2-\g} N^\g + \lam^{-1} M_0^2 N \cdot t^{\f1p}\right) M_0^\b\\
&\;+ C(\nu t)^{-\f{\b}2-\f1p} \left(  Q_0  + \lam^{-1+\f2p} M_0^2\right)M_0^\b
+ C \nu^{-1} (\nu t)^{\f{1-\b}2-\f2p} Q^2 M_0^\b.
\end{split}
\label{eqn: C gamma estimate for W}
\eeq
Since $\b\geq \f2p$, Lemma \ref{lem: parabolic estimates for Duhamel term of fractional Laplace} gives the local $C^{1,\b}$-bound for $\CI[W]$ on $(0,T)$:
\beq
\|\CI[W](t)\|_{\dot{C}_s^1(\BT)}
+ t^\b \|\CI[W](t)\|_{\dot{C}_s^{1,\b}(\BT)}
\leq C\sup_{\eta\in (0,t]} \eta^\b \|W[u,X](\eta)\|_{\dot{C}_s^\b(\BT)}.
\label{eqn: C 1 gamma estimate for IW}
\eeq
Hence, by Lemma \ref{lem: parabolic estimates for Duhamel term of fractional Laplace}, $\pa_t Y(s,t)$ is well-defined pointwise for any $t\in (0,T)$, and \eqref{eqn: equation for Y} holds.

Now by \eqref{eqn: equation for Y}, the interpolation inequalities, and the fact $T\leq 1$,
\beq
\begin{split}
&\; \sup_{\eta\in (0,T)} \eta^{\f1p} \|\pa_t Y(\cdot, \eta)\|_{L^\infty_s(\BT)}
+ \sup_{\eta\in (0,T)} \eta^\g \|\pa_t Y(\cdot, \eta)\|_{C^\g_s(\BT)}
\\
\leq &\; C\sup_{\eta\in (0,T]} \eta^{\f1p} \|\Lam Y(\eta)\|_{L^\infty_s(\BT)}
+ \eta^{\f1p} \big\|W[u,X](\eta)\big\|_{L^\infty_s(\BT)}\\
&\; + C\sup_{\eta\in (0,T]} \eta^\g \|\Lam Y( \eta)\|_{C^\g_s(\BT)}
+ \eta^\g \big\|W[u,X](\eta)\big\|_{C^\g_s(\BT)}
\\
\leq &\; C\sup_{\eta\in (0,T]} \eta^{\f1p} \| Y'(\eta)\|_{\dot{C}^{1/p}_s(\BT)}
+ C\sup_{\eta\in (0,T]} \eta^\g \|Y'(\eta)\|_{\dot{C}^\g_s(\BT)}\\
&\; + C\sup_{\eta\in (0,T]} \eta^{\f1p} \big\|W[u,X](\eta)\big\|_{L^\infty_s(\BT)}
+ C\sup_{\eta\in (0,T]} \eta^\g \big\|W[u,X](\eta)\big\|_{\dot{C}^\g_s(\BT)},
\end{split}
\label{eqn: estimate for Y_t prelim}
\eeq
where $C$ depends on $p$ and $\g$.
We use this, \eqref{eqn: L inf estimate for w}, \eqref{eqn: C 1 gamma estimate for Y minus semigroup solution}, \eqref{eqn: C gamma estimate for W} and \eqref{eqn: C 1 gamma estimate for IW} (with $\b:=\g$ there), as well as the interpolation inequalities to find that, for $t\in (0,T]$ with $T\leq 1$,
\begin{align*}
&\;\sup_{\eta\in (0,T)}\eta^{\f1p}\|\pa_t Y(\cdot,\eta)\|_{L^\infty_s(\BT)}
+ \sup_{\eta\in (0,T)}\eta^{\g}\|\pa_t Y(\cdot,t)\|_{C^{\g}_s(\BT)}
\\
\leq &\; C\|X_0'\|_{L^\infty_s(\BT)}
+ C\sup_{\eta\in (0,T]} \eta^{\f1p} \big\|W[u,X](\eta)\big\|_{L^\infty_s(\BT)}
+ C\sup_{\eta\in (0,T]} \eta^\g \big\|W[u,X](\eta)\big\|_{\dot{C}^\g_s(\BT)}
\\
\leq
&\; C \lam^{-2} M_0^3
+ C\nu^{-\f1p} \lam^{-1+\f2p} \left( M_0^{2-\g} N^{\g}
+ \lam^{-1} M_0^2N \cdot T^{\f1p}\right)\\
&\; + C \nu^{-\f1p}\left(  Q_0  + \lam^{-1+\f2p}M_0^2\right)
+ C \nu^{-\f12-\f2p} T^{\f12-\f1p}Q^2
\\
&\; + C\nu^{-\f\g 2-\f1p}T^{\f\g 2-\f1p} \lam^{-1+\f2p}
\left[1+ \mathds{1}_{\{\nu T\geq 4\lam^2\}}
\big(\lam^{-2} \nu T\big)^{\g(\f1 2+\f1 p-\g)}\right] \\
&\;\quad  \cdot\left( M_0^{2-\g} N^\g + \lam^{-1} M_0^2 N \cdot T^{\f1p}\right) M_0^\g\\
&\;+ C\nu^{-\f{\g}2-\f1p} T^{\f{\g}2-\f1p} \left(  Q_0  + \lam^{-1+\f2p} M_0^2\right)M_0^\g
+ C \nu^{-\f{1+\g}2-\f2p} T^{\f{1+\g}2-\f2p} Q^2 M_0^\g
\\
\leq &\;C(\g,p,\nu,\lam_0, Q_0 ,M_0)\left(1+N^\g + T^{\f1p}N\right).
\end{align*}
By taking $T$ to be sufficiently small, which depends on $\g$, $p$, $\nu$, $\lam_0$, $M_0$, and $ Q_0 $, and also using Young's inequality, we can obtain that
\begin{align*}
&\;\sup_{\eta\in (0,T)}\eta^{\f1p}\|\pa_t Y(\cdot,\eta)\|_{L^\infty_s(\BT)}
+ \sup_{\eta\in (0,T)}\eta^{\g}\|\pa_t Y(\cdot,t)\|_{C^{\g}_s(\BT)}
\\
\leq &\; \f12 N + C_*(\g,p,\nu,\lam_0, Q_0 ,M_0).
\end{align*}
Then we choose
\beq
N: = 2C_*(\g,p,\nu,\lam_0, Q_0 ,M_0),
\label{eqn: choice of N}
\eeq
which implies
\[
\sup_{\eta\in (0,T)}\eta^{\f1p}\|\pa_t Y(\cdot,\eta)\|_{L^\infty_s(\BT)}
+ \sup_{\eta\in (0,T)}\eta^{\g}\|\pa_t Y(\cdot,t)\|_{C^{\g}_s(\BT)}
\leq N.
\]
We remind that these quantitative estimates for $Y$ are enough to justify as in Remark \ref{rmk: time continuity in C^1 topology} that $Y\in C([0,T];C^1(\BT))$.

Finally, let us show that $Y\in C^1_{loc}((0,T)\times \BT)$.
Since $t\mapsto \|\pa_t Y(t)\|_{C^\g(\BT)}$ and $t\mapsto \|Y'(t)\|_{C^\g(\BT)}$ are locally bounded in $(0,T)$, $t\mapsto Y(t)$ is locally continuous in the $C^{1,\b}(\BT)$-topology for any $\b\in [0,\g)$ on $(0,T)$ by interpolation;
see \eqref{eqn: L inf estimate for X tau - X t}, \eqref{eqn: C 1 estimate for X tau - X t}, and Remark \ref{rmk: time continuity in C^1 topology}.
Hence, $t\mapsto \Lam Y$ is locally continuous in the $C(\BT)$-topology on $(0,T)$.
By \eqref{eqn: equation for Y}  and the continuity of $t\mapsto W[u,X]$ obtained from \eqref{eqn: alternative expression of W}, this further implies that $\pa_t Y \in C_{loc}((0,T)\times \BT)$.
This together with the fact that $Y'\in C([0,T];C(\BT))$ gives that $Y\in C^1_{loc}((0,T)\times \BT)$.

Combining all the above characterizations of $Y$, we obtain that $Y\in \CX$.

In summary, by defining $Q$ and $N$ as in \eqref{eqn: choice of Q} and \eqref{eqn: choice of N} respectively, and then taking $T$ to be sufficiently small, which depends on $\g,\,p,\,\nu,\,\lam_0,\, Q_0 ,\, M_0$ and $X_0$, we can guarantee that, whenever $(u,X)\in \CU\times \CX$, then $(v,Y): = \CT (u,X) \in \CU\times \CX$.

\subsection{Contraction mapping and the proof of Proposition \ref{prop: fixed-point solution}}
\label{sec: contraction mapping}
Next we show that, by taking $T$ to be smaller if necessary, we can make $\CT$ a contraction mapping from $\CU\times \CX$ to itself under the metric \eqref{eqn: metric on U times X}.

Take $(u_i,X_i)\in \CU\times \CX$ ($i = 1,2$), and let $(v_i,Y_i):= \CT(u_i,X_i)$.
It is known that $X_i\in O^{\lam}_T$ with $\lam$ defined above, and they satisfy \eqref{eqn: simple bound for X in CX} and \eqref{eqn: simple bound 2 for X in CX}.
Moreover, by the definition of $\CX$ and \eqref{eqn: smallness condition on T},
\[
\|X_1'-X_2'\|_{L^\infty_T L^\infty_s}
\leq
\|X_1'- e^{-\f{t}4 \Lam}X_0'\|_{L^\infty_T L^\infty_s}
+\|X_2'- e^{-\f{t}4 \Lam}X_0'\|_{L^\infty_T L^\infty_s}
\leq \f{\lam_0}{8} < \f{\lam}2,
\]
and by \eqref{eqn: X is close to initial data}, for any $t_1,t_2\in [0,T]$,
\[
\|X_i'(t_1)-X_i'(t_2)\|_{L^\infty_s}
\leq
\|X_i'(t_1)- X_0'\|_{L^\infty_s}
+\|X_i'(t_2)- X_0'\|_{L^\infty_s}
\leq \f{\lam_0}4 = \f{\lam}2.
\]

Since $X_1(s,0) = X_2(s,0)$, by \eqref{eqn: def of v}, Lemma \ref{lem: estimate for u_X^nu-u_Y^nu in Morrey space} (with $\mu_0 = \f1p$ there) and \eqref{eqn: 4th estimate for B L inf} in Lemma \ref{lem: estimate for B_nu u}, for any $t\in (0,T]$,
\begin{align*}
&\; \|v_1(t)-v_2(t)\|_{M^{1,1-\g}(\R^2)}\\
\leq &\;\|u_{X_1}^{\nu}(t)-u_{X_2}^\nu(t)\|_{M^{1,1-\g}(\R^2)}
+ \|B_\nu[u_1](t)-B_\nu[u_2](t)\|_{M^{1,1-\g}(\R^2)} \\
\leq
&\; C \lam^{-1-\g} M_0^2
\Big[ t^{1-\g-\f1p}\sup_{\eta\in (0,t]} \eta^{\f1p}\|\pa_t (X_1-X_2)(\eta)\|_{L_s^\infty}
+  \|X_1'-X_2'\|_{L^\infty_t L^\infty_s}\Big]\\
&\; +
C(\nu t)^{-\f{\g}2} \nu^{-1}
\sup_{\eta\in(0,t]} (\nu \eta)^{\f{\g}2} \|(u_1-u_2)(\eta)\|_{M^{1,1-\g}} \sup_{\eta\in(0,t]} (\nu \eta)^{\f12}\|(u_1(\eta),u_2(\eta))\|_{L_x^\infty},
\end{align*}
where the constant $C$ depends on $p$ and $\g$.
Using the bounds for $(u_i,X_i)$, we obtain that
\begin{align*}
&\; (\nu t)^{\f{\g}{2}+\f1p-\f12}\|v_1(t)-v_2(t)\|_{M^{1,1-\g}(\R^2)}\\
\leq
&\; C\nu^{\f{\g}{2}+\f1p-\f12} \lam^{-1-\g} M_0^2
\Big[ t^{\f{1-\g} 2}\sup_{\eta\in (0,t]} \eta^{\f1p}\|\pa_t (X_1-X_2)(\eta)\|_{L_s^\infty}
+  t^{\f{\g}2+\f1p-\f12}\|X_1'-X_2'\|_{L^\infty_t L^\infty_s}\Big]\\
&\; +
C\nu^{-1}(\nu t)^{\f12-\f1p}Q
\sup_{\eta\in(0,t]} (\nu \eta)^{\f{\g}{2}+\f1p-\f12} \|(u_1-u_2)(\eta)\|_{M^{1,1-\g}}.
\end{align*}
This, together with the facts that $p\in (2,\infty)$ and $\g\in(1-\f2p,1)$, implies that
\beq
\begin{split}
&\;d_\CU(v_1,v_2)\\
\leq &\; C\big(T^{\f{1-\g} 2} +  T^{\f\g2+\f1p-\f12}\big) \nu^{\f{\g}{2}+\f1p-\f12} \lam^{-1-\g} M_0^2 \cdot d_\CX(X_1,X_2)
+ CT^{\f12-\f1p}  \nu^{-\f12-\f1p} Q \cdot d_\CU(u_1,u_2),
\end{split}
\label{eqn: dist between v_1 and v_2}
\eeq
where $C$ depends on $p$ and $\g$.

Next we bound $d_\CX(Y_1,Y_2)$.
By \eqref{eqn: equation for Y}, we argue as in \eqref{eqn: C 1 gamma estimate for IW} and \eqref{eqn: estimate for Y_t prelim} that
\beq
\begin{split}
d_\CX(Y_1,Y_2)
\leq &\; \|Y_1'-Y_2'\|_{L_T^\infty L_s^\infty(\BT)}
+ \sup_{\eta\in (0,T]} \eta^\g \|(Y_1'-Y_2')(\cdot, \eta)\|_{\dot{C}^\g_s(\BT)}\\
&\; + C\sup_{\eta\in (0,T]} \eta^{\f1p} \big\|W[u_1,X_1](\eta)-W[u_2,X_2](\eta)\big\|_{L^\infty_s(\BT)}\\
&\; + C\sup_{\eta\in (0,T]} \eta^\g \big\|W[u_1,X_1](\eta)-W[u_2,X_2](\eta)\big\|_{\dot{C}^\g_s(\BT)}
\\
\leq &\; C\sup_{\eta\in (0,T]} \eta^{\f1p} \big\|W[u_1,X_1](\eta)-W[u_2,X_2](\eta)\big\|_{L^\infty_s(\BT)}\\
&\; + C\sup_{\eta\in (0,T]} \eta^\g \big\|W[u_1,X_1](\eta)-W[u_2,X_2](\eta)\big\|_{\dot{C}^\g_s(\BT)}, 
\end{split}
\label{eqn: estimate for Y_1 - Y_2}
\eeq
where the constant $C$ depends on $p$ and $\g$.

By Lemma \ref{lem: parabolic estimates} and Lemma \ref{lem: Holder estimate for composition of functions}, we derive from \eqref{eqn: def of w} that
\beq
\begin{split}
&\; \big\|W[u_1,X_1](t)-W[u_2,X_2](t) \big\|_{L^\infty_s(\BT)}\\
\leq &\; \|g_{X_1}(t)-g_{X_2}(t)\|_{L^\infty_s(\BT)}
+ \|H_{X_1}^\nu(t)-H_{X_2}^\nu(t)\|_{L^\infty_s(\BT)}\\
&\;
+ \big\|\big[e^{\nu t\D}(u_0-u_{X_1(t)})+B_{\nu}[u_1]\big] \circ X_1(t)-\big[e^{\nu t\D}(u_0-u_{X_1(t)})+B_{\nu}[u_1]\big] \circ X_2(t) \big\|_{L^\infty_s(\BT)}\\
&\;
+ \big\|\big[e^{\nu t\D}(-u_{X_1(t)}+u_{X_2(t)})+B_{\nu}[u_1]-B_{\nu}[u_2]\big] \circ X_2(t) \big\|_{L^\infty_s(\BT)}
\\
\leq &\;\|g_{X_1}(t)-g_{X_2}(t)\|_{L^\infty_s(\BT)}
+ \|H_{X_1}^\nu(t)-H_{X_2}^\nu(t)\|_{L^\infty_s(\BT)}\\
&\;
+ \Big(\big\|\na e^{\nu t\D}(u_0-u_{X_1(t)})\big\|_{L^\infty_x(\BR^2)}
+ \|\na B_{\nu}[u_1](t)\|_{L^\infty_x(\BR^2)}\Big) \|X_1(t)-X_2(t)\|_{L^\infty_s(\BT)}\\
&\; + \big\|e^{\nu t\D}(u_{X_1(t)}-u_{X_2(t)})\big\|_{L^\infty_x(\BR^2)}
+ \|B_{\nu}[u_1](t) - B_{\nu}[u_2](t)\|_{L^\infty_x(\BR^2)}.
\end{split}
\label{eqn: L inf estimate for w - w}
\eeq

In order to bound $H_{X_1}^\nu-H_{X_2}^\nu$, we shall apply Lemma \ref{lem: estimate for H_X^nu-H_Y^nu} with $(\g,\b,\mu_0)$ there taken to be $(\f{\g}2,\g,\f1p)$, and then $(M,N_0,N_{\b})$ there can be replaced by $(CM_0,N,N)$ due to the assumptions on $X_i\in \CX$, \eqref{eqn: simple bound for X in CX}, \eqref{eqn: simple bound 2 for X in CX}, and the interpolation inequalities.
Then we will find with $T\leq 1$ that
\beq
\begin{split}
&\; \|H_{X_1}^{\nu}(t)-H_{X_2}^\nu(t)\|_{L^\infty_s(\BT)}
+(\nu t)^{\g/2} M_0^{-\g} \|H_{X_1}^{\nu}(t)-H_{X_2}^\nu(t)\|_{\dot{C}_s^\g(\T)}\\
\leq &\; C d_\CX(X_1,X_2)\\
&\;\cdot \Big[ \lam^{-3}
M_0^{3-\g/2} N^{\g/2} \big( 1 + \nu^{-1} T^{1-\f2p} N^2\big)
+ \lam^{-1} M_0\\
&\;\quad + t^{\f12-\f{\g}{4}-\f1p} \cdot
\lam^{-2-\g/2} \nu^{-(1-\g/2)/2} M_0^2 N
\big( 1 + \nu^{-1} T^{1-\f2p} N^2\big)^{\f{2+\g}{4}}
\\
&\; \quad + t^{\f12-\f{\g}{4}-\f1p} \cdot \lam^{-1-\g/2}\nu^{-(1-\g/2)/2} M_0^2 \Big]
\\
\leq &\; C(\g, p,\nu,\lam,M_0,N) \cdot d_\CX(X_1,X_2)
\Big(1 + t^{\f12-\f{\g}{4}-\f1p}\Big).
\end{split}
\label{eqn: estimate for H_X_1 - H_X_2}
\eeq

Applying Lemma \ref{lem: estimate for u_X}, Lemma \ref{lem: estimate for u_X-u_Y} (with $(\b,\g)$ there taken to be $(\f1p,\g)$), \eqref{eqn: 2nd estimate for B} and \eqref{eqn: 4th estimate for B L inf} in Lemma \ref{lem: estimate for B_nu u}, Lemma \ref{lem: improved estimates for g_X-g_Y} (with $\g = \f{1}{2p}$ there), we derive from \eqref{eqn: L inf estimate for w - w} that, for $t\in (0,T]$ with $T\leq 1$,
\begin{align*}
&\; \big\|W[u_1,X_1](t)-W[u_2,X_2](t) \big\|_{L^\infty_s(\BT)}\\
\leq &\;C \lam^{-2} M_0^2 \Big[\|X_1'(t)-X_2'(t)\|_{\dot{C}_s^{\f{1}{2p}}}
+ \lam^{-1} \big(\|X_1'(t)\|_{\dot{C}_s^{\f{1}{2p}}} +\|X_2'(t)\|_{\dot{C}_s^{\f{1}{2p}}}\big) \|X_1'(t)-X_2'(t)\|_{L_s^\infty} \Big]\\
&\; + C(\g, p,\nu,\lam,M_0,N) \cdot d_\CX(X_1,X_2)
\Big(1 + t^{\f12-\f{\g}{4}-\f1p}\Big)
\\
&\;
+ C\Big[(\nu t)^{-\f12-\f1p}\|u_0-u_{X_1(t)}\|_{L^p_x}\\
&\;\qquad
+ (\nu t)^{-1} \nu^{-1} (\nu t)^{\f12-\f1p} Q
\big((\nu t)^{\f12-\f1p}Q
+ (\nu t)^{\f{1-\g}{2}} Q\big)\Big]
\|X_1(t)-X_2(t)\|_{L^\infty_s}\\
&\;
+ C (\nu t)^{-\f1{2p}} \lam^{-1+\f1p}M_0 \|X_1'(t)-X_2'(t)\|_{L_s^{\infty}}
\\
&\;
+ C(\nu t)^{-\f{1-\g}{2}} \lam^{-1-\g}
\Big(t^{-\g}M_0^2 \|X_1(t)-X_2(t)\|_{L_s^\infty}
+ M_0^2 \|X_1(t)-X_2(t)\|_{\dot{C}_s^\g}\Big)
\\
&\; + C(\nu t)^{-\f12}\nu^{-1}
(\nu t)^{\f12 -\f1p}Q \cdot \sup_{\eta\in (0,t]}(\nu \eta)^{\g/2}\|(u_1-u_2)(\eta)\|_{M^{1,1-\g}}
\\
\leq
&\;C \lam^{-2} M_0^2 \Big[t^{-\f{1}{2p}} d_\CX(X_1,X_2)
+ \lam^{-1} \cdot t^{-\f1{2p}}M_0 d_\CX(X_1,X_2) \Big]\\
&\; + C(\g, p,\nu,\lam,M_0,N) \cdot d_\CX(X_1,X_2)
\Big(1 + t^{\f12-\f{\g}{4}-\f1p}\Big)
\\
&\;
+ C(\nu t)^{-\f12-\f1p} \Big[ Q_0  + \lam^{-1+\f{2}{p}} M_0^2
+ \nu^{-1} Q^2 \big((\nu T)^{\f12-\f1p} + (\nu T)^{\f{1-\g}{2}}\big)\Big]\cdot t^{1-\f1p}d_\CX(X_1,X_2)\\
&\; + C (\nu t)^{-\f1{2p}} \lam^{-1+\f1p}M_0 \cdot d_\CX(X_1,X_2)
\\
&\;
+ C(\nu t)^{-\f{1-\g}2} \lam^{-1-\g}
\Big(t^{-\g}M_0^2 \cdot t^{1-\f1p}d_\CX(X_1,X_2)
+ M_0^2 \cdot t^{1-\g}d_\CX(X_1,X_2)\Big)
\\
&\; + C\nu^{-1}
(\nu t)^{-\f1p} Q \cdot (\nu t)^{\f12-\f1p}\sup_{\eta\in (0,t]}(\nu \eta)^{\f\g2+\f1p-\f12}\|(u_1-u_2)(\eta)\|_{M^{1,1-\g}},
\end{align*}
where the constant $C$ depends on $p$ and $\g$.
In the last inequality, we used the following estimates due to the fact $X_1(s,0) = X_2(s,0)$:
\beq
\|X_1(t)-X_2(t)\|_{L_s^\infty}
\leq \int_0^t \eta^{-\f1p} \cdot \eta^{\f1p}\|\pa_t (X_1-X_2)(\eta)\|_{L_s^\infty}\,d\eta
\leq Ct^{1-\f1p} d_\CX(X_1,X_2),
\label{eqn: L inf difference of X_1 and X_2}
\eeq
and similarly,
\beq
\|X_1(t)-X_2(t)\|_{\dot{C}_s^\g}
\leq Ct^{1-\g} d_\CX(X_1,X_2).
\label{eqn: C gamma difference of X_1 and X_2}
\eeq
Hence, under the assumptions $0<t<T\leq 1$ and $\g\in[\f2p,1)$,
\begin{align*}
&\; \big\|W[u_1,X_1](t)-W[u_2,X_2](t) \big\|_{L^\infty_s(\BT)}\\
\leq
&\; C(\g, p,\nu,\lam_0, Q_0 , M_0,N) \cdot d_\CX(X_1,X_2)
\Big(1 + t^{\f12-\f{\g}{4}-\f1p}+t^{-\f1{2p}} + t^{\f12-\f2p} + t^{\f{1-\g}{2}-\f1p}\Big)
\\
&\; + C \nu^{-\f12-\f2p} Q\cdot d_\CU(u_1,u_2) t^{\f12-\f2p}
\\
\leq
&\; C(\g, p,\nu,\lam_0,  Q_0 , M_0,N) \cdot d_\CX(X_1,X_2)
\Big(t^{-\f1{2p}} + t^{\f{1-\g}{2}-\f1p}\Big)
+ C\nu^{-\f12-\f2p} Q \cdot d_\CU(u_1,u_2) t^{\f12-\f2p},
\end{align*}
which gives
\beq
\begin{split}
&\;\sup_{\eta\in (0,T]} \eta^{\f1p}\big\|W[u_1,X_1](\eta)-W[u_2,X_2](\eta)\big\|_{L^\infty_s(\BT)}\\
\leq &\;  C(\g, p,\nu,\lam_0, Q_0 , M_0,N)
\big[ d_\CX(X_1,X_2)+d_\CU(u_1,u_2)\big]
\Big(T^{\f1{2p}} + T^{\f{1-\g}{2}}\Big).
\end{split}
\label{eqn: L inf difference of w}
\eeq

Similarly,
\begin{align*}
&\; \big\|W[u_1,X_1](t)-W[u_2,X_2](t) \big\|_{\dot{C}_s^\g(\BT)}\\
\leq &\; \|g_{X_1}(t)-g_{X_2}(t)\|_{\dot{C}_s^\g(\BT)}
+ \|H_{X_1}^\nu(t)-H_{X_2}^\nu(t)\|_{\dot{C}_s^\g(\BT)}\\
&\;
+ \big\|\big[e^{\nu t\D}(u_0-u_{X_1(t)})\big] \circ X_1(t)-\big[e^{\nu t\D}(u_0-u_{X_2(t)})\big] \circ X_2(t) \big\|_{\dot{C}_s^\g(\BT)}\\
&\; + \|B_{\nu}[u_1]\circ X_1(t) - B_{\nu}[u_1]\circ X_2(t)\|_{\dot{C}_s^\g(\BT)}
+ \| (B_{\nu}[u_1] - B_{\nu}[u_2]) \circ X_2(t) \|_{ \dot{C}_s^\g(\BT)}.
\end{align*}

By Lemma \ref{lem: improved estimates for g_X-g_Y},
\begin{align*}
&\; \|g_{X_1}(t) - g_{X_2}(t)\|_{ \dot{C}_s^\g(\BT)}\\
\leq &\; C \lam^{-2} M_0 \big(\|X_1'(t)\|_{\dot{C}_s^{\g/2}}+\|X_2'(t)\|_{\dot{C}_s^{\g/2}}\big)\\
&\;\cdot \Big[\|X_1'(t)-X_2'(t)\|_{\dot{C}_s^{\g/2}} + \lam^{-1} \big(\|X'_1(t)\|_{\dot{C}_s^{\g/2}}+\|X_2'(t)\|_{\dot{C}_s^{\g/2}}\big) \|X_1'(t)-X_2'(t)\|_{L_s^\infty}\Big]
\\
\leq &\; C \lam^{-2} M_0\cdot  t^{-\f\g2} \|(X_1',X_2')\|_{L^\infty_t L^\infty_s}^{\f12}
\left[ \sup_{\eta\in (0,t]} \eta^{\g} \big(\|X_1'(\eta)\|_{\dot{C}_s^\g}+\|X_2'(\eta)\|_{\dot{C}_s^\g}\big)\right]^{\f12} \\
&\;\cdot \Big[t^{-\g/2}  \sup_{\eta\in (0,t]}\eta^{\g/2}\|X_1'(\eta)-X_2'(\eta)\|_{\dot{C}_s^{\g/2}} + \lam^{-1} t^{-\g/2} M_0 \|X_1'-X_2'\|_{L^\infty_t L_s^\infty}\Big]
\\
\leq &\; C \lam^{-3} M_0^{5/2} \cdot  d_\CX(X_1,X_2) t^{-\g}
\big( \r(t)+ t^{\s}\big)^{\f12},
\end{align*}
where $C$ depends on $\g$.
In the last line, we used the bound for $t^{\g}\|X_i'(t)\|_{\dot{C}_s^\g}$ in the definition of $\CX$.
The H\"older estimate for $(H_{X_1}^\nu-H_{X_2}^\nu)$ in \eqref{eqn: estimate for H_X_1 - H_X_2} reads
\[
\|H_{X_1}^\nu(t) - H_{X_2}^\nu(t)\|_{ \dot{C}_s^\g(\BT)}
\leq
C(\g, p,\nu,\lam,M_0,N) \cdot d_\CX(X_1,X_2)
\Big(1 + t^{\f12-\f{\g}{4}-\f1p}\Big)t^{-\f\g2}.
\]
By Lemma \ref{lem: estimate for u_X}, Lemma \ref{lem: estimate for u_X-u_Y}, Lemma \ref{lem: parabolic estimates}, Lemma \ref{lem: Holder estimate for composition of functions}, \eqref{eqn: L inf difference of X_1 and X_2}, and \eqref{eqn: C gamma difference of X_1 and X_2}, for any $t\in (0,T]$ with $T\leq 1$,
\begin{align*}
&\; \left\|e^{\nu t\D}(u_0-u_{X_1(t)})\circ X_1(t)- e^{\nu t\D}(u_0-u_{X_2(t)})\circ X_2(t) \right\|_{\dot{C}_s^\g(\BT)}\\
\leq
&\; \left\|e^{\nu t\D}(u_0-u_{X_1(t)})\circ X_1(t)- e^{\nu t\D}(u_0-u_{X_1(t)})\circ X_2(t) \right\|_{\dot{C}_s^\g(\BT)}\\
&\; + \left\|e^{\nu t\D}(u_{X_1(t)}-u_{X_2(t)})\circ X_2(t) \right\|_{\dot{C}_s^\g(\BT)}\\
\leq &\;
\big\|e^{\nu t\D}(u_0-u_{X_1(t)})\big\|_{\dot{C}_x^{1}} \|X_1(t)-X_2(t)\|_{\dot{C}^\g_s}\\
&\; +
C \big\|e^{\nu t\D}(u_0-u_{X_1(t)})\big\|_{ \dot{C}_x^{1,\g}} \|(X_1'(t),X_2'(t))\|_{L^\infty_s}^\g \|X_1(t)-X_2(t)\|_{L^\infty_s}  \\
&\; + \big\|e^{\nu t\D}(u_{X_1(t)}-u_{X_2(t)}) \big\|_{\dot{C}_x^\g(\BR^2)} \|X_2'(t)\|_{L_s^\infty}^\g\\
\leq &\;
C(\nu t)^{-\f12-\f1p} \left(\|u_0\|_{L_x^p} + \|u_{X_1(t)}\|_{L_x^p}\right)
\cdot t^{1-\g}d_\CX(X_1,X_2)\\
&\;+
C(\nu t)^{-\f{1+\g}2-\f1p} \left(\|u_0\|_{L_x^p} + \|u_{X_1(t)}\|_{L_x^p}\right)
\cdot  M_0^\g \cdot t^{1-\f1p}d_\CX(X_1,X_2)\\
&\; + C\Big[
(\nu t)^{-\g/2} \lam^{-1} M_0 \|X_1'(t)-X_2'(t)\|_{L_s^{\infty}} \\
&\;\qquad + (\nu t)^{-1/2} \lam^{-1-\g} \cdot
M_0^2 \big(t^{-\g} \|X_1(t)-X_2(t)\|_{L_s^\infty} + \|X_1(t)-X_2(t)\|_{\dot{C}_s^\g}\big)
\Big] M_0^\g
\\
\leq &\;
C\big( Q_0  + \lam^{-1+\f{2}{p}} M_0^2\big) \Big[(\nu t)^{-\f12-\f1p}
\cdot t^{1-\g} + (\nu t)^{-\f{1+\g}2-\f1p} \cdot  M_0^\g \cdot t^{1-\f1p}\Big] d_\CX(X_1,X_2)\\
&\; + C\Big[
(\nu t)^{-\g/2} \lam^{-1} M_0
+ (\nu t)^{-1/2} \lam^{-1-\g} \cdot
M_0^2 \cdot t^{1-\g-\f1p}
\Big] M_0^\g\cdot d_\CX(X_1,X_2)
\\
\leq &\;
C(\g,p,\nu,\lam, Q_0 ,M_0)\cdot d_\CX(X_1,X_2) t^{\f12-\f1p-\g}.
\end{align*}
By \eqref{eqn: 2nd estimate for B} in Lemma \ref{lem: estimate for B_nu u}, Lemma \ref{lem: parabolic estimates} and Lemma \ref{lem: Holder estimate for composition of functions}, for any $t\in (0,T]$ with $T\leq 1$,
\begin{align*}
&\; \big\|B_{\nu}[u_1]\circ X_1(t)- B_{\nu}[u_1]\circ X_2(t) \big\|_{\dot{C}_s^\g(\BT)}\\
\leq &\;
\big\|B_{\nu}[u_1](t)\big\|_{\dot{C}_x^{1}} \|X_1(t)-X_2(t)\|_{\dot{C}^\g_s} +
C \big\|B_{\nu}[u_1](t)\big\|_{ \dot{C}_x^{1,\g}} \|(X_1',X_2')\|_{L^\infty_T L^\infty_s}^\g \|X_1(t)-X_2(t)\|_{L^\infty_s} \\
\leq &\;
C \nu^{-1}
\sup_{\eta\in (0,t]} (\nu\eta)^{1/2} \|u_1(\eta)\|_{L^\infty_x}
\left[\sup_{\eta\in(0,t]}(\nu \eta)^{\f12-\f1p}\|u_1(\eta)\|_{L_x^p}
+ \sup_{\eta\in (0,t]} (\nu\eta)^{(1+\g)/2} \|u_1(\eta)\|_{\dot{C}^\g_x}\right] \\
&\; \cdot \left[(\nu t)^{-1}\|X_1(t)-X_2(t)\|_{\dot{C}^\g_s}
+ (\nu t)^{-(2+\g)/2} \cdot  M_0^\g \cdot \|X_1(t)-X_2(t)\|_{L^\infty_s}\right]
\\
\leq &\;
C \nu^{-1} (\nu t)^{\f12-\f1p}Q
\left[(\nu t)^{\f12-\f1p} Q
+ (\nu t)^{\f{1-\g}{2}}Q\right] \\
&\; \cdot \left[(\nu t)^{-1}\cdot t^{1-\g} d_\CX(X_1,X_2)
+ (\nu t)^{-\f{2+\g}{2}} \cdot  M_0^\g \cdot t^{1-\f1p} d_\CX(X_1,X_2)\right]
\\
\leq &\;
C(\g,p,\nu,\lam_0, Q_0 ,M_0)
\cdot d_\CX(X_1,X_2)t^{1-\f1p -\f{3\g}{2}}.
\end{align*}
Finally, by \eqref{eqn: 4th estimate for B gamma} in Lemma \ref{lem: estimate for B_nu u} and Lemma \ref{lem: Holder estimate for composition of functions},
\begin{align*}
&\; \big\| \big(B_{\nu}[u_1]- B_{\nu}[u_2]\big)\circ X_2(t) \big\|_{\dot{C}_s^\g(\BT)}\\
\leq &\; \big\|B_{\nu}[u_1](t) - B_{\nu}[u_2](t)\big\|_{\dot{C}_x^\g(\BR^2)} \|X_2'\|_{L^\infty_T L^\infty_s}^\g\\
\leq &\; C\nu^{-1}(\nu t)^{-\f{1+\g}2}
\sup_{\eta\in (0,t]}(\nu \eta)^{\g/2}\|(u_1-u_2)(\eta)\|_{M^{1,1-\g}}
\sup_{\eta\in (0,t]} (\nu\eta)^{1/2} \|(u_1(\eta),u_2(\eta))\|_{L_x^\infty}\cdot M_0^\g
\\
\leq &\; C\nu^{-1}(\nu t)^{-\f{1+\g}2} \cdot (\nu t)^{\f12-\f1p} d_\CU(u_1,u_2)
\cdot (\nu t)^{\f12-\f1p} Q \cdot M_0^\g
\\
\leq &\;  C\nu^{-\f{1+\g}2-\f2p}
Q M_0^\g\cdot d_\CU(u_1,u_2) t^{\f{1-\g}2-\f2p}.
\end{align*}
Summarizing all the estimates and using the assumption $T\leq 1$ yields that
\beq
\begin{split}
&\; \big\|W[u_1,X_1](t)-W[u_2,X_2](t) \big\|_{\dot{C}_s^\g(\BT)}\\
\leq &\; C \lam^{-3} M_0^{5/2} \cdot  d_\CX(X_1,X_2) t^{-\g}
\big( \r(t)+ t^\s\big)^{\f12}
+ C(\g, p,\nu,\lam_0, Q_0 ,M_0,N) \cdot d_\CX(X_1,X_2)
t^{\f12-\f1p-\g}
\\
&\; + C\nu^{-\f{1+\g}2-\f2p}
Q M_0^\g\cdot d_\CU(u_1,u_2) t^{\f{1-\g}2-\f2p},
\end{split}
\label{eqn: C gamma estimate for W_1 - W_2}
\eeq
which gives, under the assumptions $T\leq 1$ and $\g\in [\f2p,1)$,
\begin{align*}
&\; \sup_{\eta\in (0,T]} \eta^\g \big\|W[u_1,X_1](\eta)-W[u_2,X_2](\eta) \big\|_{\dot{C}_s^\g(\BT)}\\
\leq &\; C(\g, p,\nu,\lam_0, Q_0 ,M_0,N)
\big[ d_\CX(X_1,X_2)  + d_\CU(u_1,u_2)\big]
\left[\big( \r(T)+ T^\s\big)^{\f12}
+T^{\f12-\f1p} \right].
\end{align*}

Putting this and \eqref{eqn: L inf difference of w} into \eqref{eqn: estimate for Y_1 - Y_2}, and also using \eqref{eqn: dist between v_1 and v_2}, and the definitions of $Q$ and $N$ in \eqref{eqn: choice of Q} and \eqref{eqn: choice of N} respectively, we obtain that
\beq
\begin{split}
&\;d_\CU(v_1,v_2)+d_\CX(Y_1,Y_2)
\\
\leq &\;  C(\g, p,\nu,\lam_0, Q_0 , M_0,N)\left(T^{\f{1-\g} 2} +  T^{\f\g2+\f1p-\f12} + T^{\f12-\f1p}+ T^{\f1{2p}}
+\big( \r(T)+ T^\s\big)^{\f12}
\right)\\
&\;\cdot
\big[d_\CU(u_1,u_2) + d_\CX(X_1,X_2)\big].
\end{split}
\label{eqn: final bound for distance bewteen output solutions}
\eeq
Since $\g\in (1-\f2p,1)$, $\s>0$, and $\lim_{t\to 0^+}\r(t) = 0$, we may take $T$ smaller if necessary, such that for all $u_1,u_2\in \CU$, $X_1,X_2\in \CX$, and $(v_i,Y_i):=\CT(u_i,X_i)$ $(i = 1,2)$, we have $(v_i,Y_i)\in \CU\times \CX$, and
\[
d_{\CU\times \CX}((v_1,Y_1),(v_2,Y_2))
\leq \f12 d_{\CU\times \CX}((u_1,X_1),(u_2,X_2)),
\]
so $\CT$ is a contraction mapping from $\CU\times \CX$ to itself.

Therefore, $\CT$ admits a unique fixed-point in $\CU\times \CX$, which we denote by $(u,X)$ with slight abuse of the notations.
It is clearly a mild local solution on $[0,T]$ to \eqref{eqn: NS equation}-\eqref{eqn: initial data} in the sense of Definition \ref{def: mild solution}, where $T\leq 1$ essentially depends on $\g,\,p,\,\nu,\,\lam_0,\, Q_0 ,\,M_0$, and $X_0$.
We remark that the implicit dependence of $T$ on $X_0$ stems from the requirement that $(\r(T)+T^\s)$ and $\phi(T)$ should be smaller than some constants that only depend on $\g,\,p,\,\nu,\,\lam_0,\, Q_0 $, and $M_0$.

Finally, in the statement of Proposition \ref{prop: fixed-point solution}, \eqref{eqn: equation for X_t as a fixed point} follows from \eqref{eqn: equation for Y} and the fact that $(u,X)$ is a fixed-point of $\CT$, while the other bounds for $X$ in the statement have been established in \eqref{eqn: lower bound for |X|_* for elements in CX}-\eqref{eqn: simple bound 2 for X in CX}.

\section{Properties of the Mild Solutions}
\label{sec: properties of mild solutions}
Throughout this section, we let $u_0 = u_0(x)\in L^p(\BR^2)$ for some $p\in (2,\infty)$ and $X_0 = X_0(s)\in C^1(\BT)$, which satisfy $\di u_0 = 0$ and $|X_0|_* >0$.
Suppose for some $T>0$, $(u,X)$ is a mild solution to \eqref{eqn: NS equation}-\eqref{eqn: initial data} on $[0,T]$ in the sense of Definition \ref{def: mild solution};
note that it is not necessarily obtained from the construction in Section \ref{sec: local well-posedness}.

Our goal in this section is to establish several fundamental properties of $(u,X)$.
We will show that the mild solution is unique (Proposition \ref{prop: uniqueness of mild solution}) and enjoys estimates in various regularity class (see Lemma \ref{lem: L inf regularity of u mild solution}, Lemma \ref{lem: time continuity of u in L^p}, Lemma \ref{lem: time derivative and equation}, and Proposition \ref{prop: higher regularity}).
Continuation of the mild solution will be studied in Proposition \ref{prop: continuation}.
We will show in Corollary \ref{cor: maximal solution} that there exists a unique maximal mild solution, and in the case a finite-time singularity occurs (see Definition \ref{def: maximal solution}), we provide characterization of the solution when it approaches the blow-up time.

\subsection{Regularity of the mild solutions}
\label{sec: regularity of mild solutions}

Throughout the section, with abuse of the notations, we denote that
\beq
M:= \sup_{\eta\in [0,T]} \|X'(\eta)\|_{L^\infty_s}
+ \sup_{\eta\in (0,T]}\eta^{\f1p} \|X'(\eta)\|_{\dot{C}^{1/p}_s} < +\infty,
\label{eqn: def of M}
\eeq
\beq
\lam:= \inf_{\eta\in [0,T]}|X(\eta)|_* >0,
\label{eqn: def of lambda}
\eeq
and
\beq
Q:=\sup_{\eta\in [0,T]} \|u(\eta)\|_{L^p_x} < +\infty.
\label{eqn: def of Q}
\eeq
These quantities are well-defined thanks to the definition of the mild solutions (see Definition \ref{def: mild solution}).

\begin{lem}
\label{lem: L inf regularity of u mild solution}
\beq
\sup_{\eta\in (0,T]} \eta^{\f1p}\|u(\eta)\|_{L_x^\infty(\BR^2)}
\leq C(T, p,\nu,\lam,M,Q).
\label{eqn: L inf regularity of u mild solution}
\eeq
\begin{proof}
Recall that $u$ satisfies the representation \eqref{eqn: fixed pt equation for u}.

On one hand, by Lemma \ref{lem: estimate for u_X^nu} and Lemma \ref{lem: parabolic estimates}, for $t\in (0,T]$,
\[
\big\|u_X^\nu(t) + e^{\nu t\Delta}u_0\big\|_{L^p_x(\BR^2)}
\leq
C\lam^{-1+\f2p} \|X'\|_{L^\infty_t L_s^\infty}^2 + \|u_0\|_{L^p_x}
\leq C\lam^{-1+\f2p}M^2 + Q,
\]
where $C$ depends on $p$, and
\[
\big\|u_{X}^\nu(t) + e^{\nu t\Delta}u_0\big\|_{L^\infty_x(\BR^2)}
\leq
Ct^{-\f1p} \lam^{-1}\|X'\|_{L^\infty_t L_s^{\infty}}
\sup_{\eta\in (0,t]} \eta^{\f1p} \|X'(\eta)\|_{\dot{C}_s^{1/p}}
+ C(\nu t)^{-\f1p}\|u_0\|_{L^p_x},
\]
which gives
\[
(\nu t)^{\f1p}
\big\|u_{X}^\nu(t) + e^{\nu t\Delta}u_0\big\|_{L^\infty_x(\BR^2)}
\leq
C \nu^{\f1p} \lam^{-1}M^2 + CQ,
\]
where $C$ depends on $p$. 
By interpolation, for any $q\in [p,\infty]$ and any $t\in (0,T]$,
\beq
(\nu t)^{\f1p -\f1q}\big\|u_{X}^\nu(t) + e^{\nu t\Delta}u_0\big\|_{L^q_x(\BR^2)}
\leq C(p,\nu,\lam,M,Q).
\label{eqn: L q estimate for a part of u}
\eeq

On the other hand, by \eqref{eqn: L q estimate for B general} in Lemma \ref{lem: estimate for B_nu u}, for any $q,r\in [p,\infty]$ satisfying $\f1q \in ( \f1r+\f1p - \f12, \f1r+\f1p]$,
\[
(\nu t)^{\f1p-\f1q}\|B_{\nu}[u](t)\|_{L^q_x}
\leq C\nu^{-1} (\nu t)^{\f1p-\f12}
\sup_{\eta\in(0,t]}(\nu \eta)^{1-\f1r-\f1p}
\|(u\otimes u) (\eta)\|_{L_x^{\f{pr}{p+r}}},
\]
which together with \eqref{eqn: fixed pt equation for u} and \eqref{eqn: L q estimate for a part of u} implies
\beq
(\nu t)^{\f1p-\f1q}\|u(t)\|_{L^q_x}
\leq C(p,\nu,\lam,M,Q)+ C\nu^{-1} (\nu t)^{\f12-\f1p} \|u\|_{L^\infty_t L_x^p}
\sup_{\eta\in(0,t]} (\nu \eta)^{\f1p-\f1r}
\|u(\eta)\|_{L_x^r}.
\label{eqn: bootstrap formula for u}
\eeq
Here $C$ depends on $p$, $q$, and $r$.
If $p>4$, we can simply take $(r,q) = (p,\infty)$ and that gives \eqref{eqn: L inf regularity of u mild solution}.
Otherwise, we first take $r = r_0:= p$ and choose $q = q_1$ such that $\f1{q_1} = \f1p - \f12(\f12-\f1p) \in (\f2p-\f12,\f1p]$, which leads to
\[
\sup_{\eta\in (0,T]}(\nu \eta)^{\f1p-\f1q}\|u(\eta)\|_{L^q_x}
\leq C(T, p,\nu,\lam,M,Q).
\]
Then we bootstrap with \eqref{eqn: bootstrap formula for u}, each time by taking $r_k = q_{k-1}$ and $\f1{q_k} = \max(0,\f1{r_k} - \f12(\f12-\f1p))$ there.
After finite steps of iteration, we end up having \eqref{eqn: L inf regularity of u mild solution}.
\end{proof}
\end{lem}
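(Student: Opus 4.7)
The plan is to start from the fixed-point representation \eqref{eqn: fixed pt equation for u}, namely $u(t) = u_X^\nu(t) + e^{\nu t\Delta} u_0 + B_\nu[u](t)$, and bound the three pieces separately. The linear parts are easy: Lemma \ref{lem: estimate for u_X^nu} gives $\|u_X^\nu(t)\|_{L^p_x} \lesssim \lam^{-1+2/p} M^2$ and $\|u_X^\nu(t)\|_{L^\infty_x} \lesssim t^{-1/p}\lam^{-1} M \cdot \sup_{\eta}\eta^{1/p}\|X'(\eta)\|_{\dot C^{1/p}_s}$, while standard parabolic estimates give $\|e^{\nu t\Delta} u_0\|_{L^q_x} \lesssim (\nu t)^{-1/p+1/q} Q$ for $q \in [p,\infty]$. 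By interpolation, the combined linear piece $u_X^\nu(t) + e^{\nu t\Delta} u_0$ satisfies $(\nu t)^{1/p - 1/q}\|\cdot\|_{L^q_x} \leq C(p,\nu,\lam,M,Q)$ for every $q \in [p,\infty]$.

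For the nonlinear Duhamel term $B_\nu[u]$, I will invoke the $L^q$ estimate \eqref{eqn: L q estimate for B general} from Lemma \ref{lem: estimate for B_nu u} applied to the quadratic quantity $u \otimes u$, using H\"older's inequality with the two factors placed in $L^p_x$ and $L^r_x$ respectively. Substituting this into \eqref{eqn: fixed pt equation for u} yields a self-improving inequality of the form
\[
(\nu t)^{\frac{1}{p}-\frac{1}{q}} \|u(t)\|_{L^q_x} \leq C(p,\nu,\lam,M,Q) + C \nu^{-1}(\nu t)^{\frac{1}{2}-\frac{1}{p}} Q \cdot \sup_{\eta\in(0,t]} (\nu\eta)^{\frac{1}{p}-\frac{1}{r}} \|u(\eta)\|_{L^r_x},
\]
valid whenever the admissibility condition $\frac{1}{q} \in (\frac{1}{r} + \frac{1}{p} - \frac{1}{2}, \frac{1}{r} + \frac{1}{p}]$ from Lemma \ref{lem: estimate for B_nu u} holds. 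The factor $(\nu t)^{\frac{1}{2}-\frac{1}{p}}$ on the right is the key smoothing gain; all implicit constants depend on $T$ only through positive powers.

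The main obstacle is that when $p$ is only slightly larger than $2$, the window of admissible exponents in the above bootstrap is narrow ($\frac{1}{q} - \frac{1}{r}$ may shift by less than $\frac{1}{2} - \frac{1}{p}$ in one step), so a single application does not reach $q = \infty$. If $p > 4$, I can simply take $r = p$ and $q = \infty$ directly, finishing in one step. For general $p \in (2,\infty)$ I will run a finite iteration: set $r_0 = p$ and at stage $k$ take $r_k = q_{k-1}$ and $\frac{1}{q_k} = \max\bigl(0, \frac{1}{r_k} - \frac{1}{2}\bigl(\frac{1}{2} - \frac{1}{p}\bigr)\bigr)$. Each step strictly decreases $\frac{1}{q_k}$ by the fixed amount $\frac{1}{2}(\frac{1}{2} - \frac{1}{p}) > 0$ until it reaches $0$, so after finitely many iterations I obtain the desired bound with $q = \infty$. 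The time-weight is preserved throughout because, in the Duhamel formula, the weight $(\nu\eta)^{1/p - 1/r_k}$ is exactly what allows \eqref{eqn: L q estimate for B general} to yield the next weighted bound with constants that depend only on $T, p, \nu, \lam, M, Q$.
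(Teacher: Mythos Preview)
Your proposal is correct and follows essentially the same approach as the paper's proof: you use the same decomposition via \eqref{eqn: fixed pt equation for u}, bound the linear part $u_X^\nu(t)+e^{\nu t\Delta}u_0$ in $L^p$ and $L^\infty$ via Lemma \ref{lem: estimate for u_X^nu} and interpolate, then invoke \eqref{eqn: L q estimate for B general} for $B_\nu[u]$ to obtain the same self-improving inequality, and run the identical bootstrap iteration with step size $\tfrac12(\tfrac12-\tfrac1p)$, treating $p>4$ as the one-step case.
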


\begin{lem}
\label{lem: time continuity of u in L^p}
$u \in C([0,T];L^p(\BR^2))$.

\begin{proof}
Recall that $p\in (2,\infty)$ and $u$ satisfies \eqref{eqn: fixed pt equation for u}. 

Since $u_0\in L^p(\BR^2)$, $t\mapsto e^{\nu t \D}u_0$ is continuous in $L^p(\BR^2)$.

The time-continuity in $L^p(\BR^2)$ of $u_X^\nu$ at $t = 0$ follows from Lemma \ref{lem: estimate for u_X^nu}, while the time-continuity at some $t\in (0,T]$ can be justified by interpolating the estimate in Lemma \ref{lem: time Holder continuity of u_X^nu} and, say, an $L^2$-estimate of $u_X^\nu(t)$ due to Lemma \ref{lem: estimate for u_X^nu}:
\beqo
\|u_X^{\nu}(t)\|_{L^2_x(\R^2)}
\leq C(\nu t)^{\f1{4}} \lam^{-\f12}
\|X'\|_{L^\infty_t L_s^2} \|X'\|_{L^\infty_t L_s^\infty}.
\eeqo

Regarding $B_\nu[u]$, its time-continuity in $L^p(\BR^2)$ at $t = 0$ follows from \eqref{eqn: L p estimate for B} in Lemma \ref{lem: estimate for B_nu u}:
\[
\|B_{\nu}[u](t)\|_{L^p_x}
\leq C\nu^{-1} (\nu t)^{\f1p-\f12}\left(\sup_{\eta\in(0,t]}(\nu \eta)^{\f12-\f1p}\|u(\eta)\|_{L_x^p}\right)^2
\leq C\nu^{-1} (\nu t)^{\f12-\f1p}\|u\|_{L^\infty_t L_x^p}^2.
\]
Its time-continuity at some $t\in (0,T]$ follows by interpolating the estimate in Lemma \ref{lem: time Holder continuity of B_u} and, say, an $L^{p/2}$-estimate for $B_\nu[u]$ from \eqref{eqn: L q estimate for B general} in Lemma \ref{lem: estimate for B_nu u}:
\[
\|B_{\nu}[u](t)\|_{L^{p/2}_x}
\leq C\nu^{-1} (\nu t)^{\f{2}{p}-\f12}
\sup_{\eta\in(0,t]}(\nu \eta)^{1-\f{2}{p}}\|(u\otimes u)(\eta)\|_{L_x^{p/2}}
\leq C\nu^{-1} (\nu t)^{\f12}\|u\|_{L^\infty_t L_x^p}^2.
\]

Combining them, we conclude that $u \in C([0,T];L^p(\BR^2))$.
\end{proof}
\end{lem}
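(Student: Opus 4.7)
The plan is to exploit the fixed-point representation \eqref{eqn: fixed pt equation for u}, namely
\[
u(t) = u_X^\nu(t) + e^{\nu t\Delta}u_0 + B_\nu[u](t),
\]
and prove $L^p$-time-continuity of each of the three summands separately on $[0,T]$. The easy piece is $e^{\nu t\Delta}u_0$: since $u_0\in L^p(\BR^2)$ with $p\in(2,\infty)$ and the heat semigroup is strongly continuous on $L^p$, this contribution lies in $C([0,T];L^p(\BR^2))$ with no further work.

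For $u_X^\nu(t)$ I would treat the endpoint $t=0$ and interior times $t_0\in(0,T]$ separately. At $t=0$ we have $u_X^\nu(0)=0$, and the $L^p$ estimate in Lemma \ref{lem: estimate for u_X^nu} (the first one, with the factor $(\nu t)^{1/(2p)}$) vanishes as $t\to 0^+$, yielding right-continuity there. At an interior $t_0$, I would interpolate: Lemma \ref{lem: time Holder continuity of u_X^nu} controls $\|u_X^\nu(t_1)-u_X^\nu(t_2)\|_{L^\infty_x}$ by a power of $|t_1-t_2|$ (with a $t_1^{-\gamma}$ weight that is bounded locally away from $0$), while Lemma \ref{lem: estimate for u_X^nu} furnishes a uniform $L^q$ bound for some $q<p$ (e.g.\ $q=2$, using $\|X'\|_{L^\infty_t L^2_s}<\infty$). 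The standard interpolation $\|\cdot\|_{L^p}\le \|\cdot\|_{L^q}^{q/p}\|\cdot\|_{L^\infty}^{1-q/p}$ then gives a H\"older modulus in $L^p$ on compact subintervals of $(0,T]$, hence continuity.

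For $B_\nu[u](t)$ I would proceed analogously. Continuity at $t=0$ follows directly from \eqref{eqn: L p estimate for B}: the right-hand side is bounded by $C\nu^{-1}(\nu t)^{1/p-1/2}\|u\|_{L^\infty_t L^p_x}^2$, which, in view of the definition of mild solutions and Lemma \ref{lem: L inf regularity of u mild solution}, scales like $t^{1/2-1/p}\to 0$. For interior continuity, Lemma \ref{lem: time Holder continuity of B_u} gives a H\"older-in-$t$ bound in $L^\infty_x$ with coefficients depending on $Q$ and the quantities appearing in \eqref{eqn: L inf regularity of u mild solution}; interpolating against an $L^{p/2}$ bound of $B_\nu[u]$ obtained from \eqref{eqn: L q estimate for B general} (with $r=p/2$) produces the required $L^p$ modulus of continuity on $[t_0/2,T]$ for any $t_0>0$.

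The main obstacle is the bookkeeping of time-weights when interpolating the $L^\infty_x$ Hölder bounds against an auxiliary $L^q$ bound, since the Hölder constants blow up as $t\to 0$; one must choose the companion exponent $q$ so that the resulting $L^p$ modulus is finite on any compact subinterval of $(0,T]$ and shrinks as $|t_1-t_2|\to 0$ uniformly on such subintervals. Once this is done for $u_X^\nu$ and $B_\nu[u]$, summing the three contributions gives $u\in C([0,T];L^p(\BR^2))$.
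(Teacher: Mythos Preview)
Your proposal is correct and follows essentially the same strategy as the paper: split $u(t)=u_X^\nu(t)+e^{\nu t\Delta}u_0+B_\nu[u](t)$, use strong continuity of the heat semigroup for the middle term, handle $t=0$ for $u_X^\nu$ and $B_\nu[u]$ via their decay estimates, and obtain interior $L^p$-continuity by interpolating the $L^\infty_x$ time-H\"older bounds (Lemmas~\ref{lem: time Holder continuity of u_X^nu} and~\ref{lem: time Holder continuity of B_u}) against auxiliary $L^q$ bounds with $q<p$ (the paper uses exactly your suggested exponents $q=2$ and $q=p/2$). One small correction: in your argument for $B_\nu[u]$ at $t=0$, the exponent on $(\nu t)$ after bounding the supremum should be $\tfrac12-\tfrac1p$, not $\tfrac1p-\tfrac12$, and this follows simply from $u\in L^\infty([0,T];L^p)$ (no appeal to Lemma~\ref{lem: L inf regularity of u mild solution} is needed).
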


\begin{lem}
\label{lem: time derivative and equation}
For any $t\in (0,T)$, $\pa_t X(s,t)$ is well-defined pointwise, and the equation
\beq
\pa_t X(s,t) = -\f14\Lam X(s,t) + W[u,X;\nu,u_0](s,t) = u(X(s,t),t)
\label{eqn: differential equation of X in the mild solution}
\eeq
holds.
As a result,
\[
\sup_{\eta\in (0,T)} \eta^{\f1{p}} \|\pa_t X(\eta)\|_{L_s^\infty(\BT)}
\leq C(T,p,\nu,\lam,M,Q).
\]
\begin{proof}
Proceeding as in \eqref{eqn: spatial Holder estimate for v}, we obtain that, for any $t\in (0,T]$,
\beq
\begin{split}
&\;  \|u(t)\|_{\dot{C}_x^{1/p}(\BR^2)}\\
\leq &\; \|u_X^\nu(t)\|_{\dot{C}_x^{1/p}(\BR^2)}
+ \|e^{\nu t\D}u_0\|_{\dot{C}_x^{1/p}(\BR^2)} + \|B_\nu[u](t)\|_{\dot{C}_x^{1/p}(\BR^2)} \\
\leq &\; C \lam^{-1-\f1p} t^{-\f1p} \|X'\|_{L^\infty_t L_s^{\infty}}
\sup_{\eta\in (0,t]} \eta^{\f1p} \|X'(\eta)\|_{\dot{C}_s^{1/p}}
+ C (\nu t)^{-\f3{2p}} \|u_0\|_{L_x^p}
\\
&\; + C \nu^{-1}(\nu t)^{-(1+\f1p)/2}
\left(\sup_{\eta\in (0,t]} (\nu \eta)^{\f12-\f1p} \|u(\eta)\|_{L^p_x}+\sup_{\eta\in (0,t]}(\nu \eta)^{1/2}\|u(\eta)\|_{L^\infty_x}\right)^2
\\
\leq &\; C \lam^{-1-\f1p} t^{-\f1p} M^2
+ C (\nu t)^{-\f3{2p}} Q
+ C \nu^{-1} (\nu t)^{\f12-\f5{2p}}
\left(Q + \sup_{\eta\in (0,t]}(\nu \eta)^{\f1p}\|u(\eta)\|_{L^\infty_x}\right)^2.
\end{split}
\label{eqn: C 1/p estimate for u}
\eeq
Thanks to Lemma \ref{lem: L inf regularity of u mild solution} and the fact $p>2$,
\beq
\sup_{\eta\in (0,T]} \eta^{\f{3}{2p}} \|u(\eta)\|_{\dot{C}_x^{1/p}}
\leq C(T, p,\nu,\lam,M,Q)
< +\infty.
\label{eqn: a singular Holder bound for u}
\eeq
Interpolating this with the $L^p$-time-continuity of $u$ due to Lemma \ref{lem: time continuity of u in L^p}, we know that, for any $\b\in [0,\f1p)$, $t\mapsto u(x,t)$ is continuous in $(0,T]$ in the $C^\b_x(\BR^2)$-topology.

The rest of the argument is similar to that right after \eqref{eqn: alternative expression of W}.
From the $C^1_s(\BT)$-time-continuity of $t\mapsto X(s,t)$ as well as the local $C^{1/p}_s(\BT)$-bound of $X(s,t)$, one can deduce that $t\mapsto X(t)$ is continuous in $(0,T]$ in the $C^\b_s(\BT)$-topology for any $\b\in [0,\f1p)$.
Observe that (cf.\;\eqref{eqn: alternative expression of W})
\beqo
W[u,X] = u\circ X +\f14 \Lam X,
\eeqo
so \eqref{eqn: a singular Holder bound for u}, Lemma \ref{lem: L inf regularity of u mild solution}, and the bound for $X$ imply that
\[
\sup_{\eta\in (0,T]} \eta^{\f{1}{p}} \|W[u,X](\eta)\|_{L_s^\infty} < +\infty,
\quad
\sup_{\eta\in (0,T]} \eta^{\f{3}{2p}} \|W[u,X](\eta)\|_{\dot{C}_s^{1/p}} < +\infty,
\]
Since $p>2$, by Lemma \ref{lem: parabolic estimates for Duhamel term of fractional Laplace},
\[
\sup_{\eta\in (0,T]} \big\|\CI[W](\eta)\big\|_{L_s^\infty}
< +\infty,
\quad
\sup_{\eta\in (0,T]} \eta^{\f{3}{2p}} \big\|\CI[W](\eta)\big\|_{\dot{C}_s^{1,\f{1}{p}}} < +\infty.
\]
Moreover, the regularity and time-continuity of $u$ and $X$ implies that $t\mapsto W[u,X]$ is continuous in the $C_s(\BT)$-topology in $(0,T]$.
Hence, by Lemma \ref{lem: parabolic estimates for Duhamel term of fractional Laplace} again, $\pa_t \CI [W](t)$ is well-defined pointwise for all $t\in (0,T)$, and
\[
\pa_t \CI[W](t) = -\f14\Lam \CI[W](t) + W[u,X](t).
\]
Then the desired claims follow from the representation \eqref{eqn: fixed pt equation for X} of $X$.

Lastly, the $L^\infty$-bound for $\pa_t X$ follows from Lemma \ref{lem: L inf regularity of u mild solution} and Lemma \ref{lem: Holder estimate for composition of functions}:
\[
\sup_{\eta\in (0,T)} \eta^{\f1{p}} \|\pa_t X(\eta)\|_{L_s^\infty(\BT)}
\leq
\sup_{\eta\in (0,T)} \eta^{\f1{p}} \|u(\eta)\|_{L_x^\infty}
\leq C(T,p,\nu,\lam,M,Q).
\]
\end{proof}
\begin{rmk}
For future use, we remark that, by Lemma \ref{lem: Holder estimate for composition of functions} and \eqref{eqn: a singular Holder bound for u}, one can similarly show that
\beq
\begin{split}
\sup_{\eta\in (0,T)} \eta^{\f3{2p}} \|\pa_t X(\eta)\|_{C_s^{1/p}(\BT)}
\leq &\;
\sup_{\eta\in (0,T)} \eta^{\f3{2p}}
\left(\|u(\eta)\|_{L_x^\infty}
+ \|u(\eta)\|_{\dot{C}_x^{1/p}} \|X'(\eta)\|_{L^\infty_s}^{1/p}\right)\\
\leq &\; C(T, p,\nu,\lam,M,Q).
\end{split}
\label{eqn: C 1/p estimate for X_t}
\eeq
\end{rmk}
\end{lem}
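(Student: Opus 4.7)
The plan is to exploit the identity
\[
W[u,X;\nu,u_0](s,t) \;=\; u(X(s,t),t) + \tfrac{1}{4}\Lam X(s,t),
\]
which follows by combining the definitions of $u_X$, $u_X^\nu$, $h_X^\nu$, and $g_X$ with the mild formulation \eqref{eqn: fixed pt equation for u}; see \eqref{eqn: alternative expression of W}. This lets us reduce all regularity questions about $W$ to regularity questions about $u$ composed with $X$, after which the pointwise time derivative of $X$ is read off from \eqref{eqn: fixed pt equation for X} by applying the parabolic estimates for $\CI$.

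First, I would quantify the spatial H\"older regularity of $u$. Splitting $u$ via \eqref{eqn: fixed pt equation for u} and invoking the $\dot{C}^{1/p}$-estimate for $u_X^\nu$ from Lemma \ref{lem: estimate for u_X^nu}, the standard parabolic estimate for $e^{\nu t\D}u_0$, and the $\dot{C}^{1/p}$-estimate for $B_\nu[u]$ from \eqref{eqn: 6th estimate for B new form general gamma} (fed with the $L^\infty$-bound already supplied by Lemma \ref{lem: L inf regularity of u mild solution}), I expect a bound of the form
\[
\sup_{\eta\in (0,T]}\eta^{3/(2p)}\|u(\eta)\|_{\dot C^{1/p}_x(\BR^2)} \;\le\; C(T,p,\nu,\lam,M,Q).
\]
Interpolating this against the $L^p$-time-continuity from Lemma \ref{lem: time continuity of u in L^p} gives that $t\mapsto u(\cdot,t)$ is continuous on $(0,T]$ in the $C^\b_x$-topology for every $\b\in[0,1/p)$. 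A parallel argument—combining the $C^1_s$-time-continuity of $X$ built into Definition \ref{def: mild solution} with the local $\dot{C}^{1/p}_s$-bound on $X'$—gives $t\mapsto X(t)$ continuous in $C^\b_s$ for the same range, so $t\mapsto u(X(t),t)$ is continuous in $C_s(\BT)$.

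Next, using $W=u\circ X+\tfrac14\Lam X$ together with the composition H\"older estimate (Lemma \ref{lem: Holder estimate for composition of functions}) and the bounds just obtained, I would deduce
\[
\sup_{\eta\in(0,T]}\eta^{1/p}\|W(\eta)\|_{L^\infty_s(\BT)} + \sup_{\eta\in(0,T]}\eta^{3/(2p)}\|W(\eta)\|_{\dot C^{1/p}_s(\BT)} \;<\; +\infty,
\]
and that $t\mapsto W(t)$ is continuous in $C_s(\BT)$ on $(0,T]$. Since $p>2$ implies $3/(2p)<1$, these weights are integrable, so the parabolic estimates for the fractional heat semigroup (Lemma \ref{lem: parabolic estimates for Duhamel term of fractional Laplace}) apply: $\pa_t \CI[W](t)$ exists pointwise on $(0,T)$ and satisfies $\pa_t \CI[W] = -\tfrac14\Lam \CI[W] + W$. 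The linear part $e^{-t\Lam/4}X_0$ is smooth in $t$ for $t>0$ with $\pa_t e^{-t\Lam/4}X_0 = -\tfrac14\Lam e^{-t\Lam/4}X_0$, so \eqref{eqn: fixed pt equation for X} yields \eqref{eqn: differential equation of X in the mild solution}. The quantitative $L^\infty$-bound on $\pa_t X$ then follows immediately from $\pa_t X = u\circ X$ and Lemma \ref{lem: L inf regularity of u mild solution}.

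The main obstacle is selecting the right weighted H\"older norm for $u$ and $W$: one needs a H\"older exponent strictly below $1/p$ so that the $C^{1/p}_s$ bound on $X$ can absorb the spatial H\"older gain of $u$ (since $u$ is only critical in $L^p$, any strictly higher spatial regularity costs an additional factor of $t^{-1/(2p)}$ from the heat semigroup), while still arriving at a time-weight whose exponent is $<1$, so that $\CI[W]$ is differentiable. The choice $\b=1/p$ with weight $\eta^{3/(2p)}$ is tight: it balances the $t^{-1/p}$ from the $L^\infty$-bound against the extra $t^{-1/(2p)}$ from gaining $1/p$ spatial derivatives, and it is exactly the constraint $p>2$ that makes $3/(2p)<1$ and thus allows the pointwise time derivative to exist despite the singularity at $\tau=0$.
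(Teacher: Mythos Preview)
Your proposal is correct and follows essentially the same approach as the paper's proof: both derive the weighted $\dot C^{1/p}_x$-bound $\sup_\eta \eta^{3/(2p)}\|u(\eta)\|_{\dot C^{1/p}_x}<\infty$ from the decomposition \eqref{eqn: fixed pt equation for u}, interpolate against Lemma~\ref{lem: time continuity of u in L^p} for time-continuity, use the identity $W=u\circ X+\tfrac14\Lam X$ to transfer this to bounds and continuity for $W$, and then invoke the third part of Lemma~\ref{lem: parabolic estimates for Duhamel term of fractional Laplace} to differentiate $\CI[W]$. Your closing remark on why the weight $\eta^{3/(2p)}$ is exactly tight under the constraint $p>2$ is a nice explicit observation that the paper leaves implicit.
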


In the next lemma, we shall prove that the mild solution $(u,X)$ admits improved regularity than what is dictated in Definition \ref{def: mild solution}.

\begin{lem}
\label{lem: C gamma estimate for mild solutions}
For any $\g\in [\f2p,1)$,
\begin{align*}
&\; \sup_{\eta\in (0,T]}\eta^\g
\|u(\eta)\|_{\dot{C}^{\g}_x(\BR^2)}
+ \sup_{\eta\in (0,T]}\eta^\g\|X'(\eta)\|_{C^{\g}_s(\BT)}
+ \sup_{\eta\in (0,T)}\eta^\g\|\pa_t X(\eta)\|_{C_s^{\g}(\BT)}\\
\leq &\; C(T, \g, p,\nu,\lam,M,Q).
\end{align*}
\begin{proof}

We first bound a good part of the representation \eqref{eqn: fixed pt equation for X}.
Thanks to \eqref{eqn: 6th estimate for B new form general gamma} in Lemma \ref{lem: estimate for B_nu u} and Lemma \ref{lem: L inf regularity of u mild solution}, for any $\b\in [\f2p,1)$, and any $t\in (0,T]$,
\beq
\begin{split}
&\; \big\|e^{\nu t\D}u_0+B_{\nu}[u](t)\big\|_{\dot{C}_x^{\b}(\BR^2)}
\\
\leq &\; C (\nu t)^{-\f1p-\f{\b}{2}} \|u_0\|_{L^p_x}
+ \nu^{-1}(\nu t)^{\f{1-\b}{2}-\f2p}
\left(\|u\|_{L^\infty_T L^p_x}+\sup_{\tau\in (0,T]}(\nu \tau)^{\f1p}\|u(\tau)\|_{L^\infty_x}\right)^2
\\
\leq &\; C(T,\b, p,\nu,\lam,M,Q)
\left(t^{-\f1p-\f{\b}{2}}
+ t^{\f{1-\b}{2}-\f2p}\right).
\end{split}
\label{eqn: C gamma estimate for good part of W}
\eeq
Hence, by Lemma \ref{lem: parabolic estimates for Duhamel term of fractional Laplace} and Lemma \ref{lem: Holder estimate for composition of functions},
\begin{align*}
&\; \sup_{\eta\in (0,T]} \eta^\b \left\| \CI\Big[\big(e^{\nu\eta\D}u_0+B_{\nu}[u]\big) \circ X(\eta)\Big]\right\|_{\dot{C}^{1,\b}_s(\BT)}
\\
\leq &\; C\sup_{\eta\in (0,T]} \eta^\b \Big\|\big(e^{\nu \eta \D}u_0+B_{\nu}[u]\big) \circ X(\eta)\Big\|_{\dot{C}^{\b}_s(\BT)}
\\
\leq &\; C \sup_{\eta\in (0,T]} \eta^\b \big\|e^{\nu \eta\D}u_0+B_{\nu}[u](\eta)\big\|_{\dot{C}_x^{\b}} \|X'\|_{L^\infty_{T} L_s^\infty}^\b
\\
\leq &\; C(T,\b, p,\nu,\lam,M,Q).
\end{align*}

Next we handle the other terms in \eqref{eqn: fixed pt equation for X}.
By Lemma \ref{lem: improved estimates for g_X}, for any $\b\in (0,1)$,
\[
\big\|g_{X}(t)\big\|_{\dot{C}_s^{\b}(\BT)}\\
\leq C \lam^{-2} \|X'(t)\|_{L_s^\infty} \|X'(t)\|_{\dot{C}_s^{\b/2}}^2.
\]
By \eqref{eqn: Holder estimate for h} in Lemma \ref{lem: estimate for h_X^nu}, with the parameter there chosen as
\[
(\al,\g,\g',\mu_0,\mu,\mu'):=\left(\b,\, \f{\b}{2},\, \f{\b}{2},\, \f1p,\, \f\b2+\f1p,\, \f\b2+\f1p\right),
\]
we derive that
\begin{align*}
&\; \|H_X^\nu(t)\|_{\dot{C}_s^{\b}(\BT)}
\leq \|h_X^\nu(t)\|_{\dot{C}_x^{\b}(\BR^2)} \| X'(t)\|_{L_s^\infty(\BT)}^\b
\\
\leq &\; C (\nu t)^{-\f{\b}{2}} \lam^{-1} t^{-\f{\b}{2p}} \|X'\|_{L^\infty_t L_s^\infty}
\\
&\;\cdot \left(\sup_{\eta\in (0,t]}\eta^{\f{\b}{2}} \|X'(\eta)\|_{C_s^{\b/2}}\right)^{1-\f{\b}{2}} \left(\sup_{\eta\in (0,t]}\eta^{\f{\b}{2}+\f1p}\|\pa_t X(\eta)\|_{\dot{C}_s^{\b/2}}\right)^{\f{\b}{2}} \cdot M^\b \\
&\; + C (\nu t)^{-\f{1}{2}-\f\b4}
\min\left(\lam^{-\f{\b}{2}},\, (\nu t)^{-\f{\b}4} \right)\lam^{-1}
\\
&\;\quad \cdot  \left(t^{1-\f{\b}{2}-\f1p} \|X'\|_{L^\infty_t L_s^\infty}
\sup_{\eta\in (0,t]}\eta^{\f{\b}{2}} \|X'(\eta)\|_{C_s^{\b/2}} \sup_{\eta\in (0,t]}\eta^{\f1p}\|\pa_t X(\eta)\|_{L_s^\infty} \right.\\
&\;\qquad \left.+ t^{1-\f{\b}{2}-\f1p}\|X'\|_{L^\infty_t L_s^\infty}^2 \sup_{\eta\in (0,t]}\eta^{\f{\b}{2}+\f1p}\|\pa_t X(\eta)\|_{\dot{C}_s^{\b/2}} \right)\cdot M^\b
\\
\leq &\; C t^{-\f{\b}{2}-\f{\b}{2p}} \nu^{-\f{\b}{2}} \lam^{-1} M^{1+\b}
\left(\sup_{\eta\in (0,t]}\eta^{\f{\b}{2}} \|X'(\eta)\|_{C_s^{\b/2}}+\sup_{\eta\in (0,t]}\eta^{\f{\b}{2}+\f1p}\|\pa_t X(\eta)\|_{\dot{C}_s^{\b/2}}\right) \\
&\; + C(T,p,\nu,\lam,M,Q) \cdot
t^{\f{1}{2}-\f1p-\f{3\b}{4}}
\nu^{-\f{1}{2}-\f\b4} \lam^{-1-\f{\b}{2}}
\\
&\;\quad \cdot  \left(
\sup_{\eta\in (0,t]}\eta^{\f{\b}{2}} \|X'(\eta)\|_{C_s^{\b/2}}
+ \sup_{\eta\in (0,t]}\eta^{\f{\b}{2}+\f1p}\|\pa_t X(\eta)\|_{\dot{C}_s^{\b/2}} \right)\cdot M^\b,
\end{align*}
where $C$ depends on $\b$ and $p$.
By Lemma \ref{lem: estimate for u_X} and Lemma \ref{lem: parabolic estimates}, for any $\b\in (0,1)$,
\begin{align*}
\big\|e^{\nu t\D}u_{X(t)}\circ X(t)\big\|_{\dot{C}_s^\b(\BT)}
\leq &\; \big\|e^{\nu t\D}u_{X(t)}\big\|_{\dot{C}_x^\b} \|X'\|_{L^\infty_{T} L^\infty_s}^\b
\\
\leq &\; C(\nu t)^{-\f{\b}4} \|u_{X(t)}\|_{\dot{C}_x^{\b/2}} M^\b\\
\leq &\; C t^{-\f{3\b}4} \nu^{-\f{\b}4} \cdot \lam^{-1-\f\b2} M^{1+\b} \sup_{\eta\in (0,T]} \eta^{\b/2}\|X'(\eta)\|_{\dot{C}^{\b/2}_s}.
\end{align*}

Now we take $\b = \f2p$.
Then Lemma \ref{lem: parabolic estimates for Duhamel term of fractional Laplace} and the estimates above (also recall \eqref{eqn: C 1/p estimate for X_t}) give that
\begin{align*}
&\; \sup_{\eta\in (0,T]}\eta^{\f2p} \Big\|\CI[g_X+H_X^\nu - e^{\nu \eta\D}u_{X(\eta)}\circ X(\eta)\big]\Big\|_{\dot{C}^{1,2/p}_s(\BT)}\\
\leq &\;  C\sup_{\eta\in (0,T]}\eta^{\f2p}
\big\|g_X(\eta)+H_X^\nu(\eta)-e^{\nu \eta\D}u_{X(\eta)}\circ X(\eta)\big\|_{\dot{C}^{2/p}_s}\\
\leq &\; C(T, p,\nu,\lam,M,Q).
\end{align*}
Therefore,
\begin{align*}
\sup_{\eta\in (0,T]}\eta^{\f2p} \|X'(\eta)\|_{C^{2/p}_s}
\leq &\; C\|X_0'\|_{L^\infty_s} + \sup_{\eta\in (0,T]} \eta^{\f2p} \left\| \CI\Big[\big(e^{\nu\eta\D}u_0+B_{\nu}[u]\big) \circ X(\eta)\Big]\right\|_{\dot{C}^{1,2/p}_s}\\
&\; + \sup_{\eta\in (0,T]}\eta^{\f2p} \Big\|\CI[g_X+H_X^\nu - e^{\nu \eta\D}u_{X(\eta)}\circ X(\eta)\big]\Big\|_{\dot{C}^{1,2/p}_s}
\\
\leq &\; C(T, p,\nu,\lam,M,Q).
\end{align*}
By Lemma \ref{lem: time derivative and equation}, \eqref{eqn: C gamma estimate for good part of W}, as well as the argument in \eqref{eqn: C 1/p estimate for u} and \eqref{eqn: a singular Holder bound for u}, we can further show that
\[
\sup_{\eta\in (0,T]} \eta^{\f2p} \|u\|_{\dot{C}^{2/p}_x(\BR^2)}
\leq C(T, p,\nu,\lam,M,Q),
\]
and thus
\[
\sup_{\eta\in (0,T)} \eta^{\f2p}\|\pa_t X(\eta)\|_{C_s^{2/p}(\BT)}
\leq C(T, p,\nu,\lam,M,Q).
\]

We may bootstrap with this improved regularity.
For instance, now we can take arbitrary $\b\in (\f2p,\f4p)\cap (0,1)$ and apply the preceding estimates once again.
For any fixed $\g\in [\f2p,1)$, it suffices to repeat the above argument for finitely many times to reach the desired conclusion.
\end{proof}
\end{lem}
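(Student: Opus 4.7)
The plan is to bootstrap from the $\dot{C}^{1/p}_s$ control on $X'$ and $\partial_t X$ that is already guaranteed by Definition \ref{def: mild solution} and Lemma \ref{lem: time derivative and equation} (together with \eqref{eqn: C 1/p estimate for X_t}). Because all three quantities $u$, $X'$, and $\partial_t X$ are coupled, I would improve their spatial H\"older regularity simultaneously, in small exponent steps, using the representations \eqref{eqn: fixed pt equation for u}--\eqref{eqn: fixed pt equation for X} together with the a priori estimates collected in Section \ref{sec: a priori estimates}.

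First I would isolate the two pieces of \eqref{eqn: fixed pt equation for u} whose $\dot{C}^\beta_x$-estimates do \emph{not} require any H\"older information on $X'$: the heat propagation $e^{\nu t\Delta}u_0$ (by Lemma \ref{lem: parabolic estimates}) and the bilinear term $B_\nu[u](t)$, for which \eqref{eqn: 6th estimate for B new form general gamma} in Lemma \ref{lem: estimate for B_nu u} expresses $\|B_\nu[u](t)\|_{\dot{C}^\beta_x}$ in terms of $\|u\|_{L^\infty_T L^p_x}$ and $\sup(\nu\eta)^{1/p}\|u(\eta)\|_{L^\infty_x}$, both already controlled by Lemma \ref{lem: L inf regularity of u mild solution}. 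Composing this bound with $X(t)$ via Lemma \ref{lem: Holder estimate for composition of functions} and applying the semigroup via Lemma \ref{lem: parabolic estimates for Duhamel term of fractional Laplace}, this part of $\mathcal{I}[W[u,X;\nu,u_0]]$ is seen to be in $\dot{C}^{1,\beta}_s$ with the expected weight $\eta^\beta$, for any $\beta\in[2/p,1)$, and with constants depending only on $T,p,\nu,\lambda,M,Q$.

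The nontrivial terms are $g_X$, $H_X^\nu$, and $e^{\nu t\Delta}u_{X(t)}\circ X(t)$. For $g_X$ I would use Lemma \ref{lem: improved estimates for g_X}, whose H\"older estimates are \emph{quadratic} in $\|X'\|_{\dot{C}^{\beta/2}_s}$; for $e^{\nu t\Delta}u_{X(t)}\circ X(t)$ I would use Lemma \ref{lem: estimate for u_X} with the smoothing of the heat semigroup; and for $H_X^\nu$ I would use the H\"older estimate \eqref{eqn: Holder estimate for h} of Lemma \ref{lem: estimate for h_X^nu} with parameters chosen to match $\beta$, namely $(\gamma,\gamma',\mu_0,\mu,\mu')=(\beta/2,\beta/2,1/p,\beta/2+1/p,\beta/2+1/p)$. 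The key point is that each of these estimates involves only $\dot{C}^{\beta/2}$-seminorms of $X'$ and $\partial_t X$ rather than $\dot{C}^\beta$-seminorms. Inserting the $\dot{C}^{1/p}_s$ bounds from the hypotheses and the estimate \eqref{eqn: C 1/p estimate for X_t} for $\partial_t X$, and choosing $\beta=2/p$ so that $\beta/2=1/p$, I would apply Lemma \ref{lem: parabolic estimates for Duhamel term of fractional Laplace} to obtain $\sup_{\eta\in(0,T]}\eta^{2/p}\|X'(\eta)\|_{\dot{C}^{2/p}_s}\le C(T,p,\nu,\lambda,M,Q)$, and then feed this back through \eqref{eqn: fixed pt equation for u} and the equation $\partial_t X = u\circ X$ (Lemma \ref{lem: time derivative and equation}) to recover the corresponding bounds for $\|u(\eta)\|_{\dot{C}^{2/p}_x}$ and $\|\partial_t X(\eta)\|_{C^{2/p}_s}$.

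Having gained the $\dot{C}^{2/p}$ level, I would bootstrap: pick any $\beta\in(2/p,4/p)\cap(0,1)$ and repeat the previous step with $\beta/2\in(1/p,2/p)$, for which the required $\dot{C}^{\beta/2}$-bounds on $X'$ and $\partial_t X$ are now available. Iterating finitely many times (roughly $\log_2(\gamma p/2)$ rounds) reaches any prescribed $\gamma\in[2/p,1)$. The main technical obstacle is bookkeeping the time weights: one must verify at every stage that the singular factor on the right-hand side of each estimate is at worst $\eta^{-\beta}$, so that after applying $\mathcal{I}$ the improved bound is again of the form $\eta^{-\beta}$. This is exactly why the critical-scaling threshold $\gamma\ge 2/p$ appears (it matches the singularity $(\nu t)^{-1/p-\gamma/2}$ of $\|e^{\nu t\Delta}u_0\|_{\dot{C}^\gamma_x}$), and why the sharpness of Lemmas \ref{lem: improved estimates for g_X} and \ref{lem: estimate for h_X^nu} (in particular the quadratic dependence on the relevant H\"older seminorm rather than a linear dependence of higher order) is essential: a non-quadratic dependence would force the time weight to degrade by more than $\eta^{-\beta/2}$ per bootstrap step and the scheme would not close.
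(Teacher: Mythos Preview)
Your proposal is correct and follows essentially the same approach as the paper: you isolate the ``good'' part $e^{\nu t\Delta}u_0+B_\nu[u]$, then estimate $g_X$, $H_X^\nu$, and $e^{\nu t\Delta}u_{X(t)}\circ X(t)$ in $\dot C^\beta_s$ using only $\dot C^{\beta/2}$-control (via Lemmas \ref{lem: improved estimates for g_X}, \ref{lem: estimate for h_X^nu} with the parameter choice $(\alpha,\gamma,\gamma',\mu_0,\mu,\mu')=(\beta,\beta/2,\beta/2,1/p,\beta/2+1/p,\beta/2+1/p)$, and \ref{lem: estimate for u_X}), start the bootstrap at $\beta=2/p$, and double the exponent at each step. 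Your discussion of the time-weight bookkeeping and the role of the quadratic dependence in Lemma \ref{lem: improved estimates for g_X} is also on point.
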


\begin{lem}
\label{lem: time continuity of u in L inf}
For any $\g\in [\f2p,1)$, and any $0 < t_1< t_2\leq T$,
\[
t_1^\g \|u(t_1)-u(t_2)\|_{L^\infty_x}
\leq C(T, \g, p,\nu,\lam,M,Q) |t_1-t_2|^{\g/2}.
\]

\begin{proof}
One may argue as in \eqref{eqn: bound for time continuity for v prelim}, and then apply \eqref{eqn: def of M}-\eqref{eqn: def of Q}, Lemma \ref{lem: L inf regularity of u mild solution} and Lemma \ref{lem: C gamma estimate for mild solutions} to find that, for any $0 < t_1< t_2\leq T$,
\begin{align*}
&\;\|u(t_1)-u(t_2)\|_{L^\infty_x}\\
\leq &\;
C\big(\nu |t_1-t_2|\big)^{\g/2} t_1^{-\g} \cdot \lam^{-1-\g} \|X'\|_{L^\infty_T L_s^{\infty}} \sup_{\eta\in (0,T]} \eta^\g \|X'(\eta)\|_{\dot{C}_s^{\g}} \\
&\; + C(\nu|t_1-t_2|)^{\g/2} (\nu t_1)^{-\f1{p}-\f{\g}{2}} \|u_0\|_{L^p_x} \\
&\; +
C \nu^{-1} \big(\nu(t_2-t_1)\big)^{\g/2}(\nu t_1)^{-\f{1+\g}{2}}
\left(\sup_{\eta\in (0,t_1]} (\nu \eta)^{\f12-\f1p} \|u(\eta)\|_{L^p_x}+\sup_{\eta\in (0,t_1]}(\nu \eta)^{\f12}\|u(\eta)\|_{L^\infty_x}\right)^2\\
&\;
+ C \nu^{-1} \big(\nu(t_2-t_1)\big)^{\g/2}(\nu t_2)^{-\f{1+\g}{2}}
\left(\sup_{\eta\in (0,t_2]} (\nu \eta)^{\f12-\f1p} \|u(\eta)\|_{L^p_x}+\sup_{\eta\in (0,t_2]}(\nu \eta)^{\f12}\|u(\eta)\|_{L^\infty_x}\right)^2\\
\leq &\;
C(T, \g, p,\nu,\lam,M,Q) |t_1-t_2|^{\g/2} t_1^{-\g}.
\end{align*}
\end{proof}
\end{lem}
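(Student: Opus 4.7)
The plan is to prove Lemma~\ref{lem: time continuity of u in L inf} by exploiting the mild-solution representation \eqref{eqn: fixed pt equation for u}, which expresses $u$ as the sum of three pieces whose time-continuity is separately controlled by estimates already established in Section~\ref{sec: a priori estimates}. Namely, writing
\[
u(t_1)-u(t_2) \;=\; \big(u_X^\nu(t_1)-u_X^\nu(t_2)\big) \;+\; \big(e^{\nu t_1\Delta}-e^{\nu t_2\Delta}\big)u_0 \;+\; \big(B_\nu[u](t_1)-B_\nu[u](t_2)\big),
\]
it suffices to bound each difference in $L^\infty_x(\mathbb{R}^2)$ by a constant times $|t_1-t_2|^{\gamma/2}t_1^{-\gamma}$.

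For the first piece I would apply Lemma~\ref{lem: time Holder continuity of u_X^nu} directly; the required inputs $\|X'\|_{L^\infty_T L^\infty_s}\leq M$ and $\sup_{\eta\in(0,T]}\eta^\gamma\|X'(\eta)\|_{\dot{C}^\gamma_s}$ are controlled by \eqref{eqn: def of M} and Lemma~\ref{lem: C gamma estimate for mild solutions}. For the second piece, I would use the identity $e^{\nu t_2\Delta}-e^{\nu t_1\Delta}=(e^{\nu(t_2-t_1)\Delta}-\mathrm{Id})e^{\nu t_1\Delta}$ and apply the standard $L^p\to L^\infty$ and $L^p\to \dot{C}^\gamma_x$ parabolic estimates from Lemma~\ref{lem: parabolic estimates}, producing a factor $(\nu|t_1-t_2|)^{\gamma/2}(\nu t_1)^{-1/p-\gamma/2}\|u_0\|_{L^p_x}$, which is controlled by $Q$. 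For the third piece I would invoke Lemma~\ref{lem: time Holder continuity of B_u}, whose right-hand side involves exactly $\sup_{\eta\in(0,t_j]}(\nu\eta)^{1/2-1/p}\|u(\eta)\|_{L^p_x}$ and $\sup_{\eta\in(0,t_j]}(\nu\eta)^{1/2}\|u(\eta)\|_{L^\infty_x}$ (for $j=1,2$); these are bounded by $Q$ and by Lemma~\ref{lem: L inf regularity of u mild solution} (after absorbing the extra positive power of $\nu t$, using $p>2$).

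Combining these three bounds and multiplying through by $t_1^\gamma$, the worst exponent on $|t_1-t_2|$ is $\gamma/2$ and the worst $t_1$-weight cancels precisely with the $t_1^\gamma$ prefactor, yielding the claimed inequality with a constant depending only on $T,\gamma,p,\nu,\lambda,M,Q$. This mirrors the computation already done in \eqref{eqn: bound for time continuity for v prelim}–\eqref{eqn: bound for time continuity for v} for the fixed-point argument, but applied to the mild solution itself rather than to an element of $\mathcal{U}$.

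I expect no serious obstacle: the lemma is essentially a repackaging of the three time-continuity estimates of Section~\ref{sec: a priori estimates} together with the a priori bounds obtained earlier in Section~\ref{sec: regularity of mild solutions}. The only mild subtlety is making sure the $L^p$- and $L^\infty$-norms of $u$ that feed into Lemma~\ref{lem: time Holder continuity of B_u} are dominated by the quantities appearing on the right-hand side of the conclusion; this is exactly what Lemma~\ref{lem: L inf regularity of u mild solution} guarantees, so everything closes with no bootstrapping required.
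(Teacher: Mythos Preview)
Your proposal is correct and follows essentially the same approach as the paper: the paper's proof explicitly says to argue as in \eqref{eqn: bound for time continuity for v prelim}, which is precisely the three-term decomposition you describe, and then feeds in the bounds from \eqref{eqn: def of M}--\eqref{eqn: def of Q}, Lemma~\ref{lem: L inf regularity of u mild solution}, and Lemma~\ref{lem: C gamma estimate for mild solutions} exactly as you outline. Your identification of Lemmas~\ref{lem: time Holder continuity of u_X^nu}, \ref{lem: parabolic estimates}, and \ref{lem: time Holder continuity of B_u} as the relevant inputs for each piece, and your observation that the condition $\gamma\geq 2/p$ is what ensures the $t_1$-weights are no worse than $t_1^{-\gamma}$, match the paper's reasoning.
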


\begin{lem}
\label{lem: intermediate norms vanish as t goes to zero}
For any $\b\in (0,1)$,
$\lim_{t\to 0^+} t^{\b}\|X'(t)\|_{\dot{C}_s^\b(\BT)} = 0$.

\begin{proof}
Take an arbitrary $\g\in (\b,1)$.
By the time-continuity of $X$ and Lemma \ref{lem: C gamma estimate for mild solutions},
\[
\sup_{\eta\in (0,T]} \eta^{\g}\|X'(\eta)\|_{\dot{C}^\g_s} < +\infty,
\quad
\lim_{t\to 0^+}\|X'(t)-X_0'\|_{L^\infty_s} = 0.
\]
On the other hand, $X_0\in C^1(\BT)$ implies (see \eqref{eqn: time weighted higher order norm vanish at 0})
\[
\lim_{t\to 0^+}\|X_0' - e^{-\f{t}{4}\Lam}X_0'\|_{L_s^\infty} = 0,\quad
\lim_{t\to 0^+} t^{\b}\|e^{-\f{t}{4}\Lam}X_0'\|_{\dot{C}^\b_s} = 0.
\]
Hence,
\begin{align*}
&\;t^{\b}\|X'(t)\|_{\dot{C}^\b_s(\BT)}\\
\leq &\; t^{\b}\|e^{-\f{t}{4}\Lam}X_0'\|_{\dot{C}^\b_s}
+ t^{\b}\|X'(t) - e^{-\f{t}{4}\Lam}X_0'\|_{\dot{C}^\b_s}
\\
\leq &\; t^{\b}\|e^{-\f{t}{4}\Lam}X_0'\|_{\dot{C}^\b_s}
+ C\left(t^{\g}\|X'(t)\|_{\dot{C}^\g_s}
+ t^{\g}\|e^{-\f{t}{4}\Lam}X_0'\|_{\dot{C}^\g_s}\right)^{\f\b\g}
\|X'(t) - e^{-\f{t}{4}\Lam}X_0'\|_{L_s^\infty}^{1-\f\b\g}
\\
\leq &\; t^{\b}\|e^{-\f{t}{4}\Lam}X_0'\|_{\dot{C}^\b_s}
+ C\left(t^{\g}\|X'(t)\|_{\dot{C}^\g_s}
+ \|X_0'\|_{L_s^\infty}\right)^{\f\b\g}
\left(\|X'(t) - X_0'\|_{L_s^\infty}
+ \|X_0' - e^{-\f{t}{4}\Lam}X_0'\|_{L_s^\infty}\right)^{1-\f\b\g},
\end{align*}
which converges to $0$ as $t\to 0^+$.
\end{proof}
\end{lem}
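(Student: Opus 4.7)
\medskip

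\noindent\textbf{Proof proposal.}
The plan is to split $X'(t)$ into the contribution of the linear fractional heat flow applied to $X_0'$ and a remainder, and to control the two pieces by different mechanisms: a semigroup smoothing argument that exploits $X_0\in C^1(\BT)$ for the linear part, and an interpolation argument that trades the basic (critical) regularity bound against a quantitative subcritical bound for the remainder.

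First I would fix an auxiliary H\"older exponent $\g\in (\b,1)$ with $\g\ge \f2p$ (which is possible since $p\in (2,\infty)$ and $\b<1$). By Lemma \ref{lem: C gamma estimate for mild solutions} we then have
\beqo
\sup_{\eta\in (0,T]}\eta^{\g}\|X'(\eta)\|_{\dot{C}_s^\g(\BT)}\le C(T,\g,p,\nu,\lam,M,Q)<+\infty,
\eeqo
while Definition \ref{def: mild solution} and the continuity in $C^1(\BT)$ give $\|X'(t)-X_0'\|_{L_s^\infty}\to 0$ as $t\to 0^+$. Next, I would split
\beqo
X'(t)= e^{-\f{t}{4}\Lam}X_0' + \bigl(X'(t)-e^{-\f{t}{4}\Lam}X_0'\bigr),
\eeqo
and bound $t^\b\|X'(t)\|_{\dot{C}_s^\b}$ by the corresponding two pieces.

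For the linear piece, the computation in \eqref{eqn: time weighted higher order norm vanish at 0} (applied to $X_0'$ rather than $X_0$) gives $\lim_{t\to 0^+}t^{\b}\|e^{-\f{t}{4}\Lam}X_0'\|_{\dot{C}_s^\b(\BT)}=0$; this only uses the fact that $X_0'\in C(\BT)$ together with the fractional-heat smoothing estimate (a parabolic estimate collected in Appendix A of the paper). For the remainder, I would apply the standard H\"older interpolation
\beqo
\|f\|_{\dot{C}_s^\b}\le C\,\|f\|_{L_s^\infty}^{1-\b/\g}\,\|f\|_{\dot{C}_s^\g}^{\b/\g}
\eeqo
with $f=X'(t)-e^{-\f{t}{4}\Lam}X_0'$. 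The $\dot{C}_s^\g$-factor is bounded by $(t^\g\|X'(t)\|_{\dot{C}^\g}+t^\g\|e^{-\f{t}{4}\Lam}X_0'\|_{\dot{C}^\g})\cdot t^{-\g}$, which is at most $C t^{-\g}$, and the $L_s^\infty$-factor is bounded by $\|X'(t)-X_0'\|_{L_s^\infty}+\|X_0'-e^{-\f{t}{4}\Lam}X_0'\|_{L_s^\infty}$, which tends to $0$. Multiplying by $t^\b$ and using the relation $\b-\g\cdot(\b/\g)=0$, the singular factor $t^{-\g\b/\g}=t^{-\b}$ cancels exactly with $t^\b$, so the remainder contribution is bounded by a constant times a quantity that vanishes as $t\to 0^+$.

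The main potential obstacle is ensuring that the linear-piece vanishing $t^\b\|e^{-\f{t}{4}\Lam}X_0'\|_{\dot{C}^\b}\to 0$ truly follows from $X_0'\in C(\BT)$ alone; the argument of \eqref{eqn: time weighted higher order norm vanish at 0} handles this by approximating $X_0'$ by a slightly smoothed version (replacing $t$ by $\sqrt{t}$ in the semigroup) and using uniform continuity of $X_0'$, so this step is straightforward given the tools already established. Combining the two estimates yields $\lim_{t\to 0^+}t^\b\|X'(t)\|_{\dot{C}_s^\b(\BT)}=0$, as desired.
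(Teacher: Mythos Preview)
Your proposal is correct and follows essentially the same approach as the paper's proof: split $X'(t)$ into $e^{-\f{t}{4}\Lam}X_0'$ plus a remainder, handle the linear piece via \eqref{eqn: time weighted higher order norm vanish at 0}, and control the remainder by interpolating between the $L^\infty_s$-convergence to $X_0'$ and the subcritical $\dot{C}^\g_s$-bound from Lemma \ref{lem: C gamma estimate for mild solutions}. Your explicit note that one must take $\g\ge \f2p$ to invoke that lemma is a minor clarification the paper leaves implicit.
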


\subsection{Uniqueness and continuation of the mild solutions}
\label{sec: uniqueness and continuation}

Now we prove the uniqueness of the mild solutions.

\begin{prop}
\label{prop: uniqueness of mild solution}
Let $u_0 = u_0(x)\in L^p(\BR^2)$ for some $p\in (2,\infty)$ and $X_0 = X_0(s)\in C^1(\BT)$, which satisfy $\di u_0 = 0$ and $|X_0|_* >0$.
Suppose that, for some $T>0$, $(u_1,X_1)$ and $(u_2,X_2)$ are two mild solutions to \eqref{eqn: NS equation}-\eqref{eqn: initial data} on $[0,T]$ in the sense of Definition \ref{def: mild solution}.
Then $u_1(t)\equiv u_2(t)$ and $X_1 (t)\equiv X_2(t)$ for all $t\in [0,T]$.

\begin{rmk}
As a corollary, the solution constructed in the preceding subsections does not depend on the choice of $\g$, and it is the unique mild solution.
\end{rmk}

\begin{proof}
Fix $\g \in [\f2p,1)\cap (1-\f2p,1)$.
By virtue of Lemma \ref{lem: L inf regularity of u mild solution}, Lemma \ref{lem: time derivative and equation} and Lemma \ref{lem: C gamma estimate for mild solutions}, we can denote
\begin{align*}
M_* := &\; \sup_{\eta\in [0,T]} \|X_1'(\eta)\|_{L^\infty_s}
+ \sup_{\eta\in (0,T]}\eta^{\g} \|X_1'(\eta)\|_{\dot{C}^{\g}_s} \\
&\;
+ \sup_{\eta\in [0,T]} \|X_2'(\eta)\|_{L^\infty_s}
+ \sup_{\eta\in (0,T]}\eta^{\g} \|X_2'(\eta)\|_{\dot{C}^{\g}_s}
< +\infty,
\end{align*}
\[
\lam_*: = \min\left\{\inf_{t\in [0,T]}|X_1(t)|_*,\,
\inf_{t\in [0,T]}|X_2(t)|_*\right\}>0,
\]
\begin{align*}
N_*:=&\;
\sup_{\eta\in (0,T)}\eta^{\f1p}\|\pa_t X_1(\eta)\|_{L_s^\infty}
 + \sup_{\eta\in (0,T)} \eta^{\g} \|\pa_t X_1(\eta)\|_{C_s^{\g}}\\
&\;+ \sup_{\eta\in (0,T)}\eta^{\f1p}\|\pa_t X_2(\eta)\|_{L_s^\infty}
+ \sup_{\eta\in (0,T)} \eta^{\g} \|\pa_t X_2(\eta)\|_{C_s^{\g}}
< +\infty.
\end{align*}
and
\begin{align*}
Q_*:= &\; \sup_{\eta\in [0,T]} \|u_1(\eta)\|_{L^p_x}
+ \sup_{\eta\in (0,T]} (\nu\eta)^{\f1p}\|u_1(\eta)\|_{L^\infty_x}
+
\sup_{\eta\in (0,T]} (\nu\eta)^{\g}\|u_1(\eta)\|_{\dot{C}^{\g}_x}\\
&\; + \sup_{\eta\in [0,T]} \|u_2(\eta)\|_{L^p_x}
+ \sup_{\eta\in (0,T]} (\nu\eta)^{\f1p}\|u_2(\eta)\|_{L^\infty_x}
+ \sup_{\eta\in (0,T]} (\nu\eta)^{\g}\|u_2(\eta)\|_{\dot{C}^{\g}_x}
< +\infty.
\end{align*}

Clearly, by \eqref{eqn: fixed pt equation for u} and \eqref{eqn: fixed pt equation for X}, $(u_1-u_2,X_1-X_2)$ satisfies on $[0,T]$ that
\begin{align*}
(u_1-u_2)(t) = &\; \big(u_{X_1}^\nu - u_{X_2}^\nu\big)(t) + \big(B_{\nu}[u_1] - B_{\nu}[u_2]\big)(t),\\
(X_1-X_2)(t) = &\; \CI \big[W[u_1,X_1]-W[u_2,X_2]\big](t).
\end{align*}
For convenience, we define for $0\leq t_1 < t_2\leq T$ that (cf.\;\eqref{eqn: def of d_U})
\[
\psi_u(t_1,t_2) := \sup_{\eta\in (t_1,t_2]} \big(\nu (\eta-t_1)\big)^{\f\g2+\f1p-\f12} \|(u_1-u_2)(\eta)\|_{M^{1,1-\g}(\BR^2)},
\]
and (cf.\;\eqref{eqn: def of d_X})
\begin{align*}
\psi_X(t_1,t_2) := &\; \sup_{\eta\in [t_1,t_2]}\|(X_1'-X_2')(\eta)\|_{L_s^\infty(\BT)}
+ \sup_{\eta\in (t_1,t_2]} (\eta-t_1)^\g \|(X_1'-X_2')(\eta)\|_{\dot{C}^\g_s(\BT)}\\
&\; + \sup_{\eta\in (t_1,t_2)} (\eta-t_1)^{\f1p}
\|\pa_t (X_1-X_2)(\eta)\|_{L^\infty_s(\BT)}
+ \sup_{\eta\in (t_1,t_2)} (\eta-t_1)^\g
\|\pa_t (X_1-X_2)(\eta)\|_{C^\g_s(\BT)}.
\end{align*}
Given the estimates for $(u_i,X_i)$, they are well-defined functions.

Take $T_0\in (0,T]$ with $T_0\leq 1$, such that for any $t\in [0,T_0]$,
\beq
\|X_1'(t)-X_2'(t)\|_{L^\infty_s} \leq \f{\lam_*}{2},
\label{eqn: C^1 difference between X_1 and X_2}
\eeq
and that, for any $t_1,t_2\in [0,T_0]$,
\beq
\|X_1'(t_1)-X_1'(t_2)\|_{L^\infty_s} \leq \f{\lam_*}{2},
\quad
\|X_2'(t_1)-X_2'(t_2)\|_{L^\infty_s} \leq \f{\lam_*}{2}.
\label{eqn: C^1 difference of X_1 and X_2 between different times}
\eeq
This can be achieved since $X_1,X_2\in C([0,T];C^1(\BT))$.

Arguing as before (see \eqref{eqn: dist between v_1 and v_2}), we find that
\begin{align*}
\psi_u(0,T_0)
\leq &\; C\left(T_0^{\f{1-\g} 2} +  T_0^{\f\g2+\f1p-\f12}\right) \nu^{\f{\g}{2}+\f1p-\f12} \lam_*^{-1-\g} M_*^2 \cdot \psi_X(0,T_0)\\
&\; + CT_0^{\f12-\f1p}  \nu^{-\f12-\f1p} Q_* \cdot \psi_u(0,T_0),
\end{align*}
where $C$ depends on $p$ and $\g$.

On the other hand
(see \eqref{eqn: estimate for Y_1 - Y_2}),
\begin{align*}
\psi_X(0,T_0)
\leq &\; C\sup_{\eta\in (0,T_0]} \eta^{\f1p} \big\|W[u_1,X_1](\eta)-W[u_2,X_2](\eta)\big\|_{L^\infty_s(\BT)}\\
&\; + C\sup_{\eta\in (0,T_0]} \eta^\g \big\|W[u_1,X_1](\eta)-W[u_2,X_2](\eta)\big\|_{\dot{C}^\g_s(\BT)}.
\end{align*}
We can analogously bound that (see \eqref{eqn: L inf difference of w})
\begin{align*}
&\;\sup_{\eta\in (0,T_0]} \eta^{\f1p}\big\|W[u_1,X_1](\eta)-W[u_2,X_2](\eta)\big\|_{L^\infty_s(\BT)}\\
\leq &\; C(\g, p,\nu,\lam_*,Q_*, M_*,N_*)
\big[ \psi_X(0,T_0)+\psi_u(0,T_0)\big]
\Big(T_0^{\f1{2p}} + T_0^{\f{1-\g}{2}}\Big).
\end{align*}
Besides (cf.\;\eqref{eqn: C gamma estimate for W_1 - W_2}), for any $t\in (0,T_0]$,
\begin{align*}
&\; \big\|W[u_1,X_1](t)-W[u_2,X_2](t) \big\|_{\dot{C}_s^\g(\BT)}\\
\leq &\; \|g_{X_1}(t) - g_{X_2}(t)\|_{ \dot{C}_s^\g(\BT)}
+ C(\g, p,\nu,\lam_*,Q_*,M_*,N_*) \cdot \psi_X(0,T_0) \cdot t^{\f12-\f1p-\g}
\\
&\; + C\nu^{-\f{1+\g}2-\f2p}
Q_* M_*^\g\cdot \psi_u(0,T_0)\cdot t^{\f{1-\g}2-\f2p}.
\end{align*}
Thanks to Lemma \ref{lem: improved estimates for g_X-g_Y},
\begin{align*}
&\; \|g_{X_1}(t) - g_{X_2}(t)\|_{ \dot{C}_s^\g(\BT)}\\
\leq &\; C \lam_*^{-2} M_* \big(\|X_1'(t)\|_{\dot{C}_s^{\g/2}}+\|X_2'(t)\|_{\dot{C}_s^{\g/2}}\big)\\
&\;\cdot \Big[\|X_1'(t)-X_2'(t)\|_{\dot{C}_s^{\g/2}} + \lam_*^{-1} \big(\|X'_1(t)\|_{\dot{C}^{\g/2}_s}+\|X_2'(t)\|_{\dot{C}^{\g/2}_s}\big) \|X_1'(t)-X_2'(t)\|_{L^\infty_s}\Big]
\\
\leq &\; Ct^{-\g} \lam_*^{-2} M_* \cdot t^{\g/2} \big(\|X_1'(t)\|_{\dot{C}_s^{\g/2}}+\|X_2'(t)\|_{\dot{C}_s^{\g/2}}\big) \cdot \lam_*^{-1} M_* \psi_X(0,T_0),
\end{align*}
with $C$ depending on $\g$ only.
This gives
\begin{align*}
&\; \sup_{\eta\in (0,T_0]} \eta^\g \big\|W[u_1,X_1](\eta)-W[u_2,X_2](\eta) \big\|_{\dot{C}_s^\g(\BT)}\\
\leq &\; C(\g,\lam_*,M_*) \psi_X(0,T_0)
\sup_{\eta\in (0,T_0]} \eta^{\g/2}\big(\|X_1'(\eta)\|_{\dot{C}_s^{\g/2}} +\|X_2'(\eta)\|_{\dot{C}_s^{\g/2}}\big).
\\
&\; + C(\g, p,\nu,\lam_*,Q_*,M_*,N_*)
\big[ \psi_X(0,T_0)  + \psi_u(0,T_0)\big] T_0^{\f12-\f1p}.
\end{align*}
Plugging them into the estimate for $\psi_X(0,T_0)$, and then combining with the estimate for $\psi_u(0,T_0)$, we finally obtain that
\begin{align*}
&\; \psi_X(0,T_0)+\psi_u(0,T_0)\\
\leq &\; \big(\psi_X(0,T_0)+\psi_u(0,T_0)\big)\\
&\;\cdot \Big[C_1(\g,\lam_*,M_*) \sup_{\eta\in (0,T_0]} \eta^{\g/2} \big(\|X_1'(\eta)\|_{\dot{C}_s^{\g/2}}+\|X_2'(\eta)\|_{\dot{C}_s^{\g/2}}\big)
\\
&\;\quad + C_2(\g, p,\nu,\lam_*,Q_*, M_*,N_*)
\Big(T_0^{\f1{2p}} + T_0^{\f{1-\g}{2}} +  T_0^{\f\g2+\f1p-\f12}\Big)\Big].
\end{align*}
By virtue of Lemma \ref{lem: intermediate norms vanish as t goes to zero} and the fact $\g\in (1-\f2p,1)$, we may take $T_0$ smaller if necessary, so that
\begin{align*}
&\; C_1(\g,\lam_*,M_*) \sup_{\eta\in (0,T_0]} \eta^{\g/2}\big(\|X_1'(\eta)\|_{\dot{C}^{\g/2}}+\|X_2'(\eta)\|_{\dot{C}^{\g/2}}\big)
\\
&\; + C_2(\g, p,\nu,\lam_*,Q_*, M_*,N_*)
\Big(T_0^{\f1{2p}} + T_0^{\f{1-\g}{2}}
+ T_0^{\f\g2+\f1p-\f12}\Big)
\leq \f12.
\end{align*}
This implies that $X_1(s,t)\equiv X_2(s,t)$ and $u_1(x,t)\equiv u_2(x,t)$ for all $t\in [0,T_0]$.

If $T_0<T$, we shall repeat this argument with the new initial time $T_0$.
Let us only sketch it here.
We first derive that, with $T_1\in (T_0,T]$ to be chosen later, for any $t_1,t_2\in [T_0,T_1]$ with $t_1<t_2$,
\[
\|X_i'(t_1)-X_i'(t_2)\|_{L^\infty_s}
\leq C\|X_i(t_1)-X_i(t_2)\|_{\dot{C}_s^\g}^\g
\|(X_i'(t_1),X_i'(t_2))\|_{\dot{C}_s^\g}^{1-\g}
\leq  C|t_1-t_2|^\g t_1^{-\g} M_*^{1-\g} N_*^\g.
\]
This together with the fact $X_1(T_0) = X_2(T_0)$ implies that there exists $\d = \d(T_0,\lam_*,M_*,N_*)$, such that as long as $T_1\leq T_0 + \d$, \eqref{eqn: C^1 difference between X_1 and X_2} holds for any $t\in [T_0,T_1]$ and \eqref{eqn: C^1 difference of X_1 and X_2 between different times} holds for any $t_1,t_2\in [T_0,T_1]$.
Now viewing $T_0$ as the new initial data, the same argument as above leads to
\begin{align*}
&\; \psi_X(T_0,T_1)+\psi_u(T_0,T_1)\\
\leq &\; \big( \psi_X(T_0,T_1)+\psi_u(T_0,T_1) \big)\\
&\;\cdot \Big[C_3(\g,\lam_*,M_*) \sup_{\eta\in (T_0,T_1]} (\eta-T_0)^{\g/2} \big(\|X_1'(\eta)\|_{\dot{C}_s^{\g/2}}+\|X_2'(\eta)\|_{\dot{C}_s^{\g/2}}\big)
\\
&\;\quad + C_4(\g, p,\nu,\lam_*,Q_*, M_*,N_*)
\Big((T_1-T_0)^{\f1{2p}} + (T_1-T_0)^{\f{1-\g}{2}} +  (T_1-T_0)^{\f\g2+\f1p-\f12}\Big)\Big].
\end{align*}
This time, instead of applying Lemma \ref{lem: intermediate norms vanish as t goes to zero}, we observe that
\[
\sup_{\eta\in (T_0,T_1]} (\eta-T_0)^{\g/2} \left(\|X_1'(\eta)\|_{\dot{C}_s^{\g/2}}+\|X_2'(\eta)\|_{\dot{C}_s^{\g/2}}\right)
\leq C\left(\f{T_1-T_0}{T_0}\right)^{\g/2} M_*.
\]
Hence, there exists $\d_0 = \d_0(T_0,\g, p,\nu,\lam_*,Q_*, M_*,N_*) \leq \d$, such that as long as $T_1\leq T_0+\d_0$, it holds that
\begin{align*}
&\;C_3(\g,\lam_*,M_*) \sup_{\eta\in (T_0,T_1]} (\eta-T_0)^{\g/2} \big(\|X_1'(\eta)\|_{\dot{C}_s^{\g/2}}+\|X_2'(\eta)\|_{\dot{C}_s^{\g/2}}\big)
\\
&\; + C_4(\g, p,\nu,\lam_*,Q_*, M_*,N_*)
\Big((T_1-T_0)^{\f1{2p}} + (T_1-T_0)^{\f{1-\g}{2}} +  (T_1-T_0)^{\f\g2+\f1p-\f12}\Big) \leq \f12.
\end{align*}
This gives $\psi_X(T_0,T_1)+\psi_u(T_0,T_1) = 0$,
and thus $X_1(t)\equiv X_2(t)$ and $u_1(t)\equiv u_2(t)$ in $t\in [T_0,T_1]$.
Then we repeat this argument, and each time we may extend the time interval by $\d_0$ if it has not exceeded $T$.
This leads to the conclusion that $X_1(t)\equiv X_2(t)$ and $u_1(t)\equiv u_2(t)$ for all $t\in [0,T]$.
\end{proof}
\end{prop}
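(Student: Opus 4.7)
The plan is to show that on a short initial interval $[0, T_0]$ the two solutions agree, and then propagate this to $[0,T]$ by finitely many iterations. First I would invoke the regularity results of Section~\ref{sec: regularity of mild solutions}, in particular Lemma~\ref{lem: L inf regularity of u mild solution}, Lemma~\ref{lem: time derivative and equation}, and Lemma~\ref{lem: C gamma estimate for mild solutions}, to fix a $\g \in [\f{2}{p},1) \cap (1-\f{2}{p},1)$ and collect global constants $M_*, N_*, Q_*$ and a lower bound $\lam_*$ on the relevant weighted norms of $X_i,\ \pa_t X_i,\ u_i$ and on $|X_i|_*$ respectively. These will play the role of the ambient data for the forthcoming contraction argument.

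Next, subtracting the integral equations \eqref{eqn: fixed pt equation for u}--\eqref{eqn: fixed pt equation for X} gives
\[
u_1 - u_2 = (u_{X_1}^\nu - u_{X_2}^\nu) + (B_\nu[u_1]-B_\nu[u_2]), \qquad X_1 - X_2 = \CI\big[W[u_1,X_1]-W[u_2,X_2]\big].
\]
Following the metric structure from Section~\ref{sec: fixed-pt mapping}, I would define on any subinterval $[t_1,t_2] \subset [0,T]$ a quantity $\psi_u(t_1,t_2)$ mirroring $d_\CU$ in \eqref{eqn: def of d_U} but with time weight $(\nu(\eta-t_1))^{\g/2+1/p-1/2}$, and a corresponding $\psi_X(t_1,t_2)$ mirroring $d_\CX$ in \eqref{eqn: def of d_X} with $\eta$ replaced by $\eta - t_1$. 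Using Lemma~\ref{lem: estimate for u_X^nu-u_Y^nu in Morrey space} and \eqref{eqn: 4th estimate for B L inf} of Lemma~\ref{lem: estimate for B_nu u} to bound $\psi_u$, and Lemmas~\ref{lem: estimate for H_X^nu-H_Y^nu}, \ref{lem: estimate for u_X-u_Y}, \ref{lem: improved estimates for g_X-g_Y}, \ref{lem: estimate for B_nu u} together with Lemma~\ref{lem: parabolic estimates for Duhamel term of fractional Laplace} to bound $\psi_X$, I would reproduce, almost verbatim, the contraction estimate \eqref{eqn: final bound for distance bewteen output solutions}, now interpreted as a bound on $\psi_u + \psi_X$ by itself times a small constant.

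The essential refinement over the construction of Section~\ref{sec: contraction mapping} is that because $X_1(s,0) = X_2(s,0)$ the quadratic contribution $\|g_{X_1}-g_{X_2}\|_{\dot{C}^\g_s}$ to $W[u_1,X_1]-W[u_2,X_2]$ picks up a factor
\[
\sup_{\eta \in (0,T_0]} \eta^{\g/2}\big(\|X_1'(\eta)\|_{\dot{C}^{\g/2}_s} + \|X_2'(\eta)\|_{\dot{C}^{\g/2}_s}\big),
\]
which tends to $0$ as $T_0 \to 0^+$ by Lemma~\ref{lem: intermediate norms vanish as t goes to zero}. Combined with the $T_0$-power smallness of the remaining terms (admissible because $\g > 1 - \f{2}{p}$), this lets me select $T_0$ small enough to obtain $\psi_u(0,T_0) + \psi_X(0,T_0) \leq \f{1}{2}(\psi_u(0,T_0) + \psi_X(0,T_0))$, forcing $u_1 \equiv u_2$ and $X_1 \equiv X_2$ on $[0,T_0]$.

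Finally I would iterate: at the new base time $T_0$ one already has $X_1(T_0) = X_2(T_0)$, and since $X_i(T_0) \in C^{1,\g}$ by Lemma~\ref{lem: C gamma estimate for mild solutions}, the analogous small factor $\sup_{\eta \in (T_0, T_0+\d]} (\eta - T_0)^{\g/2}\|X_i'(\eta)\|_{\dot{C}^{\g/2}_s}$ is controlled by $C(\d/T_0)^{\g/2} M_*$, which is small in $\d$; the remaining small powers of $\d$ close the same contraction inequality. This gives a step size $\d_0 = \d_0(T_0,\g,p,\nu,\lam_*,Q_*,M_*,N_*)$ independent of the position of the base time, and finitely many such steps exhaust $[0,T]$. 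The main obstacle is precisely the critical $C^1_s(\BT)$-regularity of $X$: $T_0$-smallness alone does not close the distance estimate on the first interval, and the closing factor must come from the subcritical smoothing of $X'$ encoded in Lemma~\ref{lem: intermediate norms vanish as t goes to zero}; on each subsequent interval the role of this vanishing is instead played by the quantitative $C^{1,\g}$-bound already earned at the previous base time.
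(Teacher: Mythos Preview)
Your proposal is correct and follows essentially the same approach as the paper's proof: the same choice of $\g$, the same collected constants $M_*, \lam_*, N_*, Q_*$, the same shifted metrics $\psi_u, \psi_X$, the same contraction estimate reproduced from Section~\ref{sec: contraction mapping}, the same crucial use of Lemma~\ref{lem: intermediate norms vanish as t goes to zero} to close the first interval, and the same iteration with the $C(\d/T_0)^{\g/2}M_*$ replacement on subsequent intervals. One small point you gloss over is the verification of the hypotheses $\|X_1'(t)-X_2'(t)\|_{L^\infty_s}\leq \lam_*/2$ and $\|X_i'(t_1)-X_i'(t_2)\|_{L^\infty_s}\leq \lam_*/2$ needed to invoke Lemmas~\ref{lem: estimate for u_X-u_Y} and~\ref{lem: estimate for H_X^nu-H_Y^nu}, which the paper handles explicitly via the $C^1$-time-continuity of $X_i$; also, $\d_0$ in the paper technically depends on the first $T_0$ rather than being fully independent of the base time, but this is harmless since the relevant bounds only improve as the base time grows.
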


The next proposition is concerned with continuation of the mild solutions.

\begin{prop}
\label{prop: continuation}
Let $u_0 = u_0(x)\in L^p(\BR^2)$ for some $p\in (2,\infty)$ and $X_0 = X_0(s)\in C^1(\BT)$, which satisfy $\di u_0 = 0$ and $|X_0|_* >0$.
Suppose that for some $T>0$, $(u,X)$ is a mild solution to \eqref{eqn: NS equation}-\eqref{eqn: initial data} on $[0,T]$ in the sense of Definition \ref{def: mild solution}.
Then it can be extended to a longer time interval, i.e., for some $\tilde{T}>T$, there exists $(\tilde{u},\tilde{X})$ forming a mild solution to \eqref{eqn: NS equation}-\eqref{eqn: initial data} on $[0,\tilde{T}]$ and satisfying $(u,X)=(\tilde{u},\tilde{X})$ on $[0,T]$.

\begin{proof}

Fix $\g\in [\f2p,1)\cap (1-\f2p,1)$.
By the definition of the mild solution as well as Lemma \ref{lem: C gamma estimate for mild solutions}, $u(T)\in L_x^p\cap C_x^\g(\BR^2)$ and $X(T)\in C_s^{1,\g}(\BT)$, and they satisfy $\di u(T) = 0$ and $|X(T)|_*>0$.
Denote
\[
M_1:= \|X'(T)\|_{C^\g_s(\BT)},\quad \lam_1: = |X(T)|_*>0,
\quad
Q_1:=\|u(T)\|_{L_x^p(\BR^2)}+\|u(T)\|_{L^\infty_x(\BR^2)} + \lam_1^{-1+\f2p}M_1^2.
\]
Then we apply Proposition \ref{prop: fixed-point solution} with the same $p$ and $\g$ to construct a local mild solution starting from the initial data $(u(T),X(T))$.
More precisely, there exists $T_1\in (0,1)$ and a mild solution $(u_1,X_1)$ on $[0,T_1]$ to \eqref{eqn: NS equation}-\eqref{eqn: kinematic equation} with the initial condition $u_1(x,0) = u(x,T)$, and $X_1(s,0) = X(s,T)$.
Thanks to Proposition \ref{prop: fixed-point solution}, it is known that
\begin{itemize}
\item
\[
\|u_1\|_{L^\infty_{T_1} L^p_x(\BR^2)} \leq C Q_1,\quad
\|X_1'\|_{L^\infty_{T_1} L^\infty_s(\BT)} < +\infty, \quad
\inf_{t\in [0,T_1]}|X_1(t)|_* > \lam_1/2,
\]
where the constant $C$ only depends on $p$ (see \eqref{eqn: choice of Q} and the remark that follows it);

\item and with $\s : = \f12(\f12-\f1p) \in (0,\g)$,
\beq
t^{\g} \|X_1(t)\|_{\dot{C}_s^{1,\g}(\BT)}
+ \|X_1(t)-e^{-\f{t}4\Lam}X(T)\|_{C_s^1(\BT)}
\leq 4\big(\r(t)+ t^{\s}\big),
\label{eqn: basic bound for X_1}
\eeq
where $\r(t)$ is defined in terms of $X_1(s,0) = X(s,T)$ (cf.\;\eqref{eqn: def of rho}):
\[
\r(t) = \r_{X(T),\g}(t):= \sup_{\tau\in (0,t]}\tau^{\g}\big\|e^{-\f{\tau}{4} \Lam}X'(T)\big\|_{\dot{C}^\g_s(\BT)}.
\]
\end{itemize}

Our goal is to show
\beq
\|X_1'\|_{L^\infty_{T_1} C^{1/p}_s(\BT)} < +\infty.
\label{eqn: claim for improved regularity of X_1}
\eeq

Since $X(T)\in C_s^{1,\g}(\BT)$, we find that
$\r(t)\leq t^\g M_1$.
Combining this with \eqref{eqn: basic bound for X_1}, we derive that, for any $t\in (0,T_1]$,
\beq
\begin{split}
\|X_1(t)\|_{\dot{C}_s^{1,\s}(\BT)}
\leq &\; \big\|X_1(t)-e^{-\f{t}4\Lam}X(T)\big\|_{\dot{C}_s^{1,\s}(\BT)}
+ \big\|e^{-\f{t}4\Lam}X(T)\big\|_{\dot{C}_s^{1,\s}(\BT)}\\
\leq &\; C\big\|X_1(t)-e^{-\f{t}4\Lam}X(T)\big\|_{\dot{C}_s^{1,\g}}^{\f{\s}{\g}}
\big\|X_1(t)-e^{-\f{t}4\Lam}X(T)\big\|_{\dot{C}_s^1}^{1-\f{\s}{\g}}
+ \|X(T)\|_{\dot{C}_s^{1,\s}}
\\
\leq &\; C\left(\|X_1(t)\|_{\dot{C}_s^{1,\g}} + \|X(T)\|_{\dot{C}_s^{1,\g}}\right)^{\f{\s}{\g}}
\big(\r(t)+ t^\s\big)^{1-\f{\s}{\g}}
+ \|X(T)\|_{\dot{C}_s^{1,\g}}
\\
\leq &\; C\left[t^{-\g}\big(t^\g M_1+ t^\s\big) + M_1\right]^{\f{\s}{\g}}
\big(t^\g M_1+ t^\s\big)^{1-\f{\s}{\g}}
+ M_1
\\
\leq &\; C t^{-\s}\big(t^\g M_1+ t^\s\big)
+ M_1,
\end{split}
\label{eqn: C 1 sigma estimate for X_1}
\eeq
where $C$ is a universal constant.
Since $\s <\g$ and $T_1\leq 1$,
\beq
\|X_1\|_{L^\infty_{T_1} \dot{C}_s^{1,\s}(\BT)} \leq C(1+M_1),
\label{eqn: improved bound for X_1}
\eeq
where $C$ is universal.
If $\s \geq \f1p$, this already implies \eqref{eqn: claim for improved regularity of X_1}.
In what follows, we shall handle the case $\s < \f1p$.

First, we show $u_1\in L^\infty_{T_1} L^\infty_x(\BR^2)$ by arguing as in the proof of Lemma \ref{lem: L inf regularity of u mild solution}.
Recall that, for $t\in [0,T_1]$,
\beq
u_1(t) = u_{X_1}^\nu(t) + e^{\nu t\Delta}u(T) +B_{\nu}[u_1](t).
\label{eqn: equation satisfied by u_1}
\eeq
By Lemma \ref{lem: estimate for u_X^nu} and Lemma \ref{lem: parabolic estimates}, for $t\in [0,T_1]$ with $T_1\leq 1$,
\[
\big\|u_{X_1}^\nu(t) + e^{\nu t\Delta}u(T)\big\|_{L^p_x(\BR^2)}
\leq
C \lam_1^{-1+\f2p} \|X_1'\|_{L^\infty_t L_s^\infty}^2 + \|u(T)\|_{L^p_x}
\leq C(p,\lam_1, M_1, Q_1),
\]
and
\[
\big\|u_{X_1}^\nu(t) + e^{\nu t\Delta}u(T)\big\|_{L^\infty_x(\BR^2)}
\leq
C\lam_1^{-1} \|X_1'\|_{L^\infty_t C_s^{\s}}^2
+ \|u(T)\|_{L^\infty_x} \leq C(p,\lam_1, M_1, Q_1).
\]
By interpolation, for any $q\in [p,\infty]$,
\beq
\big\|u_{X_1}^\nu(t) + e^{\nu t\Delta}u(T)\big\|_{L^\infty_{T_1} L^q_x(\BR^2)}
\leq C(p,\lam_1, M_1, Q_1).
\label{eqn: L q estimate for a part of u_1}
\eeq
On the other hand, by \eqref{eqn: L q estimate for B general} in Lemma \ref{lem: estimate for B_nu u}, for any $q,r\in [p,\infty]$ satisfying $\f1q \in ( \f1r+\f1p - \f12, \f1r+\f1p]$ and $\f1p+\f1r\leq 1$,
\[
\|B_{\nu}[u_1](t)\|_{L^q_x}
\leq C\nu^{-1} (\nu t)^{\f1q-\f12}
\sup_{\eta\in(0,t]}(\nu \eta)^{1-\f1r-\f1p}
\|(u_1\otimes u_1) (\eta)\|_{L_x^{\f{pr}{p+r}}},
\]
where $C$ depends on $p$, $q$, and $r$.
Combining this with \eqref{eqn: equation satisfied by u_1} and \eqref{eqn: L q estimate for a part of u_1} yields that
\beqo
\|u_1\|_{L^\infty_{T_1} L^q_x}
\leq C(p,\lam_1, M_1, Q_1) + C\nu^{-1} (\nu T_1)^{\f1q+\f12-\f1r-\f1p}
\|u_1\|_{L^\infty_{T_1} L_x^p}\|u_1\|_{L^\infty_{T_1} L_x^r}.
\eeqo
By choosing suitable pairs of $q$ and $r$, we can bootstrap as in the proof of Lemma \ref{lem: L inf regularity of u mild solution} to show that
\beq
\|u_1\|_{L^\infty_{T_1} L_x^\infty(\BR^2)}
\leq C(p,\nu,\lam_1, M_1, Q_1).
\label{eqn: uniform L inf estimate for u_1}
\eeq
We omit the details.

In order to show \eqref{eqn: claim for improved regularity of X_1} in the case $\s < \f1p$, one may proceed as in the proofs of Lemma \ref{lem: L inf regularity of u mild solution}, Lemma \ref{lem: time derivative and equation}, and Lemma \ref{lem: C gamma estimate for mild solutions} to further raise the regularity of $(u_1,X_1)$, but we shall present a different approach here.

Let $q\in (2,\infty)$ such that $\f1q = \f12(\f12-\f1p) = \s$.
By the assumption $\s < \f1p$, we have $q > p$, so $u(T)\in L_x^p\cap C_x^\g(\BR^2)$ implies $u(T)\in L_x^q(\BR^2)$.
Take $\g'\in [\f2q,1)\cap (1-\f2q,1)$ such that $\g'\geq \g$; we have $X(T)\in C^{\g'}_s(\BT)$ thanks to Lemma \ref{lem: C gamma estimate for mild solutions}.
Now we may apply Proposition \ref{prop: fixed-point solution} with the new parameters $q$ and $\g'$ to construct another local mild solution starting from $(u(T),X(T))$.
This time we choose the parameter $\s$ in the definition \eqref{eqn: def of set CX} of $\CX$ to be $\f1p\in (0,\f12-\f1q)$; this is valid due to Remark \ref{rmk: choice of sigma}.
In this way, we obtain $\tilde{T}_1\in (0,1)$ and a mild solution $(\tilde{u}_1,\tilde{X}_1)$ on $[0,\tilde{T}_1]$ to \eqref{eqn: NS equation}-\eqref{eqn: kinematic equation} with the initial condition
$\tilde{u}_1(x,0) = u(x,T)$, and $\tilde{X}_1(s,0) = X(s,T)$.
We also know that
\beqo
\|\tilde{u}_1\|_{L^\infty_{\tilde{T}_1} L^q_x(\BR^2)}
+ \|\tilde{X}_1'\|_{L^\infty_{\tilde{T}_1} L^\infty_s(\BT)}
<+\infty, \quad
\inf_{t\in [0,\tilde{T}_1]} |\tilde{X}_1(t)|_* > 0,
\eeqo
and for $t\in (0,\tilde{T}_1]$,
\[
t^{\g'} \|\tilde{X}_1(t)\|_{\dot{C}_s^{1,\g'}(\BT)}
+ \|\tilde{X}_1(t)-e^{-\f{t}4\Lam}X(T)\|_{C_s^1(\BT)}
\leq 4\big(\tilde{\r}(t)+ t^{1/p}\big),
\]
where
\[
\tilde{\r}(t):=\sup_{\tau\in (0,t]}\tau^{\g'}\big\|e^{-\f{\tau}{4} \Lam}X'(T)\big\|_{\dot{C}^{\g'}_s(\BT)}.
\]
By interpolation,
\[
\sup_{\eta\in (0,\tilde{T}_1]}\eta^{\f1q} \|\tilde{X}_1'(\eta)\|_{\dot{C}_s^{1/q}(\BT)} < +\infty.
\]
Furthermore, by arguing as in \eqref{eqn: C 1 sigma estimate for X_1} and \eqref{eqn: improved bound for X_1}, we find that
\beq
\tilde{X}_1\in L^\infty_{\tilde{T}_1} C^{1,\f1p}_s (\BT).
\label{eqn: improved C 1/p regularity of tilde X_1}
\eeq

On the other hand, the known bounds for $(u_1,X_1)$ and \eqref{eqn: uniform L inf estimate for u_1} give
\[
\|u_1\|_{L^\infty_{T_1} L^q_x(\BR^2)}
+
\|X_1'\|_{L^\infty_{T_1} L^\infty_s(\BT)}
+
\sup_{\eta\in (0,T_1]}\eta^{\f1q} \|X_1'(t)\|_{\dot{C}_s^{1/q}(\BT)}
<+\infty, \quad
\inf_{t\in [0,T_1]} |X_1(t)|_* > 0.
\]
Therefore, both $(u_1,X_1)$ and $(\tilde{u}_1,\tilde{X}_1)$ are mild solutions on $[0,T_1]\cap [0,\tilde{T}_1]$ to \eqref{eqn: NS equation}-\eqref{eqn: kinematic equation} with the initial data $(u(x,T),X(s,T))\in L^q_x(\BR^2)\times C^1_s(\BT)$ in the sense of Definition \ref{def: mild solution}.
By Proposition \ref{prop: uniqueness of mild solution}, we must have $\tilde{X}_1(t)\equiv X_1(t)$ on $[0,T_1]\cap [0,\tilde{T}_1]$.
Therefore, in the case $T_1\leq \tilde{T}_1$, \eqref{eqn: improved C 1/p regularity of tilde X_1} implies \eqref{eqn: claim for improved regularity of X_1}, while if $T_1> \tilde{T}_1$, \eqref{eqn: claim for improved regularity of X_1} follows from \eqref{eqn: basic bound for X_1} and \eqref{eqn: improved C 1/p regularity of tilde X_1}.

In either case, we have shown \eqref{eqn: claim for improved regularity of X_1}.
Let $\tilde{T} := T+T_1$, and
\[
\tilde{u}(x,t) :=
\begin{cases}
u(x,t), & \mbox{if } t\in [0,T], \\
u_1(x,t-T), & \mbox{if } t\in [T,\tilde{T}],
\end{cases}\quad
\tilde{X}(s,t) :=
\begin{cases}
X(s,t), & \mbox{if } t\in [0,T], \\
X_1(s,t-T), & \mbox{if } t\in [T,\tilde{T}].
\end{cases}
\]
It is straightforward to verify that $(\tilde{u},\tilde{X})$ is a mild solution to \eqref{eqn: NS equation}-\eqref{eqn: initial data} on $[0,\tilde{T}]$, which is an extension of $(u,X)$.
\end{proof}

\begin{rmk}
The lifespan $T_1$ of the extended part $(u_1,X_1)$ only depends on $\g$, $p$, $\nu$, $|X(T)|_*$, $\|u(T)\|_{L^p_x(\BR^2)}$, and $\|X'(T)\|_{C_s^\g(\BT)}$.
Indeed, since $X(T)\in C^{1,\g}_s(\BT)$, the corresponding $\r$ and $\phi$ defined in terms of $X(T)$ (see \eqref{eqn: def of rho} and \eqref{eqn: def of phi}) should satisfy
\[
\r(t)\leq t^\g \|X'(T)\|_{\dot{C}^\g_s(\BT)},
\quad
\phi(t)\leq Ct^\g\|X'(T)\|_{\dot{C}^\g_s(\BT)},
\]
where $C$ only depends on $\g$.
Then the claim follows from Proposition \ref{prop: fixed-point solution} and Remark \ref{rmk: characterization of lifespan T}.

\end{rmk}
\end{prop}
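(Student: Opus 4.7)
The plan is to invoke the local existence result (Proposition \ref{prop: fixed-point solution}) restarting from time $T$, and then glue. First, by the regularity results already established for mild solutions (Lemma \ref{lem: L inf regularity of u mild solution} and Lemma \ref{lem: C gamma estimate for mild solutions}), with $\g\in[\tfrac{2}{p},1)\cap(1-\tfrac{2}{p},1)$ fixed, the terminal data $(u(T),X(T))$ satisfies $u(T)\in L^p(\BR^2)$, $X(T)\in C^{1,\g}_s(\BT)$, $\di u(T)=0$ and $|X(T)|_*>0$. Applying Proposition \ref{prop: fixed-point solution} with this initial data produces a local mild solution $(u_1,X_1)$ on some $[0,T_1]$ with $T_1>0$ depending only on $\g,p,\nu,|X(T)|_*,\|u(T)\|_{L^p_x},\|X'(T)\|_{C^\g_s}$. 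Setting $\tilde T:=T+T_1$ and
$$(\tilde u,\tilde X)(t):=\begin{cases}(u,X)(t), & t\in[0,T],\\(u_1,X_1)(t-T), & t\in[T,\tilde T],\end{cases}$$
I would verify directly from the fixed-point identities \eqref{eqn: fixed pt equation for u}--\eqref{eqn: fixed pt equation for X} on each piece (together with the semigroup property of $e^{-\f{t}{4}\Lam}$ and $e^{\nu t\Delta}$) that $(\tilde u,\tilde X)$ satisfies the mild-solution identities on $[0,\tilde T]$.

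The only delicate point in checking Definition \ref{def: mild solution} on $[0,\tilde T]$ is the weighted bound $\sup_{\eta\in(0,\tilde T]}\eta^{1/p}\|\tilde X'(\eta)\|_{\dot{C}^{1/p}_s}<+\infty$. On $(0,T]$ this is inherited from $X$; on $(T,\tilde T]$ the weight $\eta^{1/p}$ is bounded, so it suffices to prove $X_1'\in L^\infty_{T_1} \dot{C}^{1/p}_s(\BT)$. From the fixed-point construction in Proposition \ref{prop: fixed-point solution} one has the time-weighted bound
$$t^\g\|X_1(t)\|_{\dot{C}^{1,\g}_s}+\|X_1(t)-e^{-\f{t}{4}\Lam}X(T)\|_{C^1_s}\le 4(\r_{X(T),\g}(t)+t^\s),$$
with $\s=\tfrac12(\tfrac12-\tfrac{1}{p})$. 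Using that $X(T)\in C^{1,\g}_s$ gives $\r_{X(T),\g}(t)\le t^\g\|X'(T)\|_{\dot{C}^\g_s}$, hence by interpolation between this and the $\dot{C}^{1,\g}$ bound on $e^{-\f{t}{4}\Lam}X(T)$,
$$\|X_1\|_{L^\infty_{T_1}\dot{C}^{1,\s}_s}\le C(1+\|X'(T)\|_{\dot{C}^\g_s}),$$
uniformly up to $t=0$. If $\s\ge 1/p$ this already yields $X_1\in L^\infty_{T_1}\dot{C}^{1,1/p}_s$ and we are done.

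The main obstacle is the remaining case $\s<1/p$, since the exponent $\s$ produced by the fixed-point scheme is pinned to $p$. To handle this, I would upgrade the base integrability exponent: pick $q>p$ with $\tfrac12(\tfrac12-\tfrac{1}{q})\ge \tfrac{1}{p}$, and note that $u(T)\in L^p\cap L^\infty\subset L^q$ (with the $L^\infty$ bound coming from Lemma \ref{lem: L inf regularity of u mild solution}), while $X(T)\in C^{1,\g}_s$ with a suitable $\g'\in[\tfrac{2}{q},1)\cap(1-\tfrac{2}{q},1)$ allowed after possibly raising $\g$. Re-run Proposition \ref{prop: fixed-point solution} with the parameters $(q,\g')$ and, crucially, with the auxiliary exponent $\s$ in the definition of $\CX$ chosen as $1/p\in(0,\tfrac12-\tfrac{1}{q})$, which is legal by Remark \ref{rmk: choice of sigma}. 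This yields another local mild solution $(\tilde u_1,\tilde X_1)$ with $\tilde X_1\in L^\infty\dot{C}^{1,1/p}_s$ on its lifespan. Both $(u_1,X_1)$ and $(\tilde u_1,\tilde X_1)$ are mild solutions in the sense of Definition \ref{def: mild solution} with the same initial data $(u(T),X(T))\in L^q\times C^1$, so the uniqueness statement Proposition \ref{prop: uniqueness of mild solution} forces them to coincide on the common interval, transferring the $C^{1,1/p}$ regularity to $X_1$. Combining the two cases closes the check of the mild-solution definition and yields the continuation.
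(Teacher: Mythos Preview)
Your approach is essentially the same as the paper's, and the overall strategy is correct. Two small points deserve care, though.

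First, your stated constraint on $q$, namely $\tfrac12(\tfrac12-\tfrac1q)\ge \tfrac1p$, is too strong: for $p\in(2,4]$ it has no solution in $(2,\infty)$. What you actually need (and what you write two lines later) is only $\tfrac1p\in(0,\tfrac12-\tfrac1q)$, i.e., $\tfrac1q<\tfrac12-\tfrac1p$. The paper takes $\tfrac1q=\s=\tfrac12(\tfrac12-\tfrac1p)$, which always satisfies this.

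Second, to invoke uniqueness (Proposition \ref{prop: uniqueness of mild solution}) in the $L^q$ framework you must verify that $(u_1,X_1)$ is itself a mild solution in the sense of Definition \ref{def: mild solution} with exponent $q$. In particular you need $u_1\in L^\infty_{T_1}L^q_x$, which does not follow directly from the $L^p$ construction, and you need $\sup_{\eta}\eta^{1/q}\|X_1'(\eta)\|_{\dot C^{1/q}_s}<\infty$. The paper handles the first point by a short bootstrap (as in Lemma \ref{lem: L inf regularity of u mild solution}, but now with $u(T)\in L^\infty_x$ so the bound is uniform in $t$) to get $u_1\in L^\infty_{T_1}L^\infty_x$, hence $L^q$ by interpolation; the second point follows from your uniform $\dot C^{1,\s}_s$ bound on $X_1$ once $\s=1/q$. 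You should make both checks explicit.
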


A corollary of Proposition \ref{prop: uniqueness of mild solution} and Proposition \ref{prop: continuation} is the existence and characterization of a maximal mild solution (see Definition \ref{def: maximal solution}).

\begin{cor}
\label{cor: maximal solution}
Let $u_0 = u_0(x)\in L^p(\BR^2)$ for some $p\in (2,\infty)$ and $X_0 = X_0(s)\in C^1(\BT)$, which satisfy $\di u_0 = 0$ and $|X_0|_* >0$.
Then there exists a unique $T_*\in (0,+\infty]$ and a unique maximal mild solution $(u,X)$ to \eqref{eqn: NS equation}-\eqref{eqn: initial data} with its maximal lifespan being $T_*$.

Moreover, if $T_*<+\infty$, at least one of the following scenarios would occur:
\begin{enumerate}[label=(\alph*)]
\item $\limsup_{t\to T_*^-} \|u(t)\|_{L^p_x(\BR^2)} = +\infty$;

\item $\liminf_{t\to T_*^-} |X(t)|_* = 0$;

\item For any increasing sequence $t_k\to T_*^-$, $\{X'(t_k)\}_k$ is not convergent in $C(\BT)$.
As a result, $X'([0,T_*))$, which is the image of $[0,T_*)$ under the mapping $t\mapsto X'(\cdot,t)$, is not pre-compact in $C(\BT)$.
\end{enumerate}

\begin{proof}
The existence and uniqueness of $T_*$ and the maximal solution $(u,X)$ immediately follows from Proposition \ref{prop: uniqueness of mild solution} and Proposition \ref{prop: continuation}.
We will only prove the characterizations of solution when it approaches the finite blow-up time $T_*$.

We prove by contradiction.
Suppose none of the above-mentioned three scenarios occurs as $t\to T_*^-$.
Take an increasing sequence $t_k\to T_*^-$, such that $\{X'(t_k)\}_{k\in \BZ_+}$ is convergent in $C(\BT)$.
Since the limit has mean zero, we shall denote it as $X_\circ'(s)$;
without loss of generality, we also assume $X_\circ$ to have mean zero.
By assumption, with abuse of the notations,
\[
Q:=\sup_{k\in \BZ_+} \|u(t_k)\|_{L^p_x(\BR^2)}<+\infty,\quad
\lam:=\inf_{k\in \BZ_+} |X(t_k)|_*>0.
\]
It is not difficult to show $|X_\circ|_* \geq \lam$.
Note that the mean of $X_\circ$ is irrelevant when defining $|X_\circ|_*$.

Take $\g\in [\f2p,1)\cap (1-\f2p,1)$.
Let $\r_k$ and $\phi_k$ be defined in terms of $X(t_k)$ as in \eqref{eqn: def of rho} and \eqref{eqn: def of phi}:
\begin{align*}
\r_k(t) := \sup_{\tau\in (0,t]}\tau^{\g}\big\|e^{-\f{\tau}{4} \Lam}X'(t_k)\big\|_{\dot{C}^{\g}_s(\BT)},\quad
\phi_k(t) := \sup_{\tau\in [0,t]}\big\|X(t_k)-e^{-\f{\tau}{4}\Lam}X(t_k)\big\|_{\dot{C}_s^1(\BT)}.
\end{align*}
Also define
\begin{align*}
\r_\circ(t) := \sup_{\tau\in (0,t]}\tau^{\g}\big\|e^{-\f{\tau}{4} \Lam}X_\circ'\big\|_{\dot{C}^{\g}_s(\BT)},\quad
\phi_\circ(t) := \sup_{\tau\in [0,t]}\big\|X_\circ-e^{-\f{\tau}{4}\Lam}X_\circ\big\|_{\dot{C}_s^1(\BT)}.
\end{align*}
They are all continuous increasing functions.
Note again that $\r_\circ$ and $\phi_\circ$ do not rely on the mean of $X_\circ$.
Since $\lim_{k\to +\infty}\|X'(t_k)- X'_\circ\|_{L^\infty_s(\BT)} = 0$, we find that
\[
M: = \sup_{k\in \BZ_+} \|X'(t_k)\|_{C_s(\BT)}<+\infty,
\]
and
\beq
\r_k(t)\rightrightarrows \r_\circ(t),\mbox{ and } \phi_k(t)\rightrightarrows \phi_\circ(t)\mbox{ for } t\in [0,+\infty).
\label{eqn: uniform convergence of rho and phi}
\eeq

Now we apply Proposition \ref{prop: fixed-point solution} with the parameters $p$ and $\g$ to construct local solutions starting from the initial datum $(u(t_k), X(t_k))$.
Namely, for each $k\in \BZ_+$, there exists $T_k>0$ and mild solutions $(u_k,X_k)$ on $[0,T_k]$ to \eqref{eqn: NS equation}-\eqref{eqn: kinematic equation} with the initial condition $u_k(x,0) = u(x,t_k)$, and $X_k(s,0) = X(s,t_k)$.
By virtue of Remark \ref{rmk: characterization of lifespan T},
$T_k$ should depend on $\g,\,p,\,\nu,\,\lam,\,Q$, and $M$, and additionally, we need $(\r_k(T_k)+T_k^{\s})$ and $\phi_k(T_k)$ to be smaller than some constants that depend on $\g,\,p,\,\nu,\,\lam,\,Q$, and $M$.
By \eqref{eqn: uniform convergence of rho and phi}, we may choose $\{T_k\}_{k\in \BZ_+}$, such that $T_\circ:=\inf_{k\in \BZ_+}T_k > 0$.

Now take $k$ sufficiently large so that $T_*- t_k < T_\circ$.
Define
\[
\tilde{u}(x,t) :=
\begin{cases}
u(x,t), & \mbox{if } t\in [0,t_k], \\
u_k(x,t-t_k), & \mbox{if } t\in [t_k,t_k+T_k],
\end{cases}\quad
\tilde{X}(s,t) :=
\begin{cases}
X(s,t), & \mbox{if } t\in [0,t_k], \\
X_k(s,t-t_k), & \mbox{if } t\in [t_k,t_k+T_k].
\end{cases}
\]
By arguing as in Proposition \ref{prop: continuation}, we can show that $(\tilde{u},\tilde{X})$ gives a mild solution to \eqref{eqn: NS equation}-\eqref{eqn: initial data} on $[0,t_k+T_k]$.
Since $t_k+T_k \geq t_k+T_\circ > T_*$ for sufficiently large $k$, this contradicts with the maximality of $T_*$.

This completes the proof.
\end{proof}
\end{cor}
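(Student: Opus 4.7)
The plan is to first establish the existence and uniqueness of the maximal mild solution, and then derive the blow-up criterion by contradiction using the local existence theory. For the existence and uniqueness of $T_*$ and the maximal solution, I would define
\[
T_* := \sup\big\{T>0:\, \text{there exists a mild solution to \eqref{eqn: NS equation}-\eqref{eqn: initial data} on }[0,T]\big\}
\]
and glue together all such solutions. Proposition \ref{prop: fixed-point solution} gives at least one short-time mild solution, so $T_* > 0$; Proposition \ref{prop: uniqueness of mild solution} ensures that any two mild solutions coincide on their common domain, making the gluing well-defined and producing a mild solution on every $[0,T]$ with $T<T_*$; and Proposition \ref{prop: continuation} shows that if $T_*<+\infty$ were attained, the resulting solution could be strictly extended, contradicting the definition of $T_*$. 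This yields a unique maximal $(u,X)$ with lifespan $T_*$.

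For the blow-up criterion, I would argue by contradiction: suppose $T_*<+\infty$ and that none of (a), (b), (c) occurs. Since (c) fails, there exists an increasing sequence $t_k \to T_*^-$ such that $X'(t_k)$ converges in $C(\BT)$ to some limit; fixing its mean, call the limit $X_\circ \in C^1(\BT)$. Passing to a further subsequence using the failure of (a) and (b), I may assume
\[
Q := \sup_{k}\|u(t_k)\|_{L^p_x(\BR^2)} < +\infty,\qquad \lam := \inf_{k}|X(t_k)|_* > 0,
\]
and standard arguments show $|X_\circ|_* \geq \lam$. Fix $\g \in [\f{2}{p},1)\cap(1-\f{2}{p},1)$. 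Then I would apply Proposition \ref{prop: fixed-point solution} to construct local mild solutions $(u_k,X_k)$ on $[0,T_k]$ with initial data $(u(t_k),X(t_k))$.

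The main obstacle is securing a positive lower bound $T_\circ := \inf_k T_k > 0$, because by Remark \ref{rmk: characterization of lifespan T} the lifespan $T_k$ depends implicitly on $X(t_k)$ through the functions
\[
\r_{X(t_k),\g}(t) = \sup_{\tau\in(0,t]}\tau^{\g}\big\|e^{-\tau\Lam/4}X'(t_k)\big\|_{\dot{C}^{\g}_s},\qquad \phi_{X(t_k)}(t) = \sup_{\tau\in[0,t]}\big\|X(t_k) - e^{-\tau\Lam/4}X(t_k)\big\|_{\dot{C}^1_s},
\]
and one needs both to be smaller than a threshold determined only by $\g,p,\nu,\lam,Q$ and the uniform bound $M := \sup_k\|X'(t_k)\|_{C(\BT)} < +\infty$. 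The key observation is that the maps $X_0 \mapsto \r_{X_0,\g}$ and $X_0 \mapsto \phi_{X_0}$ are continuous from $C^1(\BT)$ into $C([0,+\infty))$, so $\r_{X(t_k),\g} \rightrightarrows \r_{X_\circ,\g}$ and $\phi_{X(t_k)} \rightrightarrows \phi_{X_\circ}$ on compact intervals; since both limiting functions vanish at $t=0$, a common $T_\circ>0$ can be chosen for which the threshold condition is uniformly satisfied.

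With $T_\circ>0$ in hand, pick $k$ large enough that $T_* - t_k < T_\circ$ and define
\[
\tilde u(x,t) := \begin{cases} u(x,t), & t\in[0,t_k],\\ u_k(x,t-t_k), & t\in[t_k, t_k+T_k],\end{cases} \qquad \tilde X(s,t) := \begin{cases} X(s,t), & t\in[0,t_k],\\ X_k(s,t-t_k), & t\in[t_k, t_k+T_k].\end{cases}
\]
Arguing as in the extension step of Proposition \ref{prop: continuation} (matching the integral equations \eqref{eqn: fixed pt equation for u} and \eqref{eqn: fixed pt equation for X} across $t=t_k$ and invoking Proposition \ref{prop: uniqueness of mild solution} to reconcile the two representations), one checks that $(\tilde u,\tilde X)$ is a mild solution on $[0, t_k+T_k]$ with $t_k+T_k > T_*$, contradicting the maximality of $T_*$. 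Hence at least one of (a), (b), (c) must hold as $t\to T_*^-$, completing the proof.
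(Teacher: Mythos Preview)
Your proposal is correct and follows essentially the same approach as the paper's proof: both establish existence and uniqueness of the maximal solution from Propositions~\ref{prop: uniqueness of mild solution} and~\ref{prop: continuation}, and both prove the blow-up criterion by contradiction via the uniform convergence $\r_{X(t_k),\g}\rightrightarrows\r_{X_\circ,\g}$, $\phi_{X(t_k)}\rightrightarrows\phi_{X_\circ}$ (stemming from $X'(t_k)\to X_\circ'$ in $C(\BT)$) to secure a uniform lower bound $T_\circ>0$ on the local lifespans from Proposition~\ref{prop: fixed-point solution}. One minor remark: your step ``passing to a further subsequence using the failure of (a) and (b)'' is unnecessary, since the negations of (a) and (b) already yield uniform bounds $\sup_{t\in[0,T_*)}\|u(t)\|_{L^p}<\infty$ and $\inf_{t\in[0,T_*)}|X(t)|_*>0$ (combining the near-$T_*$ bounds with the a~priori bounds on any $[0,T]\subset[0,T_*)$ coming from the definition of mild solution), so $Q$ and $\lam$ are finite and positive along the original sequence without any extraction.
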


\subsection{Higher-order regularity}
\label{sec: higher regularity}
As is introduced at the beginning of this section, we still assume $(u,X)$ to be the unique mild solution to \eqref{eqn: NS equation}-\eqref{eqn: initial data} on $[0,T]$, and let $M$, $\lam$, and $Q$ be defined in \eqref{eqn: def of M}-\eqref{eqn: def of Q}, respectively.
In this subsection, we will show that $X = X(s,t)$ will instantly become $C_s^{2,\al}(\BT)$ for any $\al\in (0,1)$, and $u = u(x,t)$ will instantly gain Lipschitz regularity in space, with their high-order norms admitting time-singularities of optimal orders at $t = 0$.

We first derive a higher-order estimates for $H_X^\nu$, which is an extension of Lemma \ref{lem: L inf and W 1 inf estimate for h_X^nu with optimal power of nu}.

\begin{lem}
\label{lem: estimate for h_X^nu with time singularity}
Let $\g\in (\f12,1)$.
Suppose that $X\in L^\infty([0,T];C_s^{1,\g}(\BT))$, satisfying that $|X|_*\geq \lam$ and $\pa_t X\in L^\infty([0,T];C_s^\g(\BT))$.
Denote
\begin{align*}
Z[\g,\nu,\lam,X,t]
:=
&\; \nu^{-\g} \lam^{-1} \|X'\|_{L^\infty_t L_s^\infty} \|X'\|_{L^\infty_t C_s^\g }^{1-\g} \|\pa_t X\|_{L^\infty_t \dot{C}_s^{\g}}^\g \\
&\; +(\nu t)^{-\f{1+\g}{2}} \lam^{-1-\g} t\cdot \|X'\|_{L^\infty_t L_s^\infty}
\\
&\;\quad \cdot
\left(\|X'\|_{L^\infty_t C_s^\g} \|\pa_t X\|_{L^\infty_t L_s^\infty}
+ \|X'\|_{L^\infty_t L_s^\infty} \|\pa_t X\|_{L^\infty_t \dot{C}_s^{\g}} \right).
\end{align*}
Then
\begin{align*}
\|\na h_X^{\nu}(t)\|_{\dot{C}_x^{2\g-1}(\R^2)}
\leq &\; C Z[\g,\nu,\lam,X,t],\\
\|\na h_X^\nu(t)\|_{L_x^\infty(\BR^2)}
\leq &\;  C(\nu t)^{\g-\f12} Z[\g,\nu,\lam,X,t],
\end{align*}
and
\[
\|H_X^{\nu}(t)\|_{\dot{C}^{1,2\g-1}(\BT)}
\leq C \left( \|X'\|_{L^\infty_t L_s^\infty(\BT)}^{2\g}
+ (\nu t)^{\g-\f12} \|X'\|_{L^\infty_t \dot{C}^{2\g-1}_s(\BT)}\right) Z[\g,\nu,\lam,X,t],
\]
where the constants $C$ depend on $\g$.

\begin{proof}
By \eqref{eqn: Holder estimate for h} in Lemma \ref{lem: estimate for h_X^nu}, with the parameters there taken as $(\al,\g',\mu_0,\mu,\mu'):= (2\g,\g,0,0,0)$, we obtain that
\begin{align*}
\|\na h_X^{\nu}(t)\|_{\dot{C}_x^{2\g-1}(\R^2)}
\leq &\; C \left[(\nu t)^{-\g}\min\left(\lam^{-1},\,(\nu t)^{-\f{1}{2}}\right)
+ \mathds{1}_{\{\nu t\geq 4\lam^2\}} (\nu t)^{-\g}\lam^{-1}\right]
t^{\g}
\\
&\;\cdot \|X'\|_{L^\infty_t L_s^\infty} \|X'\|_{L^\infty_t C_s^\g }^{1-\g} \|\pa_t X\|_{L^\infty_t \dot{C}_s^{\g}}^\g \\
&\; + C\left[(\nu t)^{-\f{1+\g}{2}} \min\left(\lam^{-\g},\, (\nu t)^{-\f{\g}{2}} \right)
\lam^{-1}
+ \mathds{1}_{\{\nu t\geq 4\lam^2\}} (\nu t)^{-1}  \lam^{-2\g} \right]
\\
&\;\quad \cdot  t\left(\|X'\|_{L^\infty_t L_s^\infty}
\|X'\|_{L^\infty_t C_s^\g} \|\pa_t X\|_{L^\infty_t L_s^\infty}
+ \|X'\|_{L^\infty_t L_s^\infty}^2 \|\pa_t X\|_{L^\infty_t \dot{C}_s^{\g}} \right)
\\
\leq &\; C (\nu t)^{-\g} \lam^{-1} t^\g \|X'\|_{L^\infty_t L_s^\infty} \|X'\|_{L^\infty_t C_s^\g }^{1-\g} \|\pa_t X\|_{L^\infty_t \dot{C}_s^{\g}}^\g \\
&\; + C(\nu t)^{-\f{1+\g}{2}} \lam^{-1-\g} t
\\
&\;\quad \cdot  \left(\|X'\|_{L^\infty_t L_s^\infty}
\|X'\|_{L^\infty_t C_s^\g} \|\pa_t X\|_{L^\infty_t L_s^\infty}
+ \|X'\|_{L^\infty_t L_s^\infty}^2 \|\pa_t X\|_{L^\infty_t \dot{C}_s^{\g}} \right)\\
\leq &\; C Z[\g,\nu,\lam,X,t],
\end{align*}
where $C$ depends on $\g$.

Similarly, by \eqref{eqn: L inf and W 1 inf estimate for h} in Lemma \ref{lem: estimate for h_X^nu}, with the parameters there taken as $(\g',\mu_0,\mu,\mu'):=(\g,0,0,0)$,
\begin{align*}
\|\na h_X^\nu(t)\|_{L_x^\infty(\BR^2)}
\leq &\; C (\nu t)^{-\f{1}{2}} \lam^{-1}  t^{\g}
\|X'\|_{L^\infty_t L_s^\infty} \|X'\|_{L^\infty_t C_s^\g}^{1-\g} \|\pa_t X\|_{L^\infty_t \dot{C}_s^{\g}}^{\g} \\
&\; + C (\nu t)^{-\f{2-\g}{2}} \min\left(\lam^{-\g},\, (\nu t)^{-\f{\g}{2}}\right)
\ln \left(2+\f{\nu t}{\lam^2}\right)\\
&\;\quad \cdot \lam^{-1}t \left( \|X'\|_{L^\infty_t L_s^\infty}
\|X'\|_{L^\infty_t C_s^{\g}} \|\pa_t X\|_{L^\infty_t L_s^\infty}
+ \|X'\|_{L^\infty_t L_s^\infty}^2 \|\pa_t X \|_{L^\infty_t \dot{C}_s^{\g}} \right)
\\
\leq &\;  C(\nu t)^{\g-\f12} Z[\g,\nu,\lam,X,t],
\end{align*}
where $C$ depends on $\g$.
Here we used the fact that
\[
\min\left(\lam^{-\g},\, (\nu t)^{-\f{\g}{2}}\right)
\ln \left(2+\f{\nu t}{\lam^2}\right)
\leq C\lam^{-\g}
\left(1+\f{\nu t}{\lam^2}\right)^{-\f{\g}{2}}
\ln \left(2+\f{\nu t}{\lam^2}\right)\leq C\lam^{-\g}
\]
where $C$ only depends on $\g$.

The estimate for $H_X^\nu$ follows from Lemma \ref{lem: estimate for h_X^nu}.
\end{proof}
\end{lem}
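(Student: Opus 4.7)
The plan is to derive the three estimates by specializing the general bounds from Lemma \ref{lem: estimate for h_X^nu} and then grouping terms into the compact quantity $Z[\gamma,\nu,\lambda,X,t]$. The hypothesis $\gamma \in (1/2, 1)$ is crucial: it ensures $2\gamma > 1$ so that the $\dot{C}^{1,2\gamma-1}$-estimate for $H_X^\nu$ makes sense, and it also ensures $2\gamma \in (1, 1+\gamma]$ so that the borderline case of \eqref{eqn: Holder estimate for h} applies with $\alpha = 2\gamma$, $\gamma'=\gamma$.

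First I would obtain the estimate for $\|\nabla h_X^\nu(t)\|_{\dot{C}_x^{2\gamma-1}}$ by invoking \eqref{eqn: Holder estimate for h} in Lemma \ref{lem: estimate for h_X^nu} with parameters
\[
(\alpha, \gamma', \mu_0, \mu, \mu') := (2\gamma,\; \gamma,\; 0,\; 0,\; 0).
\]
Since all time-weight exponents vanish, the suprema over $\eta \in (0,t]$ of $\eta^\g \|X'(\eta)\|_{C^\gamma_s}$, $\eta^\mu\|\pa_t X(\eta)\|_{\dot C^\gamma_s}$, etc., simplify to time-independent norms $\|X'\|_{L^\infty_t C^\gamma_s}$, $\|\pa_t X\|_{L^\infty_t \dot C^\gamma_s}$. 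The two $\min$'s in \eqref{eqn: Holder estimate for h} satisfy $\min(\lambda^{-1}, (\nu t)^{-1/2}) \leq \lambda^{-1}$ and $\min(\lambda^{-\gamma}, (\nu t)^{-\gamma/2}) \leq \lambda^{-\gamma}$, and I will check that the two indicator terms are also dominated by, respectively, the first and second summands in $Z$. This yields the claimed bound by $CZ$.

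Next, for $\|\nabla h_X^\nu(t)\|_{L_x^\infty}$ I would instead apply \eqref{eqn: L inf and W 1 inf estimate for h} with $k = 1$ and $(\gamma', \mu_0, \mu, \mu') := (\gamma, 0, 0, 0)$. The hypothesis $\gamma \in (k/2, 1) = (1/2,1)$ is exactly what is required. The main bookkeeping issue here will be the logarithmic factor produced by
\[
\int_{2\lambda^2/(\nu t)}^1 \zeta^{-1}\, d\zeta = \ln\frac{\nu t}{2\lambda^2}
\]
when $\nu t \geq 4\lambda^2$. I will absorb it into a constant using the inequality
\[
\min\bigl(\lambda^{-\gamma},\,(\nu t)^{-\gamma/2}\bigr)\,\ln\!\Bigl(2+\frac{\nu t}{\lambda^2}\Bigr) \leq C\lambda^{-\gamma},
\]
valid for $\gamma > 0$, since $(1+x)^{-\gamma/2}\ln(2+x)$ is bounded on $[0,\infty)$. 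Matching the resulting terms with the definition of $Z$ gives the factor $(\nu t)^{\gamma - 1/2}$ in front.

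Finally, the composition estimate for $H_X^\nu = h_X^\nu\circ X$ follows from the chain rule bound in the third part of Lemma \ref{lem: estimate for h_X^nu} applied with $\alpha = 2\gamma \in (1,2)$:
\[
\|H_X^\nu(t)\|_{\dot C^{1,2\gamma-1}_s} \leq \|\nabla h_X^\nu(t)\|_{\dot C^{2\gamma-1}_x}\|X'(t)\|_{L^\infty_s}^{2\gamma} + \|\nabla h_X^\nu(t)\|_{L^\infty_x}\|X'(t)\|_{\dot C^{2\gamma-1}_s}.
\]
Plugging in the two spatial bounds already proved produces exactly the stated estimate.

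The main obstacle is not conceptual but bookkeeping: carefully verifying that every one of the six terms on the right-hand side of \eqref{eqn: Holder estimate for h} (respectively \eqref{eqn: L inf and W 1 inf estimate for h}) is dominated by a constant times one of the two summands in $Z$, with the correct power of $(\nu t)$ out front. The key simplifying observation is that with $\mu_0 = \mu = \mu' = 0$ all the factors $t^{-\gamma(\mu - \gamma)}$, $t^{1-\gamma'-\mu_0}$, $t^{1-\mu'}$ reduce to pure positive powers of $t$, so that the various scenarios collapse and only the structure $\lambda^{-1} + (\nu t)^{-(1+\gamma)/2}\lambda^{-1-\gamma} t$ survives.
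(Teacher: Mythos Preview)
Your proposal is correct and follows essentially the same approach as the paper: the paper also specializes \eqref{eqn: Holder estimate for h} with $(\alpha,\gamma',\mu_0,\mu,\mu')=(2\gamma,\gamma,0,0,0)$ for the $\dot C^{2\gamma-1}$ bound, then \eqref{eqn: L inf and W 1 inf estimate for h} with $k=1$ and $(\gamma',\mu_0,\mu,\mu')=(\gamma,0,0,0)$ for the $L^\infty$ bound (using exactly your logarithmic absorption $\min(\lambda^{-\gamma},(\nu t)^{-\gamma/2})\ln(2+\nu t/\lambda^2)\leq C\lambda^{-\gamma}$), and finally the chain-rule composition bound from part (3) of Lemma~\ref{lem: estimate for h_X^nu} for $H_X^\nu$.
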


The next lemma is concerned with higher-order estimates for $B_\nu[u]$.

\begin{lem}
\label{lem: estimate for B_nu u with time singularity}
For any $\b\in (0,1)$ and $t\in [0,T]$,
\[
(\nu t)^{-\b/2} \|B_{\nu}[u](t)\|_{\dot{C}^{1}_x(\BR^2)}
+ \|B_{\nu}[u](t)\|_{\dot{C}^{1,\b}_x(\BR^2)}
\leq C \nu^{-1}\|u\|_{L^\infty_t L_x^\infty(\BR^2)}  \|u\|_{L^\infty_t\dot{C}_x^\b(\BR^2)},
\]
where $C$ depends on $\b$.

\begin{proof}
It follows from
\begin{align*}
\|B_{\nu}[u](t)\|_{\dot{C}^{1}_x}
\leq &\; \int_{0}^t \big\|\na e^{\nu (t-\tau)\Delta}\mathbb{P}\di (u\otimes u)(\tau)\big\|_{L^\infty_x}\, d\tau
\\
\leq &\; C \int_{0}^t (\nu(t-\tau))^{-\f{2-\b}{2}}
\|(u\otimes u)(\tau)\|_{\dot{C}^\b_x}\,d\tau
\\
\leq &\; C \nu^{-1} (\nu t)^{\b/2} \sup_{\tau\in(0,t)} \|(u\otimes u)(\tau)\|_{\dot{C}^\b_x},
\end{align*}
and by Lemma \ref{lem: interpolation},
\begin{align*}
\|B_{\nu}[u](t)\|_{\dot{C}^{1,\b}_x}
\leq &\; C \left|\sup_{\tau\in(0,t) }(t-\tau)^{\f{2-\b}{2}} (\nu(t-\tau))^{-\f{2-\b}{2}}
\|(u\otimes u)(\tau)\|_{\dot{C}^\b_x}\right|^{1-\b}\\
&\;\cdot
\left|\sup_{\tau\in (0,t)}(t-\tau)^{\f{3-\b}{2}} (\nu(t-\tau))^{-\f{3-\b}{2}} \|(u\otimes u)(\tau)\|_{C^\b_x}\right|^{\b}
\\
\leq &\; C \nu^{-1} \sup_{\tau\in(0,t)} \|(u\otimes u)(\tau)\|_{\dot{C}^\b_x}.
\end{align*}

\end{proof}
\end{lem}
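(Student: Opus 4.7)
The plan is to work directly from the Duhamel representation $B_\nu[u](t) = -\int_0^t e^{\nu(t-\tau)\Delta}\mathbb{P}\,\di(u\otimes u)(\tau)\,d\tau$ (valid since $\di u = 0$), and to combine two ingredients: (i) the parabolic smoothing estimate $\|\na^k e^{\nu(t-\tau)\D}\mathbb{P}\di w\|_{L^\infty_x}\leq C(\nu(t-\tau))^{-(1+k-\b)/2}\|w\|_{\dot{C}^\b_x}$ (Lemma \ref{lem: parabolic estimates} in the appendix, the same type of bound already used throughout Lemma \ref{lem: estimate for B_nu u}); and (ii) the product estimate $\|u\otimes u\|_{\dot{C}^\b_x}\leq C\|u\|_{L^\infty_x}\|u\|_{\dot{C}^\b_x}$.

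For the $\dot{C}^1_x$-bound I would just put $k=1$ into (i) and integrate bluntly in $\tau$. Since $\b\in(0,1)$, the resulting time integral $\int_0^t(\nu(t-\tau))^{-(2-\b)/2}\,d\tau$ converges and evaluates to $C\nu^{-1}(\nu t)^{\b/2}$, producing exactly the factor $(\nu t)^{\b/2}$ on the right-hand side after applying (ii). This is the easy half.

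For the $\dot{C}^{1,\b}_x$-bound a direct integration against $k=2$ fails, because $(3-\b)/2>1$, so I would instead apply the interpolation device Lemma \ref{lem: interpolation} with $\varphi\equiv 1$ to $v(x,\tau):=e^{\nu(t-\tau)\D}\mathbb{P}\di(u\otimes u)(\tau)$ at H\"older exponent $\b$. The weights $(t-\tau)^{(2-\b)/2}$ and $(t-\tau)^{(3-\b)/2}$ built into that lemma are precisely calibrated to cancel the singular factors $(\nu(t-\tau))^{-(2-\b)/2}$ and $(\nu(t-\tau))^{-(3-\b)/2}$ in (i), leaving only a $\nu$-dependent prefactor. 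The exponent bookkeeping is the only place requiring care: the $\nu^{-1}$ on the right-hand side must come out of the identity
\[
\tfrac{(2-\b)(1-\b)+(3-\b)\b}{2}=1,
\]
so the two singular powers of $\nu$ conspire to collapse exactly to $\nu^{-1}$, independently of $\b$. After that, (ii) again closes the estimate.

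I do not anticipate a serious obstacle: the whole proof is two applications of a parabolic bound plus one interpolation, and the only thing worth double-checking is the arithmetic that forces the $\nu$-exponent to equal $-1$ rather than something $\b$-dependent. If desired, a short sanity check by taking the scaling $u \mapsto \lambda u(\lambda x,\lambda^2 t)$ confirms that $\nu^{-1}\|u\|_{L^\infty}\|u\|_{\dot{C}^\b}$ has the correct homogeneity to sit opposite $\|B_\nu[u]\|_{\dot{C}^{1,\b}}$, which is reassuring.
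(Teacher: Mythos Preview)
Your proposal is correct and follows exactly the paper's proof: direct integration of the $k=1$ parabolic bound for the $\dot{C}^1_x$ part, and Lemma \ref{lem: interpolation} with $\varphi\equiv1$ (applied to the gradient of the integrand) for the $\dot{C}^{1,\b}_x$ part, with the same exponent cancellation $(2-\b)(1-\b)+(3-\b)\b=2$ yielding the clean $\nu^{-1}$. The only slip is notational---you should set $v:=\na e^{\nu(t-\tau)\Delta}\mathbb{P}\di(u\otimes u)(\tau)$ rather than the integrand itself, since it is $\|\na B_\nu[u]\|_{\dot{C}^\b_x}$ you are bounding---but the powers you quote are consistent with that and the argument goes through as written.
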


\begin{lem}
\label{lem: smoothing lemma for X' in 2gamma}
Let $\g\in (\f12,1)\cap [\f2p,1)$.
For any $t\in (0,T]$,
\[
\sup_{\eta\in (0,T]}\eta^{2\g} \|X''(\eta)\|_{C^{2\g-1}_s(\BT)} + \sup_{\eta\in (0,T)}\eta^{2\g} \|\pa_t X'(\eta)\|_{C^{2\g-1}_s(\BT)}
\leq C(T, \g, p,\nu,\lam,M,Q),
\]
where as before, $C(T, \g, p,\nu,\lam,M,Q)$ denotes a constant depending on $T,\,\g,\,p,\,\nu,\,\lam,\,M$, and $Q$.

\begin{proof}
Take an arbitrary $t_0\in (0,T]$.
Since $\g \geq \f2p$, the results in Section \ref{sec: regularity of mild solutions} imply that
\begin{itemize}
\item $X\in C^1(\BT\times [t_0/2,T])$ and $u\in C(\BR^2\times [t_0/2,T])$;

\item $\inf_{\eta\in [t_0/2,T]} |X(\eta)|_* \geq \lam$, and
\begin{align*}
&\; t_0^\g \sup_{\eta\in [t_0/2,T]}\|X'(\eta)\|_{C^{\g}_s(\BT)}
+
t_0^{\g}\sup_{\eta\in [t_0/2,T)}
\|\pa_t X(\eta)\|_{C_s^{\g}(\BT)}
+
t_0^{1/p}\sup_{\eta\in [t_0/2,T)}\|\pa_t X(\eta)\|_{L_s^\infty(\BT)} \\
\leq &\; C(T, \g, p,\nu,\lam,M,Q);
\end{align*}
\item and, moreover,
\begin{align*}
t_0^{1/p}\sup_{\tau\in [t_0/2,T]}\|u(\tau)\|_{L^\infty_x(\BR^2)}
+ t_0^{\g}\sup_{\tau\in [t_0/2,T]}\|u(\tau)\|_{\dot{C}^\g_x(\BR^2)}
\leq &\; C(T, \g, p,\nu,\lam,M,Q).
\end{align*}

\end{itemize}
It is not difficult to verify by definition that, $(u,X)$ satisfies on $t\in [t_0/2,T]$ that (cf.\;\eqref{eqn: u_X^nu in terms of u_X}-\eqref{eqn: u_2 tilde}, and \eqref{eqn: fixed pt equation for u}-\eqref{eqn: def of the operator I})
\begin{align}
u(t)
=&\; u_{X(t)}+\tilde{h}_{X,t_0/2}^{\nu}(t)+e^{\nu (t-t_0/2)\Delta}(u(t_0/2)-u_{X(t)}) +\tilde{B}_{\nu,t_0/2}[u](t),
\label{eqn: formula for u new initial time}\\
X(t)= &\; e^{-\f14(t-t_0/2) \Lam}X(t_0/2)+\tilde{\CI}_{t_0/2} \big[\tilde{W}[u,X; \nu,u(t_0/2),t_0/2]\big](t),
\label{eqn: formula for X new initial time}
\end{align}
and it holds on $[t_0/2,T)$ that (cf.\;\eqref{eqn: differential equation of X in the mild solution})
\beq
\pa_t X(s,t) = -\f14\Lam X(s,t) + \tilde{W}[u,X;\nu,u(t_0/2),t_0/2](t) = u(X(s,t),t).
\label{eqn: X_t equation new form}
\eeq
Here we denoted, with $0\leq t_*\leq t$,
\begin{align*}
\tilde{h}_{X,t_*}^{\nu}(t):= &\; -\nu\int_{t_*}^t\Delta e^{\nu (t-\tau)\Delta}(u_{X(\tau)}-u_{X(t)})\, d\tau,
\\
\tilde{H}_{X,t_*}^{\nu}(t):=&\; \tilde{h}_{X,t_*}^{\nu}(t)\circ X(t),
\\
\tilde{B}_{\nu,t_*}[u](t):= &\; -\int_{t_*}^te^{\nu (t-\tau)\Delta}\mathbb{P}(u\cdot\nabla u)(\tau)\, d\tau,
\\
\tilde{W}[u,X; \nu,\tilde{u}_0,t_*](t)
:= &\; g_X(t) + \tilde{H}_{X,t_*}^{\nu}(t)
+ \big[e^{\nu (t-t_*)\D}(\tilde{u}_0-u_{X(t)})+\tilde{B}_{\nu,t_*}[u]\big] \circ X(t),
\end{align*}
and
\[
\tilde{\CI}_{t_*}[\tilde{W}](t):=\int_{t_*}^te^{-\f14(t-\tau)\Lam}\tilde{W}(\tau)\,d\tau.
\]
In what follows, with $(u,X)$ and $t_0$ given, we shall write $\tilde{W}[u,X; \nu,u(t_0/2),t_0/2]$ as $\tilde{W}[u,X]$ or even $\tilde{W}$ for brevity.

Denote $\al := 2\g-1$.
We shall derive a $C^{2,\al}_s(\BT)$-estimate for $X(t_0)$ using the new representation.
By \eqref{eqn: formula for X new initial time}, Lemma \ref{lem: parabolic estimates for fractional Laplace}, and Lemma \ref{lem: parabolic estimates for Duhamel term of fractional Laplace} with $\va(t)= t^{-\g}$, 
\beq
\begin{split}
\|X''(t_0)\|_{C^{\al}_s(\BT)}
\leq &\; C\big\|e^{-\f{t_0}8 \Lam}X'(t_0/2)\big\|_{\dot{C}^{1,\al}_s(\BT)}
+ C\big\|\tilde{\CI}_{t_0/2} \big[\tilde{W}[u,X]\big](t_0)\big\|_{\dot{C}^{2,\al}_s(\BT)}
\\
\leq &\; Ct_0^{-\g}\|X'(t_0/2)\|_{C^\g_s(\BT)}
+ Ct_0^{-\g}
\sup_{\tau\in[t_0/2,t_0]} (\tau-t_0/2)^{\g} \|\tilde{W}[u,X](\tau)\|_{\dot{C}_s^{1,\al}(\T)}.
\end{split}
\label{eqn: C 2 alpha estimate for X at t_0}
\eeq
By Lemma \ref{lem: Holder estimate for composition of functions}, for any $t\in [\f{t_0}{2},t_0]$,
\begin{align*}
\|\tilde{W}[u,X](t)\|_{\dot{C}^{1,\al}_s(\BT)}
\leq
&\;\|g_X(t)\|_{C^{1,\al}_s}
+ \|\tilde{H}_{X,t_0/2}^{\nu}(t)\|_{\dot{C}^{1,\al}_s}\\
&\;
+ \big\|e^{\nu (t-t_0/2)\D}(u(t_0/2)-u_{X(t)})+ \tilde{B}_{\nu,t_0/2}[u](t)\big\|_{\dot{C}^1_x} \|X'(t)\|_{\dot{C}^{\al}_s}\\
&\;
+ \big\|e^{\nu (t-t_0/2)\D}(u(t_0/2)-u_{X(t)})+ \tilde{B}_{\nu,t_0/2}[u](t)\big\|_{\dot{C}^{1,\al}_x} \|X'(t)\|_{L^\infty_s}^{1+\al}.
\end{align*}
We shall estimate these terms one by one.

By Lemma \ref{lem: improved estimates for g_X}, 
for any $t\in [\f{t_0}{2},t_0]$,
\[
\|g_X(t)\|_{\dot{C}_s^{1,\al}(\BT)}
\leq C\lam^{-3} \|X'(t)\|_{L_s^\infty}^2 \|X'(t)\|_{\dot{C}_s^\g}^2
\leq C(T, \g, p,\nu,\lam,M,Q) t_0^{-2\g}.
\]
With $t_0/2$ viewed as the new initial time, we apply Lemma \ref{lem: estimate for h_X^nu with time singularity} to obtain that, for any $t\in [\f{t_0}{2},t_0]$,
\begin{align*}
&\;\|\tilde{H}_{X,t_0/2}^{\nu}(t)\|_{\dot{C}^{1,\al}_s(\BT)}\\
\leq &\; C \left( \|X'\|_{L^\infty_t L_s^\infty}^{2\g}
+ (\nu (t-t_0/2))^{\g-\f12} \sup_{\eta\in [t_0/2,t]} \|X'(\eta)\|_{\dot{C}^{2\g-1}_s}\right)\\
&\;\cdot \left[
\nu^{-\g} \lam^{-1} \|X'\|_{L^\infty_t L_s^\infty} \sup_{\eta\in [t_0/2,t]}\|X'(\eta)\|_{C_s^\g}^{1-\g} \sup_{\eta\in [t_0/2,t]}\|\pa_t X(\eta)\|_{\dot{C}_s^{\g}}^\g \right. \\
&\; \quad +(\nu (t-t_0/2))^{-\f{1+\g}{2}} \lam^{-1-\g} (t-t_0/2) \|X'\|_{L^\infty_t L_s^\infty}
\\
&\;\left.\qquad \cdot
\left(\sup_{\eta\in [t_0/2,t]}\|X'(\eta)\|_{C_s^\g} \sup_{\eta\in [t_0/2,t]}\|\pa_t X(\eta)\|_{L_s^\infty}
+ \|X'\|_{L^\infty_t L_s^\infty} \sup_{\eta\in [t_0/2,t]}\|\pa_t X(\eta)\|_{\dot{C}_s^{\g}} \right)
\right]
\\
\leq &\; C(T, \g, p,\nu,\lam,M,Q)
\left( 1 + (t-t_0/2)^{\g-\f12} t_0^{-(2\g-1)} \right)
\left( t_0^{-\g} + (t-t_0/2)^{\f{1-\g}{2}} t_0^{-\g-\f1p}
\right).
\end{align*}
Since $\g \in(\f12, 1)$ and $t_0\leq T$, for $t\in [\f{t_0}{2},t_0]$,
\[
\|\tilde{H}_{X,t_0/2}^{\nu}(t)\|_{\dot{C}^{1,\al}_s(\BT)}
\leq C(T, \g, p,\nu,\lam,M,Q) t_0^{-2\g}.
\]

By Lemma \ref{lem: estimate for u_X} (with $\g = \f1p$ there) and Lemma \ref{lem: parabolic estimates}, for $t\in [\f{t_0}{2},t_0]$,
\begin{align*}
&\; (\nu (t-t_0/2))^{1/2}\big\|e^{\nu (t-t_0/2)\D}(u(t_0/2)-u_{X(t)}) \big\|_{\dot{C}^1_x(\BR^2)} \\
&\; + (\nu (t-t_0/2))^{\g}
\big\|e^{\nu (t-t_0/2)\D}(u(t_0/2)-u_{X(t)}) \big\|_{\dot{C}^{1,\al}_x(\BR^2)}
\\
\leq
&\; C \left(\|u(t_0/2)\|_{L^\infty_x} +\|u_{X(t)}\|_{L^\infty_x}\right) \\
\leq
&\; C
\left[t_0^{-\f1p} C(T, \g, p,\nu,\lam,M,Q) +t^{-\f1p}\cdot \lam^{-1} M^2\right] \\
\leq
&\; C(T, \g, p,\nu,\lam,M,Q) t_0^{-1/p}.
\end{align*}
So for $t\in [\f{t_0}{2},t_0]$,
\begin{align*}
&\; \big\|e^{\nu (t-t_0/2)\D}(u(t_0/2)-u_{X(t)})\big\|_{\dot{C}^1_x} \|X'(t)\|_{\dot{C}^{\al}_s}\\
&\; + \big\|e^{\nu (t-t_0/2)\D}(u(t_0/2)-u_{X(t)})\big\|_{\dot{C}^{1,\al}_x} \|X'(t)\|_{L^\infty_s}^{1+\al}
\\
\leq &\; C(T, \g, p,\nu,\lam,M,Q) (t-t_0/2)^{-\g} t_0^{-1/p}\left[
(t-t_0/2)^{\g-\f12} t_0^{-\al} + 1\right].
\end{align*}
By Lemma \ref{lem: estimate for B_nu u with time singularity} and the interpolation inequality,
\begin{align*}
&\;  (\nu (t-t_0/2))^{-\al/2}
\big\| \tilde{B}_{\nu,t_0/2}[u](t)\big\|_{\dot{C}^1_x(\BR^2)}
+ \big\|\tilde{B}_{\nu,t_0/2}[u](t)\big\|_{\dot{C}^{1,\al}_x(\BR^2)}\\
\leq
&\; C \nu^{-1}\sup_{\eta\in [t_0/2,t]}\|u(\eta)\|_{L_x^\infty(\BR^2)}
\sup_{\eta\in [t_0/2,t]} \|u(\eta)\|_{\dot{C}_x^\al(\BR^2)} \\
\leq
&\; C \nu^{-1}\cdot t_0^{-\f1p} C(T, \g, p,\nu,\lam,M,Q)\cdot t_0^{-\al-\f1p(1-\f{\al}{\g})} C(T, \g, p,\nu,\lam,M,Q),
\end{align*}
and thus, for $t\in [\f{t_0}{2},t_0]$,
\begin{align*}
&\; \big\| \tilde{B}_{\nu,t_0/2}[u](t)\big\|_{\dot{C}^1_x} \|X'(t)\|_{\dot{C}^{\al}_s}
+ \big\|\tilde{B}_{\nu,t_0/2}[u](t)\big\|_{\dot{C}^{1,\al}_x} \|X'(t)\|_{L^\infty_s}^{1+\al}\\
\leq &\;
C\left[(\nu (t-t_0/2))^{\al/2} \|X'(t)\|_{\dot{C}^{\al}_s}
+ \|X'(t)\|_{L^\infty_s}^{1+\al}\right]
\nu^{-1}\cdot t_0^{-\al-\f1p(2-\f{\al}{\g})} C(T, \g, p,\nu,\lam,M,Q)
\\
\leq &\;
C(T, \g, p,\nu,\lam,M,Q) \left(t_0^{-\al/2} + 1\right)
t_0^{-\al-\f1p(2-\f{\al}{\g})}
\\
\leq &\;
C(T, \g, p,\nu,\lam,M,Q) t_0^{-2\g}.
\end{align*}
In the last line, we used the facts that $\g\in [\f2p,1)$, $\al = 2\g-1$, and $t_0\leq T$.
Combining all these bounds into the estimate for $\tilde{W}$, we find that, for $t\in [\f{t_0}{2},t_0]$,
\beq
\begin{split}
\|\tilde{W}[u,X](t)\|_{\dot{C}^{1,\al}_s(\BT)}
\leq &\;
C(T, \g, p,\nu,\lam,M,Q) t_0^{-2\g}\\
&\;
+ C(T, \g, p,\nu,\lam,M,Q) (t-t_0/2)^{-\g}
t_0^{-1/p}\left(t_0^{-\g+\f12} + 1\right).
\end{split}
\label{eqn: C 1 alpha estimate for tilde W}
\eeq

Since $p>2$, $\g\in (\f12,1)$, and $t_0\leq T$,
\begin{align*}
&\; t_0^{-\g} \sup_{\tau\in[t_0/2,t_0]} (\tau-t_0/2)^{\g} \|\tilde{W}[u,X](\tau)\|_{\dot{C}^{1,\al}_s(\BT)}\\
\leq
&\; C(T, \g, p,\nu,\lam,M,Q) t_0^{-2\g}
+ C(T, \g, p,\nu,\lam,M,Q) t_0^{-\g} \cdot t_0^{-1/p}
\left(t_0^{-\g+\f12} + 1\right).
\\
\leq
&\; C(T, \g, p,\nu,\lam,M,Q) t_0^{-2\g}.
\end{align*}
Hence, by \eqref{eqn: C 2 alpha estimate for X at t_0},
\[
\|X''(t_0)\|_{C^{\al}_s(\BT)}
\leq C(T, \g, p,\nu,\lam,M,Q) t_0^{-2\g}.
\]
Note that $t_0\in (0,T]$ is arbitrary.
This together with \eqref{eqn: X_t equation new form} and \eqref{eqn: C 1 alpha estimate for tilde W} that, with $t_0\in (0,T)$,
\[
\|\pa_t X'(t_0)\|_{C^{\al}_s(\BT)}
\leq C\left(\|\Lam X'(t_0)\|_{C^{\al}_s}
+ \|\tilde{W}[u,X](t_0)\|_{\dot{C}^{1,\al}_s}\right)
\leq C(T, \g, p,\nu,\lam,M,Q) t_0^{-2\g}.
\]
This proves the desired estimate.
\end{proof}
\end{lem}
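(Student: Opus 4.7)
The natural strategy is a \emph{restart argument}: fix an arbitrary $t_0 \in (0,T]$, set $\alpha := 2\gamma - 1 \in (0,1)$, and reformulate the mild solution on $[t_0/2, T]$ with $t_0/2$ playing the role of the initial time. By the regularity results of Section~\ref{sec: regularity of mild solutions} (Lemma \ref{lem: L inf regularity of u mild solution}, Lemma \ref{lem: time derivative and equation}, Lemma \ref{lem: C gamma estimate for mild solutions}), at time $t_0/2$ we already have $X(t_0/2) \in C^{1,\gamma}_s(\BT)$, $u(t_0/2) \in L^p_x \cap C^\gamma_x \cap L^\infty_x(\BR^2)$, and bounds on $\|\pa_t X\|_{C^\gamma_s}$ and $\|\pa_t X\|_{L^\infty_s}$ on $[t_0/2, T]$ controlled by $C(T,\g,p,\nu,\lam,M,Q)$ (times a negative power of $t_0$). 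The plan is then to derive a $C^{1,\alpha}_s(\BT)$-estimate for $X'(t_0)$ by writing $X(t_0) = e^{-(t_0/8)\Lambda} X(t_0/2) + \tilde{\mathcal I}_{t_0/2}[\tilde W](t_0)$ and then applying Lemma~\ref{lem: parabolic estimates for fractional Laplace} and Lemma~\ref{lem: parabolic estimates for Duhamel term of fractional Laplace}; once the $X''$ bound is in hand, the estimate for $\pa_t X'$ follows immediately from the pointwise equation $\pa_t X = -\tfrac14\Lam X + \tilde W$.

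The linear piece $e^{-(t_0/8)\Lambda} X'(t_0/2)$ is straightforward: the smoothing gives $\|\cdot\|_{\dot C^{1,\alpha}_s} \lesssim t_0^{-\g} \|X'(t_0/2)\|_{C^\g_s} \lesssim t_0^{-\g} \cdot t_0^{-\g} C(\cdots) = C(\cdots)\, t_0^{-2\gamma}$, which is already the desired order. For the Duhamel piece I would use Lemma~\ref{lem: parabolic estimates for Duhamel term of fractional Laplace} with weight $\varphi(\tau) = \tau^{-\g}$, so the task reduces to controlling $\sup_{\tau \in [t_0/2, t_0]} (\tau - t_0/2)^\g \|\tilde W(\tau)\|_{\dot C^{1,\alpha}_s}$.

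Split $\tilde W$ into its four natural summands. For $g_X$, use Lemma~\ref{lem: improved estimates for g_X} in the $\gamma \in (\tfrac12, 1)$ case to bound $\|g_X\|_{\dot C^{1,2\gamma-1}_s} \lesssim \lam^{-3}\|X'\|_{L^\infty_s}^2 \|X'\|_{\dot C^\g_s}^2$, which gives the required $t_0^{-2\g}$ singularity. For $\tilde H^\nu_{X, t_0/2}$, I would establish and apply a specialized lemma (the natural extension of Lemma~\ref{lem: L inf and W 1 inf estimate for h_X^nu with optimal power of nu}) producing $\|\nabla h^\nu_X\|_{\dot C^{2\g-1}_x}$ and $\|\nabla h^\nu_X\|_{L^\infty_x}$ bounds — essentially Lemma~\ref{lem: estimate for h_X^nu} with parameters $(\alpha, \gamma', \mu_0, \mu, \mu') = (2\g, \g, 0, 0, 0)$ — and then transfer to the composition via Lemma~\ref{lem: Holder estimate for composition of functions}. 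For the semigroup term $e^{\nu(t - t_0/2)\Delta}(u(t_0/2) - u_{X(t)})$, combine $L^\infty_x$-bounds on $u(t_0/2)$ and $u_{X(t)}$ (via Lemma~\ref{lem: estimate for u_X}) with the parabolic smoothing estimates of Lemma~\ref{lem: parabolic estimates}. For $\tilde B_{\nu, t_0/2}[u]$, apply a sharpened Morrey-free version of the $\dot C^{1,\alpha}$ bound for $B_\nu[u]$ (to be established separately, analogous to Lemma~\ref{lem: estimate for B_nu u} but using the higher $L^\infty_x$ and $\dot C^\alpha_x$ control of $u$), and combine it with $C^\alpha_s$-bounds on $X'$ via composition.

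The main obstacle is the bookkeeping of time powers: each summand contributes singularities in $t - t_0/2$ and in $t_0$, and the multiplicative weight $(\tau - t_0/2)^\g$ from the Duhamel estimate must absorb all the $(t - t_0/2)^{-\beta}$ factors ($\beta$ being $\g$, $(1+\g)/2$, $1/2$, etc.). This works because $\gamma \in (\tfrac12, 1)$ — which is exactly what forces the restriction in the statement — and because the remaining $t_0^{-(\cdots)}$ powers are dominated by $t_0^{-2\g}$ using $\g \geq \tfrac{2}{p}$ and $t_0 \leq T$. A secondary technical point is that for the $\tilde B_\nu[u]$ term I need an $L^\infty_x$-based $\dot C^{1,\alpha}_x$ bound that avoids going through Morrey norms (these appear in Lemma~\ref{lem: estimate for B_nu u} which is tailored to lower regularity inputs); the proof of such a bound is a short computation using Lemma~\ref{lem: interpolation} applied to the representation of $B_\nu[u]$ as a time integral of $e^{\nu(t-\tau)\Delta}\BP\,\mathrm{div}(u\otimes u)$.
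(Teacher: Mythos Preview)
Your proposal is correct and follows essentially the same route as the paper: a restart at $t_0/2$, the Duhamel estimate with weight $\varphi(\tau)=\tau^{-\gamma}$, and termwise treatment of $\tilde W$ using exactly the four ingredients you name. The two auxiliary lemmas you anticipate needing---the $\dot C^{1,2\gamma-1}_x$ bound for $h^\nu_X$ via Lemma~\ref{lem: estimate for h_X^nu} with parameters $(\alpha,\gamma',\mu_0,\mu,\mu')=(2\gamma,\gamma,0,0,0)$, and the $L^\infty_x\times\dot C^\alpha_x$-based $\dot C^{1,\alpha}_x$ estimate for $B_\nu[u]$---are precisely Lemmas~\ref{lem: estimate for h_X^nu with time singularity} and~\ref{lem: estimate for B_nu u with time singularity}, which the paper states and proves immediately before this lemma.
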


To study the higher-order regularity of $u$, we need the following Lipschitz estimate for the Stokes flow field $u_X$ under the assumption that $X\in C_s^{2,\al}(\BT)$, which can be viewed as an extension of Lemma \ref{lem: estimate for u_X}.
Let us mention that a similar estimate on the log-Lipschitz regularity of $u_X$ was established under slightly weaker assumptions in \cite[Lemma 2.2]{tong2018stokes}.
\begin{lem}
\label{lem: Lipschitz continuity of flow field}
Suppose $X = X(s)\in C^{2,\al}(\BT)$ for some $\al\in (0,1)$ and $|X|_* = \lam>0$.
Then $u_X = u_X(x)$ is Lipschitz in $\BR^2$, satisfying that, for any $x,y\in \BR^2$,
\[
|u_X(x)-u_X(y)|
\leq
C\lam^{-2} \|X'\|_{L_s^\infty}^{1+\f{\al}{1+\al}}\|X''\|_{\dot{C}_s^\al}^{\f{1}{1+\al}} |x-y|,
\]
where $C$ depends on $\al$.

\begin{proof}
It suffices to bound $\na u_X(x)$ when $x \not \in X(\BT)$.
This is because Lemma \ref{lem: estimate for u_X} implies the continuity of $u_X$ across $X(\BT)$, and the points on $X(\BT)$ can always be approximated by those outside $X(\BT)$.

For $\d\in (0,1]$ to be chosen later, we take $\chi_\d:\BT\to [0,1]$ to be an even smooth cut-off function, satisfying that $\chi_\d(s)\equiv 1$ in $[-\d,\d]$, $\chi_\d(s)\equiv 0$ in $\BT\setminus[-2\d,2\d]$, $\chi_\d$ is decreasing on $[0,\pi]$, and $|\chi_\d'(s)|\leq C\d^{-1}$ for some universal $C>0$.

By \eqref{eqn: expression of u_11}, with $s_x$ defined as in \eqref{eqn: def of s_x} and with $\d\in (0,1]$ to be determined,
\begin{align*}
&\; \pa_k u_{X,i}(x)\\
= &\; \int_{s_x-\pi}^{s_x+\pi} \pa_k G_{ij}(x-X(s')) X_j''(s')\cdot \chi_\d(s'-s_x)\,ds'\\
&\;
+\int_{s_x-\pi}^{s_x+\pi} \pa_k G_{ij}(x-X(s')) X_j''(s')\big[1-\chi_\d(s'-s_x)\big]\,ds'
\\
= &\; \int_{s_x-\pi}^{s_x+\pi} \pa_k G_{ij}(x-X(s')) \big(X_j''(s')-X_j''(s_x)\big)\cdot \chi_\d(s'-s_x)\,ds'\\
&\; + X_j''(s_x) \int_{s_x-\pi}^{s_x+\pi}
\Big[\pa_k G_{ij}(x-X(s')) - \pa_k G_{ij}\big(x-X(s_x)-(s'-s_x)X'(s_x)\big)\Big] \chi_\d(s'-s_x)\,ds'\\
&\; + X_j''(s_x) \int_{s_x-\pi}^{s_x+\pi}
\pa_k G_{ij}\big(x-X(s_x)-(s'-s_x)X'(s_x)\big) \cdot \chi_\d(s'-s_x)\,ds'\\
&\; + \int_{s_x-\pi}^{s_x+\pi} \pa_{kl} G_{ij}(x-X(s')) X_l'(s') X_j'(s')\big[1-\chi_\d(s'-s_x)\big]\,ds'\\
&\;
+\int_{s_x-\pi}^{s_x+\pi} \pa_k G_{ij}(x-X(s')) X_j'(s')\cdot \chi_\d'(s'-s_x)\,ds'\\
=: &\; V_1+V_2+V_3+V_4+V_5.
\end{align*}
Here we performed integration by parts to obtain the last two terms $V_4$ and $V_5$.

It is not difficult to verify by \eqref{eqn: Stokeslet in 2-D} that (cf.\;Lemma \ref{lem: estimate for K}), for any $k\in \BZ_+$,
\[
|\na^k G(x)|\leq C_k |x|^{-k},
\]
and for any $k\in \BN$,
\[
|\na^k G(x)-\na^k G(y)|
\leq C_k|x-y|\big(|x|^{-k-1}+|y|^{-k-1}\big).
\]
Lemma \ref{lem: lower bound for x-X(s)} implies that,
\[
|x-X(s')|\geq \lam|s_x-s'|.
\]
Moreover, with $d=d(x)$ defined as in \eqref{eqn: def of d(x,t)},
\[
|x-X(s_x)-(s'-s_x)X'(s_x)| = \big(d^2+(s'-s_x)^2|X'(s_x)|^2\big)^{1/2} \geq \lam|s_x-s'|,
\]
and
\[
|X(s')-X(s_x)-(s'-s_x)X'(s_x)|
=\left|\int_{s_x}^{s'} X'(s'')-X'(s_x) \, ds''\right|
\leq C|s'-s_x|^{1+\al}\|X'\|_{\dot{C}_s^\al}.
\]

Using these estimates, we can derive that 
\begin{align*}
|V_1|\leq &\;
C\int_{s_x-2\d}^{s_x+2\d} |x-X(s')|^{-1}|s'-s_x|^\al \|X''\|_{\dot{C}_s^\al}\,ds' \\
\leq &\;
C\|X''\|_{\dot{C}_s^\al}
\int_{s_x-2\d}^{s_x+2\d} \lam^{-1} |s'-s_x|^{-1+\al} \,ds'
\leq 
C \lam^{-1}\|X''\|_{\dot{C}_s^\al} \d^{\al},
\end{align*}
where $C$ depends on $\al$.
Similarly,
\begin{align*}
|V_2|
\leq &\;
C\|X''\|_{L_s^\infty} \int_{s_x-2\d}^{s_x+2\d} |s'-s_x|^{1+\al}\|X'\|_{\dot{C}_s^\al}\\
&\;\qquad \qquad \qquad \qquad \cdot \big( |x-X(s')|^{-2} + |x-X(s_x)-(s'-s_x)X'(s_x)|^{-2}\big)\,ds'\\
\leq &\;
C\|X'\|_{\dot{C}_s^\al}\|X''\|_{L_s^\infty}
\int_{s_x-2\d}^{s_x+2\d} |s'-s_x|^{1+\al}\cdot \lam^{-2}|s'-s_x|^{-2}\,ds'\\
\leq &\; C\lam^{-2} \|X'\|_{L_s^\infty}\|X''\|_{\dot{C}_s^\al} \d^{\al}.
\end{align*}
In the last line, we applied the interpolation inequality.
Moreover,
\begin{align*}
|V_4|
\leq &\; C\int_{[s_x-\pi,s_x+\pi]\setminus[s_x-\d,s_x+\d]}|x-X(s')|^{-2} |X'(s')|^2\,ds'\\
\leq &\; C\int_{[s_x-\pi,s_x+\pi]\setminus[s_x-\d,s_x+\d]}\lam^{-2}|s'-s_x|^{-2} \|X'\|_{L^\infty_s}^2\,ds'
\leq C\lam^{-2} \|X'\|_{L^\infty_s}^2\d^{-1}.
\end{align*}
and
\begin{align*}
|V_5|
\leq
&\; C\int_{s_x-\pi}^{s_x+\pi} |x-X(s')|^{-1} |X'(s')| \cdot |\chi_\d'(s'-s_x)|\,ds'\\
\leq
&\; C\int_{[s_x-2\d,s_x+2\d]\setminus [s_x-\d,s_x+\d]} \lam^{-1} |s'-s_x|^{-1} \|X'\|_{L^\infty_s}\cdot \d^{-1}\,ds'
\leq C \lam^{-1} \|X'\|_{L^\infty_s} \d^{-1}.
\end{align*}

We shall handle $|V_3|$ in a more explicit way.
Without loss of generality, up to suitable translation, rotation, reflection, and a change of variable, we assume that $x = (0,-d)$ with $d>0$, $s_x= 0$, $X(s_x) = (0,0)$, and $X'(s_x) = (c,0)$ with $c\geq \lam>0$.
In order to bound $|V_3|$, it suffices to estimate
\begin{align*}
&\; \left|X''_j(0)\int_{-\pi}^{\pi}
\pa_k G_{ij}(cs',d) \cdot \chi_\d(s') \,ds'\right|\\
= &\; \left|X''_j(0)\int_0^{1} \int_{-\pi}^{\pi}
\pa_k G_{ij}(cs',d) \mathds{1}_{\{\chi_\d(s')\geq z\}} \,ds' \,dz \right|.
\end{align*}
By the assumptions on $\chi_\d$, for any $z\in [0,1]$, the set $\{s'\in \BT:\,\chi_\d(s')\geq z\}$ is an interval of the form $[-a,a]$ for some $a\in [0,\pi]$.
We calculate that, with $x:=(cs',d)$, for an arbitrary $a\in [0,\pi]$,
\[
\int_{-a}^{a}
\pa_k G_{ij}(cs',d)\,ds'
= \f{1}{4\pi} \int_{-a}^{a}
-\f{x_k}{|x|^2}\d_{ij} + \f{\d_{ik}x_j + x_i \d_{jk}}{|x|^2} -\f{2x_ix_j x_k}{|x|^4} \,ds'.
\]
Thanks to the oddness of the integral,
\[
\int_{-a}^{a} \f{x_1}{|x|^2}\,ds'
= \int_{-a}^{a} \f{cs'}{(cs')^2+d^2}\,ds' = 0,
\quad\mbox{and}\quad
\int_{-a}^a \f{x_1^3}{|x|^4}\,ds' = 0.
\]
Hence, what remains in the integral must contain a factor of $x_2$ in the numerator, which gives
\[
\left|\int_{-a}^{a}
\pa_k G_{ij}(cs',d)\,ds'\right|
\leq C\int_{-a}^{a} \f{x_2}{|x|^2}\,ds'
= C\int_{-a}^{a} \f{d}{(cs')^2+d^2}\,ds'
\leq Cc^{-1}\leq C\lam^{-1}.
\]
Therefore, $|V_3|\leq C\lam^{-1}\|X''\|_{L_s^\infty}$.

Summarizing all these estimates yields that
\[
|\na u(x)|
\leq C\lam^{-2} \|X'\|_{L_s^\infty}\|X''\|_{\dot{C}_s^\al} \d^{\al} + C\lam^{-2} \|X'\|_{L^\infty_s}^2\d^{-1} + C\lam^{-1}\|X''\|_{L_s^\infty},
\]
where $C$ depends on $\al$.
Observe that $\|X'\|_{L_s^\infty}\leq C_\al \|X''\|_{\dot{C}_s^\al}$ for some $C_\al>0$ that only depends on $\al$, so we take
\[
\d: = \left(\f{\|X'\|_{L_s^\infty}}{C_\al\|X''\|_{\dot{C}_s^\al}}\right)^{\f{1}{1+\al}}\in (0,1].
\]
This gives
\[
|\na u(x)|
\leq C\lam^{-2} \|X'\|_{L_s^\infty}^{1+\f{\al}{1+\al}}\|X''\|_{\dot{C}_s^\al}^{\f{1}{1+\al}} + C\lam^{-1}\|X''\|_{L_s^\infty}
\leq C\lam^{-2} \|X'\|_{L_s^\infty}^{1+\f{\al}{1+\al}}\|X''\|_{\dot{C}_s^\al}^{\f{1}{1+\al}},
\]
where $C$ depends on $\al$.
In the last inequality, we used the interpolation inequality.
Therefore, the desired Lipschitz estimate for $u_X$ follows.
\end{proof}
\end{lem}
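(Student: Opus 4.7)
My plan is to bound $|\nabla u_X(x)|$ at every point $x \notin X(\BT)$ and then extend by continuity to the whole of $\BR^2$ (the continuity across $X(\BT)$ is already ensured by the $L^\infty$-estimate for $u_X$ in Lemma \ref{lem: estimate for u_X}, since points on the curve can be approximated from the complement). Starting from the representation $u_X(x) = \int_\BT G(x-X(s'))X''(s')\,ds'$, I would introduce an even smooth cutoff $\chi_\delta(s')$ with support in $[-2\delta, 2\delta]$ and identically $1$ on $[-\delta, \delta]$, centered at the nearest parameter $s_x$ defined in \eqref{eqn: def of s_x}, with a parameter $\delta\in(0,1]$ to be optimized.

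I would split $\nabla u_X(x) = V_1+V_2+V_3+V_4+V_5$, where the inner piece (weighted by $\chi_\delta$) is first rewritten by subtracting $X''(s_x)$ to exploit the $\dot{C}^\alpha$-regularity, then by replacing $X(s')$ with its affine approximation $X(s_x)+(s'-s_x)X'(s_x)$, and on the outer piece (weighted by $1-\chi_\delta$) I would use the identity $X''\,ds' = dX'$ and integrate by parts. The resulting five integrals $V_1,V_2,V_4,V_5$ are all handled by the pointwise estimates on $\nabla G, \nabla^2 G$ together with the well-stretched lower bound $|x-X(s')|\geq \lambda|s'-s_x|$ from Lemma \ref{lem: lower bound for x-X(s)}, producing the bounds $C\lambda^{-2}\|X'\|_{L^\infty}\|X''\|_{\dot{C}^\alpha}\delta^\alpha$ for the near terms and $C\lambda^{-2}\|X'\|_{L^\infty}^2 \delta^{-1}$ for the far ones.

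The main obstacle is the "principal-value" term $V_3 = X''(s_x)_j \int \pa_k G_{ij}(x-X(s_x)-(s'-s_x)X'(s_x))\chi_\delta(s'-s_x)\,ds'$: unlike the other pieces, the integrand cannot absorb any cancellation coming from $\chi_\delta$-weighting and the bound $|\pa G|\lesssim 1/|y|$ gives only a logarithmically divergent estimate. I would rotate and translate coordinates so that $x=(0,-d)$, $s_x=0$, $X(s_x)=0$, and $X'(s_x)=(c,0)$ with $c\geq \lambda$, and rewrite the $\chi_\delta$-integral via the layer-cake formula $\chi_\delta(s') = \int_0^1 \mathds{1}_{\{\chi_\delta\geq z\}}\,dz$ so that it becomes a superposition of integrals over symmetric intervals $[-a,a]$. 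Using the explicit form \eqref{eqn: Stokeslet in 2-D} of $G$ and the odd symmetry $s'\mapsto -s'$, the contributions involving $x_1 = cs'$ in the numerator vanish, leaving only those with a factor of $x_2=d$, which satisfy the elementary bound $\int_{-a}^a d\,((cs')^2+d^2)^{-1}\,ds' \leq C c^{-1}\leq C\lambda^{-1}$. This yields $|V_3|\leq C\lambda^{-1}\|X''\|_{L^\infty}$, uniform in $a$ and hence in $\delta$.

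Combining the five bounds and interpolating $\|X''\|_{L^\infty} \leq C_\alpha \|X'\|_{L^\infty}^{\alpha/(1+\alpha)}\|X''\|_{\dot{C}^\alpha}^{1/(1+\alpha)}$, I obtain
\[
|\nabla u_X(x)| \leq C\lambda^{-2}\|X'\|_{L^\infty}\|X''\|_{\dot{C}^\alpha}\delta^\alpha + C\lambda^{-2}\|X'\|_{L^\infty}^2\delta^{-1} + C\lambda^{-1}\|X''\|_{L^\infty}.
\]
Optimizing in $\delta$ via $\delta^{1+\alpha} \sim \|X'\|_{L^\infty}/\|X''\|_{\dot{C}^\alpha}$ (which gives $\delta\in (0,1]$ after applying the interpolation inequality noted above) produces exactly the claimed exponent $1+\alpha/(1+\alpha)$ on $\|X'\|_{L^\infty}$ and $1/(1+\alpha)$ on $\|X''\|_{\dot{C}^\alpha}$, with the third term being dominated by the first two. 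The Lipschitz bound on $u_X$ then follows from the mean value theorem on the connected components of $\BR^2\setminus X(\BT)$ combined with continuity across the curve.
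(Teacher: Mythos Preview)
Your proposal is correct and follows essentially the same approach as the paper's proof: the same cutoff-based decomposition into $V_1,\dots,V_5$, the same handling of $V_1,V_2,V_4,V_5$ via the well-stretched lower bound, the same coordinate rotation plus layer-cake/parity argument for the principal-value term $V_3$, and the same optimization in $\delta$ with the interpolation $\|X''\|_{L^\infty}\leq C_\al\|X'\|_{L^\infty}^{\al/(1+\al)}\|X''\|_{\dot{C}^\al}^{1/(1+\al)}$ to absorb the $V_3$ contribution.
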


\begin{prop}
\label{prop: higher regularity}

Let $u_0 = u_0(x)\in L^p(\BR^2)$ for some $p\in (2,\infty)$ and $X_0 = X_0(s)\in C^1(\BT)$, which satisfy $\di u_0 = 0$ and $|X_0|_* >0$.
Let $(u,X)$ be the unique local mild solution to \eqref{eqn: NS equation}-\eqref{eqn: initial data} on $[0,T]$.
Let $M$, $\lam$, and $Q$ be defined as in \eqref{eqn: def of M}-\eqref{eqn: def of Q}, respectively.

\begin{enumerate}
\item For any $\b\in [\f2p,1]$,
\beq
\sup_{\eta\in (0,T]} \eta^\b \|X'(\eta)\|_{C^\b_s(\BT)}
+ \sup_{\eta\in (0,T)} \eta^\b \|\pa_t X(\eta)\|_{C^\b_s(\BT)}
\leq C(T, p,\nu,\lam,M,Q).
\label{eqn: C beta estimate for X' and X_t}
\eeq
Note that the constant $C$ depends on $T,\, p,\,\nu,\,\lam,\,M,\,Q$, but not on $\b$.
\item
For any $\b\in (0,1)$,
\beq
\sup_{\eta\in (0,T]} \eta^{1+\b} \|X''(\eta)\|_{C^{\b}_s(\BT)}
+\sup_{\eta\in (0,T)} \eta^{1+\b} \|\pa_t X'(\eta)\|_{C^{\b}_s(\BT)}
\leq C(T,\b,p,\nu,\lam,M,Q).
\label{eqn: C 1+beta estimate for X' and X_t}
\eeq

\item For any $\b\in [0,1)$, $\lim_{t\to 0^+} t^{1+\b}\|X''(t)\|_{\dot{C}_s^\b(\BT)} = 0$.

\item
For any $\b\in [\f2p,1)$,
\[
\sup_{\eta\in (0,T]}\eta^{\b}\|u(\eta)\|_{\dot{C}^\b_x(\BR^2)}
\leq C(T, \b, p,\nu,\lam,M,Q).
\]
Moreover, for any $t\in (0,T]$,
\beq
\|\na u(t)\|_{L^\infty_x(\BR^2)}
\leq C(T,p,\nu,\lam,M,Q) t^{-1}. 
\label{eqn: Lipschitz estimate for u}
\eeq
\end{enumerate}
\begin{proof}

The first estimate follows by interpolating Lemma \ref{lem: C gamma estimate for mild solutions} (with $\g = \f2p$ there) and Lemma \ref{lem: smoothing lemma for X' in 2gamma} (with $\g=\f12+\f1p$ there).
The second estimate follows from Lemma \ref{lem: smoothing lemma for X' in 2gamma}.
The third claim follows by interpolating this with Lemma \ref{lem: intermediate norms vanish as t goes to zero}.
The $C^\b_x$-estimate for $u$ was proved in Lemma \ref{lem: C gamma estimate for mild solutions}.
It remains to show \eqref{eqn: Lipschitz estimate for u}.

Fix $t_0\in (0,T]$.
To bound $\|\na u(t_0)\|_{L^\infty_x(\BR^2)}$, we follow the argument in the proof of Lemma \ref{lem: smoothing lemma for X' in 2gamma} by resetting the initial time to be $t_0/2$.
Then \eqref{eqn: formula for u new initial time} holds, which gives
\begin{align*}
\|\na u(t_0)\|_{L^\infty_x(\BR^2)}
\leq &\; \left\|\left(\mathrm{Id}- e^{\f{\nu t_0}2\Delta}\right)\na u_{X(t_0)}\right\|_{L^\infty_x}
+\big\|\na \tilde{h}_{X,t_0/2}^{\nu}(t_0)\big\|_{L^\infty_x}
\\
&\; +\Big\|e^{\f{\nu t_0}2\Delta} u(t_0/2) \Big\|_{\dot{C}^1_x}
+\big\| \tilde{B}_{\nu,t_0/2}[u](t_0)\big\|_{\dot{C}^1_x}.
\end{align*}

By Lemma \ref{lem: estimate for h_X^nu with time singularity} with e.g.\;$\g = 1-\f1p$,
\begin{align*}
&\;\big\|\na \tilde{h}_{X,t_0/2}^{\nu}(t_0)\big\|_{L^\infty_x}\\
\leq
&\; C(\nu t_0)^{\g-\f12}\left[\nu^{-\g} \lam^{-1} \|X'\|_{L^\infty_t L_s^\infty} \sup_{\eta\in [t_0/2,t_0]}\|X'(\eta)\|_{C_s^\g }^{1-\g} \sup_{\eta\in [t_0/2,t_0]}\|\pa_t X(\eta)\|_{\dot{C}_s^{\g}}^\g \right.\\
&\;\qquad +C (\nu t_0)^{-\f{1+\g}{2}} \lam^{-1-\g} t_0\cdot \|X'\|_{L^\infty_t L_s^\infty}
\\
&\;\left.\quad \qquad \cdot
\left(\sup_{\eta\in [t_0/2,t_0]}\|X'(\eta)\|_{C_s^\g} \sup_{\eta\in [t_0/2,t_0]}\|\pa_t X(\eta)\|_{L_s^\infty}
+ \|X'\|_{L^\infty_t L_s^\infty} \sup_{\eta\in [t_0/2,t_0]}\|\pa_t X(\eta)\|_{\dot{C}_s^{\g}} \right)\right]
\\
\leq
&\; C(T, \g, p,\nu,\lam,M,Q)\left( t_0^{-\g} +t_0^{-\f{1+\g}{2}}\cdot t_0\cdot t_0^{-\g-\f1p}\right) t_0^{\g-\f12}\\
\leq
&\; C(T, p,\nu,\lam,M,Q) t_0^{-\f12-\f{1}{2p}}.
\end{align*}

Now by applying Lemma \ref{lem: estimate for B_nu u with time singularity} (with $\b = \f2p$ there), Lemma \ref{lem: Lipschitz continuity of flow field} (with $\al = \f1p$ there), and Lemma \ref{lem: parabolic estimates}, and also using the updated bounds in \eqref{eqn: C beta estimate for X' and X_t} and \eqref{eqn: C 1+beta estimate for X' and X_t} to derive that (cf.\;the estimates in the proof of Lemma \ref{lem: smoothing lemma for X' in 2gamma})
\begin{align*}
&\;\|\na u(t_0)\|_{L^\infty_x(\BR^2)}\\
\leq &\; C\lam^{-2} \|X'(t_0)\|_{L_s^\infty}^{\f{p+2}{p+1}} \|X''(t_0)\|_{\dot{C}_s^{1/p}}^{\f{p}{p+1}}
+ C(T, p,\nu,\lam,M,Q)  t_0^{-\f12-\f1{2p}}\\
&\; + C(\nu t_0)^{-\f12-\f1p} Q
+ C (\nu t_0)^{\f1p}  \nu^{-1}
\sup_{\eta\in [t_0/2,t_0]}
\|u(\eta)\|_{L_x^\infty}  \|u(\eta)\|_{\dot{C}_x^{2/p}}
\\
\leq &\; C(T, p,\nu,\lam,M,Q)  t_0^{-1} 
\\
&\; +
C (\nu t_0)^{\f1p}  \nu^{-1}\cdot t_0^{-\f1p} C(T, p,\nu,\lam,M,Q)\cdot t_0^{-\f2p}C(T, p,\nu,\lam,M,Q)\\
\leq &\; C(T, p,\nu,\lam,M,Q)  t_0^{-1}. 
\end{align*}
Since $t_0\in (0,T]$ is arbitrary, this completes the proof.
\end{proof}

\begin{rmk}
\label{rmk: optimality of higher-order estimates}
The results in Proposition \ref{prop: higher regularity} are optimal in two ways.
\begin{itemize}
\item The quantitative bounds for the higher-order norms of $X$ and $u$ have time-singularities at $t=0$ with the optimal powers of $t$.
    Indeed, the corresponding norms of the linear term $e^{-\f{t}{4}\Lam}X_0$ in \eqref{eqn: fixed pt equation for X} satisfies estimates with the very powers of $t$, and that further makes the estimates for $u$ admit singularity in $t$ as is claimed (see \eqref{eqn: fixed pt equation for u} and Lemma \ref{lem: estimate for u_X^nu}).

\item 
    As is mentioned in Remark \ref{rmk: optimality of higher-order estimates main thm}, in general, the flow field $u$ can at most be Lipschitz in space.
    Then in view of Lemma \ref{lem: estimate for B_nu u} and its proof, the spatial regularity of $B_\nu[u]$ cannot go beyond $C^{1,1}_x(\BR^2)$.
    By Lemma \ref{lem: parabolic estimates for Duhamel term of fractional Laplace}, we can only expect $\CI[B_\nu[u]\circ X]$ to have spatial regularity no higher than $C^{2,1}_s(\BT)$, and by \eqref{eqn: fixed pt equation for X}, that sets a limit for the regularity of $X$ within this framework of analysis.
\end{itemize}
\end{rmk}
\end{prop}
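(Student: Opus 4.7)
The proof will rest on two pillars: exponent-matched interpolation between the two smoothing estimates already in hand (Lemma \ref{lem: C gamma estimate for mild solutions} and Lemma \ref{lem: smoothing lemma for X' in 2gamma}) to handle parts (1)--(3), and a representation of $u$ with the initial time reset to $t_0/2$ to bootstrap the higher regularity into a Lipschitz bound for part (4).

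\textbf{Parts (1)--(3).} For part (1), I would invoke Lemma \ref{lem: C gamma estimate for mild solutions} at $\g=\f{2}{p}$ and Lemma \ref{lem: smoothing lemma for X' in 2gamma} at $\g=\f{1}{2}+\f{1}{p}$, obtaining
\[
\sup_{\eta\in(0,T]}\eta^{2/p}\|X'(\eta)\|_{\dot{C}^{2/p}_s}\leq C,\qquad \sup_{\eta\in(0,T]}\eta^{1+2/p}\|X''(\eta)\|_{\dot{C}^{2/p}_s}\leq C,
\]
and similarly for $\pa_t X$. Then I would interpolate $X'(\eta)$ between $\dot{C}^{2/p}_s$ and $\dot{C}^{1,2/p}_s$ for $\b\in[\f{2}{p},1+\f{2}{p}]$: the resulting exponent of $\eta$ works out to $\b-\f{2}{p}(1+\f{2}{p}-\b)-(1+\f{2}{p})(\b-\f{2}{p})=0$, so $\eta^\b\|X'(\eta)\|_{\dot{C}^\b_s}\leq C$ with a $\b$-independent constant, and adding the trivial $\|X'\|_{L^\infty}\leq M$ contribution upgrades this to the full $C^\b_s$-norm on $[\f{2}{p},1]$. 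For part (2), given $\b\in(0,1)$, the choice $\g=(1+\b)/2$ in Lemma \ref{lem: smoothing lemma for X' in 2gamma} handles the range $\b\geq 4/p-1$ directly. The remaining range $\b\in(0,4/p-1)$ (possible only when $p<4$) is treated by interpolating $X''$ between the $L^\infty$-bound $\sup\eta\|X''\|_{L^\infty}\leq C$ (obtained from part (1) at $\b=1$) and the Hölder bound $\sup\eta^{4/p}\|X''\|_{\dot{C}^{4/p-1}}\leq C$ (from Lemma \ref{lem: smoothing lemma for X' in 2gamma} at $\g=\f{2}{p}$), with the $\eta$-exponents again canceling. The parallel estimate for $\pa_t X'$ follows by differentiating the identity $\pa_t X=-\f{1}{4}\Lam X+W[u,X]$ and using the bounds already obtained. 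For part (3), I would choose $\b_0\in(0,1)$ and $\b_1\in(\b,1)$ and use the Gagliardo--Nirenberg interpolation
\[
t^{1+\b}\|X''(t)\|_{\dot{C}^\b_s}\lesssim \big(t^{\b_0}\|X'(t)\|_{\dot{C}^{\b_0}_s}\big)^\theta\big(t^{1+\b_1}\|X''(t)\|_{\dot{C}^{\b_1}_s}\big)^{1-\theta}
\]
with $\theta$ determined by $\b_0\theta+(1+\b_1)(1-\theta)=1+\b$. The first factor tends to zero as $t\to 0^+$ by Lemma \ref{lem: intermediate norms vanish as t goes to zero}, while the second is uniformly bounded by part (2), so the limit is $0$.

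\textbf{Part (4).} The Hölder estimate for $u$ on $\b\in[\f{2}{p},1)$ is precisely Lemma \ref{lem: C gamma estimate for mild solutions}. The Lipschitz bound is the principal obstacle and proceeds as follows. For fixed $t_0\in(0,T]$, I would view $t_0/2$ as a new initial time and use the representation \eqref{eqn: formula for u new initial time} to split
\[
\|\na u(t_0)\|_{L^\infty_x}\leq \|(\mathrm{Id}-e^{\f{\nu t_0}{2}\D})\na u_{X(t_0)}\|_{L^\infty_x}+\|\na\tilde{h}_{X,t_0/2}^\nu(t_0)\|_{L^\infty_x}+\|e^{\f{\nu t_0}{2}\D}u(t_0/2)\|_{\dot{C}^1_x}+\|\tilde{B}_{\nu,t_0/2}[u](t_0)\|_{\dot{C}^1_x}.
\]
For the first term, Lemma \ref{lem: Lipschitz continuity of flow field} at $\al=1/p$ gives $\|\na u_{X(t_0)}\|_{L^\infty_x}\leq C\lam^{-2}\|X'(t_0)\|_{L^\infty_s}^{(p+2)/(p+1)}\|X''(t_0)\|_{\dot{C}^{1/p}_s}^{p/(p+1)}$, which is $\leq Ct_0^{-1}$ after substituting part (2) at $\b=1/p$. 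The second is controlled by Lemma \ref{lem: estimate for h_X^nu with time singularity} on $[t_0/2,t_0]$ with $\g=1-\f{1}{p}$, yielding a bound of order $t_0^{-1/2-1/(2p)}$ (strictly less singular than $t_0^{-1}$). The third is standard parabolic regularity: $\|\na e^{\f{\nu t_0}{2}\D}u(t_0/2)\|_{L^\infty_x}\leq C(\nu t_0)^{-\f{1}{p}-\f{1}{2}}Q\leq Ct_0^{-1}$ since $p>2$. The last term uses Lemma \ref{lem: estimate for B_nu u with time singularity} at $\b=2/p$ together with the $L^\infty$- and $\dot{C}^{2/p}$-bounds on $u$ from Lemma \ref{lem: L inf regularity of u mild solution} and Lemma \ref{lem: C gamma estimate for mild solutions}, producing a bound of order $\nu^{-1+1/p}t_0^{-2/p}\leq Ct_0^{-1}$ (again using $p>2$). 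Summing these four contributions delivers $\|\na u(t_0)\|_{L^\infty_x}\leq C(T,p,\nu,\lam,M,Q)\,t_0^{-1}$. The main technical difficulty here is verifying that the product of several norms, each individually singular at $t=0$, combines cleanly to give the sharp rate $t_0^{-1}$ without loss; the key is that the exponents in Lemma \ref{lem: Lipschitz continuity of flow field}, namely $\f{p+2}{p+1}$ and $\f{p}{p+1}$, are precisely what is needed to turn the scales $t_0^{0}$ and $t_0^{-(1+2/p)}$ of parts (1)--(2) at $\b=1/p$ into the exact exponent $-1$.
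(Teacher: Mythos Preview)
Your proof is correct and follows the same route as the paper: interpolation between Lemmas \ref{lem: C gamma estimate for mild solutions} and \ref{lem: smoothing lemma for X' in 2gamma} for parts (1)--(3), and the time-shifted representation \eqref{eqn: formula for u new initial time} with the identical four-term splitting and the same auxiliary lemmas (\ref{lem: estimate for h_X^nu with time singularity}, \ref{lem: estimate for B_nu u with time singularity}, \ref{lem: Lipschitz continuity of flow field}) for the Lipschitz bound in part (4). You are in fact slightly more explicit than the paper in handling the small-$\b$ range of part (2) when $p<4$, where the constraint $\g\geq 2/p$ in Lemma \ref{lem: smoothing lemma for X' in 2gamma} prevents a direct application and an interpolation down from $\b=4/p-1$ is needed.
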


Before ending this subsection, let us prove a well-expected result on the volume conservation.
\begin{lem}
\label{lem: volume conservation}
For all $t\in [0,T]$,
\[
\f12\int_\BT X(s,t)\times X'(s,t)\,ds \equiv \f12\int_\BT X_0(s)\times X_0'(s)\,ds,
\]
i.e., the area of the region enclosed by the curve $X(\BT,t)$ is constant in time.
\begin{proof}
Let $\Omega_t$ denote the region enclosed by the curve $\G_t:=X(\BT,t)$.
Thanks to Proposition \ref{prop: higher regularity}, for any $t\in(0, T)$, $\G_t$ is a simple $C^{2,\b}$-curve with $\b\in (0,1)$.
Whenever $x\in \G_t$, we shall use $n(x,t)$ to denote the outer unit normal vector along $\G_t$ with respect to $\Omega_t$.

Thanks to the regularity of $X$ and $\pa_t X$ established in Proposition \ref{prop: higher regularity}, for $t\in (0,T)$,
\begin{align*}
\f{d}{dt}\f12\int_\BT X(s,t)\times X'(s,t)\,ds
= &\; \f12\int_\BT \pa_t X(s,t)\times X'(s,t)
+ X(s,t)\times \pa_t X'(s,t)\,ds\\
= &\; \int_\BT \pa_t X(s,t)\times X'(s,t)\,ds.
\end{align*}
We used the integration by parts in the last line.
Recall that $X$ satisfies \eqref{eqn: equation for X_t} in $(0,T)$ (see Lemma \ref{lem: time derivative and equation}).
Hence,
\begin{align*}
\f{d}{dt}\f12\int_\BT X(s,t)\times X'(s,t)\,ds
= &\; -\int_\BT u(X(s,t),t)\cdot X'(s,t)^\perp\,ds\\
= &\; \int_\BT u(X(s,t),t)\cdot n(X(s,t),t) |X'(s,t)|\,ds\\
= &\; \int_{\G_t} u(x,t)\cdot n(x,t) \,d\s
= \int_{\Omega_t} \di u(x,t)\,dx = 0.
\end{align*}
In the last line, we used the divergence theorem, the divergence-free property of $u$, and the regularity of $u$ proved in Proposition \ref{prop: higher regularity}.
Finally, using the continuity of $t\mapsto X(s,t)$ in $C^1(\BT)$ at $t = 0$ and $t = T$, we can conclude the desired claim.
\end{proof}
\end{lem}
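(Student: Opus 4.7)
The plan is to differentiate the signed area functional
\[
A(t) := \f12\int_\BT X(s,t)\times X'(s,t)\,ds
\]
in time, transfer the derivative onto $X$ rather than $X'$, and recognize the resulting boundary integral as the flux of $u$ through the curve $\G_t := X(\BT,t)$, which vanishes by incompressibility. Since the area equals $A(t)$ for all $t$ with $\G_t$ a simple closed curve (guaranteed by $|X(t)|_* \geq \lam > 0$ together with the counterclockwise orientation preserved from $X_0$), this gives the conservation.

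The first step is to justify differentiation under the integral on $(0,T)$. Proposition \ref{prop: higher regularity} yields $X(\cdot,t) \in C^{2,\b}_s(\BT)$ and $\pa_t X(\cdot,t)\in C^{1,\b}_s(\BT)$ for any $\b\in(0,1)$, with norms that are locally bounded in $t \in (0,T)$; Lemma \ref{lem: time derivative and equation} additionally gives $\pa_t X(s,t) = u(X(s,t),t)$ pointwise. Differentiating and then integrating by parts in $s$ on $\BT$ (using that $X \times \pa_t X'$ equals $-X' \times \pa_t X$ up to an exact derivative), I obtain
\[
\f{d}{dt} A(t) = \int_\BT \pa_t X(s,t) \times X'(s,t)\,ds = \int_\BT u(X(s,t),t) \times X'(s,t)\,ds.
\]

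Next, I reinterpret this as a flux integral. Writing $X'^\perp = (X_2', -X_1')$ so that $n := X'^\perp/|X'|$ is the outward unit normal to $\G_t$ (under the counterclockwise convention), the identity $u\times X' = u\cdot X'^\perp$ converts the integrand into $u(X,t)\cdot n\,|X'|\,ds$, i.e.\;the arclength element of the boundary flux:
\[
\f{d}{dt}A(t) = \int_{\G_t} u(x,t)\cdot n(x,t)\,d\s(x).
\]
By Proposition \ref{prop: higher regularity}, $u(\cdot,t)$ is Lipschitz in $\BR^2$ for each $t\in (0,T)$, so the divergence theorem applies to the bounded Lipschitz domain $\Omega_t$ enclosed by $\G_t$, and the flux equals $\int_{\Omega_t} \di u\,dx = 0$ by the divergence-free condition in Definition \ref{def: mild solution}.

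Therefore $A(t)$ is constant on $(0,T)$. To conclude equality with $A(0)$ and $A(T)$, I invoke the continuity $X \in C([0,T];C^1(\BT))$ from Definition \ref{def: mild solution}, which makes $t\mapsto A(t)$ continuous on $[0,T]$. The main (minor) technical issue will be bookkeeping for the $2\times 2$ cross product and the correct sign convention for the outward normal induced by the counterclockwise parameterization; aside from that, every analytic input—regularity of $X$ and $\pa_t X$, the pointwise equation $\pa_t X = u\circ X$, Lipschitz regularity of $u$, and the Jordan curve property from $|X(t)|_* > 0$—is already in hand, so the argument is essentially a clean application of the divergence theorem.
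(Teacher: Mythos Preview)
Your proposal is correct and follows essentially the same route as the paper: differentiate $A(t)$, integrate by parts to get $\int_\BT \pa_t X\times X'\,ds$, substitute $\pa_t X=u\circ X$ from Lemma \ref{lem: time derivative and equation}, rewrite as the flux of $u$ through $\G_t$, and invoke the divergence theorem together with $\di u=0$; the endpoints $t=0,T$ are handled by continuity of $X$ in $C^1(\BT)$. The only cosmetic difference is the sign convention for $X'^\perp$, which you already flag.
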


\section{The Zero-Reynolds-number Limit}
\label{sec: zero Reynolds number limit}

We have constructed the mild solutions to the 2-D immersed boundary problem with the Navier-Stokes equation \eqref{eqn: NS equation}-\eqref{eqn: initial data}.
It is heuristically conceivable that, as $\nu \to +\infty$, or equivalently as $Re\to 0^+$, the solution would converge in a suitable sense to the one to the Stokes case \eqref{eqn: Stokes equation}-\eqref{eqn: initial data Stokes case}, which formally corresponds to $Re = 0$.
In this section, we shall rigorously justify this convergence and quantify the difference between these two cases.
The main result of this section is Proposition \ref{prop: zero Reynolds number limit}.

As is reviewed in Section \ref{sec: Stokes case revisited}, there have been quite a few results in the literature on the existence and uniqueness of solutions to the Stokes case \eqref{eqn: Stokes equation}-\eqref{eqn: initial data Stokes case}, but here we choose to state the following one that is more compatible with our analysis.
Its proof is essentially the same as (and yet much simpler than) that for the Navier-Stokes case in Section \ref{sec: local well-posedness} and Section \ref{sec: properties of mild solutions}, so we shall omit it completely.
Readers are also referred to \cite[Theorem 1.12 and \S\,3.7]{Chen2024wellposedness_quasilinear} for a similar result on the case of general nonlinear elasticity laws.

\begin{prop}
\label{prop: well-posedness Stokes case}
Suppose that $X_0 = X_0(s)\in C^1(\BT)$ satisfies $|X_0|_*>0$.
Then for some $T>0$ that only depends on $X_0$, there exists a unique $X_\dag = X_\dag(s,t)$ satisfying the following conditions:
\begin{enumerate}[label=(\roman*)]
\item \label{item: regularity and well-stretched condition}
$X_\dag \in C([0,T];C^1(\BT))$, and $\inf_{t\in [0,T]}|X_\dag(t)|_* >0$;
\item \label{item: bounds}
for some $\g\in (0,1)$,
\[
\|X_\dag'\|_{L^\infty_T L^\infty_s(\BT)}
+ \sup_{\eta\in (0,T]}\eta^\g \|X_\dag'(\eta)\|_{\dot{C}^\g_s(\BT)}
< +\infty;
\]
\item \label{item: the equation Duhamel form}
and, for any $t\in [0,T]$,
\[
X_\dag(t) = e^{-\f{t}4 \Lam}X_0+\CI [g_{X_\dag}](t),
\]
where $g_{X_\dag}$ was defined as in \eqref{eqn: def of g_X} in terms of $X_\dag$, and where $\CI[\cdot]$ was defined in \eqref{eqn: def of the operator I}.
\end{enumerate}

Let $M_0:=\|X_0'\|_{L_s^\infty(\BT)}$ and $\lam_0 := |X_0|_*$.
The solution $X_\dag$ additionally enjoys the following properties.
\begin{enumerate}
\item $|X_\dag(t)|_* \geq \lam_0/2$ and $\|X'_\dag(t)\|_{L_s^\infty}\leq CM_0$ for $t\in [0,T]$, where $C$ is a universal constant.

\item $X_\dag \in C^1_{loc}((0,T]\times \BT)$, and the equation
\beqo
\pa_t X_\dag = -\f14\Lam X_\dag + g_{X_\dag}
\eeqo
holds pointwise in $(0,T]\times \BT$.

\item
For any $\b\in(0,1]$,
\[
\sup_{\eta\in (0,T]} \eta^\b \|X_\dag'(\eta)\|_{C^\b_s(\BT)}
+ \sup_{\eta\in (0,T]} \eta^\b \|\pa_t X_\dag(\eta)\|_{C^\b_s(\BT)}
\leq C(\lam_0,M_0). 
\]

\item
For any $\b\in (0,1)$,
\[
\sup_{\eta\in (0,T]} \eta^{1+\b} \|X_\dag''(\eta)\|_{C^{\b}_s(\BT)}
+\sup_{\eta\in (0,T]} \eta^{1+\b} \|\pa_t X_\dag'(\eta)\|_{C^{\b}_s(\BT)}
\leq C(\b,\lam_0, M_0).
\]

\item
Given $\b\in (0,1)$, let $\r=\r_{X_0,\b}(t)$ be defined in terms of $X_0$ as in \eqref{eqn: def of rho}.
Then there exists $T'\in (0,T]$ depending on $\b$ and $X_0$, such that for all $t\in [0,T']$,
\beq
t^\b\|X_\dag'(t)\|_{\dot{C}^\g_s(\BT)} \leq 2\big(\r(t)+t^\b\big).
\label{eqn: modulus of X_dag at initial time}
\eeq

\end{enumerate}

\begin{rmk}
\label{rmk: def of mild solution Stokes case}
We shall call any function $X = X(s,t)$ that satisfies the conditions \ref{item: regularity and well-stretched condition}-\ref{item: the equation Duhamel form} to be a (mild) solution on $[0,T]$ to the Stokes case of the 2-D immersed boundary problem \eqref{eqn: Stokes equation}-\eqref{eqn: initial data Stokes case}.
Note that in the Stokes case, the flow field $u$ can be recovered by $u(x,t) = u_{X(t)}(x)$.
\end{rmk}
\end{prop}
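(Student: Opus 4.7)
The plan is to adapt the contraction mapping argument of Section \ref{sec: local well-posedness}, which becomes substantially simpler in the Stokes case because only $X$ needs to be solved for and the nonlinearity reduces to $g_X$. Fix an auxiliary exponent $\gamma\in(1/2,1)$ and $\sigma\in(0,\gamma)$, and let $\rho=\rho_{X_0,\gamma}$ and $\phi=\phi_{X_0}$ be as in \eqref{eqn: def of rho} and \eqref{eqn: def of phi}. Choose $T\in(0,1)$ small enough that $2(\rho(T)+T^\sigma)\leq\lambda_0/16$ and $\phi(T)\leq\lambda_0/16$, and introduce the complete metric space
\begin{align*}
\CX:=\Big\{ X\in C([0,T];C^1(\BT))\cap C^1_{loc}((0,T)\times\BT):{} & X(\cdot,0)=X_0, \\
& t^\gamma\|X(\cdot,t)\|_{\dot C^{1,\gamma}_s}+\|X(\cdot,t)-e^{-t\Lambda/4}X_0\|_{C^1_s}\leq 2(\rho(t)+t^\sigma)\Big\}
\end{align*}
equipped with the metric $d_\CX(X,Y):=\|X'-Y'\|_{L^\infty_T L^\infty_s}+\sup_{t\in(0,T]}t^\gamma\|(X'-Y')(t)\|_{\dot C^\gamma_s}$. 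Exactly as in Section \ref{sec: bounds for the mapping T}, the defining bounds force $|X(t)|_*\geq\lambda_0/2=:\lambda$ for every $X\in\CX$. The natural solution map is $\CT:X\mapsto Y$, with $Y(t):=e^{-t\Lambda/4}X_0+\CI[g_X](t)$.

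To verify $\CT(\CX)\subset\CX$ I will insert the improved $g_X$-estimates of Lemma \ref{lem: improved estimates for g_X} into the parabolic bound of Lemma \ref{lem: parabolic estimates for Duhamel term of fractional Laplace}. Taking $\beta:=(1+\gamma)/2>\gamma$ and arguing as in \eqref{eqn: C beta estimate for g_X} yields $\|g_X(t)\|_{\dot C^\beta_s}\leq Ct^{-\beta}\lambda^{-2}M_0^{3-\beta/\gamma}(\rho(t)+t^\sigma)^{\beta/\gamma}$, whence
\[
\|Y(t)-e^{-t\Lambda/4}X_0\|_{C^1_s}+t^\gamma\|Y(t)-e^{-t\Lambda/4}X_0\|_{\dot C^{1,\gamma}_s}\leq C(\lambda_0,M_0)\,(\rho(t)+t^\sigma)^{\beta/\gamma}.
\]
Since $\beta/\gamma>1$ and $\rho(t)+t^\sigma\to 0$ as $t\to 0^+$, a further shrinking of $T$ makes this right-hand side at most $2(\rho(t)+t^\sigma)$. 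Contraction in $d_\CX$ will follow from feeding Lemma \ref{lem: improved estimates for g_X-g_Y} into the same parabolic estimates, in direct parallel with \eqref{eqn: final bound for distance bewteen output solutions} but with all $u$-dependent terms deleted; one more reduction of $T$ produces $d_\CX(\CT X_1,\CT X_2)\leq\tfrac12 d_\CX(X_1,X_2)$, and Banach's theorem delivers the unique fixed point $X_\dag\in\CX$.

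Higher regularity and uniqueness then mirror Section \ref{sec: properties of mild solutions}. Pointwise validity of $\partial_t X_\dag=-\tfrac14\Lambda X_\dag+g_{X_\dag}$ on $(0,T]\times\BT$ follows from the $C^\gamma_s$-time continuity of $g_{X_\dag}$, which is itself a consequence of Lemma \ref{lem: improved estimates for g_X-g_Y} combined with the $C^1_s$-time continuity built into $\CX$. Interpolating the $\dot C^{1,\gamma}_s$-bound of $X_\dag$ against its uniform $C^1_s$-bound supplies claim~(3) for $\beta\in(0,\gamma]$, and feeding these back into the $\dot C^{1,2\gamma-1}_s$-estimate for $g_{X_\dag}$ from Lemma \ref{lem: improved estimates for g_X} gives, via Lemma \ref{lem: parabolic estimates for Duhamel term of fractional Laplace}, the $C^{1,\beta}_s$-bounds of claim~(4); the endpoint $\beta=1$ in claim~(3) is reached by taking $\gamma$ arbitrarily close to $1$ at the construction stage. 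Uniqueness within the class characterized by \ref{item: regularity and well-stretched condition}--\ref{item: the equation Duhamel form} is established exactly as in Proposition \ref{prop: uniqueness of mild solution}, using Lemma \ref{lem: intermediate norms vanish as t goes to zero} to absorb the prefactor $\sup_{\eta\in(0,T_0]}\eta^{\gamma/2}\|X_i'(\eta)\|_{\dot C^{\gamma/2}_s}$ into $\tfrac12$ on a small initial interval and then iterating across $[0,T]$.

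The main obstacle is the refined modulus \eqref{eqn: modulus of X_dag at initial time}, which demands the $\beta$-analogue of the $\CX$-bound for an \emph{arbitrary} $\beta\in(0,1)$ rather than only the single $\gamma$ used in the construction. Because $C^1$-regularity of $X_0$ is critical, one cannot simply re-interpolate the bounds already obtained for $\gamma$; instead, for each given $\beta$ I would re-run the entire fixed-point argument with $\gamma$ replaced by $\beta$ on a possibly shorter interval $T'=T'(\beta,X_0)$, producing a unique element of the corresponding space $\CX_\beta$ that by construction satisfies \eqref{eqn: modulus of X_dag at initial time}. Uniqueness applied in the common ambient regularity class (take the exponent $\min(\gamma,\beta)$) then identifies this element with $X_\dag$ on $[0,T']$. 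This re-parametrisation is precisely the mechanism highlighted in Remark \ref{rmk: characterization of lifespan T}, and it is the only step in which the $C^1$-criticality of the initial datum is felt directly.
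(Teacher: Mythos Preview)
Your proposal is correct and follows essentially the same approach as the paper, which explicitly omits the proof and states that it ``is essentially the same as (and yet much simpler than) that for the Navier-Stokes case in Section~\ref{sec: local well-posedness} and Section~\ref{sec: properties of mild solutions}.'' You have accurately identified the simplifications (no $u$, $h_X^\nu$, or $B_\nu[u]$; only $g_X$ survives), invoked the correct Lemmas~\ref{lem: improved estimates for g_X}, \ref{lem: improved estimates for g_X-g_Y}, and \ref{lem: parabolic estimates for Duhamel term of fractional Laplace}, and correctly flagged that property~(5) requires re-running the construction with the given $\beta$ and invoking uniqueness---precisely the mechanism of Remark~\ref{rmk: characterization of lifespan T}.
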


Throughout this section, we assume $u_0 = u_0(x)\in L^p(\BR^2)$ with some $p\in (2,\infty)$ and $X_0 = X_0(s)\in C^1(\BT)$, and they satisfy that $\di u_0 = 0$ and $|X_0|_* >0$.
We still define $M_0$, $\lam_0$, and $ Q_0 $ as in \eqref{eqn: def of initial constants}.

Let $X_\dag = X_\dag(s,t)$ be the unique mild solution on $[0,T_\dag]$ to the Stokes case \eqref{eqn: Stokes equation}-\eqref{eqn: initial data Stokes case} constructed in Proposition \ref{prop: well-posedness Stokes case}, which satisfies all the characterizations there.
In particular, it holds for $t\in [0,T_\dag]$ that
\beq
X_\dag(t) = e^{-\f{t}4 \Lam}X_0+\CI [g_{X_\dag}](t).
\label{eqn: representation of X_dag}
\eeq
Fix an arbitrary $\g\in [\f12+\f1p,1) \cap (1-\f2p,1)$.
Note that this new range of $\g$ may be more restrictive than the previously used one (cf.\;\eqref{eqn: range of gamma}); we will explain this in Remark \ref{rmk: reason for choosing new range of gamma}.
With this $\g$ and with $\epsilon\leq M_0$ to be chosen later (see \eqref{eqn: choice of epsilon}), by the time-continuity of $X_\dag$ and \eqref{eqn: modulus of X_dag at initial time}, we may further take $T_\dag\leq 1$ to be smaller if needed, so that
\beq
\sup_{t\in [0,T_\dag]}\|X_\dag'(t)\|_{L^\infty_s(\BT)}\leq 2M_0,\quad
\sup_{t\in (0,T_\dag]} t^{\g}\|X_\dag'(t)\|_{\dot{C}^{\g}_s(\BT)} \leq \epsilon,\quad
\inf_{t\in [0,T_\dag]} |X_\dag(t)|_* \geq \f{3\lam_0}{4}.
\label{eqn: a priori bound for X dag}
\eeq
and for any $t_1,t_2\in [0,T_\dag]$,
\beq
\|X_\dag'(\cdot,t_1)-X_\dag'(\cdot,t_2)\|_{L^\infty_s(\T)} \leq \f{\lam_0}{8}.
\label{eqn: X dag at different times are close to each other}
\eeq
Such $T_\dag$ will depend on $X_0$, $\g$, and $\epsilon$.

Next we introduce the setup in the Navier-Stokes case.
Without loss of generality, we assume $\nu \geq 1$ in the rest of this section.
Let $(u_\nu,X_\nu)$ denote the unique maximal mild solution to the Navier-Stokes case \eqref{eqn: NS equation}-\eqref{eqn: initial data}, with its maximal lifespan being $T_{\nu,*}\in (0,+\infty]$ (see Definition \ref{def: maximal solution} and Theorem \ref{thm: maximal solution}).
It holds on $[0,T_{\nu,*})$ that (cf.\;\eqref{eqn: fixed pt equation for u} and \eqref{eqn: fixed pt equation for X})
\begin{align}
u_\nu(t) = &\; u_{X_\nu(t)}+\tilde{w}[u_\nu,X_\nu](t)
= u_{X_\nu}^\nu(t) + e^{\nu t\Delta}u_0 +B_{\nu}[u_\nu](t),
\label{eqn: representation of u^nu}
\\
X_\nu(t) = &\; e^{-\f{t}4 \Lam}X_0+\CI \big[g_{X_\nu} + \tilde{w}[u_\nu,X_\nu]\circ X_\nu \big](t),
\label{eqn: representation of X^nu}
\end{align}
where (cf.\;\eqref{eqn: def of w})
\[
\tilde{w}[u_\nu,X_\nu](t)
= \tilde{w}[u_\nu,X_\nu; \nu,u_0](t)
:= h_{X_\nu}^{\nu}+e^{\nu t\Delta}(u_0-u_{X_\nu(t)}) +B_{\nu}[u_\nu].
\]
Let $\g$ be taken as above.
By uniqueness, in short time, $(u_\nu,X_\nu)$ satisfies all the characterizations in Proposition \ref{prop: fixed-point solution} (with the chosen $\g$).
Besides, on any compact sub-interval of $[0,T_{\nu,*})$,  $(u_\nu,X_\nu)$ satisfies all the properties established in Section \ref{sec: properties of mild solutions}.
We thus define $T_\nu \in (0,T_{\nu,*})$ to be the maximum possible time such that the following holds:
\begin{itemize}
\item $T_\nu \leq T_\dag\leq 1$;
\item with the same $\epsilon$ as in \eqref{eqn: a priori bound for X dag},
\beq
\sup_{t\in [0,T_\nu]}\|X_\nu'(t)\|_{L^\infty_s(\BT)}\leq 3M_0,\quad
\sup_{t\in (0,T_\nu]} t^{\g}\|X_\nu'(t)\|_{\dot{C}^{\g}_s(\BT)} \leq 2\epsilon,\quad
\inf_{t\in [0,T_\nu]} |X_\nu(t)|_* \geq \f{\lam_0}{2},
\label{eqn: a priori bound for X nu}
\eeq
\item for any $t_1,t_2\in [0,T_\nu]$,
\beq
\|X_\nu'(\cdot,t_1)-X_\nu'(\cdot,t_2)\|_{L^\infty_s(\T)} \leq \f{\lam_0}{4}.
\label{eqn: X nu at different times are close to each other}
\eeq
\item
and, with $\epsilon'>0$ to be chosen below,
\beq
\sup_{t\in [0,T_\nu]}\|u_\nu(t)\|_{L^p_x(\BR^2)}\leq 2 Q_0 +\epsilon'.
\label{eqn: a priori bound for L^p norm of u nu}
\eeq
\end{itemize}
Here the maximality of $T_\nu$ means that \eqref{eqn: a priori bound for X nu}-\eqref{eqn: a priori bound for L^p norm of u nu} hold on $[0,T_\nu]$, but if $T_\nu$ gets replaced by any $T\in (T_\nu,T_\dag]$, at least one of the properties would fail.
Such $T_\nu$ is well-defined due to the time-continuity of $(u_\nu,X_\nu)$ (see Proposition \ref{prop: fixed-point solution}, Lemma \ref{lem: time continuity of u in L^p}, and Lemma \ref{lem: C gamma estimate for mild solutions}) as well as Corollary \ref{cor: maximal solution}; we only point out that, by Corollary \ref{cor: maximal solution}, as $T\to T_{\nu,*}^-$,
\[
\sup_{t\in (0,T]} t^{\g}\|X_\nu'(t)\|_{\dot{C}^{\g}_s(\BT)}+
\left(\inf_{t\in [0,T]} |X_\nu(t)|_*\right)^{-1}
+
\sup_{t\in [0,T]}\|u_\nu(t)\|_{L^p_x(\BR^2)}
\]
should diverge.

We choose the parameter $\epsilon'$ in \eqref{eqn: a priori bound for L^p norm of u nu} as follows.
By Lemma \ref{lem: estimate for u_X^nu},
\begin{align*}
\|u_{X_\nu}^\nu(t)\|_{L_x^p}
\leq &\; C_0'\lam_0^{-1+\f{2}{p}} M_0^2,
\end{align*}
where $C_0'$ is universal constant depending only on $p$.
Then we let
\beq
\epsilon' := 2C_0' \lam_0^{-1+\f{2}{p}} M_0^2.
\label{eqn: def of epsilon prime}
\eeq
It will be clear in the proof of Lemma \ref{lem: improved closeness to the Stokes solution} below why this choice of $\epsilon'$ suffices.

The main result to prove in this section is as follows.
\begin{prop}
\label{prop: zero Reynolds number limit}
Let $p$, $(u_0,X_0)$, $(u_\nu,X_\nu)$, and $X_\dag$ be given as above.
Fix $\g\in [\f12+\f1p,1)\cap (1-\f2p,1)$.
Define $M_0$, $\lam_0$, and $Q_0$ as in \eqref{eqn: def of initial constants}, let $T_\dag$ and $T_\nu$ be defined as above (also see Remark \ref{rmk: dependence of T_dag} below).
Then there exists $\nu_*\geq 1$, which depends on $\g$, $p$, $\lam_0$, $M_0$, and $Q_0$, such that $T_\nu = T_\dag$ for any $\nu \geq \nu_*$.
Hence, for $\nu \geq \nu_*$, \eqref{eqn: a priori bound for X nu}-\eqref{eqn: a priori bound for L^p norm of u nu} hold on $[0,T_\dag]$.
Moreover, for any $t\in (0,T_\dag]$,
\beq
\|(X_\nu-X_\dag)(t)\|_{L^\infty_s(\BT)}
\leq C t\cdot (\nu t)^{-\f1p},
\label{eqn: L inf different between X_dag and X_nu}
\eeq
\beq
\|(X_\nu-X_\dag)(t)\|_{\dot{C}_s^1(\BT)}
+ t^\g \|(X_\nu-X_\dag)'(t)\|_{\dot{C}^\g_s(\BT)}
\leq Ct \cdot (\nu t)^{-\f{1}{2}-\f1p},
\label{eqn: C 1 and C 1 gamma different between X_dag and X_nu}
\eeq
and
\beqo
\|u_\nu(t)-u_{X_\dag(t)}\|_{L^\infty_x(\BR^2)}
\leq C  \ln \nu \cdot t
\|(X_\nu'(t),X_\dag'(t))\|_{\dot{C}^1_s}\cdot (\nu t)^{-\f1p} + C(\nu t)^{-\f1p},
\eeqo
where the constants $C$ depend on $\g$, $p$, $\lam_0$, $M_0$, and $ Q_0 $.

\begin{rmk}
\label{rmk: extend the range of gamma in the zero Reynolds number limit}
For $\g\not \in [\f12+\f1p,1)\cap (1-\f2p,1)$, similar estimates can be derived from Proposition \ref{prop: zero Reynolds number limit} by the interpolation inequality.
\end{rmk}
\end{prop}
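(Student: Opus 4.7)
The plan is to prove Proposition \ref{prop: zero Reynolds number limit} by a continuity-type bootstrap exploiting the fact that, for large $\nu$, the non-Stokesian perturbation in the Navier-Stokes representation is small. Subtracting \eqref{eqn: representation of X_dag} from \eqref{eqn: representation of X^nu} gives
\beq
X_\nu(t) - X_\dag(t) = \CI[g_{X_\nu} - g_{X_\dag}](t) + \CI\big[\tilde{w}[u_\nu,X_\nu]\circ X_\nu\big](t),
\label{eqn: plan diff}
\eeq
where $\tilde{w}[u_\nu,X_\nu] = h_{X_\nu}^\nu + e^{\nu t\D}(u_0 - u_{X_\nu(t)}) + B_\nu[u_\nu]$. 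The essential observation is that, under the a priori bounds \eqref{eqn: a priori bound for X nu}--\eqref{eqn: a priori bound for L^p norm of u nu} defining $T_\nu$, every summand of $\tilde{w}$ admits estimates carrying an intrinsic factor of $(\nu t)^{-1/p}$ or better: $h_{X_\nu}^\nu$ by Lemma \ref{lem: L inf and W 1 inf estimate for h_X^nu with optimal power of nu}; the heat-transport term by Lemma \ref{lem: parabolic estimates} combined with Lemma \ref{lem: estimate for u_X}; and $B_\nu[u_\nu]$ by \eqref{eqn: L inf estimate for B} of Lemma \ref{lem: estimate for B_nu u} together with the $L^\infty_x$-bound from Lemma \ref{lem: L inf regularity of u mild solution}. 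This is the source of smallness that will drive convergence.

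The first step is to consolidate these bounds into uniform-in-$\nu$ estimates for $\tilde{w}[u_\nu,X_\nu]$ in $L^\infty_x$ and $\dot C^\g_x$ with the appropriate weights in $(\nu t)$, with constants depending only on $\g,p,\lam_0,M_0,Q_0,\epsilon,\epsilon'$. Inserting these into \eqref{eqn: plan diff} via Lemma \ref{lem: parabolic estimates for Duhamel term of fractional Laplace}, and controlling the $g_{X_\nu} - g_{X_\dag}$ term by Lemma \ref{lem: improved estimates for g_X-g_Y} using the uniform bounds on $X_\nu, X_\dag$ from \eqref{eqn: a priori bound for X dag} and \eqref{eqn: a priori bound for X nu}, produces a closed integral inequality for the quantity
\beqo
D(t) := \sup_{\eta\in(0,t]}\nu^{1/p}\eta^{1/p-1}\|(X_\nu-X_\dag)(\eta)\|_{L^\infty_s} + \sup_{\eta\in(0,t]}\nu^{1/2+1/p}\eta^{1/2+1/p-1}\Big(\|(X_\nu-X_\dag)(\eta)\|_{\dot C^1_s}+\eta^\g\|(X_\nu-X_\dag)'(\eta)\|_{\dot C^\g_s}\Big).
\eeqo
Since $T_\dag \leq 1$ is small and $\g > 1-2/p$ (so the relevant time exponents are integrable), the $g$-difference contributes a term that is linear in $D$ with a small coefficient (absorbable into the left-hand side), while the $\tilde{w}$-contribution is a $\nu$- and $t$-independent forcing. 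This produces $D(t)\leq C(\g,p,\lam_0,M_0,Q_0)$, which is precisely \eqref{eqn: L inf different between X_dag and X_nu}--\eqref{eqn: C 1 and C 1 gamma different between X_dag and X_nu}. Choosing $\nu_*$ large enough so that these differences are strictly smaller than the slack between \eqref{eqn: a priori bound for X dag} and \eqref{eqn: a priori bound for X nu}, and separately controlling $\|u_\nu\|_{L^p_x}$ via Lemma \ref{lem: estimate for u_X^nu}, \eqref{eqn: L p estimate for B}, and the choice \eqref{eqn: def of epsilon prime} of $\epsilon'$, contradicts the maximality of $T_\nu < T_\dag$, forcing $T_\nu = T_\dag$.

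For the final $L^\infty$-estimate on $u_\nu - u_{X_\dag}$, I split $u_\nu(t) - u_{X_\dag(t)} = (u_{X_\nu(t)} - u_{X_\dag(t)}) + \tilde{w}[u_\nu,X_\nu](t)$. The second summand is bounded by $C(\nu t)^{-1/p}$ from the first step. For the first summand, I would employ the representation $u_X - u_Y = -\int_0^\infty \D e^{\tau\D}(u_X-u_Y)\,d\tau$ together with Lemma \ref{lem: estimate for Laplace of heat operator applied to u_X-u_Y}, splitting the $\tau$-integral at a threshold $\tau_* \sim \|X_\nu-X_\dag\|_{\dot C^1}^2/\|(X_\nu',X_\dag')\|_{\dot C^1}^2$ balancing the two branches of that lemma. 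The hardest part will be extracting the precise $\ln\nu$ factor: because the map $X\mapsto u_X$ fails to be Lipschitz continuous at the $C^1$-regularity scale relevant here, the truncation produces a logarithm $\log(\|(X_\nu',X_\dag')\|_{\dot C^1}/\|X_\nu-X_\dag\|_{\dot C^1})$, which after inserting \eqref{eqn: C 1 and C 1 gamma different between X_dag and X_nu} simplifies to $C\ln\nu$. Careful bookkeeping at this logarithmic scale is needed to ensure that the cutoff argument does not inadvertently activate a higher-order norm of $X_\nu, X_\dag$ that our $C^{1,\g}$ regularity class does not supply.
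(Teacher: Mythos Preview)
Your plan for the $X$-estimates and the continuity argument is essentially the paper's approach: the paper likewise subtracts \eqref{eqn: representation of X_dag} from \eqref{eqn: representation of X^nu}, bounds $\tilde w$ via the same lemmas, and absorbs the $g_{X_\nu}-g_{X_\dag}$ term using the smallness of $\sup_\eta \eta^\gamma\|(X_\nu',X_\dag')\|_{\dot C^\gamma}\leq C\epsilon$ (your ``$T_\dag$ small'' mechanism). The paper organizes this as Lemma~\ref{lem: uniform in nu bounds} and Lemma~\ref{lem: improved closeness to the Stokes solution}, proving the $\dot C^1/\dot C^{1,\gamma}$ bound first and then the $L^\infty$ bound, rather than combining them into a single $D(t)$, but this is cosmetic. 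One caution: for the $L^\infty_x$ control on $u_\nu$ feeding into $B_\nu[u_\nu]$, you cite Lemma~\ref{lem: L inf regularity of u mild solution}, whose constant depends on $\nu$; the paper redoes this as Lemma~\ref{lem: uniform in nu bounds} to make the $\nu$-dependence explicit (it is $C\nu^{1/p}\lam_0^{-1}M_0^2+C$), which is harmless after the subsequent cancellations.

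There is, however, a genuine gap in your plan for $\|u_{X_\nu(t)}-u_{X_\dag(t)}\|_{L^\infty}$. Using only Lemma~\ref{lem: estimate for Laplace of heat operator applied to u_X-u_Y} on the whole of $(0,\infty)$ does not work: the first term of that lemma's bound is $\min(\lam^{-1}\tau^{-1},\tau^{-3/2})\,\|(X',Y')\|_{L^\infty}\|X'-Y'\|_{L^\infty}$, which equals $\lam^{-1}\tau^{-1}\|X'-Y'\|_{L^\infty}$ for $\tau\lesssim\lam^2$ and is \emph{not} integrable at $\tau=0$. No threshold choice within Lemma~\ref{lem: estimate for Laplace of heat operator applied to u_X-u_Y} rescues this. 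The paper's remedy is a three-way split $(0,\delta]\cup(\delta,t]\cup(t,\infty)$ with $\delta=t^2(\nu t)^{-1}=t/\nu$: on $(0,\delta]$ it \emph{abandons the difference estimate} and instead bounds $\|\Delta e^{\tau\Delta}u_{X_\nu}\|+\|\Delta e^{\tau\Delta}u_{X_\dag}\|$ separately via Lemma~\ref{lem: estimate for u_X} (giving the integrable $\tau^{-1+1/p}$); on $(\delta,t]$ it applies Lemma~\ref{lem: estimate for Laplace of heat operator applied to u_X-u_Y} with $\gamma=1$ (allowed by the remark following that lemma), producing $\int_\delta^t\tau^{-1}\,d\tau=\ln(t/\delta)=\ln\nu$; on $(t,\infty)$ the $\tau^{-3/2}$ branches apply. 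Your concern about ``activating a higher-order norm'' is misplaced: the bound genuinely requires $\|(X_\nu',X_\dag')\|_{\dot C^1}$, which is available by Proposition~\ref{prop: higher regularity} and Proposition~\ref{prop: well-posedness Stokes case}, and this norm appears explicitly in the Proposition's conclusion.
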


To prove this proposition, let us first justify more uniform-in-$\nu$ estimates for $(u_\nu,X_\nu)$ based on \eqref{eqn: a priori bound for X nu} and \eqref{eqn: a priori bound for L^p norm of u nu}.

\begin{lem}
\label{lem: uniform in nu bounds}
Assume \eqref{eqn: a priori bound for X nu} and \eqref{eqn: a priori bound for L^p norm of u nu}.
When $\nu\geq 1$ is taken to be suitably large, which depends on $p$, $\lam_0$, $M_0$, and $ Q_0 $, it holds that
\beq
\sup_{\eta\in (0,T_\nu]}(\nu \eta)^{\f1p}\|u_\nu(\eta)\|_{L_x^\infty(\BR^2)}
\leq C\nu^{\f1p}\lam_0^{-1}M_0^2 + C(p,\lam_0,M_0, Q_0 ),
\label{eqn: uniform in nu L inf bound for u nu}
\eeq
where $C$ is a constant depending only on $p$, and
\begin{align*}
\sup_{\eta\in (0,T_\nu]}\eta^{\f1p}\|\pa_t X_\nu(\eta)\|_{L_s^\infty(\BT)}
\leq &\;
C(p,\lam_0,M_0, Q_0 ),
\sup_{\eta\in (0,T_\nu]}\eta^{\g}\|\pa_t X_\nu(\eta)\|_{\dot{C}^\g_s(\BT)}
\leq &\; C(\g, p,\lam_0,M_0, Q_0).
\end{align*}

\begin{proof}
By \eqref{eqn: representation of u^nu}, Lemma \ref{lem: estimate for u_X^nu}, and \eqref{eqn: L inf estimate for B} in Lemma \ref{lem: estimate for B_nu u} (cf.\;\eqref{eqn: spatial L inf estimate for v}),
\begin{align*}
&\; (\nu t)^{\f1p}\|u_\nu(t)-u_{X_\nu}^\nu(t)\|_{L_x^\infty}
\\
\leq &\;
C  Q_0  +  C(\nu t)^{\f1p} \nu^{-1} (\nu t)^{-\f12}\sup_{\eta\in(0,t]}(\nu \eta)^{\f12-\f1p}\|u_\nu(\eta)\|_{L_x^p}\cdot
\sup_{\eta\in(0,t]}(\nu \eta)^{\f12}\|u_\nu(\eta)\|_{L_x^\infty}
\\
\leq &\; C  Q_0  \\
&\; +  C_1 \nu^{-1} (\nu t)^{\f12-\f1p} \sup_{\eta\in(0,t]}\|u_\nu(\eta)\|_{L_x^p}\\
&\;\quad \cdot
\left(\nu^{\f1p} \lam_0^{-1}\|X_\nu'\|_{L^\infty_{T_\nu} L_s^{\infty}}
\sup_{\eta\in(0,T_\nu]} \eta^{1/p} \|X_\nu'(\eta)\|_{\dot{C}_s^{1/p}}
+
\sup_{\eta\in(0,T_\nu]}(\nu \eta)^{\f1p}\|u_\nu(\eta)-u_{X_\nu}^\nu(\eta)\|_{L_x^\infty}
\right),
\end{align*}
where $C$ and $C_1$ only depend on $p$.
Note that, given the bounds for $(u_\nu,X_\nu)$ from Proposition \ref{prop: fixed-point solution}, $\sup_{\eta\in(0,t]}(\nu \eta)^{\f1p}\|u_\nu(\eta)-u_{X_\nu}^\nu(\eta)\|_{L_x^\infty}$ is known to be finite.
In view of \eqref{eqn: a priori bound for L^p norm of u nu} and $T_\nu \leq T_\dag \leq 1$, we may assume $\nu\geq 1$ to be sufficiently large, which depends on $p$, $\lam_0$, $M_0$, and $ Q_0 $, so that
\[
C_1 \nu^{-\f12} (\nu T_\nu)^{\f12-\f1p} \sup_{\eta\in(0,T_\nu]}\|u_\nu(\eta)\|_{L_x^p}\leq \f12.
\]
Then with \eqref{eqn: a priori bound for X nu} and the interpolation inequality, we can obtain that
\[
\sup_{\eta\in (0,T_\nu]}(\nu \eta)^{\f1p}\|u_\nu(\eta)-u_{X_\nu}^\nu(\eta)\|_{L_x^\infty(\BR^2)}
\leq C  Q_0  + C\nu^{-\f12+\f1p} \lam_0^{-1}M_0^2
\leq C(p,\lam_0,M_0, Q_0 ).
\]
Here we used the fact that $\epsilon\leq M_0$.
As a result, by Lemma \ref{lem: estimate for u_X^nu},
\[
\sup_{\eta\in (0,T_\nu]}(\nu \eta)^{\f1p}\|u_\nu(\eta)\|_{L_x^\infty(\BR^2)}
\leq C\nu^{\f1p}\lam_0^{-1}M_0^2 + C(p,\lam_0,M_0, Q_0 ),
\]
where $C$ depends on $p$, and by \eqref{eqn: differential equation of X in the mild solution},
\beqo
\sup_{\eta\in (0,T_\nu]}\eta^{\f1p}\|\pa_t X_\nu(\eta)\|_{L_s^\infty(\BT)}
\leq \sup_{\eta\in (0,T_\nu]}\eta^{\f1p}\|u_\nu(\eta)\|_{L_x^\infty}
\leq C(p,\lam_0,M_0, Q_0 ).
\eeqo

Similarly, by \eqref{eqn: Holder estimate for B u prelim} in Lemma \ref{lem: estimate for B_nu u}, for any $t\in (0,T_\nu]$,
\beq
\begin{split}
&\; \|B_\nu [u_\nu](t)\|_{\dot{C}_x^\g}\\
\leq &\; C \nu^{-1} (\nu t)^{-(1+\g)/2} \sup_{\eta\in (0,t]} (\nu \eta)^{(1+\g)/2}\|(u_\nu\otimes u_\nu)(\eta)\|_{L_x^{2/(1-\g)}}
\\
\leq &\; C\nu^{-1} (\nu t)^{-\f{2}{p} + \f{1-\g}{2}} \left(\sup_{\eta\in (0,t]}\|u_\nu(\eta)\|_{L_x^p}\right)^{\f{p(1-\g)}{2}}
\left(\sup_{\eta\in (0,t]} (\nu \eta)^{\f1p}\|u_\nu(\eta)\|_{L_x^\infty}\right)^{2-\f{p(1-\g)}{2}},
\end{split}
\label{eqn: Holder estimate for B u nu}
\eeq
where $C$ depends on $\g$.
Here we used the interpolation inequality and the fact $\f{2}{1-\g}>p$.
Using \eqref{eqn: a priori bound for L^p norm of u nu} and \eqref{eqn: uniform in nu L inf bound for u nu},
\begin{align*}
(\nu t)^{\g}\|u_\nu(t)-u_{X_\nu}^\nu(t)\|_{\dot{C}_x^\g}
\leq &\; C (\nu t)^{\f{\g}2-\f1p} Q_0 \\
&\; +  C(\nu t)^{\g} \nu^{-1} (\nu t)^{-\f{2}{p} + \f{1-\g}{2}} C(p,\lam_0,M_0,Q_0)^{\f{p(1-\g)}{2}}\\
&\;\quad \cdot
\left(C\nu^{\f1p}\lam_0^{-1}M_0^2 + C(p,\lam_0,M_0, Q_0 )\right)^{2-\f{p(1-\g)}{2}}
\\
\leq &\;
C (\nu t)^{\f{\g}2-\f1p} Q_0 + C(\g,p,\lam_0,M_0, Q_0 ) \nu^{-1+\g} t^{\f{1+\g}{2}-\f{2}{p}}.
\end{align*}
Since $\g\geq \f2p$, $\nu \geq 1$, and $t\leq T_\nu \leq 1$,
\begin{align*}
\sup_{\eta\in (0,T_\nu]}(\nu \eta)^{\g}\|u_\nu(\eta)\|_{\dot{C}_x^\g(\BR^2)}
\leq &\;
\sup_{\eta\in (0,T_\nu]}(\nu \eta)^{\g} \|u_{X_\nu}^\nu(\eta)\|_{\dot{C}_x^\g} + C \nu^{\f{\g}2-\f1p} Q_0
+ C(\g,p,\lam_0,M_0, Q_0 )\\
\leq &\; C\nu^\g \lam_0^{-1-\g}M_0^2 + C \nu^{\f{\g}2-\f1p} Q_0 + C(\g,p,\lam_0,M_0, Q_0 ),
\end{align*}
where $C$ depends on $\g$.
Hence, by \eqref{eqn: differential equation of X in the mild solution},
\beqo
\sup_{\eta\in (0,T_\nu]}\eta^{\g}\|\pa_t X_\nu(\eta)\|_{\dot{C}^\g_s(\BT)}
\leq \sup_{\eta\in (0,T_\nu]}\eta^{\g}\|u_\nu(\eta)\|_{\dot{C}^\g_x} \|X_\nu'(\eta)\|_{L^\infty_s}^\g
\leq C(\g, p,\lam_0,M_0, Q_0 ).
\eeqo
\end{proof}
\end{lem}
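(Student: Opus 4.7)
The plan is to bootstrap on the decomposition \eqref{eqn: representation of u^nu}, isolating the Stokes part $u_{X_\nu}^\nu$ (which is the term responsible for the $\nu^{1/p}$ factor and which is directly controlled by the uniform-in-$\nu$ bounds on $X_\nu$ in \eqref{eqn: a priori bound for X nu}) and treating the remainder $u_\nu - u_{X_\nu}^\nu = e^{\nu t\Delta}u_0 + B_\nu[u_\nu]$ as a perturbation. First I would apply Lemma \ref{lem: estimate for u_X^nu} together with \eqref{eqn: a priori bound for X nu} and the interpolation of $\|X_\nu'(\eta)\|_{\dot C^{1/p}_s}$ between $L^\infty$ and $\dot C^\g$ (which uses $\g \geq 1/p$ and produces a bound with constant independent of the cutoff) to get
\[
(\nu t)^{1/p}\|u_{X_\nu}^\nu(t)\|_{L^\infty_x} \leq C\nu^{1/p}\lambda_0^{-1}M_0^2,
\]
and standard parabolic estimates give $(\nu t)^{1/p}\|e^{\nu t\Delta}u_0\|_{L^\infty_x} \leq C Q_0$.

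Next I would plug \eqref{eqn: L inf estimate for B} from Lemma \ref{lem: estimate for B_nu u} into $B_\nu[u_\nu]$ and split the $L^\infty$ factor inside as
\[
(\nu\eta)^{1/2}\|u_\nu(\eta)\|_{L^\infty_x} \leq (\nu\eta)^{1/2-1/p}\bigl[(\nu\eta)^{1/p}\|u_{X_\nu}^\nu(\eta)\|_{L^\infty_x} + (\nu\eta)^{1/p}\|(u_\nu-u_{X_\nu}^\nu)(\eta)\|_{L^\infty_x} + (\nu\eta)^{1/p}\|e^{\nu\eta\Delta}u_0\|_{L^\infty_x}\bigr],
\]
which, after absorbing the $L^p$ factor using \eqref{eqn: a priori bound for L^p norm of u nu}, yields an inequality of the form
\[
A(t) \leq C(p,\lambda_0,M_0,Q_0) + C_1 \nu^{-1/2}(\nu T_\nu)^{1/2-1/p}(2Q_0 + \epsilon')\cdot A(t),
\]
with $A(t):=\sup_{\eta\in(0,t]}(\nu\eta)^{1/p}\|(u_\nu-u_{X_\nu}^\nu)(\eta)\|_{L^\infty_x}$. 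Taking $\nu$ large (using $T_\nu\leq 1$ and $1/2 - 1/p > 0$ since $p>2$) makes the second coefficient $\leq 1/2$, closing the bootstrap and yielding the first claimed bound.

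For the two time-derivative estimates, I would use the pointwise identity $\pa_t X_\nu(s,t) = u_\nu(X_\nu(s,t),t)$ from Lemma \ref{lem: time derivative and equation}, so $\|\pa_t X_\nu\|_{L^\infty_s}\leq \|u_\nu\|_{L^\infty_x}$ and hence the first bound gives $(\nu t)^{1/p}\|\pa_t X_\nu\|_{L^\infty_s} \leq C\nu^{1/p}\lambda_0^{-1}M_0^2 + C(p,\lambda_0,M_0,Q_0)$; dividing the $\nu^{1/p}$ factor into the $t^{1/p}$ on the left produces exactly the time-weighted bound independent of $\nu$ that is claimed. For the $\dot C^\g$ bound, I would need a Hölder estimate for $u_\nu$: Lemma \ref{lem: estimate for u_X^nu} directly gives $(\nu t)^\g \|u_{X_\nu}^\nu\|_{\dot C^\g_x} \leq C\nu^\g\lambda_0^{-1-\g}M_0^2$, the heat kernel gives $(\nu t)^\g\|e^{\nu t\Delta}u_0\|_{\dot C^\g_x} \leq C(\nu t)^{\g/2 - 1/p}Q_0$, and the Hölder estimate \eqref{eqn: Holder estimate for B u prelim} for $B_\nu[u_\nu]$ can be closed by interpolating $\|u_\nu\|_{L^{2/(1-\g)}}$ between $L^p$ and $L^\infty$, which is legal because the choice $\g > 1 - 2/p$ forces $2/(1-\g) > p$. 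Composition with $X_\nu$ via Lemma \ref{lem: Holder estimate for composition of functions} and the fact $\|X_\nu'\|_{L^\infty_s}\leq 3M_0$ then transfers the bound to $\pa_t X_\nu = u_\nu\circ X_\nu$.

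The main obstacle is closing the bootstrap in the first step: a priori $u_\nu$ appears on both sides of the estimate for $B_\nu[u_\nu]$ because the nonlinearity is quadratic in $u_\nu$, and the only way to absorb it is to exploit the smallness of $\nu^{-1/2}(\nu T_\nu)^{1/2-1/p}$ when $\nu$ is large. This is delicate because $T_\nu$ a priori could depend on $\nu$, but \eqref{eqn: a priori bound for L^p norm of u nu} together with $T_\nu\leq 1$ gives a uniform-in-$\nu$ $L^p$-bound $(2Q_0+\epsilon')$ that makes the absorption possible once $\nu \geq \nu_0(p,\lambda_0,M_0,Q_0)$. A secondary technical point is that the weights $(\nu\eta)^{1/p}$ and $(\nu\eta)^{1/2}$ differ by the power $1/2 - 1/p$, and it is exactly this positive gap (which demands $p>2$) that provides the smallness from $T_\nu^{1/2-1/p}$ needed to absorb the nonlinear term; the rest of the argument is essentially bookkeeping.
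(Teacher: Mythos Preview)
Your proposal is correct and follows essentially the same approach as the paper: isolate $u_{X_\nu}^\nu$, bootstrap the remainder $u_\nu-u_{X_\nu}^\nu=e^{\nu t\Delta}u_0+B_\nu[u_\nu]$ via the $L^\infty$ estimate \eqref{eqn: L inf estimate for B} and absorb the quadratic term using the smallness of $\nu^{-1/2}(\nu T_\nu)^{1/2-1/p}$ for large $\nu$, then transfer to $\pa_t X_\nu$ through $\pa_t X_\nu=u_\nu\circ X_\nu$ and repeat the analogous argument at the $\dot C^\g$ level using \eqref{eqn: Holder estimate for B u prelim} with the interpolation $L^{2/(1-\g)}\subset [L^p,L^\infty]$ enabled by $\g>1-\tfrac{2}{p}$. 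The only cosmetic difference is that the paper keeps $e^{\nu t\Delta}u_0$ inside $u_\nu-u_{X_\nu}^\nu$ rather than splitting it off separately.
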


We will use a continuity method to prove Proposition \ref{prop: zero Reynolds number limit}.
The following lemma plays a crucial role.

\begin{lem}
\label{lem: improved closeness to the Stokes solution}

Let $T_\nu \leq T_\dag \leq 1$ be defined as above, and assume \eqref{eqn: a priori bound for X dag}, \eqref{eqn: X dag at different times are close to each other}, and
\eqref{eqn: a priori bound for X nu}-\eqref{eqn: a priori bound for L^p norm of u nu}, where $\epsilon$ and $\epsilon'$ are chosen in \eqref{eqn: choice of epsilon} and \eqref{eqn: def of epsilon prime}, respectively.

When $\nu\geq 1$ is sufficiently large, which depends on $\g$, $p$, $\lam_0$, $M_0$, and $ Q_0 $, it holds that
\begin{itemize}
\item
\beq
\sup_{t\in [0,T_\nu]}\|X_\nu'(t)\|_{C_s(\BT)}\leq \f52 M_0,\quad
\sup_{t\in (0,T_\nu]} t^{\g}\|X_\nu'(t)\|_{\dot{C}^{\g}_s(\BT)} \leq \f32\epsilon,\quad
\inf_{t\in [0,T_\nu]} |X_\nu(t)|_* \geq \f{5\lam_0}{8},
\label{eqn: improved bound for X nu}
\eeq
\item
for any $t_1,t_2\in [0,T_\nu]$,
\beq
\|X_\nu'(\cdot,t_1)-X_\nu'(\cdot,t_2)\|_{L^\infty_s(\T)} \leq \f{3\lam_0}{16}.
\label{eqn: improved bound for X nu at different times are close to each other}
\eeq
\item
and
\beq
\sup_{t\in [0,T_\nu]}\|u_\nu(t)\|_{L^p_x(\BR^2)}\leq \f34\big(2 Q_0  + \epsilon'\big).
\label{eqn: improved bound for L^p norm of u nu}
\eeq
\end{itemize}

\begin{proof}
By Lemma \ref{lem: estimate for u_X^nu}, \eqref{eqn: L p estimate for B} in Lemma \ref{lem: estimate for B_nu u}, \eqref{eqn: representation of u^nu}, and \eqref{eqn: a priori bound for X nu}, for any $t\in [0,T_\nu]$ (cf.\;\eqref{eqn: L p estimate for v}),
\begin{align*}
\|u_\nu(t)\|_{L_x^p}
\leq &\; \|u_{X_\nu}^\nu(t)\|_{L_x^p} + \|e^{\nu t\Delta}u_0\|_{L_x^p} + \|B_{\nu}[u_\nu](t)\|_{L_x^p}
\\
\leq &\; C_0'\lam_0^{-1+\f{2}{p}} M_0^2 +  Q_0
+ C\nu^{-1} (\nu t)^{\f12-\f1p} \left(\sup_{\eta\in(0,t]}\|u_\nu(\eta)\|_{L_x^p}\right)^2,
\end{align*}
where $C_0'$ and $C$ are universal constants depending only on $p$.
By \eqref{eqn: a priori bound for L^p norm of u nu} and \eqref{eqn: def of epsilon prime},
\[
\|u_\nu(t)\|_{L_x^p}
\leq C_0'\lam_0^{-1+\f{2}{p}} M_0^2 +  Q_0
+ C\nu^{-1} (\nu t)^{\f12-\f1p} \left(Q_0 + C_0' \lam_0^{-1+\f{2}{p}} M_0^2\right)^2.
\]
Since $T_\nu\leq T_\dag \leq 1$, by taking $\nu$ to be sufficiently large, which depends on $p$, $\lam_0$, $M_0$, and $ Q_0 $,
we can achieve that
\beqo
\sup_{t\in [0,T_\nu]}\|u_\nu(t)\|_{L^p_x(\BR^2)}\leq \f32 \left( Q_0 + C_0' \lam_0^{-1+\f{2}{p}} M_0^2\right)
= \f34\big(2 Q_0  + \epsilon'\big),
\eeqo
which is \eqref{eqn: improved bound for L^p norm of u nu}.

By \eqref{eqn: representation of X_dag} and \eqref{eqn: representation of X^nu},
\beq
(X_\nu - X_\dag)(t) = \CI \big[g_{X_\nu}-g_{X_\dag} + \tilde{w}[u_\nu,X_\nu]\circ X_\nu \big](t).
\label{eqn: representation of X_dag-X_nu}
\eeq
To bound $(X_\nu - X_\dag)$, we apply Lemma \ref{lem: parabolic estimates for Duhamel term of fractional Laplace} as in \eqref{eqn: estimate for Y-semigroup solution prelim} and find that, for any $t\in [0,T_\nu]$ with $T_\nu\leq 1$,
\beq
\begin{split}
&\; \|(X_\nu-X_\dag)(t)\|_{\dot{C}_s^1(\BT)}
+ t^\g \|(X_\nu-X_\dag)'(t)\|_{\dot{C}^\g_s(\BT)}\\
\leq &\; C \sup_{\eta\in (0,t]} \eta^\g \big\|(g_{X_\nu}-g_{X_\dag})(\eta)\big\|_{\dot{C}_s^\g(\BT)}\\
&\; + Ct^{\f12-\f1p} \sup_{\eta\in (0,t]} \eta^{\f12+\f1p} \big\|\big(h_{X_\nu}^{\nu} + e^{\nu \eta\Delta}(u_0-u_{X_\nu(\eta)})\big)\circ X_\nu(\eta)\big\|_{\dot{C}_s^1(\BT)}\\
&\;+ C \sup_{\eta\in (0,t]} \eta^\g \big\|B_{\nu}[u_\nu]\circ X_\nu(\eta)\big\|_{\dot{C}_s^{\g}(\BT)},
\end{split}
\label{eqn: bound for X nu - X_dag prelim}
\eeq
where $C$ depends on $p$ and $\g$.

By Lemma \ref{lem: improved estimates for g_X-g_Y} and the interpolation inequality,
\begin{align*}
&\;\|g_{X_\nu}(t)-g_{X_\dag}(t)\|_{\dot{C}^\g_s(\BT)}
\\
\leq
&\; C \lam_0^{-2}
\big(\|X_\nu'(t)\|_{L_s^\infty}+\|X_\dag'(t)\|_{L_s^\infty}\big) \big(\|X_\nu'(t)\|_{\dot{C}_s^{\g/2}}+\|X_\dag'(t)\|_{\dot{C}_s^{\g/2}}\big)\\
&\;\cdot \Big[\|X_\nu'(t)-X_\dag'(t)\|_{\dot{C}_s^{\g/2}} + \lam_0^{-1} \big(\|X_\nu'(t)\|_{\dot{C}_s^{\g/2}}+\|X_\dag'(t)\|_{\dot{C}_s^{\g/2}}\big) \|X_\nu'(t)-X_\dag'(t)\|_{L_s^\infty}\Big]
\\
\leq
&\; C t^{-\g} \lam_0^{-2} M_0^{3/2}
\left[\sup_{\eta\in (0,t]} \eta^{\g}\left(\|X_\nu'(\eta)\|_{\dot{C}_s^{\g}} +\|X_\dag'(\eta)\|_{\dot{C}_s^{\g}}\right)\right]^{1/2}\\
&\;\cdot
\left[\sup_{\eta\in (0,t]}\eta^{\g/2}\|(X_\nu'-X_\dag')(\eta)\|_{\dot{C}_s^{\g/2}} + \lam_0^{-1} M_0 \sup_{\eta\in (0,t]}\|({X_\nu}-X_\dag)'(\eta)\|_{L_s^\infty}
\right].
\end{align*}
By the interpolation inequality, \eqref{eqn: a priori bound for X dag}, and \eqref{eqn: a priori bound for X nu}, it simplifies to
\begin{align*}
&\;\sup_{\eta\in (0,t]} \eta^\g \|(g_{X_\nu}-g_{X_\dag})(\eta)\|_{\dot{C}^\g_s(\BT)} \\
\leq
&\; C \lam_0^{-3} M_0^{5/2} \epsilon^{1/2}
\left[\sup_{\eta\in (0,t]}\|({X_\nu}-X_\dag)'(\eta)\|_{L_s^\infty}
+ \sup_{\eta\in (0,t]}\eta^{\g}\|(X_\nu'-X_\dag')(\eta)\|_{\dot{C}_s^{\g}}
\right],
\end{align*}
where the constant $C$ depends only on $\g$.

The last two lines of \eqref{eqn: bound for X nu - X_dag prelim} can be bounded as in \eqref{eqn: C 1 gamma estimate for Y minus semigroup solution} and \eqref{eqn: Holder estimate for B u nu}, respectively.
Indeed, with $\g\geq \f12+\f1p$ and
\[
N_\nu:= \sup_{\eta\in (0,T_\nu)}\eta^{\f1p}\|\pa_t X_\nu(\eta)\|_{L_s^\infty(\BT)} +
\sup_{\eta\in (0,T_\nu)}\eta^{\g}\|\pa_t X_\nu(\eta)\|_{\dot{C}^\g_s(\BT)},
\]
we proceed as in \eqref{eqn: C 1 gamma estimate for Y minus semigroup solution} to find that
\beq
\begin{split}
&\; t^{\f12-\f1p} \sup_{\eta\in (0,t]} \eta^{\f12+\f1p} \big\|\big(h_{X_\nu}^{\nu} + e^{\nu \eta\Delta}(u_0-u_{X_\nu(\eta)})\big)\circ X_\nu(\eta)\big\|_{\dot{C}_s^1(\BT)}
\\
\leq &\; Ct^{\f12-\f1p} \nu^{-\f12-\f1p} \left(  Q_0  + \lam_0^{-1+\f2p} M_0^2\right) M_0 \\
&\; + C t^{\f12-\f1p} \nu^{-\f{1}{2}-\f1p} \lam_0^{-1+\f2p}
\left( M_0^{2-\g} N_\nu^{\g} +\lam_0^{-1} M_0^2 N_\nu \cdot t^{\f1p}\right) M_0
\\
\leq
&\; C(\g, p,\lam_0,M_0, Q_0) \cdot t \cdot (\nu t)^{-\f{1}{2}-\f1p},
\end{split}
\label{eqn: Lipschitz estimate for h and semigroup term}
\eeq
Here we used the bound $N_\nu\leq C(\g, p,\lam_0,M_0, Q_0)$ due to Lemma \ref{lem: uniform in nu bounds}.
On the other hand, following \eqref{eqn: Holder estimate for B u nu} and applying \eqref{eqn: a priori bound for L^p norm of u nu}, \eqref{eqn: def of epsilon prime}, and Lemma \ref{lem: uniform in nu bounds},
\begin{align*}
&\;\sup_{\eta\in (0,t]} \eta^\g \big\|B_{\nu}[u_\nu]\circ X_\nu(\eta)\big\|_{\dot{C}_s^{\g}(\BT)}
\\
\leq &\;  C
t^\g \cdot \nu^{-1} (\nu t)^{-\f{2}{p} + \f{1-\g}{2}}
\left(\sup_{\eta\in (0,t]}\|u_\nu(\eta)\|_{L_x^p}\right)^{\f{p(1-\g)}{2}}
\left(\sup_{\eta\in (0,t]} (\nu \eta)^{\f1p}\|u_\nu(\eta)\|_{L_x^\infty}\right)^{2-\f{p(1-\g)}{2}}
M_0^\g
\\
\leq
&\; C(\g, p,\lam_0,M_0, Q_0 ) \cdot t \cdot (\nu t)^{-\f{1}{2}-\f1p}. 
\end{align*}
In the last line, we used the assumptions that $p>2$, $\g > \f2p$, $\nu\geq 1$, and $t\leq 1$.

Combining all these estimates into \eqref{eqn: bound for X nu - X_dag prelim}, we obtain that
\beq
\begin{split}
&\; \|(X_\nu-X_\dag)(t)\|_{\dot{C}_s^1(\BT)}
+ t^\g \|(X_\nu-X_\dag)'(t)\|_{\dot{C}^\g_s(\BT)}\\
\leq &\; C_2 \lam_0^{-3} M_0^{5/2} \epsilon^{1/2}
\left[\sup_{\eta\in (0,t]}\|({X_\nu}-X_\dag)'(\eta)\|_{L_s^\infty}
+ \sup_{\eta\in (0,t]}\eta^{\g}\|(X_\nu'-X_\dag')(\eta)\|_{\dot{C}_s^{\g}}
\right]
\\
&\; + C(\g, p,\lam_0,M_0, Q_0 ) \cdot t \cdot (\nu t)^{-\f{1}{2}-\f1p},
\end{split}
\label{eqn: C 1 gamma bound for X_dag-X_nu prelim}
\eeq
where $C_2$ depends only on $\g$.

In view of this, we shall choose $\epsilon\leq M_0$ suitably small, so that
\beq
C_2 \lam_0^{-3}M_0^{5/2}\epsilon^{1/2}\leq \f12.
\label{eqn: choice of epsilon}
\eeq
Then with $T_\nu \leq T_\dag \leq 1$, for any $t\in (0,T_\nu]$,
\beq
\|(X_\nu-X_\dag)(t)\|_{\dot{C}_s^1(\BT)}
+ t^\g \|(X_\nu-X_\dag)'(t)\|_{\dot{C}^\g_s(\BT)}\\
\leq C_3(\g, p,\lam_0,M_0, Q_0 ) \cdot  t \cdot (\nu t)^{-\f{1}{2}-\f1p},
\label{eqn: C 1 gamma bound for X_dag-X_nu}
\eeq
We additionally require $\nu$ to be sufficiently small, which depends on $\g$, $p$, $\lam_0$, $M_0$, and $ Q_0 $, such that
\[
C_3(\g, p,\lam_0,M_0, Q_0 ) \cdot \nu^{-\f{1}{2}-\f1p} \leq \min\left(\f{M_0}2,\,\f{\epsilon}{2},\,\f{\lam_0}{32}\right).
\]
Then from \eqref{eqn: a priori bound for X dag} and \eqref{eqn: X dag at different times are close to each other}, one can readily show that $X_\nu$ satisfies \eqref{eqn: improved bound for X nu} and \eqref{eqn: improved bound for X nu at different times are close to each other}.

This completes the proof.
\end{proof}

\begin{rmk}
\label{rmk: reason for choosing new range of gamma}
As is explained in Remark \ref{rmk: optimality of the convergence}, the estimates \eqref{eqn: C 1 gamma bound for X_dag-X_nu prelim} and \eqref{eqn: C 1 gamma bound for X_dag-X_nu} are optimal in terms of the $\nu$- and $t$-dependence given the presence of the term $e^{\nu t\D}u_0$ in the dynamics of $X_\nu$.
The assumption $\g \geq \f12+\f1p$ was made to achieve such optimal form of the bounds.
We point out that it was used when estimating $\|h_{X_\nu}^{\nu}\circ X_\nu\|_{\dot{C}_s^1(\BT)}$ in \eqref{eqn: Lipschitz estimate for h and semigroup term}; also see \eqref{eqn: W 1 inf estimate for h} in Lemma \ref{lem: L inf and W 1 inf estimate for h_X^nu with optimal power of nu}.
\end{rmk}
\begin{rmk}
\label{rmk: dependence of T_dag}
Combining \eqref{eqn: a priori bound for X dag}, \eqref{eqn: X dag at different times are close to each other}, and \eqref{eqn: choice of epsilon}, we can conclude that $T_\dag$ depends on $\g$, $\lam_0$, $M_0$, and $X_0$.
\end{rmk}
\end{lem}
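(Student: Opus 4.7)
The plan is to upgrade each of the three a priori bounds \eqref{eqn: a priori bound for X nu}--\eqref{eqn: a priori bound for L^p norm of u nu} separately by exploiting the largeness of $\nu$ together with the smallness of $\epsilon$ fixed in \eqref{eqn: choice of epsilon}. I will first sharpen the $L^p$-bound for $u_\nu$ using only the Navier--Stokes fixed-point equation \eqref{eqn: representation of u^nu} and crude nonlinear estimates; then the three bounds on $X_\nu'$ (in $L^\infty_s$, in the time-weighted $\dot{C}^\g_s$ norm, and in the well-stretched constant) and the time-equicontinuity of $X_\nu'$ will all follow from a single $\dot{C}^1_s\cap\dot{C}^{1,\g}_s$-estimate for the difference $X_\nu-X_\dag$, combined with the a priori bounds \eqref{eqn: a priori bound for X dag}--\eqref{eqn: X dag at different times are close to each other} on the Stokes solution.

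For \eqref{eqn: improved bound for L^p norm of u nu}: starting from \eqref{eqn: representation of u^nu}, I decompose $\|u_\nu(t)\|_{L^p_x}\leq \|u_{X_\nu}^\nu(t)\|_{L^p_x} + \|e^{\nu t\D}u_0\|_{L^p_x} + \|B_\nu[u_\nu](t)\|_{L^p_x}$. Lemma \ref{lem: estimate for u_X^nu} and \eqref{eqn: a priori bound for X nu} control the first term by $C_0'\lam_0^{-1+2/p}M_0^2=\epsilon'/2$ (this is the very reason for the choice \eqref{eqn: def of epsilon prime}); the heat semigroup gives the second by $Q_0$; the nonlinear piece is handled by \eqref{eqn: L p estimate for B} in Lemma \ref{lem: estimate for B_nu u}, which produces a factor $C\nu^{-1}(\nu T_\nu)^{1/2-1/p}(2Q_0+\epsilon')^2$. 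Since $T_\nu\leq 1$ and $p>2$, taking $\nu$ large enough (depending on $p,\lam_0,M_0,Q_0$) forces the nonlinear contribution to be bounded by $\tfrac14(2Q_0+\epsilon')$, which combined with $Q_0+\epsilon'/2=\tfrac12(2Q_0+\epsilon')$ yields the $\tfrac34(2Q_0+\epsilon')$ ceiling.

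For \eqref{eqn: improved bound for X nu} and \eqref{eqn: improved bound for X nu at different times are close to each other}: I work with the difference representation \eqref{eqn: representation of X_dag-X_nu} and apply Lemma \ref{lem: parabolic estimates for Duhamel term of fractional Laplace} to the operator $\CI$ to bound $\|(X_\nu-X_\dag)(t)\|_{\dot{C}^1_s}+t^\g\|(X_\nu-X_\dag)'(t)\|_{\dot{C}^\g_s}$ by the sum of three source-term norms. The elasticity-contrast $g_{X_\nu}-g_{X_\dag}$ is estimated using Lemma \ref{lem: improved estimates for g_X-g_Y} and the interpolation $\|\cdot\|_{\dot{C}^{\g/2}}\leq C\|\cdot\|_{\dot{C}^\g}^{1/2}\|\cdot\|_{L^\infty}^{1/2}$, producing a coefficient $C\lam_0^{-3}M_0^{5/2}\epsilon^{1/2}$ in front of $\|X_\nu-X_\dag\|$ itself; the choice \eqref{eqn: choice of epsilon} makes this coefficient at most $1/2$, which is the key continuity-method input allowing absorption into the left-hand side. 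The linearized Navier--Stokes error $(h_{X_\nu}^\nu+e^{\nu t\D}(u_0-u_{X_\nu(t)}))\circ X_\nu$ is controlled in the weighted $\dot{C}^1_s$ at weight $t^{1/2+1/p}$ via \eqref{eqn: W 1 inf estimate for h} of Lemma \ref{lem: L inf and W 1 inf estimate for h_X^nu with optimal power of nu} and heat-semigroup gradient bounds, yielding the optimal decay $t\cdot(\nu t)^{-1/2-1/p}$; the uniform-in-$\nu$ bounds on $\pa_t X_\nu$ supplied by Lemma \ref{lem: uniform in nu bounds} are needed here. The nonlinear piece $B_\nu[u_\nu]\circ X_\nu$ is handled in $t^\g\|\cdot\|_{\dot{C}^\g_s}$ via \eqref{eqn: Holder estimate for B u prelim} combined with interpolation through $L^p$ and $L^\infty$, also delivering $t\cdot(\nu t)^{-1/2-1/p}$. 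Summing and absorbing the $\tfrac12$-coefficient gives $\|(X_\nu-X_\dag)(t)\|_{\dot{C}^1_s}+t^\g\|(X_\nu-X_\dag)'(t)\|_{\dot{C}^\g_s}\leq C_3(\g,p,\lam_0,M_0,Q_0)\cdot t\cdot(\nu t)^{-1/2-1/p}$. Demanding $\nu^{-1/2-1/p}C_3\leq \min(M_0/2,\epsilon/2,\lam_0/32)$ and comparing with \eqref{eqn: a priori bound for X dag}--\eqref{eqn: X dag at different times are close to each other} through the triangle inequality (in particular $|X_\nu(t)|_*\geq |X_\dag(t)|_*-\|X_\nu'(t)-X_\dag'(t)\|_{L^\infty_s}$) yields \eqref{eqn: improved bound for X nu}; the time-equicontinuity \eqref{eqn: improved bound for X nu at different times are close to each other} follows from $\|X_\nu'(t_i)-X_\dag'(t_i)\|_{L^\infty_s}\leq \lam_0/32$ for $i=1,2$ together with \eqref{eqn: X dag at different times are close to each other}.

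The main obstacle is securing the sharp $(\nu t)^{-1/2-1/p}$ rate for the $\dot{C}^1_s$-seminorm of $h_{X_\nu}^\nu\circ X_\nu$: the estimate \eqref{eqn: W 1 inf estimate for h} only produces the desired decay when $\g\geq 1/2+1/p$, which is precisely the reason the enlarged range \eqref{eqn: range of gamma} used in Section \ref{sec: local well-posedness} had to be tightened in this section (cf.\;Remark \ref{rmk: reason for choosing new range of gamma}); this forces the use of the asymmetric time weight $t^{1/2+1/p}$ for the $\dot{C}^1_s$-piece instead of the $t^\g$-weight natural to Lemma \ref{lem: parabolic estimates for Duhamel term of fractional Laplace}, exactly paralleling the split in \eqref{eqn: C 1 gamma estimate for Y minus semigroup solution}. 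A secondary subtlety is ensuring that all constants multiplying $\|(X_\nu-X_\dag)\|$-quantities on the right-hand side are of the $\epsilon^{1/2}$ type (hence absorbable via \eqref{eqn: choice of epsilon}) rather than involving positive powers of $\nu$; this is why the $\dot{C}^{\g/2}$-interpolation is essential when estimating $g_{X_\nu}-g_{X_\dag}$, and why one must not naively bound $\|X_\nu'-X_\dag'\|_{\dot{C}^\g}$ by $2\epsilon/t^\g$ without first extracting the smallness factor.
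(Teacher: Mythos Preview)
Your proposal is correct and follows essentially the same approach as the paper: you establish \eqref{eqn: improved bound for L^p norm of u nu} first via the fixed-point representation and \eqref{eqn: L p estimate for B}, then bound $X_\nu-X_\dag$ through \eqref{eqn: representation of X_dag-X_nu} by splitting the source term into the elasticity contrast (handled with Lemma~\ref{lem: improved estimates for g_X-g_Y} and the $\dot{C}^{\g/2}$-interpolation to extract the absorbable $\epsilon^{1/2}$ factor), the linearized Navier--Stokes error (handled in $\dot{C}^1_s$ at weight $t^{1/2+1/p}$ via Lemma~\ref{lem: L inf and W 1 inf estimate for h_X^nu with optimal power of nu}, which is where $\g\geq 1/2+1/p$ enters), and the nonlinear piece (via \eqref{eqn: Holder estimate for B u prelim}), and then absorb and conclude by the triangle inequality against \eqref{eqn: a priori bound for X dag}--\eqref{eqn: X dag at different times are close to each other}. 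You have also correctly identified both subtleties the paper flags in Remark~\ref{rmk: reason for choosing new range of gamma}.
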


Then the convergence as $\nu \to +\infty$ follows immediately.

\begin{proof}[Proof of Proposition \ref{prop: zero Reynolds number limit}]

We may define $\nu_*$ to be sufficiently large so that, as long as $\nu \geq \nu_*$, Lemma \ref{lem: uniform in nu bounds} and Lemma \ref{lem: improved closeness to the Stokes solution} hold.
Such $\nu_*$ depends on $\g$, $p$, $\lam_0$, $M_0$, and $ Q_0 $.
In the rest of the proof, we always assume $\nu \geq \nu_*$.

The claim that $T_\nu = T_\dag$ for all $\nu\geq \nu_*$ follows from Lemma \ref{lem: improved closeness to the Stokes solution} and the time-continuity of $(u_\nu,X_\nu)$.
Indeed, if $T_\nu<T_\dag$ for some $\nu\geq \nu_*$, then the time-continuity of $(u_\nu,X_\nu)$ as well as \eqref{eqn: improved bound for X nu}-\eqref{eqn: improved bound for L^p norm of u nu} implies that there exists $T_\nu'\in (T_\nu,T_\dag]$, such that \eqref{eqn: a priori bound for X nu}-\eqref{eqn: a priori bound for L^p norm of u nu} still hold with $T_\nu$ there replaced by $T_\nu'$.
This contradicts with the maximality of $T_\nu$.

The estimates \eqref{eqn: a priori bound for X nu}-\eqref{eqn: a priori bound for L^p norm of u nu} for $(u_\nu,X_\nu)$ on $[0,T_\dag]$ then follow.
The bound \eqref{eqn: C 1 and C 1 gamma different between X_dag and X_nu} has been proved in \eqref{eqn: C 1 gamma bound for X_dag-X_nu}.

To show \eqref{eqn: L inf different between X_dag and X_nu}, we derive by Lemma \ref{lem: estimate for u_X}, Lemma \ref{lem: estimate for h_X^nu} (also see the arguments in Lemma \ref{lem: L inf and W 1 inf estimate for h_X^nu with optimal power of nu}), \eqref{eqn: L inf estimate for B} in Lemma \ref{lem: estimate for B_nu u}, Lemma \ref{lem: parabolic estimates} that (also see \eqref{eqn: L inf estimate for w}), for all $t\in (0,T_\nu]$,
\begin{align*}
&\; \|\tilde{w}[u_\nu,X_\nu](t)\|_{L^\infty_x(\BR^2)}\\
\leq
&\;\|h_{X_\nu}^{\nu}(t)\|_{L^\infty_x(\BR^2)}
+ \big\|e^{\nu t\D}(u_0-u_{X_\nu(t)})\big\|_{L^\infty_x(\BR^2)}
+ \|B_{\nu}[u_\nu](t)\|_{L^\infty_x(\BR^2)}\\
\leq
&\; C(\nu t)^{-\f1p} \min\left(\lam_0^{-1+\f2p},\, (\nu t)^{-\f12+\f1p}\right)\\
&\;\cdot
\|X_\nu'\|_{L^\infty_t L_s^\infty}  \left(\sup_{\eta\in (0,t]}\eta^\g \|X_\nu'(\eta)\|_{C_s^\g}\right)^{1-\g} \left(\sup_{\eta\in (0,t]}\eta^\g \|\pa_t X_\nu(\eta)\|_{\dot{C}_s^{\g}}\right)^{\g} \\
&\; + C (\nu t)^{-\f1p}  \min\left(\lam_0^{-1+\f2p},\, (\nu t)^{-\f12+\f1p}\right) t^{\f1p}\\
&\;\quad \cdot \lam_0^{-1} \left( \|X_\nu'\|_{L^\infty_t L_s^\infty}
\sup_{\eta\in (0,t]}\eta^{1-\f2p} \|X_\nu'(\eta)\|_{C_s^{1-\f2p}} \sup_{\eta\in (0,t]}\eta^{\f1p}\|\pa_t X_\nu(\eta)\|_{L_s^\infty} \right.\\
&\;\qquad \qquad \left.+ \|X_\nu'\|_{L^\infty_t L_s^\infty}^2 \sup_{\eta\in (0,t]}\eta^{1-\f1p}\|\pa_t X_\nu(\eta)\|_{\dot{C}_s^{1-\f2p}} \right)
\\
&\; + C (\nu t)^{-\f1p} \|u_0-u_{X_\nu(t)}\|_{L^p_x} \\
&\; + C\nu^{-1} (\nu t)^{\f12-\f2p}
\sup_{\eta\in(0,t]}\|u_\nu(\eta)\|_{L_x^p}
\sup_{\eta\in(0,t]}(\nu \eta)^{\f1p}\|u_\nu(\eta)\|_{L_x^\infty}.
\end{align*}
By Lemma \ref{lem: uniform in nu bounds} as well as \eqref{eqn: a priori bound for X nu} and \eqref{eqn: a priori bound for L^p norm of u nu}, for any $0<t\leq T_\dag \leq 1$,
\beq
\begin{split}
&\; \|\tilde{w}[u_\nu,X_\nu](t)\|_{L^\infty_x(\BR^2)}\\
\leq
&\; C(\g, p,\lam_0,M_0, Q_0 ) (\nu t)^{-\f{1}{p}}
+ C (\nu t)^{-\f1p} \big( Q_0  + \lam_0^{-1+\f2p} M_0^2\big) \\
&\; + C\nu^{-1} (\nu t)^{\f12-\f2p} C(p,\lam_0,M_0, Q_0 )
\left[C(p)\nu^{\f1p}\lam_0^{-1}M_0^2 + C(p,\lam_0,M_0, Q_0 )\right]
\\
\leq &\; C(\g, p,\lam_0,M_0, Q_0 ) (\nu t)^{-\f{1}{p}}.
\end{split}
\label{eqn: L inf estimate for tilde w}
\eeq
Moreover, by Lemma \ref{lem: improved estimates for g_X-g_Y}, \eqref{eqn: a priori bound for X dag}, \eqref{eqn: a priori bound for X nu}, \eqref{eqn: C 1 and C 1 gamma different between X_dag and X_nu}, and the interpolation inequality,
\begin{align*}
&\; \|g_{X_\nu}(t)-g_{X_\dag}(t)\|_{L^\infty_s(\BT)}\\
\leq &\; C\lam_0^{-2} \big(\|X_\nu'(t)\|_{L_s^\infty}+\|X_\dag'(t)\|_{L_s^\infty}\big)^2\\
&\;\cdot \Big[\|X_\nu'(t)-X_\dag'(t)\|_{\dot{C}_s^{1/2}}
+ \lam_0^{-1} \big(\|X_\nu'(t)\|_{\dot{C}_s^{1/2}}+\|X_\dag'(t)\|_{\dot{C}_s^{1/2}}\big) \|X_\nu'(t)-X_\dag'(t)\|_{L_s^\infty} \Big]
\\
\leq &\; C\lam_0^{-2}M_0^2 \Big[t^{-\f12}\cdot t^{\f12}\|X_\nu'(t)-X_\dag'(t)\|_{\dot{C}_s^{1/2}}
+ t^{-\f12} \lam_0^{-1}  M_0 \|X_\nu'(t)-X_\dag'(t)\|_{L^\infty} \Big]
\\
\leq &\; C\lam_0^{-3}M_0^3  t^{-\f12} \cdot C(\g, p, \lam_0, M_0,  Q_0 )\cdot  t\cdot (\nu t)^{-\f{1}{2}-\f1p}
\\
\leq &\; C(\g, p, \lam_0, M_0,  Q_0 ) (\nu t)^{-\f1p}.
\end{align*}
By \eqref{eqn: representation of X_dag-X_nu},
\begin{align*}
\|X_\nu(t)-X_\dag(t)\|_{L^\infty_s(\BT)}
\leq &\; \int_0^t \|g_{X_\nu}(\tau)-g_{X_\dag}(\tau)\|_{L^\infty_s}
+ \|\tilde{w}[u_\nu,X_\nu](\tau)\|_{L^\infty_x}\,d\tau\\
\leq &\; C(\g, p, \lam_0, M_0,  Q_0 ) \cdot t\cdot (\nu t)^{-\f1p}.
\end{align*}

Finally, by Lemma \ref{lem: estimate for u_X}, Lemma \ref{lem: estimate for Laplace of heat operator applied to u_X-u_Y}, and \eqref{eqn: representation of u_X using time integral}, with $\d\in (0,t]$ to be chosen,
\begin{align*}
&\; \|u_{X_\nu(t)}-u_{X_\dag(t)}\|_{L^\infty_x(\BR^2)}\\
\leq &\; \int_0^\infty
\big\|\D e^{\tau \D}\big(u_{X_\nu(t)}-u_{X_\dag(t)}\big)\big\|_{L^\infty_x(\BR^2)}\,d\tau\\
\leq &\; C \int_0^\d
\lam_0^{-1-\f2p}\tau^{-1+\f1p} \|(X_\nu',X_\dag')\|_{\dot{C}_s^{2/p}} \|(X_\nu',X_\dag')\|_{L_s^\infty} \,d\tau\\
&\;+ C \int_\d^t \lam_0^{-2}\tau^{-1} \\
&\;\qquad \quad \cdot \left(\|(X_\nu',X_\dag')\|_{L_s^\infty}
\|(X_\nu',X_\dag')\|_{\dot{C}^1_s}\|X_\nu-X_\dag\|_{L_s^\infty}
+ \|(X_\nu',X_\dag')\|_{L_s^\infty}^2 \|X_\nu-X_\dag\|_{\dot{C}_s^1}\right) d\tau
\\
&\;+ C \int_t^\infty \tau^{-3/2} \|(X_\nu',X_\dag')\|_{L_s^\infty} \|(X_\nu-X_\dag)'\|_{L_s^\infty}
+ \lam_0^{-1}\tau^{-3/2} \|(X_\nu',X_\dag')\|_{L_s^\infty}^2 \|X_\nu-X_\dag\|_{L_s^\infty} \,d\tau
\\
\leq &\; C \d^{1/p}\cdot
\lam_0^{-1-\f2p} \|(X_\nu',X_\dag')\|_{\dot{C}_s^{2/p}} \|(X_\nu',X_\dag')\|_{L_s^\infty}\\
&\;+ C\lam_0^{-2} \ln \big(\d^{-1} t\big) \\
&\;\quad \cdot
\left(\|(X_\nu',X_\dag')\|_{L_s^\infty}\|(X_\nu',X_\dag')\|_{\dot{C}^1_s}\|X_\nu-X_\dag\|_{L_s^\infty}
+ \|(X_\nu',X_\dag')\|_{L_s^\infty}^2 \|(X_\nu-X_\dag)'\|_{L_s^\infty}\right)
\\
&\;+ Ct^{-\f12}\left(\|(X_\nu',X_\dag')\|_{L_s^\infty} \|(X_\nu-X_\dag)'\|_{L_s^\infty}
+ \lam_0^{-1} \|(X_\nu',X_\dag')\|_{L_s^\infty}^2 \|X_\nu-X_\dag\|_{L_s^\infty}\right).
\end{align*}
Using \eqref{eqn: a priori bound for X dag}, \eqref{eqn: a priori bound for X nu}, \eqref{eqn: L inf different between X_dag and X_nu} and \eqref{eqn: C 1 and C 1 gamma different between X_dag and X_nu}, we simplify it as
\begin{align*}
&\; \|u_{X_\nu(t)}-u_{X_\dag(t)}\|_{L^\infty_x(\BR^2)}
\\
\leq &\; C \d^{\f1p}\cdot  t^{-\f2p} \lam_0^{-1-\f2p} M_0^2\\
&\;+ C(\g, p, \lam_0, M_0,  Q_0 ) \ln \big(\d^{-1} t\big)
\left( \|(X_\nu',X_\dag')\|_{\dot{C}^1_s}\cdot t\cdot (\nu t)^{-\f1p}
+ t\cdot (\nu t)^{-\f12-\f1p}\right)
\\
&\;+ C(\g, p, \lam_0, M_0,  Q_0 ) \cdot t^{-\f12} \left( t\cdot (\nu t)^{-\f12-\f1p} + t\cdot (\nu t)^{-\f1p}\right).
\end{align*}
Take $\d := t^2(\nu t)^{-1}\in (0,t]$.
Since $\nu \geq 1$ and $t\leq T_\dag \leq 1$, we find that
\begin{align*}
\|u_{X_\nu(t)}-u_{X_\dag(t)}\|_{L^\infty_x(\BR^2)}
\leq &\; C(\g, p, \lam_0, M_0,  Q_0 )  \ln \nu \cdot t
\|(X_\nu'(t),X_\dag'(t))\|_{\dot{C}^1_s}\cdot (\nu t)^{-\f1p}
\\
&\; + C(\g, p, \lam_0, M_0,  Q_0 ) (\nu t)^{-\f1p}.
\end{align*}
Therefore, by \eqref{eqn: representation of u^nu} and \eqref{eqn: L inf estimate for tilde w},
\begin{align*}
\|u_\nu(t)-u_{X_\dag(t)}\|_{L^\infty_x(\BR^2)}
\leq &\; \|u_{X_\nu(t)}-u_{X_\dag(t)}\|_{L^\infty_x(\BR^2)}
+ \|\tilde{w}[u_\nu,X_\nu](t)\|_{L^\infty_x(\BR^2)}\\
\leq &\; C(\g, p, \lam_0, M_0,  Q_0)  \ln \nu \cdot t
\|(X_\nu'(t),X_\dag'(t))\|_{\dot{C}^1_s}\cdot (\nu t)^{-\f1p}
\\
&\; + C(\g, p, \lam_0, M_0,  Q_0) (\nu t)^{-\f1p}.
\end{align*}
\end{proof}

\section{Local Solutions with $u_0\in L^2\cap L^p(\BR^2)$}
\label{sec: local well-posedness for u_0 in L^2 and L^p}

Let $(u_0,X_0)$ be assumed as in Section \ref{sec: properties of mild solutions}.
From now on, let us additionally assume that $u_0\in L^2(\BR^2)$.
This is motivated by the need of establishing the energy law of the system (see Proposition \ref{prop: energy law}), which will be used in the study of the global solutions in Section \ref{sec: global well-posedness}.

We still use $(u,X)$ to denote the unique mild solution to \eqref{eqn: NS equation}-\eqref{eqn: initial data} on $[0,T]$, and let $M$, $\lam$, $Q$ be defined as in \eqref{eqn: def of M}-\eqref{eqn: def of Q}, respectively.
Besides, we denote $K_0:= \|u_0\|_{L_x^2(\BR^2)} < +\infty$.

\begin{lem}
\label{lem: L 2 bound for the mild solution u}
$u\in C([0,T];L^2(\BR^2))$, and
\[
\|u\|_{L^\infty_T L^2_x(\BR^2)}\leq C(T, p,\nu,\lam,M,Q,K_0).
\]

\begin{proof}
We first derive an upper bound for $\|u(t)\|_{L^2_x(\BR^2)}$.
This can be achieved by repeatedly using \eqref{eqn: fixed pt equation for u} and Lemma \ref{lem: estimate for B_nu u}.
We only sketch it.

For any $q\in [2,p]$, by \eqref{eqn: fixed pt equation for u}, Lemma \ref{lem: estimate for u_X^nu}, and \eqref{eqn: L q estimate for B general} in Lemma \ref{lem: estimate for B_nu u}, with $r\in [1,q]$ satisfying that $0< \f1r <\f12 + \f1q$,
\begin{align*}
&\; \|u(t)\|_{L_x^q(\BR^2)}\\
\leq &\; \|u_0\|_{L^q_x}
+ C(\nu t)^{\f1{2q}} \lam^{-1+\f1q}
\|X'\|_{L^\infty_t L_s^\infty}^2
+ C\nu^{-1} (\nu t)^{\f12+\f1q-\f1r}
\sup_{\eta\in(0,t]}\|(u\otimes u)(\eta)\|_{L_x^r}.
\end{align*}
If $p \in (2,4]$, we simply take $(q,r) = (2,\f{p}2)$ to obtain that
\[
\|u(t)\|_{L_x^2(\BR^2)}
\leq \|u_0\|_{L^2_x}
+ C(\nu t)^{\f14} \lam^{-\f12} M^2
+ C\nu^{-1} (\nu t)^{1-\f2p}\|u\|_{L^\infty_t L_x^{p}}^2
\leq C(T, p,\nu,\lam,M,Q,K_0).
\]
If $p> 4$, we first take $q = r = p/2$, and find that
\[
\|u(t)\|_{L_x^{p/2}(\BR^2)}
\leq \|u_0\|_{L^{p/2}_x}
+ C(\nu t)^{\f1{p}} \lam^{-1+\f2p} M^2
+ C\nu^{-1} (\nu t)^{\f12}
\|u\|_{L^\infty_t L_x^{p}}^2
\leq C(T, p,\nu,\lam,M,Q,K_0).
\]
Note that $\|u_0\|_{L^{p/2}_x}\leq C(K_0,Q)$ by interpolation.
Then we bootstrap with this updated estimate.
After finite steps of iteration, we can conclude that
\[
\|u\|_{L^\infty_T L_x^2(\BR^2)}
\leq
C(T, p,\nu,\lam,M,Q,K_0).
\]

Next we prove $u\in C([0,T];L^2(\BR^2))$. 
With $r:=(\f12+\f1p)^{-1}\in (1,2)$, we apply Lemma \ref{lem: estimate for u_X^nu} and \eqref{eqn: L q estimate for B general} in Lemma \ref{lem: estimate for B_nu u} to find that, for $t\in [0,T]$,
\[
\big\|u_X^\nu(t)+B_\nu[u](t)\big\|_{L^r_x(\BR^2)}
\leq C(\nu t)^{\f1{2r}} \lam^{-1+\f1r}
\|X'\|_{L^\infty_t L_s^\infty}^2
+  C\nu^{-1} (\nu t)^{\f12}\|u\|_{L^\infty_t L_x^2}
\|u\|_{L^\infty_t L_x^p},
\]
which is uniformly bounded for $t\in [0,T]$.
On the other hand, $t\mapsto u_X^\nu(t)+B_\nu[u](t) = u(t)-e^{\nu t\D}u_0$ is continuous in the $L^p(\BR^2)$-topology due to Lemma \ref{lem: time continuity of u in L^p} and the fact $u_0\in L^p(\BR^2)$.
By interpolation, $t\mapsto u_X^\nu(t)+B_\nu[u](t)$ is also continuous in the $L^2(\BR^2)$-topology.
Since $u_0\in L^2(\BR^2)$ implies that $t\mapsto e^{\nu t \D}u_0$ is continuous in $L^2(\BR^2)$, we use \eqref{eqn: fixed pt equation for u} to conclude that $u\in C([0,T];L^2(\BR^2))$.
\end{proof}
\end{lem}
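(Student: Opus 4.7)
The plan is to exploit the mild representation $u(t)=u_{X}^\nu(t)+e^{\nu t\Delta}u_0+B_\nu[u](t)$ of \eqref{eqn: fixed pt equation for u}. Each of the three pieces will be controlled in $L^2(\BR^2)$. First, $\|e^{\nu t\Delta}u_0\|_{L^2_x}\leq K_0$ by $L^2$-contractivity of the heat semigroup. Second, Lemma \ref{lem: estimate for u_X^nu} at exponent $p=2$ gives directly $\|u_X^\nu(t)\|_{L^2_x}\leq C(\nu t)^{1/4}\lam^{-1/2}\|X'\|_{L^\infty_t L^2_s}\|X'\|_{L^\infty_t L^\infty_s}\leq C(T,\nu,\lam,M)$. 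The only delicate term is $B_\nu[u]$, where I would use the bilinear estimate \eqref{eqn: L 2 estimate for B}
\[
\|B_\nu[u](t)\|_{L^2_x}\leq C\nu^{-1}\|u\|_{L^\infty_t L^2_x}\sup_{\eta\in(0,t]}(\nu\eta)^{\frac12-\frac1p}\|u(\eta)\|_{L^p_x},
\]
so that the contribution of $B_\nu[u]$ is linear in $\|u\|_{L^\infty_t L^2_x}$ with a small prefactor as $t\to0^+$.

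Denoting $F(t):=\sup_{\eta\in[0,t]}\|u(\eta)\|_{L^2_x}$, the three estimates combine into
\[
F(t)\leq K_0+C(T,\nu,\lam,M)+C\nu^{-1}(\nu t)^{\frac12-\frac1p}Q\cdot F(t).
\]
Since $p>2$, choose $t_0=t_0(p,\nu,Q)>0$ so small that $C\nu^{-1}(\nu t_0)^{1/2-1/p}Q\leq 1/2$; this gives a bound for $F$ on $[0,t_0]$. To extend past $t_0$ one rewrites the equation with initial time $t_0$ in place of $0$ (as in \eqref{eqn: formula for u new initial time}), using $u(t_0)\in L^2$ as new initial datum; the same absorption argument yields a bound on $[t_0,2t_0]$, and iterating a finite number of steps depending only on $T/t_0$ one obtains $\|u\|_{L^\infty_T L^2_x}\leq C(T,p,\nu,\lam,M,Q,K_0)$.

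For time-continuity, I would argue by interpolation. Lemma \ref{lem: time continuity of u in L^p} already gives $u\in C([0,T];L^p(\BR^2))$, and the direct splitting $u-e^{\nu t\Delta}u_0=u_X^\nu+B_\nu[u]$ combined with \eqref{eqn: L q estimate for B general} applied with $\frac1r=\frac12+\frac1p\in(1/2,1)$ (so that $r\in(1,2)$ and $\|u\otimes u\|_{L^r}\leq\|u\|_{L^2}\|u\|_{L^p}$) shows that $u_X^\nu+B_\nu[u]$ is uniformly bounded in $L^r(\BR^2)$ for $t\in[0,T]$; together with its $L^p$-continuity, standard interpolation $\|f\|_{L^2}\leq \|f\|_{L^r}^{1-\theta}\|f\|_{L^p}^{\theta}$ with $\theta\in(0,1)$ determined by $\frac12=\frac{1-\theta}{r}+\frac{\theta}{p}$ yields continuity of $u_X^\nu+B_\nu[u]$ in $L^2$. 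Since $t\mapsto e^{\nu t\Delta}u_0$ is also continuous in $L^2(\BR^2)$ thanks to $u_0\in L^2(\BR^2)$, the conclusion $u\in C([0,T];L^2(\BR^2))$ follows.

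The main obstacle is the self-referential nature of the $L^2$ bound: because $B_\nu[u]$ is quadratic in $u$ and we only have linear-in-$\|u\|_{L^2}$ control, one cannot close the estimate in a single application of the bilinear bound on the whole interval $[0,T]$. This is what forces the short-time absorption followed by a finite iteration argument, and it is also where careful attention to the exponent $p>2$ (to ensure a strictly positive power of $t$ multiplying $F(t)$) is essential.
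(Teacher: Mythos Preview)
Your continuity argument is correct and coincides with the paper's: both bound $u_X^\nu+B_\nu[u]$ uniformly in $L^r$ with $\tfrac1r=\tfrac12+\tfrac1p$, use its $L^p$-continuity from Lemma~\ref{lem: time continuity of u in L^p}, and interpolate.

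Your $L^2$-bound argument, however, has a genuine gap. The absorption step ``$F(t)\le K_0+C+\tfrac12 F(t)$, hence $F(t)\le 2(K_0+C)$'' is only valid if $F(t)<\infty$ is known beforehand, and nothing in your argument establishes this. The mild-solution definition gives only $u\in L^\infty_T L^p$ with $p>2$, and the estimate \eqref{eqn: L 2 estimate for B} you invoke places $\|u\|_{L^\infty_t L^2}$ on the right-hand side, so it cannot be used to prove qualitative membership in $L^2$. For $p>4$ this is a real obstruction: one only knows $u\otimes u\in L^{p/2}$ with $p/2>2$, which cannot be mapped into $L^2$ by the kernel of $e^{\nu(t-\tau)\Delta}\mathbb P\,\mathrm{div}$ (that operator does not improve integrability at infinity). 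Your inequality $F(t)\le C+\tfrac12 F(t)$ is then vacuous, and the subsequent time-iteration, which needs $u(t_0)\in L^2$ as initial datum, never gets started.

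The paper sidesteps the circularity by iterating in the \emph{Lebesgue exponent} rather than in time. Using \eqref{eqn: L q estimate for B general} with $(q,r)=(p/2,p/2)$ bounds $\|u\|_{L^\infty_T L^{p/2}}$ purely in terms of the known quantity $\|u\|_{L^\infty_T L^p}\le Q$; feeding this back in, one descends from $L^{p/2}$ to $L^{p/4}$, and so on, reaching $L^2$ after finitely many steps (a single step with $(q,r)=(2,p/2)$ suffices when $p\le 4$). Each stage uses only quantities already shown to be finite, so no a priori finiteness is needed and no absorption is required. Once this bootstrap is in place, your time-iteration becomes redundant: the exponent descent already yields the uniform bound on $[0,T]$.
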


Let us also show that $\na u(t)\in L^2(\BR^2)$ for all $t\in (0,T]$.
\begin{lem}
\label{lem: u is H^1 at any positive time}
\[
\sup_{\eta\in (0,T]}\eta^{1/2}\|\na u(\eta)\|_{L_x^2(\BR^2)}
\leq C(T, p,\nu,\lam,M,Q,K_0).
\]
In particular, $u(t)\in H^1(\BR^2)$ for any $t\in (0,T]$.

\begin{proof}
Recall \eqref{eqn: fixed pt equation for u}.
By Lemma \ref{lem: parabolic estimates},
\[
\|\na e^{\nu t\Delta}u_0\|_{L_x^2(\BR^2)}
\leq C(\nu t)^{-\f12}\|u_0\|_{L^2_x(\BR^2)}
= C(\nu t)^{-\f12} K_0,
\]
where $C$ is a universal constant.
To bound the $L^2$-norm of $\na u_X^\nu$, we interpolate the results of Lemma \ref{lem: estimate for u_X} to find that, with $\g\in (0,1)$ to be chosen,
\begin{align*}
&\;\|\na \Delta e^{t\Delta}u_{X}\|_{L_x^2(\BR^2)}\\
\leq &\; C\min\left(\lam^{-\f12-\g} t^{-\f54+\f{\g}{2}}\|X'\|_{L_s^{\infty}}\|X'\|_{\dot{C}_s^{\g}},\,
\lam^{-\f12}t^{-\f54}\|X'\|_{L_s^{\infty}}\|X'\|_{L_s^2},\,
t^{-\f32}\|X'\|_{L^2_s}^2
\right).
\end{align*}
By \eqref{eqn: u_X^nu in terms of u_X}, for any $\g\in(\f12,1)$,  with $\d\in (0,\f{t}{2}]$ to be determined,
\begin{align*}
\|\na u_X^{\nu}(t)\|_{L_x^2(\BR^2)}
\leq &\; C\nu\int_{t-\d}^t \lam^{-\f12-\g} (\nu(t-\tau))^{-\f54+\f{\g}{2}}\tau^{-\g}\cdot \|X'\|_{L^\infty_t L_s^{\infty}}\cdot \tau^{\g} \|X'(\tau)\|_{\dot{C}_s^{\g}} \, d\tau
\\
&\; + C\nu\int_{t/2}^{t-\d}
\lam^{-\f12}(\nu(t-\tau))^{-\f54}\|X'\|_{L^\infty_t L_s^{\infty}} \|X'\|_{L^\infty_t L_s^2}
\, d\tau
\\
&\; + C\nu\int_0^{t/2}
(\nu(t-\tau))^{-\f32}\|X'\|_{L^\infty_t L^2_s}^2
\, d\tau
\\
\leq &\; C(T, \g, p,\nu,\lam,M,Q)\left[ \d^{-\f14+\f{\g}{2}}t^{-\g}
+ \mathds{1}_{\{\d< \f{t}2\}}\d^{-\f14}
+ t^{-\f12}\right].
\end{align*}
In the last inequality, we used Proposition \ref{prop: higher regularity}.
Letting $\g := \f34$ and $\d := \f12\min(t^2,t)$, we find that
\[
\|\na u_X^{\nu}(t)\|_{L_x^2(\BR^2)}
\leq C(T, p,\nu,\lam,M,Q)t^{-1/2}.
\]

Finally, by \eqref{eqn: u_2 tilde}, again with $\d\in[0,\f{t}{2}]$ to be determined,
\begin{align*}
B_{\nu}[u](t) = e^{\nu\d\Delta} B_{\nu}[u](t-\d) -\int_{t-\d}^t e^{\nu (t-\tau)\Delta}\mathbb{P}(u\cdot\nabla u)(\tau)\, d\tau.
\end{align*}
Hence, by \eqref{eqn: L 2 estimate for B} in Lemma \ref{lem: estimate for B_nu u}, Proposition \ref{prop: higher regularity}, and Lemma \ref{lem: parabolic estimates},
\begin{align*}
&\; \|\na B_{\nu}[u](t)\|_{L_x^2(\BR^2)} \\
\leq &\; \big\|\na e^{ \nu \d\Delta} B_{\nu}[u](t-\d)\big\|_{L_x^2} + \left\|\int_{t-\d}^t e^{\nu (t-\tau)\Delta}\na \mathbb{P}(u\cdot\nabla u)(\tau)\, d\tau\right\|_{L_x^2}
\\
\leq &\; C(\nu \d)^{-\f12}\|B_{\nu}[u](t-\d)\|_{L_x^2} + C\int_{t-\d}^t (\nu (t-\tau))^{-\f12} \|(u\cdot\nabla u)(\tau)\|_{L_x^2}\, d\tau
\\
\leq &\; C(\nu \d)^{-\f12} \nu^{-1} \|u\|_{L^\infty_t L_x^2} \sup_{\eta\in(0,t]}(\nu \eta)^{\f12-\f1p}\|u(\eta)\|_{L_x^p}
+ C\nu^{-\f12} \d^{\f12} \|u\|_{L^\infty_t L^2_x} \sup_{\eta\in [t/2,t]}\|\nabla u(\eta)\|_{L_x^\infty}
\\
\leq &\; C(T, p,\nu,\lam,M,Q,K_0)\left[\d^{-\f12} t^{\f12-\f1p}
+ \d^{\f12}t^{-1}\right].
\end{align*}
In the last line, we used Proposition \ref{prop: higher regularity} and Lemma \ref{lem: L 2 bound for the mild solution u}.
Choosing $\d := \f12\min(t^{2-\f2p},t)$ yields that
\[
\|\na B_{\nu}[u](t)\|_{L_x^2(\BR^2)}
\leq C(T, p,\nu,\lam,M,Q,K_0)t^{-\f12}.
\]
Here in the case $t\geq 1$, we absorbed some powers of $t$ into the constant $C$ which may depend on $T$.

Combining all the estimates, we conclude the desired estimate.
\end{proof}
\end{lem}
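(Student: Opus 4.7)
The plan is to exploit the Duhamel decomposition \eqref{eqn: fixed pt equation for u}, writing $u(t)=u_X^\nu(t)+e^{\nu t\Delta}u_0+B_\nu[u](t)$, and estimate $\|\nabla(\cdot)\|_{L^2_x}$ of each piece separately. The target is an $O(t^{-1/2})$ time-singularity, which is dictated by the heat-semigroup smoothing acting on the $L^2$ datum.

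The easy piece is $e^{\nu t\Delta}u_0$: standard parabolic smoothing (Lemma \ref{lem: parabolic estimates}) yields $\|\nabla e^{\nu t\Delta}u_0\|_{L^2_x}\le C(\nu t)^{-1/2}K_0$. For $u_X^\nu$, I would use the integral representation \eqref{eqn: u_X^nu in terms of u_X}, namely $u_X^\nu(t)=-\nu\int_0^t\Delta e^{\nu(t-\tau)\Delta}u_{X(\tau)}\,d\tau$. Lemma \ref{lem: estimate for u_X} gives three $L^2$-estimates for $\nabla\Delta e^{\tau\Delta}u_X$: a $\dot C^\gamma$-based bound with singularity $\tau^{-5/4+\gamma/2}$, an $L^2$-based bound with singularity $\tau^{-5/4}$, and a purely $L^2$-based one at $\tau^{-3/2}$. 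Splitting $[0,t]$ into three sub-intervals $[0,t/2]$, $[t/2,t-\delta]$, $[t-\delta,t]$ and applying the three estimates respectively allows me to trade off; using $\gamma=3/4$ (where Proposition \ref{prop: higher regularity} supplies $\tau^\gamma\|X'(\tau)\|_{\dot C^\gamma_s}\le C$) and choosing $\delta=\tfrac12\min(t^2,t)$ balances the contributions so that the whole thing is $\le C t^{-1/2}$.

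The nonlinear term $B_\nu[u](t)$ needs a semigroup trick to avoid the singularity of $u\cdot\nabla u$ at $\tau=0$: write $B_\nu[u](t)=e^{\nu\delta\Delta}B_\nu[u](t-\delta)-\int_{t-\delta}^t e^{\nu(t-\tau)\Delta}\mathbb P(u\cdot\nabla u)(\tau)\,d\tau$. Applying $\nabla$, the first piece loses an extra $(\nu\delta)^{-1/2}$ but inherits an $L^2$-bound from \eqref{eqn: L 2 estimate for B} combined with Lemma \ref{lem: L 2 bound for the mild solution u}; the second piece is controlled by $\int_{t-\delta}^t(\nu(t-\tau))^{-1/2}\|u(\tau)\|_{L^2_x}\|\nabla u(\tau)\|_{L^\infty_x}\,d\tau$, where the Lipschitz bound $\|\nabla u(\tau)\|_{L^\infty_x}\le C\tau^{-1}$ from Proposition \ref{prop: higher regularity} is essential. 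The two pieces scale like $\delta^{-1/2}t^{1/2-1/p}$ and $\delta^{1/2}t^{-1}$ respectively, and $\delta=\tfrac12\min(t^{2-2/p},t)$ equalises them to $Ct^{-1/2}$.

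The main technical obstacle is to pick the Hölder index $\gamma$ and the cut-off parameter $\delta$ in a way that simultaneously (i) makes the time integrals convergent near $\tau=t$, (ii) uses only bounds on $X$ and $u$ that are already available from Proposition \ref{prop: higher regularity} and Lemma \ref{lem: L 2 bound for the mild solution u}, and (iii) produces the sharp exponent $t^{-1/2}$. Once the three estimates are added, the proof concludes by recalling $u(t)\in L^2_x$ and hence $u(t)\in H^1(\BR^2)$ for each $t\in(0,T]$.
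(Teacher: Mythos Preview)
Your proposal is correct and follows essentially the same route as the paper's proof: the same three-piece decomposition via \eqref{eqn: fixed pt equation for u}, the same three-interval splitting for $u_X^\nu$ with $\gamma=\tfrac34$ and $\delta=\tfrac12\min(t^2,t)$, and the same semigroup trick for $B_\nu[u]$ with $\delta=\tfrac12\min(t^{2-2/p},t)$. The parameter choices and the lemmas invoked (Lemma \ref{lem: estimate for u_X}, Proposition \ref{prop: higher regularity}, Lemma \ref{lem: L 2 bound for the mild solution u}, \eqref{eqn: L 2 estimate for B}) match the paper exactly.
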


Now we can prove the energy law of the system.

\begin{prop}
\label{prop: energy law}
Let $u_0 = u_0(x)\in L^2\cap L^p(\BR^2)$ for some $p\in (2,\infty)$ and $X_0 = X_0(s)\in C^1(\BT)$, which satisfy $\di u_0 = 0$ and $|X_0|_* >0$.
Let $(u,X)$ be the unique local mild solution to \eqref{eqn: NS equation}-\eqref{eqn: initial data} on $[0,T]$.
Then for any $t\in [0,T]$,
\beqo
\|X'(t)\|_{L_s^2(\BT)}^2 + \nu^{-1}\|u(t)\|_{L_x^2(\BR^2)}^2
+ 2\int_0^t\|\na u(\tau)\|_{L^2_x(\BR^2)}^2\,d\tau
= \|X'_0\|_{L_s^2(\BT)}^2 + \nu^{-1}\|u_0\|_{L_x^2(\BR^2)}^2.
\eeqo

\begin{proof}
Take $\va = \va(x)\in C_0^\infty(\BR^2)$ to be a non-negative radial mollifier in $\BR^2$, satisfying that $\va$ is supported in the unit disc $B_1(0)$ centered at the origin, and $\int_{\BR^2} \va(x)\,dx = 1$.
For $\e>0$, we define $\va_\e(x) := \e^{-2} \va(x/\e)$.

Fix $0<\e\ll 1$.
Take $[t_1,t_2]\subset (0,T)$ and consider an arbitrary $t\in [t_1,t_2]$.
By Proposition \ref{prop: higher regularity}, $X = X(\cdot,t)\in C^{2,\al}(\BT)$ for any fixed $\al\in (0,1)$, and $|X(t)|_*\geq \lam$.
By \eqref{eqn: expression of u_11}, we claim that $\va_\e*u_{X(t)}$ solves
\beq
-\D (\va_\e* u_{X(t)}) = \mathbb{P} f_{X,\e},\quad f_{X,\e}(x,t): = \int_\BT \va_\e(x-X(s,t))X''(s,t)\,ds,
\label{eqn: def of f_X eps}
\eeq
where $\BP$ denotes the Leray projection operator in $\BR^2$.
Indeed, one can readily show that $f_{X,\e}(\cdot,t)\in C_0^\infty(\BR^2)$ and has integral $0$ in $\BR^2$, so
\begin{align*}
(-\D)^{-1} \BP f_{X,\e}
= &\; G*f_{X,\e}(x) = \int_{\BR^2} G(x-y)\int_{\BT} \va_\e(y-X(s,t))X''(s,t)\,ds\,dy\\
= &\; \int_{\BR^2} \va_\e(x-y)\int_{\BT} G(y-X(s,t))X''(s,t)\,ds\,dy
= \va_\e * u_{X(t)}.
\end{align*}
Hence, by \eqref{eqn: fixed pt equation for u} as well as \eqref{eqn: u_X^nu in terms of u_X} and \eqref{eqn: u_2 tilde},
\begin{align*}
\va_\e* u(t)
= &\; \va_\e* e^{\nu t\Delta}u_0 - \nu\int_0^t e^{\nu (t-\tau)\Delta} \Delta \va_\e* u_{X(\tau)}\, d\tau -\int_0^te^{\nu (t-\tau)\Delta}\va_\e*  \mathbb{P}(u\cdot\nabla u)(\tau)\, d\tau
\\
= &\; e^{\nu t\Delta} (\va_\e* u_0) + \int_0^t e^{\nu (t-\tau)\Delta} \BP \big[ \nu f_{X,\e} -\va_\e*
(u\cdot \na u)\big](\tau) \, d\tau.
\end{align*}
By virtue of Proposition \ref{prop: higher regularity}, Lemma \ref{lem: L 2 bound for the mild solution u}, and \eqref{eqn: def of f_X eps}, one can show that, on $[t_1,t_2]\subset (0,T)$, the mappings $t\mapsto f_{X,\e}(t)$ and $t\mapsto \va_\e*(u\cdot \na u)(t)$ are continuous in the $C^\b_x(\BR^2)$-topology for some $\b\in(0,1)$; we omit the details.
This allows us to deduce that
\begin{align*}
&\; (\pa_t -\nu\D) (\va_\e* u)(t)\\
= &\; (\pa_t -\nu\D) e^{\nu t\Delta} (\va_\e* u_0)
+ (\pa_t -\nu\D) \int_0^t e^{\nu (t-\tau)\Delta} \BP \big[ \nu f_{X,\e} -\va_\e*
(u\cdot\na u)\big](\tau) \, d\tau
\\
= &\; \BP \big[ \nu f_{X,\e} -\va_\e*
(u\cdot\na u)\big](t).
\end{align*}
We note that $\va_\e*u$ is smooth in space.
Besides, also by Proposition \ref{prop: higher regularity} and Lemma \ref{lem: L 2 bound for the mild solution u}, the last line of the above equation is smooth in space and uniformly bounded in $L^2(\BR^2)$ for all $t\in [t_1,t_2]$.
It is thus legitimate to take inner product of the above equation with $\va_\e*u$, which gives
\begin{align*}
&\; \f12 \f{d}{dt} \|\va_\e* u\|_{L_x^2(\BR^2)}^2 + \nu \|\na (\va_\e* u)\|_{L^2_x(\BR^2)}^2\\
= &\; \int_{\BR^2}(\pa_t -\nu\D) (\va_\e* u)\cdot (\va_\e* u) \,dx
\\
= &\; \int_{\BR^2} \big[ \nu f_{X,\e} -\va_\e*
(u\cdot\na u)\big] \cdot (\va_\e* u) \,dx
\\
= &\; \nu \int_{\BR^2} (\va_\e* u)(x) \int_\BT \va_\e(x-X(s,t))X''(s,t)\,ds\, \,dx\\
&\; -\int_{\BR^2} \big[\va_\e*
(u\cdot\na u)-(\va_\e* u) \cdot\na (\va_\e* u)\big] \cdot (\va_\e* u) \,dx.
\end{align*}
Indeed, the first equality follows from \cite[Chapter III, Section 1.4, Lemma 1.2]{temam1984navier} as well as the smoothness of $(\va_\e* u)$ and its time derivative.
In the last equality, we used the fact that, since $\va_\e*u\in L^2\cap C^1(\BR^2)$ and it is divergence-free, it holds that
\[
\int_{\BR^2} (\va_\e* u) \cdot\na (\va_\e* u) \cdot (\va_\e* u) \,dx = 0.
\]
Hence,
\begin{align*}
&\; \f12 \f{d}{dt} \|\va_\e* u\|_{L_x^2(\BR^2)}^2 + \nu \|\na (\va_\e* u)\|_{L^2_x(\BR^2)}^2\\
= &\; \nu \int_\BT \big[(\va_\e*\va_\e)* u\big](X(s,t))\cdot X''(s,t)\,ds\\
&\; +\int_{\BR^2} \big[\va_\e*
(u\otimes u)-(\va_\e* u)\otimes(\va_\e* u)\big] : \na (\va_\e* u) \,dx.
\end{align*}
Integrate this over $t\in [t_1,t_2]$ yields that
\begin{align*}
&\; \f12\left[\|(\va_\e* u)(t_2)\|_{L_x^2(\BR^2)}^2-\|(\va_\e* u)(t_1)\|_{L_x^2(\BR^2)}^2\right]
+ \nu \int_{t_1}^{t_2}\|\na (\va_\e* u)(t)\|_{L^2_x(\BR^2)}^2\,dt\\
= &\; \nu \int_{t_1}^{t_2}\int_\BT \big[(\va_\e*\va_\e)* u\big](X(s,t))\cdot X''(s,t)\,ds\,dt\\
&\; + \int_{t_1}^{t_2}\int_{\BR^2} \big[\va_\e*
(u\otimes u)-(\va_\e* u)\otimes(\va_\e* u)\big] : \na (\va_\e* u) \,dx\,dt.
\end{align*}

Now send $\e\to 0^+$.
Thanks to Proposition \ref{prop: higher regularity}, Lemma \ref{lem: L 2 bound for the mild solution u}, and Lemma \ref{lem: u is H^1 at any positive time}, $u(t)$ is uniformly bounded in $H^1\cap W^{1,\infty}(\BR^2)$ for $t\in [t_1,t_2]$, so by the dominated convergence theorem,
\begin{align*}
&\lim_{\e\to 0^+}\|(\va_\e* u)(t_i)\|_{L_x^2(\BR^2)}^2
= \|u(t_i)\|_{L_x^2(\BR^2)}^2\quad (i = 1,2),\\
&\lim_{\e\to 0^+} \int_{t_1}^{t_2}\|\na (\va_\e* u)(t)\|_{L^2_x(\BR^2)}^2\,dt
=\int_{t_1}^{t_2}\|\na u(t)\|_{L^2_x(\BR^2)}^2\,dt,
\end{align*}
and
\[
\lim_{\e\to 0^+}\int_{t_1}^{t_2}\int_{\BR^2} \big[\va_\e*
(u\otimes u)-(\va_\e* u)\otimes(\va_\e* u)\big] : \na (\va_\e* u) \,dx\,dt = 0.
\]
Moreover, by Lemma \ref{lem: time derivative and equation} and Proposition \ref{prop: higher regularity},
\begin{align*}
&\; \lim_{\e\to 0^+}\nu \int_{t_1}^{t_2}\int_\BT \big[(\va_\e*\va_\e)* u\big](X(s,t))\cdot X''(s,t)\,ds\,dt\\
= &\; \nu \int_{t_1}^{t_2}\int_\BT u(X(s,t))\cdot X''(s,t)\,ds\,dt
= \nu \int_{t_1}^{t_2}\int_\BT \pa_t X(s,t)\cdot X''(s,t)\,ds\,dt\\
= &\; -\nu \int_{t_1}^{t_2}\int_\BT \pa_t X'(s,t)\cdot X'(s,t)\,ds\,dt
= -\nu \int_{t_1}^{t_2} \f12 \f{d}{dt}\| X'(\cdot,t)\|_{L_s^2}^2\,dt\\
= &\; -\f{\nu}{2} \left[\| X'(\cdot,t_2)\|_{L_s^2}^2-\| X'(\cdot,t_1)\|_{L_s^2}^2\right].
\end{align*}
Combining these limits with the identity above, we obtain that
\begin{align*}
&\; \f12\left[\|u(t_2)\|_{L_x^2(\BR^2)}^2 -\|u(t_1)\|_{L_x^2(\BR^2)}^2\right]
+ \nu \int_{t_1}^{t_2}\|\na u(t)\|_{L^2_x(\BR^2)}^2\,dt\\
= &\; - \f{\nu}{2} \left[\|X'(t_2)\|_{L_s^2}^2- \|X'(t_1)\|_{L_s^2}^2\right].
\end{align*}
Since $0<t_1<t_2\leq T$ are arbitrary, we send $t_1\to 0^+$ and use the time-continuity of $(u,X)$ in $L_x^2(\BR^2)\times C_s^1(\BT)$ to conclude the desired energy estimate.
\end{proof}
\end{prop}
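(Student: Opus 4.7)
The natural strategy is to formally pair the momentum equation with $u$. Using $\di u = 0$ to eliminate the pressure and the convection contribution, the viscous term produces $-\nu\|\na u\|_{L^2_x}^2$, while the singular forcing contributes $\nu\int_{\BR^2} u\cdot f_X\,dx = \nu\int_\BT u(X(s,t),t)\cdot X''(s,t)\,ds$. Substituting the kinematic identity $u\circ X=\pa_t X$ from Lemma~\ref{lem: time derivative and equation} and integrating by parts in $s$, this equals $-\f{\nu}{2}\f{d}{dt}\|X'\|_{L^2_s}^2$, which upon integration in $t$ yields the claimed identity. The main obstacle is that \eqref{eqn: NS equation} is satisfied only in the mild sense and $f_X$ is a Dirac measure along $X(\BT,t)$, so none of these manipulations is a priori legitimate; moreover we lack spatial regularity of $u$ uniformly up to $t=0$.

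I would regularize by spatial mollification. Fix a smooth radial mollifier $\va_\e$ on $\BR^2$ and a subinterval $[t_1,t_2]\subset(0,T)$. Convolving the representations \eqref{eqn: fixed pt equation for u}, \eqref{eqn: u_X^nu in terms of u_X}, \eqref{eqn: u_2 tilde} with $\va_\e$ and using the key identity $-\D(\va_\e*u_{X(t)})=\BP f_{X,\e}(t)$ with $f_{X,\e}(x,t):=\int_\BT \va_\e(x-X(s,t))X''(s,t)\,ds$ smooth in $x$, I obtain the classical equation
\[
(\pa_t-\nu\D)(\va_\e*u) = \nu\BP f_{X,\e} - \BP\va_\e*(u\cdot\na u)
\]
on $[t_1,t_2]\times\BR^2$; time-continuity of the right-hand side is guaranteed by Proposition~\ref{prop: higher regularity} and Lemma~\ref{lem: L 2 bound for the mild solution u}. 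Testing against $\va_\e*u$, using $\di(\va_\e*u)=0$ to kill the divergence-free part of the convection, and integrating over $[t_1,t_2]$ yields an identity whose nonlinear error is the commutator $\int[\va_\e*(u\otimes u)-(\va_\e*u)\otimes(\va_\e*u)]:\na(\va_\e*u)\,dx$ and whose forcing term is $\nu\int_\BT((\va_\e*\va_\e)*u)(X(s,t),t)\cdot X''(s,t)\,ds$.

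To pass to the limit $\e\to 0^+$: the kinetic energy and dissipation terms converge by dominated convergence thanks to Lemma~\ref{lem: L 2 bound for the mild solution u} and Lemma~\ref{lem: u is H^1 at any positive time}; the commutator vanishes by a DiPerna--Lions type estimate, which uses the Lipschitz bound \eqref{eqn: Lipschitz estimate for u} from Proposition~\ref{prop: higher regularity} on $[t_1,t_2]$; and the forcing term converges to $\nu\int_{t_1}^{t_2}\int_\BT u(X(s,t),t)\cdot X''(s,t)\,ds\,dt$, which by Lemma~\ref{lem: time derivative and equation} equals $\nu\int_{t_1}^{t_2}\int_\BT \pa_t X\cdot X''\,ds\,dt$, and integration by parts in $s$ rewrites this as $-\f{\nu}{2}[\|X'(t_2)\|_{L^2_s}^2-\|X'(t_1)\|_{L^2_s}^2]$. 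This establishes the energy identity on $[t_1,t_2]$. Finally I send $t_1\to 0^+$ and invoke the $L^2$-time-continuity of $u$ from Lemma~\ref{lem: L 2 bound for the mild solution u} together with the $C^1$-time-continuity of $X$ from Definition~\ref{def: mild solution} to extend the identity to $[0,t_2]$ for any $t_2\in(0,T]$. The principal difficulty is combining the commutator estimate (which needs $u$ Lipschitz in space) with the $t=0$ endpoint: Proposition~\ref{prop: higher regularity} provides only an $O(t^{-1})$ Lipschitz bound, forcing the two-step scheme of first working on sub-intervals bounded away from $0$ and then closing the identity by the energy-space continuity.
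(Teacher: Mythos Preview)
Your proposal is correct and follows essentially the same approach as the paper: mollify in space, derive the classical equation for $\va_\e*u$ via the identity $-\D(\va_\e*u_{X(t)})=\BP f_{X,\e}$, test against $\va_\e*u$ on $[t_1,t_2]\subset(0,T)$, pass $\e\to 0^+$ using the $H^1\cap W^{1,\infty}$ bounds, and then send $t_1\to 0^+$ using the time-continuity in $L^2_x\times C^1_s$. The only cosmetic difference is that the paper handles the commutator directly via dominated convergence and the uniform $H^1\cap W^{1,\infty}$ bound rather than invoking a DiPerna--Lions-type argument.
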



\section{Global Solutions for Small Initial Datum}
\label{sec: global well-posedness}

In the section, we shall prove that starting from small initial datum, we can obtain global solutions (see Definition \ref{def: maximal solution}).
By saying the initial data is small, we mean that it is close to an equilibrium (i.e., a steady-state solution) in a suitable sense, which will be specified later.
The main result of this section is Proposition \ref{prop: global solution}.

\subsection{Preliminaries}

Recall that, for $X=X(s)$, we defined its Fourier expansion in \eqref{eqn: Fourier series} and \eqref{eqn: Fourier coefficients}.
It is not difficult to show that
\beq
\|X'\|_{L_s^2(\BT)}^2 = 2\pi\sum_{n\in\BZ}n^2|a_n|^2.
\label{eqn: Fourier representation of H1 seminorm}
\eeq
Also recall that, if $X = X(s)\in C^1(\BT)$ is a Jordan curve parameterized in the counterclockwise direction, its effective radius $R>0$ was defined in \eqref{eqn: def of effective radius}.
Using the Fourier expansion \eqref{eqn: Fourier series} and \eqref{eqn: Fourier coefficients} of $X = X(s)$, we have that
\beq
\pi R^2 = \f12\int_{\BT} X(s)\times X'(s)\, ds
= \pi\sum_{n\in\BZ}n|a_n|^2.
\label{eqn: Fourier representation of area}
\eeq

We claim that the only equilibria (i.e., steady-states) of the system \eqref{eqn: NS equation}-\eqref{eqn: kinematic equation} are
\[
u(x)\equiv 0,\quad
X(s) = b_0 + \begin{pmatrix}
\cos s & -\sin s \\
\sin s & \cos s
\end{pmatrix} b_1
\]
for some $b_0,b_1\in \BR^2$ with $|b_1|>0$. 
Indeed, in order for $(u,X)$ to be a steady-state solution, the energy law in Proposition \ref{prop: energy law} and the decay condition of $u$ at spatial infinity imply that $u\equiv 0$.
Then one can show that $p$ is constant in the interior and exterior of the domain enclosed by the curve $X(\BT)$.
The rest of the argument is the same as that in \cite[\S\,5.1]{mori2019well}.

Motivated by this, for $X=X(s)$, we defined $X^*$ and $\Pi X$ in \eqref{eqn: def of projection operator Pi}, where $X^*(s)$ is closest equilibrium to $X(s)$ in the $L^2$-sense, while $\Pi X$ characterizes deviation of $X$ from an equilibrium.
We list some simple facts regarding $\Pi X$ without proofs.

\begin{itemize}
\item
The operator $\Pi$ commutes with the differentiation, and it is bounded in $\dot{H}^1(\BT)$ and $\dot{C}^{\g}(\BT)$ for every $\g\geq 0$, namely,
\beq
\|\Pi X\|_{\dot{H}^1_s(\BT)}\leq \|X\|_{\dot{H}^1_s(\BT)},\quad
\|\Pi X\|_{\dot{C}^\g_s(\BT)}\leq C\| X\|_{\dot{C}^\g_s(\BT)}.
\label{eqn: boundedness of projection Pi}
\eeq

\item With $R$ defined in terms of $X$ by \eqref{eqn: Fourier representation of area}, it holds that
\beq
\f12 \|\Pi X'\|_{L_s^2(\BT)}^2
\leq \|X'\|_{L_s^2(\BT)}^2- 2\pi R^2
\leq 2\|\Pi X'\|_{L_s^2(\BT)}^2.
\label{eqn: projection is equivalent to excess energy}
\eeq
Indeed, $n^2-n\in [\f12n^2, 2n^2]$ for all $n\in\BZ\setminus\{1\}$ while it vanishes for $n = 1$, so \eqref{eqn: projection is equivalent to excess energy} follows from \eqref{eqn: Fourier representation of H1 seminorm} and \eqref{eqn: Fourier representation of area}.
This in particular implies $\|X'\|_{L_s^2}^2\geq 2\pi R^2$, where $2\pi R^2$ is the total elastic energy of a string configuration in equilibrium with the effective radius being $R$.

\item
If $\|\Pi X'\|_{L^{\infty}_s}\leq \f1{16} R$, then
\beq
\|{X^*}'\|_{L^\infty_s(\BT)}
\leq R+\|\Pi X'\|_{L^{\infty}_s},
\quad
\|X'\|_{L^\infty_s(\BT)}
\leq R+2\|\Pi X'\|_{L^{\infty}_s},
\label{eqn: a priori estimates for X under smallness conditions}
\eeq
and
\beq
|X|_* \geq \f{2}{\pi} R- 2\|\Pi X'\|_{L^{\infty}_s}.
\label{eqn: a priori estimates for X well stretched constant under smallness conditions}
\eeq
Indeed, assuming the expansion \eqref{eqn: Fourier series}, we find by  \eqref{eqn: Fourier representation of H1 seminorm} and \eqref{eqn: Fourier representation of area} that
\[
|R^2-|a_1|^2|
\leq \f{1}{2\pi} \|\Pi X'\|_{L^{2}_s}^2
\leq \|\Pi X'\|_{L^{\infty}_s}^2,
\]
so under the assumption,
\[
R- \|\Pi X'\|_{L^{\infty}_s}
\leq |a_1|
\leq R+\|\Pi X'\|_{L^{\infty}_s}.
\]
Then the desired estimates follow from the facts that
\[
\|{X^*}'\|_{L^\infty_s}=|a_1|,\quad
\|X'\|_{L^\infty_s}\leq |a_1|+\|\Pi X'\|_{L^\infty_s},\quad
|X|_*\geq \f{2}{\pi}|a_1|-\|\Pi X'\|_{L^{\infty}_s}.
\]
\end{itemize}

\subsection{Proof of the solution being global}
\label{sec: proof of global well-posedness}
Following the conditions of Theorem \ref{thm: global solution for small data general case}, we assume that $u_0 = u_0(x)\in L^2\cap L^p(\BR^2)$ for some $p\in (2,\infty)$ and $X_0 = X_0(s)\in C^1(\BT)$, which satisfies $\di u_0 = 0$, $|X_0|_* > 0$, and $X_0$ is parameterized in the counterclockwise direction.
In order to show that we can obtain a global solution when $(u_0,X_0)$ is small in a suitable sense, we make some preparations.

\begin{enumerate}[label = (\roman*)]
\item
Let $(u,X)$ be the maximal mild solution to \eqref{eqn: NS equation}-\eqref{eqn: initial data}, with its maximal lifespan being $T_*\in (0,+\infty]$ (see Definition \ref{def: maximal solution}).

\item
Let $R$ be defined by \eqref{eqn: Fourier representation of area}, with $X$ there replaced by $X_0 = X_0(s)$.
For the mild solution $(u,X)$, by Lemma \ref{lem: volume conservation}, the effective radius of $X(\cdot,t)$ is time-invariant, so it is always $R$.

\item
Denote
\beq
\e_0^2 := 2\|\Pi X_0'\|_{L_s^2(\BT)}^2 +\nu^{-1}\|u_0\|_{L_x^2(\BR^2)}^2.
\label{eqn: def of epsilon_0}
\eeq
By Theorem \ref{thm: energy law thm} and \eqref{eqn: projection is equivalent to excess energy}, for all $t\in [0,T_*)$,
\begin{align*}
\f12 \|\Pi X'(t)\|_{L^2_s(\BT)}^2 + \nu^{-1}\|u(t)\|_{L^2_x(\BR^2)}^2
\leq &\; \|X'(t)\|_{L^2_s(\BT)}^2 - 2\pi R^2 + \nu^{-1}\|u(t)\|_{L^2_x(\BR^2)}^2\\
\leq &\; \|X_0'\|_{L^2_s(\BT)}^2 - 2\pi R^2 + \nu^{-1}\|u_0\|_{L^2_x(\BR^2)}^2\\
\leq &\; 2\|\Pi X_0'\|_{L_s^2(\BT)}^2 +\nu^{-1}\|u_0\|_{L_x^2(\BR^2)}^2
= \e_0^2,
\end{align*}
so for any $t\in [0,T_*)$,
\beq
\|\Pi X'\|_{L^\infty_t L^2_s(\BT)} \leq \sqrt{2}\e_0,\quad
\nu^{-1}\|u\|_{L^\infty_t L^2_x(\BR^2)}
\leq \e_0 \nu^{-1/2}.
\label{eqn: smallness from energy estimate}
\eeq

\end{enumerate}

\begin{prop}
\label{prop: global solution}
Let $p$, $(u_0,X_0)$, $(u,X)$, and $R$ be given as above.
There exists $\e_*>0$, which depends on $p$, $\nu$, and $R$, such that as long as
\beq
e_0:=
\|\Pi X_0'\|_{L^\infty_s(\BT)}+ \|u_0\|_{L^2_x(\BR^2)} + \|u_0\|_{L^p_x(\BR^2)} \leq \e_*,
\label{eqn: def of e_0}
\eeq
the solution $(u,X)$ is global.

Moreover, it holds for all $t>0$ that
\[
\|u(t)\|_{L^p_x(\BR^2)} \leq CR^{1+\f2p},
\]
where $C$ depends on $p$,
\[
\|X'(t)\|_{L^\infty_s(\BT)}\leq CR,\quad
|X(t)|_* \geq cR,
\]
where $C>c>0$ are universal constants, and
\[
\|X'(t)\|_{\dot{C}^{1/p}_s(\BT)}
\leq C(p,\nu,R)\big(1+t^{-1/p}\big).
\]
Lastly,
\[
\|\Pi X'(t)\|_{L^2_s(\BT)} + \nu^{-1/2}\|u(t)\|_{L^2_x(\BR^2)}
\leq C\big(1+\nu^{-1/2}\big)e_0,
\]
where $C$ is universal.

\begin{proof}
Without loss of generality, we assume that
\beq
\|u_0\|_{L^p_x(\BR^2)}\leq R^{1+\f2p},
\label{eqn: smallness of u_0}
\eeq
and, for some $\d_0\in (0,\f1{16})$ to be determined,
\beq
\|\Pi X_0'\|_{L^\infty_s(\BT)}\leq\d_0 R.
\label{eqn: smallness of X_0}
\eeq
By \eqref{eqn: a priori estimates for X under smallness conditions} and \eqref{eqn: a priori estimates for X well stretched constant under smallness conditions},
\[
\|X_0'\|_{L^\infty_s(\BT)}\leq (1+2\d_0)R,\quad
|X_0|_*\geq \left(\f{2}{\pi} - 2\d_0\right) R.
\]
By Theorem \ref{thm: maximal solution}, there exists $T>0$ such that, the maximal lifespan of $(u,X)$ satisfies $T_*>T$, and on $[0,T]$, 
\beq
\|X'\|_{L^\infty_T L^\infty_s(\BT)}\leq 2(1+2\d_0)R,\quad
\sup_{\eta\in (0,T]} \eta^{\f1p} \|X'(\cdot, \eta)\|_{\dot{C}_s^{1/p}(\BT)}
\leq 4(1+2\d_0)R,
\label{eqn: local C 1/p bound for X}
\eeq
\beq
|X(t)|_* \geq \left(\f{1}{\pi} - \d_0\right) R,
\label{eqn: local bound for X well stretched constant}
\eeq
and
\beq
\|u\|_{L^\infty_T L^p_x(\BR^2)} +
\sup_{\eta\in (0,T]} (\nu\eta)^{\f1p}\|u(\cdot,\eta)\|_{L^\infty_x(\BR^2)}
\leq CR^{1+\f2p},
\label{eqn: local L inf bound for u}
\eeq
where $C$ only depends on $p$.

We claim that, if $\d_0$ in \eqref{eqn: smallness of X_0} is taken to be sufficiently small, which depends on $p$, $\nu$, and $R$, the above $T$ can be made to only depend on $p$, $\nu$, and $R$ as well (but not implicitly on $X_0$).
Indeed, with $\g:= \max(\f12+\f1p,1-\f1p)$, we define as in \eqref{eqn: def of rho} and \eqref{eqn: def of phi} that
\[
\r_{X_0,\g}(t): = \sup_{\tau\in (0,t]}\tau^{\g}\big\|e^{-\f{\tau}{4} \Lam}X_0'\big\|_{\dot{C}^\g_s(\BT)},
\quad
\phi_{X_0}(t):= \sup_{\tau\in [0,t]}\big\|X_0-e^{-\f{\tau}{4}\Lam}X_0\big\|_{\dot{C}_s^1(\BT)}.
\]
Using \eqref{eqn: def of projection operator Pi}, \eqref{eqn: a priori estimates for X under smallness conditions}, \eqref{eqn: smallness of X_0}, and Lemma \ref{lem: parabolic estimates for fractional Laplace}, we derive that
\beq
\r_{X_0,\g}(t)
\leq
\sup_{\tau\in (0,t]}\tau^{\g}\big\|e^{-\f{\tau}{4} \Lam}{X_0^*}'\big\|_{\dot{C}^\g_s}
+ \sup_{\tau\in (0,t]}\tau^{\g}\big\|e^{-\f{\tau}{4} \Lam}\Pi X_0'\big\|_{\dot{C}^\g_s}
\leq C\big(t^\g R + \d_0 R\big),
\label{eqn: bound for rho}
\eeq
where $C$ only depends on $p$, and
\beq
\phi_{X_0}(t)
\leq
\sup_{\tau\in [0,t]}\big\|X_0^*-e^{-\f{\tau}{4}\Lam}X_0^*\big\|_{\dot{C}_s^1}
+ \big\|\Pi X_0-e^{-\f{\tau}{4}\Lam}\Pi X_0\big\|_{\dot{C}_s^1}
\leq C\big(tR + \d_0 R\big),
\label{eqn: bound for phi}
\eeq
where $C$ is a universal constant.
By virtue of Remark \ref{rmk: characterization of lifespan T}, \eqref{eqn: a priori estimates for X under smallness conditions}, \eqref{eqn: a priori estimates for X well stretched constant under smallness conditions}, and \eqref{eqn: smallness of X_0}, $T$ only needs to be sufficiently small, so that
\beq
\begin{split}
&\mbox{$(\r_{X_0,\g}(T)+T^\s)$ and $\phi_{X_0}(T)$ are smaller than}\\
&\mbox{some constants that depend on $p$, $\nu$, and $R$,}
\end{split}
\label{eqn: smallness condition on T for X_0 close to equilibrium}
\eeq
where $\s:=\f12(\f12-\f1p)$.
In view of \eqref{eqn: bound for rho} and \eqref{eqn: bound for phi}, we can find a sufficiently small $\d_0\in (0,\f1{16})$, and then choose $T\in(0,1)$ to be small as well, both of which only depend on $p$, $\nu$, and $R$, so that under the assumption \eqref{eqn: smallness of X_0}, the smallness condition \eqref{eqn: smallness condition on T for X_0 close to equilibrium} is satisfied.
We shall fix this choice of $\d_0$ and $T$ in the rest of the proof.

By the interpolation inequality, \eqref{eqn: boundedness of projection Pi}, \eqref{eqn: smallness from energy estimate}, and \eqref{eqn: local C 1/p bound for X},
\[
\|\Pi X'(T/2)\|_{L^\infty_s(\BT)}
\leq C \|\Pi X'(T/2)\|_{L_s^2}^{\f{2}{2+p}} \|X'(T/2)\|_{\dot{C}_s^{1/p}}^{\f{p}{2+p}}
\leq C \e_0^{\f{2}{2+p}}\big(T^{-\f1p}R\big)^{\f{p}{2+p}},
\]
where the constant $C$ only depends on $p$.
Similarly, by the interpolation inequality, \eqref{eqn: smallness from energy estimate}, and \eqref{eqn: local L inf bound for u},
\[
\|u(T/2)\|_{L^p_x(\BR^2)}
\leq \|u(T/2)\|_{L^2_x}^{\f2p}\|u(T/2)\|_{L^\infty_x}^{1-\f2p}
\leq
C\big(\e_0 \nu^{\f12}\big)^{\f2p} \left[ (\nu T)^{-\f1p} R^{1+\f2p}\right]^{1-\f2p} ,
\]
where $C$ only depends on $p$ as well.
Since $\d_0$ and $T$ only depend on $p$, $\nu$, and $R$, by assuming $\e_0$ to be sufficiently small, which only depends on $p$, $\nu$, and $R$ as well, we can make
\[
\|u(T/2)\|_{L^p_x(\BR^2)}
\leq R^{1+\f2p},
\quad
\|\Pi X'(T/2)\|_{L^\infty_s(\BT)}
\leq \d_0 R.
\]

Now we view $(u(T/2),X(T/2))$ as the new initial data, and apply Theorem \ref{thm: maximal solution} to obtain its maximal local solution, which we denote by $(\tilde{u},\tilde{X})$.
Since $(u(T/2),X(T/2))$ enjoys the same bound as $(u_0,X_0)$, by the same argument as above, we find that for the same $T>0$, $(\tilde{u},\tilde{X})$ is well-defined on $[0,T]$ and enjoys the same estimates as in \eqref{eqn: local C 1/p bound for X}-\eqref{eqn: local L inf bound for u}.
By uniqueness, $(\tilde{u},\tilde{X})$ must coincide with the restriction of $(u,X)$ on $[T/2,T_*)$ (up to a time shift by $T$).
Hence, $T_*>\f32 T$.
One may repeat this infinitely to obtain $T_*=+\infty$, i.e., $(u,X)$ is a global solution.

We note that the smallness conditions \eqref{eqn: smallness of u_0} and \eqref{eqn: smallness of X_0} as well as the smallness of $\e_0$ (see \eqref{eqn: def of epsilon_0}) can be achieved by requiring $\e_*$ in \eqref{eqn: def of e_0} to be sufficiently small, which only depends on $p$, $\nu$, and $R$.

Finally, the claimed estimates follow from the fact that \eqref{eqn: smallness from energy estimate} and \eqref{eqn: local C 1/p bound for X}-\eqref{eqn: local L inf bound for u} hold for $(u,X)$ on each time interval $[\f12(k-1)T,\f12(k+1)T]$ for any $k\in \BZ_+$,
\end{proof}
\end{prop}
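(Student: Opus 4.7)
The plan is to combine the energy law from Theorem \ref{thm: energy law thm} with the local well-posedness result of Theorem \ref{thm: maximal solution}, in the spirit of a standard continuation argument, where the energy dissipation allows us to continually re-apply local existence with a uniform lifespan. The central observation is that the energy law, together with the area conservation (Lemma \ref{lem: volume conservation}) and the Fourier identities $\|X'\|_{L^2}^2 = 2\pi\sum n^2|a_n|^2$ and $2\pi R^2 = 2\pi\sum n|a_n|^2$, forces
\[
\tfrac12\|\Pi X'(t)\|_{L^2_s}^2 + \nu^{-1}\|u(t)\|_{L^2_x}^2 \leq 2\|\Pi X_0'\|_{L^2_s}^2 + \nu^{-1}\|u_0\|_{L^2_x}^2 =: \varepsilon_0^2,
\]
because the excess energy $\|X'\|_{L^2}^2 - 2\pi R^2$ is comparable to $\|\Pi X'\|_{L^2}^2$ by \eqref{eqn: projection is equivalent to excess energy}. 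So for all $t$ in the lifespan, $\Pi X'$ and $\nu^{-1/2} u$ remain small in $L^2$ whenever $\varepsilon_0$ is small.

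The next step is to normalize the size assumptions so that $\|u_0\|_{L^p} \le R^{1+2/p}$ and $\|\Pi X_0'\|_{L^\infty} \le \delta_0 R$ for a small constant $\delta_0 \in (0,1/16)$ depending on $p,\nu,R$. Under \eqref{eqn: a priori estimates for X under smallness conditions}--\eqref{eqn: a priori estimates for X well stretched constant under smallness conditions}, $X_0$ is uniformly well-stretched and satisfies $\|X_0'\|_{L^\infty} \le (1+2\delta_0)R$. Theorem \ref{thm: maximal solution} provides a local solution on some $[0,T]$, but \emph{a priori} $T$ depends implicitly on $X_0$ through the quantities $\rho_{X_0,\gamma}(t)$ and $\phi_{X_0}(t)$ from \eqref{eqn: def of rho}--\eqref{eqn: def of phi} (cf.\ Remark \ref{rmk: characterization of lifespan T}). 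I would split $X_0 = X_0^* + \Pi X_0$ and estimate these separately: the equilibrium part $X_0^*$ is smooth in every $\dot{C}^\gamma$ with norm controlled by $R$, so $\rho_{X_0^*,\gamma}(t) \le Ct^\gamma R$ and $\phi_{X_0^*}(t) \le CtR$; meanwhile, using $L^\infty$-smallness \eqref{eqn: smallness of X_0}, $\rho_{\Pi X_0,\gamma}(t)$ and $\phi_{\Pi X_0}(t)$ are both $\le C\delta_0 R$. Hence by taking $\delta_0$ and $T$ sufficiently small in terms of $p,\nu,R$ only, the smallness constraints imposed on these quantities in Remark \ref{rmk: characterization of lifespan T} are met, so $T$ can be chosen depending only on $p,\nu,R$ (not implicitly on $X_0$). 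On this interval one obtains bounds of the form \eqref{eqn: local C 1/p bound for X}--\eqref{eqn: local L inf bound for u}, with constants depending only on $p,\nu,R$.

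Having secured a uniform lifespan $T$, I would propagate the smallness to time $T/2$ via interpolation. Using the energy bound $\|\Pi X'(T/2)\|_{L^2}\leq \sqrt2\,\varepsilon_0$ together with the local $\dot C^{1/p}$-bound $\|X'(T/2)\|_{\dot C^{1/p}} \le CT^{-1/p}R$, the Gagliardo--Nirenberg inequality yields
\[
\|\Pi X'(T/2)\|_{L^\infty} \le C\|\Pi X'(T/2)\|_{L^2}^{2/(p+2)} \|X'(T/2)\|_{\dot C^{1/p}}^{p/(p+2)} \le C(p,\nu,R)\,\varepsilon_0^{2/(p+2)},
\]
and similarly $\|u(T/2)\|_{L^p} \le \|u(T/2)\|_{L^2}^{2/p}\|u(T/2)\|_{L^\infty}^{1-2/p}$, which is at most a small multiple of $R^{1+2/p}$ if $\varepsilon_0$ is small. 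By choosing $\varepsilon_*$ small enough (depending on $p,\nu,R$), both requirements $\|u(T/2)\|_{L^p}\leq R^{1+2/p}$ and $\|\Pi X'(T/2)\|_{L^\infty}\leq \delta_0 R$ can be restored at $t=T/2$. Then I apply the previous local existence step to $(u(T/2),X(T/2))$, which by uniqueness (Proposition \ref{prop: uniqueness of mild solution}) is just a time-shifted restriction of $(u,X)$; this extends the solution by another $T$. Iterating gives $T_* = +\infty$, and the claimed uniform estimates follow by applying the local bounds on each interval $[\tfrac12(k-1)T, \tfrac12(k+1)T]$, while the final $L^2$-bound comes directly from the energy law.

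The main obstacle I anticipate is the verification that the lifespan $T$ can be made to depend only on $p,\nu,R$ rather than on the full $C^1$-profile of $X_0$. In the generic setting of Theorem \ref{thm: maximal solution}, the dependence on $X_0$ is implicit because $X_0$ at the critical $C^1$-regularity admits no quantitative modulus of continuity in $\dot C^\gamma$. Here the closeness-to-equilibrium hypothesis supplies exactly such a modulus: the equilibrium component $X_0^*$ is analytic so its $\rho_{X_0^*,\gamma}$ and $\phi_{X_0^*}$ are $O(t^\gamma R)$ and $O(tR)$ respectively, while the perturbation is uniformly small by assumption. Making this quantitative and checking that the smallness thresholds in Remark \ref{rmk: characterization of lifespan T} are indeed satisfied with constants depending only on $p,\nu,R$ is the technical heart of the argument.
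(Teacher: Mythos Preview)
Your proposal is correct and follows essentially the same approach as the paper's proof: use the energy law to keep $\|\Pi X'\|_{L^2}$ and $\|u\|_{L^2}$ uniformly small, split $X_0 = X_0^* + \Pi X_0$ to make the local lifespan $T$ depend only on $p,\nu,R$ via the bounds on $\rho_{X_0,\gamma}$ and $\phi_{X_0}$, interpolate at $t=T/2$ to restore the smallness hypotheses, and iterate. You have also correctly identified the key technical point, namely that closeness to equilibrium supplies the quantitative $C^1$-modulus needed to remove the implicit dependence of $T$ on $X_0$.
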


\appendix

\section{Parabolic Estimates and Calculus Results}
\label{sec: parabolic estimates}

We first recall some standard parabolic estimates for the heat equation in $\BR^2$, whose proof will be omitted.

\begin{lem}
\label{lem: parabolic estimates}
Recall that, with $\mu\in (0,1)$, the Morrey norm $M^{1,\mu}(\BR^2)$ was defined in \eqref{eqn: def of Morrey norm}.
Let $\{e^{t\D}\}_{t\geq 0}$ be the standard heat semigroup on $\BR^2$.
Let $f = f(x)$ be defined on $\BR^2$.
Assume $\g\in (0,1)$.
\begin{enumerate}
\item
For any $t>0$,
\begin{align*}
\|e^{t\Delta}f\|_{L^{\infty}(\R^2)}\leq &\; Ct^{-\mu/2}\|f\|_{M^{1,\mu}(\R^2)},
\\
\|\na^{3}(-\Delta)^{-1}e^{t\Delta}f\|_{M^{1,\mu}(\R^2)}\leq &\; Ct^{-1/2}\|f\|_{M^{1,\mu}(\R^2)},
\end{align*}
where $C$ depends on $\mu$.
\item
For any $\b>0$ and $t>0$,
\[
\|\na^{2}(-\Delta)^{-1}e^{t\Delta}f\|_{\dot{C}^{\beta}(\R^2)}\leq Ct^{-(\mu+\beta)/2}\|f\|_{M^{1,\mu}(\R^2)},
\]
where $C$ depends on $\beta$ and $\mu$.

\item
For any $\g\in(0,1)$, $\b>\g$, and any $t>0$,
\begin{align*}
\|\na^{2}(-\Delta)^{-1}e^{t\Delta}f\|_{\dot{C}^{\beta}(\R^2)}
\leq &\; Ct^{-\beta/2}\|f\|_{L^\infty(\R^2)},
\\
\|\na^{2}(-\Delta)^{-1}e^{t\Delta}f\|_{\dot{C}^{\beta}(\R^2)}
\leq &\; Ct^{-(\beta-\g)/2}\|f\|_{\dot{C}^\g(\R^2)},
\\
\|e^{t\Delta}f-f\|_{C(\R^2)}\leq &\; Ct^{\g/2}\|f\|_{\dot{C}^\g(\R^2)},
\end{align*}
where $C$ depends on $\beta$ and $\gamma$.

\item
For any $\beta\geq 0$, $p\in[1,\infty]$, and any $t>0$,
\[
\|e^{t\D}f\|_{\dot{C}^{\beta}(\R^2)}
\leq Ct^{-\b/2-1/p}\|f\|_{L^p(\R^2)},
\]
where $C$ depends on $\beta$ and $p$.
\end{enumerate}
\end{lem}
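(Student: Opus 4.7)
The plan relies on two ingredients. \emph{Kernel estimates}: writing $e^{t\Delta}$ as convolution with $G_t(x)=(4\pi t)^{-1}e^{-|x|^2/(4t)}$ and using the semigroup identity $(-\Delta)^{-1}e^{t\Delta}=\int_t^\infty e^{s\Delta}\,ds$, one checks that for $k=2,3$ the operator $\nabla^k(-\Delta)^{-1}e^{t\Delta}$ is convolution with a self-similar kernel $K_{k,t}(x)=t^{-k/2}\Psi_k(x/\sqrt t)$ satisfying
\[
|K_{k,t}(x)|+(t+|x|^2)^{1/2}|\nabla K_{k,t}(x)|\leq C(t+|x|^2)^{-k/2};
\]
this is essentially Lemma \ref{lem: estimate for K}, extended to $k=3$ by one more derivative. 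The Gaussian $G_t$ satisfies such bounds with any $k\geq 2$ thanks to its exponential decay. \emph{Time-integral representation}: by Fubini and pointwise differencing one can commute a seminorm $\|\cdot\|_{\dot C^\beta}$ with the $\int_t^\infty ds$, reducing estimates for $\nabla^2(-\Delta)^{-1}e^{t\Delta}f$ to summable-in-$s$ estimates for $\nabla^2 e^{s\Delta}f$.

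For the Morrey-based parts (1) and (2), I would use the kernel estimates alone. Fix an evaluation point $x$ and decompose $\BR^2$ into $\{|x-y|\leq\sqrt t\}$ and the shells $\{2^j\sqrt t\leq|x-y|<2^{j+1}\sqrt t\}$, $j\geq 0$. On the $j$-th shell the kernel is bounded by $C(4^jt)^{-k/2}$ and, by the Morrey hypothesis, $\int_{|x-y|<2^{j+1}\sqrt t}|f|\leq C(2^{j+1}\sqrt t)^{2-\mu}\|f\|_{M^{1,\mu}(\BR^2)}$. Summing the resulting geometric series, convergent because $k>2-\mu$, yields $|K_{k,t}*f(x)|\leq Ct^{(2-\mu-k)/2}\|f\|_{M^{1,\mu}(\BR^2)}$, which specializes to the first estimate of (1) with $k=2$ and gives the pointwise bound $\|\nabla^3(-\Delta)^{-1}e^{t\Delta}f\|_{L^\infty(\BR^2)}\leq Ct^{-(1+\mu)/2}\|f\|_{M^{1,\mu}(\BR^2)}$ used later. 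The Hölder estimate (2) follows from the standard dichotomy: if $|x_1-x_2|\leq\sqrt t$, apply the mean-value theorem together with the gradient kernel bound; if $|x_1-x_2|>\sqrt t$, the triangle inequality reduces matters to the $L^\infty$ bound just obtained. The case $\beta\geq 1$ is handled by iterating this argument on $\nabla^{\lfloor\beta\rfloor}K_{2,t}$.

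For part (3) I would switch to the time-integral representation. The identity $\int\nabla^2 G_s=0$ yields the subtraction trick $\nabla^2 e^{s\Delta}f(x)=\int\nabla^2 G_s(y)(f(x-y)-f(x))\,dy$, from which $\|\nabla^2 e^{s\Delta}f\|_{L^\infty}\leq Cs^{-1}\|f\|_{L^\infty}$ (Young) and $\|\nabla^2 e^{s\Delta}f\|_{L^\infty}\leq Cs^{\gamma/2-1}\|f\|_{\dot C^\gamma}$, with the analogous $Cs^{-3/2}$ and $Cs^{\gamma/2-3/2}$ bounds for $\nabla^3$. Interpolation across derivative orders gives $\|\nabla^2 e^{s\Delta}f\|_{\dot C^\beta}\leq Cs^{-1-\beta/2}\|f\|_{L^\infty}$ and $\|\nabla^2 e^{s\Delta}f\|_{\dot C^\beta}\leq Cs^{\gamma/2-1-\beta/2}\|f\|_{\dot C^\gamma}$. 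Integrating these from $s=t$ to $\infty$, which converges because $\beta>0$ and $\beta>\gamma$ respectively, produces the two Hölder bounds of (3). The continuity statement $\|e^{t\Delta}f-f\|_{C}\leq Ct^{\gamma/2}\|f\|_{\dot C^\gamma}$ is the same subtraction trick applied to $e^{t\Delta}f-f(x)=\int G_t(y)(f(x-y)-f(x))\,dy$. Finally, part (4) follows from the standard bounds $\|\nabla^k e^{t\Delta}f\|_{L^\infty(\BR^2)}\leq Ct^{-k/2-1/p}\|f\|_{L^p(\BR^2)}$ (Young's inequality for integer $k$) together with interpolation across derivative orders to handle fractional $\beta$.

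The main obstacle is the Morrey-to-Morrey bound of (1), $\|\nabla^3(-\Delta)^{-1}e^{t\Delta}f\|_{M^{1,\mu}(\BR^2)}\leq Ct^{-1/2}\|f\|_{M^{1,\mu}(\BR^2)}$, because two competing length scales, $\sqrt t$ and the Morrey scale $r$, enter simultaneously. For fixed $x_0$ and $r>0$ I would split $f=f\mathds{1}_{B(x_0,2r)}+(f-f\mathds{1}_{B(x_0,2r)})$; control the first piece via Young's inequality using $\|K_{3,t}\|_{L^1(\BR^2)}\leq Ct^{-1/2}$ and the Morrey bound on $\|f\|_{L^1(B(x_0,2r))}$; and handle the second piece by a further dyadic expansion at scale $r$ (rather than $\sqrt t$) using $|K_{3,t}(x-y)|\leq C|x-y|^{-3}$ when $|x-y|\geq r$. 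One has to split into $r\geq\sqrt t$ versus $r<\sqrt t$ to verify that the correct power $t^{-1/2}$ emerges, but no new idea is required beyond the dyadic summations used for the pointwise bounds.
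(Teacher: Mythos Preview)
Your proposal is correct. The paper omits the proof entirely, calling these ``standard parabolic estimates'' and leaving them unproved, so there is no approach to compare against; your dyadic/near-far decompositions for the Morrey bounds and the time-integral representation $(-\Delta)^{-1}e^{t\Delta}=\int_t^\infty e^{s\Delta}\,ds$ for part (3) are exactly the standard route and all the exponent bookkeeping checks out.
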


There are similar estimates regarding the fractional Laplacian $\Lam$ on $\BT$ and the associated fractional heat semigroup.
\begin{lem}
\label{lem: parabolic estimates for fractional Laplace}
Recall $\Lam := (-\D)^{1/2}_\BT$.
Let $\{e^{-t\Lam}\}_{t\geq 0}$ be the semigroup associated with the fractional heat operator $\pa_t + \Lam$ on $\BT$.
Let $f = f(s)$ be defined on $\BT$.
Assume $\g\in (0,1)$.
Then
\begin{enumerate}
\item $\|\Lam f\|_{{C}^{\gamma}(\BT)}
\leq C\|f\|_{\dot{C}^{1,\gamma}(\BT)}$,
where $C$ depends on $\g$.
\item For any $t>0$,
\[
\|e^{-t\Lam}f\|_{L^{\infty}(\BT)}
\leq \|f\|_{L^\infty(\BT)}.
\]
For any $\beta\geq 0$ and $t>0$,
\[
\|e^{-t\Lam}f\|_{\dot{C}^{\beta}(\BT)}
\leq Ct^{-\beta}\|f\|_{L^\infty(\BT)},
\]
where $C$ depends on $\b$.

\item
For $0\leq\alpha\leq\beta$, and $t>0$,
\[
\| e^{-t\Lam}f\|_{\dot{C}^{\beta}(\BT)}
\leq Ct^{\alpha-\beta}\|f\|_{\dot{C}^{\alpha}(\BT)}.
\]
For $0\leq \alpha<\beta<\alpha+1$, and $t>0$,
\[
\| e^{-t\Lam}f-f\|_{\dot{C}^{\alpha}(\BT)}
\leq Ct^{\beta-\alpha}\|f\|_{\dot{C}^{\beta}(\BT)}.
\]
Here the constant $C$ depends on $\al$ and $\b$.
\end{enumerate}
\end{lem}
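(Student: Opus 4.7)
\textbf{Proof Proposal for Lemma \ref{lem: parabolic estimates for fractional Laplace}.}

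The plan is to exploit two standard representations: the singular integral formula $\Lam f(s) = c\,\mathrm{p.v.}\int_{\BT} K(s-s')(f(s)-f(s'))\,ds'$ with kernel $K(\sigma) \asymp |\sigma|^{-2}$ near $\sigma=0$, and the periodic Poisson kernel representation $e^{-t\Lam}f = P_t * f$ where $P_t(s) = \sum_{n\in\BZ} e^{-t|n|}e^{ins}$. The kernel $P_t$ is positive with $\int_{\BT} P_t\,ds = 2\pi$ (so $e^{-t\Lam}$ preserves constants and is $L^\infty$-contractive), and by direct comparison with the real-line Poisson kernel $t/[\pi(t^2+\sigma^2)]$, one has $\|\pa_s^k P_t\|_{L^1(\BT)} \leq C_k t^{-k}$ for $t\in (0,1]$ and any $k\in\BN$, together with $\|\pa_s^k(P_t - \mathds{1}_{\{k=0\}}\cdot 1)\|_{L^1(\BT)} \leq C_k e^{-t}$ for $t\geq 1$. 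These two ingredients are the workhorses.

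For statement (1), I would write $\Lam f(s_1) - \Lam f(s_2)$ as the difference of two principal-value integrals, use the Taylor expansion $f(s)-f(s') = f'(s)(s-s') + R(s,s')$ with $|R(s,s')| \leq \|f\|_{\dot{C}^{1,\g}} |s-s'|^{1+\g}$, and exploit the odd symmetry of $K(s-s')(s-s')$ about $s'=s$ to extract the principal-value cancellation. Standard splitting of the integration domain into a near zone $|s'-s_1|\leq 2|s_1-s_2|$ and a far zone, combined with a mean-value bound on $K$ in the far zone, gives the claimed $C^\g$-norm. For statements (2) and (3), the $L^\infty$-contraction is immediate from $P_t\geq 0$ and $(2\pi)^{-1}\int P_t=1$. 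The smoothing estimate $\|e^{-t\Lam}f\|_{\dot{C}^\b} \leq Ct^{-\b}\|f\|_{L^\infty}$ follows for integer $\b$ from $\|\pa_s^k P_t\|_{L^1} \leq Ct^{-k}$; for non-integer $\b$, interpolate between consecutive integers, and for $t\geq 1$ absorb the exponential decay into the constant.

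For the smoothing estimate $\|e^{-t\Lam}f\|_{\dot{C}^\b} \leq Ct^{\al-\b}\|f\|_{\dot{C}^\al}$ with $0\leq \al\leq \b$, I would reduce to $\al<1$ (or even $\al=0$ with a mean-subtracted kernel) since $\pa_s$ commutes with $e^{-t\Lam}$. For $\b\in (\al,\al+1)$ with $\al\in [0,1)$, write $e^{-t\Lam}f(s_1)-e^{-t\Lam}f(s_2) = \int_{\BT}(P_t(s_1-s')-P_t(s_2-s'))(f(s')-f(s_*))\,ds'$ for $s_*$ chosen between $s_1,s_2$, and distinguish $|s_1-s_2|\leq t$ (use $\|\pa_s P_t\|_{L^1}\leq Ct^{-1}$) from $|s_1-s_2|\geq t$ (use the $L^1$-bound directly). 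Higher $\b$ is handled by $\pa_s$-commutation. The displacement estimate $\|e^{-t\Lam}f-f\|_{\dot{C}^\al}\leq Ct^{\b-\al}\|f\|_{\dot{C}^\b}$ follows from the identity $e^{-t\Lam}f - f = \int_{\BT}P_t(s-s')(f(s')-f(s))\,ds'$, which after inserting $|f(s')-f(s)|\leq \|f\|_{\dot{C}^\b}|s-s'|^\b$ and using the moment bound $\int_{\BT}|s-s'|^\b |P_t(s-s')|\,ds' \leq C t^\b$ (valid since $\b<1+\al\leq 2$) gives the $L^\infty$-part, while for the H\"older increment we split $P_t * f - f$ as above and interpolate. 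The only genuinely delicate point is the restriction $\b<\al+1$, which appears because $\int_0^t\tau^{\b-\al-1}\,d\tau$ (coming from the alternative semigroup identity $e^{-t\Lam}f-f = -\int_0^t e^{-\tau\Lam}\Lam f\,d\tau$) would diverge otherwise; this is naturally respected by the direct kernel approach sketched above.
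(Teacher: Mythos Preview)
Your proposal is correct; the Poisson-kernel approach (which the paper itself employs explicitly for the sharper Lemma~\ref{lem: parabolic estimates improved for fractional Laplace}) together with the semigroup identity $e^{-t\Lam}f-f=-\int_0^t\Lam e^{-\tau\Lam}f\,d\tau$ is exactly the standard route. In fact the paper does not prove this lemma at all---it is stated as a companion to Lemma~\ref{lem: parabolic estimates}, whose proof is explicitly omitted, so there is nothing to compare against.

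One small caution in your sketch: the moment bound $\int_{\BT}|\sigma|^{\b}P_t(\sigma)\,d\sigma\leq Ct^{\b}$ only holds for $\b\in(0,1)$; for $\b\geq 1$ the integral is $\sim t\ln(1/t)$ or $\sim t$ on the torus, so it will not by itself give the displacement estimate when $\b\geq 1$. The clean way to cover the full range $\al<\b<\al+1$ is the semigroup identity you already mention: combining $\|\Lam g\|_{\dot{C}^{\al}}\leq C\|g\|_{\dot{C}^{1+\al}}$ (your part~(1)) with the smoothing bound $\|e^{-\tau\Lam}f\|_{\dot{C}^{1+\al}}\leq C\tau^{\b-\al-1}\|f\|_{\dot{C}^{\b}}$ (your part~(3), first estimate, which requires $\b\leq 1+\al$) and integrating $\int_0^t\tau^{\b-\al-1}\,d\tau$ gives the result directly and transparently explains the upper restriction $\b<\al+1$.
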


We also have the following improved estimates. 
\begin{lem}
\label{lem: parabolic estimates improved for fractional Laplace}
Let $f =f (s)$ be defined on $\BT$.
For any $\al\in [0,1]$ and $t>0$,
\[
\big\|e^{-t\Lam }f\big\|_{\dot{C}^{1,\al}(\BT)}
\leq \f{2}{e^t+1} \|f\|_{\dot{C}^{1,\al}(\BT)}.
\]

\begin{proof}
Note that
\[
e^{-t\Lam }f(s): = \int_{\BT}P_t(s-s') f(s')\,ds',
\]
where
\[
P_t(s): = \f{1}{2\pi} \cdot \f{\sinh t}{\cosh t-\cos s}
\geq \f{1}{2\pi} \cdot \f{\sinh t}{1+\cosh t}.
\]
Hence,
\begin{align*}
\big\|e^{-t\Lam}f\big\|_{\dot{C}^1(\BT)}
= &\; \left\| \int_{\BT} \left[P_t(s')-\f{1}{2\pi} \cdot \f{\sinh t}{1+\cosh t}\right] f'(s-s')\,ds'\right\|_{L^\infty(\BT)}\\
\leq &\; \f{1}{2\pi} \int_{\BT}\f{\sinh t}{\cosh t-\cos s'}-\f{\sinh t}{1+\cosh t} \,ds' \cdot \|f'\|_{L^\infty}\\
= &\; \left[1-\f{\sinh t}{1+\cosh t}\right] \|f'\|_{L^\infty}
= \f{2}{e^t+1} \|f'\|_{L^\infty}.
\end{align*}
Similarly, with $\al\in (0,1]$, for distinct $s_1,s_2\in \BT$,
\begin{align*}
&\;\left|e^{-t\Lam} f'(s_1)-e^{-t\Lam} f'(s_2)\right|\\
\leq &\; \left| \int_{\BT} \left[P_t(s')-\f{1}{2\pi} \cdot \f{\sinh t}{1+\cosh t}\right] \big(f'(s_1-s')-f'(s_2-s')\big) \,ds'\right|\\
\leq &\; \f{1}{2\pi} \int_{\BT}\f{\sinh t}{\cosh t-\cos s'}-\f{\sinh t}{1+\cosh t} \,ds' \cdot \|f'\|_{\dot{C}^\al}|s_1-s_2|_\BT^\al \\
\leq &\; \f{2}{e^t+1}\|f'\|_{\dot{C}^\al}|s_1-s_2|_\BT^\al,
\end{align*}
which implies the desired inequality.
\end{proof}
\end{lem}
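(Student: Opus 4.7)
My plan is to exploit the fact that $e^{-t\Lam}$ is convolution with the Poisson kernel
\[
P_t(s) = \f{1}{2\pi}\cdot \f{\sinh t}{\cosh t - \cos s},
\]
which is \emph{strictly positive} with a uniform lower bound. The basic $L^\infty$-contraction $\|e^{-t\Lam}f\|_{L^\infty} \leq \|f\|_{L^\infty}$ is tight because it uses $\int_\BT P_t = 1$, but if we differentiate once we can do better, since $f'$ is a periodic function with mean zero. The plan is to use this mean-zero property to subtract a constant from $P_t$ inside the convolution, turning the total mass of the resulting kernel into $1 - 2\pi \min_{\BT} P_t$, and then verify that this quantity equals $\tfrac{2}{e^t + 1}$.

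Concretely, I would first write
\[
(e^{-t\Lam}f)'(s) = \int_\BT P_t(s')\, f'(s-s')\, ds',
\]
and then, using $\int_\BT f'(s-s')\, ds' = 0$ (by periodicity of $f$), rewrite
\[
(e^{-t\Lam}f)'(s) = \int_\BT \Big[P_t(s') - c_t\Big] f'(s-s')\, ds',
\]
where $c_t := \min_{s'\in \BT} P_t(s') = \f{1}{2\pi}\cdot \f{\sinh t}{\cosh t + 1}$ is attained at $s' = \pi$. The integrand kernel is now pointwise non-negative, so
\[
\big\|(e^{-t\Lam}f)'\big\|_{L^\infty(\BT)} \leq \left(\int_\BT P_t(s') - c_t\, ds'\right)\|f'\|_{L^\infty(\BT)} = (1 - 2\pi c_t)\|f'\|_{L^\infty(\BT)}.
\]
For $\al \in (0,1]$, I repeat the argument replacing $f'(s-s')$ by the difference $f'(s_1 - s') - f'(s_2 - s')$, which is still periodic and has mean zero in $s'$; the non-negativity of $P_t - c_t$ then gives the bound with the same constant $(1 - 2\pi c_t)$ multiplying $\|f'\|_{\dot{C}^\al}|s_1 - s_2|_\BT^\al$.

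The only remaining step is the algebraic identity. Rewriting
\[
1 - 2\pi c_t = 1 - \f{\sinh t}{\cosh t + 1} = \f{\cosh t - \sinh t + 1}{\cosh t + 1} = \f{e^{-t} + 1}{\cosh t + 1},
\]
and multiplying numerator and denominator by $2 e^t$ gives $\tfrac{2(1 + e^t)}{(e^t+1)^2} = \tfrac{2}{e^t+1}$, yielding exactly the claimed constant.

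I do not anticipate a genuine obstacle here; the only conceptual point is the realization that the $L^\infty$-contraction constant can be strictly improved on $\dot{C}^{1,\al}$ precisely because derivatives of periodic functions are mean-zero, allowing us to subtract the minimum of $P_t$ and gain a factor of $(1 - 2\pi c_t)$. The computation $1 - 2\pi c_t = \tfrac{2}{e^t+1}$ is a short elementary identity. Note that the argument gives the same multiplicative constant in front of the $L^\infty$-norm of $f'$ and in front of the $\dot{C}^\al$-seminorm of $f'$, which is why the single bound covers the full range $\al \in [0,1]$ uniformly.
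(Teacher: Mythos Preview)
Your proposal is correct and follows essentially the same approach as the paper: both use that $e^{-t\Lam}$ is convolution with the Poisson kernel $P_t$, subtract its minimum $c_t = \tfrac{1}{2\pi}\tfrac{\sinh t}{1+\cosh t}$ (legitimate because $f'$ and its differences are mean-zero), and then use non-negativity of $P_t - c_t$ together with the identity $1 - \tfrac{\sinh t}{1+\cosh t} = \tfrac{2}{e^t+1}$. Your write-up is slightly more explicit about the final algebraic simplification, but the argument is otherwise identical.
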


The next lemma bounds the time-integral given by the operator $\CI$.

\begin{lem}
\label{lem: parabolic estimates for Duhamel term of fractional Laplace}
Recall the operator $\CI$ was defined in \eqref{eqn: def of the operator I}, i.e., given $W = W(s,t)$ on $\BT\times [0,T]$, define $\CI[W]$ on $\BT\times [0,T]$ by
\[
\CI[W](t):=\int_0^t e^{-\f14(t-\tau)\Lam}W(\tau)\,d\tau.
\]
\begin{enumerate}
\item
Assume that $\va:(0,+\infty)\to (0,+\infty)$ is decreasing on $(0,+\infty)$ and it is integrable on any bounded interval.
Then for $\g\in (0,1)$ and any $t\in (0,T]$,
\[
\|\CI[W](t)\|_{\dot{C}_s^{1,\g}(\BT)}
\leq \f{C}{t}\int_0^t \va(\tau)\,d\tau \cdot
\sup_{\tau\in(0,T]} \va(\tau)^{-1}\|W(\tau)\|_{\dot{C}_s^{\g}(\T)},
\]
where $C$ only depends on $\g$.

\item
For $\al\in [0,1)$, $\b\in (\al,1+\al]$, $\mu\in [0,1)$, and $t\in (0,T]$,
\[
\|\CI[W](t)\|_{\dot{C}_s^{1,\al}(\BT)}
\leq Ct^{\b-\al-\mu} \min\big(1,t^{\al-\b}\big) \sup_{\tau\in (0,t]}\tau^{\mu} \|W(\tau)\|_{\dot{C}_s^\b(\BT)},
\]
where $C$ depends on $\al$, $\b$ and $\mu$.

As a corollary of the above two estimates, for $0<\g\leq \b<1$, it holds for any $t\in (0,T]$ that
\[
\|\CI[W](t)\|_{\dot{C}_s^{1}(\BT)}
+t^\g\|\CI[W](t)\|_{\dot{C}_s^{1,\g}(\BT)}
\leq C\sup_{\tau\in (0,t]}\tau^\b \|W(\tau)\|_{\dot{C}_s^\b(\BT)},
\]
where $C$ depends on $\b$ and $\g$.

\item Assume $t_1\in (0,T)$.
Suppose that, for some $\d>0$ and $\g\in (0,1)$,
\[
t\mapsto \|\CI[W](t)\|_{C^{1,\g}(\BT)}\mbox{ and }t\mapsto \|W(t)\|_{C^\g(\BT)}\mbox{ are bounded in }[t_1-\d,t_1+\d],
\]
and in addition,
\[
\lim_{t\to t_1}\|W(t)-W(t_1)\|_{C(\BT)} = 0.
\]
Then $\pa_t \CI [W](t_1)$ is well-defined pointwise, and it holds pointwise that
\[
\pa_t \CI[W](t_1) = -\f14\Lam \CI[W](t_1) + W(t_1).
\]
\end{enumerate}

\begin{proof}
By Lemma \ref{lem: interpolation} and Lemma \ref{lem: parabolic estimates for fractional Laplace},
\begin{align*}
\|\CI[W](t)\|_{\dot{C}_s^{1,\g}(\BT)}
\leq &\; \f{C}{t}\int_0^t \va(\tau)\,d\tau \left|\sup_{\tau\in (0,T)}(t-\tau)^{1-\g} \va(\tau)^{-1} \|\pa_s e^{-\f14(t-\tau)\Lam} W(\tau)\|_{L_s^{\infty}}\right|^{1-\g}\\
&\;\cdot \left|\sup_{\tau\in (0,T)}(t-\tau)^{2-\g} \va(\tau)^{-1} \|\pa_s^2 e^{-\f14(t-\tau)\Lam}W(\tau)\|_{L_s^{\infty}}\right|^\g
\\
\leq &\; \f{C}{t}\int_0^t \va(\tau)\,d\tau
\left|\sup_{\tau\in (0,T)} (t-\tau)^{1-\g} \cdot (t-\tau)^{-(1-\g)}\va(\tau)^{-1}
\|W(\tau)\|_{\dot{C}_s^\g}\right|^{1-\g}\\
&\; \cdot \left|\sup_{\tau\in (0,T)} (t-\tau)^{2-\g}\cdot (t-\tau)^{-(2-\g)}\va(\tau)^{-1}
\|W(\tau)\|_{\dot{C}_s^\g}\right|^\g\\
\leq &\; \f{C}{t}\int_0^t \va(\tau)\,d\tau \cdot
\sup_{\tau\in(0,T]} \va(\tau)^{-1}\|W(\tau)\|_{\dot{C}_s^{\g}(\T)}.
\end{align*}
This proves the first inequality.

Regarding the second claim, we apply Lemma \ref{lem: parabolic estimates for fractional Laplace} and Lemma \ref{lem: parabolic estimates improved for fractional Laplace} to find that, for $\al \in [0,1)$, $\b\in (\al,1+\al]$, and $\mu\in [0,1)$,
\begin{align*}
\|\CI[W](t)\|_{\dot{C}_s^{1,\al}(\BT)}
\leq &\; \int_0^t \big\|e^{-\f14(t-\tau)\Lam}W(\tau)\big\|_{\dot{C}_s^{1,\al}}\,d\tau\\
\leq
&\; C\int_0^t e^{-\f18(t-\tau)} (t-\tau)^{-(1+\al-\b)} \tau^{-\mu}\cdot \tau^{\mu} \|W(\tau)\|_{\dot{C}_s^\b}\,d\tau\\
\leq &\; C\left[e^{-\f{t}{16}}t^{-(1+\al-\b)+1-\mu} + \min\big(1,t^{-\al+\b}\big) t^{-\mu}\right] \sup_{\tau\in (0,t]}\tau^{\mu} \|W(\tau)\|_{\dot{C}_s^\b(\BT)}.
\end{align*}
This proves the desired estimate.

Finally, to prove the last claim, we need to justify that
\[
\lim_{t_2\to t_1}\f{\CI[W](t_2)-\CI[W](t_1)}{t_2-t_1} = -\f14\Lam \CI[W](t_1) + W(t_1)\mbox{ in }C(\BT).
\]

By definition, for any $t_1<t_2$,
\begin{align*}
&\; \CI[W](t_2)-\CI[W](t_1)\\
= &\; \int_{t_1}^{t_2} e^{-\f14(t_2-\tau)\Lam}W(\tau)\,d\tau + \int_0^{t_1} \big(e^{-\f14(t_2-t_1)\Lam} -\mathrm{Id}\big) e^{-\f14(t_1-\tau)\Lam}W(\tau)\,d\tau
\\
= &\; \int_{t_1}^{t_2} e^{-\f14(t_2-\tau)\Lam}W(t_1)\,d\tau
+ \int_{t_1}^{t_2} e^{-\f14(t_2-\tau)\Lam} \big[W(\tau)-W(t_1)\big]\,d\tau
+ \big(e^{-\f14(t_2-t_1)\Lam} -\mathrm{Id}\big) \CI[W](t_1).
\end{align*}
Since $W(t_1),\Lam\CI[W](t_1)\in C(\BT)$, one can readily show that, in the $C(\BT)$-topology,
\begin{align*}
&\lim_{t_2\to t_1^+}\f{1}{t_2-t_1}\int_{t_1}^{t_2} e^{-\f14(t_2-\tau)\Lam}W(t_1)\,d\tau = W(t_1),
\end{align*}
and
\[
\lim_{t_2\to t_1^+}\f{1}{t_2-t_1}\big(e^{-\f14(t_2-t_1)\Lam} -\mathrm{Id}\big) \CI[W](t_1)
=
\lim_{\d\to 0^+}- \f{1}{4\d}\int_{0}^{\d} e^{-\f{\tau}{4}\Lam} \Lam \CI[W](t_1)\,d\tau = -\f14\Lam \CI[W](t_1).
\]
Moreover,
\begin{align*}
&\; \left\|\f{1}{t_2-t_1}\int_{t_1}^{t_2} e^{-\f14(t_2-\tau)\Lam}\big[W(\tau)-W(t_1)\big]\,d\tau\right\|_{C(\BT)}\\
\leq &\; \f{1}{t_2-t_1}\int_{t_1}^{t_2} \|W(\tau)-W(t_1)\|_{C(\BT)} \,d\tau
\leq \sup_{\tau\in [t_1,t_2]} \|W(\tau)-W(t_1)\|_{C(\BT)}.
\end{align*}
Thanks to the time-continuity of $W$,
\begin{align*}
&\; \lim_{t_2\to t_1^+}\f{1}{t_2-t_1}\int_{t_1}^{t_2} e^{-\f14(t_2-\tau)\Lam}\big[W(\tau)-W(t_1)\big]\,d\tau = 0\mbox{ in }C(\BT).
\end{align*}
Therefore, it holds in the $C(\BT)$-topology that
\[
\lim_{t_2\to t_1^+}\f{\CI[W](t_2)-\CI[W](t_1)}{t_2-t_1} = -\f14\Lam \CI[W](t_1) + W(t_1).
\]

On the other hand, for $t_1>t_2$, one similarly derives that
\begin{align*}
&\; \CI[W](t_1)-\CI[W](t_2)\\
= &\; \int_{t_2}^{t_1} e^{-\f14(t_1-\tau)\Lam}W(\tau)\,d\tau + \int_0^{t_2} \big(e^{-\f14(t_1-t_2)\Lam} -\mathrm{Id}\big) e^{-\f14(t_2-\tau)\Lam}W(\tau)\,d\tau
\\
= &\; \int_{t_2}^{t_1} e^{-\f14(t_1-\tau)\Lam}W(t_1)\,d\tau
+ \big(e^{-\f14(t_1-t_2)\Lam} -\mathrm{Id}\big) \CI[W](t_1)
\\
&\;+ \int_{t_2}^{t_1} e^{-\f14(t_1-\tau)\Lam}\big[W(\tau)-W(t_1)\big]\,d\tau
+ \big(e^{-\f14(t_1-t_2)\Lam} -\mathrm{Id}\big)\big[ \CI[W](t_2)-\CI[W](t_1)\big].
\end{align*}
The first three terms can be handled as before; we omit the details.
It remains to show that
\[
\lim_{t_2\to t_1^-}\f{1}{t_1-t_2}\big(e^{-\f14(t_1-t_2)\Lam} -\mathrm{Id}\big)\big[ \CI[W](t_2)-\CI[W](t_1)\big] = 0\mbox{ in }C(\BT).
\]
To that end, we first claim that $t\mapsto \Lam\CI[W](t)$ is continuous in $C(\BT)$ at $t = t_1$.
Indeed, for $\b\in (0,\g)$, and $t_1>t_2$ with $|t_1-t_2|<\d\ll 1$,
\begin{align*}
&\;\big\| \Lam\CI[W](t_1)-\Lam\CI[W](t_2)\big\|_{C(\BT)}\\
\leq &\; C\| \CI[W](t_1)-\CI[W](t_2)\|_{\dot{C}^{1,\b}(\BT)}\\
\leq &\; C\int_{t_2}^{t_1} \|e^{-\f14(t_1-\tau)\Lam}W(\tau)\|_{\dot{C}^{1,\b}(\BT)}\,d\tau
+ C\big\|\big(e^{-\f14(t_1-t_2)\Lam} -\mathrm{Id}\big) \CI[W](t_2)\big\|_{\dot{C}^{1,\b}(\BT)}
\\
\leq &\; C\int_{t_2}^{t_1} (t_1-\tau)^{-(1+\b-\g)} \|W(\tau)\|_{\dot{C}^\g(\BT)}\,d\tau
+ C|t_1-t_2|^{\g-\b} \|\CI[W](t_2)\|_{\dot{C}^{1,\g}(\BT)}
\\
\leq &\;C|t_1-t_2|^{\g-\b} \left[ \sup_{\tau\in [t_1-\d,t_1]}\|W(\tau)\|_{\dot{C}^\g(\BT)}
+ \|\CI[W](t_2)\|_{\dot{C}^{1,\g}(\BT)}\right],
\end{align*}
so the desired time-continuity follows from the assumptions.
Then we derive that
\begin{align*}
&\;\left\|\f{1}{t_1-t_2}\big(e^{-\f14(t_1-t_2)\Lam} -\mathrm{Id}\big)\big[ \CI[W](t_2)-\CI[W](t_1)\big]\right\|_{C(\BT)}\\
= &\;
\left\|\f{1}{4(t_1-t_2)}\int_0^{t_1-t_2} \Lam e^{-\f{\tau}{4} \Lam} \big[ \CI[W](t_2)-\CI[W](t_1)\big]\,d\tau \right\|_{C(\BT)}\\
\leq &\;
\f{1}{4(t_1-t_2)}\int_0^{t_1-t_2} \big\|\Lam \big[ \CI[W](t_2)-\CI[W](t_1)\big]\big\|_{C(\BT)} \,d\tau\\
= &\;
\f14 \big\|\Lam \CI[W](t_2)- \Lam \CI[W](t_1) \big\|_{C(\BT)},
\end{align*}
which converges to $0$ as $t_2\to t_1^-$.
Therefore, we can conclude that
\[
\lim_{t_2\to t_1^-}\f{\CI[W](t_1)-\CI[W](t_2)}{t_1-t_2} = -\f14\Lam \CI[W](t_1) + W(t_1)\mbox{ in }C(\BT),
\]

This completes the proof.
\end{proof}
\end{lem}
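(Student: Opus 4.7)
My plan is to handle the three parts of the lemma separately, leveraging Lemma \ref{lem: interpolation} together with the smoothing estimates for the fractional heat semigroup from Lemma \ref{lem: parabolic estimates for fractional Laplace} and its sharpened form Lemma \ref{lem: parabolic estimates improved for fractional Laplace}.

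For part (i), I would apply Lemma \ref{lem: interpolation} to the integrand $V(s,\tau):=e^{-\f14(t-\tau)\Lam}W(\tau)$, interpolating $\dot{C}^{1,\g}$ between $\dot{C}^1$ (exponent $1-\g$) and $\dot{C}^2$ (exponent $\g$). By the smoothing estimate $\|\pa_s^k e^{-\f14(t-\tau)\Lam}W(\tau)\|_{L^\infty}\le C(t-\tau)^{-(k-\g)}\|W(\tau)\|_{\dot C^\g}$ for $k=1,2$ from Lemma \ref{lem: parabolic estimates for fractional Laplace}, the weighted suprema in Lemma \ref{lem: interpolation} collapse to $\sup_{\tau\in(0,T]}\va(\tau)^{-1}\|W(\tau)\|_{\dot{C}^\g}$, and the time-integral factor $t^{-1}\int_0^t\va(\tau)\,d\tau$ drops out. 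Part (ii) is a direct integration using the exponentially sharpened bound of Lemma \ref{lem: parabolic estimates improved for fractional Laplace}: from $\|e^{-\f14(t-\tau)\Lam}W(\tau)\|_{\dot{C}^{1,\al}}\le C e^{-(t-\tau)/16}(t-\tau)^{\b-1-\al}\tau^{-\mu}\cdot\tau^\mu\|W(\tau)\|_{\dot C^\b}$, integrating in $\tau\in(0,t)$ produces the stated $t^{\b-\al-\mu}\min(1,t^{\al-\b})$ factor after splitting into short-time ($t-\tau\lesssim 1$) and long-time regions, where the exponential decay handles the latter. The corollary then follows by choosing $\va(\tau)=\tau^{-\b}$ in (i) and combining with (ii).

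For part (iii), I would split the right difference quotient for $h>0$ as
\[
\f{\CI[W](t_1+h)-\CI[W](t_1)}{h}=\f1h\int_{t_1}^{t_1+h}e^{-\f14(t_1+h-\tau)\Lam}W(\tau)\,d\tau+\f{e^{-\f{h}{4}\Lam}-\Id}{h}\,\CI[W](t_1).
\]
Adding and subtracting $W(t_1)$ inside the integral, the first term converges to $W(t_1)$ in $C(\BT)$ by the assumed $C(\BT)$-continuity of $\tau\mapsto W(\tau)$ at $t_1$. Since the assumed $C^{1,\g}$-boundedness of $\CI[W]$ at $t_1$ gives $\Lam\CI[W](t_1)\in C(\BT)$ via part (1) of Lemma \ref{lem: parabolic estimates for fractional Laplace}, the semigroup identity $h^{-1}(e^{-\f{h}{4}\Lam}-\Id)g=-\f{1}{4h}\int_0^h \Lam e^{-\f{\tau}4\Lam}g\,d\tau$ applied with $g=\CI[W](t_1)$ gives convergence to $-\f14\Lam\CI[W](t_1)$.

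The main obstacle is the left derivative ($h\to 0^-$). Decomposing analogously yields an extra term of the form $h^{-1}(e^{-\f{|h|}{4}\Lam}-\Id)[\CI[W](t_1+h)-\CI[W](t_1)]$, which I must show vanishes in $C(\BT)$. For this I need $C(\BT)$-continuity of $\Lam\CI[W]$ at $t_1$; I would establish it by taking any $\b\in(0,\g)$ and estimating the $\dot{C}^{1,\b}$-norm of $\CI[W](t_2)-\CI[W](t_1)$ via the Duhamel decomposition
\[
\CI[W](t_2)-\CI[W](t_1)=\int_{t_1}^{t_2}e^{-\f14(t_2-\tau)\Lam}W(\tau)\,d\tau+(e^{-\f14(t_2-t_1)\Lam}-\Id)\CI[W](t_1),
\]
bounding the first piece by $C\int_{t_1}^{t_2}(t_2-\tau)^{-(1+\b-\g)}d\tau\cdot\sup_{[t_1-\d,t_1+\d]}\|W\|_{\dot C^\g}\lesssim |t_1-t_2|^{\g-\b}$ and the second by $|t_1-t_2|^{\g-\b}\sup\|\CI[W]\|_{\dot{C}^{1,\g}}$ using Lemma \ref{lem: parabolic estimates for fractional Laplace}. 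Once the continuity of $\Lam \CI[W]$ at $t_1$ is secured, an averaging argument closes the computation of the left derivative.
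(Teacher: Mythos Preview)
Your proposal is correct and follows essentially the same approach as the paper's proof: part (i) via Lemma \ref{lem: interpolation} applied to $\partial_s$ of the integrand with the smoothing bounds of Lemma \ref{lem: parabolic estimates for fractional Laplace}; part (ii) by direct integration combining Lemma \ref{lem: parabolic estimates for fractional Laplace} with the exponential decay from Lemma \ref{lem: parabolic estimates improved for fractional Laplace}; and part (iii) by the same three-term splitting of the right difference quotient, together with the extra term for the left derivative handled via $\dot C^{1,\beta}$-continuity of $\CI[W]$ (for some $\beta\in(0,\g)$) to obtain $C(\BT)$-continuity of $\Lam\CI[W]$. One small remark: the corollary in part (ii) actually follows from part (ii) alone by taking $\al=0$ and $\al=\g$ with $\mu=\b$, so you need not invoke part (i) there.
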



Lastly, we collect some useful auxiliary calculus results in the following lemma.

\begin{lem}
\label{lem: Holder estimate for composition of functions}
Suppose  $\g\in (0,1)$.
Let $f = f(x)$ be defined on $\BR^2$, and $X,X_1,X_2:\BT\to \BR^2$.
Then
\begin{align*}
\|f\circ X\|_{\dot{C}^\g_s(\BT)}\leq &\; \|f\|_{\dot{C}^\g_x(\BR^2)}\|X'\|_{L^\infty_s(\BT)}^\g,\\
\|f\circ X\|_{\dot{C}^\g_s(\BT)}\leq &\;
\|\na f\|_{L^\infty_x(\BR^2)}\|X\|_{\dot{C}^\g_s(\BT)},\\
\|f\circ X_1 - f\circ X_2\|_{L^\infty_s(\BT)}
\leq &\; \|\na f\|_{L^\infty_x(\BR^2)}\|X_1-X_2\|_{L^\infty_s(\BT)},
\end{align*}
and
\begin{align*}
&\;\|f\circ X_1 - f\circ X_2\|_{\dot{C}^\g_s(\BT)}\\
\leq &\;
\|f\|_{\dot{C}^1_x(\BR^2)} \|X_1-X_2\|_{\dot{C}_s^\g(\BT)}
+ C\|f\|_{\dot{C}_x^{1,\g}(\BR^2)} \|(X_1',X_2')\|_{L^\infty_s(\BT)}^\g \|X_1-X_2\|_{L^\infty_s(\BT)},
\end{align*}
where $C$ only depends on $\g$.

\begin{proof}
We only show the last estimate since the others are straightforward to verify.

For arbitrary $s_1,s_2\in \BT$,
\begin{align*}
&\;|f\circ X_1(s_1) - f\circ X_2(s_1) - f\circ X_1(s_2) + f\circ X_2(s_2)|\\
= &\; \bigg|\int_0^1 (X_1(s_1)-X_2(s_1))\cdot \na f\big(\th X_1(s_1) + (1-\th)X_2(s_1)\big) \\
&\; \quad
- (X_1(s_2)-X_2(s_2))\cdot \na f\big(\th X_1(s_2) + (1-\th)X_2(s_2)\big)
\,d\th \bigg|\\
\leq &\; \int_0^1 |X_1(s_1)-X_2(s_1)
- X_1(s_2)+ X_2(s_2)|\cdot \big|\na f\big(\th X_1(s_1) + (1-\th)X_2(s_1)\big)\big|\\
&\; \quad
+ |X_1(s_2)-X_2(s_2)|\cdot \big|\na f\big(\th X_1(s_1) + (1-\th)X_2(s_1)\big)- \na f\big(\th X_1(s_2) + (1-\th)X_2(s_2)\big)\big|
\,d\th\\
\leq &\; \int_0^1 \|X_1-X_2\|_{\dot{C}_s^\g} |s_1-s_2|^\g \|\na f\|_{L^\infty_x}\\
&\; \quad
+ \|X_1-X_2\|_{L^\infty_x} \|\na f\|_{\dot{C}_x^\g} \big|\th X_1(s_1)  - \th X_1(s_2) + (1-\th)X_2(s_1) - (1-\th)X_2(s_2)\big|^\g \,d\th\\
\leq &\;  \|X_1-X_2\|_{\dot{C}_s^\g} |s_1-s_2|^\g \|\na f\|_{L^\infty_x}
+ C \|X_1-X_2\|_{L^\infty_s}  \|\na f\|_{\dot{C}_x^\g} \|(X_1',X_2')\|_{L^\infty_s}^\g |s_1-s_2|^\g.
\end{align*}
This implies the desired bound.
\end{proof}
\end{lem}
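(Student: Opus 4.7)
The plan is to dispatch the three one-line estimates first from the chain-rule bound $|X(s_1)-X(s_2)|\le \min(\|X'\|_{L^\infty}|s_1-s_2|,\,\|X\|_{\dot C^\g}|s_1-s_2|^\g)$ and the mean-value bound $|f(y_1)-f(y_2)|\le \min(\|f\|_{\dot C^\g}|y_1-y_2|^\g,\,\|\nabla f\|_{L^\infty}|y_1-y_2|)$. Specifically, the first two bounds for $\|f\circ X\|_{\dot C^\g_s}$ follow by composing the two orderings of H\"older/Lipschitz with Lipschitz/H\"older controls on $X$, and the pointwise bound for $f\circ X_1-f\circ X_2$ is immediate from writing the difference as $\int_0^1 \nabla f(Y_\theta(s))\cdot(X_1(s)-X_2(s))\,d\theta$ with $Y_\theta:=\theta X_1+(1-\theta)X_2$.

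The only substantive inequality is the last one, which I would prove with the same integral representation. Fix $s_1,s_2\in\BT$ and write
\begin{align*}
&(f\circ X_1-f\circ X_2)(s_1)-(f\circ X_1-f\circ X_2)(s_2)\\
&\quad=\int_0^1\nabla f(Y_\theta(s_1))\cdot\big[(X_1{-}X_2)(s_1)-(X_1{-}X_2)(s_2)\big]\,d\theta\\
&\qquad+\int_0^1\big[\nabla f(Y_\theta(s_1))-\nabla f(Y_\theta(s_2))\big]\cdot(X_1{-}X_2)(s_2)\,d\theta.
\end{align*}
For the first summand, pull out $\|\nabla f\|_{L^\infty_x}=\|f\|_{\dot C^1_x}$ and use the $\dot C^\g_s$-semi-norm of $X_1-X_2$ to produce the factor $|s_1-s_2|^\g$. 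For the second, use the $\dot C^\g_x$-semi-norm of $\nabla f$ (i.e.\ $\|f\|_{\dot C^{1,\g}_x}$) and note that $|Y_\theta(s_1)-Y_\theta(s_2)|\le\|(X_1',X_2')\|_{L^\infty_s}|s_1-s_2|$, so raising to the $\g$-th power gives the exponent $\g$ on both $\|(X_1',X_2')\|_{L^\infty_s}$ and $|s_1-s_2|$, with $\|X_1-X_2\|_{L^\infty_s}$ coming from the factor outside.

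There is no real obstacle here; the only mild subtlety is arranging the two ``orderings'' in the second term — one must first bound $\nabla f$ in $\dot C^\g_x$ (giving the $|Y_\theta(s_1)-Y_\theta(s_2)|^\g$) and then Lipschitz-estimate the path $Y_\theta$ in $s$, rather than the other way around, in order to land the H\"older exponent on $(X_1',X_2')$ instead of demanding $C^\g$ control of $X_1-X_2$. The constant $C$ in the final bound absorbs the $2^{\g-1}$ from $|a+b|^\g\le 2^{\g-1}(|a|^\g+|b|^\g)$ applied to $\theta X_1'+(1-\theta)X_2'$; otherwise the estimate is entirely elementary.
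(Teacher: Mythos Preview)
Your proposal is correct and matches the paper's proof essentially line for line: the paper also only proves the last estimate, uses the same integral representation $f\circ X_1-f\circ X_2=\int_0^1\nabla f(Y_\theta)\cdot(X_1-X_2)\,d\theta$ with $Y_\theta=\theta X_1+(1-\theta)X_2$, and performs the identical two-term split. The one cosmetic difference is that your bound $|Y_\theta(s_1)-Y_\theta(s_2)|\le\|(X_1',X_2')\|_{L^\infty_s}|s_1-s_2|$ (via convexity of $Y_\theta'$) actually needs no constant, so the $2^{\g-1}$ you mention is harmless but unnecessary.
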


\section{
The Improved Estimates for 
$g_X$}
\label{sec: improved estimates for g_X}

In this section, we provide self-consistent justifications of Lemma \ref{lem: improved estimates for g_X} and Lemma \ref{lem: improved estimates for g_X-g_Y}, which are improved estimates for the nonlinear terms $g_X$ and $g_X-g_Y$ arising in the Stokes case (see \eqref{eqn: def of g_X}).
Since their representations are purely Lagrangian, in what follows, we shall omit the subscript $s$ in all the norm notations.

\subsection{Proof of Lemma \ref{lem: improved estimates for g_X}}
\label{sec: proof of improved estimates for g_X}

For $s \neq s'$, denote
\beq
L(s,s') := \f{X(s')-X(s)}{s'-s}.
\label{eqn: def of L}
\eeq
We understand $s'-s$ as the usual subtraction on $\BR$.
When $s'\in [s-\f32\pi,s+\f32\pi]$, it is not difficult to show by the regularity and the well-stretched property of $X$ that
\beq
C\lam \leq |L(s,s')|\leq \|X'\|_{L^\infty},\quad
\lam \leq \inf_{s'\in\BT} |X'(s')| \leq \|X'\|_{L^\infty},
\label{eqn: bounds for L(s,s')}
\eeq
and
\beq
|L(s,s')-X'(s')|
\leq \left|\f{1}{s'-s}\int_{s}^{s'} X'(\tau)-X'(s')\,d\tau\right|
\leq C\|X'\|_{\dot{C}^\g} |s-s'|^\g.
\label{eqn: bounds for L(s,s')-X'(s')}
\eeq
Note that here we choose not to define the subtraction $s'-s$ in the modulo $2\pi$ (i.e., making it range in $[-\pi,\pi)$) in order to allow some flexibility later in the H\"{o}lder estimate.

By the definition \eqref{eqn: def of g_X},
\beq
(g_X)_i(s)
= \int_\BT X_k'(s')\pa_k G_{ij}(X(s)-X(s')) X_j'(s')
+ \f{1}{4\pi}\cdot \f{X_i'(s')}{2\tan\f{s-s'}{2}}\,ds'.
\label{eqn: formula for g_X}
\eeq
From \eqref{eqn: Stokeslet in 2-D}, we can explicitly calculate that, for $x\neq 0$,
\[
\pa_k G_{ij}(x) = \f{1}{4\pi}\left(-\f{\d_{ij}x_k}{|x|^2} +\f{\d_{ik}x_j + x_i\d_{jk}}{|x|^2} -\f{2x_ix_jx_k}{|x|^4}\right),
\]
which is homogeneous of degree $-1$, and thus
\beq
\pa_k G_{ij}(x) x_k
= -\f{1}{4\pi}\d_{ij}.
\label{eqn: an identity for grad G}
\eeq
Hence,
\beq
\begin{split}
(g_X)_i(s)
= &\; \int_\BT \f{1}{s-s'}\cdot \pa_k G_{ij}\left(\f{X(s)-X(s')}{s-s'}\right) X_k'(s')X_j'(s') + \f{1}{4\pi}\cdot \f{X_i'(s')}{2\tan\f{s-s'}{2}}\,ds'\\
= &\; \int_\BT \f{1}{s-s'}\left[\pa_k G_{ij}\big(L(s,s')\big)- \pa_k G_{ij}(X'(s'))\right] X_k'(s')X_j'(s')\,ds' \\
&\; -\f{1}{4\pi} \int_\BT \left(\f{1}{s-s'}- \f{1}{2\tan\f{s-s'}{2}}\right) X_i'(s') \,ds'.
\end{split}
\label{eqn: new form of g_X}
\eeq
Here we may take the range of $s'$ to be any interval of length $2\pi$ that contains, e.g., $(s-\f{\pi}{2},s+\f{\pi}{2})$, and the value of the right-hand side does not depend on the choice of the interval.

One can readily show that, for $k\in \BZ_+$,
\beq
|\na^k G(x)|\leq C|x|^{-k},\quad
|\na^k G(x) - \na^k G(y)|\leq C\min(|x|,|y|)^{-(k+1)}|x-y|.
\label{eqn: bound for G}
\eeq
Using this as well as \eqref{eqn: bounds for L(s,s')} and \eqref{eqn: bounds for L(s,s')-X'(s')},
\begin{align*}
|g_X(s)|
\leq &\; C\int_\BT \f{1}{|s-s'|}\cdot \lam^{-2}|L(s,s')-X'(s')| \cdot \|X'\|_{L^\infty}^2 \,ds'
+ C\int_\BT |X'(s')| \,ds'\\
\leq &\; C \int_\BT \f{1}{|s-s'|}\cdot \lam^{-2} \|X'\|_{\dot{C}^\g}|s-s'|^\g \cdot \|X'\|_{L^\infty}^2 \,ds'
+ C\|X'\|_{L^\infty}\\
\leq &\; C\lam^{-2}\|X'\|_{L^\infty}^2 \|X'\|_{\dot{C}^\g}.
\end{align*}
In the last line, we used the fact that $\lam \leq \|X'\|_{L^\infty}$.
This proves the $L^\infty$-bound.

Next we turn to the H\"{o}lder estimate for $g_X$.
We first rewrite \eqref{eqn: formula for g_X} as (cf.\;\eqref{eqn: expression of u_11 alternative general c} and \eqref{eqn: def of g_X})
\[
(g_X)_i(s) = \int_\BT X_k'(s')\pa_k G_{ij}\big(X(s)-X(s')\big)
\big(X_j'(s')-X_j'(s)\big)
+ \f{1}{4\pi}\cdot \f{X_i'(s')-X_i'(s)}{2\tan\f{s-s'}{2}}\,ds'.
\]
Note that the added terms actually vanish.
Using \eqref{eqn: an identity for grad G} and deriving as before, we obtain that
\begin{align*}
(g_X)_i(s)
= &\; \int_\BT \f{1}{s-s'} \left[\pa_k G_{ij}\big(L(s,s')\big)-\pa_k G_{ij}(X'(s'))\right]
X_k'(s') \big(X_j'(s')-X_j'(s)\big)\,ds'\\
&\; -\f{1}{4\pi} \int_\BT \left(\f{1}{s-s'}- \f{1}{2\tan\f{s-s'}{2}}\right) \big(X_i'(s')-X_i'(s)\big) \,ds'.
\end{align*}
Again, we may interpret $\BT$ here to be an arbitrary interval on $\BR$ of length $2\pi$ that contains, say, $(s-\f{\pi}{2},s+\f{\pi}{2})$.
In this way, $s-s'$ can be understood as the regular subtraction on $\BR$, while the value of the integral does not depend on the choice of the interval.

In the case $\g\in(0,\f12)$, we would like to bound $g_X(s_1)-g_X(s_2)$ for distinct $s_1,s_2\in \BT$.
Denote $r := |s_1-s_2|_\BT$.
Assume $s_2 = s_1+r$.
Without loss of generality, we also assume $r\in (0,\f{\pi}{4})$, as otherwise, the inequality $|g_X(s_1)-g_X(s_2)|\leq 2\|g_X\|_{L^\infty}$ is enough for deriving the H\"{o}lder estimate.

Let $I : = (s_1-r,s_2+r)$.
We split $g_X(s_1)-g_X(s_2)$ as
\beq
\begin{split}
&\;(g_X)_i(s_1)-(g_X)_i(s_2)\\
= &\; \int_I \f{1}{s_1-s'} \left[\pa_k G_{ij}\big(L(s_1,s')\big)-\pa_k G_{ij}(X'(s'))\right]
X_k'(s') \big(X_j'(s')-X_j'(s_1)\big)\,ds'\\
&\; -\int_I \f{1}{s_2-s'} \left[\pa_k G_{ij}\big(L(s_2,s')\big)-\pa_k G_{ij}(X'(s'))\right]
X_k'(s') \big(X_j'(s')-X_j'(s_2)\big)\,ds'\\
&\; + \int_{I^c} \left(\f{1}{s_1-s'}-\f{1}{s_2-s'}\right)\left[\pa_k G_{ij}\big(L(s_1,s')\big)- \pa_k G_{ij}(X'(s'))\right] \\ &\;\qquad \quad \cdot X_k'(s')\big(X_j'(s')-X_j'(s_1)\big)\,ds' \\
&\; +\int_{I^c} \f{1}{s_2-s'}\left[\pa_k G_{ij}\big(L(s_1,s')\big)- \pa_k G_{ij}\big(L(s_2,s')\big)\right] X_k'(s')\big(X_j'(s')-X_j'(s_1)\big)\,ds' \\
&\; + \int_{I^c} \f{1}{s_2-s'} \left[\pa_k G_{ij}\big(L(s_2,s')\big)- \pa_k G_{ij}(X'(s'))\right] X_k'(s')\big(X_j'(s_2)-X_j'(s_1)\big)\,ds' \\
&\; -\f{1}{4\pi} \int_\BT \left( \f{1}{s_1-s'}-\f{1}{2\tan\f{s_1-s'}{2}}
-\f{1}{s_2-s'}+\f{1}{2\tan\f{s_2-s'}{2}} \right) \big(X_i'(s') -X_i'(s_1) \big) \,ds'\\
&\; -\f{1}{4\pi} \int_\BT
\left(\f{1}{s_2-s'}-\f{1}{2\tan\f{s_2-s'}{2}}\right) \big(X_i'(s_2) -X_i'(s_1) \big) \,ds'\\
=:&\; \sum_{m = 1}^7 \big(g_m[X]\big)_i.
\end{split}
\label{eqn: splitting g_X(s_1)-g_X(s_2)}
\eeq
In all the integrals above, we choose to interpret $\BT:=[s_2-\pi,s_2+\pi)$, and $I^c : = \BT \setminus I$.
In what follows, we shall simply denote $g_m := g_m[X]$ whenever it incurs no confusion.

Arguing as before,
\begin{align*}
|g_1| \leq &\; C\int_{s_1-r}^{s_1+2r} \f{1}{|s_1-s'|}\cdot \lam^{-2}|L(s_1,s')-X'(s')| \cdot \|X'\|_{L^\infty}\|X'\|_{\dot{C}^\g}|s'-s_1|^\g \,ds'\\
\leq &\; C\lam^{-2}\|X'\|_{L^\infty}\|X'\|_{\dot{C}^\g}^2 \int_{s_1-r}^{s_1+2r} |s'-s_1|^{-1+2\g} \,ds'\\
\leq &\; C|s_1-s_2|^{2\g} \cdot \lam^{-2} \|X'\|_{L^\infty}\|X'\|_{\dot{C}^\g}^2.
\end{align*}
$|g_2|$ satisfies the same bound.

For any $s'\in I^c = [s_2-\pi,s_2+\pi)\setminus (s_2-2r,s_2+r)$, it holds that
\beq
\f12|s_2-s'| \leq |s_1-s'|\leq 2|s_2-s'|,
\quad
\left|\f{1}{s_1-s'}-\f{1}{s_2-s'}\right|\leq \f{C|s_1-s_2|}{|s_2-s'|^2}.
\label{eqn: estimate for (s_1-s')^-1 - (s_2-s')^-1}
\eeq
Note that this is true because we chose the ranges of $s'$ in both the integrals of $g_X(s_1)$ and $g_X(s_2)$ to be $[s_2-\pi,s_2+\pi)$, and also because we understood $s_1-s'$ and $s_2-s'$ as the usual subtraction on $\BR$.
Moreover, for any $s'\in I^c$,
\beq
\begin{split}
|L(s_1,s')-L(s_2,s')|
= &\; \left|\int_{s_1}^{s_2} \pa_{s}L(s,s')\,ds \right|
= \left|\int_{s_1}^{s_2} \f{-X'(s)(s'-s) + (X(s')-X(s))}{(s'-s)^2}\,ds \right|\\
\leq &\; C\int_{s_1}^{s_2} \|X'\|_{\dot{C}^\g} |s-s'|^{-1+\g} \,ds \leq C\|X'\|_{\dot{C}^\g}|s_1-s_2||s_2-s'|^{-1+\g}.
\end{split}
\label{eqn: estimate for L(s_1,s')-L(s_2,s')}
\eeq
Hence,
\begin{align*}
|g_3|+|g_4|
\leq
&\; C\int_{I^c} \f{|s_1-s_2|}{|s_2-s'|^2}\cdot \lam^{-2}|L(s_1,s')-X'(s')| \cdot \|X'\|_{L^\infty}\|X'\|_{\dot{C}^\g}|s'-s_1|^\g\,ds'\\
&\; + C\int_{I^c} \f{1}{|s_2-s'|}\cdot \lam^{-2}|L(s_1,s')-L(s_2,s')| \cdot \|X'\|_{L^\infty}\|X'\|_{\dot{C}^\g}|s'-s_1|^\g\,ds'\\
\leq
&\; C\int_{I^c} \f{|s_1-s_2|}{|s_2-s'|^2}\cdot \lam^{-2}|s_1-s'|^\g\|X'\|_{\dot{C}^\g}\cdot \|X'\|_{L^\infty}\|X'\|_{\dot{C}^\g}|s'-s_1|^{\g}\,ds'\\
&\; + C\int_{I^c} \f{1}{|s_2-s'|}\cdot \lam^{-2}\|X'\|_{\dot{C}^\g}|s_1-s_2||s_2-s'|^{-1+\g} \|X'\|_{L^\infty}\|X'\|_{\dot{C}^\g}|s'-s_1|^\g\,ds'\\
\leq
&\; C|s_1-s_2|\cdot \lam^{-2} \|X'\|_{L^\infty}\|X'\|_{\dot{C}^\g}^2 \int_{I^c} |s'-s_2|^{-2+2\g}\,ds'\\
\leq
&\; C|s_1-s_2|^{2\g}\cdot \lam^{-2} \|X'\|_{L^\infty}\|X'\|_{\dot{C}^\g}^2.
\end{align*}

Since
\begin{align*}
&\; \int_{s_2-\pi}^{s_2+\pi}  \f{1}{s_2-s'}X_k'(s')\pa_k G_{ij}\big(L(s_2,s')\big)
- \f{1}{s_2-s'} \pa_k G_{ij}(X'(s'))X_k'(s')\,ds'\\
= &\; \int_{s_2-\pi}^{s_2+\pi} -\pa_{s'}\big[G_{ij}\big(X(s_2)-X(s')\big)\big]
+ \f{1}{s_2-s'} \cdot \f{1}{4\pi}\d_{ij}\,ds' = 0,
\end{align*}
we find that
\beq
(g_5)_i
= -\big(X_j'(s_2)-X_j'(s_1)\big)
\int_{I}  \f{1}{s_2-s'}\big[\pa_k G_{ij}\big(L(s_2,s')\big)
- \pa_k G_{ij}(X'(s'))\big] X_k'(s')\,ds',
\label{eqn: g_5}
\eeq
which can be handled as $g_1$ and $g_2$:
\begin{align*}
|g_5|
\leq &\; C|s_1-s_2|^\g \|X'\|_{\dot{C}^\g}
\int_{I} \f{1}{|s_2-s'|}\cdot \lam^{-2}
\big|L(s_2,s')-X'(s')\big| \|X'\|_{L^\infty} \,ds'\\
\leq &\; C|s_1-s_2|^{2\g}\cdot \lam^{-2}\|X'\|_{L^\infty} \|X'\|_{\dot{C}^\g}^2.
\end{align*}

Since $s^{-1}-(2\tan(s/2))^{-1}$ is Lipschitz on $(-\f{3\pi}{2},\f{3\pi}{2})$,
\[
|g_6|\leq C|s_1-s_2|\|X'\|_{L^\infty}.
\]
Lastly, since $s'$ ranges in $[s_2-\pi,s_2+\pi)$, $g_7 = 0$ due to the oddness of the integrand.

Summarizing all the estimates yields that, for $|s_1-s_2|\leq \f{\pi}{4}$,
\[
|g_X(s_1)-g_X(s_2)| \leq C|s_1-s_2|^{2\g} \cdot \lam^{-2} \|X'\|_{L^\infty}\|X'\|_{\dot{C}^\g}^2,
\]
which implies that, for $\g\in (0,\f12)$,
\[
\|g_X\|_{\dot{C}^{2\g}(\BT)} \leq C \lam^{-2} \|X'\|_{L^\infty}\|X'\|_{\dot{C}^\g}^2.
\]

In the case $\g\in (\f12,1)$, the $C^{1,2\g-1}$-estimate for $g_X$ can be derived in a similar spirit.
Indeed, it has been proved in \cite[Proof of Proposition 3.8]{mori2019well} (also see \cite[Lemma 3.5]{lin2019solvability}) that
\begin{align*}
(g_X)'_i(s)
= &\;  \int_\BT \pa_{s}\left[X_k'(s')\pa_k G_{ij}\big(X(s)-X(s')\big)
+ \f{1}{4\pi}\f{\d_{ij}}{2\tan\f{s-s'}{2}}\right]
\big(X_j'(s')-X_j'(s)\big)\,ds'\\
= &\;  \int_\BT  \left[X_l'(s) X_k'(s')\pa_{kl} G_{ij}\big(X(s)-X(s')\big)
- \f{1}{4\pi}\f{\d_{ij}}{4\sin^2\f{s-s'}{2}}\right]
\big(X_j'(s')-X_j'(s)\big)\,ds'.
\end{align*}
Again, we may interpret $\BT$ to be an arbitrary interval of length $2\pi$ in $\BR$ which contains, say, $(s-\f{\pi}{2},s+\f{\pi}{2})$;
in this case, $|s'-s|\leq \f32\pi$.
We note that the integral does not depend on the choice of the interval.

Observe that $\pa_{kl}G_{ij}(x)$ is homogeneous of degree $-2$, and by differentiating \eqref{eqn: an identity for grad G}, for $x \neq 0$,
\[
\pa_{kl} G_{ij}(x) x_k + \pa_k G_{ij}(x)\d_{kl} = 0,
\]
which gives
\[
\pa_{kl} G_{ij}(x) x_k x_l = -x_l\pa_{l} G_{ij}(x) = \f{1}{4\pi}\d_{ij}.
\]
For $s\neq s'$, denote
\beq
\tilde{L}(s,s') := \f{X(s')-X(s)}{2\sin \f{s'-s}{2}},
\label{eqn: def of tilde L}
\eeq
where we understand $s'-s$ as the usual subtraction on $\BR$.
Then we derive as before to obtain
\beq
(g_X)'_i(s)
= \int_\BT \f{X_k'(s')}{4\sin^2 \f{s-s'}{2}}
\big[X_l'(s)\pa_{kl} G_{ij}\big(\tilde{L}(s,s')\big)
-X_l'(s') \pa_{kl} G_{ij}(X'(s'))\big]
\big(X_j'(s')-X_j'(s)\big)\,ds'.
\label{eqn: representation of g_X'}
\eeq

For distinct $s_1,s_2\in \BT$, define $r$ and $I$ as before.
We still assume $s_2 = s_1+r$ and $r\in (0,\f{\pi}{4})$.
Then
\beq
\begin{split}
&\;(g_X)'_i(s_1)-(g_X)'_i(s_2)\\
= &\; \int_I\f{X_k'(s')}{4\sin^2 \f{s_1-s'}{2}}
\big[X_l'(s_1)\pa_{kl} G_{ij}\big(\tilde{L}(s_1,s')\big)
-X_l'(s') \pa_{kl} G_{ij}(X'(s'))\big]
\big(X_j'(s')-X_j'(s_1)\big)\,ds'\\
&\; - \int_I \f{X_k'(s')}{4\sin^2 \f{s_2-s'}{2}}
\big[X_l'(s_2)\pa_{kl} G_{ij}\big(\tilde{L}(s_2,s')\big)
-X_l'(s') \pa_{kl} G_{ij}(X'(s'))\big]
\big(X_j'(s')-X_j'(s_2)\big)\,ds'\\
&\; + \int_{I^c} \left(\f{1}{4\sin^2 \f{s_1-s'}{2}}-\f{1}{4\sin^2 \f{s_2-s'}{2}}\right) X_k'(s')\\
&\;\qquad \qquad  \big[X_l'(s_1)\pa_{kl} G_{ij}\big(\tilde{L}(s_1,s')\big)
-X_l'(s') \pa_{kl} G_{ij}(X'(s'))\big]
\big(X_j'(s')-X_j'(s_1)\big)\,ds'\\
&\; + \int_{I^c} \f{X_k'(s')}{4\sin^2 \f{s_2-s'}{2}}
\big(X_l'(s_1)-X_l'(s_2)\big)
\pa_{kl} G_{ij}\big(\tilde{L}(s_1,s')\big)
\big(X_j'(s')-X_j'(s_1)\big)\,ds'\\
&\; + \int_{I^c} \f{X_k'(s')}{4\sin^2 \f{s_2-s'}{2}}
X_l'(s_2)\big[\pa_{kl} G_{ij}\big(\tilde{L}(s_1,s')\big)
-\pa_{kl} G_{ij}\big(\tilde{L}(s_2,s')\big)\big]
\big(X_j'(s')-X_j'(s_1)\big)\,ds'\\
&\; + \int_{I^c} \f{X_k'(s')}{4\sin^2 \f{s_2-s'}{2}}
\big[X_l'(s_2)\pa_{kl} G_{ij}\big(\tilde{L}(s_2,s')\big)
-X_l'(s') \pa_{kl} G_{ij}(X'(s'))\big]
\big(X_j'(s_2)-X_j'(s_1)\big)\,ds'
\\
=:&\; \sum_{m = 1}^6 \big(\tilde{g}_m[X]\big)_i.
\end{split}
\label{eqn: splitting g_X'(s_1)-g_X'(s_2)}
\eeq
Once again, in all the integrals above, we choose to interpret $\BT:=[s_2-\pi,s_2+\pi)$, and $I^c : = \BT \setminus I$.
For brevity, we shall simply denote $\tilde{g}_m := \tilde{g}_m[X]$ whenever it incurs no confusion.

Then we may estimate $\tilde{g}_m$ as before, with in mind that $\g\in (\f12,1)$.
We only sketch them as follows.
First we note that, for $s'\in [s-\f32\pi,s+\f32\pi]$ with $s'\neq s$,
\[
c|s'-s|\leq \left|2\sin\f{s'-s}{2}\right|\leq |s'-s|,\quad
c\lam \leq |\tilde{L}(s,s')|\leq C\|X'\|_{L^\infty}
\]
for some universal $c$ and $C$, and
\begin{align*}
|\tilde{L}(s,s')-X'(s')|
\leq &\; \left|\f{s'-s}{2\sin \f{s'-s}{2}}\right||L(s,s')-X'(s')|
+ \left|1-\f{s'-s}{2\sin \f{s'-s}{2}}\right||X'(s')|\\
\leq &\; C\|X'\|_{\dot{C}^\g} |s-s'|^\g + C\|X'\|_{L^\infty}|s-s'|
\leq C\|X'\|_{\dot{C}^\g} |s-s'|^\g.
\end{align*}
Then
\beq
\begin{split}
&\; \big|X_l'(s_1)\pa_{kl} G_{ij}\big(\tilde{L}(s_1,s')\big)
-X_l'(s') \pa_{kl} G_{ij}(X'(s'))\big|\\
\leq &\; |X_l'(s_1)-X_l'(s')|\big|\pa_{kl} G_{ij}\big(\tilde{L}(s_1,s')\big)\big|
+ |X_l'(s')| \big|\pa_{kl} G_{ij}\big(\tilde{L}(s_1,s')\big)
-\pa_{kl} G_{ij}(X'(s'))\big|\\
\leq &\; C|s_1-s'|^\g \|X'\|_{\dot{C}^\g}\cdot \lam^{-2}
+ C\|X'\|_{L^\infty}\cdot \lam^{-3}|\tilde{L}(s_1,s')-X'(s')|\\
\leq &\; C \lam^{-3} \|X'\|_{L^\infty}\|X'\|_{\dot{C}^\g}|s_1-s'|^\g.
\end{split}
\label{eqn: estimate for X' G'' difference}
\eeq
Hence,
\begin{align*}
|\tilde{g}_1|
\leq &\; C\int_I \f{\|X'\|_{L^\infty}}{|s_1-s'|^2}
\cdot \lam^{-3}\|X'\|_{L^\infty}\|X'\|_{\dot{C}^\g}|s_1-s'|^\g \cdot \|X'\|_{\dot{C}^\g}|s_1-s'|^\g \,ds'\\
\leq &\; C\lam^{-3}\|X'\|_{\dot{C}^\g}^2 \|X'\|_{L^\infty}^2 \int_I |s_1-s'|^{-2+2\g} \,ds'\\
\leq &\; C|s_1-s_2|^{2\g-1} \cdot \lam^{-3}\|X'\|_{\dot{C}^\g}^2 \|X'\|_{L^\infty}^2,
\end{align*}
and $|\tilde{g}_2|$ satisfies the same estimate.
Moreover, for $s'\in I^c$, by \eqref{eqn: estimate for L(s_1,s')-L(s_2,s')},
\beq
\begin{split}
&\;|\tilde{L}(s_1,s')-\tilde{L}(s_2,s')|\\
\leq &\; \left|\f{s'-s_1}{2\sin\f{s'-s_1}{2}} \right||L(s_1,s')-L(s_2,s')| + \left|\f{s'-s_1}{2\sin\f{s'-s_1}{2}} -\f{s'-s_2}{2\sin\f{s'-s_2}{2}} \right| |L(s_2,s')|\\
\leq &\; C\|X'\|_{\dot{C}^\g}|s_1-s_2||s_2-s'|^{-1+\g} + C|s_1-s_2| \|X'\|_{L^\infty}\\
\leq &\; C\|X'\|_{\dot{C}^\g}|s_1-s_2||s_2-s'|^{-1+\g},
\end{split}
\label{eqn: estimate for tilde L(s_1,s') - tilde L(s_2,s')}
\eeq
so we obtain that
\begin{align*}
&\;|\tilde{g}_3|+|\tilde{g}_4|+|\tilde{g}_5|+|\tilde{g}_6|\\
\leq
&\; C\int_{I^c} \f{|s_1-s_2|}{|s_2-s'|^3}\|X'\|_{L^\infty} \cdot \lam^{-3}
\|X'\|_{L^\infty}\|X'\|_{\dot{C}^\g}|s_1-s'|^\g
\cdot \|X'\|_{\dot{C}^\g}|s'-s_1|^\g \,ds'\\
&\; + C\int_{I^c} \f{1}{|s_2-s'|^2}\|X'\|_{L^\infty}
\cdot |s_1-s_2|^\g\|X'\|_{\dot{C}^\g}\cdot \lam^{-2}\cdot \|X'\|_{\dot{C}^\g}|s'-s_1|^\g\,ds'\\
&\; + C\int_{I^c} \f{1}{|s_2-s'|^2}\|X'\|_{L^\infty}^2
\cdot \lam^{-3} \|X'\|_{\dot{C}^\g} |s_1-s_2||s_2-s'|^{-1+\g} \cdot \|X'\|_{\dot{C}^\g}|s'-s_1|^\g \,ds'\\
&\; + C\int_{I^c} \f{1}{|s_2-s'|^2}\|X'\|_{L^\infty}
\cdot \lam^{-3} \|X'\|_{L^\infty}\|X'\|_{\dot{C}^\g}|s_2-s'|^\g\cdot
\|X'\|_{\dot{C}^\g}|s_1-s_2|^\g\,ds'\\
\leq
&\; C|s_1-s_2|^{2\g-1}\cdot \lam^{-3}\|X'\|_{L^\infty}^2\|X'\|_{\dot{C}^\g}^2.
\end{align*}
Therefore, when $\g\in (\f12,1)$,
\[
\|g_X\|_{\dot{C}^{1,2\g-1}(\BT)}
\leq
C\lam^{-3}\|X'\|_{L^\infty}^2\|X'\|_{\dot{C}^\g}^2.
\]
This completes the proof of Lemma \ref{lem: improved estimates for g_X}.

\subsection{Proof of Lemma \ref{lem: improved estimates for g_X-g_Y}}
\label{sec: proof of improved estimates for g_X-g_Y}

Similar to \eqref{eqn: def of L}, we denote for $s\neq s'$ that
\[
L_X(s,s') := \f{X(s')-X(s)}{s'-s},\quad
L_Y(s,s') := \f{Y(s')-Y(s)}{s'-s}.
\]
Thanks to \eqref{eqn: new form of g_X},
\beq
\begin{split}
&\; (g_X)_i(s)-(g_Y)_i(s)\\
= &\; \int_\BT \f{1}{s-s'}\left[\pa_k G_{ij}\big(L_X(s,s')\big)- \pa_k G_{ij}(X'(s'))\right.\\
&\;\qquad \qquad \quad \left.- \pa_k G_{ij}\big(L_Y(s,s')\big) + \pa_k G_{ij}(Y'(s'))\right] X_k'(s')X_j'(s')\,ds' \\
&\; +\int_\BT \f{1}{s-s'}\left[\pa_k G_{ij}\big(L_Y(s,s')\big)- \pa_k G_{ij}(Y'(s'))\right] \big(X_k'(s')X_j'(s')-Y_k'(s')Y_j'(s')\big)\,ds' \\
&\; -\f{1}{4\pi} \int_\BT \left(\f{1}{s-s'}- \f{1}{2\tan\f{s-s'}{2}}\right) \big(X_i'(s')-Y_i'(s')\big) \,ds'.
\end{split}
\label{eqn: splitting g_X-g_Y}
\eeq
Again, we understand $s'-s$ as the usual subtraction on $\BR$, and $\BT$ can be taken to be an arbitrary interval of length $2\pi$ in $\BR$ that contains, e.g., $(s-\f{\pi}{2},s+\f{\pi}{2})$.

We first claim that, for $k\in \BZ_+$,
\beq
\begin{split}
&\;\big| \na^k G\big(L_X(s,s')\big)- \na^k G(X'(s'))
- \na^k G\big(L_Y(s,s')\big) + \na^k G(Y'(s'))\big|\\
\leq &\; C |s-s'|^\g \Big[\lam^{-(k+1)}\|X'-Y'\|_{\dot{C}^\g}
+ \lam^{-(k+2)} \|X'-Y'\|_{L^\infty} \big(\|X'\|_{\dot{C}^\g}+\|Y'\|_{\dot{C}^\g}\big)\Big].
\end{split}
\label{eqn: estimate double difference}
\eeq
Indeed, if $\|X'-Y'\|_{L^\infty}\geq \lam/2$, we simply use \eqref{eqn: bounds for L(s,s')}, \eqref{eqn: bounds for L(s,s')-X'(s')}, and \eqref{eqn: bound for G} to derive that
\begin{align*}
&\;\big| \na^k G\big(L_X(s,s')\big)- \na^k G(X'(s'))
- \na^k G\big(L_Y(s,s')\big) + \na^k G(Y'(s'))\big|\\
\leq &\;\big| \na^k G\big(L_X(s,s')\big)- \na^k G(X'(s'))\big|+
\big|\na^k G\big(L_Y(s,s')\big) - \na^k G(Y'(s'))\big|\\
\leq &\;C \lam^{-(k+1)} |L_X(s,s')-X'(s')|
+ C\lam^{-(k+1)} |L_Y(s,s')-Y'(s')|\\
\leq &\;C \lam^{-(k+1)}\big(\|X'\|_{\dot{C}^\g}+\|Y'\|_{\dot{C}^\g}\big)|s-s'|^\g\\
\leq &\;C \lam^{-(k+2)} \|X'-Y'\|_{L^\infty} \big(\|X'\|_{\dot{C}^\g}+\|Y'\|_{\dot{C}^\g}\big)|s-s'|^\g.
\end{align*}
Otherwise, if $\|X'-Y'\|_{L^\infty}\leq \lam/2$, we let $Z(s;\th) := \th X(s)+(1-\th) Y(s)$, where $\th\in [0,1]$.
Then
\begin{align*}
&\;\big| \na^k G\big(L_X(s,s')\big)- \na^k G(X'(s'))
- \na^k G\big(L_Y(s,s')\big) + \na^k G(Y'(s'))\big|\\
= &\;\left|\int_0^1 \f{d}{d\th}\left[\na^k G\big(L_{Z(\cdot;\th)}(s,s')\big) - \na^k G(Z'(s';\th))\right] d\th\right|\\
\leq &\;\int_0^1 \left| \f{(X-Y)(s)-(X-Y)(s')}{s-s'}\cdot \na^{k+1} G\big(L_{Z(\cdot;\th)}(s,s')\big) - (X-Y)'(s')\cdot \na^{k+1} G(Z'(s';\th)) \right| d\th\\
\leq &\;\int_0^1 \left| \f{(X-Y)(s)-(X-Y)(s')}{s-s'}-(X-Y)'(s')\right|
\big|\na^{k+1} G\big(L_{Z(\cdot;\th)}(s,s')\big)\big| \\
&\;\qquad + |(X-Y)'(s')|\big|\na^{k+1} G\big(L_{Z(\cdot;\th)}(s,s')\big)-\na^{k+1} G(Z'(s';\th)) \big|\, d\th.
\end{align*}
Since
\[
\inf_{s\in \BT}|Z'(s)|
\geq |Z(\cdot;\th)|_*
\geq |Y|_*- \|X'-Y'\|_{L^\infty}
\geq \lam/2,
\]
by \eqref{eqn: bounds for L(s,s')}, \eqref{eqn: bounds for L(s,s')-X'(s')}, and \eqref{eqn: bound for G},
\begin{align*}
&\;\big| \na^k G\big(L_X(s,s')\big)- \na^k G(X'(s'))
- \na^k G\big(L_Y(s,s')\big) + \na^k G(Y'(s'))\big|\\
\leq &\; C\int_0^1 \|X'-Y'\|_{\dot{C}^\g}|s-s'|^\g\cdot
\big|L_{Z(\cdot;\th)}(s,s')\big|^{-(k+1)}\\
&\;\qquad + \|X'-Y'\|_{L^\infty}\cdot \lam^{-(k+2)} \big|L_{Z(\cdot;\th)}(s,s')-Z'(s';\th) \big|\, d\th\\
\leq &\; C \lam^{-(k+1)}\|X'-Y'\|_{\dot{C}^\g}|s-s'|^\g
+ C \lam^{-(k+2)} \|X'-Y'\|_{L^\infty} \big(\|X'\|_{\dot{C}^\g}+\|Y'\|_{\dot{C}^\g}\big)|s-s'|^\g.
\end{align*}
This proves \eqref{eqn: estimate double difference}.

Now combining \eqref{eqn: estimate double difference} with \eqref{eqn: splitting g_X-g_Y}, and also using \eqref{eqn: bounds for L(s,s')}, \eqref{eqn: bounds for L(s,s')-X'(s')}, and \eqref{eqn: bound for G}, we find that
\beq
\begin{split}
&\; |g_X(s)-g_Y(s)|\\
\leq &\; C\int_\BT \f{|s-s'|^\g }{|s-s'|} \Big[\lam^{-2}\|X'-Y'\|_{\dot{C}^\g}
+ \lam^{-3} \|X'-Y'\|_{L^\infty} \big(\|X'\|_{\dot{C}^\g}+\|Y'\|_{\dot{C}^\g}\big)\Big] \|X'\|_{L^\infty}^2 \,ds' \\
&\; +C \int_\BT \f{1}{|s-s'|}\cdot \lam^{-2} |L_Y(s,s') - Y'(s')| (\|X'\|_{L^\infty}+\|Y'\|_{L^\infty})
\|X'-Y'\|_{L^\infty}\,ds' \\
&\; +C \int_\BT \|X'-Y'\|_{L^\infty} \,ds'\\
\leq &\; C\big(\|X'\|_{L^\infty}+\|Y'\|_{L^\infty}\big)^2 \Big[\lam^{-2}\|X'-Y'\|_{\dot{C}^\g}
+ \lam^{-3} \|X'-Y'\|_{L^\infty} \big(\|X'\|_{\dot{C}^\g}+\|Y'\|_{\dot{C}^\g}\big)\Big],
\end{split}
\label{eqn: L inf bound for g_X-g_Y}
\eeq
which gives the $L^\infty$-bound.

The H\"{o}lder estimates for $g_X-g_Y$ can be justified as in the proof of Lemma \ref{lem: improved estimates for g_X}.
We first consider the case $\g\in (0,\f12)$.
Given distinct $s_1,s_2\in \BT$, we only consider the case $r:=|s_1-s_2|_\BT\in (0,\f{\pi}{4})$ and $s_2 = s_1 + r$.
Using the notations in \eqref{eqn: splitting g_X(s_1)-g_X(s_2)}, we write
\[
(g_X-g_Y)(s_1)-(g_X-g_Y)(s_2)
=  \sum_{m = 1}^7 g_m[X]-g_m[Y].
\]
We note that $g_7[X] = g_7[Y] = 0$.

By \eqref{eqn: bounds for L(s,s')-X'(s')} and \eqref{eqn: estimate double difference},
\begin{align*}
&\; |g_1[X]-g_1[Y]| \\
\leq
&\; \int_I \f{1}{|s_1-s'|} \left|\na G\big(L_X(s_1,s')\big)-\na G(X'(s'))-\na G\big(L_Y(s_1,s')\big)-\na G(Y'(s'))\right|\\
&\;\qquad \cdot |X'(s')| \big|X'(s')-X'(s_1)\big|\,ds'\\
&\; + \int_I \f{1}{|s_1-s'|} \left|\na G\big(L_Y(s_1,s')\big)-\na G(Y'(s'))\right| |(X'-Y')(s')| |X'(s')-X'(s_1)|\,ds'\\
&\; + \int_I \f{1}{|s_1-s'|} \left|\na G\big(L_Y(s_1,s')\big)-\na G(Y'(s'))\right|
|Y'(s')| \big|(X'-Y')(s')-(X'-Y')(s_1)\big|\,ds'\\
\leq
&\; C\int_{s_1-r}^{s_1+2r} \f{|s_1-s'|^\g}{|s_1-s'|} \Big[\lam^{-2}\|X'-Y'\|_{\dot{C}^\g}
+ \lam^{-3} \|X'-Y'\|_{L^\infty} \big(\|X'\|_{\dot{C}^\g}+\|Y'\|_{\dot{C}^\g}\big)\Big] \\ &\;\qquad \qquad \cdot \|X'\|_{L^\infty}\|X'\|_{\dot{C}^\g}|s'-s_1|^\g \,ds'\\
&\; + C\int_{s_1-r}^{s_1+2r} \f{1}{|s_1-s'|}\cdot \lam^{-2}|s_1-s'|^\g \|Y'\|_{\dot{C}^\g} \cdot \|X'-Y'\|_{L^\infty}\|X'\|_{\dot{C}^\g}|s'-s_1|^\g \,ds'\\
&\; + C\int_{s_1-r}^{s_1+2r} \f{1}{|s_1-s'|}\cdot \lam^{-2}|s_1-s'|^\g \|Y'\|_{\dot{C}^\g} \cdot \|Y'\|_{L^\infty}\|X'-Y'\|_{\dot{C}^\g}|s'-s_1|^\g \,ds'
\\
\leq
&\; C|s_1-s_2|^{2\g} \cdot \lam^{-2} \big(\|X'\|_{L^\infty}+\|Y'\|_{L^\infty}\big) \big(\|X'\|_{\dot{C}^\g}+\|Y'\|_{\dot{C}^\g}\big)\\
&\; \cdot \Big[\|X'-Y'\|_{\dot{C}^\g}+
\lam^{-1} \big(\|X'\|_{\dot{C}^\g}+\|Y'\|_{\dot{C}^\g}\big)\|X'-Y'\|_{L^\infty} \Big].
\end{align*}
Obviously, $|g_2[X]-g_2[Y]|$ satisfies the same bound.

Using \eqref{eqn: estimate for (s_1-s')^-1 - (s_2-s')^-1} and \eqref{eqn: estimate double difference}, we derive analogously that
\begin{align*}
&\; |g_3[X]-g_3[Y]|\\
\leq
&\; C\int_{I^c} \f{|s_1-s_2|}{|s_2-s'|^2}\cdot
|s_1-s'|^\g \Big[\lam^{-2}\|X'-Y'\|_{\dot{C}^\g}
+ \lam^{-3} \|X'-Y'\|_{L^\infty} \big(\|X'\|_{\dot{C}^\g}+\|Y'\|_{\dot{C}^\g}\big)\Big]\\
&\;\qquad \cdot \|X'\|_{L^\infty}\|X'\|_{\dot{C}^\g}|s'-s_1|^\g\,ds'\\
&\; + C\int_{I^c} \f{|s_1-s_2|}{|s_2-s'|^2}\cdot \lam^{-2}\|Y'\|_{\dot{C}^\g}|s_1-s'|^\g \cdot \|X'-Y'\|_{L^\infty}\|X'\|_{\dot{C}^\g}|s'-s_1|^\g
\,ds'\\
&\; + C\int_{I^c} \f{|s_1-s_2|}{|s_2-s'|^2}\cdot \lam^{-2}\|Y'\|_{\dot{C}^\g}|s_1-s'|^\g \cdot \|Y'\|_{L^\infty}\|X'-Y'\|_{\dot{C}^\g}|s'-s_1|^\g \,ds'\\
\leq
&\; C|s_1-s_2|^{2\g} \cdot \lam^{-2} \big(\|X'\|_{L^\infty}+\|Y'\|_{L^\infty}\big) \big(\|X'\|_{\dot{C}^\g}+\|Y'\|_{\dot{C}^\g}\big)\\
&\; \cdot \Big[\|X'-Y'\|_{\dot{C}^\g}+
\lam^{-1} \big(\|X'\|_{\dot{C}^\g}+\|Y'\|_{\dot{C}^\g}\big)\|X'-Y'\|_{L^\infty} \Big].
\end{align*}

To bound $|g_4[X]-g_4[Y]|$, we first follow the argument in the proof of \eqref{eqn: estimate double difference} to find that (also see \eqref{eqn: estimate for L(s_1,s')-L(s_2,s')}), for any $s'\in I^c$ and $k\in \BZ_+$,
\beq
\begin{split}
&\;\big| \na^k G\big(L_X(s_1,s')\big)- \na^k G\big(L_X(s_2,s')\big)
- \na^k G\big(L_Y(s_1,s')\big) + \na^k G\big(L_Y(s_2,s')\big)\big|\\
\leq &\; C |s_1-s_2||s_2-s'|^{-1+\g}\cdot \lam^{-(k+1)}\Big[\|X'-Y'\|_{\dot{C}^\g}
+ \lam^{-1}\|X'-Y'\|_{L^\infty}
\big(\|X'\| _{\dot{C}^\g}+\|Y'\| _{\dot{C}^\g}\big) \Big].
\end{split}
\label{eqn: estimate double difference s_1 and s_2}
\eeq
We omit its proof.
Using this and also \eqref{eqn: estimate for L(s_1,s')-L(s_2,s')}, we obtain that
\begin{align*}
&\; |g_4[X]-g_4[Y]|\\
\leq
&\; C\int_{I^c} \f{|s_1-s_2|}{|s_2-s'|}
|s_2-s'|^{-1+\g} \cdot \lam^{-2}\Big[\|X'-Y'\|_{\dot{C}^\g}
+ \lam^{-1}\|X'-Y'\|_{L^\infty}
\big(\|X'\| _{\dot{C}^\g}+\|Y'\| _{\dot{C}^\g}\big) \Big]\\
&\;\qquad \cdot \|X'\|_{L^\infty}\|X'\|_{\dot{C}^\g}|s'-s_1|^\g\,ds'\\
&\; + C\int_{I^c} \f{1}{|s_2-s'|}\cdot \lam^{-2} \|Y'\|_{\dot{C}^\g}|s_1-s_2||s_2-s'|^{-1+\g} \cdot \|X'-Y'\|_{L^\infty}\|X'\|_{\dot{C}^\g}|s'-s_1|^\g
\,ds'\\
&\; + C\int_{I^c}\f{1}{|s_2-s'|}\cdot \lam^{-2} \|Y'\|_{\dot{C}^\g}|s_1-s_2||s_2-s'|^{-1+\g} \cdot \|Y'\|_{L^\infty}\|X'-Y'\|_{\dot{C}^\g}|s'-s_1|^\g \,ds'\\
\leq
&\; C|s_1-s_2|^{2\g} \cdot \lam^{-2} \big(\|X'\|_{L^\infty}+\|Y'\|_{L^\infty}\big) \big(\|X'\|_{\dot{C}^\g}+\|Y'\|_{\dot{C}^\g}\big)\\
&\; \cdot \Big[\|X'-Y'\|_{\dot{C}^\g}+
\lam^{-1} \big(\|X'\|_{\dot{C}^\g}+\|Y'\|_{\dot{C}^\g}\big)\|X'-Y'\|_{L^\infty} \Big].
\end{align*}

By \eqref{eqn: g_5} as well as \eqref{eqn: bounds for L(s,s')-X'(s')} and \eqref{eqn: estimate double difference},
\begin{align*}
&\; |g_5[X] -g_5[Y]|\\
\leq &\; C|s_1-s_2|^\g \|X'-Y'\|_{\dot{C}^\g}
\int_{I} \f{1}{|s_2-s'|}\cdot \lam^{-2}
\|X'\|_{\dot{C}^\g}|s_2-s'|^\g\cdot \|X'\|_{L^\infty} \,ds'\\
&\; + C|s_1-s_2|^\g \|Y'\|_{\dot{C}^\g}\\
&\;\quad \cdot
\int_{I} \f{|s_2-s'|^\g }{|s_2-s'|} \Big[\lam^{-2}\|X'-Y'\|_{\dot{C}^\g}
+ \lam^{-3} \|X'-Y'\|_{L^\infty} \big(\|X'\|_{\dot{C}^\g}+\|Y'\|_{\dot{C}^\g}\big)\Big]
\|X'\|_{L^\infty} \,ds'\\
&\; + C|s_1-s_2|^\g \|Y'\|_{\dot{C}^\g}
\int_{I} \f{1}{|s_2-s'|}\cdot \lam^{-2}
\|Y'\|_{\dot{C}^\g}|s_2-s'|^\g\cdot \|X'-Y'\|_{L^\infty} \,ds'\\
\leq &\; C|s_1-s_2|^{2\g} \cdot \lam^{-2} \big(\|X'\|_{L^\infty}+\|Y'\|_{L^\infty}\big) \big(\|X'\|_{\dot{C}^\g}+\|Y'\|_{\dot{C}^\g}\big)\\
&\; \cdot \Big[\|X'-Y'\|_{\dot{C}^\g}+
\lam^{-1} \big(\|X'\|_{\dot{C}^\g}+\|Y'\|_{\dot{C}^\g}\big)\|X'-Y'\|_{L^\infty} \Big].
\end{align*}

Finally,
\[
|g_6[X] -g_6[Y]|\leq C|s_1-s_2|\|X'-Y'\|_{L^\infty}.
\]

Summarizing all the above estimates yields that, for $|s_1-s_2|\leq \f{\pi}{4}$,
\begin{align*}
&\; |(g_X-g_Y)(s_1)-(g_X-g_Y)(s_2)|\\
\leq &\; C|s_1-s_2|^{2\g} \cdot \lam^{-2} \big(\|X'\|_{L^\infty}+\|Y'\|_{L^\infty}\big) \big(\|X'\|_{\dot{C}^\g}+\|Y'\|_{\dot{C}^\g}\big)\\
&\; \cdot \Big[\|X'-Y'\|_{\dot{C}^\g}+
\lam^{-1} \big(\|X'\|_{\dot{C}^\g}+\|Y'\|_{\dot{C}^\g}\big)\|X'-Y'\|_{L^\infty} \Big].
\end{align*}
This, together with \eqref{eqn: L inf bound for g_X-g_Y}, implies the desired estimate in the case $\g\in (0,\f12)$.

Next we study the case $\g\in (\f12,1)$.
As in \eqref{eqn: def of tilde L}, for $s\neq s'$, we denote
\[
\tilde{L}_X(s,s') := \f{X(s')-X(s)}{2\sin \f{s'-s}{2}},\quad
\tilde{L}_Y(s,s') := \f{Y(s')-Y(s)}{2\sin \f{s'-s}{2}}.
\]
Here $s'-s$ is again understood as the usual subtraction on $\BR$.

For distinct $s_1,s_2\in \BT$, define $r$ and $I$ as before.
We still assume $s_2 = s_1+r$ and $r\in (0,\f{\pi}{4})$ without loss of generality.
Thanks to \eqref{eqn: representation of g_X'} and using the notations in \eqref{eqn: splitting g_X'(s_1)-g_X'(s_2)},
\[
(g_X-g_Y)'(s_1)-(g_X-g_Y)'(s_2)
=\sum_{m = 1}^6 \tilde{g}_m[X]-\tilde{g}_m[Y].
\]

As in \eqref{eqn: estimate double difference} and \eqref{eqn: estimate double difference s_1 and s_2}, we also have for $k\in \BZ_+$ that
\beq
\begin{split}
&\;\big| \na^k G\big(\tilde{L}_X(s,s')\big)- \na^k G(X'(s'))
- \na^k G\big(\tilde{L}_Y(s,s')\big) + \na^k G(Y'(s'))\big|\\
\leq &\; C |s-s'|^\g \cdot \lam^{-(k+1)}\Big[\|X'-Y'\|_{\dot{C}^\g}
+ \lam^{-1} \|X'-Y'\|_{L^\infty} \big(\|X'\|_{\dot{C}^\g}+\|Y'\|_{\dot{C}^\g}\big)\Big],
\end{split}
\label{eqn: estimate double difference tilde}
\eeq
and for any $k\in \BZ_+$ and $s'\in I^c$,
\beq
\begin{split}
&\;\big| \na^k G\big(\tilde{L}_X(s_1,s')\big)- \na^k G\big(\tilde{L}_X(s_2,s')\big)
- \na^k G\big(\tilde{L}_Y(s_1,s')\big) + \na^k G\big(\tilde{L}_Y(s_2,s')\big)\big|\\
\leq &\; C |s_1-s_2||s_2-s'|^{-1+\g}\cdot \lam^{-(k+1)}\Big[\|X'-Y'\|_{\dot{C}^\g}
+ \lam^{-1}\|X'-Y'\|_{L^\infty}
\big(\|X'\| _{\dot{C}^\g}+\|Y'\| _{\dot{C}^\g}\big) \Big].
\end{split}
\label{eqn: estimate double difference s_1 and s_2 tilde}
\eeq
whose proofs will be omitted as they are the same as those of \eqref{eqn: estimate double difference} and \eqref{eqn: estimate double difference s_1 and s_2}.

As a result of \eqref{eqn: estimate double difference tilde},
\begin{align*}
&\;\Big| X_l'(s_1)\pa_{kl} G\big(\tilde{L}_X(s_1,s')\big)
-X_l'(s') \pa_{kl} G(X'(s'))\\
&\; - Y_l'(s_1)\pa_{kl} G\big(\tilde{L}_Y(s_1,s')\big)
+ Y_l'(s') \pa_{kl} G(Y'(s'))\Big|\\
\leq &\;\Big| \big(X_l'(s_1)-Y_l'(s_1)\big)\pa_{kl} G\big(\tilde{L}_X(s_1,s')\big)
- \big(X_l'(s')-Y_l'(s')\big) \pa_{kl} G(X'(s'))\Big|\\
&\; + \Big| Y_l'(s_1)\big[\pa_{kl} G\big(\tilde{L}_X(s_1,s')\big)- \pa_{kl} G\big(\tilde{L}_Y(s_1,s')\big)\big]
- Y_l'(s') \big[\pa_{kl} G(X'(s'))-\pa_{kl} G(Y'(s'))\big]\Big|
\\
\leq &\;\big| (X-Y)'(s_1)-(X-Y)'(s')\big| \big|\na^2 G\big(\tilde{L}_X(s_1,s')\big)\big|\\
&\;+\big|(X-Y)'(s')\big| \big|\na^2 G\big(\tilde{L}_X(s_1,s')\big)- \na^2 G(X'(s'))\big|\\
&\; + \big| Y'(s_1)- Y'(s')\big|\big|\na^2 G\big(\tilde{L}_X(s_1,s')\big)- \na^2 G\big(\tilde{L}_Y(s_1,s')\big)\big|\\
&\; + |Y'(s')|\Big|\na^2 G \big(\tilde{L}_X(s_1,s')\big)- \na^2 G\big(\tilde{L}_Y(s_1,s')\big)
- \na^2 G(X'(s')) + \na^2 G(Y'(s'))\Big|
\\
\leq &\;C|s_1-s'|^\g\|X'-Y'\|_{\dot{C}^\g} \big|\tilde{L}_X(s_1,s')\big|^{-2}
+C \|X'-Y'\|_{L^\infty} \cdot
\lam^{-3} \big|\tilde{L}_X(s_1,s')-X'(s')\big|\\
&\; + C|s_1-s'|^\g\|Y'\|_{\dot{C}^\g}
\cdot \lam^{-3} \big|\tilde{L}_X(s_1,s')-\tilde{L}_Y(s_1,s')\big|\\
&\; + C \|Y'\|_{L^\infty} \cdot |s_1-s'|^\g \cdot \lam^{-3} \Big[\|X'-Y'\|_{\dot{C}^\g}
+ \lam^{-1} \|X'-Y'\|_{L^\infty} \big(\|X'\|_{\dot{C}^\g}+\|Y'\|_{\dot{C}^\g}\big)\Big]
\\
\leq &\; C |s_1-s'|^\g \big(\|X'\|_{L^\infty}+\|Y'\|_{L^\infty}\big) \\
&\; \cdot \lam^{-3}\Big[\|X'-Y'\|_{\dot{C}^\g}
+ \lam^{-1} \|X'-Y'\|_{L^\infty} \big(\|X'\|_{\dot{C}^\g}+\|Y'\|_{\dot{C}^\g}\big)\Big].
\end{align*}
Using this estimate as well as \eqref{eqn: estimate for X' G'' difference}, we derive that
\begin{align*}
&\;\big|\tilde{g}_1[X]-\tilde{g}_1[Y]\big|\\
\leq &\; \int_I\f{|X'(s')-Y'(s')|}{4\sin^2 \f{s_1-s'}{2}}
\big|X_l'(s_1)\pa_{kl} G\big(\tilde{L}_X(s_1,s')\big)
-X_l'(s') \pa_{kl} G(X'(s'))\big|
\big|X_j'(s')-X_j'(s_1)\big|\,ds'\\
&\; + \int_I\f{|Y'(s')|}{4\sin^2 \f{s_1-s'}{2}}
\big|X_l'(s_1)\pa_{kl} G\big(\tilde{L}_X(s_1,s')\big)
-X_l'(s') \pa_{kl} G(X'(s'))\\
&\;\qquad \qquad \qquad -Y_l'(s_1)\pa_{kl} G\big(\tilde{L}_Y(s_1,s')\big)
+Y_l'(s') \pa_{kl} G(Y'(s'))\big|
\big|X'(s')-X'(s_1)\big|\,ds'\\
&\; + \int_I\f{|Y'(s')|}{4\sin^2 \f{s_1-s'}{2}}
\big|Y_l'(s_1)\pa_{kl} G\big(\tilde{L}_Y(s_1,s')\big)
-Y_l'(s') \pa_{kl} G(Y'(s'))\big|
\big|(X-Y)'(s')-(X-Y)'(s_1)\big|\,ds'
\\
\leq &\; C\int_I\f{\|X'-Y'\|_{L^\infty}}{|s_1-s'|^2}
\cdot \lam^{-3} \|X'\|_{L^\infty}\|X'\|_{\dot{C}^\g}|s_1-s'|^\g
\cdot |s'-s_1|^\g\|X'\|_{\dot{C}^\g}\,ds'\\
&\; + C\int_I\f{\|Y'\|_{L^\infty}}{|s_1-s'|^2}
\cdot |s_1-s'|^\g \big(\|X'\|_{L^\infty}+\|Y'\|_{L^\infty}\big) \\
&\; \qquad \cdot \lam^{-3} \Big[\|X'-Y'\|_{\dot{C}^\g}
+ \lam^{-1} \|X'-Y'\|_{L^\infty} \big(\|X'\|_{\dot{C}^\g}+\|Y'\|_{\dot{C}^\g}\big)\Big]
\cdot |s'-s_1|^\g\|X'\|_{\dot{C}^\g}\,ds'\\
&\; + C\int_I\f{\|Y'\|_{L^\infty}}{|s_1-s'|^2}
\cdot \lam^{-3} \|Y'\|_{L^\infty}\|Y'\|_{\dot{C}^\g}|s_1-s'|^\g
\cdot |s'-s_1|^\g\|(X-Y)'\|_{\dot{C}^\g}\,ds'
\\
\leq &\; C|s_1-s_2|^{2\g-1}
\big(\|X'\|_{L^\infty}+\|Y'\|_{L^\infty}\big)^2 \big(\|X'\|_{\dot{C}^\g}+\|Y'\|_{\dot{C}^\g}\big)\\
&\; \cdot \lam^{-3} \Big[\|X'-Y'\|_{\dot{C}^\g}
+ \lam^{-1} \|X'-Y'\|_{L^\infty} \big(\|X'\|_{\dot{C}^\g}+\|Y'\|_{\dot{C}^\g}\big)\Big].
\end{align*}
$|\tilde{g}_2[X]-\tilde{g}_2[Y]|$ enjoys the same bound.
Similarly,
\begin{align*}
&\;\big|\tilde{g}_3[X]-\tilde{g}_3[Y]\big|
+ \big|\tilde{g}_4[X]-\tilde{g}_4[Y]\big|
+ \big|\tilde{g}_6[X]-\tilde{g}_6[Y]\big|
\\
\leq &\; C|s_1-s_2|^{2\g-1} \big(\|X'\|_{L^\infty}+\|Y'\|_{L^\infty}\big)^2 \big(\|X'\|_{\dot{C}^\g}+ \|Y'\|_{\dot{C}^\g}\big)\\
&\; \cdot \lam^{-3}\Big[\|X'-Y'\|_{\dot{C}^\g}
+ \lam^{-1} \|X'-Y'\|_{L^\infty} \big(\|X'\|_{\dot{C}^\g}+\|Y'\|_{\dot{C}^\g}\big)\Big].
\end{align*}
We omit the details.
For $\tilde{g}_5[X]-\tilde{g}_5[Y]$, we first derive that
\begin{align*}
&\;\big|\tilde{g}_5[X]-\tilde{g}_5[Y]\big|\\
\leq
&\; C\int_{I^c} \f{|X_k'(s') X_l'(s_2)-Y_k'(s') Y_l'(s_2)|}{|s_2-s'|^2}\\
&\;\qquad \cdot
\big|\na^2 G\big(\tilde{L}_X(s_1,s')\big)
-\na^2 G\big(\tilde{L}_X(s_2,s')\big)\big|
\big|X'(s')-X'(s_1)\big|\,ds'\\
&\; + C\int_{I^c} \f{|Y_k'(s') Y_l'(s_2)|}{|s_2-s'|^2}
\big|\na^2 G\big(\tilde{L}_X(s_1,s')\big)
-\na^2 G\big(\tilde{L}_X(s_2,s')\big)\\
&\;\qquad \qquad \qquad
-\na^2 G\big(\tilde{L}_Y(s_1,s')\big)
+\na^2 G\big(\tilde{L}_Y(s_2,s')\big)\big|
\big|X'(s')-X'(s_1)\big|\,ds'\\
&\; + C\int_{I^c} \f{|Y_k'(s') Y_l'(s_2)|}{|s_2-s'|^2}
\big|\na^2 G\big(\tilde{L}_Y(s_1,s')\big)
- \na^2 G\big(\tilde{L}_Y(s_2,s')\big)\big|\\
&\;\qquad \quad \cdot \big|(X-Y)'(s')-(X-Y)'(s_1)\big|\,ds'
\\
\leq &\; C\int_{I^c} \f{1}{|s_2-s'|^2}\big(\|X'\|_{L^\infty}+\|Y'\|_{L^\infty}\big) \|X'-Y'\|_{L^\infty}\\
&\;\qquad \cdot \lam^{-3}\big|\tilde{L}_X(s_1,s')-\tilde{L}_X(s_2,s')\big|
\cdot |s'-s_1|^\g \|X'\|_{\dot{C}^\g}\,ds'\\
&\; + C\int_{I^c} \f{\|Y'\|_{L^\infty}^2}{|s_2-s'|^2}
\big|\na^2 G\big(\tilde{L}_X(s_1,s')\big)
-\na^2 G\big(\tilde{L}_X(s_2,s')\big)\\
&\;\qquad \qquad \qquad
-\na^2 G\big(\tilde{L}_Y(s_1,s')\big)
+\na^2 G\big(\tilde{L}_Y(s_2,s')\big)\big|
\cdot |s'-s_1|^\g\|X'\|_{\dot{C}^\g}\,ds'\\
&\; + C\int_{I^c} \f{\|Y'\|_{L^\infty}^2}{|s_2-s'|^2}
\cdot \lam^{-3}\big|\tilde{L}_Y(s_1,s')-\tilde{L}_Y(s_2,s')\big|
\cdot |s'-s_1|^\g\|X'-Y'\|_{\dot{C}^\g}\,ds'.
\end{align*}
By virtue of \eqref{eqn: estimate for tilde L(s_1,s') - tilde L(s_2,s')} and \eqref{eqn: estimate double difference s_1 and s_2 tilde},
\begin{align*}
&\;\big|\tilde{g}_5[X]-\tilde{g}_5[Y]\big|
\\
\leq &\; C\int_{I^c} \f{1}{|s_2-s'|^2}\big(\|X'\|_{L^\infty}+\|Y'\|_{L^\infty}\big) \|X'-Y'\|_{L^\infty}\\
&\;\qquad \cdot \lam^{-3}\|X'\|_{\dot{C}^\g}|s_1-s_2||s_2-s'|^{-1+\g}
\cdot |s'-s_1|^\g \|X'\|_{\dot{C}^\g}\,ds'\\
&\; + C\int_{I^c} \f{\|Y'\|_{L^\infty}^2}{|s_2-s'|^2}
\cdot |s_1-s_2||s_2-s'|^{-1+\g}\\
&\;\qquad \quad \cdot \lam^{-3}\Big[\|X'-Y'\|_{\dot{C}^\g}
+ \lam^{-1}\|X'-Y'\|_{L^\infty}
\big(\|X'\| _{\dot{C}^\g}+\|Y'\| _{\dot{C}^\g}\big) \Big]
\cdot |s'-s_1|^\g\|X'\|_{\dot{C}^\g}\,ds'\\
&\; + C\int_{I^c} \f{\|Y'\|_{L^\infty}^2}{|s_2-s'|^2}
\cdot \lam^{-3}\|Y'\|_{\dot{C}^\g}|s_1-s_2||s_2-s'|^{-1+\g}
\cdot |s'-s_1|^\g\|X'-Y'\|_{\dot{C}^\g}\,ds'
\\
\leq &\; C|s_1-s_2|^{2\g-1} \big(\|X'\|_{L^\infty}+\|Y'\|_{L^\infty}\big)^2
\big(\|X'\|_{\dot{C}^\g}+\|Y'\|_{\dot{C}^\g}\big)\\
&\;\cdot \lam^{-3}\Big[\|X'-Y'\|_{\dot{C}^\g}
+ \lam^{-1}\|X'-Y'\|_{L^\infty}
\big(\|X'\| _{\dot{C}^\g}+\|Y'\| _{\dot{C}^\g}\big) \Big].
\end{align*}

Combining all the estimates, we obtain that
\begin{align*}
&\;\big|(g_X-g_Y)'(s_1)-(g_X-g_Y)'(s_2)\big|\\
\leq &\; C|s_1-s_2|^{2\g-1} \big(\|X'\|_{L^\infty}+\|Y'\|_{L^\infty}\big)^2
\big(\|X'\|_{\dot{C}^\g}+\|Y'\|_{\dot{C}^\g}\big)\\
&\;\cdot \lam^{-3}\Big[\|X'-Y'\|_{\dot{C}^\g}
+ \lam^{-1}\big(\|X'\| _{\dot{C}^\g}+\|Y'\| _{\dot{C}^\g}\big) \|X'-Y'\|_{L^\infty}\Big],
\end{align*}
which implies the desired H\"{o}lder estimate in the case $\g\in (\f12,1)$.


\end{document}